\renewcommand\@biblabel[1]{#1}
\newcommand\bmat{\begin{pmatrix}}
\newcommand\emat{\end{pmatrix}}
\newcommand\longto{{\, \longrightarrow \, }}
\newcommand\ifff{{ \, \Longleftrightarrow \,}}
\newcommand\RR{\mathbb{R}}
\newcommand\CC{\mathbb{C}}
\newcommand\NN{\mathbb{N}}\newcommand\ZZ{\mathbb{Z}}
\newcommand\KK{\mathbb{K}}\newcommand\GG{\mathbb{G}}
\newcommand\FF{\mathbb{F}}
\newcommand\LL{\mathbb{L}}
\newcommand\inv{{^{-1}}}
\newcommand\Li{{\mathcal
    L}}
\newcommand\coker{{\operatorname{coker}}}
\newcommand\Spec{{\operatorname{Spec}}}
\newcommand\Divv{{\operatorname{Div}}}
\newcommand\hPhi{{\widehat\Phi}}
\newcommand\hW{{\widehat W}}
\newcommand\hnu{{\widehat\nu}}
\newcommand\hx{{\widehat x}}
\newcommand\hG{{\widehat G}}
\newcommand\hB{{\widehat B}}
\newcommand\hT{{\widehat
    T}}
    \newcommand\hU{{\widehat U}} 
    \newcommand{\hg}{\widehat g}
    \newcommand{\hlambda}{\widehat \lambda}
\newcommand{\wh}{\widehat}
\newcommand\SL{\operatorname{SL}}\newcommand\Sp{\operatorname{Sp}}
\newcommand\Orb{{\mathcal O}}
\newcommand\Hom{{\operatorname{Hom}}}
\newcommand\Id{{\operatorname{Id}}}
\renewcommand\sl{{\mathfrak{sl}}}
\renewcommand{\lg}{{\mathfrak g}}
\newcommand{\lh}{{\mathfrak h}}
\newcommand{\lb}{{\mathfrak b}}
\newcommand{\lu}{{\mathfrak u}}
\newcommand\Br{{\operatorname{Br}}}
\newcommand{\clu}{\mathcal{A}}
\newcommand{\uclu}{\overline{\mathcal{A}}}
\newcommand{\upp}{\mathcal{U}}
\newcommand{\val}{\mathcal{V}}
\newcommand{\cval}{\mathcal{C}\mathcal{V}}
\newcommand\supp{{\operatorname{supp}}}
\newcommand\coef{{\operatorname{Cf}}}
\newcommand{\lX}{{\mathfrak{X}}}
\newcommand{\lif}{\upharpoonleft \kern-0.35em}
\newcommand{\spc}{\kern0.2em}
\newcommand{\lifa}{\xlongleftarrow{\nu}}
\newcommand{\lifap}{\xlongleftarrow{\nu'}}
\newcommand{\ii}{\mathbf{i}}
\newcommand{\jj}{\mathbf{j}}
\newcommand{\sba}{\overline{s}_\alpha}
\newcommand{\sbb}{\overline{s}_\beta}
\newcommand{\phiam}{\varphi_{\varpi_\alpha}^-}
\newcommand{\vap}{v_{\varpi_\alpha}^+}
\newcommand{\pia}{\varpi_\alpha}
\newcommand{\pib}{\varpi_\beta}
\newcommand{\delavw}{\Delta^{\varpi_\alpha}_{v,w}}
\newcommand{\delbvw}{\Delta^{\varpi_\beta}_{v,w}}
\newcommand{\delaew}{\Delta^{\varpi_\alpha}_{e,w}}
\newcommand{\delbew}{\Delta^{\varpi_\beta}_{e,w}}
\newcommand{\delaee}{\Delta^{\varpi_\alpha}_{e,e}}
\newcommand{\delbee}{\Delta^{\varpi_\beta}_{e,e}}
\newcommand{\wkm}{w_{\leq k}^{-1}}
\newtheorem{prop}{Proposition}[subsection]
\newtheorem{theo}[prop]{Theorem}
\newtheorem{lemma}[prop]{Lemma}
\newtheorem{obj}[prop]{Objective}
\newtheorem{coro}[prop]{Corollary}
\newtheorem{question}[prop]{Question}
\theoremstyle{definition}
\newtheorem{definition}[prop]{Definition}
\newtheorem{remark}[prop]{Remark}
\newtheorem{example}[prop]{Example}
\newcommand\DynkinNodeSize{2mm}
\newcommand\DynkinArrowLength{3mm}
\tikzset{
  % a diagram node
  dnode/.style={
    circle,
    inner sep=0pt,
    minimum size=\DynkinNodeSize,
    fill=white,
    draw},
  middlearrow/.style={
    decoration={markings,
      mark=at position 0.6 with
      %{\arrow[black]{angle 90};}
      %{\arrow[black]{angle 60};}
      %{\arrow[black]{stealth};}
      {\draw (0:0mm) -- +(+135:\DynkinArrowLength); \draw (0:0mm) -- +(-135:\DynkinArrowLength);},
    },
    postaction={decorate}
  },
  leftrightarrow/.style={
    decoration={markings,
      mark=at position 0.999 with
      {
      \draw (0:0mm) -- +(+135:\DynkinArrowLength); \draw (0:0mm) -- +(-135:\DynkinArrowLength);
      },
      mark=at position 0.001 with
      {
      \draw (0:0mm) -- +(+45:\DynkinArrowLength); \draw (0:0mm) -- +(-45:\DynkinArrowLength);
      },
    },
    postaction={decorate}
  },
  % single edge
  sedge/.style={
  },
  % directed double edge
  dedge/.style={
    middlearrow,
    double distance=0.5mm,
  },
  % directed triple edge
  tedge/.style={
    middlearrow,
    double distance=1.0mm+\pgflinewidth,
    postaction={draw}, % third line
  },
  % double edge with two arrows, for \widetilde{A}_1 residues
  infedge/.style={
    leftrightarrow,
    double distance=0.5mm,
  }
}
\newcommand\revddots{\mathinner{\mkern1mu\raise\p@\vbox{\kern7\p@\hbox{.}}\mkern2mu\raise4\p@\hbox{.}\mkern2mu\raise7\p@\hbox{.}\mkern1mu}}
\def\revddots{\mathinner{\mkern1mu\raise\p@\vbox{\kern7\p@\hbox{.}}\mkern2mu\raise4\p@\hbox{.}\mkern2mu\raise7\p@\hbox{.}\mkern1mu}}
\title{The minimal monomial lifting of cluster algebras I: branching problems}
\author{Luca Francone \\
francone at math.univ-lyon1.fr}
\date{}
\begin{document}

\maketitle
\begin{abstract}

Let $\hG \subseteq G$ be complex reductive algebraic groups. The branching problem that aims to study $G$-modules as $\hG$-modules is encoded by a collection of branching multiplicities parameterised by pairs of dominant weights. The branching algebra $\Br(G,\hG)$ is a graded algebra whose dimension of homogeneous components are precisely the branching multiplicities. Here, we endow $\Br(G, \hG)$ with the structure of a graded upper cluster algebra, for some pair of groups. Our result holds if $\hG$ is a Levi subgroup of $G$ or in the tensor product case, that is when $\hG$ is the diagonal in $G= \hG \times \hG$, assuming that $G$ is semisimple and simply connected. This sharpens J.Fei's result who got the same statement for $\hG=T$ a maximal torus of $G$ and for $G \subseteq G \times G$, assuming $G$ simple, simply laced and simply connected. To prove our result we develop a new geometric and compbinatorial technique called minimal monomial lifting.

    Let $Y$ be a complex scheme with cluster structure, $T$ be a complex torus and $\lX$ be a suitable partial compactification of $T \times Y$. The minimal monomial lifting produces a canonically graded upper cluster algebra $\uclu$ inside $\Orb_\lX(\lX)$ which is, in a precise sense, the best candidate to give a cluster structure on $\lX$ compatible with the one on $Y$. We develop some geometric criteria to prove the equality between $\uclu$ and $\Orb_\lX(\lX)$, which doesn't always hold and has some remarkable consequences. This technique is very flexible and will be used elsewhere to endow other classical algebras with the structure of a graded upper cluster algebra.

\end{abstract}
\tableofcontents

    \section{Introduction }

This is the first step in a project which aims to study branching problems in representation theory, through the use of cluster algebras.

\subsubsection*{The branching problem} Let $\hG$ be a complex, connected, reductive algebraic subgroup of the connected reductive group $G$. The branching problem in representation theory asks to understand how, irreducible representations of $G$, decompose under the natural $\hG$-action.

Fix maximal tori $\hT\subseteq T$ and Borel subgroups $B\supseteq T$ and
$\hB\supseteq \hT$ of $G$ and $\hG$ respectively.
Let $X(T)$ denote the group of characters of $T$ and let $X(T)^+$
denote the set of dominant characters.
For $\lambda \in X(T)^+$, $V(\lambda)$ denotes the
irreducible representation of highest weight $\lambda$.
Similarly, we use the notation $X(\widehat T)$,  $X(\widehat T)^+$,  $V(\widehat\lambda)$
relatively to $\widehat G$. 
For any $\lambda \in X(T)^+$, there is a natural $\hG$-equivariant isomorphism:
% \begin{eqnarray}
%   \label{eq:2}
%   V_{\widehat\nu}=\sum_{\nu\in X(T)^+}c_{\nu\,\widehat\nu} V_\nu^*,
% \end{eqnarray}
$$
\begin{array}{rcl}
  \displaystyle \bigoplus_{\hlambda\in X(\hT)^+} \Hom(V(\hlambda),V(\lambda))^\hG\otimes
  V(\hlambda)&\longto&V(\lambda)\\
f\otimes v&\longmapsto&f(v).
\end{array}
$$
Hence the \textit{multiplicity} of $V(\hlambda)$ in $V(\lambda)$ is $c_{\lambda}^{\hlambda}= \dim \Hom(V(\hlambda), V(\lambda))^G$.
A result one would like to achieve, in solving the branching problem is the following.

\begin{obj}
\label{obj: branching}
For the pair $(G, \hG)$, construct a positive combinatorial model for the multiplicities.
    
\end{obj}

Examples of such models are given by the Littlewood-Ricardson rule, Gelfand-Tseltlin's patterns, Littelmann's paths \cite{littelmann1995paths}, Sundaram's dominos \cite{Sunda} and the celebrated Knutson-Tao's hive model \cite{KT:saturation}. The last one is the essential tool for the proof of the saturation conjecture by Buch \cite{buch2000saturation}, after \cite{KT:saturation}. 

Though for some specific pairs $(G, \hG)$ such a model exists, no general technique for constructing those models, uniformly for some families of pairs, is known. To the author best knowledge, the only remarkable exceptions for which a positive model for multiplicities can be constructed in families, are the following.

\begin{enumerate}
    \item The Levi case: $\hG$ is a Levi subgroup of $G$.
    \item The tensor product case: $\hG$ is diagonally embedded in $G=\hG \times \hG$, for which the branching problem amounts to decompose the tensor product of irreducible representations of $\hG$.
\end{enumerate}

 A breakthrough has been realised by Berenstein and Zelevisnky \cite{berenstein1999tensor}. They constructed, under the assumption that $G$ is semisimple and simply connected, (many) polyhedral models for multiplicities, for any branching problem belonging to the previous list. Their proof is actually quite involved. It uses deep results and objects, such as: the canonical basis and the Lusztig's parametrizations \cite{lusztig2010introduction}, its relation to total positivity and double Bruhat cells \cite{lusztig1994total} \cite{fomin1999double} and tropicalisation.
 We refer to \cite{zelevinsky2001littlewood} for a beautiful survey on the proof. One of the reasons why it is difficult to generalise Berenstein-Zelevinsky approach to other pairs $\hG \subseteq G$ is because the previous objects are very specific of the Levi and the tensor product case.
 One key ingredient to link all these objects are the detereminantal identientities \cite{fomin1999double}[Theorem 1.16,1.17], which also stand at the very base of all the known cluster algebra structures on the ring of varieties related to $G$. Among such varieties: double Bruhat cells \cite{berenstein2005cluster3} and certain $T$-stable subgroups of the unipotent radical of $B$ \cite{geiss2011kac}, \cite{goodearl2021integral}. %From this perspective, it's no surprise that cluster algebras have been defined by Fomin and Zelevinsky \cite{fomin2002clusterfoundation} to study the combinatorial properties of the dual canonical basis.

A more evident link between cluster algebras and branching problems has been recently realised by Magee \cite{magee2015fock} \cite{magee2020littlewood}, building on the work of Gross, Hacking, Keel, Kontsevich \cite{gross2018canonical} and Fock, Goncharov \cite{fock2006moduli} (see also \cite{gross2018canonical}[Corollary 0.20, 0.21]). Magee constructs (many) polyhedral models for the weight space decomposition (which corresponds to branching to a maximal torus) and for the tensor product decomposition of $\SL_n$, using cluster algebras. Moreover, one of Magee's models is unimodular to the Knutson-Tao's hive model. Fei obtained the same results \cite{fei2016tensor}[Theorem 8.1, 9.2] for any simple, simply laced and simply connected algebraic group. 

\bigskip

For a pair $\hG \subseteq G$, all  the information for the associated branching problem is contained in a $X(T) \times X(\hT)$ graded algebra $\Br(G,\hG)$ called \textit{branching algebra}. Indeed, we have natural isomorphisms 

$$
\begin{array}{ll}
\Br(G,\hG)_{\lambda,\hlambda} \simeq \Hom(V(\hlambda),V(\lambda))^\hG & \text{if} \, (\lambda, \hlambda) \in X(T)^+ \times X(\hT)^+\\
\Br(G,\hG)_{\lambda,\hlambda}  = 0 & \text{otherwise},
\end{array}$$
where, for $(\lambda, \hlambda) \in X(T) \times X(\hT)$, $\Br(G,\hG)_{\lambda, \hlambda}$ denotes the homogeneous component of degree $(\lambda,\hlambda)$ of $\Br(G,\hG)$.
A fundamental step for the results of Magee, Gross, Hacking Keel, Kontsevich and Fei previously discussed is the identification of $\Br(G,\hG)$, for the corresponding groups $\hG \subseteq G$, with a graded upper cluster algebra with non-invertible frozen variables. This allows the authors to use cluster theory to construct homogeneous bases of the branching algebra. Combinatorial analysis of such bases ultimately leads to the birth of combinatorial models for multiplicities.
The main result of this paper is the following theorem.

\begin{theo}
    \label{thm:main intro}
 In the following two cases:
    \begin{enumerate}
        \item $G$ is simple, simply connected and $\hG$  is a Levi subgroup of $G$.
        \item $\hG$ is semisimple, simply connected and diagonally embedded in $G= \hG \times \hG$.
    \end{enumerate}
    The branching algebra $\Br(G,\hG)$ is a graded upper cluster algebra, of geometric type, with non-invertible frozen variables.
\end{theo}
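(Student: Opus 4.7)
The plan is to apply, in each of the two cases, the minimal monomial lifting machinery announced in the abstract. The first step is to identify a suitable triple $(T, Y, \lX)$ such that the coordinate ring $\Orb_\lX(\lX)$ is naturally isomorphic to $\Br(G,\hG)$ as an $X(T) \times X(\hT)$-graded algebra, where $T$ is a torus that encodes the bigrading, $Y$ is a variety already carrying a known upper cluster algebra structure, and $\lX$ is a partial compactification of $T \times Y$ of the type handled by the minimal monomial construction. The grading in each case will be realised by a torus action on $\lX$ whose character group is $X(T) \times X(\hT)$.

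For the Levi case, with $\hG = L$ a Levi subgroup of $G$ inside a parabolic $P = L \ltimes U_P$, $\Br(G, L)$ is realised via $U$-invariants in $\CC[G/\hU \times B]$ after standard manipulations, and the candidate $Y$ will be taken as an open subvariety of $G$ (a reduced double Bruhat cell, or a unipotent cell of the Geiss-Leclerc-Schr\"oer type) whose coordinate ring is known to be a graded upper cluster algebra in a way compatible with the torus action. For the tensor product case, where $G = \hG \times \hG$ and the diagonal $\hG$ acts, $\Br(G,\hG)$ is recognised as $(\CC[\hG])^{\hU \times \hU^-}$ bigraded by $X(\hT) \times X(\hT)$; here $Y$ will be a double Bruhat cell of $\hG$ itself, again with an explicit cluster structure coming from the Berenstein-Fomin-Zelevinsky theory.

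Having fixed $(T, Y, \lX)$, the body of the paper attaches to this datum a canonical graded upper cluster algebra $\uclu \subseteq \Orb_\lX(\lX)$, which by construction restricts to the given cluster structure on $T \times Y$ and is homogeneous for the bigrading, with the boundary divisors of $\lX \setminus (T \times Y)$ producing non-invertible frozen variables. The theorem is proved the moment one knows that this inclusion is actually an equality, so that the entire branching algebra is recovered.

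The hard part is precisely this last step: verifying $\uclu = \Orb_\lX(\lX)$. The abstract stresses that the equality does not hold in general, so it has to be established by applying the geometric criteria developed in the paper (codimension control of $\lX \setminus (T \times Y)$, factoriality, and properness/regularity of the extending monomials). Concretely, this means checking that the exchange relations and frozen variables produced by the minimal monomial lifting extend to regular functions on all boundary divisors, and that every such regular function is captured. I expect this verification to split into a unified geometric reduction plus a case-by-case check in the Levi and tensor product settings, drawing on the detailed structure of the initial seed on $Y$ and on the explicit divisorial description of $\lX$.
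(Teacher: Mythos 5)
Your high-level plan — identify $Y$ with a variety carrying a known cluster structure, view $\Br(G,\hG)$ as $\Orb_\lX(\lX)$ for a suitable partial compactification of $T \times Y$, apply the minimal monomial lifting, and then prove the resulting upper cluster algebra exhausts $\Br(G,\hG)$ — is indeed the strategy of the paper, and the observation that equality is the hard part is correct. But there are two concrete gaps.

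First, the identification of $Y$ is off, especially in the tensor product case. In both cases the paper takes $Y = \Spec(\CC[U]^\hU)$, which by the isomorphism $U(z) \times \hU \simeq U$ gets identified with the unipotent cell $U(z) = U \cap z^{-1}U^- z$: for the Levi case $z = w_0 w_{0,I}$, and for the tensor product case $z = (w_{0,H},e)$ so that $Y \cong U_H$, the full maximal unipotent subgroup of $H = \hG$. The cluster structures on $U(z)$ used throughout are the Goodearl--Yakimov / Geiss--Leclerc--Schr\"oer unipotent-cell structures attached to a reduced expression $\ii \in R(z)$ (Section \ref{Cluster structure on U(w) section}), not Berenstein--Fomin--Zelevinsky double Bruhat cell structures; the latter have invertible frozen variables and would not compactify the right way. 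Moreover $U(z)$ is a closed subgroup of $U$, not an open subvariety of $G$. A further ingredient you do not mention, but which the paper needs, is that the $X(\hT)$-grading coming from the conjugation $\hT$-action on $U(z)$ is a cluster grading via a degree configuration $\sigma_\ii$ (Proposition \ref{prop:t i graded}); without this, Lemma \ref{lifting subgraded} does not apply and the grading part of the statement fails.

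Second, the core of the proof — establishing the equality $\uclu(\lif t_\ii^D) = \Br$ — is left essentially empty, and the mechanism you describe is not quite right. Saying one should ``check that the exchange relations and frozen variables extend to regular functions on all boundary divisors'' conflates the easy direction (the inclusion $\uclu(\lif t_\ii^D) \subseteq \Br$, which Theorem \ref{minimal monomial lifting theo} gives automatically via the starfish lemma) with the genuinely hard converse. The paper's criterion (Proposition \ref{conditions equality minimal lifting}) reduces the converse to showing $\cval_\alpha = \val_\alpha$ on $\Br$ for every $\alpha \in D$, and this is proved by constructing explicit charts $\iota_\alpha$ (and $j_\alpha$ in the tensor case) centered on the boundary divisors $\overline{B^- s_\alpha B}$ (Lemmas \ref{chart Levi case}, \ref{iota, j tensor product}), pulling back a hypothetical homogeneous $f$ with $\cval_\alpha(f) < 0$, expanding its leading coefficient as a polynomial in the restrictions $p_{k,0} = D^{\varpi_{i_k}}_{s_\alpha, z_{\leq k}^{-1}}$ (resp.\ $q_{k,0}$), and then using the algebraic-independence and factorisation lemmas for twisted generalised minors (Lemmas \ref{delta alpha min polinomio nei precedenti U(w)}, \ref{delta s w alg indip U(w)}, \ref{f alpha(-) = }, \ref{f alg indip}) together with an injectivity argument for a weight map $\pi$ to derive a contradiction. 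These minor identities and independence statements are the real technical content, and they rely crucially on the Fomin--Zelevinsky determinantal identity (Theorem \ref{relations minors FZ}) and on $\supp(z) = \Delta$ (Lemma \ref{support z Levi}, which is where simplicity of $G$ enters in the Levi case). A proposal that simply asserts ``codimension control, factoriality, and properness'' will verify the equality does not engage with any of this, and as Section \ref{application to G section} illustrates, equality genuinely can fail in closely analogous settings, so no soft argument of the kind you gesture at can suffice.
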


In the setting of Theorem \ref{thm:main intro}, our present work doesn't yield combinatorial models for multiplicities or homogeneous bases of the upper cluster algebra $\Br(G,\hG)$. Obviously, this can be interpreted as a weakness regarding Objective \ref{obj: branching} and  other authors' results previously mentioned.

On the contrary, we believe that this is due to great simplifications comparing to other proves and constructions, especially Fei's ones. Simultaneously, we reach a great generalisation of the hypothesis in which Theorem \ref{thm:main intro} holds, matching the same generality of the work of Berenstein and Zelevinsky on the $\ii$-trails models.
Moreover, Theorem \ref{thm:main intro} allows to rephrase Objective \ref{obj: branching} in the setting of cluster algebras, which is a much more suitable context regarding this objective.
Notably, the results of \cite{gross2018canonical} and \cite{qin2022bases}, give some explicit combinatorial criteria for constructing homogeneous bases of upper cluster algebras and corresponding parametrizations. These criteria should apply to the upper cluster algebras of Theorem \ref{thm:main intro}. The combinatorial models described in \cite{magee2020littlewood} arise precisely in this way. 
 
 Thus, Theorem \ref{thm:main intro} ultimately
sheds light on a precise path that could lead to the realisation of Objective \ref{obj: branching}. Simultaneously, we hope that the previously discussed Berenstein and Zelevinsky's result, which is currently maximum in generality, can be surpassed.
Indeed, the minimal monomial lifting, which is the technique we develop to identify the branching algebra with a graded upper cluster algebra is flexible and applies to several branching problems. Thus, we hope to generalise Theorem \ref{thm:main intro} to other pairs $\hG \subseteq G$.

\subsubsection*{Cluster algebras in brief}

Starting from a combinatorial data called seed, usually denoted by $t$, and an iterative procedure called mutation, one can define the \textit{cluster algebra} $\clu(t)$ and the \textit{upper cluster algebra} $\uclu(t)$. In general one has $\clu(t) \subseteq \uclu(t)$. The cluster algebra is more combinatorial in nature, while the upper cluster algebra is more geometric. % In particular, though $\uclu(t)$ may not be noetherian \cite{gross2013birational}, \cite{speyer2013infinitely}, it may be interpreted as the ring of global sections of a locally of finite type smooth scheme. 
Thanks to a huge effort due to many authors, there's nowadays a quite well developed structural theory of cluster algebras.

\bigskip  

To explain some features of this theory, we recall that part of the defining data of a seed $t$ is the vertex set $I$, which has a partition $I=I_{uf} \sqcup I_{f}$ into unfrozen (or mutable) and frozen vertices. Each seed also posses a distinguished set of cluster variables $x=(x_i)_{i \in I}$, indexed by the vertex set, which are elements of the cluster algebra $\clu(t).$ 
Defining the degree of the cluster variables of the seed $t$, under some constraints arising from the mutation process, gives a global graduation on both $\clu(t)$ and $\uclu(t)$ to which we refer as a \textit{cluster graduation}. In this case, the collection of the degrees (which have value in an abelian group) of the cluster variables of $t$ is called \textit{degree configuration}.
There's a well developed theory for constructing special bases, called \textit{good bases}, of $\uclu(t)$ \cite{dupont2011generic}, \cite{qintriangular}, \cite{gross2018canonical}, \cite{qin2022bases}. If we allow some frozen variables not to be invertible in $\uclu(t)$, some results are still conjectural. Nevertheless, under some technical assumption on the seed $t$, good bases exist and have many parametrizations by rational points into polyhedral cones which are related by invertible piece-wise linear maps.   

\bigskip

We say that a scheme $\lX$ has cluster structure if $\Orb_\lX(\lX)= \uclu(t)$ for a certain seed $t$.

\subsubsection*{Monomial liftings and minimal monomial lifting}

In this paper we introduce the notion of \textit{lifting configuration} on a seed $t$. Let $I$ be the vertex set of $t$ and $D$ be a finite set. By little abuse, a lifting configuration $\nu$, on $t$, is a matrix $\nu \in \ZZ^{D \times I}$. Given a lifting configuration $\nu$, on $t$, we introduce a seed  called \textit{monomial lifting} of $t$ associated to $\nu$, which is denoted by $\lif t$. The vertex set $\lif I$ of $\lif t$ consists of $I$ (the vertex set of $t$) and $D$, the latter being a subset of the frozen vertex set. %is obtained by adding $D$ frozen vertices to $I$. 
In particular, $t$ and $\lif t$ have the same set of mutable variables. Moreover, for every $d \in D$, the associated frozen cluster variable of $\lif t$ is denoted by $\lif x_d$. 

We prove that the mutation of a monomial lifting is again a monomial lifting. Formally, for any mutable vertex $k \in I_{uf}$, we introduce the notion of \textit{mutation of lifting configurations} at $k$, and prove the following properties.

\begin{prop}
\label{prop:mon lift intro}
Let $\nu \in \ZZ^{D \times I}$ be a lifting configuration on $t$. 
    \begin{enumerate}
        \item Lifting a seed commutes with mutation. 
        \item For every seed $t'$ mutation equivalent to $t$, through the mutation process, we can associate a well defined lifting configuration $\nu'$ on $t'.$
        \item $\clu(\lif t)=\clu(t)[\lif x_d ^{\pm 1}]_d$.
        \item If $t$ is of maximal rank,  $\uclu(\lif t)=\uclu(t)[\lif x_d ^{\pm 1}]_d$.
    \end{enumerate}
\end{prop}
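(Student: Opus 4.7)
The plan is to prove the four statements in order. The heart of the argument is (1); parts (2) and (3) follow almost formally, while (4) is the most delicate.

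First I would make precise the mutation rule for lifting configurations, extending the standard matrix mutation formula to the new rows: for $k \in I_{uf}$ and $d \in D$, set
\[
\mu_k(\nu)_{dk} = -\nu_{dk}, \qquad \mu_k(\nu)_{dj} = \nu_{dj} + \frac{|\nu_{dk}|\, B_{kj} + \nu_{dk}\, |B_{kj}|}{2} \quad (j \neq k),
\]
with $\mu_k(\nu)_{dj} = \nu_{dj}$ on any frozen column. By construction the exchange matrix of the lift of $\mu_k(t)$ along $\mu_k(\nu)$ coincides with the mutation at $k$ of the exchange matrix of $\lif t$. To finish (1) I would expand the exchange relation $\lif x_k \lif x_k' = P_+ + P_-$ in $\lif t$: separating the contributions of the $\lif x_d$ from the two monomials yields $\lif x_k' = x_k' \cdot M_k$, where $M_k$ is a Laurent monomial in the $\lif x_d$ whose exponents are read off from $\nu_{dk}$. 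Part (2) then follows by induction on the length of a mutation sequence.

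For (3), the previous computation shows that every cluster variable produced from $\lif t$ equals a cluster variable of $t$ times a Laurent monomial in the $\lif x_d$. Hence $\clu(\lif t) \subseteq \clu(t)[\lif x_d^{\pm 1}]_d$, and the reverse inclusion is immediate since each $x_i$ is recovered as $\lif x_i$ times the inverse of the corresponding monomial correction.

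For (4), let $\mathcal{F}$ denote the ambient field of $t$. Under the maximal rank hypothesis, the $\lif x_d$ may be taken algebraically independent over $\mathcal{F}$, so the ambient field of $\lif t$ is $\mathcal{F}(\lif x_d : d \in D)$. For every seed $t'$ mutation equivalent to $t$ and its lift $\lif{t'}$, the Laurent polynomial ring attached to $\lif{t'}$ decomposes, after the change of variables from (1), as $\LL(t')[\lif x_d^{\pm 1}]_d$. The identity then reduces to an interchange of intersection and adjunction
\[
\uclu(\lif t) = \bigcap_{t'} \LL(t')[\lif x_d^{\pm 1}]_d = \Bigl(\bigcap_{t'} \LL(t')\Bigr)[\lif x_d^{\pm 1}]_d = \uclu(t)[\lif x_d^{\pm 1}]_d,
\]
and the main obstacle is the middle equality: one must expand an element of the left-hand intersection as a Laurent polynomial in the $\lif x_d$ with coefficients in $\mathcal{F}$ and verify that every coefficient lies in each $\LL(t')$. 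The maximal rank hypothesis enters precisely here, ensuring that at every mutated seed the $\lif x_d$ remain free generators independent of the cluster variables of $t'$, so that the coefficient-wise argument lands inside $\uclu(t)$.
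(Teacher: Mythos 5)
Your mutation rule for lifting configurations is not the right one, and this breaks part (1), which is the heart of the proposition. You mutate $\nu$ as if its rows were extra frozen rows of the exchange matrix, giving in particular $\mu_k(\nu)_{d,k}=-\nu_{d,k}$. But in the monomial lifting the extra rows of $\lif B$ are $-\nu B$, not $\nu$, and the correction forced on the $k$-th column by the exchange relation is
$$\nu'_{\bullet,k}=\max\{\nu B^{+}_{\bullet,k},\,\nu B^{-}_{\bullet,k}\}-\nu_{\bullet,k},$$
which differs from $-\nu_{\bullet,k}$ whenever $\max\{\nu B^{+}_{\bullet,k},\nu B^{-}_{\bullet,k}\}\neq 0$. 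Concretely, in Example \ref{ex:monomial lifting} ($D=\{d\}$, $\nu=(1,2,3)$, $B_{\bullet,1}=(0,1,0)^{T}$) one computes $\lif x_1\lif x_1'=\lif x_2+\lif x_d^{2}=x_d^{2}(x_2+1)$, hence $\lif x_1'=x_d\,x_1'$, i.e.\ $\nu'_{d,1}=+1$, whereas your rule gives $-1$; the exchange matrices fail to match as well, since $-\mu_k(\nu)\,\mu_k(B)\neq \mu_k(\lif B)_{D,\bullet}$ in this example. Moreover, the two steps you dismiss as ``by construction'' are precisely the content of the proof: (i) one must check that the $\hx_D$-exponents of the two monomials $\lif x\strut^{\lif\spc B^{+}_{\bullet,k}}$ and $\lif x\strut^{\lif\spc B^{-}_{\bullet,k}}$ coincide — this is the only reason a single Laurent monomial factors out of $\lif x_k'$ at all, and it is what pins down $\nu'_{\bullet,k}$ as above, so the exponent is not ``read off from $\nu_{d,k}$'' alone but involves the whole column $B_{\bullet,k}$; and (ii) one must verify the identity $\mu_k(-\nu B)=-\nu'\,\mu_k(B)$ on the $D$-rows, an elementary but genuine manipulation with the signed parts of $B$.

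Two smaller points. For (2), induction on the length of a mutation sequence produces a lifting configuration for each path but does not show independence of the path, which is what ``well defined'' requires; this follows, for instance, because $\lif t'$ determines $\nu'$, or by recovering $\nu'$ from the canonical $\ZZ^{D}$-degree configuration $\lif 0$, whose mutation is seed-intrinsic. Parts (3) and (4) are in the right spirit and match the paper's route (once (1) is repaired): (4) is the interchange of $\CC[x_d^{\pm1}]\otimes(-)$ with the defining intersection. However, maximal rank is not about the $\lif x_d$ ``remaining free generators independent of the cluster variables of $t'$'' — they are algebraically independent of every cluster of $t$ by the very definition of a $D$-pointed field extension. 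It is used to replace the infinite intersection defining the upper cluster algebra by the finite intersection of the upper bound, with which tensoring by $\CC[x_d^{\pm1}]$ manifestly commutes.
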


Moreover, if the seed $t$ is graded by a degree configuration $\sigma$, we produce a degree configuration $\lif \sigma$, on $\lif t$, called the \textit{lifting} of the degree configuration $\sigma$. Since every seed $t$ is trivially graded, the cluster algebra $\uclu(\lif t)$ is canonically $\ZZ^D$-graded by the lifting of the trivial degree configuration on $t$, which is denoted by $\lif 0$.

\bigskip %By Proposition \ref{prop:mon lift intro}, if $Y$ is a scheme with cluster structure, under mild assumptions on $Y$, any lifting configuration on any seed of the cluster structure of $Y$ allows to produce a cluster structure of $(\CC^*)^D \times Y$. \\

Suppose from now on that $t$ is of maximal rank and that $\lif t$ is the monomial lifting with respect to a lifting configuration $\nu$. If we allow the frozen variables $\lif x_d$ for $d \in D$ to vanish, that is if we consider an upper cluster algebra with non-invertible frozen variables indexed by $D$, we get the algebra $\uclu(\lif t^D)$. This is the object we are most interested in. Suppose for a moment that $\uclu(\lif t^D)$ is of finite type, then $\uclu(t)$ also is. Let $$\lX= \Spec(\uclu(\lif t^D)) \quad \text{and} \quad Y= \Spec(\uclu(t)).$$
By Proposition \ref{prop:mon lift intro}, we have an open embedding $\phi_\nu : \GG_m^D \times Y \longto \lX$ whose image is the non-vanishing locus of the variables $\lif x_d$ for $d \in D$, which is canonically identified with $\Spec(\uclu(\lif t))$. It's easy to verify the following statement.

\begin{prop}
    \label{prop:ausiliary intro}
    The triple $(\lX, \,  \phi_\nu, \, \lif x= (\lif x_d)_{d \in D})$ satisfies the following properties: 

\begin{enumerate}
     \item The scheme $\lX$ is noetherian, normal and integral.
    \item For each $d \in D$, the zero locus $V(\lif x_d)$ of $\lif x_d$ in $\lX$ is irreducible. If $\cval_d: \CC(\lX) \longto \ZZ \cup \{\infty\}$ is the valuation associated to the divisor $V(\lif x_d)$, then $$\cval_{d_1}(\lif x_{d_2})= \delta_{d_1,d_2}.$$
    \item The map $\phi_\nu: \GG_m^D \times Y \longto \lX \setminus \cup_d V(\lif x_d)$ is an isomorphism such that $\phi_\nu^*(\lif x_d)= (x_d \otimes 1)$, where the collection of $x_d$ is a base of the character group $X\bigl( \GG_m^D \bigr)$ of the torus $ \GG_m^D$. Moreover, $Y$ is an irreducible $\CC$-scheme.
\end{enumerate}
\end{prop}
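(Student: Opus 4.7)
The approach is to derive everything from Proposition \ref{prop:mon lift intro}(4), which states that $\uclu(\lif t) = \uclu(t)\bigl[\lif x_d^{\pm 1}\bigr]_{d\in D}$. By construction of $\uclu(\lif t^D)$, localizing at the $D$-frozen variables recovers $\uclu(\lif t)$, and combining with the above identity yields
$$
\uclu(\lif t^D)\bigl[\lif x_d^{-1}\bigr]_{d\in D} \;=\; \uclu(t)\otimes_\CC\CC\bigl[\lif x_d^{\pm 1}\bigr]_{d\in D}.
$$
Applying $\Spec$ to this equation directly produces the open immersion $\phi_\nu$ of item (3), with image $\lX\setminus\bigcup_d V(\lif x_d)$ and pullback formula $\phi_\nu^*(\lif x_d)=x_d\otimes 1$ built into the construction.

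For item (1), together with the irreducibility of $Y$ in (3), I would invoke general features of upper cluster algebras. Noetherianness of $\lX$ is exactly the finite-type hypothesis assumed just above the statement. Both $\uclu(t)$ and $\uclu(\lif t^D)$ are, by definition, intersections of Laurent polynomial rings sitting inside a common field of fractions; hence they are integral domains (making $Y$ and $\lX$ irreducible), and they are normal (each such Laurent polynomial ring is a UFD, and an intersection of normal subrings of a field is normal).

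The heart of the matter is item (2). I would first establish the irreducibility of $V(\lif x_d)$, so that $\cval_d$ is well-defined, and then read off the identity $\cval_{d_1}(\lif x_{d_2})=\delta_{d_1,d_2}$. The off-diagonal case $d_1\neq d_2$ is immediate: localizing at the $\lif x_{d'}$ with $d'\neq d_1$ makes $\lif x_{d_2}$ a unit, hence regular and non-vanishing at the generic point of $V(\lif x_{d_1})$. For the diagonal case and for the irreducibility itself, I would work in the initial cluster chart $U_{\mathrm{init}}=\Spec(R_{\mathrm{init}})$ of $\lif t^D$, where $R_{\mathrm{init}}=\CC[\lif x_d:d\in D]\otimes_\CC\CC[\lif x_i^{\pm 1}:i\in\lif I\setminus D]$; in this chart $\lif x_d$ is visibly a prime element cutting out an irreducible divisor of multiplicity one. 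The main obstacle is then globalising this local statement to all of $V(\lif x_d)\subset\lX$: I would argue that any irreducible component of $V(\lif x_d)$ disjoint from $U_{\mathrm{init}}$ would have to be contained in some $V(\lif x_i)$ for $i\in\lif I\setminus D$, and rule out this possibility using the explicit structure of the initial seed, so that $V(\lif x_d)\cap U_{\mathrm{init}}$ is open and dense in $V(\lif x_d)$ and its irreducibility propagates to the whole divisor.
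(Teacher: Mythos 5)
Items (1) and (3) of your plan are fine and coincide with what the paper actually does: item (3) is Theorem \ref{monomial lifting AxT} together with statement 7 of Lemma \ref{properties freezing} (localising $\uclu(\lif t^D)$ at $\prod_d \lif x_d$ gives $\uclu(\lif t)=\uclu(t)[\lif x_d^{\pm 1}]_d$, using that $t$ has maximal rank), and item (1) follows from the finite-type hypothesis plus the fact that $\uclu$ is by definition an intersection of Laurent polynomial rings inside a common fraction field.

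The gap is in item (2), precisely at the step you flag as ``the main obstacle.'' Your plan is to check that $\lif x_d$ is prime in the initial chart $\Spec\bigl(\Li(\lif t^D)\bigr)$ and then rule out components of $V(\lif x_d)$ contained in $V(\lif x_k)$ for $k$ unfrozen ``using the explicit structure of the initial seed.'' This cannot be done from the initial seed alone: the initial Laurent ring carries no information at the codimension-one points where an unfrozen variable vanishes, and for a general subring squeezed between $\CC[t^*]$ and $\Li(t^*)$ such components do occur. What is really needed is that $(\lif x_d)$ is a prime ideal of $\uclu(\lif t^D)$ itself, which is Corollary \ref{highly frozen prime} in the paper. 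Its proof takes $ab\in(\lif x_d)$, reduces to $a/\lif x_d\in\Li(t^*)$ in one chart, and then invokes Lemma \ref{cluster valuation} --- the mutation-invariance of the valuation $\val_d^{t^*}$, proved via the exchange relation and the fact that $M_k^++M_k^-$ is never divisible by $\lif x_d$ --- to get $a/\lif x_d\in\Li(t')$ for \emph{every} seed $t'$, hence $a/\lif x_d\in\uclu(\lif t^D)$. In other words, the argument must use all the cluster charts, not just the initial one; this is also what justifies the diagonal computation $\cval_d(\lif x_d)=1$ and the identification of $\cval_d$ with the divisorial valuation of $V(\lif x_d)$ (second half of Corollary \ref{highly frozen prime}, via $\CC[t^*]_{(\lif x_d)}=\uclu(\lif t^D)_{(\lif x_d)}=\Li(t^*)_{(\lif x_d)}$). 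A minor additional imprecision: your $R_{\mathrm{init}}$ inverts the highly frozen variables of the original seed $t$, which $\Li(\lif t^D)$ does not, so your ``initial chart'' is smaller than the genuine one and the globalisation problem is only made harder.
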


For $d \in D$, the valuation $\cval_d$ is called \textit{cluster valuation}. We can identify $\ZZ^D$ with $X\bigl( \GG_m^D \bigr)$ by means of the base $(x_d)_{d \in D}$. Then, the graduation on $\uclu(\lif t^D)$ induced by the degree configuration $\lif 0$ on $\lif t$, corresponds to an action of $\GG_m^D$ on $\lX$ with respect to which $\phi_\nu$ is equivariant. The geometry of $\lX$ allows to trace the lifting configuration $\nu$, indeed: 
$$\cval_d(1 \otimes x_i)= - \nu_{d,i} \quad \text{for any} \, d \in D, i \in I$$
where, with little abuse $1 \otimes x_i$ denotes $\phi_{\nu, *}(1 \otimes x_i) \in \CC(\lX)$. Also, $\nu_{\bullet,i}=(\nu_{d,i})_{d \in D}$ is the degree of the cluster variable $\lif x_i$ with respect to the $\GG_m^D$ action on $\lX.$

\bigskip

Based on the previous discussion, we consider a triple $(\lX, \phi, X)$ where $\lX$ is a non-necessarily affine complex scheme, $\phi: \GG_m^D \times Y \longto \lX$ is an open embedding (here $Y$ is an irreducible complex scheme) and $X=(X_d)_{d \in D} \in \Orb_\lX(\lX)^D$ is a collection of regular functions. 
Consider analogues of the three statements of Proposition \ref{prop:ausiliary intro}, relative to the triple $(\lX, \phi, X)$. For $d \in D$, we denote by $\val_d$ the valuation corresponding to the divisor $V(X_d)$. 
Up to some details, we say that the triple $(\lX, \phi, X)$ is \textit{suitable for lifting} is these three statements hold.
For such a triple, we identify $\GG_m^D \times Y$ with an open subset of $\lX$ via $\phi$. We have that, a non-zero function $f \in \Orb_Y(Y)$, can be "homogenised" to a global regular function on $\lX$ by multiplying for a Laurent monomial in the $X_d$ whose exponent is given by the integers $-\val_d(1 \otimes f).$

If $\Orb_Y(Y)= \uclu(t)$, the matrix $\nu \in \ZZ^{D \times I}$ defined by $\nu_{d,i}=-\cval_d(1 \otimes x_i)$ is called the \textit{minimal lifting matrix} of the seed $t$ with respect to $(\lX,\phi, X).$ The monomial lifting $\lif t^D$ (with non-invertible frozen variables indexed by $D$) of $t$ with respect to $\nu$ is the \textit{minimal monomial lifting} of $t$ with respect to $(\lX, \phi, X)$. The seed $\lif t^D$ is the best possible candidate to give $\lX$ a cluster structure compatible with the one on $Y$, in the sense of Theorem \ref{unicity minimal monomial lifting}. In particular we have the following theorem.

\begin{theo}
    We have a natural inclusion $\uclu(\lif t^D) \subseteq \Orb_\lX(\lX)$. This inclusion is an isomorphism over $\GG_m^D \times Y$, meaning that $  \uclu(\lif t^D)_{\prod_d \lif \spc x_d}= \uclu(\lif t)= \Orb_\lX( \GG_m^D \times Y).$
    Moreover, if a seed $\widetilde t$ is compatible with $t$ in the sense of Theorem \ref{unicity minimal monomial lifting}, and $\uclu(\widetilde t)=\Orb_\lX(\lX)$, then $\widetilde t= \lif t^D$.
\end{theo}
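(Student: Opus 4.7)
The plan is to prove the inclusion first, deduce the localisation identity as a corollary of its construction, and finally handle uniqueness using the compatibility hypothesis from Theorem \ref{unicity minimal monomial lifting}.

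To build the inclusion $\uclu(\lif t^D)\subseteq \Orb_\lX(\lX)$, I would proceed in two moves. First, localising at $\prod_d \lif x_d$ and invoking Proposition \ref{prop:mon lift intro}(4), one obtains $\uclu(\lif t^D)[\lif x_d^{-1}]_{d\in D}=\uclu(\lif t)=\uclu(t)[\lif x_d^{\pm 1}]_{d\in D}$. Since $(\lX,\phi,\lif x)$ is suitable for lifting, the open embedding $\phi$ identifies $\GG_m^D\times Y$ with $\lX\setminus\bigcup_d V(\lif x_d)$, and $\phi^*$ identifies $\Orb_\lX(\GG_m^D\times Y)$ with $\Orb_Y(Y)[x_d^{\pm 1}]_{d\in D}=\uclu(\lif t)$. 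Composing yields an injection $\iota\colon\uclu(\lif t^D)\hookrightarrow\Orb_\lX(\GG_m^D\times Y)$. The second move, which is the technical core, is to show that the image of $\iota$ extends regularly to all of $\lX$. By noetherianity and normality of $\lX$, this amounts to checking $\val_d(\iota(f))\geq 0$ for every $f\in\uclu(\lif t^D)$ and every $d\in D$, where $\val_d$ is the valuation along $V(\lif x_d)$. For this I would invoke the Laurent phenomenon for the seed $\lif t^D$ itself: $f$ is polynomial in the non-invertible frozens $\lif x_d$ and Laurent in the other cluster variables of $\lif t^D$. Using the formulas defining the monomial lifting together with the defining identity $\nu_{d,i}=-\cval_d(1\otimes x_i)$, one checks that every cluster variable of $\lif t^D$ other than the $\lif x_d$ themselves has vanishing $\val_d$-valuation, i.e.\ is a unit along $V(\lif x_d)$. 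Combined with $\val_d(\lif x_d)=1$, this forces $\val_d(\iota(f))\geq 0$.

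The localisation identity $\uclu(\lif t^D)_{\prod_d \lif x_d}=\uclu(\lif t)=\Orb_\lX(\GG_m^D\times Y)$ then falls out of the construction of $\iota$, since localising $\iota$ at $\prod_d \lif x_d$ recovers by design the isomorphism $\phi^*$ from the first move.

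For uniqueness, suppose $\widetilde t$ is compatible with $t$ in the sense of Theorem \ref{unicity minimal monomial lifting} and $\uclu(\widetilde t)=\Orb_\lX(\lX)$. I expect compatibility to pin down the mutable exchange data of $\widetilde t$ to agree with that of $t$ after localisation, and to identify the non-invertible frozen variables of $\widetilde t$ indexed by $D$ with the functions $\lif x_d$. The only remaining freedom is then the lifting matrix governing the mutable cluster variables. But computing $\cval_d$ of each such cluster variable and using the hypothesis $\uclu(\widetilde t)=\Orb_\lX(\lX)$, regularity on $\lX$ combined with the correct localisation at $\prod_d \lif x_d$ forces precisely $-\cval_d(1\otimes x_i)=\nu_{d,i}$, which is the defining identity of the minimal lifting matrix. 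Hence $\widetilde t=\lif t^D$. The hardest step will be the regularity check of the first paragraph: it requires combining the Laurent phenomenon with the minimality of $\nu$ to control valuations along every boundary divisor $V(\lif x_d)$ simultaneously, and verifying that the non-frozen cluster variables of the monomial lifting are genuinely units along these divisors.
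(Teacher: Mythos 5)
Your proof of the inclusion and the localization identity takes a route genuinely different from the paper's. In the proof of Theorem \ref{minimal monomial lifting theo}, the paper applies the starfish lemma: it verifies regularity of the cluster variables of $\lif t^D$ and of their one-step mutations, establishes pairwise coprimality, and runs through all codimension-one points of $\lX$ in a single pass. You instead split the codimension-one points into two classes. Those lying in $T \times Y$ are handled by the chain $\uclu(\lif t^D)\subseteq \uclu(\lif t)=\Orb_\lX(T\times Y)$ (the first part of the theorem, which uses Theorem \ref{monomial lifting AxT} and the maximal-rank hypothesis, plus the sheaf-theoretic verification that $\Orb_\lX(T\times Y)=\CC[T]\otimes\Orb_Y(Y)$, a step worth spelling out when $Y$ is not affine). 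The generic points of the boundary divisors $V(X_d)$ are handled by observing that $\uclu(\lif t^D)\subseteq\Li(\lif t^D)$, that $\val_d(\lif x_i)=0$ for $i\in I$ and $\val_{d}(\lif x_{d'})=\delta_{d,d'}$, and hence every Laurent monomial occurring in $f$ has nonnegative $\val_d$. This is a lighter argument: it bypasses the starfish lemma and, in fact, does not use the coprimality hypotheses of Theorem \ref{minimal monomial lifting theo} for this step. One caution: the defining identity of the minimal lifting matrix is $\nu_{d,i}=-\val_d(1\otimes x_i)$ with $\val_d$ the divisorial valuation on $\lX$; the valuation $\cval_d$ is the cluster valuation of Lemma \ref{cluster valuation}, a priori different, and they agree on $\Orb_\lX(\lX)$ precisely when the inclusion is an equality (Proposition \ref{conditions equality minimal lifting}). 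You cite the identity with $\cval_d$ in place of $\val_d$; your computation $\val_d(\lif x_i)=0$ needs the $\val_d$-version.

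The uniqueness part has a genuine gap. Appealing to ``regularity on $\lX$'' only yields $\val_d(\widetilde x_i)\geq 0$, i.e.\ $\mu_{d,i}\geq\nu_{d,i}$, and the content of the uniqueness statement is the reverse inequality. The paper's proof of Theorem \ref{unicity minimal monomial lifting} establishes $\val_d(\widetilde x_i)=0$ by invoking Theorem \ref{cluster variables irred}: each $\widetilde x_i$ is either a unit of $\Orb_\lX(\lX)$ (when $i$ is semi-frozen, so $\val_d(\widetilde x_i)=0$ immediately) or irreducible and generating an ideal distinct from $(X_d)$. In the irreducible case, if one had $\val_d(\widetilde x_i)>0$, then by the hypothesis $\val_{d'}(X_d)=\delta_{d,d'}$ and Hartogs' lemma the quotient $\widetilde x_i/X_d$ would be regular, exhibiting $\widetilde x_i=X_d\cdot(\widetilde x_i/X_d)$ as a proper factorization and contradicting irreducibility. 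Your proposal never touches irreducibility of cluster variables, so it cannot rule out $\mu_{d,i}>\nu_{d,i}$; as written it does not force $\widetilde t=\lif t^D$.
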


The inclusion between $\uclu(\lif t^D)$ and $\Orb_\lX(\lX)$ may be strict. Ultimately, the difference between $\uclu(\lif t^D)$ and $\Orb_\lX(\lX)$, depends on the behaviour of  $\lif t^D$ along the divisors of $\lX$ in the complement of $\GG_m^D \times Y$. Thus, the question if $\uclu(\lif t^D)$ equals $\Orb_\lX(\lX)$ can be studied geometrically. For example, we have the following proposition.

\begin{prop}
    \label{prop:equality intro} The equality
      $\uclu(\lif t^D)= \Orb_\lX(\lX)$ holds
    if and only if, for any $d \in D$, $\cval_d = \val_d$ over $\Orb_\lX(\lX).$
\end{prop}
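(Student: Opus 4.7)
Set $A := \uclu(\lif t^D)$ and $B := \Orb_\lX(\lX)$, so that the given inclusion is $A \subseteq B$ inside the common function field $K$, with $X_d$ corresponding to $\lif x_d$ by construction of the minimal monomial lifting. My plan is to describe $A$ and $B$ intrinsically as intersections of discrete valuation rings in $K$, which reduces the whole statement to a comparison of the ``boundary valuations'' $\val_d$ and $\cval_d$. Since $\lX$ is noetherian, normal and integral, normality yields $B = \bigcap_{\mathfrak{p}} \Orb_{\lX,\mathfrak{p}}$ with $\mathfrak{p}$ ranging over height-one primes; these primes split into those contained in the open $\phi(\GG_m^D \times Y)$, which contribute exactly $\uclu(\lif t) = \Orb_\lX(\GG_m^D \times Y)$, and the boundary divisors $V(X_d)$ for $d \in D$. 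An identical argument on $\Spec A$, which is also noetherian, normal and integral by the analog of Proposition \ref{prop:ausiliary intro}, produces the parallel description for $A$, so that
\[ B \;=\; \uclu(\lif t) \,\cap\, \bigcap_{d \in D} \{f \in K : \val_d(f) \geq 0\}, \qquad A \;=\; \uclu(\lif t) \,\cap\, \bigcap_{d \in D} \{f \in K : \cval_d(f) \geq 0\}. \]

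The implication $(\Leftarrow)$ is then immediate: if $\cval_d = \val_d$ on $B$ for every $d$, any $f \in B$ automatically satisfies $\cval_d(f) = \val_d(f) \geq 0$ and already lies in $\uclu(\lif t)$, so $f \in A$; combined with $A \subseteq B$ this gives equality.

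For $(\Rightarrow)$, I would assume $A = B$, fix $f \in B$ and $d \in D$, and set $n := \val_d(f) \geq 0$. Consider $g := f \cdot X_d^{-n}$. The suitable-for-lifting identity $\val_{d'}(X_d) = \delta_{d,d'}$ gives $\val_d(g) = 0$ and $\val_{d'}(g) = \val_{d'}(f) \geq 0$ for $d' \neq d$; since $X_d$ is invertible on $\GG_m^D \times Y$, $g$ is also regular on the open part, hence at every inner prime divisor of $\lX$. By the intersection formula, $g \in B = A$, whence $\cval_d(g) \geq 0$, and using $\cval_d(X_d) = 1$ this yields $\cval_d(f) \geq \val_d(f)$. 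A fully symmetric argument with $h := f \cdot X_d^{-\cval_d(f)}$, using instead $\cval_{d'}(X_d) = \delta_{d,d'}$ and $A \subseteq B$, supplies the reverse inequality $\val_d(f) \geq \cval_d(f)$.

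The only real obstacle is ensuring that the two intersection formulas above are legitimate, i.e. that the bijection ``prime divisors $\leftrightarrow$ discrete valuations'' applies equally to $\lX$ and to $\Spec A$; once this is granted the entire proof collapses to the symmetric use of the monomial identities $\val_{d'}(X_d) = \cval_{d'}(X_d) = \delta_{d,d'}$ together with the triviality of $X_d$ on $\GG_m^D \times Y$.
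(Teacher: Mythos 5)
Your reduction of both sides to intersection formulas follows the same skeleton as the paper's proof of Proposition \ref{conditions equality minimal lifting}: the description of $\Orb_\lX(\lX)$ as $\uclu(\lif t)\cap\bigcap_{d}\val_d\inv(\ZZ_{\geq 0}\cup\{\infty\})$ is Hartogs' lemma on $\lX$ combined with part 1 of Theorem \ref{minimal monomial lifting theo}, and the description of $\uclu(\lif t^D)$ as $\uclu(\lif t)\cap\bigcap_{d}\cval_d\inv(\ZZ_{\geq 0}\cup\{\infty\})$ is statement 8 of Lemma \ref{properties freezing}. Granting these, both of your implications are correct. Your $(\Rightarrow)$ is genuinely a different (and slightly leaner) route than the paper's: the paper first establishes the \emph{unconditional} inequality $\val_d\geq\cval_d$ on all of $\Orb_\lX(\lX)$ (Lemma \ref{inequality valuations min lifting}, via the Laurent expansion $f=P/M$ in a cluster) and then needs only the single twist $f\mapsto fX_d^{-\val_d(f)}$; you instead run the twisting argument symmetrically in both rings, invoking the hypothesis $\uclu(\lif t^D)=\Orb_\lX(\lX)$ twice, which dispenses with Lemma \ref{inequality valuations min lifting} altogether. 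What the paper's unconditional lemma buys is the finer chain of equivalences in Proposition \ref{conditions equality minimal lifting} (in particular that $\cval_d\geq\val_d$ alone already forces equality of valuations); your version suffices for the statement as posed.

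The one place your justification would actually fail is the ``identical argument on $\Spec\bigl(\uclu(\lif t^D)\bigr)$''. Hartogs' lemma needs a locally noetherian normal scheme, and you would additionally have to classify its codimension-one points as the inner ones together with the loci $V(\lif x_d)$; but in the generality of Theorem \ref{minimal monomial lifting theo} the algebra $\uclu(\lif t^D)$ is not known to be noetherian or of finite type (the paper explicitly warns that upper cluster algebras can fail to be noetherian), so this geometric route is not available. The repair is that no geometry is needed: for every seed $t'\sim\lif t$ one has $\Li\bigl((t')^D\bigr)=\Li(t')\cap\bigcap_{d}(\val_d^{t'})\inv(\ZZ_{\geq 0}\cup\{\infty\})$, and since the valuations $\val_d^{t'}=\cval_d$ are independent of the seed by Lemma \ref{cluster valuation}, intersecting over all seeds yields exactly the formula for $\uclu(\lif t^D)$ you need; this is statement 8 of Lemma \ref{properties freezing}. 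With that substitution your proof closes.
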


If there exist an action of $\GG_m^D$ on $\lX$ which makes the map $\phi$ $\GG_m^D$-equivariant, we say that the triple $(\lX, \phi, X)$ is homogeneously suitable for lifting. In that case, the inclusion between $\uclu(\lif t^D)$ and $\Orb_\lX(\lX)$ is of graded algebras, where $\uclu(\lif t^D)$ is graded by the trivial degree configuration $\lif 0$ and $\Orb_\lX(\lX)$ has a natural grading induced by the $\GG_m^D$-action. In this situation, the minimal monomial lifting is the best possible candidate to give $\lX$ a cluster structure simultaneously compatible with the one on $Y$ and with the $\GG_m^D$-action. 

\begin{theo}
    \label{thm:unicity torus intro}
    Let $\widetilde t$ be a graded seed compatible with $t$ in the sense of Theorem \ref{thm:unicity min mon lifting torus}. If $\uclu(\widetilde t)= \Orb_\lX(\lX)$ as graded algebras, then $\widetilde t= \lif t$ as graded seeds.
\end{theo}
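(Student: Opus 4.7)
The natural strategy is to decouple the statement into two independent claims: an equality of underlying seeds (forgetting the grading) and an equality of degree configurations. Both steps should follow quite mechanically once the notion of graded compatibility has been unpacked.

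First, since graded compatibility in the sense of Theorem \ref{thm:unicity min mon lifting torus} is a priori stronger than the ungraded compatibility of Theorem \ref{unicity minimal monomial lifting}, the seed $\widetilde t$ is automatically compatible with $t$ in the weaker sense. Applying the ungraded uniqueness statement directly yields $\widetilde t = \lif t^D$ as ungraded seeds; in particular the cluster variables of $\widetilde t$ and $\lif t^D$ coincide inside $\Orb_\lX(\lX)$, and the underlying exchange data agree.

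To upgrade this to an equality of graded seeds, it remains to check that the degree configuration carried by $\widetilde t$ coincides with the lifted trivial configuration $\lif 0$. The hypothesis $\uclu(\widetilde t) = \Orb_\lX(\lX)$ \emph{as graded algebras} amounts to saying that the degree configuration of $\widetilde t$ assigns to each cluster variable its weight for the $\GG_m^D$-action on $\lX$, since the latter induces the target grading on $\Orb_\lX(\lX)$. For $d \in D$, property (2) of Proposition \ref{prop:ausiliary intro} combined with the $\GG_m^D$-equivariance of $\phi$ forces $\lif x_d$ to be a weight vector of weight $e_d$, matching the value prescribed by $\lif 0$. For $i \in I$, the $\GG_m^D$-equivariance of $\phi$ together with the identity $\cval_d(1 \otimes x_i) = -\nu_{d,i}$ shows that $\lif x_i$ is homogeneous of weight $\nu_{\bullet,i}$, which is precisely its degree under $\lif 0$ by the very construction of the lifted degree configuration.

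The main obstacle should be isolating, inside the definition of graded compatibility, the precise condition that forces each cluster variable of $\widetilde t$ to be $\GG_m^D$-homogeneous in $\Orb_\lX(\lX)$, so that the above weight computation is unambiguous and does reproduce the degree configuration of $\widetilde t$. Once this point is in place, the matching of the frozen and mutable degrees is essentially a bookkeeping exercise, because a degree configuration on a seed is determined by its values on the cluster variables, and these are in bijection with the lifted vertex set $\lif I = I \sqcup D$.
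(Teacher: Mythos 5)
Your overall route is the same as the paper's: reduce to the ungraded uniqueness statement (Theorem \ref{unicity minimal monomial lifting}) and then match the degree configurations. But the justification of your first step contains the one real gap. You assert that graded compatibility is ``a priori stronger'' than the compatibility of Theorem \ref{unicity minimal monomial lifting}, so that the ungraded theorem applies ``directly''. This is not true a priori: the hypotheses of Theorem \ref{thm:unicity min mon lifting torus} say that $\widetilde x_i$ is $X(T)$-homogeneous and restricts to $x_i$ under the map $s$, whereas Theorem \ref{unicity minimal monomial lifting} requires the much more rigid condition $\widetilde x_i = x_i\,\hx_D^{\mu_{\bullet,i}}$ for some integer matrix $\mu$ (and similarly for the once-mutated variables). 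Homogeneity of degree $\lambda$ plus $s(\widetilde x_i)=x_i$ only forces $\widetilde x_i = X^{\lambda}(1\otimes x_i)$ if one knows that the $X(T)$-graduation on $\Orb_\lX(\lX)$ is a \emph{lifting graduation} in the sense of Definition \ref{lifting graduation defi}; this is exactly what the paper supplies (via Lemma \ref{lifting subgraded} and Proposition \ref{lifting graduation torus action}) before invoking Corollary \ref{homogeneous elements lifting graduation} to translate the graded hypotheses into the monomial-form hypotheses. Without this translation, a homogeneous element with the prescribed restriction could in principle differ from $X^\lambda(1\otimes x_i)$ by something in the kernel of $s$ on the $\lambda$-component.

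The ingredients you need are in fact present in your second paragraph (equivariance of $\phi$ and the identity $\cval_d(1\otimes x_i)=-\nu_{d,i}$), but they are deployed only to compare degree configurations, whereas they are needed already to legitimize the reduction to Theorem \ref{unicity minimal monomial lifting}. Once that point is repaired — i.e., once you record that the $T$-action makes $\Orb_\lX(\lX)$ into a lifting graduation so that every homogeneous $r$ of degree $\lambda$ satisfies $r=X^{\lambda}(1\otimes s(r))$ — both halves of your argument go through, and the comparison of $\widetilde\sigma$ with $\lif 0$ is the bookkeeping you describe (it is the computation carried out in the proof of Lemma \ref{lifting subgraded}).
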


\subsubsection*{Monomial lifting for the branching scheme}

Given a pair $\hG \subseteq G$ of reductive groups, we consider the \textit{branching scheme} $\lX(G,\hG)= \Spec \bigl( \Br(G,\hG) \bigr)$ with its $T \times \hT$ action induced by the previously discussed graduation. We prove that, whenever $G$ is semisimple and simply connected, the scheme $\lX(G, \hG)$ has a natural structure of homogeneously suitable for lifting scheme. Moreover, the torus $\GG_m^D$ can be naturally identified with $T$. 

Let $U$ and $\hU$ be the unipotent radicals of $B$ and $\hB$ respectively. We denote by $C[U]^\hU$ the algebra of $\hU$ right-invariant functions on $U$. The scheme $Y$ of the suitable for lifting structure of $\lX(G,\hG)$ is precisely $Y= \Spec \bigl( \CC[U]^\hU \bigr)$. Now, the interesting question is whether $Y$ has a meaningful cluster structure. However, this is not clear in general. 

Nevertheless, it is often true that $Y$ can be naturally identified with $U(w)= U \cap w\inv U^- w$, for some $w$ in the Weyl group of $G$. The varieties $U(w)$ are known to have cluster structure by \cite{geiss2011kac} and \cite{goodearl2021integral}. For these cluster structures, we prove that the $X(T)$-graduation induced by the conjugation action of $T$  on $U(w)$ is a cluster graduation. Then, applying Proposition \ref{prop:equality intro}, we deduce the following more precise version Theorem \ref{thm:main intro}.

\begin{theo}
    \label{thm:intro branching} In the following two cases:
    \begin{enumerate}
        \item $G$ is simple, simply connected and $\hG$  is a Levi subgroup of $G$.
        \item $\hG$ is semisimple, simply connected and diagonally embedded in $G= \hG \times \hG$.
    \end{enumerate}
    %that $G$ is semisimple, simply connected and $\hG$ is a maximal torus, or a Levi subgroup, of $G$. Otherwise, assume that $\hG$ is semisimple, simply connected and diagonally embedded in $G= \hG \times \hG$. 
    If \,$\lX=\lX(G, \hG)$, there exists a seed $t$ constructed in \cite{goodearl2021integral}, \cite{geiss2011kac}, which is $X(\hT)$-graded by a degree configuration $\sigma$, such that $\uclu(\lif t^D)= \Orb_\lX(\lX)$ as $X(T) \times X(\hT)$ graded algebras. Here,  $\uclu(\lif t^D)$ is graded by the degree configuration $\lif \sigma$.
\end{theo}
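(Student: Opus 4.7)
The plan is to apply the machinery of minimal monomial lifting developed in the previous sections. By the (already established) construction of a homogeneously suitable-for-lifting structure on $\lX(G, \hG)$, with torus $\GG_m^D \simeq T$ and complementary scheme $Y = \Spec(\CC[U]^{\hU})$, the task reduces to (i) exhibiting an $X(\hT)$-graded cluster structure on $Y$ and (ii) verifying the valuation equality of Proposition \ref{prop:equality intro}. Together these yield $\uclu(\lif t^D) = \Orb_\lX(\lX)$ as graded algebras, with the $X(T)$-grading furnished by $\lif 0$ and the $X(\hT)$-grading furnished by $\lif \sigma$.

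For (i), I would first identify $\CC[U]^{\hU}$ with the coordinate ring of an appropriate $U(w) = U \cap w^{-1} U^- w$. In the Levi case, when $\hG$ is the Levi factor of a standard parabolic $P$, the algebra $\CC[U]^{\hU}$ identifies with $\CC[U_P]$ where $U_P$ is the unipotent radical of $P$, and $U_P$ coincides with $U(w_0 w_0^{\hG})$ for $w_0^{\hG}$ the longest element of the Weyl group of $\hG$. In the tensor product case $\hG \hookrightarrow \hG \times \hG$, the multiplication map realises $\CC[U \times U]^{\hU_{\mathrm{diag}}}$ as $\CC[\hU] = \CC[U(w_0^{\hG})]$. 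In both situations $U(w)$ carries a cluster structure by \cite{geiss2011kac} and \cite{goodearl2021integral}, which supplies the seed $t$; the $\hT$-action by conjugation scales the cluster variables by $\hT$-weights and produces the degree configuration $\sigma$, as recalled in the introduction. Forming the monomial lifting along the lifting configuration $\nu_{d, i} = -\cval_d(1 \otimes x_i)$ read off from the suitable-for-lifting data then yields $\lif t^D$ together with the lifted configuration $\lif \sigma$, and the general theory (Proposition \ref{prop:mon lift intro}) automatically produces the inclusion $\uclu(\lif t^D) \subseteq \Orb_\lX(\lX)$ of $X(T) \times X(\hT)$-graded algebras.

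The main obstacle is the reverse inclusion, which by Proposition \ref{prop:equality intro} reduces to proving $\cval_d = \val_d$ on $\Orb_\lX(\lX)$ for every $d \in D$. My strategy would be to establish first the non-negativity $\cval_d(f) \geq 0$ for all $f \in \Orb_\lX(\lX)$: using the $T \times \hT$-action this can be tested on bihomogeneous elements, where it reduces to a statement about the orders of the highest-weight generators $X_d$ and of the cluster variables of $t$ after homogenisation, and this is where the explicit description of the seeds in \cite{geiss2011kac}, \cite{goodearl2021integral} (together with the categorification by preprojective-algebra modules) plays the decisive role. The matching normalisation $\cval_{d_1}(X_{d_2}) = \delta_{d_1, d_2}$ is then essentially tautological once the $X_d$ are identified with the frozen variables $\lif x_d$ of the minimal lifting, thanks to Proposition \ref{prop:ausiliary intro}. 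Combining these two ingredients, Proposition \ref{prop:equality intro} yields $\uclu(\lif t^D) = \Orb_\lX(\lX)$; the matching of the gradings is automatic, since $\phi$ is $T$-equivariant and $\lif \sigma$ extends $\sigma$ by the very construction of the lifting.
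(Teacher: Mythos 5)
Your overall framing — reducing the theorem to (i) identifying $\CC[U]^{\hU}$ with a $U(w)$ carrying the Goodearl--Yakimov/GLS cluster structure, (ii) reading off the minimal lifting matrix $\nu$ from the cluster valuations, (iii) invoking Proposition~\ref{conditions equality minimal lifting} (your Proposition~\ref{prop:equality intro}) to reduce the reverse inclusion to $\cval_d = \val_d$ on $\Orb_\lX(\lX)$, and (iv) matching gradings via the lifted degree configuration — is faithful to the paper (Sections~\ref{monomial lift branchig section}--\ref{study of equality branching section}, in particular Lemma~\ref{T x Y open embedding branching}, Corollary~\ref{X phi suitable lifting}, and Theorem~\ref{application minimal mon lifting branching}). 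The soft steps are correct.

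The problem is that your step (iii), which is the entire content of the theorem, is not actually proved. You write that the non-negativity $\cval_d(f)\ge 0$ for $f\in\Orb_\lX(\lX)$ ``reduces to a statement about the orders of the highest-weight generators $X_d$ and the cluster variables after homogenisation'' and that this is where ``the categorification by preprojective-algebra modules plays the decisive role.'' Neither half of this survives scrutiny. First, the reduction is not to a statement about single cluster variables: one must rule out \emph{every} homogeneous $f\in\Orb_\lX(\lX)$ with $\cval_d(f)<0$, and such $f$ is an arbitrary Laurent polynomial in a cluster, not a cluster monomial. The paper handles this by contradiction: writing $f=P/\lif x_\alpha$ with $P$ a polynomial in $\lif x$ not divisible by $\lif x_\alpha$, passing to an explicit chart $\iota_\alpha$ (resp.\ $j_\alpha$) transverse to the boundary divisor $V(X_\alpha)$ (Lemmas~\ref{chart Levi case}, \ref{iota, j tensor product}), extracting the leading Taylor coefficient $f_0$ along $\{t=0\}$, and then showing $f_0=0$ forces all coefficients of $P$ to vanish. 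The decisive inputs for that last step are the \emph{algebraic independence of the twisted minors} $\Delta^{\varpi_{i_j}}_{s_\alpha, z^{-1}_{\le j}}$ restricted to $Y$ (Lemmas~\ref{delta s w alg indip U(w)}, \ref{delta s w alg indip U(w) alpha}, \ref{f alg indip}), the FZ determinantal identity expressing $p_{\alpha^{\min},0}$ as a monomial in earlier $p_{j,0}$ (Lemmas~\ref{delta alpha min polinomio nei precedenti U(w)}, \ref{p q min polinomio nei precedenti tensor product}), and a weight-counting argument making a certain integer matrix $\pi$ upper unitriangular. None of this is categorification, and the categorification route you gesture at would at best be available in the simply-laced case (via \cite{geiss2011kac}), whereas the theorem is claimed — and proved — uniformly, including $G_2$, by working directly with generalised minors.

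Second, a small but telling conflation: you say $\cval_{d_1}(X_{d_2})=\delta_{d_1,d_2}$ is tautological ``thanks to Proposition~\ref{prop:ausiliary intro}.'' The tautological identity is $\cval_{d_1}(\lif x_{d_2})=\delta_{d_1,d_2}$ inside the abstract cluster algebra; what needs work is $\val_{d_1}(X_{d_2})=\delta_{d_1,d_2}$ for the geometric valuations on $\lX$ (established via irreducibility of principal minors, Lemma~\ref{lem:minors irred}), and then, separately, the genuinely nontrivial fact that $\val_d$ and $\cval_d$ agree on all of $\Orb_\lX(\lX)$. Treating these as automatic erases the theorem.

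In short: the scaffolding is right, but the load-bearing argument — the analysis of regular functions along the boundary divisors via explicit charts, the technical lemmas on generalised minors (their vanishing, their expansions under left/right translation by root subgroups, and the algebraic independence of their twists), and the weight-triangularity argument — is entirely absent, and the one concrete tool you name (preprojective-algebra categorification) is not the one used and would not cover the stated generality.
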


Actually, in Theorem \ref{thm:intro branching} the graduation we consider on $\Orb_\lX(\lX)$ is a twist of the previously discussed one, but we prefer to leave this merely technical, and not relevant point out of this introduction.

 \subsubsection*{Some further remarks}

 \begin{enumerate}
\item\textbf{Schemes vs affine varieties.} In this paper we apply the minimal monomial lifting technique to schemes which are actually affine varieties. We prefer to carry out the discussion on the minimal monomial lifting in terms of schemes in view of future applications. We also think that the language of schemes is more adapted to tackle a geometric study of cluster algebras and its applications for many reasons.
For example, for a given seed $t$, the algebra $\uclu(t)$ may not be noetherian \cite{gross2013birational}, \cite{speyer2013infinitely}. Still, in the language of Fock and Goncharov \cite{fock2009cluster}, it may be interpreted as the ring of regular functions on a (generally non-affine) locally of finite type smooth variety which is called $\mathcal{A}$-cluster variety. The $\mathcal{A}$-cluster variety often has a big open subset which is of finite type. Of course, the $\mathcal{A}$-cluster variety is a more useful geometric model for $\uclu(t)$, than $\Spec \bigl (\uclu(t) \bigr)$, if $\uclu(t)$ is not of finite type. Moreover, in the present paper we only discuss $\mathcal{A}$-cluster algebras. Nevertheless, it is natural to carry out similar constructions for $\mathcal{X}$-cluster algebras. Then, one has to deal with the fact that the $\mathcal{X}$-cluster variety may not be separated. \cite{gross2013birational}.

\bigskip

\item\textbf{Pole filtrations and biperfect bases.}
Let $\phi: \GG_m^D \times Y \longto \lX$ be the open embedding of an homogeneously suitable for lifting structure on $\lX$ (that is a tripe $(\lX, \phi, X)$ which is homogeneously suitable for lifting). 
Then, we have a restriction map $s: \Orb_\lX(\lX) \longto \Orb_Y(Y)$ defined as the pullback along the map $Y \longto \lX$ sending $y$ to $\phi(e,y)$. The map $s$ injects the spaces of semi-invariants functions on $\lX$ in $\Orb_Y(Y)$, giving birth to a $X\bigl(\GG_m^D \bigr)$-filtration on $\Orb_Y(Y)$ that we call \textit{pole filtration} and which reflects, geometrically, the properties of the natural $X\bigl(\GG_m^D \bigr)$-graduation on $\Orb_\lX(\lX)$.

This situation generalises the classical embedding of the irreducible $G$-representations into the ring of functions on $U$, for a semisimple simply connected algebraic group $G$ with a maximal unipotent subgroup $U$. See Section \ref{sec:base aff space} for more details.

The fact that, for a pair $\hG \subseteq G$ of reductive groups with $G$ semisimple and simply connected, the branching scheme $\lX(G,\hG)$ has an homogeneously suitable for lifting structure allows to deduce the following proposition.

\begin{prop}
\label{prop:perfect base intro}
    For any $(\lambda, \hlambda) \in X(T)^+ \times X(\hT)^+$, we have an isomorphism $$\Hom(V(\lambda), V(\hlambda))^\hG \longto \CC[U]^\hU_{\lambda, \hlambda -\rho(\lambda)}$$
where
\begin{equation*}
\begin{array}{r l l }
\CC[U]^\hU_{\lambda , \hlambda - \rho(\lambda)} = \{ f \in \CC[U]^\hU \, : &  f(\wh h^\inv u \wh h)= (\hlambda-\lambda)( \wh h) f(u) &  \text{for} \quad \wh h \in \hT, u \in U \ , \, \text{and} \\
& \val_\alpha (1 \otimes f) \geq -\langle \lambda, \alpha^\vee \rangle & \text{for} \quad  \alpha \in D\}.
\end{array}
\end{equation*}
\end{prop}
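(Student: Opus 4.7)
The strategy is to realize $\Hom(V(\lambda), V(\hlambda))^\hG$ as the $(\lambda, \hlambda)$-isotypic component of $\Orb_\lX(\lX)$ under the natural $T \times \hT$-action on the branching scheme $\lX = \lX(G, \hG)$, and then to transport this space into $\CC[U]^\hU$ via the section map coming from the homogeneously suitable for lifting structure on $\lX$.

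First, by definition of the branching algebra and of its $X(T) \times X(\hT)$-grading, the component $\Br(G,\hG)_{\lambda,\hlambda} = \Orb_\lX(\lX)_{\lambda,\hlambda}$ is canonically identified with $\Hom(V(\lambda), V(\hlambda))^\hG$. Second, I would invoke the homogeneously suitable for lifting structure on $\lX(G,\hG)$ (set up in the section on the monomial lifting for the branching scheme and used to prove Theorem \ref{thm:intro branching}): the open embedding $\phi: T \times Y \hookrightarrow \lX$ with $Y = \Spec(\CC[U]^\hU)$, whose complement is $\bigcup_{\alpha \in D} V(X_\alpha)$. The cluster coordinates $X_\alpha$ are normalized so that $\cval_\alpha(X_\beta) = \delta_{\alpha,\beta}$, and the identification $T \cong \GG_m^D$ is chosen so that a $T$-eigenfunction of weight $\lambda$ pulls back to $X^\lambda \otimes g$ on $T \times Y$ with $\cval_\alpha(X^\lambda) = \langle \lambda, \alpha^\vee \rangle$ and $g \in \CC[U]^\hU$.

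For $f \in \Orb_\lX(\lX)_{\lambda,\hlambda}$, $T$-homogeneity determines $s(f) := g$ uniquely, so $s$ is injective on this weight space. The $\hT$-weight of $s(f)$ under the conjugation action on $U$ is obtained by subtracting the $\hT$-weight $\rho(\lambda) := \lambda|_{\hT}$ of $X^\lambda$ from the total $\hT$-weight $\hlambda$ of $f$, yielding $\hlambda - \rho(\lambda)$. The valuation bound $\val_\alpha(1 \otimes s(f)) \geq -\langle \lambda, \alpha^\vee \rangle$ is then equivalent to the regularity of $f$ across $V(X_\alpha)$, via the identity $\cval_\alpha = \val_\alpha$ on $\Orb_\lX(\lX)$ (cf.\ Proposition \ref{prop:equality intro} together with Theorem \ref{thm:intro branching}) and $\cval_\alpha(X^\lambda) = \langle \lambda, \alpha^\vee \rangle$. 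Conversely, for any $g \in \CC[U]^\hU_{\lambda, \hlambda - \rho(\lambda)}$, the function $X^\lambda \otimes g$ is regular on $T \times Y$ with non-negative valuations along each boundary divisor, so it extends by normality of $\lX$ to a regular function of $T \times \hT$-weight $(\lambda, \hlambda)$, giving surjectivity.

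The main obstacle is the bookkeeping of normalizations: identifying $D$ with a set indexing simple roots of $G$, specifying the pulled-back $T$- and $\hT$-actions on $T \times Y$ (in particular that $\hT \hookrightarrow T$ acts on the $T$-factor by translation while the intrinsic $\hT$-action on $Y$ is conjugation on $U$), and matching the cluster valuations $\val_\alpha$ with the coroot pairing $\langle \lambda, \alpha^\vee \rangle$. Once these conventions are consistently pinned down, the statement is a direct application of the pole-filtration description of $\Orb_\lX(\lX)$ established earlier in the paper.
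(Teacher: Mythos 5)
Your overall strategy --- identifying the multiplicity space with a graded component of $\Br(G,\hG)$ and transporting it into $\CC[U]^\hU$ via the section map coming from the homogeneously suitable for lifting structure --- is the right one, and is the one the paper follows (cf.\ the proof of Proposition \ref{prop:geom realisation coeff general}, via the twist $\Br^*$ of the $T\times\hT$-action, Corollary \ref{* suitable lifting}, and Definition \ref{lifting graduation defi}). Your weight bookkeeping giving $\hlambda-\rho(\lambda)$ is also correct and matches the paper's identity $\Br_{\lambda,\hlambda}=\Br^*_{\lambda,\hlambda-\rho(\lambda)}$.

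However, there is a genuine error in the middle step. You justify the valuation bound $\val_\alpha(1\otimes s(f))\geq -\langle\lambda,\alpha^\vee\rangle$ by invoking the identity $\cval_\alpha=\val_\alpha$ on $\Orb_\lX(\lX)$, citing Proposition \ref{prop:equality intro} together with Theorem \ref{thm:intro branching}. This is both unnecessary and too strong. The valuations $\val_\alpha$ appearing in the statement are the purely geometric valuations along the irreducible divisors $V(X_\alpha)\subseteq\lX$; they are part of the data of a (homogeneously) suitable for lifting scheme, not cluster valuations. One has $\val_{\alpha_1}(X_{\alpha_2})=\delta_{\alpha_1,\alpha_2}$ by Definition \ref{def: suitable lifting}(2), and since $\lambda=\sum_\alpha\langle\lambda,\alpha^\vee\rangle\varpi_\alpha$ and $\phi^*(X_\alpha)=\varpi_\alpha\otimes 1$, the formula $\val_\alpha(X^\lambda)=\langle\lambda,\alpha^\vee\rangle$ is immediate. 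Together with $T$-homogeneity ($f=X^\lambda(1\otimes s(f))$) and normality of $\lX$ (Hartogs'), this gives the equivalence between regularity of $f$ along $V(X_\alpha)$ and the asserted valuation bound --- no cluster structure enters. Your route is worse than overkill: Theorem \ref{thm:intro branching} only holds for the Levi and tensor-product pairs, whereas Proposition \ref{prop:perfect base intro} is stated and proved for \emph{every} connected reductive $\hG\subseteq G$ with $G$ semisimple simply connected; moreover $\cval_\alpha$ is not even defined unless one has first chosen a cluster structure on $Y=\Spec(\CC[U]^\hU)$, which the proposition does not assume. The correct dependency is the reverse: Proposition \ref{prop:perfect base intro} sits upstream of the cluster-theoretic results, relying only on the geometric structure established in Section \ref{monomial lift branchig section} and on the abstract notions of pole filtration and ($X(\hT)$-graded) lifting graduation from Section \ref{pole filtration section}.
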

 If $G= \hG \times \hG$, we can identify $\CC[\hU \times \hU]^\hU$ with $\CC[\hU]$, and a base of $\CC[\hU]$ which is adapted to the subspaces appearing in the previous proposition is known as \textit{biperfect basis}. Indeed, Proposition \ref{prop:perfect base intro} can be interpreted as a generalisation of \cite{zelevinsky}[Proposition 1.4], which is the starting point for the interest in biperfect bases. These kind of bases include the classical limit of the dual canonical basis and have arguably been one of the most used items to study tensor product decomposition. 
See \cite{kamnitzer2022perfect} for a beautiful survey on biperfect bases.

 \item\textbf{Minimal monomial lifting and \cite{fei2016tensor}.} In Section \ref{sec: prod, tensor prod} we prove that Fei's cluster structures \cite{fei2016tensor} are obtained through minimal monomial lifting. It's probable that Fei's structures are special cases of the ones constructed in the present text, but I'm not capable of carrying out this comparison in detail. Finally, Example \ref{ex:G_2 tensor prod} and Theorem \ref{thm:intro branching} partially answer to \cite{fei2016tensor}[Conjecture 8.2] in the $G_2$-case. 

\item\textbf{Other branching problems.} There are several pairs $\hG \subseteq G$ of reductive groups, with $G$ semisimple and simply connected, for which we can identify a $X(\hT)$-graded cluster structure on $\Spec\bigl( \CC[U]^\hU \bigr)$. However, it is not clear to the author how to identify these pairs. From this point of view, Question \ref{ques: U=U(z) x hU?} is of interest. When we identify a cluster structure on $\Spec\bigl( \CC[U]^\hU \bigr)$, it may happen that the upper cluster cluster algebra of the associated minimal monomial lifting is strictly contained in $\Br(G,\hG)$. This is the case for $\Sp_{2n} \subseteq \SL_{2n}$. Nevertheless, the existence of a graded upper cluster algebra inside $\Br(G,\hG)$ already has some interesting applications. Some of them are briefly discussed in Section \ref{sec:on question }.
  \end{enumerate}
\subsubsection*{Outline of the paper}

Section \ref{preliminaries} contains the preliminaries on cluster algebras. We formalise two notions already existing in the literature: the difference between \textit{highly-frozen} and  \textit{semi-frozen} vertices and define the cluster valuation $\cval_i$ associated to a frozen vertex $i$. The cluster variable associated to a highly-frozen vertex is not invertible, while the one associated to a semi-frozen one is. Then, we recall the notion of graded seed and degree configuration.

In Section \ref{monomial lifting section} we define and study monomial liftings. Proposition \ref{prop:ausiliary intro} is proved. In Section \ref{pole filtration section} we define \textit{pole filtrations} and \textit{lifting graduations.} This section is devoted to understand, geometrically, the canonical $\ZZ^D$-graduation on $\uclu(\lif t)$ and the kind of graduations we hope to understand using the minimal monomial lifting technique.

 Section \ref{minimal monomial lifting section} is essential for the paper. We define suitable and homogeneously suitable for lifting schemes and develop the minimal monomial lifting. We prove some unicity results on the minimal monomial lifting, namely Theorem \ref{unicity minimal monomial lifting}, \ref{thm:unicity min mon lifting torus} and discuss geometric criteria to study whether $\uclu(\lif t^D)$ equals $\Orb_\lX(\lX)$. 

Section \ref{Preliminaries alg groups section} is devoted to some preliminaries on algebraic groups. We study in detail some technical properties of generalised minors, that are crucial for the proof of Theorem \ref{thm:intro branching}. The reader can skip this part at first.
In Section \ref{application to G section}, we consider an example of minimal monomial lifting where the inclusion $\uclu(\lif t^D) \subseteq \Orb_\lX(\lX)$ is strict. We report this toy example since we consider it instructive.

In Section \ref{monomial lift branchig section}, we discuss the general setting for applying the minimal monomial lifting to branching problems and derive Proposition \ref{prop:perfect base intro}.

Finally, in Section \ref{study of equality branching section} we prove Theorem \ref{thm:intro branching} and study some relations with \cite{fei2016tensor}.

\bigskip

\textbf{Acknowledgements}.
I would like to thank Bernard Leclerc for the invitation to Caen and the fruitful discussions and insights on cluster algebras, and Nicolas Ressayre, for introducing me to the branching problem, for the support and the many discussions that made this work possible. 

    \section{Preliminaries on cluster algebras}
    \label{preliminaries}
\subsection{Cluster algebras}
\label{cluster algebras}

We give a definition of cluster algebras and upper cluster algebras of geometric type in the spirit of \cite{geiss2013factorial}. The only differences with the standard setting are encoded in the notions of \textit{semi-frozen} (invertible) and \textit{highly-frozen} (not invertible) variables. The type of frozen variables affects the definition of the coefficient ring of the cluster algebra. This terminology provides a natural framework for studying functions over some partial compactifications of cluster varieties. Note that our cluster algebras are very special cases of the ones defined in \cite{bucher2019upper} and of \textit{generalized cluster algebras} \cite{gekhtman2018drinfeld}.

   \begin{definition} Let $\KK$ be a field extension of $\CC$. A \textit{seed} $t$ of $\KK$ is a collection $$(I_{uf}, I_{sf}, I_{hf}, B, x ) $$ consisting of:

   \begin{itemize}
       \item[--] Three disjoint finite sets $I_{uf}, I_{sf}, I_{hf}$. An element of $I:= I_{uf} \sqcup I_{sf} \sqcup I_{hf}$ (resp. $I_f:= I_{hf} \sqcup I_{sf }$) is called \textit{vertex} (resp. \textit{frozen} vertex).  A vertex is respectively called \textit{unfrozen} (or \textit{mutable}), \textit{semi-frozen}, \textit{highly frozen} if it belongs to $I_{uf}, I_{sf}, I_{hf}.$ 
       \item[--] The \textit{extended exchange matrix} $B \in \ZZ^{I \times I_{uf}}$, which is an integer matrix, such that it's \textit{principal part} $B^\circ :=B_{| I_{uf} \times I_{uf}}$ is \textit{skew-symmetrizable}. That is: there exists $d_i \in \NN_{>0} $, for $i \in I_{uf}$, such that for any $i,j \in I_{uf}$, $d_ib_{i,j} = - d_jb_{j,i}.$
       \item[--] An \textit{extended cluster} $x \in (\KK^*)^I$,  consisting of a transcendence basis of $\KK$ over $\CC$, whose elements are called \textit{cluster variables}. A variable is said to be \textit{unfrozen} (or \textit{mutable}) (resp. \textit{semi-frozen}, resp. \textit{highly-frozen}, resp. \textit{frozen}) if its corresponding vertex is unfrozen (resp. semi-frozen, resp. highly frozen, resp. frozen).
   \end{itemize}

   \end{definition}

If a seed is denoted by $t$, we implicitly assume that its defining data are denoted as in the previous definition. If the seed is called $t^\bullet$ (resp. $t_\bullet$), where $\bullet$ is any superscript (resp. subscript), then we add a $\bullet$ superscript (resp. subscript) to all the notation. For example, we write $t'=(I'_{uf}, I'_{sf}, I'_{hf}, B', x')$ or $t_\ii=(I_{\ii, uf} , \dots , x_\ii)$. If needed, we add the dependence on $t$ writing $B(t)$, $x(t)$ and so on. 

\bigskip
\textbf{Graphical notation for seeds.} Given a seed $t$, the only data required to identify the isomorphism class of the associated cluster and upper cluster algebra is the vertex set and the extended exchange matrix. This is encoded graphically in a \textit{valued quiver} $Q$ (or $Q(t)$ if the dependence on $t$ is needed) as follows.

\begin{itemize}
    \item [-] The vertex set of $Q$ is $I$. For $i \in I$, the corresponding vertex of $Q$ is pictured by a symbol $\bigcirc$ (resp. $\square$, resp. $\blacksquare$) if it is unfrozen (resp. semi-frozen, resp. highly frozen) and labelled by $i$.
\end{itemize}
Given two vertices $i$ and $j$ of $Q$, we use the convention that $b_{i,j}=0$ if $i,j$ are both frozen and that $b_{i,j}=-b_{j,i}$ if exactly one between $i$ and $j$ is frozen. Note that, in the last case, exactly one between $b_{i,j}$ and $b_{j,i}$ is defined as a coefficient of the generalised exchange matrix, according to which vertex is unfrozen. Then
\begin{itemize}
    \item[-]  There is an arrow between $i$ and $j$, pointing towards $j$, if and only if $b_{i,j} > 0$. In this case, the arrow is labelled by: "$b_{i,j}, - b_{j,i}$". Moreover, if $b_{i,j} = - b_{j,i}$, for low values of $b_{i,j}$, we may write $b_{i,j}$ distinct arrows from $i$ to $j$ instead of a labelled arrow.
\end{itemize}
\begin{example}
    \label{ex label seed}
    Let $t=( \{1,2\}, \{3\}, \{4\}, B, x)$ be a seed whose generalised exchange matrix is $$ B= \bmat 0 & 3 \\
    -1 & 0 \\
    0 & -2 \\
    0 & 1
    \emat
    $$
    where the rows and columns of $B$ are labelled in the obvious way. The following two pictures are both a graphical representation of the seed $t$

    \[\begin{tikzcd}
	&&& {\square 3} &&&&& {\square 3} \\
	{\bigcirc 1} && {\bigcirc 2} &&& {\bigcirc 1} && {\bigcirc 2} \\
	&&& {\blacksquare 4} &&&&& {\blacksquare 4}
	\arrow["{2,2}"{description}, from=2-3, to=1-4]
	\arrow["{3,1}"{description}, from=2-1, to=2-3]
	\arrow["{3,1}"{description}, from=2-6, to=2-8]
	\arrow[shift left, from=2-8, to=1-9]
	\arrow[shift right, from=2-8, to=1-9]
	\arrow["{1,1}"{description}, from=3-4, to=2-3]
	\arrow[from=3-9, to=2-8]
\end{tikzcd}\]
    
\end{example}
\textbf{General notation.} If $b \in \RR$, then $$ b^+:= \max\{b,0\} \quad b^-:= \max\{-b,0\} \quad \text{so that} \quad b=b^+-b^-.$$
If $J$, $K$ are finite sets and $M \in \RR^{J \times K}$, then $M^\pm \in \RR_{\geq 0}^{J \times K}$ is the matrix obtained applying component-wise to $M$ the corresponding operation. If $k \in K$, the $k$\textit{-th column} of $M$ is  $M_{\bullet,k}=M_{|J \times \{k\}} \in \RR^J.$ 
Similarly,  if $M \in \RR^{J \times K}$ and $J_1 \subseteq J$, $K_1 \subseteq K$, then $M_{J_1 \times K_1}= M_{| J_1 \times K_1}$ and $M_{\bullet,K_1}= M_{J \times K_1}.$\\
If $S$ is any set and we consider  $S$-valued matrices of size $J \times K $, that is elements of $S^{J \times K}$, then we use analogue notations for restrictions.\\
Let $H$ be an abelian group and $h \in H^J$. If $m \in \ZZ^J$ and $M \in \ZZ^{J \times K}$, then $$h^m:= \sum m_j h_j \quad \text{while} \quad h^M =(h^{M_\bullet,k})_{k \in K} \in (H)^K .$$
If $H= \KK^*$, then $h^m$ is a multiplicative monomial in the $h_j$.
Note that, if  $I$ is a third finite set and $N \in \ZZ^{K \times I}$ then $$h^{M N}=(h^M)^N$$ 
where the product $ \ZZ^{J \times K} \times \ZZ^{K \times I} \longto \ZZ^{J \times I }$ corresponds to composition of morphisms in the canonical identification between $\ZZ^{J \times K}$ and $\Hom_\ZZ(\ZZ^K , \ZZ^J).$
The elements of the canonical basis of $\ZZ^K$ are denoted by $e_k$, for $k \in K$. Finally, if $n \in \NN$ we note $$[n]= \{1, \dots , n\}.$$

\bigskip

Given a seed $t$ and a mutable vertex $k \in I_{uf}$, we have an operation called \textit{seed mutation at k}, denoted by $\mu_k$. This operation produces a new seed $\mu_k(t)=t'$, of $\KK$, defined as follows.
\begin{itemize}
    \item[--] The vertex sets of $t'$ are the same of $t$, that is $I_{uf}'=I_{uf}, I_{sf}'=I_{sf}$ and $I_{hf}'=I_{hf}$.
    \item[--] The extended exchange matrix $B' $ satisfies:

   \begin{equation}
       \label{matrix mutation}
       b'_{i,j}=\begin{aligned}
           \begin{cases} 
           -b_{i,j} & \text{if} \quad i=k \quad \text{or} \quad j=k\\
           b_{i,j}+b_{i,k}^+b_{k,j}^+ - b_{i,k}^-b_{k,j}^- & \text{otherwise}.
           \end{cases}
       \end{aligned}
      \end{equation}
   \item[--] The extended cluster $x'$ is defined by
   \begin{equation}
       \label{cluster mutation}
      x_i'= \begin{aligned}
           \begin{cases} 
           x_i & \text{if} \quad i\neq k \\
           x \strut^{B_{\bullet,k}^+ -e_k} + x \strut^{B_{\bullet,k}^- - e_k} & \text{if} \quad i = k.
           \end{cases}
       \end{aligned}
      \end{equation}
\end{itemize}
The identity 

\begin{equation}
    \label{exchange relation}
    x_kx_k'= x \strut^{B_{\bullet,k}^+} + x \strut^{B_{\bullet,k}^-}
\end{equation}
is called \textit{exchange relation}.
We often denote the two monomials on the right hand side of the exchange relation by $M^+_k$ and $M_k^-$, in the obvious way.

Given a seed $t'$ of $\KK$, we say that $t'$ is \textit{(mutation-)equivalent} to $t$, and write $t' \sim t$, if there exists $i_1, \dots , i_l \in I_{uf}$ such that $$ \mu_{i_l} \circ \dots \circ \mu_{i_1}(t)=t'.$$
We call $\Delta$ the set of seeds of $\KK$ equivalent to $t$. 

\bigskip

From now on, we fix  an equivalence class of seeds $\Delta$ of $\KK$. Since any seed in $\Delta$ has the same set of frozen variables, say $x_i$ for $i \in I_f$, we can define the \textit{coefficient ring} 
\begin{equation}
 \label{coefficient ring}   
\coef[\Delta]:= \CC [x_i]_{i \in I_{hf}} [x_i^{\pm1}]_{i \in I_{sf}}.
\end{equation}

\begin{definition}
    \label{cluster algebra}
    The \textit{cluster algebra} $\clu(\Delta)$ is the $\coef[\Delta]$-algebra generated by all the cluster variables of all the seeds in $\Delta$. 
\end{definition}

If $t \in \Delta$, we define the ring of \textit{Laurent Polynomials}

\begin{equation}
    \label{Laurent Poly}
    \Li(t):= \coef[\Delta][x_i^{\pm1}]_{i \in I_{uf}}
\end{equation}
Note that the unfrozen and semi-frozen variables of $t$ are invertible in $\Li(t)$, while the highly frozen are not.

\begin{definition}
The \textit{upper cluster algebra} $\uclu(\Delta)$ is  $$ \uclu(\Delta):= \bigcap_{t \in \Delta} \Li(t).$$
\end{definition}
Both $\clu(\Delta)$ and $\uclu(\Delta)$ are domains, moreover $\uclu(\Delta)$ is normal since it is defined as the intersection of normal rings. Still, both the cluster algebra and the upper cluster algebra are not noetherian in general. Note that, the choice of the ambient field $\KK$ (as the choice of the variables $x$) is immaterial for the isomorphism class of the  cluster algebra and the upper cluster algebra. These choices become important when we try to identify cluster algebra structures on a given ring. So, when we only deal with (upper) cluster algebras, we often omit to specify the ambient field $\KK$. The following theorem can be found in \cite{gross2018canonical}[Corollary 0.4] or in \cite{lee2015positivity} for skew-symmetric seeds.

\begin{theo}[Positivity of the Laurent phenomenon]
\label{laurent pheno}
For any $t, t' \in \Delta$ and $i \in I$, $$ x_i' \in \NN[x_j]_{j \in I_f}[x_k^{\pm1}]_{k \in I_{uf}}$$
In particular $\clu(\Delta) \subseteq \uclu(\Delta).$
\end{theo}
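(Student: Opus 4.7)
The plan is to separate the statement into two parts: (i) the classical integral Laurent phenomenon, namely that each $x_i'$ lies in $\ZZ[x_j]_{j \in I_f}[x_k^{\pm1}]_{k \in I_{uf}}$, and (ii) the positivity of the coefficients. Once (i) and (ii) are in hand, the inclusion $\clu(\Delta) \subseteq \uclu(\Delta)$ follows immediately, because $\clu(\Delta)$ is generated over $\coef[\Delta]$ by the cluster variables of the seeds of $\Delta$, and (i) says each such generator already lies in every localised ring $\Li(t)$.

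\textbf{Step 1: the Laurent phenomenon.} I would induct on the length $\ell$ of a mutation sequence $t = t^{(0)} \to t^{(1)} \to \dots \to t^{(\ell)}=t'$. The case $\ell=0$ is trivial, and $\ell=1$ is the exchange relation \eqref{exchange relation}. For the inductive step one uses the Fomin--Zelevinsky \emph{caterpillar lemma}: consider two mutation paths from $t$ to $t'$ that differ by a local exchange of three mutations $\mu_k \mu_j \mu_k$ versus $\mu_j \mu_k \mu_j$ (with $j,k \in I_{uf}$ distinct). Using the explicit formulas \eqref{matrix mutation} and \eqref{cluster mutation} one verifies that the two resulting Laurent expressions coincide after multiplying through, by exploiting that $x_k$ and $\mu_j(x)_k$ are coprime in the polynomial ring $\ZZ[x_i]_{i\neq k,j}[x_k,x_j]$. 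Unique factorisation in $\Li(t)$ then propagates Laurent-ness through the induction.

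\textbf{Step 2: positivity.} This is the substantive part. Following \cite{gross2018canonical}, I would attach to a seed in $\Delta$ a consistent scattering diagram $\mathfrak{D}^s$ in the tropical space associated with the principal $B$-matrix, and identify each cluster variable $x_i'$ with a theta function $\vartheta_m$ for an appropriate integer point $m$. The Laurent expansion of $\vartheta_m$ in the cluster $x(t)$ is given by a sum over broken lines with wall-crossing contributions, and each broken line contributes a monomial with non-negative integer coefficient; summing yields positivity. As an alternative in the skew-symmetric case one may invoke \cite{lee2015positivity}, whose proof is purely combinatorial and proceeds via compatible pairs on a certain ``snake'' diagram; or one may categorify and use the Caldero--Chapoton / Derksen--Weyman--Zelevinsky expansion formula, where each Laurent coefficient is the Euler characteristic of a quiver Grassmannian and is therefore manifestly a non-negative integer (after checking that these Grassmannians are smooth and have non-negative Euler characteristic in the relevant cases).

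\textbf{Main obstacle.} Step 1 is elementary and classical. The genuine difficulty is entirely concentrated in Step 2: without either the scattering-diagram machinery of \cite{gross2018canonical} or a suitable categorification, positivity cannot be established by the naive induction of Step 1, because the coprimeness argument controls only the shape of the denominators, not the signs of the numerators. Since the present paper uses the theorem only as a black box, the most economical route is simply to cite \cite{gross2018canonical}[Corollary 0.4] in full generality, and \cite{lee2015positivity} in the skew-symmetric specialisation.
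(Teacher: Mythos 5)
Your proposal is correct and matches the paper's approach: the paper gives no proof of this theorem, simply citing \cite{gross2018canonical}[Corollary 0.4] in general and \cite{lee2015positivity} in the skew-symmetric case, which is exactly the route you conclude is most economical. Your derivation of $\clu(\Delta) \subseteq \uclu(\Delta)$ from Laurent-ness alone (positivity not needed for this part) is also the intended one.
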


 The weaker statement that $ x_i' \in \ZZ[x_j]_{j \in I_f}[x_k^{\pm1}]_{k \in I_{uf}}$, usually called \textit{Laurent phenomenon}, is much more elementary and also sufficient for most applications. It  can be found in \cite[Proposition 11.2]{fomin2002cluster}. (See also  \cite[Theorem 3.1]{fomin2002clusterfoundation}).

\bigskip

Clearly, if $t \in \Delta$, we also use the notation $\clu(t)$ (resp. $\uclu(t)$) to denote $\clu(\Delta)$ (resp. $\uclu(\Delta)$).

\begin{definition}
    \label{upper bound}
Let $t \in \Delta$. For $i \in I_{uf}$, let $t_i:= \mu_i(t)$ and $t_0:= t$. The \textit{upper bound} $\upp(t)$ at $t$ is $$ \upp(t):= \bigcap_{i \in I_{uf} \cup \{0\} }\Li(t_i) .$$
\end{definition}

If the matrix $B$ is of maximal rank, we say that $t$ is of \textit{maximal rank}. The rank of the generalised exchange matrix is invariant under mutation by \cite[Lemma 3.2]{berenstein2005cluster3}. The following theorem is a very special case of \cite[Theorem 3.11]{gekhtman2018drinfeld}. The case with no highly frozen vertices had already been proved in \cite[Corollary 1.7]{berenstein2005cluster3}.
\begin{theo}
    \label{upper cluster equals upper bound}
    If $t$ is of maximal rank, $\uclu(\Delta)=\upp(t)$.
\end{theo}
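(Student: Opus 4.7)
The inclusion $\uclu(\Delta) \subseteq \upp(t)$ is tautological: the right-hand side intersects $\Li(t')$ only over the finitely many $t' \in \{t\} \cup \{\mu_k(t) : k \in I_{uf}\}$, whereas the left intersects over the whole mutation class $\Delta$. The entire content of the theorem is thus the reverse inclusion $\upp(t) \subseteq \uclu(\Delta)$, which I would deduce from the following stability of the upper bound under a single mutation: for every $k \in I_{uf}$,
\begin{equation*}
\upp(t) = \upp(\mu_k(t)).
\end{equation*}
Granting this, any $t' \in \Delta$ is reached from $t$ by a finite mutation sequence, so induction on the length of the sequence yields $\upp(t) = \upp(t') \subseteq \Li(t')$, and intersecting over $t' \in \Delta$ gives $\upp(t) \subseteq \uclu(\Delta)$.

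To establish the mutation-invariance, I would first observe that $\mu_k \circ \mu_k = \mathrm{id}$, so it suffices by symmetry to show one inclusion, say $\upp(t) \subseteq \upp(\mu_k(t))$. Writing $t' = \mu_k(t)$, the task reduces to showing that any $f \in \upp(t)$ lies in $\Li(\mu_j(t'))$ for every $j \in I_{uf}$. For $j = k$ this is automatic since $\mu_k(t') = t$ and $f \in \Li(t)$ by hypothesis, so the real work is the case $j \neq k$. The approach is to exploit the fact that $f$ is simultaneously a Laurent polynomial in the clusters of $t$, of $t' = \mu_k(t)$, and of $\mu_j(t)$. Since the cluster of $\mu_j(t')$ differs from that of $\mu_j(t)$ only by replacing $x_k$ with $x_k'$, and $f \in \Li(t')$ already "knows" the denominator of $f$ as a polynomial in $x_k'$, comparing these three overlapping expansions should pin down the denominator of $f$ along the cluster of $\mu_j(t')$.

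The main obstacle — and the precise point where the maximal rank hypothesis enters — is a coprimality statement: in the Laurent ring generated over the coefficient ring $\coef[\Delta]$ by the cluster variables of $t$ other than $x_k$, the elements $x_k$ and $x_k' = (M_k^+ + M_k^-)x_k^{-1}$ must generate coprime principal ideals. The two monomials $M_k^+$ and $M_k^-$ are coprime as monomials because their exponent vectors $B_{\bullet,k}^+$ and $B_{\bullet,k}^-$ have disjoint supports by construction, but upgrading this to coprimality of $x_k'$ and $x_k$ requires that no spurious multiplicative relation among the cluster variables of $t$ is forced by the exchange relations. Maximal rank of $B$ is exactly what guarantees this: the columns of $B$ being linearly independent prevents the cluster monomials from satisfying a hidden identity, so unique factorisation in the appropriate localisation applies. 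Once this coprimality is in hand, a standard Nullstellensatz-style argument combines the three Laurent expansions to yield membership of $f$ in $\Li(\mu_j(t'))$, closing the proof. Without the maximal rank hypothesis the coprimality can genuinely fail and the upper bound can strictly increase under a single mutation, so the hypothesis cannot be dropped.
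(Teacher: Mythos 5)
First, a point of orientation: the paper does \emph{not} prove this theorem itself. It is cited as a special case of a result of Gekhtman--Shapiro--Vainshtein, with the version without highly-frozen vertices attributed to Berenstein--Fomin--Zelevinsky. So there is no ``paper proof'' to compare against; what you have written is a reconstruction of the BFZ strategy, and the structural skeleton --- (i) the only nontrivial inclusion is $\upp(t) \subseteq \uclu(\Delta)$; (ii) reduce this to single-step invariance $\upp(t) = \upp(\mu_k(t))$; (iii) by involutivity of mutation, reduce to one inclusion; (iv) for $f \in \upp(t)$ the case $j = k$ is free and the case $j \neq k$ is the real work --- is exactly the standard route, and correct. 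One small omission in step (ii): to iterate the single-step invariance along a mutation path you need every intermediate seed to again be of maximal rank, i.e.\ you need rank-invariance under mutation; the paper explicitly flags and cites this just before the statement, and your induction silently relies on it.

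The real gap is in the coprimality discussion. You assert that the crucial fact is that $x_k$ and $x_k'$ are coprime and that this is ``exactly where the maximal rank hypothesis enters.'' This is not the right statement. The exchange polynomial $P_k = M_k^+ + M_k^-$ is a Laurent polynomial that does not involve $x_k$ at all, so $x_k$ never divides $P_k$ and the coprimality of $x_k$ with $x_k' = P_k/x_k$ (in any sensible ambient ring) is automatic, independent of any rank hypothesis. What the maximal rank hypothesis actually delivers --- via the BFZ notion of a \emph{totally coprime} seed --- is that the exchange polynomials $P_j$ and $P_{j'}$ for \emph{distinct} mutable indices $j \neq j'$ are coprime in the Laurent ring, for the seed and for all its mutations. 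That pairwise coprimality is what allows one to identify the upper bound with the intersection $\bigcap_{j} \coef[\Delta][x_i^{\pm1}:\, i \neq j][x_j, x_j']$, which is the intermediate structure that is then shown to be mutation-symmetric. Finally, your closing phrase --- ``a standard Nullstellensatz-style argument combines the three Laurent expansions'' --- compresses what is in fact the technical heart of the BFZ argument (the two-variable gluing lemma identifying $\coef[\Delta][y^{\pm1}][z,z'] \cap \coef[\Delta][y,y'][z^{\pm1}]$ with $\coef[\Delta][y,y'][z,z']$ under coprimality of the two exchange polynomials). It is not a Nullstellensatz argument and cannot be dismissed as routine; it is precisely the step where the coprimality hypothesis is consumed. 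As written, then, your proof names the right obstacle but misidentifies the exact coprimality needed and leaves the load-bearing lemma unstated.
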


We say that no mutable vertex of $t$ is completely disconnected if, for any $k \in I_{uf}$, there exists $i \in I$ such that $b_{i,k} \neq 0.$ It's easy to see that this property is invariant under mutation. So, we say that no mutable vertex of $\Delta$ is completely disconnected if one, hence any, of its seeds has this property. Note that completely disconnected mutable vertices are the ones that produce trivial exchange relations.

\begin{theo}[\cite{geiss2013factorial}]
\label{cluster variables irred}
    Suppose that  no mutable vertex of $\Delta$  is completely disconnected. Let $t, t' \in \Delta$ and $i,j \in I_{uf} \sqcup I_{hf}$, then 
    \begin{enumerate}
        \item The variable $x_i$ is irreducible in $\uclu(\Delta)$.
        \item The ideals $(x_i)$ and $(x_j')$ of $\uclu(\Delta)$ are equal if and only if $x_i=x_j'$.
         \item The invertible elements of $\uclu(\Delta)$, up to scalar, are the monomials in the semi-frozen variables.
    \end{enumerate} 
\end{theo}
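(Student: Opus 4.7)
The plan is to use the Laurent phenomenon together with the UFD structure of each $\Li(t) = \coef[\Delta][x_j^{\pm 1}]_{j \in I_{uf}}$ and the observation that when $k \in I_{uf}$ is not completely disconnected, the exchange binomial $M_k^+ + M_k^-$ is a sum of two distinct monomials, so in particular not a unit of $\Li(t)$. I would prove (3) first, then use its key substitution step to deduce (1), and finally obtain (2) as a consequence.

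For (3), an element $u \in \uclu(\Delta)^\times$ is a unit of every $\Li(t)$, hence of the form $c \prod_{j \in I_{uf} \cup I_{sf}} x_j^{a_j}$ with $c \in \CC^\times$. Rewriting $u$ in the seed $\mu_k(t)$ via $x_k = (M_k^+ + M_k^-)/x_k'$ produces the factor $(M_k^+ + M_k^-)^{a_k}$, and the only way this remains a Laurent monomial in the new invertible variables is $a_k = 0$. Iterating over all $k \in I_{uf}$ shows that $u$ is supported on $I_{sf}$.

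For (1) with $i \in I_{hf}$, I would exploit that $x_i$ is prime in every UFD $\Li(t)$ and unchanged under mutation, so the $(x_i)$-adic valuations on the various $\Li(t)$ agree on $\uclu(\Delta)$ and assemble into the cluster valuation $\cval_i$ from the preliminaries. A factorization $x_i = ab$ gives $\cval_i(a) + \cval_i(b) = 1$ with both summands $\geq 0$; up to swapping, $\cval_i(a) = 0$ and $\cval_i(b) = 1$, so $b/x_i$ lies in each $\Li(t)$ and satisfies $a \cdot (b/x_i) = 1$, whence $a^{-1} \in \bigcap_t \Li(t) = \uclu(\Delta)$. For $i \in I_{uf}$, $x_i$ is already a unit of $\Li(t)$, so $a = c x^\alpha$ and $b = c^{-1} x^{e_i - \alpha}$ are Laurent monomials in the invertible variables of $t$; applying the substitution argument at $\mu_k(t)$ to both $a$ and $b$ forces $(M_k^+ + M_k^-)^{\pm \alpha_k}$ to live in $\Li(\mu_k(t))$, giving $\alpha_k = 0$ for every $k \in I_{uf} \setminus \{i\}$ and $\alpha_i \in \{0,1\}$; swapping $a$ and $b$ if needed, $\alpha_i = 0$, so $a$ is a semi-frozen monomial and hence a unit of $\uclu(\Delta)$.

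Part (2) follows: if $(x_i) = (x_j')$ in the integral domain $\uclu(\Delta)$, then by (1) the two irreducibles $x_i$ and $x_j'$ are associated, so $x_j' = u x_i$ for some $u \in \uclu(\Delta)^\times$, and by (3) $u = c \prod_{k \in I_{sf}} x_k^{m_k}$. To force $u = 1$ I would compare the canonical Laurent expansions of $x_i$ and $x_j'$ in a seed containing $x_i$, using either the $g$-vector formalism or the positivity and reducedness of the Laurent phenomenon, which rule out a non-trivial semi-frozen rescaling. The main obstacle is the mutable case of (1), where one has to use the substitution trick on both $a$ and $b$ at every mutable vertex to bound $\alpha_k$ from both sides, relying crucially on the no-disconnected hypothesis so that $M_k^+ + M_k^-$ is never a unit of $\Li(\mu_k(t))$.
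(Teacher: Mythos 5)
The paper does not give a self-contained proof of this theorem; it cites \cite[Theorem 1.3, Corollary 2.3]{geiss2013factorial} and remarks that the argument there carries over verbatim to the upper cluster algebra under the weaker hypothesis that no mutable vertex is completely disconnected. Your attempt is a genuine re-derivation, so the comparison is really with the Geiss--Leclerc--Schr\"oer argument being cited.

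Parts (3) and (1) of your proof are correct. For (3), a unit $u$ of $\uclu(\Delta)$ is a unit of each $\Li(t')$ and hence a Laurent monomial $c\prod_{j\in I_{uf}\cup I_{sf}}x_j^{a_j}$ in the cluster of $t'$; equating the two monomial presentations of $u$ in $\Li(t)$ and $\Li(\mu_k(t))$ forces $(M_k^++M_k^-)^{a_k}$ to be a Laurent monomial, hence $a_k=0$ since $M_k^++M_k^-$ is a sum of two distinct monomials (this is exactly where the no-disconnected-vertex hypothesis enters). For (1) with $i\in I_{hf}$, the invariance of the $(x_i)$-adic valuation under mutation (Lemma~\ref{cluster valuation}) is indeed the right tool, and your argument is complete. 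For (1) with $i\in I_{uf}$, your two-sided substitution pins down $\alpha_k=0$ for $k\in I_{uf}\setminus\{i\}$ and $\alpha_i\in\{0,1\}$; after swapping $a$ and $b$ the factor $a$ is a semi-frozen monomial, i.e.\ a unit by (3). You flag the mutable case of (1) as the main obstacle, but as you sketched it that step is in fact complete.

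The genuine gap is in (2). You correctly reduce to $x_j'=u\,x_i$ with $u=c\prod_{k\in I_{sf}}x_k^{m_k}$, but the step ``to force $u=1$ I would compare the canonical Laurent expansions \dots using either the $g$-vector formalism or the positivity and reducedness of the Laurent phenomenon'' is only a promissory note, and this is in fact where the content of the statement lives. When $i$ and $j$ are both highly-frozen the conclusion is immediate from algebraic independence of the cluster of $t$ (and mixed frozen/unfrozen cases are excluded by comparing $\cval_i$), but when $i,j\in I_{uf}$ and $t\neq t'$ you need a nontrivial input to rule out a semi-frozen rescaling: positivity alone gives $c\in\NN_{>0}$ and (by symmetry) $c=1$, but showing $m_k=0$ for all $k\in I_{sf}$ requires knowing that the reduced numerator of the Laurent expansion of $x_j'$ in the cluster of $t$ is not divisible by any $x_l$ --- a primitivity statement that you would have to prove or cite precisely, as it does not follow from the Laurent phenomenon as you have stated it. This is exactly the technical heart of \cite[Theorem 1.3]{geiss2013factorial}, so you should either carry it out or explicitly defer to that reference as the paper does.
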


The above theorem is proved in \cite[Theorem 1.3, Corollary 2.3]{geiss2013factorial} for the ordinary cluster algebra under the assumption that the seed $t$ is connected (see \cite[Section 1.2]{geiss2013factorial}). The exact same proof also works for the upper cluster algebra and the upper bound under the weaker assumption that no mutable vertex is completely disconnected.

\subsection{Disjoint union of seeds}
\label{sec:disj union}
This section  is only needed for an application in Section \ref{application to G section}.

\bigskip

We introduce a notation. If $M \in  \RR^{(J_1 \sqcup J_2) \times (K_1 \sqcup K_2)}$, then 
\begin{equation}
    \label{eq:restr matrices}
    M= \begin{pmatrix}
    M_{1,1} & M_{1,2}\\
    M_{2,1} & M_{2,2}
\end{pmatrix}
\end{equation}
 means that $M_{J_a \times K_b}= M_{a,b} $, for $a,b \in \{1,2\}$. 
\label{disjoint union of seeds section}

\begin{definition}
    \label{disjoint union seeds defi}
    Let $t $  and $t'$ be two seeds. The \textit{disjoint union} of $t$ and $t'$ is the seed $t|t'= (J_{uf}, J_{sf},J_{hf}, C, z)$ of $\CC(x_i,x'_{i'})_{i \in I,i' \in I'}$ defined by
    \begin{itemize}
        \item[-] The set of vertices 
        $$J_{uf}= I_{uf} \sqcup I'_{uf} \quad J_{sf}= I_{sf} \sqcup I'_{sf} \quad J_{hf}= I_{hf} \sqcup I'_{hf}. $$
        \item[-]  The generalised exchange matrix
        $$ C= \begin{pmatrix}
            B & 0 \\
            0 & B'
        \end{pmatrix}.$$
        \item[-] The cluster $z$, defined by 
       $$ z_I= x \quad z_{I'}= x'.$$
    \end{itemize}
\end{definition}

We identify $\CC(x_i,x'_{i'})_{i \in I,i' \in I'}$ with the fraction field of $\Li(t) \otimes \Li(t')$ in the natural way. It's clear that $\Li(t|t')=\Li(t) \otimes \Li(t')$. The following property is immediate to verify:

\begin{lemma}
\label{commutation disjoint union}
    Let $t,t'$ as above, $i \in I_{uf}$, $i' \in I'_{uf}$, then
    $$\mu_i(t|t')=\mu_i(t)|t' \quad  \text{and} \quad \mu_{i'}(t|t')=t|\mu_i(t').$$
\end{lemma}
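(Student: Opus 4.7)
The plan is to verify the identity $\mu_i(t|t') = \mu_i(t)|t'$ for $i \in I_{uf}$ by checking that each of the three pieces of a seed — the vertex sets, the extended exchange matrix, and the cluster — coincide on both sides. The second identity is symmetric, so it follows by the same argument. The whole proof is essentially bookkeeping based on the block-diagonal shape of $C$.

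For the vertex sets, mutation at any vertex leaves $J_{uf}, J_{sf}, J_{hf}$ unchanged, so both $\mu_i(t|t')$ and $\mu_i(t)|t'$ have the vertex partition of $t|t'$ by construction. For the exchange matrix, write $C$ in block form as in \eqref{eq:restr matrices} with blocks $B, 0, 0, B'$, and fix $i \in I_{uf} \subseteq J_{uf}$. In the row/column update \eqref{matrix mutation} at $i$, the entries $c_{a,i}$ and $c_{i,b}$ vanish whenever the index $a$ or $b$ lies in $I'$, because $i \in I$ and the off-diagonal blocks of $C$ are zero. Hence the formula $c'_{a,b} = c_{a,b} + c_{a,i}^+ c_{i,b}^+ - c_{a,i}^- c_{i,b}^-$ reduces to $c'_{a,b} = c_{a,b}$ as soon as $\{a,b\} \not\subseteq I$, so the off-diagonal blocks remain zero and the $B'$ block is untouched. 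The $B$-block transforms according to the mutation rule applied to $B$ at $i$. This is precisely the extended exchange matrix of $\mu_i(t)|t'$.

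Finally, for the cluster: mutation only affects the entry indexed by $i$, replacing $z_i$ by $z'_i = z^{C_{\bullet,i}^+ - e_i} + z^{C_{\bullet,i}^- - e_i}$. Since the $I'$-components of the column $C_{\bullet,i}$ vanish, the monomials involve only the variables $z_I = x$, so $z'_i = x^{B_{\bullet,i}^+ - e_i} + x^{B_{\bullet,i}^- - e_i}$, which is exactly the mutated variable $x'_i$ of $\mu_i(t)$. All other components of the cluster are fixed, matching the cluster of $\mu_i(t)|t'$.

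The three verifications together show $\mu_i(t|t') = \mu_i(t)|t'$, and swapping the roles of the two seeds yields the analogous statement for $i' \in I'_{uf}$. There is no real obstacle: the only thing that makes the claim work is that the off-diagonal blocks of the block-diagonal matrix $C$ are zero, which kills all the cross-terms appearing in the matrix mutation and in the binomial exchange relation.
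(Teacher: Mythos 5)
Your verification is correct and is exactly the straightforward computation the paper has in mind when it declares the lemma ``immediate to verify''; the key observation, that the off-diagonal blocks of $C$ being zero kill all cross-terms in both the matrix mutation \eqref{matrix mutation} and the exchange relation \eqref{cluster mutation}, is the whole content. Nothing to add.
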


\begin{lemma}
\label{dijoint union tensor product}
    If $t$ and $t'$ are of maximal rank, then $\uclu(t|t')=\uclu(t) \otimes \uclu(t').$
\end{lemma}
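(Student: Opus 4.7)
The plan is to use the upper bound characterization provided by Theorem \ref{upper cluster equals upper bound}. The maximal rank hypothesis on $t$ and $t'$ guarantees that the block-diagonal matrix $C$ of $t|t'$ is of maximal rank, so the theorem applies to all three seeds simultaneously. Hence $\uclu(t|t') = \upp(t|t')$, $\uclu(t) = \upp(t)$ and $\uclu(t') = \upp(t')$. The inclusion $\uclu(t) \otimes \uclu(t') \subseteq \uclu(t|t')$ is straightforward, since an element of the left hand side belongs to $\Li(\mu_k(t|t'))$ for every $k \in I_{uf} \sqcup I'_{uf} \cup \{0\}$ by Lemma \ref{commutation disjoint union} together with $\Li(t|t') = \Li(t) \otimes \Li(t')$.

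For the converse, I would first rewrite $\upp(t|t')$ using Lemma \ref{commutation disjoint union} and the identity $\Li(\mu_i(t)|t') = \Li(\mu_i(t)) \otimes \Li(t')$ (and symmetrically for mutations in $I'_{uf}$) to obtain
$$\upp(t|t') \;=\; \bigcap_{i \in I_{uf} \cup \{0\}}\bigl(\Li(\mu_i(t)) \otimes \Li(t')\bigr) \;\cap\; \bigcap_{i' \in I'_{uf} \cup \{0\}}\bigl(\Li(t) \otimes \Li(\mu_{i'}(t'))\bigr).$$
The next step is to commute the intersections past the tensor products. Concretely, I would fix a $\CC$-vector space basis $\{e_\alpha\}$ of $\Li(t')$ so that every element of $\Li(t) \otimes \Li(t')$ admits a unique expression $\sum_\alpha f_\alpha \otimes e_\alpha$ with $f_\alpha \in \Li(t)$ almost all zero. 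For any $\CC$-subspace $R \subseteq \Li(t)$, membership of such an element in $R \otimes \Li(t')$ is then equivalent to $f_\alpha \in R$ for every $\alpha$. Intersecting over $i \in I_{uf} \cup \{0\}$ therefore yields
$$\bigcap_{i \in I_{uf} \cup \{0\}}\bigl(\Li(\mu_i(t)) \otimes \Li(t')\bigr) \;=\; \upp(t) \otimes \Li(t'),$$
and symmetrically $\bigcap_{i'}(\Li(t) \otimes \Li(\mu_{i'}(t'))) = \Li(t) \otimes \upp(t')$.

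It remains to verify the identity
$$\bigl(\upp(t) \otimes \Li(t')\bigr) \cap \bigl(\Li(t) \otimes \upp(t')\bigr) \;=\; \upp(t) \otimes \upp(t').$$
For this, I would choose the basis $\{e_\alpha\}$ of $\Li(t')$ to contain a $\CC$-basis of $\upp(t')$ (any basis of a $\CC$-subspace extends to one of the ambient space). Then an element $\sum_\alpha f_\alpha \otimes e_\alpha$ lies in $\Li(t) \otimes \upp(t')$ precisely when $f_\alpha = 0$ whenever $e_\alpha \notin \upp(t')$, while lying in $\upp(t) \otimes \Li(t')$ forces $f_\alpha \in \upp(t)$ for all $\alpha$; combining these conditions gives exactly $\upp(t) \otimes \upp(t')$.

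The only delicate point — and what I consider the main technical obstacle — is ensuring that the naive ``intersection commutes with tensor product'' identities used above are legitimate, since in general tensor product does not commute with infinite intersections of submodules. The basis argument above circumvents this cleanly because $\Li(t')$ is free over $\CC$, so choosing the basis adapted to $\upp(t')$ resolves all set-theoretic issues without needing any flatness or Noetherianity hypothesis on the (possibly non-noetherian) upper cluster algebras themselves.
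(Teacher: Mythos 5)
Your proof is correct and follows essentially the same route as the paper: reduce to upper bounds via Theorem \ref{upper cluster equals upper bound} and Lemma \ref{commutation disjoint union}, then commute the (finite) intersections past the tensor products. The paper simply cites ``the tensor product commutes with finite intersections'' at the point where you supply the basis argument, which is a valid and standard justification of that fact over the field $\CC$.
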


\begin{proof}
    Clearly $t|t'$ is of maximal rank. The lemma follows easily from repeated application of Theorem \ref{upper cluster equals upper bound}, the previous lemma and the fact that the tensor product commutes with finite intersections.
\end{proof}

\subsection{Cluster valuations induced by frozen variables}
We introduce the notion of \textit{cluster valuation} at a frozen vertex. This notion implicitly appears in the existing literature. We develop here some technical detail because of a lack of an appropriate reference.

If $t\in \Delta$, we denote by $\CC[t]$ (resp. $\CC(t)$) the polynomial ring (resp. the field of fractions) in the cluster variables $x_i$. By the Laurent phenomenon (Theorem \ref{laurent pheno}) we have that $\CC[t] \subseteq \uclu(\Delta)$ and the fraction field of $\uclu(\Delta)$ is $\CC(t)$, which from now on will be denoted by $\CC(\Delta)$.

Any cluster variable $x_i$ generates a prime ideal in $\CC[t]$, so we can consider the induced discrete valuation $$\val_i^t : \CC(\Delta) \longto \ZZ \cup \{ \infty \}.$$
For an element $p \in \CC[t] \subseteq \CC(\Delta)$, the exponent of $x_i$ in the factorisation of $p$ into irreducible factors, in the ring $\CC[t]$, is $\val_i^t(p).$

\begin{lemma}
    \label{cluster valuation}
    If $i \in I_f$ and $t \sim t'$, then $\val^t_i= \val^{t'}_i$. In particular, we refer to any such valuation as  the \textit{cluster valuation} at the frozen vertex $i$ and denote it by $\cval_i$.
\end{lemma}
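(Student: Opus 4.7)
The plan is to reduce to a single mutation and then use the exchange relation to compare the two valuations via direct substitution. Since mutation equivalence gives a finite sequence of mutations between $t$ and $t'$ and equality of valuations is transitive, it suffices to prove $\val^t_i = \val^{\mu_k(t)}_i$ for each $k \in I_{uf}$.

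Fix $k$ and set $t' = \mu_k(t)$. Since $i$ is frozen and $k$ is mutable, $i \neq k$. Let $R = \CC[x_j]_{j \neq k}$, a polynomial ring in $|I|-1$ variables. Then $\CC[t] = R[x_k]$ and $\CC[t'] = R[x'_k]$ are polynomial rings over $R$ sharing the fraction field $\CC(\Delta)$, and on each of them the valuation at $x_i$ is the Gauss-style extension of the $x_i$-adic valuation $v$ on $R$: for $f = \sum_m a_m x_k^m \in R[x_k]$, one has $\val^t_i(f) = \min_m v(a_m)$, and similarly for $t'$.

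The key technical input is that $v(N) = 0$, where $N := M^+ + M^- \in R$ denotes the right-hand side of the exchange relation \eqref{exchange relation}. This holds because $b^+_{i,k}$ and $b^-_{i,k}$ cannot both be positive (they satisfy $b^+_{i,k} b^-_{i,k} = 0$), so $x_i$ divides at most one of the two monomials $M^\pm$ and therefore does not divide their sum in $R$. Combined with $\val^{t'}_i(x'_k) = 0$ (since $x'_k$ is a variable in $R[x'_k]$), this yields $\val^{t'}_i(x_k) = v(N) - \val^{t'}_i(x'_k) = 0$.

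The conclusion is then a direct substitution. For $f = \sum_{m=0}^d a_m x_k^m \in \CC[t]$, using $x_k = N/x'_k$ gives $(x'_k)^d f = \sum_m a_m N^m (x'_k)^{d-m} \in R[x'_k] = \CC[t']$; its $\val^{t'}_i$-value equals $\min_m v(a_m N^m) = \min_m v(a_m) = \val^t_i(f)$ because $v(N) = 0$, and the $(x'_k)^d$ factor contributes nothing since $\val^{t'}_i(x'_k) = 0$. Hence $\val^{t'}_i$ and $\val^t_i$ agree on $\CC[t]$, which determines them on the fraction field $\CC(\Delta)$. I expect the main subtle point to be exactly the verification that $v(N) = 0$, resting on the sign incompatibility built into the exchange monomials; everything else is a mechanical computation in the two polynomial presentations of $\CC(\Delta)$.
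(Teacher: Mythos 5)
Your proof is correct and follows essentially the same route as the paper's: reduce to a single mutation, observe that the valuation at $x_i$ is computed as the minimum of the $x_i$-adic values of the coefficients in the (Laurent) polynomial expansion in $x_k$, and then substitute via the exchange relation, the key point being that $M_k^+ + M_k^-$ is not divisible by $x_i$ (which the paper asserts without the sign-disjointness justification you supply). No gaps.
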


\begin{proof}
    We can assume that $t'= \mu_k(t)$, where $k \in I_{uf}.$ Let $$\CC[I\setminus\{k\}]:= \CC[x_j]_{j \in I \setminus\{k\}}=  \CC[x_j']_{j \in I \setminus \{ k \} }$$
    and $\CC(I \setminus\{k\})$ the fraction field of $\CC[ I \setminus\{k\}]$. The valuations $\val_i^t$ and $\val_i^{t'}$ obviously coincide on $\CC(I \setminus\{k\})$.
    Since the fraction field of $\CC(I\setminus\{k\})[x_k^{\pm1}]$ is equal to $\CC(\Delta)$, it's sufficient to prove that $ \val_i^t(f)=\val_i^{t'}(f)$ for any $f \in \CC(I\setminus\{k\})[x_k^{\pm1}].$
%    Consider $\val^{I\setminus \{k\} }_i :  \CC(I\setminus\{k\}) \longto \ZZ \cup \{ \infty\}$ the valuation induced by the prime ideal $(x_i)$ of $\CC[I\setminus\{k\}]$. 
If
    
    \begin{equation}
        \label{expression f}
    f = \sum_{n=-N}^N f_nx_k^n   
    \end{equation} 
with $f_n \in  \CC(I\setminus\{k\})$,  we claim that
    
    \begin{equation}
        \label{valuation and min}
        \val^t_i(f) = \min \{ \val_i^t(f_n) \, : \, -N \leq n \leq N \}.
    \end{equation} 
   To prove \eqref{valuation and min}, it's sufficient to notice that if $f_n \neq 0$, then  
\begin{equation}
\label{fn substitutio}
\val_i^t (f_n)= v_n \ifff f_n = x_i^{v_n}  \frac{p_n}{q_n}\end{equation}
with $ p_n, q_n \in \CC[I\setminus\{k\}] $ not divisible by $ x_i$. Then, we can compute $\val_i^t(f)$ plugging the right hand side of \eqref{fn substitutio} into \eqref{expression f} and we easily obtain \eqref{valuation and min}.

Using the exchange relation \eqref{exchange relation}, we compute that $$ f = \sum_{n=-N}^N f_n (M_k^+ + M_k^-)^n (x_k')^{-n} \in \CC(I \setminus \{ k \} ) [(x_k')^{\pm 1}] $$

But $M_k^+ + M_k^-$ is not divisible by $x_i$, hence 
$$ \val^t_i(f_n)= \val^t_i(f_n (M_k^+ + M_k^-)^n)=\val_i^{t'}(f_n (M_k^+ + M_k^-)^n). $$
The statement follows using \eqref{valuation and min} and the similar formula relative to the seed $t'$.

\end{proof}

At the level of cluster varieties, the previous lemma says that the gluing maps between cluster tori preserve the valuation defined by the frozen variables. We can strengthening Theorem \ref{cluster variables irred} in the case of highly frozen variables as follows.

\begin{coro}
\label{highly frozen prime}
If $i \in I_{hf}$, then $x_i$ generates a prime ideal of $\uclu(\Delta)$. Moreover, $\uclu(\Delta)_{(x_i)}$ is a discrete valuation ring whose induced valuation on $\CC(\Delta)$ equals the cluster valuation $\cval_i.$
\end{coro}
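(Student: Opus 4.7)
The plan is to first reduce the primality assertion to an equality of ideals. Let $\mathfrak{m}_i := \{f \in \uclu(\Delta) : \cval_i(f) \geq 1\}$. This is always a prime ideal of $\uclu(\Delta)$, since the quotient $\uclu(\Delta)/\mathfrak{m}_i$ embeds into the residue field of the cluster valuation $\cval_i$ acting on $\CC(\Delta)$. So the first goal is to show $(x_i) = \mathfrak{m}_i$.

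The inclusion $(x_i) \subseteq \mathfrak{m}_i$ is trivial since $\cval_i(x_i)=1$ (pick any seed $t$ containing $x_i$ and use the definition of $\val_i^t$ on $\CC[t]$). For the reverse inclusion, fix $f \in \mathfrak{m}_i$ and an arbitrary seed $t \in \Delta$. By the Laurent phenomenon (Theorem \ref{laurent pheno}) we have $f \in \Li(t)$, so we may write uniquely
$$f = \sum_{\alpha \in \ZZ^{I_{uf}}} f_\alpha \, x^\alpha \quad \text{with } f_\alpha \in \coef[\Delta] = \CC[x_j]_{j \in I_{hf}}[x_j^{\pm 1}]_{j \in I_{sf}}.$$
Because $i \in I_{hf}$, the variable $x_i$ appears polynomially in the coefficient ring, so exactly the argument of Lemma \ref{cluster valuation} (formula \eqref{valuation and min}, applied one unfrozen variable at a time) gives $\cval_i(f) = \min_\alpha \cval_i(f_\alpha)$, where $\cval_i$ on $\coef[\Delta]$ is the usual valuation at the polynomial variable $x_i$. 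The hypothesis $\cval_i(f) \geq 1$ then forces $x_i \mid f_\alpha$ in $\coef[\Delta]$ for every $\alpha$, so $f/x_i \in \Li(t)$. Since this holds for every $t \in \Delta$, we obtain $f/x_i \in \uclu(\Delta)$, i.e.\ $f \in (x_i)$.

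For the second assertion, one has to identify the localisation $\uclu(\Delta)_{(x_i)}$ with the valuation ring $\Orb_i$ of $\cval_i$ inside $\CC(\Delta)$. The inclusion $\uclu(\Delta)_{(x_i)} \subseteq \Orb_i$ is immediate from $(x_i) = \mathfrak{m}_i$: a denominator outside $(x_i)$ has $\cval_i$-value zero. Conversely, given $f \in \Orb_i$, write $f = g/h$ with $g,h \in \uclu(\Delta)$, set $n = \cval_i(h)$, and note that $\cval_i(g) \geq n$; by the first part, both $g/x_i^n$ and $h/x_i^n$ lie in $\uclu(\Delta)$, and $h/x_i^n \notin (x_i)$, so $f \in \uclu(\Delta)_{(x_i)}$. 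Hence $\uclu(\Delta)_{(x_i)} = \Orb_i$ is a DVR whose valuation is $\cval_i$.

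The only delicate point is the identity $\cval_i(f) = \min_\alpha \cval_i(f_\alpha)$ for $f \in \Li(t)$ and its use to produce $f/x_i$ in every $\Li(t')$ simultaneously; everything else is formal. This is the crux, but it is essentially the content of Lemma \ref{cluster valuation}, so the present corollary is a direct consequence of the techniques already developed.
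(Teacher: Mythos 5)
Your proof is correct and runs on the same engine as the paper's: both arguments reduce everything to Lemma \ref{cluster valuation} (mutation-invariance of $\val_i^t$) together with the observation that an element of $\uclu(\Delta)$ with positive cluster valuation at $i$ is divisible by $x_i$ in every $\Li(t)$ simultaneously, hence in $\uclu(\Delta)$. The only packaging differences are that you route primality through the valuation ideal $\mathfrak{m}_i$ and identify the localisation with the valuation ring by hand, whereas the paper checks $ab\in(x_i)$ directly and gets the DVR statement from the slicker sandwich $\CC[t]_{(x_i)}=\uclu(\Delta)_{(x_i)}=\Li(t)_{(x_i)}$.
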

\begin{proof}
Fix $t \in \Delta$. Take $a, b \in \uclu(\Delta)$ such that $ab \in (x_i).$ Since $x_i$ generates a prime ideal in $\Li(t)$, we can assume that $\frac{a}{x_i} \in \Li(t)$. Hence, $\val^t_i( \frac{a}{x_i} )\geq 0$. By the definition of $\uclu(\Delta)$, it's clear that for any $t' \in \Delta$, $\frac{a}{x_i} \in \frac{1}{x_i}\Li(t') \subseteq\Li(t')_{x_i}$. Lemma \ref{cluster valuation} implies that $\val^{t'}_i(\frac{a}{x_i}) \geq 0$, hence $\frac{a}{x_i} \in \Li(t')$.
It follows that $\frac{a}{x_i}\in \uclu(\Delta)$ and the first part of the statement follows.
For the second statement, notice that we have inclusions $$\CC[t] \subseteq \uclu(\Delta) \subseteq \Li(t). $$
In particular, by the definition of $\Li(t)$, localizing the above inclusions at $\prod_{j \in I_{uf} \sqcup I_{sf}} x_j$ gives a sequence of isomorphisms. Thus $\CC[t]_{(x_i)}= \uclu(\Delta)_{(x_i)}= \Li(t)_{(x_i)}$. 
\end{proof}

\begin{remark}
One can easily prove that the cluster valuation is a tropical valuation on $\uclu(\Delta)$ in the sense of \cite[Definition 7.1]{berenstein2005cluster3}. Also, it agrees with one of the valuations constructed in \cite[Lemma 7.3]{berenstein2005cluster3}.
\end{remark}

\subsection{Highly-freezing and semi-freezing}
We introduce two operations on seeds that change the nature of some frozen vertices from highly-frozen to semi-frozen, or the other way around. The notation is designed to be coherent with the fact that, passing from $t$ to $t_F$ corresponds to a localisation at the level of upper cluster algebras, while passing from $t$ to $t^F$ is the inverse operation, as described in Lemma \ref{properties freezing}.

\begin{definition}
\label{semi-higly-freezing}Let $ t$ be a seed.

\begin{itemize}
    \item[-] If $F \subseteq I_{hf}$, we define the seed $$t_F= (I_{uf}, \,  I_{sf} \sqcup F, \, I_{ hf} \setminus F, \,  B, \, x).$$ 
    We say that $t_F$ is obtained from $t$ by \textit{semi-freezing} the vertices in $F$. If $\Delta= \Delta(t)$, then we  denote $\Delta_F= \Delta(t_F)$.
    
    \item[--] If $F \subseteq I_{sf}$, we define the seed 
    $$t^F= (I_{uf}, \, I_{sf} \setminus F, \,  I_{ hf} \sqcup F, \, B, \, x).$$ 
    We say that $t^F$ is obtained from $t$ by \textit{higly-freezing} the vertices in $F$. If $\Delta= \Delta(t)$, then we denote $\Delta^F= \Delta(t^F)$.
\end{itemize}

\end{definition}

\begin{lemma}
\label{properties freezing} Let $t$ be a seed, $\Delta= \Delta(t)$, $F \subseteq I_{hf}$, $G \subseteq I_{sf}$, $k$ (resp. $j$) a mutable (resp. frozen) vertex of $t$. 
\begin{enumerate}
    \item  $(t_F)^F=t$.
    \item $(t^G)_G=t$.
    \item $\CC(\Delta)= \CC(\Delta_F)= \CC(\Delta^G)$.
    \item $\val^{t^G}_j= \val^t_j= \val^{t_F}_j.$
    \item $\mu_k(t_F)= \mu_k(t)_F$ \quad and \quad $\mu_k(t^G)= \mu_k(t)^G$.
    
    \item $\Delta_F = \{ t'_F \, : \, t' \in \Delta \}$ \quad and \quad $\Delta^G= \{ (t')^G \, : \, t' \in \Delta \}.$
    \item If $z:= \prod_{i \in F}(x_i)$, the natural inclusion $\uclu(\Delta)\subseteq \uclu(\Delta_F)$ induces an equality $$ \uclu(\Delta)_z = \uclu(\Delta_F).$$
    \item $\uclu(\Delta^G)= (\bigcap_{i \in G} \cval_i\inv (\ZZ_{\geq 0} \cup \{ \infty \})) \bigcap \uclu(\Delta)$.
\end{enumerate}
\end{lemma}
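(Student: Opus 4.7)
The plan is to treat the eight points in four natural groups, since several are immediate from the definitions and the real content is concentrated in (7) and (8).

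Points (1)--(4) should follow directly by unfolding definitions. For (1) and (2), the operations $(-)_F$ and $(-)^F$ only relabel which frozen vertices are highly-frozen versus semi-frozen and leave $I_{uf}$, $B$, and $x$ unchanged; applying them in sequence restores the original seed. For (3), the fraction field $\CC(\Delta)$ is by construction the field generated by any extended cluster of $\Delta$, and $t$, $t_F$, $t^G$ all have the same extended cluster $x$, so they share the field of fractions. For (4), the cluster valuation $\val^t_j$ attached to a frozen vertex $j$ depends only on the polynomial ring $\CC[t] = \CC[x_i]_{i \in I}$ and on which height-one prime is generated by $x_j$; none of this changes when we rename some vertices as semi-frozen or highly-frozen, so the three valuations coincide.

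For (5) I would inspect the two mutation formulas \eqref{matrix mutation} and \eqref{cluster mutation}: both depend only on $I_{uf}$, $B$, and $x$, not on the partition of $I_f$ into $I_{sf}$ and $I_{hf}$. Since highly-freezing and semi-freezing also leave $I_{uf}$, $B$, and $x$ unchanged, mutation at a mutable vertex $k$ commutes with $(-)_F$ and $(-)^G$. Point (6) then follows by an obvious induction on the length of a mutation sequence from $t$ to $t'$, using (5) at each step.

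Points (7) and (8) are the substantive ones and should be handled via the Laurent ring description. For (7), note that by definition
\[
\Li(t_F) = \coef[\Delta_F][x_i^{\pm 1}]_{i \in I_{uf}} = \Li(t)[x_i^{-1}]_{i \in F} = \Li(t)_z,
\]
and more generally $\Li(t'_F) = \Li(t')_z$ for every $t' \in \Delta$ by the same calculation. By (6), $\Delta_F = \{t'_F : t' \in \Delta\}$. Since localisation commutes with finite intersection inside a common fraction field (which exists by (3)) and since $\uclu(\Delta)$ is the intersection of finitely many $\Li(t')$ when $t$ is of maximal rank (Theorem \ref{upper cluster equals upper bound}), taking the intersection over $\Delta_F$ gives $\uclu(\Delta)_z = \uclu(\Delta_F)$. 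In the general case where only the full intersection $\bigcap_{t' \in \Delta} \Li(t')$ is available, one shows directly that an element $f/z^N \in \uclu(\Delta)_z$ lies in each $\Li(t'_F)$, which is immediate, and conversely that any $g \in \uclu(\Delta_F)$ satisfies $z^N g \in \Li(t')$ for all $t'$ and large enough $N$, by comparing exponents of the $x_i$ with $i \in F$ across seeds; this uses (4) together with the fact that $\val^{t'}_i$ on $g$ is bounded below uniformly because $g$ already lies in $\Li(t')_z$ for each $t'$.

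For (8), the same bookkeeping gives $\Li(t'^G) = \Li(t')\cap \bigcap_{i\in G}\cval_i^{-1}(\ZZ_{\geq 0}\cup\{\infty\})$, because highly-freezing $G$ removes exactly the inverses $x_i^{-1}$ for $i\in G$, i.e.\ imposes non-negative valuation at each $i \in G$ (using (4) to identify the valuations across $t$ and $t^G$). Intersecting over all $t' \in \Delta^G = \{(t')^G : t' \in \Delta\}$ and using (4) one more time to pull the conditions $\cval_i \geq 0$ outside the intersection yields the claimed formula. The main obstacle I expect is purely notational: keeping track, in (7) and (8), that the localisation/restriction operations on $\Li$ behave well with respect to the intersection defining $\uclu$; once one commits to the identifications in (3)--(4) this becomes bookkeeping.
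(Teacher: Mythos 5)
Your proposal follows essentially the same route as the paper: points (1)--(6) by inspection of the definitions, point (7) via the identification $\Li(t'_F)=\Li(t')_z$ together with a valuation argument for the reverse inclusion, and point (8) by characterising $\Li(t'^G)$ inside $\Li(t')$ by non-negativity of the valuations at $G$. The paper's proof is just a compressed version of this.

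One justification as written is insufficient, though the conclusion is right. In the converse inclusion of (7) you need a \emph{single} exponent vector $(N_i)_{i\in F}$ with $\prod_{i\in F}x_i^{N_i}\,g\in\Li(t')$ for \emph{every} $t'\in\Delta$, and you claim the required uniformity holds "because $g$ already lies in $\Li(t')_z$ for each $t'$". That only bounds $\val^{t'}_i(g)$ for each $t'$ separately; since $\Delta$ may be infinite, it does not produce a uniform bound. What actually gives uniformity is Lemma \ref{cluster valuation}: for $i$ frozen, $\val^{t'}_i=\val^{t}_i$ for all $t'\sim t$, so one may take $N_i=-\val^t_i(g)$ once and for all — this is exactly the element $\bigl(\prod_{i\in F}x_i^{-\val^t_i(g)}\bigr)g$ the paper exhibits. (Your point (4) compares $t$ with $t_F$ and $t^G$, not with mutated seeds, so it cannot substitute for Lemma \ref{cluster valuation} here; the same remark applies to your use of $\cval_i$ in (8), whose very well-definedness is that lemma.) Also note the detour through Theorem \ref{upper cluster equals upper bound} in (7) is unnecessary and would impose a maximal-rank hypothesis the lemma does not make; the direct argument you sketch afterwards is the one to keep.
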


\begin{proof}
The first six statements are obvious. 
Then it's clear that $\Li(t^G) \subseteq \Li(t) \subseteq \Li(t_F)$. Using 6, we get that $$\uclu(\Delta^G) \subseteq \uclu(\Delta) \subseteq \uclu(\Delta_F). $$
Moreover, $z$ is invertible in $\uclu(\Delta_F)$, hence we have an inclusion $\uclu(\Delta)_z \subseteq \uclu(\Delta_F)$. 
If $f \in \uclu(\Delta_F)$, by Lemma \ref{cluster valuation} and the definition of upper cluster algebra, we get that $$( \prod_{i \in F} x_i^{-\val^t_i(f)})f \in \uclu(\Delta). $$
This proves 7. Statement 8 also follows immediately from Lemma \ref{cluster valuation} and the definition of upper cluster algebra. 

\end{proof}

\subsection{Graded seeds}
\label{graded seed subsection}

Cluster algebras and upper cluster algebras can be graded almost as if they where polynomial rings. In particular, defining the degree of the cluster variables of an initial seed and using the mutation process induces, under some hypothesis, global graduations. This is formalised by the notion of graded seed. We refer to \cite{grabowski2015graded} for more details.

\bigskip

Let $t$ be a seed.

\begin{definition}
Let $H$ be an abelian group and $\sigma \in H^I$. We say that $\sigma$ is an $H$-\textit{degree configuration} on $t$ if

\begin{equation}
\label{graduation condition}
    \sigma^{B^+}= \sigma^{B^-}.
\end{equation}
A seed with an $H$-degree configuration is called $H$-\textit{graded seed}.
\end{definition}

We drop the dependence on $H$, in the notation, if the group is clear from the context or meaningless. If we denote a graded seed by $t$ (resp. $t^\bullet$, resp. $t_\bullet$ for a certain symbol $\bullet$), we implicitly assume that its degree configuration is $\sigma$ (resp. $\sigma^\bullet$, resp. $\sigma_\bullet$) and that the grading group is $H$.

\bigskip

From now on, $t$ is an $H$-graded seed. We have a notion of mutation of degree configuration. Let $k \in I_{uf}$ and $t'=\mu_k(t)$. Then  $\sigma'=\mu_k(\sigma)$ is the $H$-degree configuration, on $t'$, defined by the following formula:

\begin{equation}
  \label{mutation graduation}  
  \sigma'_i =\begin{aligned}
           \begin{cases} 
           \sigma_i & \text{if} \quad i \neq k \\
         \sigma^{B_{\bullet,k}^+} - \sigma_k & \text{if} \quad i=k.
          \end{cases}
       \end{aligned}    
\end{equation}

We say that $\sigma'$ is the \textit{mutation at $k$} of the degree configuration $\sigma.$ Any sequence of mutations defines a degree configuration on the resulting seed, which actually only depends on the seed and not on the sequence of mutations. Hence, any $t' \in \Delta(t)$ is in a canonical way a graded seed.

\bigskip

Note that the algebra $\Li(t)$ is canonically $H$-graded by  setting $\deg(x_i)= \sigma_i$. The set of Laurent monomials is an homogeneous base of $\Li(t)$.

\begin{prop}
   Let $t^* \in \Delta(t)$. Then $\clu(t)$ and $\uclu(t)$ are graded subalgebras of $\Li(t^*)$. The graduation defined on $\clu(t)$ and $\uclu(t)$, by this inclusion, is independent on $t^*$. In particular, $\clu(t)$ and $\uclu(t)$ are canonically graded. Moreover, cluster variables are homogeneous.
   
\end{prop}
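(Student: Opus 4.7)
The plan is to reduce everything to a single mutation step, then iterate along mutation paths.

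\textbf{Step 1 (Single mutation).} Suppose $t' = \mu_k(t)$ and let $\sigma' = \mu_k(\sigma)$. I first check that the exchange relation is homogeneous in $\Li(t)$: the two monomials in $x_k' = x^{B^+_{\bullet,k} - e_k} + x^{B^-_{\bullet,k} - e_k}$ have the same $\sigma$-degree, since $\sigma^{B^+_{\bullet,k}} = \sigma^{B^-_{\bullet,k}}$ by the defining condition \eqref{graduation condition}. Their common degree is exactly $\sigma^{B^+_{\bullet,k}} - \sigma_k = \sigma'_k$, so $x_k'$ is homogeneous of the expected degree. A symmetric calculation, using that mutation is involutive on both $B$ and $\sigma$, shows that $x_k$ is homogeneous of degree $\sigma_k$ in $\Li(t')$. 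Inside $\CC(\Delta)$ the two localizations
\[
R \;:=\; \Li(t)\bigl[(x_k')^{-1}\bigr] \;=\; \Li(t')\bigl[x_k^{-1}\bigr]
\]
coincide, because each side is obtained from $\coef[\Delta]$ by adjoining the same generators $\{x_i^{\pm 1}\}_{i \in I_{uf}\setminus\{k\}} \cup \{x_k^{\pm 1}, (x_k')^{\pm 1}\}$. The previous homogeneity observations make $R$ an $H$-graded ring whose grading simultaneously restricts to the given gradings on $\Li(t)$ and on $\Li(t')$. Hence $\Li(t)$ and $\Li(t')$ are graded subrings of $R$, and the gradings they induce on $\Li(t) \cap \Li(t')$ agree.

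\textbf{Step 2 (Iteration and the upper cluster algebra).} Fix a path of mutations $t^{(0)} = t^* \to t^{(1)} \to \cdots \to t^{(N)} = t'$. Applying Step 1 at each step, the homogeneous decomposition of any element of $\bigcap_i \Li(t^{(i)})$ computed in $\Li(t^{(i)})$ agrees with the one computed in $\Li(t^{(i+1)})$. Taking $f \in \uclu(t) = \bigcap_{t'' \in \Delta} \Li(t'')$ and decomposing $f = \sum_h f_h$ in $\Li(t^*)$, each $f_h$ therefore lies in $\Li(t')$ for every $t' \in \Delta$, hence in $\uclu(t)$. This shows $\uclu(t)$ is a graded subring of $\Li(t^*)$. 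Running the same argument between $\Li(t^*)$ and $\Li(t^{**})$ for another $t^{**} \in \Delta$ shows that the homogeneous decompositions of $f$ in the two rings coincide; in particular the $H$-grading on $\uclu(t)$ is independent of $t^*$.

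\textbf{Step 3 (Cluster variables and $\clu(t)$).} By induction on mutation distance, each cluster variable $x_i(t')$ is homogeneous in $\Li(t^*)$ of degree $\sigma_i(t')$, with degrees tracked by the mutation rule \eqref{mutation graduation}; the inductive step is exactly the computation of Step 1. Since $\coef[\Delta]$ is generated by the frozen variables (whose degree is $t$-independent as frozen vertices do not mutate) and $\clu(t)$ is generated over $\coef[\Delta]$ by cluster variables of all seeds in $\Delta$, $\clu(t)$ is a graded subalgebra of $\Li(t^*)$, with grading again independent of $t^*$.

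The only non-formal point is the verification in Step 1 that the two localizations $\Li(t)[(x_k')^{-1}]$ and $\Li(t')[x_k^{-1}]$ coincide as graded rings — everything else reduces to this via the standard fact that the intersection of two graded subrings of a graded ring is graded, and the iteration in Step 2 is a routine bookkeeping along a mutation path.
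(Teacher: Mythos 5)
Your proof is correct. The paper does not prove this proposition — it is stated as known and deferred to the reference \cite{grabowski2015graded} — and your argument (homogeneity of the exchange binomial, identification of the two localizations $\Li(t)[(x_k')^{-1}]=\Li(t')[x_k^{-1}]$ as graded rings, then propagation along mutation paths) is exactly the standard one that reference supplies, so it fills the gap in essentially the intended way.
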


\begin{remark}
    Since any cluster variable is homogeneous, the construction above is compatible with the operation of highly freezing or semi-freezing a set of vertices.
\end{remark}

Note that all the basis constructed in \cite{qin2022bases} consist of homogeneous elements.  
Finally, if $t$ is a not necessarily graded seed, and we identify $B$ as an element of $\Hom_\ZZ(\ZZ^{I_{uf}}, \ZZ^I)$, then we have an universal $\coker(B)$-graduation $\overline{\sigma}$ on $t$. This is defined by $\overline{\sigma}_i=\overline{e}_i$, where $\overline{e}_i$ is the class of $e_i$ in $\coker(B)$. In particular, roughly speaking, having many frozen vertices allows to construct fine graduations on $\clu(t)$ and $\uclu(t)$.
\section{Monomial liftings}
\label{monomial lifting section}

From now on, $D$ denotes a finite set. We define the notion of $D$-lifting configuration on a seed $t$. For any such configuration, we construct a seed $\lif t$, which is naturally graded, to which we refer as a \textit{monomial lifting} of $t$. The name is explained as follows. Let $Y$ be a variety carrying a cluster structure. Under mild assumptions on $Y$, any monomial lifting of $t$ gives a cluster structure to $\GG_m^D \times Y$, which extends the natural one on $\{e\} \times Y$ compatibly with the $\GG_m^D$-action. Conversely, any homogeneous cluster structure on $\GG_m^D \times Y$ which extends the one on $\{e\} \times Y$ is defined by a monomial lifting. We start with some definitions.

\bigskip

From now on, $t$ is a seed of the field $\KK$ and $\Delta= \Delta(t).$ We assume that $I \cap D= \emptyset.$

\begin{definition}
\label{pointed field ext}
    A $D$-pointed field extension of $\KK$ is a field extension $\LL / \KK $, with a $D$-uple $(x_d)_{d \in D} \in (\LL^*)^D$ of $\KK$-algebraically independent elements, such that $\LL=\KK(x_d)_{d \in D}.$ 
\end{definition}
If we denote a $D$-pointed field extension of $\KK$ by $\LL$, we implicitly assume that it's defining $D$-uple of elements is denoted by $(x_d)_d.$ Whenever a seed $t$ of $\KK$ and a $D$-pointed field extension $\LL$ of $\KK$ are given, we denote by 
    \begin{equation}
        \label{tlif x}
        \hx = (x_j)_{j \in I \sqcup D} \in (\LL^*)^{I \sqcup D}.
    \end{equation}
\begin{definition}
\label{seed extension}
    A \textit{$D$-seed extension} of $t$ is a seed $  t^*$, of a field extension $\LL/\KK$, such that
    
    \begin{itemize}
        \item [-] $I_{uf}=  I^*_{uf} \quad I_{sf} \subseteq  I^*_{sf} \quad  I_{hf} \subseteq  I^*_{hf} $ \quad and \quad $I^*_f \setminus I_f=D$.
        \item [-] $B^*_{I \times I_{uf}}=B.$
    \end{itemize}
    If moreover $\LL$ is $D$-pointed and for any $d \in D$ we have $x^*_d=x_d$, then we say that $t^*$ is a $D$-pointed seed extension of $t$. 
\end{definition}
In the next definition, we repeatedly use the notation introduced in \eqref{eq:restr matrices}.
\begin{definition}[monomial lifting]
\label{monomial lifting defi}
    A \textit{$D$-lifting configuration} on $t$ is a pair $(\LL, \nu)$ consisting of: a $D$-pointed field extension $\LL$ of $\KK$ and an integer matrix $\nu \in \ZZ^{D \times I}$. We call $\nu$ the $D$-\textit{lifting matrix} of the $D$-lifting configuration. 
    The \textit{monomial lifting} of $t$, defined by $(\LL, \nu)$, is the seed $\lif t =( \lif I_{uf}, \lif I_{sf}, \lif I_{uf}, \lif B , \lif x) $ defined by:

    \begin{itemize}
        \item[--] The set of vertices  $\lif I_{uf}=I_{uf}, \quad \lif I_{sf}=I_{sf}\sqcup D, \quad  \lif I_{hf}= I_{hf}. $
        \item[--] The matrix
        \begin{equation}
            \label{formula lifting B}
            \lif B= \begin{pmatrix}
            B \\
            -\nu B
        \end{pmatrix} \in \ZZ^{(I \sqcup D) \times I_{uf}}.
        \end{equation}
         The coefficients of $\lif B$ are denoted by $\lif b_{i,j}$ for $i \in \, \lif I, j \in \, \lif I_{uf}$.
        
\item[--] The cluster
\begin{equation}
  \label{formula lifting variables}  
 \lif x= \hx\strut^{\hnu} \quad \text{where} \quad  \hnu= \bmat Id & 0 \\
 \nu & Id
 \emat \in \ZZ^{(I \sqcup D) \times (I \sqcup D)}.
\end{equation}
    \end{itemize}
    In this case we say that $\nu$ lifts $t$ to $\lif t$ and write: $\lif t \lifa t.$ 
\end{definition}

Any monomial lifting $\lif t$ is a $D$-pointed seed extension of $t$. Recall that, 
$\hx_D= (x_d)_d \in (\LL^*)^D.$ So, formula \eqref{formula lifting variables} means that 
$$\lif x_i = \begin{cases}
    x_i \,\hx\strut^{\nu_{\bullet,i}}_D= x_i \prod_d x_d^{\nu_{d,i}} & \text{if} \quad i \in I\\
    x_d & \text{if} \quad i=d \in D.
\end{cases} $$
Hence, we can think about $\nu$ as a collection of monomials, indexed by $I$, in the variables $\hx_D$. The cluster variables $\lif x_i$, for $i \in I$, are the product of $x_i$ and the corresponding monomial.

\begin{example}
    \label{ex:monomial lifting} 
    Let $t$ be a seed of the field $\KK$, which is graphically described by the following quiver 
    \[\begin{tikzcd}
	{\bigcirc 1} & {\bigcirc 2} & \blacksquare3
	\arrow[from=1-3, to=1-2]
	\arrow[from=1-2, to=1-1]
\end{tikzcd}.\]
That is: $I_{uf} = \{1,2\}$, $I_{sf}= \emptyset$, $I_{hf}=\{3\}$ and 
$$B=\bmat 
0 & -1\\
1 & 0\\
0 & 1
\emat.$$
Let $D= \{d\}$. Here $d$ is considered as a symbol, $\LL= \KK(x_d)$ where $x_d$ is a variable and $\nu=(1,2,3).$ Then, a simple matrix multiplication implies that $\lif t$ corresponds graphically to the quiver
\[\begin{tikzcd}
	& \blacksquare d \\
	{\bigcirc 1} & {\bigcirc 2} & \blacksquare3
	\arrow[from=2-3, to=2-2]
	\arrow[from=2-2, to=2-1]
	\arrow[shift left, from=2-1, to=1-2]
	\arrow[shift left, from=2-2, to=1-2]
	\arrow[from=2-1, to=1-2]
	\arrow[from=2-2, to=1-2]
\end{tikzcd} \quad \text{and} \quad
\lif B=\bmat 
0 & -1\\
1 & 0\\
0 & 1 \\
-2 & -2
\emat. \]
Moreover, from the definition, we have that 
$$\lif x_1= x_1 x_d \quad \lif x_2= x_2 x_d^2 \quad \lif x_3= x_3 x_d^3 \quad \lif x_d=x_d.$$
\end{example}

Monomial liftings can be performed in steps. In fact, suppose that $D=D_1 \sqcup D_2$ and $\LL$ is a $D$-pointed field extension of $\KK$. Then the field $\LL_1=\KK(x_{d_1})_{d_1 \in D_1}$ is a $D_1$-pointed extension of $\KK$ and $\LL$ is a $D_2$-pointed extension of $\LL_1$. The following lemma is obvious from the definitions.

\begin{lemma}
    \label{lift in steps}
   Let $(\LL,\nu)$ be a $D$-lifting data on $t$. If $D=D_1 \sqcup D_2$, and $\nu_j=\nu_{D_j, \bullet}$,  for $j=1,2$, then the following diagram is commutative.

   \[\begin{tikzcd}
  	& \bullet \\
	\bullet && t
	\arrow["{\nu_1}"', from=2-3, to=1-2]
	\arrow["{(\nu_2, 0)}"', from=1-2, to=2-1]
	\arrow["\nu"', from=2-3, to=2-1]
\end{tikzcd}\]

\end{lemma}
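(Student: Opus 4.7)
The approach is to verify directly, using only Definition~\ref{monomial lifting defi}, that the two sides of the diagram produce identical seeds; this reduces to comparing the three defining data (vertex sets, exchange matrix, cluster) piece by piece. No mutation is involved, so the argument is entirely mechanical.

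For the vertex sets there is nothing to check beyond the identification $(I \sqcup D_1) \sqcup D_2 = I \sqcup D$; in both stages the new vertices are added as semi-frozen and the partition of $I$ into $I_{uf}, I_{sf}, I_{hf}$ is preserved. For the exchange matrix I would denote by $B_1 := \begin{pmatrix} B \\ -\nu_1 B \end{pmatrix}$ the exchange matrix of the intermediate lift and apply formula~\eqref{formula lifting B} to the second step, whose lifting matrix is $(\nu_2, 0) \in \ZZ^{D_2 \times (I \sqcup D_1)}$. A one-line block computation gives $(\nu_2, 0)\, B_1 = \nu_2 B + 0 \cdot (-\nu_1 B) = \nu_2 B$, so the composed exchange matrix reads
$$\begin{pmatrix} B_1 \\ -(\nu_2, 0)\, B_1 \end{pmatrix} = \begin{pmatrix} B \\ -\nu_1 B \\ -\nu_2 B \end{pmatrix} = \begin{pmatrix} B \\ -\nu B \end{pmatrix},$$
which is exactly the exchange matrix of the one-step lift by $\nu$.

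For the cluster I would use formula~\eqref{formula lifting variables}: after the first step the variable indexed by $i \in I$ reads $x_i \, \hx_{D_1}^{\nu_{1, \bullet, i}}$, while every $x_{d_1}$ with $d_1 \in D_1$ is untouched. The second step multiplies the $i \in I$ variable by $\hx_{D_2}^{\nu_{2, \bullet, i}}$, giving $x_i \, \hx_D^{\nu_{\bullet, i}}$, and leaves each $x_{d_1}$ unchanged precisely because the $D_1$-block of $(\nu_2, 0)$ vanishes; this agrees with the one-step cluster. The only point that deserves to be recorded carefully—rather than dismissed as obvious—is the dual role of this zero block: it is what simultaneously ensures that the first-stage frozen variables $x_{d_1}$ propagate unchanged through the second lift and that $-(\nu_2, 0)\, B_1$ collapses to $-\nu_2 B$ with no contamination from $\nu_1$. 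Once this observation is in place, no further obstacle remains.
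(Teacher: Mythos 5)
Your proof is correct and takes the same route the paper implicitly has in mind: the paper records this lemma with the remark that it ``is obvious from the definitions'' and gives no proof, and your argument is precisely the careful unwinding of Definition~\ref{monomial lifting defi} (vertex sets, exchange matrix via the block computation $(\nu_2,0)\,B_1 = \nu_2 B$, and cluster variables via the vanishing $D_1$-block) that justifies that remark.
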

We introduce the following notion of mutation of lifting configuration.

\begin{definition}[mutation of lifting configuration]
    \label{mutation lifting configuration defi}
    Let $(\LL,\nu)$ be a $D$-lifting configuration on $t$, $k \in I_{uf}$  and $t':= \mu_k(t)$. Consider the $D$-lifting matrix $\nu'$ on $t'$ defined by

    \begin{equation}
  \label{mutation lifting configuration formula}  
  \nu'_{\bullet,i}  =\begin{aligned}
           \begin{cases} 
           \nu_{\bullet,i} & \text{if} \quad  i \neq k  \\
          \max \{ \nu B_{\bullet,k}^+ \, , \,  \nu B_{\bullet, k}^- \} - \nu_{\bullet,k} & \text{if} \quad i=k
          \end{cases}
       \end{aligned}    
\end{equation}
where the $\max$ in \eqref{mutation lifting configuration formula} is taken component-wise. We say that $\nu'=\mu_k(\nu)$ is the mutation at $k$ of the lifting matrix $\nu$. Moreover, the lifting configuration $(\LL, \nu')= \mu_k(\LL, \nu)$, on $t'$, is the mutation at $k$ of the lifting configuration $(\LL, \nu).$
\end{definition}
\begin{remark}
\label{alternative mutation lifting}
    For any $a,b \in \RR$ we have that $\max\{a, b\}= (a-b)^+ + b$. Using this formula and the fact that $b=b^+-b^-$, we get two alternative forms of formula \eqref{mutation lifting configuration formula}, namely:  \begin{equation}
    \label{eq 18}
        \nu'_{\bullet, k}= ( \nu B_{\bullet, k})^+ + \nu B^-_{\bullet,k}- \nu_{\bullet, k}
         \end{equation}
    and 
    \begin{equation}
    \label{eq 19}
        \nu'_{\bullet, k}=  ( - \nu B_{\bullet, k})^+ + \nu B^+_{\bullet,k}- \nu_{\bullet,k}.
        \end{equation}
\end{remark}
If the $D$-pointed field $\LL$ is clear, we identify a lifting configuration with its lifting matrix. From now on, we fix $\LL$ and suppose that any lifting configuration has $\LL$ as defining field. The following lemma is crucial.
\begin{lemma}
\label{mutation commutes lifting}
Lifting seeds commutes with mutation. That is, for any lifting configuration $\nu$ on $t$ and any $k \in I_{uf}$, if we denote by $t'=\mu_k(t)$ and $\nu'=\mu_k(\nu)$, then the following diagram is commutative

\[\begin{tikzcd}
	\lif t & \lif t' \\
	t & {t'}
	\arrow["{\nu'}"', from=2-2, to=1-2]
	\arrow["{\mu_k}"', from=2-1, to=2-2]
	\arrow["\nu", from=2-1, to=1-1]
	\arrow["{\mu_k}", from=1-1, to=1-2]
\end{tikzcd}\]
\end{lemma}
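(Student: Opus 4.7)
The plan is a direct verification that the two seeds $\mu_k(\lif t)$ and $\lif{t'}$ (the latter being the monomial lifting of $t'=\mu_k(t)$ through $\nu'=\mu_k(\nu)$) coincide coordinate-by-coordinate: vertex set, extended exchange matrix, and cluster. The vertex sets are identical by definition since neither mutation nor lifting modifies $I_{uf}$, $I_{sf}$, $I_{hf}$ or adjoins/removes elements of $D$.

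For the extended exchange matrices, I would split $\mu_k(\lif B)$ into its $I \times I_{uf}$ block and its $D \times I_{uf}$ block. The $I \times I_{uf}$ block mutates to $B'=\mu_k(B)$ because the mutation rule \eqref{matrix mutation} only sees the $k$-th row of $\lif B$ (which agrees with the $k$-th row of $B$, as $k\in I_{uf}$) and the $k$-th column restricted to $I$ (again equal to $B_{\bullet,k}$). For the $D \times I_{uf}$ block, the $k$-th column is forced to equal $(\nu B)_{\bullet,k}$ from both sides: on the left by the sign-flip rule $\mu_k(\lif B)_{\bullet,k}=-\lif B_{\bullet,k}$, and on the right by the identity $-(\nu' B')_{d,k}=-(\nu'\cdot(-B_{\bullet,k}))_d=(\nu B)_{d,k}$, valid because $\nu'$ and $\nu$ differ only on column $k$ while $b_{k,k}=0$. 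For columns $j\ne k$ of the $D \times I_{uf}$ block, I would expand the mutation formula using $\lif b_{d,k}^+=(\nu B)_{d,k}^-$ and $\lif b_{d,k}^-=(\nu B)_{d,k}^+$, and compare with $-(\nu' B')_{d,j}$ after using \eqref{matrix mutation} for $b'_{i,j}$ together with $b_{k,k}^{\pm}=0$. The comparison reduces to the elementary identity
\[
\nu_{d,k}+\nu'_{d,k}=\max\{(\nu B^+)_{d,k},\,(\nu B^-)_{d,k}\},
\]
which is the content of \eqref{mutation lifting configuration formula}, combined with a two-case sign analysis on $(\nu B)_{d,k}$.

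For the clusters, only the $k$-th entry is at stake since mutation fixes $\lif x_i$ for $i\ne k$, and the lifting is the identity on $D$. Writing down the exchange relation at $k$ for $\lif t$,
\[
\lif x_k\,\mu_k(\lif x)_k=\lif x^{\lif B_{\bullet,k}^+}+\lif x^{\lif B_{\bullet,k}^-},
\]
and substituting $\lif x_i=x_i\prod_d x_d^{\nu_{d,i}}$ for $i\in I$ together with $\lif x_d=x_d$, both monomials on the right factor as $M_k^{\pm}\cdot\prod_d x_d^{e_d^{\pm}}$, where $e_d^+=(\nu B^+)_{d,k}+(\nu B)_{d,k}^-$ and $e_d^-=(\nu B^-)_{d,k}+(\nu B)_{d,k}^+$. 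The key observation is that both exponents collapse to the same value $\nu_{d,k}+\nu'_{d,k}$; this follows from the same two-case analysis used for the matrix computation (equivalently, from the reformulations of Remark \ref{alternative mutation lifting}). Factoring out this common monomial leaves $(M_k^++M_k^-)=x_k x'_k$, and dividing by $\lif x_k$ gives $\mu_k(\lif x)_k=x'_k\prod_d x_d^{\nu'_{d,k}}=\lif{x'}_k$, as required.

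The only genuinely nontrivial ingredient is the max-identity $\nu_{d,k}+\nu'_{d,k}=\max\{(\nu B^+)_{d,k},(\nu B^-)_{d,k}\}$, and the parallel identity $(\nu B^\pm)_{d,k}+(\nu B)_{d,k}^\mp=\max\{(\nu B^+)_{d,k},(\nu B^-)_{d,k}\}$, which is precisely the design choice encoded in Definition \ref{mutation lifting configuration defi}. Everything else is matrix bookkeeping; the main obstacle is simply organising these case distinctions cleanly, ideally by invoking \eqref{eq 18}--\eqref{eq 19} to avoid repeated sign analysis. Once the key identity is in hand, both the matrix equality and the cluster equality drop out from symmetric expansions.
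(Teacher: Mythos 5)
Your proposal is correct and takes essentially the same approach as the paper's proof: a direct coordinate-by-coordinate verification of vertex sets, extended exchange matrix blocks, and cluster variables, with the crux in both being the identity $(\nu B^{\pm})_{\bullet,k} + (\nu B_{\bullet,k})^{\mp} = \nu_{\bullet,k} + \nu'_{\bullet,k}$ from Remark \ref{alternative mutation lifting}. The paper does clusters before matrices and packages the exponent bookkeeping through the matrix products $\hnu\cdot\lif B^{\pm}$ in \eqref{eq 15}--\eqref{eq 16}, while you handle the two blocks of $\lif B$ separately with the same sign analysis; these are cosmetic differences.
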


\begin{proof}
  Call $t^*=\mu_k(\lif t)$. We have to prove that $t^*=\lif t'$, where $\lif t' \lifap t$. It's clear that the two seeds have the same vertices sets. To avoid possible confusion, we stress that $\lif B^\pm$ denotes $(\lif B)^\pm$ and that $\lif b_{i,j}^\pm$ are the coefficients of $\lif B^\pm$.

\bigskip
  
 First we prove that $x^*=\lif x'$. Clearly, if $d \in D$, then $x^*_d=\lif x'_d= x_d$. If $i \in I $ and $i \neq k$, then we have $x_i^*=\lif x_i= \lif x_i'$ because of \eqref{mutation lifting configuration formula} and \eqref{cluster mutation}. Finally, we can compute that:

   \begin{align}
         x_k^* = & \spc  \displaystyle  \lif x \strut^{ \lif \kern0.2em B_{\bullet, k}^+ - e_k} +  \lif x\strut^{\lif \spc B_{\bullet, k}^- - e_k} \notag  \\[0.4em]
          = & \spc  \hx \strut^{ \hnu \cdot \lif \spc B^+_{\bullet,k} - \hnu_{\bullet,k}}+ \hx\strut^{ \hnu \cdot \lif \spc B^-_{\bullet,k} - \hnu_{\bullet,k}}\notag \\[0.4em]
          =  & \spc  x\strut^{B^+_{\bullet,k}-e_k} \hx_D\strut^{\nu \cdot B^+_{\bullet,k}+ (-\nu \cdot B_{\bullet,k})^+ - \nu_{\bullet,k}} + x\strut^{B^-_{\bullet,k}-e_k} \hx_D\strut^{\nu \cdot B^-_{\bullet,k}+ (-\nu \cdot B_{\bullet,k})^- - \nu_{\bullet,k}} \label{eq 17}
\end{align}

The first equality is the mutation rule \eqref{cluster mutation}, the second is obtained from the definition of $\lif x$ \eqref{formula lifting variables} and the third follows from the following expressions:

\begin{equation}
    \label{eq 15}
    \begin{split}
    \hnu \cdot \lif B^+= 
    \bmat Id & 0\\
    \nu & Id
    \emat  \bmat B^+\\
    (-\nu B)^+ 
    \emat = \bmat B^+ \\
    \nu B^+ + (-\nu B )^+
    \emat \\
     \hnu \cdot \lif B^-= 
    \bmat Id & 0\\
    \nu & Id
    \emat  \bmat B^-\\
    (-\nu B)^-
    \emat = \bmat B^- \\
    \nu B^- + (-\nu B )^-
    \emat
    \end{split}
\end{equation}

Recall that $\lif B^+-\lif B^-=\lif B$, in particular
\begin{equation}
    \label{eq 16}
\hnu \cdot  \lif B^+ - \hnu \cdot \lif B ^-= \bmat Id & 0\\
    \nu & Id
    \emat  \bmat B\\
    -\nu B
    \emat = \bmat B \\
    0
    \emat.
\end{equation}

From \eqref{eq 15} and \eqref{eq 16} we deduce that the exponents of $\hx_D$ in the two monomials of expression \eqref{eq 17} are the same. Hence, using \eqref{cluster mutation}, we deduce from \eqref{eq 17} that 
$$ x_k^*= x_k' \hx_D \strut^{\nu \cdot B^+_{\bullet,k}+ (-\nu \cdot B_{\bullet,k})^+ - \nu_{\bullet,k}}.
$$
Comparing this formula to the expression of $\lif x_k'$, we see that it's sufficient to prove that  $$ \nu B^+_{\bullet, k} + (-\nu B_{\bullet, k})^+ - \nu_{\bullet,k}= \nu'_{\bullet, k}.$$
But this is Formula \eqref{eq 19}.

\bigskip

Next, we consider the exchange matrices, that is we prove that $B^*=\lif B'$. It's clear from \eqref{matrix mutation} and \eqref{formula lifting B} that for any $j \in I$ and $i \in I_{uf}$, $b^*_{j,i}=b'_{j,i}=\lif b'_{j,i}$. For $d \in D$, then $$b^*_{d,k}=- \lif b_{d,k}= \nu_{d, \bullet} B_{\bullet, k}. $$
and $$\lif b_{d,k}'= - \nu'_{d,\bullet} B'_{\bullet,k}= \nu'_{d,\bullet} B_{\bullet,k}.$$
The equality $b^*_{d,k}= \lif b'_{d,k}$ follows from the fact that $b_{k,k}= 0$ and the definition of $\nu'$ \eqref{mutation lifting configuration formula}. Next, if $i \in I_{uf} $, $i \neq k$, using \eqref{matrix mutation} and \eqref{formula lifting B} we get that

\begin{align}
b^*_{d,i}=&  \lif b_{d,i}+ \lif b_{d,k}^+ b_{k,i}^+ - \lif b_{d,k}^-b_{k,i}^- \notag\\[0.2em]
= & \lif b_{d,i} + (-\nu_{d, \bullet} B_{\bullet,k})^+b_{k,i}^+ - (-\nu_{d, \bullet} B_{\bullet, k})^- b_{k,i}^- \notag .
\end{align}

Similarly we have that 

\begin{align}
\lif b'_{d,i} = & -\nu'_{d,\bullet}B'_{\bullet,i} \notag \\[0.2em]
= & -\nu_{d,k}'b_{k,i}' + \sum_{j \neq k}-\nu_{d,j}b_{j,i}' \notag \\[0.2em]
= & -\nu_{d,k}b_{k,i} + \max\{ \nu_{d,\bullet}B^+_{\bullet,k} \, , \, \nu_{d,\bullet}B^-_{\bullet,k} \}b_{k,i} - \sum_{j \in I\setminus \{k\}} \nu_{d,j}(b_{j,i}+ b^+_{j,k}b^+_{k,i}-b^-_{j,k}b^-_{k,i}) \notag \\[0.2em]
= & -\nu_{d,\bullet} B_{\bullet,i}+ \max\{\nu_{d,\bullet} B_{\bullet,k}^+ \, , \, \nu_{d,\bullet} B_{\bullet,k}^-\}b_{k,i} -\nu_{d,\bullet}B_{\bullet,k}^+b_{k,i}^+ +\nu_{d,\bullet}B^-_{\bullet,k}b_{k,i}^- \notag \\[0.2em]
= & \lif b_{d,i} + (-\nu_{d,\bullet}B_{\bullet,k})^+b_{k,i} + \nu_{d,\bullet}B^+_{\bullet,k}b_{k,i}-\nu_{d,\bullet}B^+_{\bullet,k}b_{k,i}^+ + \nu_{d,\bullet}B^-_{\bullet,k}b_{k,i}^- 
\label{eq 20} \end{align}
The first equality follows from formula \eqref{formula lifting B}, the second, third and fourth are obtained from formulas \eqref{matrix mutation}, \eqref{mutation lifting configuration formula} and some elementary manipulations.
The last equality is a consequence of formulas \eqref{formula lifting B} and \eqref{eq 19}. Finally, using that $b_{k,i}=b_{k,i}^+-b_{k,i}^-$ in Formula \eqref{eq 20}, we obtain immediately that $\lif b'_{d,i}=b_{d,i}^*$. 
\end{proof}

The following characterisation of monomial liftings turns out to be useful.

\begin{prop}
    \label{unicity monomial lifting}
   Let  $\widetilde t$ be a seed of $\LL$. Suppose that there exist  $\nu \in \ZZ^{D \times I}$, and $\lambda \in \ZZ^{D \times I_{uf}}$ such that \begin{enumerate}
       \item $\widetilde t$ is a $D$-pointed seed extension of $t$ with $D \subset \widetilde I_{sf}$.
       \item For any $i \in I$, $\widetilde x_i= x_i \,\hx\strut^{\nu_{\bullet,i}}_D.$ 
       \item For any $k \in I_{uf}$, $\mu_k(\widetilde x_k)= \mu_k(x_k)  \,\hx\strut^{\lambda_{\bullet,k}}_D$
   \end{enumerate} Then $\widetilde t \lifa t$ and for any $k \in I_{uf}$, $\lambda_{\bullet,k}= \mu_k(\nu)_{\bullet,k}.$
\end{prop}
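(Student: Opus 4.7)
The plan is to verify the three constituent data of $\widetilde t$ against the hypothetical monomial lifting $\lif t$: the vertex sets, the extended cluster, and the extended exchange matrix. The first two will match almost by construction. Condition 1 together with $D \subseteq \widetilde I_{sf}$ forces $\widetilde I_{uf} = I_{uf}$, $\widetilde I_{sf} = I_{sf} \sqcup D$ and $\widetilde I_{hf} = I_{hf}$, which are precisely the vertex sets of $\lif t$. Since $\widetilde t$ is $D$-pointed we have $\widetilde x_d = x_d$ for $d \in D$, and combined with condition 2 this gives $\widetilde x = \lif x$ directly from Definition \ref{monomial lifting defi}.

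The substantive step will be to show $\widetilde B = \lif B$. From condition 1 we already have $\widetilde B_{I \times I_{uf}} = B = \lif B_{I \times I_{uf}}$, so only the $D$-rows remain. For a fixed $k \in I_{uf}$, I would expand both sides of condition 3 using the mutation rule and the explicit formula for $\widetilde x$, obtaining on the left
\[
\mu_k(\widetilde x_k) = x^{B_{\bullet,k}^+ - e_k}\, \hx_D^{\nu B_{\bullet,k}^+ - \nu_{\bullet,k} + \widetilde B_{D,k}^+} \; + \; x^{B_{\bullet,k}^- - e_k}\, \hx_D^{\nu B_{\bullet,k}^- - \nu_{\bullet,k} + \widetilde B_{D,k}^-},
\]
and on the right $\bigl(x^{B_{\bullet,k}^+ - e_k} + x^{B_{\bullet,k}^- - e_k}\bigr)\hx_D^{\lambda_{\bullet,k}}$. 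Because the family $\hx$ is algebraically independent over $\CC$, its Laurent monomials are linearly independent. In the generic case $B_{\bullet,k} \neq 0$ the two monomials in $x$ are distinct, and identifying coefficients gives $\widetilde B_{D,k}^+ - \widetilde B_{D,k}^- = -\nu B_{\bullet,k}$; since the positive and negative parts have disjoint supports, this forces $\widetilde B_{D,k} = -\nu B_{\bullet,k} = \lif B_{D,k}$.

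The main technical nuisance will be the degenerate case $B_{\bullet,k} = 0$ (a completely disconnected mutable vertex), in which the two $x$-monomials collapse and coefficients cannot be identified separately. There the relation reduces to $\hx_D^{-\nu_{\bullet,k} + \widetilde B_{D,k}^+} + \hx_D^{-\nu_{\bullet,k} + \widetilde B_{D,k}^-} = 2\,\hx_D^{\lambda_{\bullet,k}}$, and linear independence of $\hx_D$-monomials forces both exponents to agree, so $\widetilde B_{D,k}^+ = \widetilde B_{D,k}^-$ and both vanish; again $\widetilde B_{D,k} = 0 = -\nu B_{\bullet,k}$. Once $\widetilde t = \lif t$ is in hand, reading off $\lambda_{\bullet,k}$ from the coefficient identification above gives $\lambda_{\bullet,k} = \nu B_{\bullet,k}^+ - \nu_{\bullet,k} + (-\nu B_{\bullet,k})^+$, which is precisely formula \eqref{eq 19} in Remark \ref{alternative mutation lifting} for $\mu_k(\nu)_{\bullet,k}$. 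Beyond the degenerate-vertex subtlety, the proof is essentially bookkeeping with positive and negative parts of matrices, leveraged by the linear independence of Laurent monomials in $\hx$.
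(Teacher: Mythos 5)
Your proof is correct and follows essentially the same route as the paper: use condition 1 to match vertex sets and the $I$-rows of the exchange matrix, conditions 1 and 2 to match extended clusters, then expand both sides of condition 3 and compare Laurent monomials to pin down the $D$-rows of $\widetilde B$ and recover $\lambda$. A minor bonus is that you explicitly treat the degenerate case $B_{\bullet,k}=0$, where the two $x$-monomials coincide and the coefficient comparison needs the disjoint-supports observation; the paper's proof passes over this silently.
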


\begin{proof}
 Denote by $\lif t$ the monomial lifting of $t$ determined by $(\LL, \nu)$. We have to prove that $\widetilde t= \lif t$. 
 The first assumption implies that $\widetilde t$ and $\lif t$ have the same vertices set. Moreover, assumption 1 and 2 imply that $\widetilde x= \lif x$. By assumption 1, $\widetilde B_{I , \bullet }= B = \lif B_{I, \bullet}.$ So we just have to prove that $\widetilde B_{D , \bullet}= -\nu B.$
 Fix $k \in I_{uf}$. Call $t'=\mu_k(t)$ and $t^*= \mu_k(\widetilde t)$. By assumption 3 and \eqref{exchange relation}, we have that 
$$ \begin{array}{rl}
   x_k^* \widetilde x_k= & x_k'x_k \hx_D\strut^{\nu_{\bullet, k} 
 + \lambda_{\bullet,k}}\\[0.2em]
   = & x\strut^{B^+_{\bullet,k}} \hx_D\strut^{\nu_{\bullet, k} 
 + \lambda_{\bullet,k}} +  x\strut^{B^-_{\bullet,k}} \hx_D\strut^{\nu_{\bullet, k} 
 + \lambda_{\bullet,k}}.
\end{array}$$
Always using \eqref{exchange relation}, we can compute that 

$$\begin{array}{rl}
 x_k^*\widetilde x_k= & \widetilde x \strut^{\widetilde B^+_{\bullet,k} }+ \widetilde x \strut^{\widetilde B^-_{\bullet,k} }\\[0.2em]
 = &  \hx \strut^{\hnu \widetilde B^+_{\bullet,k} }+ \hx \strut^{ \hnu \widetilde B^-_{\bullet,k} }\\[0.2em]
 = &  x\strut^{B^+_{\bullet, k}} \hx_D\strut^{\nu B_{\bullet, k}^+ + \widetilde B^+_{D\times k}}+ x\strut^{B^-_{\bullet, k}} \hx_D\strut^{\nu B_{\bullet, k}^- + \widetilde B^-_{D \times k}}.
 \end{array}$$

We deduce that $$ \nu B_{\bullet, k}^+ + \widetilde B^+_{D \times k}= \nu_{\bullet, k}+ \lambda_{\bullet,k}= \nu B_{\bullet, k}^- + \widetilde B^-_{D \times k}. $$
In particular $$ \widetilde B_{D \times k } = \widetilde B_{D \times k }^+ - \widetilde B_{D \times k }^-=  \nu B_{\bullet, k}^- - \nu B_{\bullet, k}^+ = - \nu B_{\bullet, k}.   $$
Finally, the statement about $\lambda$ can be easily deduced from the two above identities and \eqref{eq 19}.

\end{proof}

We introduce some notation. For any finite sets $J$ and $K$, then $\ZZ^{J \times K}$ and $(\ZZ^J)^K$ are canonically identified by the map that assigns to a matrix the collection of its columns. From now on, we make no difference between these two objects. Let $M \in \ZZ^{J \times K} = (\ZZ^J)^K$. Note that if $I$ is a third finite set and $N \in \ZZ^{K \times I}$, then $MN= M^N$. We write $MN$ when we think about $M$ as a matrix and $M^N$ when we think about $M$ as the collection of its columns.\\
If $H,K$ are groups and $h \in H^J, k \in K^J$, then the notation $h \times k$ stands for the element of $(H \times K)^J$ defined by $(h \times k)_j=(h_j,k_j)$. 

\bigskip

From now on, fix a lifting matrix $\nu \in \ZZ^{D \times I}$. In particular $(\LL, \nu)$ is a fixed $D$-lifting configuration on $t$. 

\bigskip

If the seed $t$ graded, the degree configuration of $t$ can be lifted along with $t$.

\begin{definition}[lifting of degree configuration]
    \label{lifting deg def}
    Let $\lif 0:= (\nu, Id) \in (\ZZ^D)^{I \sqcup D}$. If $\sigma \in H^I$ is a degree configuration on $t$, then we define 
    \begin{equation}
        \label{lifting sigma formula}
        \lif \sigma =  \lif 0 \times (\sigma,0) \in (\ZZ^D \times H)^{I \sqcup D}.
    \end{equation}
     In this case we say that $\nu$ lifts $\sigma$ to $\lif \sigma $ and write: $\lif \sigma \lifa \sigma.$ 
\end{definition}

\begin{lemma}
\label{lifting grad lemma}
    For any degree configuration $\sigma$ on $t$, $\lif \sigma$ is a degree configuration on $\lif t$.
\end{lemma}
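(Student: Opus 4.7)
The plan is to verify the graduation condition $\lif\sigma^{\lif B^+} = \lif\sigma^{\lif B^-}$ column by column for each mutable vertex $k \in \lif I_{uf} = I_{uf}$, exploiting the product structure $\lif\sigma = \lif 0 \times (\sigma, 0)$ to reduce the check to two independent identities: one in $H$ (arising from $\sigma$) and one in $\ZZ^D$ (arising from $\lif 0 = (\nu, \Id)$).

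First, I would unpack the definition. For $i \in I$ we have $\lif\sigma_i = (\nu_{\bullet,i}, \sigma_i)$, and for $d \in D$ we have $\lif\sigma_d = (e_d, 0)$. Since the $H$-component of $\lif\sigma_d$ vanishes, the $H$-component of $\lif\sigma^{\lif B^\pm_{\bullet,k}}$ only involves the rows of $\lif B^\pm$ indexed by $I$, which by the formula $\lif B = \bigl(\begin{smallmatrix} B \\ -\nu B \end{smallmatrix}\bigr)$ coincide with $B^\pm_{\bullet,k}$. Hence the $H$-component boils down to $\sigma^{B^+_{\bullet,k}} = \sigma^{B^-_{\bullet,k}}$, which holds by the hypothesis that $\sigma$ is a degree configuration on $t$.

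The more substantial step is the $\ZZ^D$-component. Computing directly,
\begin{equation*}
\lif 0^{\lif B^+_{\bullet,k}} \;=\; \nu B^+_{\bullet,k} \,+\, (-\nu B)^+_{\bullet,k}, \qquad \lif 0^{\lif B^-_{\bullet,k}} \;=\; \nu B^-_{\bullet,k} \,+\, (-\nu B)^-_{\bullet,k},
\end{equation*}
where the first summands come from the $I$-part of $\lif\sigma$ evaluated against $B^\pm_{\bullet,k}$, and the second summands come from the $D$-part $\Id$ picking out the row $(-\nu B)_{\bullet,k}$ of $\lif B$. Using the elementary identities $(-a)^+ = a^-$ and $(-a)^- = a^+$, applied componentwise, the equality to prove becomes
\begin{equation*}
\nu B^+_{\bullet,k} \,+\, (\nu B)^-_{\bullet,k} \;=\; \nu B^-_{\bullet,k} \,+\, (\nu B)^+_{\bullet,k},
\end{equation*}
i.e. $\nu(B^+ - B^-)_{\bullet,k} = (\nu B)^+_{\bullet,k} - (\nu B)^-_{\bullet,k}$. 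Both sides equal $(\nu B)_{\bullet,k}$, so the identity holds.

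Combining the two components yields $\lif\sigma^{\lif B^+_{\bullet,k}} = \lif\sigma^{\lif B^-_{\bullet,k}}$ for every $k \in I_{uf}$, which is the required graduation condition. I do not anticipate any real obstacle: the lemma is a direct algebraic verification, and the only slightly subtle point is keeping track that $(\nu B)^\pm$ differs from $\nu B^\pm$ when $\nu$ has negative entries, which is handled by the $(-a)^\pm$ identity above.
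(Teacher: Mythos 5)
Your proof is correct and follows essentially the same route as the paper: decompose $\lif\sigma$ into its $\ZZ^D$-part $\lif 0=(\nu,\Id)$ and its $H$-part $(\sigma,0)$, and check each against the columns of $\lif B$. The paper merely streamlines the $\ZZ^D$-computation by working with the difference $\lif B=\lif B^+-\lif B^-$ directly, so that $\lif 0^{\lif B}=(\nu,\Id)\bigl(\begin{smallmatrix}B\\-\nu B\end{smallmatrix}\bigr)=\nu B-\nu B=0$, avoiding your (correct but longer) bookkeeping with $(-a)^{\pm}=a^{\mp}$.
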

\begin{proof}
    Note that  $\lif \sigma\strut^{ \lif \spc B}= \lif 0\strut^{\lif \spc B} \times (\sigma,0)\strut^{\lif \spc B}.$ Moreover $(\sigma,0)\strut^{\lif \spc B}= \sigma^B=0$ and $$\lif 0 \strut^{\lif \spc B}= (\nu, \Id) \bmat B \\
    -\nu B \emat =0.$$
\end{proof}

\begin{remark}
\label{canonical deg conf lifting}
    Note that $\lif t$ has a canonical $\ZZ^D$-degree configuration given by $\lif 0=(\nu, \Id)$, which is the lifting of the trivial degree configuration on $t$. The data of this degree configuration is equivalent to the one of $\nu$. 
    As a warning, the degree configuration $\lif 0$ depends both on $t$ and on $\nu$. 
    To avoid ambiguity, we use the convention that $\lif 0$ is always considered as a degree configuration on a seed which is denoted by $\lif t$ and whose corresponding lifting matrix $\nu$ is clear from the context. If a seed is called $t^\bullet$ (resp. $t_\bullet$), for a certain symbol $\bullet$, and a lifting $\lif t^\bullet$ (resp. $\lif t_\bullet$) of $t^\bullet$ (resp. $t_\bullet$) is defined, then we use the notation $\lif 0^\bullet$ (resp. $\lif 0_\bullet$).
\end{remark}

\begin{lemma}
\label{lifting grad commutes mutation}
   Lifting a degree configuration commutes with mutation. In particular, for any degree configuration $\sigma$ on $t$ and any $k \in I_{uf}$, if $\sigma'= \mu_k(\sigma)$, then we have a commutative diagram

   \[\begin{tikzcd}
	\lif \sigma & \lif \sigma' \\
	\sigma & {\sigma'}
	\arrow["{\nu'}"', from=2-2, to=1-2]
	\arrow["{\mu_k}"', from=2-1, to=2-2]
	\arrow["\nu", from=2-1, to=1-1]
	\arrow["{\mu_k}", from=1-1, to=1-2]
\end{tikzcd}\]
\end{lemma}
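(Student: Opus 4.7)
The plan is to verify the diagram on the nose by directly computing $\mu_k(\lif{\sigma})$ and comparing it to $\lif{\sigma'}$ component by component, using that both are now seen as degree configurations on the common seed $\mu_k(\lif t) = \lif{t'}$ (which is known from Lemma \ref{mutation commutes lifting}). The computation is essentially the \emph{degree shadow} of the calculation performed in Lemma \ref{mutation commutes lifting}: once one remembers that $\lif 0 = (\nu, \Id)$ records precisely the $\hx_D$-monomial exponents of the lifted cluster variables, the claim should fall out of the same bookkeeping with $\lif B^+$ and $\lif B^-$.

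First I would dispose of the vertices $i \neq k$ in $\lif I = I \sqcup D$. By the mutation rule \eqref{mutation graduation} applied to $\lif \sigma$, one has $\mu_k(\lif \sigma)_i = \lif \sigma_i$. For $i = d \in D$ this equals $(e_d, 0) = \lif{\sigma'}_d$, and for $i \in I \setminus \{k\}$ both $\nu'_{\bullet, i} = \nu_{\bullet, i}$ (by \eqref{mutation lifting configuration formula}) and $\sigma'_i = \sigma_i$ (by \eqref{mutation graduation}), so $\lif \sigma_i = (\nu_{\bullet, i}, \sigma_i) = \lif{\sigma'}_i$.

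The only non-trivial check is at $i = k$. Here I would use the block form
\[
\lif B^+_{\bullet, k} \;=\; \begin{pmatrix} B^+_{\bullet, k} \\ (-\nu B_{\bullet, k})^+ \end{pmatrix} \in \ZZ^{I \sqcup D}
\]
(which is extracted from the identity $\hnu \cdot \lif B^+ = \bigl( \begin{smallmatrix} B^+ \\ \nu B^+ + (-\nu B)^+ \end{smallmatrix} \bigr)$ used in the proof of Lemma \ref{mutation commutes lifting}), together with the product decomposition $\lif \sigma = (\nu, \Id) \times (\sigma, 0)$, to evaluate
\[
\mu_k(\lif \sigma)_k \;=\; \lif \sigma\strut^{\lif B^+_{\bullet, k}} - \lif \sigma_k \;=\; \Bigl( \nu B^+_{\bullet, k} + (-\nu B_{\bullet, k})^+ - \nu_{\bullet, k}, \; \sigma^{B^+_{\bullet, k}} - \sigma_k \Bigr).
\]
The second component is exactly $\sigma'_k$ by \eqref{mutation graduation}, and the first component is exactly $\nu'_{\bullet, k}$ by the alternative form \eqref{eq 19} of the lifting mutation formula recorded in Remark \ref{alternative mutation lifting}. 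Hence $\mu_k(\lif \sigma)_k = (\nu'_{\bullet, k}, \sigma'_k) = \lif{\sigma'}_k$, which closes the diagram.

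There is no real obstacle: the entire content of the lemma is that the two "halves" of $\lif \sigma$ — namely the $\ZZ^D$-part encoded by $\nu$ and the $H$-part encoded by $\sigma$ — mutate independently under the block-diagonal-looking formulas \eqref{formula lifting B} and \eqref{lifting sigma formula}, and each half already satisfies its own commutation statement (the $\nu$-half by definition of $\mu_k(\nu)$, the $\sigma$-half by definition of $\mu_k(\sigma)$). The only thing one has to be careful about is which alternative form of $\mu_k(\nu)_k$ is used: formula \eqref{eq 19} is the one that matches the output of \eqref{mutation graduation} applied to $\lif \sigma$, and using \eqref{mutation lifting configuration formula} directly would require an extra $\max$-versus-$(\cdot)^+$ rewriting.
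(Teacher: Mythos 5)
Your proposal is correct and follows essentially the same route as the paper: the paper likewise reduces the check to the single vertex $k$, computes $\mu_k(\lif 0)_k = \lif 0\strut^{\lif \spc B^+_{\bullet,k}} - \nu_{\bullet,k} = \nu B^+_{\bullet,k} + (-\nu B_{\bullet,k})^+ - \nu_{\bullet,k}$, and closes with formula \eqref{eq 19}. The only cosmetic difference is that the paper first splits $\mu_k(\lif\sigma)=\mu_k(\lif 0)\times(\sigma',0)$ and then handles only the $\ZZ^D$-half, whereas you carry both components through the computation simultaneously.
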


\begin{proof}
Let $t'=\mu_k(t)$. From the mutation rule \eqref{mutation graduation} it's clear that 
  $ \mu_k(\lif \sigma)= \mu_k(\lif 0) \times (\sigma',0).$
  Hence it's sufficient to prove that $\mu_k(\lif 0)= \lif 0'$. Call $0^*= \mu_k(\lif 0)$. For $ i \in I \setminus \{k\}$ and $d \in D$, the identities 
  $$0^*_i= \nu_i= \nu'_i= \lif0'_i \quad \text{and} \quad 0^*_d= e_d=\lif 0'_d $$
  follows at once from \eqref{mutation graduation} and \eqref{mutation lifting configuration formula}. From the same formulas we compute that $$\begin{array}{rl}
     0^*_k= &\lif 0\strut^{\lif \spc B^+_{\bullet, k}} - \nu_{\bullet, k} \\[0.15em]
     = & \nu B^+_{\bullet, k} + (-\nu B_{\bullet, k})^+ - \nu_{\bullet,k}.
  \end{array}
  $$
Then the identity $0^*_k=\lif 0'_k$ follows from \eqref{eq 19}.
\end{proof}

\begin{coro}
\label{unicity mutation sequence lifting configuration}
For any $k_1, \dots k_r \in I_{uf}$ and $j_1, \dots j_s \in I_{uf}$ such that $\mu_{k_r}\circ \dots \circ  \mu_{k_1}(t)=\mu_{j_s}\circ \dots \circ \mu_{j_1}(t)$, then $\mu_{k_r} \circ \dots \circ \mu_{k_1}(\nu)=\mu_{j_s} \circ \dots \circ \mu_{j_1}(\nu)$. In particular, any $t' \in \Delta(t)$ has a canonical lifting configuration $(\LL,\nu')$ defined by the composition of \eqref{mutation lifting configuration formula} along any sequence of mutations from $t$ to $t'.$ Moreover, the map sending $t'$ to $\lif t'$ is a bijection between $\Delta(t)$ and $\Delta(\lif t).$ 
\end{coro}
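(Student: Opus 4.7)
The plan is to prove the first assertion --- path-independence of the iterated lifting matrix --- by expressing each entry of $\tau(\nu)$ in closed form, purely in terms of intrinsic data of $t':=\tau(t)$. Once this is accomplished, both remaining claims of the corollary (existence of a canonical lifting configuration on every $t'\in\Delta(t)$, and the bijection with $\Delta(\lif t)$) will follow immediately.

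To execute this, I iterate Lemma \ref{mutation commutes lifting} along $\tau=\mu_{k_r}\circ\dots\circ\mu_{k_1}$: this identifies $\tau(\lif t)\in\Delta(\lif t)$ with the monomial lifting of $t'$ by $\tau(\nu)$, so by formula \eqref{formula lifting variables} one has
$$\tau(\lif x)_i = x(t')_i\cdot\hx_D^{\tau(\nu)_{\bullet,i}}\qquad\text{as elements of }\LL,\text{ for every } i\in I.$$
I now apply the cluster valuation $\cval_d$ (Lemma \ref{cluster valuation}) to both sides. In the seed $\tau(\lif t)$, the cluster variable $\tau(\lif x)_i$ and the frozen variable $x_d$ are distinct members of the polynomial ring generated by its extended cluster, so $\val_d^{\tau(\lif t)}(\tau(\lif x)_i)=0$; hence Lemma \ref{cluster valuation} gives $\cval_d(\tau(\lif x)_i)=0$. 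On the other hand, multiplicativity of the valuation together with $\cval_d(x_{d'})=\delta_{d,d'}$ yields $\cval_d(\tau(\lif x)_i)=\cval_d(x(t')_i)+\tau(\nu)_{d,i}$. Combining these two computations,
$$\tau(\nu)_{d,i}=-\cval_d(x(t')_i),$$
and the right-hand side depends only on $t'$. So $\tau(\nu)=\tau'(\nu)$ whenever $\tau(t)=\tau'(t)$.

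The remaining claims then follow at once. The canonical lifting configuration on each $t'\in\Delta(t)$ is $\nu'_{d,i}=-\cval_d(x(t')_i)$. The map $t'\mapsto\lif t'$ lands in $\Delta(\lif t)$ by Lemma \ref{mutation commutes lifting} iterated, it is surjective since any seed in $\Delta(\lif t)$ has the form $\tau(\lif t)$ and (again by the iterated lemma) equals the monomial lifting of $\tau(t)$, and it is injective because $t'$ is recovered from $\lif t'$ by restricting the data to the indices in $I$. The main technical insight making the argument work is the valuation-theoretic description of the lifting matrix, which sidesteps any direct combinatorial comparison of the iterated mutation formula \eqref{mutation lifting configuration formula} along different paths --- this is the step I expect to require the most care, but Lemma \ref{cluster valuation} supplies precisely the intrinsic tool needed.
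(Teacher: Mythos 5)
Your proof is correct, but it takes a genuinely different route from the paper's. The paper recovers $\nu$ from the canonical degree configuration $\lif 0=(\nu,\Id)$: by Lemma \ref{lifting grad commutes mutation} one has $\tau(\lif 0)=(\tau(\nu),\Id)$, and the paper then invokes the fact (Section \ref{graded seed subsection}) that a mutated degree configuration depends only on the resulting seed. You instead invoke Lemma \ref{cluster valuation} and extract the closed formula $\tau(\nu)_{d,i}=-\cval_d(x'_i)$, whose right-hand side is manifestly intrinsic to $t'$. Your computation is sound: $\cval_d(\tau(\lif x)_i)=\val_d^{\tau(\lif t)}(\tau(\lif x)_i)=0$ because $\tau(\lif x)_i$ and $x_d$ are distinct variables of the polynomial ring $\CC[\tau(\lif t)]$, and additivity of $\cval_d$ on the factorisation $\tau(\lif x)_i=x'_i\,\hx_D^{\tau(\nu)_{\bullet,i}}$ supplied by Lemma \ref{mutation commutes lifting} gives the rest. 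What your route buys is self-containedness and the avoidance of a delicate point: applying path-independence of degree configurations to the seed $\lif t$ presupposes that $\tau(\lif t)=\tau'(\lif t)$, which, given $\tau(t)=\tau'(t)$, is equivalent to the equality $\tau(\nu)=\tau'(\nu)$ being proved; the valuation $\cval_d$ is seed-independent by Lemma \ref{cluster valuation}, so no such a priori identification is needed. Your formula also anticipates the later definition of the minimal lifting matrix $\nu_{d,i}=-\val_d(1\otimes x_i)$ in Theorem \ref{minimal monomial lifting theo} and the identity $\val_d(\lif x_i)=\cval_d(\lif x_i)$ used in Lemma \ref{inequality valuations min lifting}. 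The bijectivity argument matches the paper's; just phrase injectivity as in Definition \ref{monomial lifting defi}: one recovers $x'_i$ and the monomial $\hx_D^{\nu'_{\bullet,i}}$ from $\lif x'_i$ because the $x_d$ are algebraically independent over $\KK$, rather than by literally ``restricting the data to the indices in $I$''.
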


\begin{proof}
    One can recover $\nu$ from $\lif 0$. We know that $\mu_{k_r} \circ \dots \circ \mu_{k_1}(\lif 0)=\mu_{j_s} \circ  \dots \circ \mu_{j_1}(\lif 0)$, then  Lemma \ref{lifting grad commutes mutation} implies that  $\mu_{k_r} \circ \dots \circ \mu_{k_1}(\nu)=\mu_{j_s} \circ \dots \circ \mu_{j_1}(\nu)$. Alternatively, the same result can be obtained in the same way as a direct consequence of Lemma \ref{mutation commutes lifting}. By Lemma \ref{mutation commutes lifting}, the map $\Delta(t) \longto \Delta (\lif t)$ sending $t'$ to $\lif t'$ is well defined. It is surjective since $t$ and $\lif t$ have the same set of mutable vertices. Moreover, it's clear from Definition \ref{monomial lifting defi} that if $\lif t' \lifap t'$, then $\lif t'$ completely determines $\nu'$ and $t'$. In particular, the map is injective. 
\end{proof}

Note that, with little abuse of notation, we can consider the $x_d$ as abstract independent variables over $\KK$. Any $t' \in \Delta$ gives an isomorphism 
\begin{equation}
    \label{lifting variable tensor}
    \begin{array}{r c l }
    \KK[x_d^{\pm 1}]_d & \longto & \CC[x_d^{\pm 1}]_d \otimes \KK\\[0.4em]
    \lif x'_i & \longmapsto & \begin{cases}
        x_d \otimes 1 & \text{if} \quad i=d \in D\\
        (x_d)_d^{\nu'_{\bullet, i}} \otimes x'_i & \text{if} \quad i \in I
    \end{cases}
    \end{array}
\end{equation}
where $\nu'$ is the lifting matrix on $t'$ determined by $\nu$. Because of Lemma \ref{mutation commutes lifting}, the above isomorphism doesn't depend on $t'$ but only on the chosen initial lifting matrix $\nu$ on $t$. So, whenever $\nu$ is clear, we identify the above two rings by the previous isomorphism. In particular, we can think about $\lif t$ as a seed of the fraction field of $\CC[x_d^{\pm 1}]_d \otimes \KK$. With this identification, it's immediate that $\Li(\lif t)= \CC[x_d^{\pm 1}]_d \otimes \Li(t).$

\begin{theo}
\label{monomial lifting AxT}
 We have equality $$ \clu(\lif t) = \clu(t) [x_d^{\pm 1}]_d.$$ Moreover, if $t$ is of maximal rank, then 
 $$ \uclu(\lif t) = \uclu(t) [x_d^{\pm 1}]_d.$$
\end{theo}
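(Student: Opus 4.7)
The plan is to handle the cluster algebra equality by tracking cluster variables through the mutation bijection, and then reduce the upper cluster equality to a flat-base-change argument on upper bounds.

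\textbf{Cluster algebra equality.} By Corollary \ref{unicity mutation sequence lifting configuration}, the map $t' \mapsto \lif t'$ is a bijection $\Delta(t) \to \Delta(\lif t)$, so every cluster variable of $\lif t$ is of the form $\lif x_i'$ for some $t' \in \Delta(t)$ and $i \in I \sqcup D$. The defining formula \eqref{formula lifting variables} gives $\lif x_d' = x_d$ for $d \in D$ and $\lif x_i' = x_i' \, \hx_D^{\nu'_{\bullet,i}}$ for $i \in I$, where $\nu'$ is the lifted matrix on $t'$. Thus each $\lif x_i'$ lies in $\clu(t)[x_d^{\pm 1}]_d$, which gives the inclusion $\clu(\lif t) \subseteq \clu(t)[x_d^{\pm 1}]_d$. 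For the reverse inclusion, the same identity shows $x_i' = \lif x_i' \, \hx_D^{-\nu'_{\bullet,i}} \in \clu(\lif t)$ (since $D \subseteq \lif I_{sf}$, the $x_d$ are invertible in the coefficient ring of $\clu(\lif t)$), and of course $x_d \in \clu(\lif t)$. Combining with the compatibility between the two coefficient rings finishes this half.

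\textbf{Upper cluster algebra equality.} First note that if $B$ has maximal rank, then $\lif B = \binom{B}{-\nu B}$ also has maximal rank since its first $|I|$ rows already do; so $\lif t$ is of maximal rank, and Theorem \ref{upper cluster equals upper bound} applies simultaneously to $t$ and to $\lif t$, giving $\uclu(\lif t) = \upp(\lif t)$ and $\uclu(t) = \upp(t)$. Under the identification \eqref{lifting variable tensor} we already have $\Li(\lif t) = \CC[x_d^{\pm 1}]_d \otimes_\CC \Li(t)$. By Lemma \ref{mutation commutes lifting}, $\mu_k(\lif t)$ is itself a monomial lifting, namely of $\mu_k(t)$ along $\mu_k(\nu)$, so by the same identification
\[
  \Li(\mu_k(\lif t)) = \CC[x_d^{\pm 1}]_d \otimes_\CC \Li(\mu_k(t)),
\]
all sitting inside the common ambient field $\CC(x_d)_d \otimes \CC(\Delta)$.

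\textbf{Finishing via flat base change.} It remains to intersect. Since $\CC[x_d^{\pm 1}]_d$ is a free, hence flat, $\CC$-module, tensoring with it commutes with finite intersections of submodules of a fixed ambient module. Therefore
\[
  \upp(\lif t) \;=\; \Li(\lif t) \cap \bigcap_{k \in I_{uf}} \Li(\mu_k(\lif t)) \;=\; \CC[x_d^{\pm 1}]_d \otimes \Bigl( \Li(t) \cap \bigcap_{k \in I_{uf}} \Li(\mu_k(t)) \Bigr) = \CC[x_d^{\pm 1}]_d \otimes \upp(t),
\]
which is exactly $\uclu(t)[x_d^{\pm 1}]_d$. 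The expected obstacle is the bookkeeping to make the flat-intersection step rigorous: one must check that all the $\Li(\mu_k(\lif t))$ are indeed viewed inside a single $\CC[x_d^{\pm 1}]_d$-module containing $\CC[x_d^{\pm 1}]_d \otimes \Li(\mu_k(t))$ coherently, which is where the compatibility between \eqref{lifting variable tensor} for different seeds (guaranteed by Lemma \ref{mutation commutes lifting} and Corollary \ref{unicity mutation sequence lifting configuration}) is used.
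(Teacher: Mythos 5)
Your proof is correct and follows essentially the same route as the paper: the cluster algebra equality via the bijection of Corollary \ref{unicity mutation sequence lifting configuration} together with the invertibility of the $x_d$ as semi-frozen variables, and the upper cluster algebra equality by passing to upper bounds (Theorem \ref{upper cluster equals upper bound}), identifying $\Li(\mu_k(\lif t))$ with $\CC[x_d^{\pm 1}]_d \otimes \Li(\mu_k(t))$ via Lemma \ref{mutation commutes lifting}, and commuting the flat tensor with the finite intersection. Your explicit remarks on why $\lif t$ inherits maximal rank and on the coherence of the identification \eqref{lifting variable tensor} across seeds are exactly the points the paper relies on implicitly.
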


\begin{proof}
 By Corollary \ref{unicity mutation sequence lifting configuration}, for any $t^* \in \Delta(\lif t)$,  there exists a unique $t' \in \Delta(t)$, with a well defined lifting configuration $\nu'$, such that $t^*= \lif t'$. The equality involving $\clu(\lif t)$ is then obvious from the definition of $\lif x'$ and the fact that, for any $d \in D$, then $x_d^{\pm 1} \in \clu(\lif t)$ since $D \subseteq \, \, \lif I_{sf}$.

\bigskip

In the following, we think about $\lif t$ as a seed of  the fraction field of $\CC[x_d^{\pm 1}]_d \otimes \KK$. If $t$ is of maximal rank, so is $\lif t$. Then using Theorem \ref{upper cluster equals upper bound} and the convention that $\mu_0(t)=t$, we deduce that (intersections run over $I_{uf} \cup \{ 0 \}$): 
$$ \begin{array}{rl}
   \uclu(\lif t)= & \displaystyle \bigcap \biggl[ \CC[x_d^{\pm 1}]_d \otimes \Li(\mu_k(t))  \biggr] \\[1.5em]
   = &  \displaystyle \CC[x_d^{\pm 1}]_d \otimes \biggl[ \bigcap \Li(\mu_k(t)  \biggr]  \\[1.5em]
   = & \uclu(t) [x_d^{\pm 1}]_d
   \end{array}$$
 where central equality follows because $ \CC[x_d^{\pm 1}]_d \otimes -$ commutes with finite intersections.
\end{proof}

We expect the answer to the following question to be negative in general.

\begin{question}
    Does the equality between $\uclu(\lif t)$ and $\uclu(t) [x_d^{\pm 1}]_d$ hold without the assumption that $t$ is of maximal rank?
\end{question}

\begin{remark}
 In the special case of $t_{prin}$ (the seed obtained from $t$ by adding principal coefficients), the second statement of the previous theorem is a consequence of \cite[Proposition 8.27]{gross2018canonical}. We stress the fact that, the variables $(x_d)_d$ being algebraically independent over $\KK$, $\clu(t) [x_d^{\pm 1}]_d$ and $\uclu(t) [x_d^{\pm 1}]_d$ are Laurent polynomial rings over $\clu(t)$ and $\uclu(t)$ respectively.
\end{remark}
         
\begin{coro}
\label{deletion map surjective lifting}
 The deletion map $s : \uclu(\lif t) \longto \uclu(t)$, that is the morphism defined by $s(\lif x_i)=x_i$ for $i \in I $ and $s(\lif x_d)=1$ for $d \in D$, is surjective. The kernel is the ideal generated by $\lif x_d-1$, for $d \in D$.
\end{coro}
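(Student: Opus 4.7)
The plan is to invoke Theorem \ref{monomial lifting AxT}, which identifies $\uclu(\lif t)$ with the Laurent polynomial ring $\uclu(t)[x_d^{\pm 1}]_d$ via the map \eqref{lifting variable tensor}: $\lif x_d \leftrightarrow x_d$ and $\lif x_i \leftrightarrow (x_d)_d^{\nu_{\bullet,i}}\, x_i$ for $i \in I$. The first step is to verify that, under this identification, the prescription $s(\lif x_d) = 1$, $s(\lif x_i) = x_i$ coincides with the evaluation homomorphism
$$\varepsilon\,:\, \uclu(t)[x_d^{\pm 1}]_d \longto \uclu(t), \qquad x_d \longmapsto 1, \qquad f \longmapsto f \text{ for } f \in \uclu(t).$$
This is a single one-line check: $\varepsilon\bigl((x_d)_d^{\nu_{\bullet,i}} \, x_i\bigr) = 1 \cdot x_i = x_i$. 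This observation simultaneously establishes that $s$ is a well-defined $\CC$-algebra morphism, since $\varepsilon$ is manifestly a ring homomorphism.

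Surjectivity follows immediately, because $\varepsilon$ is a retraction of the canonical inclusion $\uclu(t) \hookrightarrow \uclu(t)[x_d^{\pm 1}]_d$. For the kernel, I would observe that the ideal $\mathfrak{a} := (x_d - 1)_{d \in D}$ of $\uclu(t)[x_d^{\pm 1}]_d$ is clearly contained in $\ker \varepsilon$, and that the reverse inclusion follows because the quotient $\uclu(t)[x_d^{\pm 1}]_d / \mathfrak{a}$ is canonically isomorphic to $\uclu(t)$: in the quotient every $x_d$ becomes $1$, collapsing all Laurent monomials, so the composition $\uclu(t) \hookrightarrow \uclu(t)[x_d^{\pm 1}]_d \twoheadrightarrow \uclu(t)[x_d^{\pm 1}]_d / \mathfrak{a}$ is an isomorphism. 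Transporting $\mathfrak{a}$ back along the identification yields $\ker s = (\lif x_d - 1)_{d \in D}$.

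The only conceivable obstacle is the well-definedness of $s$, since a priori one cannot define a ring homomorphism out of $\uclu(\lif t)$ merely by prescribing values on the cluster variables $\{\lif x_j\}_{j \in \lif I}$ without checking that all relations in $\uclu(\lif t)$ are respected. The identification with a Laurent polynomial ring over $\uclu(t)$ dispatches this issue entirely. Thus the whole statement reduces to Theorem \ref{monomial lifting AxT} together with two elementary facts about evaluation homomorphisms on Laurent polynomial rings.
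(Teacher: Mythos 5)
Your proof is correct and is precisely the unpacking of the paper's one-line proof, which simply says the statement is obvious from Theorem \ref{monomial lifting AxT}: you identify $\uclu(\lif t)$ with $\uclu(t)[x_d^{\pm 1}]_d$ via \eqref{lifting variable tensor}, observe that $s$ becomes the evaluation $x_d \mapsto 1$, and then read off surjectivity and the kernel from standard facts about Laurent polynomial rings. Your remark on well-definedness is a good observation and is exactly what the identification is doing implicitly.
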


\begin{proof}
This is obvious from Theorem \ref{monomial lifting AxT}.
\end{proof}

Note that $\CC[x_d^{\pm 1}] \otimes \uclu(t)$ is a $\ZZ^D$ graded algebra. For $\lambda \in \ZZ^D$, the homogeneous component of degree $\lambda$ of this graduation is $(x_d)_d^\lambda \otimes \uclu(t)$. The following is an obvious but very useful reformulation of Proposition \ref{unicity monomial lifting}.

 \begin{coro}
         \label{cor:unicity mon lifting homogeneous}  Let  $\widetilde t$ be a seed of the fraction field of $\CC[x_d^{\pm 1}]_d \otimes \uclu(t)$ such that
         \begin{enumerate}
             \item $\widetilde t$ is a $D$-pointed seed extension of $t$ with $D \subset \widetilde I_{sf}$.
             \item For any $i \in I$, $s(\widetilde x_i)= x_i $ and $\widetilde x_i$ is $\ZZ^D$-homogeneous.
       \item For any $k \in I_{uf}$, $s\biggl( \mu_k(\widetilde x_k)\biggr)= \mu_k(x_k) $ and $\mu_k(\widetilde x_k)$ is $\ZZ^D$-homogeneous.
               \end{enumerate}
         For $i \in I$, let $\nu_{\bullet,i }= \deg(x_i).$ Then $\widetilde t = \lif t \lifa t.$
         \end{coro}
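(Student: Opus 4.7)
The plan is to recognise this corollary as a direct reformulation of Proposition \ref{unicity monomial lifting}, with the explicit monomial factorisations of conditions (2) and (3) there replaced by $\ZZ^D$-homogeneity conditions. The essential observation is that, under the grading on $\CC[x_d^{\pm 1}]_d \otimes \uclu(t)$ for which each $x_d \otimes 1$ has degree $e_d$ and $1 \otimes \uclu(t)$ sits in degree $0$, the ring decomposes as $\bigoplus_{\lambda \in \ZZ^D} \hx_D^\lambda \otimes \uclu(t)$. Consequently any homogeneous element of degree $\lambda$ is of the form $\hx_D^\lambda \otimes g$ for a unique $g \in \uclu(t)$, and the deletion map $s$ (which sends each $x_d$ to $1$ and is the identity on $\uclu(t)$) recovers exactly this $g$.

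Granting this observation, I would set $\nu_{\bullet, i} := \deg(\widetilde x_i)$ for $i \in I$, and $\lambda_{\bullet, k} := \deg(\mu_k(\widetilde x_k))$ for $k \in I_{uf}$. Hypothesis (2) combined with homogeneity of $\widetilde x_i$ forces $\widetilde x_i = \hx_D^{\nu_{\bullet, i}} \otimes g_i$, and applying $s$ yields $g_i = x_i$, so $\widetilde x_i = x_i \hx_D^{\nu_{\bullet, i}}$. This is exactly condition (2) of Proposition \ref{unicity monomial lifting}. The identical argument applied to $\mu_k(\widetilde x_k)$, using hypothesis (3), gives $\mu_k(\widetilde x_k) = \mu_k(x_k) \hx_D^{\lambda_{\bullet, k}}$, which is condition (3) of that proposition. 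Condition (1) is identical in both statements, so Proposition \ref{unicity monomial lifting} applies and delivers $\widetilde t \lifa t$, i.e. $\widetilde t = \lif t$, along with $\lambda_{\bullet, k} = \mu_k(\nu)_{\bullet, k}$ as a free byproduct.

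I do not anticipate any substantive obstacle: the paper itself flags this as an ``obvious but very useful reformulation,'' and indeed the content is purely a dictionary translation. The only minor technicality worth noting is that $\mu_k(\widetilde x_k)$ is a priori a rational expression in the $\widetilde x_j$, not an element of the tensor product ring, so one must extend the $\ZZ^D$-grading to the fraction field. This is harmless: since $\uclu(t)$ sits entirely in degree zero and the non-trivial graded pieces are generated by monomials in the $x_d$, the field of fractions inherits the decomposition $\bigoplus_{\lambda \in \ZZ^D} \hx_D^\lambda \otimes \CC(\Delta(t))$, and homogeneous rational elements admit the same unique factorisation used above.
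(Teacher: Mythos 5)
Your proposal is correct and is essentially the same argument the paper has in mind: the paper offers no separate proof, simply declaring the corollary ``an obvious but very useful reformulation of Proposition \ref{unicity monomial lifting},'' and your translation of the homogeneity hypotheses into the explicit monomial factorisations $\widetilde x_i = x_i\, \hx_D^{\nu_{\bullet,i}}$ and $\mu_k(\widetilde x_k) = \mu_k(x_k)\, \hx_D^{\lambda_{\bullet,k}}$ is exactly that reformulation (you also correctly read $\nu_{\bullet,i}=\deg(\widetilde x_i)$, fixing an apparent typo in the statement). One small imprecision: the fraction field does not literally decompose as $\bigoplus_\lambda \hx_D^\lambda \otimes \CC(\Delta(t))$ — that direct sum is the graded Laurent ring, not the field — but the weaker fact you actually need, namely that a $\ZZ^D$-homogeneous element of the fraction field of degree $\lambda$ has the form $\hx_D^\lambda \otimes f$ with $f\in\CC(\Delta(t))$, does hold and suffices.
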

\begin{lemma}
Let  $\widetilde t$ be a $D$-seed extension of $t$ such that $D \subseteq \widetilde I_{sf}$. If $B^t : \ZZ^I \longto \ZZ^{I_{uf}}$ is surjective, there exist a lifting matrix $\nu \in \ZZ^{D \times I}$ such that $\lif B= \widetilde B$. 
\end{lemma}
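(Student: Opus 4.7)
The plan is to rephrase the equation $\lif B = \widetilde B$ as a linear system in the unknown $\nu$, then to solve it using the surjectivity hypothesis. By Definition \ref{monomial lifting defi} we have $\lif B_{I,\bullet} = B$ and $\lif B_{D,\bullet} = -\nu B$; moreover, since $\widetilde t$ is a $D$-seed extension of $t$, the identity $\widetilde B_{I,\bullet} = B$ holds automatically. Hence the only thing to arrange is the existence of some $\nu \in \ZZ^{D \times I}$ with
\[
\nu B = -\widetilde B_{D,\bullet}
\]
as elements of $\ZZ^{D \times I_{uf}}$.

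Next I would exploit the surjectivity of $B^t \colon \ZZ^I \to \ZZ^{I_{uf}}$. Because $\ZZ^{I_{uf}}$ is a free abelian group, the short exact sequence $0 \to \ker B^t \to \ZZ^I \to \ZZ^{I_{uf}} \to 0$ splits, so there exists $R \in \ZZ^{I \times I_{uf}}$ with $B^t R = \Id_{I_{uf}}$, or equivalently $R^t B = \Id_{I_{uf}}$. Setting $\nu := -\widetilde B_{D,\bullet}\, R^t \in \ZZ^{D \times I}$, one then computes
\[
\nu B = -\widetilde B_{D,\bullet}\, R^t B = -\widetilde B_{D,\bullet},
\]
which is exactly the required identity, so $\lif B = \widetilde B$ follows.

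The only delicate point is producing such an $R$ with \emph{integer} entries, and this is precisely what the hypothesis ``$B^t$ surjective over $\ZZ$'' (rather than merely over $\QQ$) buys us; so I do not expect any genuine obstacle. One may alternatively phrase the argument more categorically by noting that the map $\nu \mapsto \nu B$ from $\ZZ^{D \times I}$ to $\ZZ^{D \times I_{uf}}$ is split surjective as soon as $B^t$ is, and $\nu$ is then obtained by applying any such section row by row to $-\widetilde B_{D,\bullet}$.
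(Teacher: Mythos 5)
Your proof is correct and matches the paper's approach: the paper's one-line proof simply observes that surjectivity of $B^t$ yields such a $\nu$, and you have spelled out the underlying linear algebra (splitting of the surjection onto the free module $\ZZ^{I_{uf}}$, producing an integer right-inverse $R$, and assembling $\nu = -\widetilde B_{D,\bullet}R^t$) that makes that observation precise.
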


\begin{proof}
Since $B^t$ is surjective, we can find $\nu$ such that $\widetilde B_{D, \bullet}= - \nu B.$
\end{proof}

To the author best knowledge, the next statement is new.

\begin{coro}
    Let  $\widetilde t$ be a $D$-seed extension of $t$ such that $D \subseteq \widetilde I_{sf}$.  If $B^t : \ZZ^I \longto \ZZ^{I_{uf}}$ is surjective, then the deletion map $s : \uclu(\widetilde t) \longto \uclu(t)$ is surjective and its kernel is the ideal generated by $\widetilde x_d -1$, for $d \in D.$
\end{coro}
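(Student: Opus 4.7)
The plan is to reduce this statement to Corollary \ref{deletion map surjective lifting} by exhibiting $\widetilde t$ as a monomial lifting of $t$, up to a canonical identification of ambient fields. First I would apply the preceding lemma: it produces a matrix $\nu \in \ZZ^{D \times I}$ with $\widetilde B_{D, \bullet} = -\nu B$, and combined with the seed-extension identity $\widetilde B_{I, \bullet} = B$ this gives $\widetilde B = \lif B$, where $\lif t$ denotes the monomial lifting of $t$ defined by $\nu$. Note that the surjectivity of $B^t$ forces $B$ to have full column rank, so $t$ is of maximal rank and Theorem \ref{monomial lifting AxT} and Corollary \ref{deletion map surjective lifting} both apply to $\lif t$.

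Next I would construct a seed isomorphism $\widetilde t \simeq \lif t$. The two seeds share the same vertex set with matching partitions into mutable, semi-frozen and highly-frozen (this uses $D \subseteq \widetilde I_{sf}$), and by the previous step they share the same extended exchange matrix. Since $\widetilde x$ and $\lif x$ are each transcendence bases of their respective ambient fields over $\CC$, there is a unique $\CC$-algebra isomorphism $\psi : \CC(\widetilde t) \longto \CC(\lif t)$ with $\psi(\widetilde x_i) = \lif x_i$ for all $i \in I \sqcup D$. Because mutation is determined by the seed data $(I_{uf}, I_{sf}, I_{hf}, B, x)$ via the explicit formulas \eqref{matrix mutation} and \eqref{cluster mutation}, $\psi$ intertwines mutation at any $k \in I_{uf}$ and hence identifies the Laurent rings $\Li(\widetilde t')$ and $\Li(\lif t')$ for every seed $t'$ equivalent to $t$. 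Intersecting over such seeds, $\psi$ restricts to a ring isomorphism $\psi_* : \uclu(\widetilde t) \longto \uclu(\lif t)$ sending each $\widetilde x_d - 1$ to $\lif x_d - 1$.

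Finally, define $s : \uclu(\widetilde t) \longto \uclu(t)$ as the composition of $\psi_*$ with the deletion map of Corollary \ref{deletion map surjective lifting}. On generators one checks $s(\widetilde x_d) = 1$ and $s(\widetilde x_i) = x_i$: the latter because $s(\lif x_i) = s\bigl(x_i \prod_d x_d^{\nu_{d,i}}\bigr) = x_i$. Thus $s$ is the natural deletion map, and surjectivity together with the identification $\ker s = (\widetilde x_d - 1)_{d \in D}$ transport directly from Corollary \ref{deletion map surjective lifting} through $\psi_*$.

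The only potentially delicate point is the seed-isomorphism $\psi$ itself: the assertion that $\widetilde t$ need not be a \emph{pointed} extension is harmless precisely because an abstract seed is determined, up to a $\CC$-algebra isomorphism of the ambient field, by its vertex partition, its exchange matrix, and the choice of a transcendence basis serving as the cluster. Once this identification is in place the whole statement is formal, and I do not expect any genuine obstacle beyond carefully tracking definitions.
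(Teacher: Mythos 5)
Your proof is correct and takes the same route as the paper: both invoke the preceding lemma to produce $\nu$ with $\lif B = \widetilde B$, observe that the statement depends only on the seed data $(I_{uf}, I_{sf}, I_{hf}, B)$ up to choice of transcendence basis, and then transport Corollary \ref{deletion map surjective lifting} across the induced isomorphism $\uclu(\widetilde t) \simeq \uclu(\lif t)$. You simply spell out the identification $\psi$ more explicitly than the paper's one-line remark that ``the isomorphism class of an upper cluster algebra only depends on the $B$-matrix and on the vertices sets of one of its seeds, and so does the desired property of $s$.''
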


\begin{proof}
   Note that the isomorphism class of an upper cluster algebra only depends on the $B$-matrix and on the vertices sets of one of its seeds, and so does the desired property of $s$. The statement follows from the previous lemma and Corollary \ref{deletion map surjective lifting}.
\end{proof}

If $\widetilde t$ is a seed extension of $t$, the deletion map $s : \uclu(\widetilde t) \longto \uclu(t)$ is not always surjective.  An example is given in \cite{matherne2015computing}[Section 7.1, Corollary 7.1.2]. The deletion map is always surjective at the level of cluster algebras.

\begin{remark}
\label{rem:analogy monodromy Qin}
    Let $t$ be a seed and set $D:= \{ i' : i \in I \}$. We identify $\ZZ^D$ with $\ZZ^I$ in the obvious way. If we consider $\nu=  \Id \in \ZZ^{I \times I}$, then 
    $$ \lif B= \bmat B \\ -B \emat.$$
   Consider the degree transformation $\psi$ defined in \cite[Definition 3.3.1]{qin2022bases}. By \cite[Remark 3.3.4]{qin2022bases}, for any $k \in I_{uf}$,
   $$e_k -\psi_{t,\mu_k(t)} \psi_{\mu_k(t), t}(e_k) = \lif B_{D \times k}.$$
    In particular, the monomial lifting for this special $\nu$ encodes the non-trivial monodromy of the maps $\psi$ after one mutation. See \cite[Remark 3.3.4]{qin2022bases} for more details. It turns out that, under some hypothesis, the lifting matrix $\nu$ has an interpretation in terms of the Cox ring of a certain partial compactification of the cluster manifold. This is part of an ongoing work.

\bigskip
    
    The results of this section should also work, with the same proofs, for cluster and upper cluster algebras defined over $\ZZ$ instead then over $\CC$.
\end{remark} 

\begin{remark}
    \label{rem:quasi eq seeds}
  If $\nu,\mu \in \ZZ^{D \times I}$ are distinct lifting matrices and $\lif t_\nu \lifa t$, $\lif t_\mu \xlongleftarrow{\mu}$ are the respective monomial liftings, clearly $\lif t_\nu$ and $\lif t_\mu$ are distinct. Nevertheless, they are quasi-equivalent in the sense of \cite{fraser2016quasi} by  \cite[Corollary 4.5]{fraser2016quasi}.
\end{remark}

\subsection{Pole filtration and lifting graduation}
\label{pole filtration section}

It turns out that the graduation induced by $\lif 0$ on $\uclu(\lif t^D)$ has a nice geometric interpretation. To explain this fact, we switch for a moment to a more general setting. This allows us to apply the same ideas in the context of schemes which are suitable for lifting, which are introduced in Section \ref{minimal monomial lifting section}.

In this section, $D$ is a finite set and $T$ is a torus of rank $|D|$. We denote by $X(T)$ the character group of $T$.

\begin{definition}
\label{def: almost polynomial ring}
    Let $A$ be a $\CC$-algebra. An almost polynomial ring over $A$ is a quadruple $(R, \val , X, \phi^*)$ where

    \begin{enumerate}
        \item $R$ is a normal (non necessarily noetherian) $\CC$-algebra which is a domain.
        \item $\val$ is a collection of discrete valuations  $\val_d : R \longto \ZZ_{\geq 0} \cup \{ \infty \}$, indexed by $d \in D$.
        \item $X \subseteq R^D$ is a collection of elements such that $\val_{d_1}(X_{d_2})= \delta_{d_1,d_2}$ and $$R= \bigcap_{d \in D} \val_d\inv( \ZZ_{\geq 0 } \cup \{ \infty \}) \bigcap R_{\prod X_d}.$$
        \item $\phi^*: R_{\prod X_d} \longto \CC[T] \otimes A$ is an isomorphism such that $\phi^*(X_d)=x_d \otimes 1$, where the collection $(x_d)_d$ is a basis of $X(T).$
    \end{enumerate}
\end{definition}

When $\val, X$ and $\phi^*$ are clear, we just write that $R$ is an almost polynomial ring over $A$. If an almost polynomial ring is called $R$, we implicitly assume that the rest of the data is denoted by $\val, X, \phi^*$ and that $R$ is almost polynomial over $A$. Moreover, $R_{\prod{X_d}}$ is canonically identified with $\CC[T] \otimes A$ via $\phi^*$. In particular, we write $X_d= x_d \otimes 1$. 

\begin{example}
    \label{ex: upper cluster almost polynomial}
    Let $t$ be a seed of maximal rank, $\nu$ a $D$-lifting matrix and $\lif t \lifa t$. Then we have an almost polynomial ring over $\uclu(t)$: 
    $$(\uclu(\lif t^D), \cval, \lif x_D, \phi^*).$$ Recall that $\lif x_d=x_d$ and  $\uclu(\lif t^D)_{\prod_d  x_d} = \uclu(\lif t)$ because of Lemma \ref{properties freezing}.
    Moreover, the map $\phi^*: \uclu(\lif t) \longto \CC[ x_d^{\pm 1}]_d \otimes \uclu(t)$ defined by formula \eqref{lifting variable tensor} is an isomorphism because of Theorem \ref{monomial lifting AxT}. The third axiom of Definition \ref{def: almost polynomial ring} holds by statement 8 of Lemma \ref{properties freezing}. The other axioms are trivial to check.
    
    We refer to this as the \textit{standard quasi polynomial structure} of $\uclu(\lif t^D)$. Of course, we consider $(x_d)_d$ as base of characters of the torus $\Spec(\CC[x_d^{\pm 1}])_d$.
\end{example}

Form now on, we fix an almost polynomial ring $R$ over $A$. Let $s: R \longto A$ be the morphism induced by the map  $\CC[T] \otimes A \longto A$ sending $p \otimes a$ to $p(1)a$. Note that, if $R=\uclu(\lif t^D)$, considered with its standard quasi-polynomial structure, then $s$ is the deletion map of Corollary \ref{deletion map surjective lifting}.
We consider $\ZZ^D$ ordered by $\lambda \leq \mu$ if the inequality holds component-wise.

\begin{definition}[Pole filtration]
\label{pole filtration}
The \textit{pole filtration} on the algebra $A$ is the $(\ZZ^D, \leq)$ filtration defined by
$$A_\lambda:= \{ a \in A \, : \, \forall \, d \in D , \val_d(1 \otimes a) \geq - \lambda(d) \}.$$
If $H$ is an abelian group and $A= \bigoplus_{h \in H} A_h$ is a graduation, we set $A_{\lambda,h}= A_\lambda \cap A_h$. We say that the pole filtration is $H$-graded if, for any $\lambda \in \ZZ^d$, $ A_\lambda = \bigoplus_h A_{\lambda, h}.$ 
\end{definition}
It's easy to verify  that, if $\mu \leq \lambda$, then $A_\mu \subseteq A_\lambda$, moreover for any $\lambda, \mu$ we have $A_\lambda \cdot A_\mu \subseteq A_{\lambda+\mu}$ and finally $A_\lambda + A_\mu \subseteq A_{\max\{ \lambda, \mu \}}$. We express this last inclusion saying that the pole filtration is a \textit{tropical filtration}. Note that from the definition of almost polynomial ring, we have that for any $\lambda \in \ZZ^D$

$$A_\lambda= \{ a\in A \, : \, X^\lambda(1 \otimes a) \in R\}.$$

We introduce a notation. Let $G$ and $H$ be abelian groups and $B=  \displaystyle \bigoplus_{(g,h) \in G \times H} B_{g,h}$ be a $G \times H$-graded $\CC$-algebra. The $G$-graduation induced by the $G \times H$ one, on $B$, is defined as follows: 
$$B= \bigoplus_{g \in G} B_{g,\bullet} \quad \text{where, for} \,\, g \in G \quad B_{g, \bullet}= \bigoplus_{h \in H} B_{g,h}.$$ 
Similarly, we have an induced $H$-graduation, for which we use an analogue notation.
\begin{definition}[lifting graduation]
\label{lifting graduation defi}
A $\ZZ^D$-graduation $R= \bigoplus_{\lambda \in \ZZ^D} R_\lambda$ is called \textit{lifting graduation}, of the pole filtration, if the following conditions hold
\begin{enumerate}
    \item For any $d \in D$, $X_d \in R_{e_d}$.
    \item The isomorphism $\phi^*$ identifies $1 \otimes A $ with the $0$-degree component of $R_{\prod X_D}$.
    \item For any $\lambda \in \ZZ^D$, the restriction of $s$ to $R_\lambda$ is an isomorphism with $A_\lambda.$
\end{enumerate}
    If the pole filtration is $H$-graded, we say that a $\ZZ^D \times H$ graduation $R = \bigoplus_{\lambda ,h} R_{\lambda,h}$ is an $H$-graded lifting graduation if \begin{enumerate}
        \item $R= \bigoplus_{\lambda} R_{\lambda, \bullet}$ is a lifting graduation.
        \item For any $(\lambda, h) \in \ZZ^D \times H$, the restriction of $s$ to $R_{\lambda, h}$ is an isomorphism with $A_{\lambda, h}.$
    \end{enumerate}
\end{definition}

   It often happens, in representation theory, that we study the components of the pole filtration on some algebra, because they are the shadows of the homogeneous components of the lifting graduation, which is the real object of interest. For example, some classical expressions for the generalised Littlewood-Richardson coefficients, such as  \cite[Proposition 1.4]{zelevinsky2001littlewood}, can be interpreted in this setting. We explain this in section \ref{branching scheme subsection}.

\bigskip

    The following lemma is obvious but useful.

    \begin{lemma}
        \label{lem: H graded pole filt equivariance}
A $\ZZ^D \times H$ graduation $R = \bigoplus_{\lambda ,h} R_{\lambda,h}$ is an $H$-graded lifting graduation if and only if\begin{enumerate}
        \item $R= \bigoplus_{\lambda} R_{\lambda, \bullet}$ is a lifting graduation.
        \item The map $s$ is $H$-equivariant.
    \end{enumerate}
        
    \end{lemma}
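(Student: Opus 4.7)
My plan is to verify the two implications of the biconditional separately, using that the nontrivial content sits in unpacking condition (2) of Definition \ref{lifting graduation defi} and comparing it with the more flexible notion of $H$-equivariance.

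For the forward direction, I would observe that if $R=\bigoplus_{\lambda,h}R_{\lambda,h}$ is an $H$-graded lifting graduation, then (1) is immediate from the definition. For (2), the isomorphism $s\colon R_{\lambda,h}\longto A_{\lambda,h}$ implies in particular that $s(R_{\lambda,h})\subseteq A_h$, so summing over $\lambda$ gives $s(R_{\bullet,h})\subseteq A_h$. This is exactly the compatibility of $s$ with the $H$-gradings on source and target, i.e.\ $H$-equivariance.

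For the converse, assume (1) and (2) and fix $(\lambda,h)\in\ZZ^D\times H$. I would first note that $R_{\lambda,h}\subseteq R_{\lambda,\bullet}$ combined with (1) gives $s(R_{\lambda,h})\subseteq A_\lambda$; combined with (2) this yields $s(R_{\lambda,h})\subseteq A_\lambda\cap A_h=A_{\lambda,h}$. Injectivity of the restriction is then free, since $s\vert_{R_{\lambda,\bullet}}$ is already injective by (1). The main point is surjectivity: given $a\in A_{\lambda,h}$, use (1) to produce a unique lift $r\in R_{\lambda,\bullet}$ with $s(r)=a$, decompose $r=\sum_{h'}r_{h'}$ according to the $H$-grading on $R_{\lambda,\bullet}$, apply (2) to obtain $s(r_{h'})\in A_{h'}$, and combine with the inclusion just proved to get $s(r_{h'})\in A_{\lambda,h'}$. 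The uniqueness of the $H$-decomposition in $A_\lambda=\bigoplus_{h'}A_{\lambda,h'}$ (which holds because the pole filtration is $H$-graded) then forces $s(r_{h'})=0$ for $h'\neq h$ and $s(r_h)=a$, exhibiting $r_h\in R_{\lambda,h}$ as the desired preimage.

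There is really no hard step: everything reduces to the basic bookkeeping that, under an $H$-equivariant map between $H$-graded objects, a preimage can always be chosen in the correct $H$-degree once one exists. The only thing I would be careful about is invoking the assumption that the pole filtration is $H$-graded (part of the hypotheses of Definition \ref{lifting graduation defi} in the $H$-graded case), which is what legitimises the decomposition $A_\lambda=\bigoplus_h A_{\lambda,h}$ used to pin down the $H$-component of the lift.
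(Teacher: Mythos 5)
Your proof is correct; the paper declares this lemma ``obvious'' and gives no argument, and your write-up supplies exactly the intended bookkeeping (the only genuinely non-formal step being the surjectivity of $s|_{R_{\lambda,h}}$ onto $A_{\lambda,h}$, which you handle correctly by decomposing the lift in $R_{\lambda,\bullet}$ and using that $A_\lambda=\bigoplus_h A_{\lambda,h}$). Note also that your worry about the hypothesis that the pole filtration is $H$-graded is harmless: it is a standing assumption for the notion of $H$-graded lifting graduation to be defined, and in any case your own surjectivity argument shows it follows from (1) and (2).
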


 Recall that if $\widetilde T$ is a torus and $\widetilde R$ is a $\CC$-algebra, there is a bijection between left actions of $ \widetilde T$ on $\Spec( \widetilde R)$ and (multiplicative) graduations $\widetilde R= \bigoplus_{\lambda \in X( \widetilde T)} \widetilde R_\lambda$. Explicitly, if $\alpha :  \widetilde T \times \Spec(\widetilde R) \longto \Spec(\widetilde R)$ is an action, then $\widetilde R_\lambda$ is the set of $r \in \widetilde R$ such that $\alpha^*( r)= \lambda \otimes r$. 
 We adopt the convention that, for any $\CC$-scheme $Z$, $\widetilde T \times Z$ is canonically acted on by $\widetilde T$ by multiplication on the $ \widetilde T$ component. In particular, when $Z= \Spec( \widetilde A)$, for a $\CC$-algebra $\widetilde A $, then $( \CC[\widetilde T] \otimes \widetilde A)_\lambda=  \lambda \otimes \widetilde A $.
 When $\widetilde T$ has a fixed base of characters $\varpi \subseteq X( \widetilde T)^J$, then we identify $\ZZ^J$ with $X( \widetilde T) $ through the map $n \longmapsto \varpi^n$.
 
 In our context, the torus $T$ has a fixed base of characters determined by $R$, that is the base $(x_d)_d.$
 
\begin{prop}
\label{lifting graduation torus action}
    Let  $R=\bigoplus_{\lambda \in \ZZ^D} R_\lambda$  be a graduation and consider the associated $T$ action on $\Spec(R)$. The graduation on $R$ is a lifting graduation if and only if the morphism $\phi: T \times \Spec(A) \longto \Spec(R)$, induced by $\phi^*$, is $T$-equivariant. In particular, if the lifting graduation exists, it is unique.
\end{prop}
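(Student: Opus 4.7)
The plan is to translate both sides of the equivalence into statements about gradings, via the standard correspondence between $T$-actions on $\Spec(R)$ and multiplicative $\ZZ^D$-gradings on $R$. Writing the grading $R=\bigoplus_\lambda R_\lambda$ as the coaction $\alpha_R^*\colon R\longto \CC[T]\otimes R$ sending $f\in R_\lambda$ to $x^\lambda\otimes f$, and the standard action on $T\times\Spec(A)$ as $\alpha^*(x^\mu\otimes a)=x^\mu\otimes x^\mu\otimes a$, a direct chase through the equivariance square applied to $f\in R_\lambda$ shows that $T$-equivariance of $\phi$ is equivalent to $\phi^*(R_\lambda)\subseteq x^\lambda\otimes A$ for every $\lambda\in\ZZ^D$, where we precompose with $R\hookrightarrow R_{\prod X_d}$. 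In other words, $\phi$ is $T$-equivariant if and only if the composition $R\hookrightarrow R_{\prod X_d}\xrightarrow{\phi^*}\CC[T]\otimes A$ is graded. This reduces everything to comparing gradings.

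For the forward direction, assume the three axioms of Definition \ref{lifting graduation defi}. Condition 1 makes each $X_d$ homogeneous of degree $e_d$, so the grading extends canonically to $R_{\prod X_d}$. For $f\in R_\lambda$, the element $f/X^\lambda$ then lives in $(R_{\prod X_d})_0$, and by condition 2 we have $\phi^*(f/X^\lambda)=1\otimes a$ for some $a\in A$, whence $\phi^*(f)=x^\lambda\otimes a\in x^\lambda\otimes A$. Thus $\phi^*$ is graded and $\phi$ is $T$-equivariant.

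For the backward direction, $T$-equivariance yields $\phi^*(R_\lambda)\subseteq x^\lambda\otimes A$ for all $\lambda$. Condition 1 is recovered by decomposing $X_d=\sum_\mu X_d^{(\mu)}$ into its homogeneous components and using injectivity of $\phi^*$ on $R$ (inherited from the chain of injections $R\hookrightarrow R_{\prod X_d}\cong \CC[T]\otimes A$) together with $\phi^*(X_d)=x_d\otimes 1$, homogeneous of degree $e_d$, to force $X_d=X_d^{(e_d)}\in R_{e_d}$. Condition 2 is then automatic: the grading extends to $R_{\prod X_d}$, $\phi^*$ becomes a graded isomorphism, and the degree-$0$ part on the right is $1\otimes A$. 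For condition 3, given $f\in R_\lambda$ write $\phi^*(f)=x^\lambda\otimes a$ with $a=s(f)$; the identity $\val_d(f)=\lambda_d+\val_d(1\otimes a)$ together with $\val_d(f)\geq 0$ forces $a\in A_\lambda$, and injectivity of $s|_{R_\lambda}$ is immediate from injectivity of $\phi^*$. For surjectivity, given $a\in A_\lambda$, set $f:=(\phi^*)^{-1}(x^\lambda\otimes a)\in R_{\prod X_d}$; the third axiom of Definition \ref{def: almost polynomial ring} yields $f\in R$, and then the graded decomposition of $f$ combined with $\phi^*(f)\in x^\lambda\otimes A$ and injectivity of $\phi^*$ concentrates $f$ in $R_\lambda$.

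Uniqueness follows from the explicit formula $R_\lambda=R\cap(\phi^*)^{-1}(x^\lambda\otimes A)$ extracted above, which depends only on the almost polynomial data $(\val,X,\phi^*)$ and not on the chosen grading. The subtlest step I anticipate is the surjectivity half of condition 3 in the backward direction: one must argue that an element of $R$ whose $\phi^*$-image is concentrated in weight $\lambda$ is itself homogeneous of degree $\lambda$ in $R$. This rests on the interplay between the given graded decomposition on $R$, injectivity of $\phi^*$, and the saturation property of $R$ inside $R_{\prod X_d}$.
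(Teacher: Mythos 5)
Your proof is correct and takes essentially the same route as the paper: translate $T$-equivariance into the condition $\phi^*(R_\lambda)\subseteq x^\lambda\otimes A$, then chase through the axioms in both directions, using injectivity of $\phi^*$ on $R$ and the saturation description of $R$ inside $R_{\prod X_d}$. Minor variations aside — your forward direction cleanly bypasses axiom~3 of Definition~\ref{lifting graduation defi} by passing to $f/X^\lambda$ in the degree-zero component of the localization and invoking axiom~2, whereas the paper routes through the injectivity of $s|_{R_\lambda}$; and your uniqueness argument extracts the explicit formula $R_\lambda = R\cap(\phi^*)^{-1}(x^\lambda\otimes A)$ rather than appealing to schematic density of $\Spec(R_{\prod X_d})$ in $\Spec(R)$ — the structure of the argument matches the paper's.
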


\begin{proof}

We identify $R$ with a subring of $\CC[T] \otimes A$, using $\phi^*$. Suppose that $R=\bigoplus_{\lambda \in \ZZ^D} R_\lambda$ is a lifting graduation. We need to prove that $R_\lambda \subseteq x^\lambda \otimes A $. Let $r \in R_\lambda$, by condition 3 in Definition \ref{lifting graduation defi}, $s(r) \in A_\lambda$. Recall that $X_d= x_d \otimes 1$. Since $R= \bigcap_{d \in D} \val_d( \ZZ_{\geq 0 } \cup \{ \infty \}) \bigcap R_{\prod X_d}$ and $\val_{d_1}(X_{d_2})= \delta_{d_1,d_2}$, we have that $ x^\lambda \otimes s(r) \in R$. Moreover, since $X_d \in R_{e_d}$ and $1 \otimes s(r) $ is of degree zero, then $x^\lambda \otimes s(r) \in R_\lambda$. But the restriction of $s$ to $R_\lambda$ is injective. Hence $r=x^\lambda \otimes s(r)$.

\bigskip

Conversely, suppose that $\phi$ is equivariant. Since $X_d=x_d \otimes 1$, we clearly have that $X_d \in R_{e_d}$. Similarly, condition 2 of Definition \ref{lifting graduation defi} is trivial to check. We want to verify condition 3.  
Take $r \in R_\lambda$, then $r \in x^\lambda \otimes  A $, in particular $r= x^\lambda \otimes s(r)$. Again, since
$$R= \bigcap_{d \in D} \val_d\inv( \ZZ_{\geq 0 } \cup \{ \infty \}) \bigcap R_{\prod X_D}$$
and $\val_{d_1}(X_{d_2})= \delta_{d_1,d_2}$, we deduce that $s(r) \in A_\lambda$. Moreover, if $s(r)=0$, then clearly $r=0.$ This proves that the restriction of $s$ to $R_\lambda$ is injective with image in $A_\lambda$. For the surjectivity, notice that if $a \in A_\lambda$, then $x^\lambda \otimes a  \in R_\lambda$ and $s( x^\lambda \otimes a )=a$. 

For the unicity part, it's sufficient to prove that if a torus action as in the statement of the proposition exists, then it is unique. But this is clear since $R$ is a domain, hence $\Spec(R_{\prod X_D})$ is schematically dense in $\Spec(R)$. 
\end{proof}

The following corollary is a useful reformulation of the above proposition.

\begin{coro}
    \label{homogeneous elements lifting graduation}
    A graduation $R= \bigoplus_{\lambda \in \ZZ^D} R_\lambda$ is the lifting graduation if and only if, for any $r \in R_\lambda$, then $\phi^*(r)= x^\lambda \otimes s(r)$.
\end{coro}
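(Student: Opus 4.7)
The plan is to deduce this corollary directly from Proposition \ref{lifting graduation torus action} by translating the $T$-equivariance of $\phi$ into an elementwise statement about $\phi^*$. Recall that on $\CC[T] \otimes A$, the natural $T$-action induces the graduation $(\CC[T] \otimes A)_\lambda = x^\lambda \otimes A$, where $(x_d)_d$ is the fixed basis of characters of $T$. Since the $T$-action on $T \times \Spec(A)$ is multiplication on the first factor, $T$-equivariance of $\phi$ is equivalent to $\phi^*$ being graded, i.e. $\phi^*(R_\lambda) \subseteq x^\lambda \otimes A$ for every $\lambda \in \ZZ^D$.

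For the forward implication, I would assume that $R = \bigoplus_\lambda R_\lambda$ is a lifting graduation. By Proposition \ref{lifting graduation torus action}, the associated map $\phi$ is $T$-equivariant, so for any $r \in R_\lambda$ there exists a unique $a \in A$ with $\phi^*(r) = x^\lambda \otimes a$. By definition of $s$, which is the composition of $\phi^*$ with evaluation of the $T$-factor at $1$, one has $s(r) = x^\lambda(1) \cdot a = a$, yielding the desired formula $\phi^*(r) = x^\lambda \otimes s(r)$.

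Conversely, if the identity $\phi^*(r) = x^\lambda \otimes s(r)$ holds for every homogeneous $r \in R_\lambda$, then $\phi^*(R_\lambda) \subseteq x^\lambda \otimes A = (\CC[T] \otimes A)_\lambda$, so $\phi^*$ is graded. This is exactly the $T$-equivariance of $\phi$, and one more application of Proposition \ref{lifting graduation torus action} concludes that the graduation on $R$ is the lifting graduation.

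Since the argument is essentially a repackaging of the proposition, there is no genuine obstacle; the only point to double-check is the compatibility between the map $s : R \to A$ (defined via $p \otimes a \mapsto p(1)a$) and the evaluation-at-identity implicit in the $T$-action, but this is immediate from the definitions.
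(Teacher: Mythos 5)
Your proof is correct and is essentially the paper's intended argument: the paper presents this corollary as "a useful reformulation" of Proposition \ref{lifting graduation torus action}, and the translation you perform (equivariance of $\phi$ $\Leftrightarrow$ $\phi^*(R_\lambda)\subseteq x^\lambda\otimes A$, combined with $s(r)=a$ when $\phi^*(r)=x^\lambda\otimes a$) is exactly the content already carried out inside the proof of that proposition.
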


\begin{prop}
\label{ker pi lifting graduation}
    If the lifting graduation $R= \bigoplus_{\lambda \in \ZZ^D} R_\lambda$ exists, then $\ker(s)=( X_d-1 \, : \, d \in D ).$
\end{prop}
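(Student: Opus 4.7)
The inclusion $(X_d - 1 : d \in D) \subseteq \ker(s)$ is immediate from the definition of $s$, since $s(X_d) = s(x_d \otimes 1) = 1$. The plan is to prove the reverse inclusion by showing that the induced map $\bar s \colon R/(X_d - 1 : d \in D) \to A$ is an isomorphism; that it is surjective follows from the lifting graduation axioms (for $a \in A_\lambda$, the element $X^\lambda \otimes a$ lies in $R_\lambda$ and maps to $a$), so the content is injectivity.

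The key trick is comaximality. Modulo the ideal $J := (X_d - 1 : d \in D)$, every $X_d$ is congruent to $1$, so the product $S := \prod_{d \in D} X_d$ satisfies $S \equiv 1 \pmod J$. In particular $S$ is a unit in $R/J$. This means the localization map
\[
R/J \longrightarrow (R/J)_{S}
\]
is an isomorphism, since every power of $S$ is already invertible. But $(R/J)_S = R_S/J R_S$, and by the almost polynomial ring axiom $\phi^*$ identifies $R_S$ with $\CC[T]\otimes A$, sending $X_d$ to $x_d \otimes 1$. Under this identification $J R_S$ becomes the ideal $(x_d - 1 : d \in D)$ of $\CC[T]\otimes A$, whose quotient is the fiber of $\Spec(\CC[T]\otimes A) \to \Spec(\CC[T])$ over $1 \in T$, namely $A$. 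Chasing the construction, the composite $R/J \xrightarrow{\sim} (R/J)_S = A$ is exactly $\bar s$, so $\bar s$ is an isomorphism as required.

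Geometrically, this is the statement that the closed subscheme $V(J) \subseteq \Spec(R)$ is disjoint from the boundary $V(\prod X_d)$, hence is contained in the open subset $T \times \Spec(A)$, where it coincides scheme-theoretically with $\{1\}\times \Spec(A)$ thanks to Proposition \ref{lifting graduation torus action}. There is no real obstacle to the argument; the only subtle point one has to verify is the compatibility of the localization with taking the quotient by $J$, but this is immediate because localization is exact. The proof does not use the graded decomposition of $r \in \ker(s)$ into components $r_\lambda$ at all, which is what makes it short; any attempt to argue termwise on the lifting decomposition runs into the fact that $s(r_\lambda)$ generally fails to lie in $A_0$, so the individual summands $r_\lambda - \iota(s(r_\lambda))$ are not elements of $R$ and cannot be directly manipulated.
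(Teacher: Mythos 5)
Your proof is correct and takes a genuinely different, cleaner route than the paper's. The paper's proof uses the lifting graduation to decompose $r\in\ker(s)$ into its homogeneous pieces $r=\sum_\mu r_\mu$, shows that $r - x^\nu\otimes s(r)=\sum_\mu r_\mu\bigl(1-X^{\nu-\mu}\bigr)\in J$ for $\nu$ sufficiently large, and then extracts the result by a further manipulation to handle $y\in\ker(s)$. Your argument replaces all this with one observation: $S=\prod_d X_d$ maps to $1$ in $R/J$, so localizing $R/J$ at $S$ is the identity; hence $R/J\cong R_S/JR_S$, which by axiom 4 of Definition \ref{def: almost polynomial ring} is $(\CC[T]\otimes A)/(x_d-1:d\in D)\cong A$, the composite being exactly $\bar s$. (Once phrased this way, surjectivity drops out of the isomorphism, so your preliminary remark about $x^\lambda\otimes a$ is not needed.) Notably, this buys a real strengthening: you never invoke the lifting graduation, only that $R_{\prod X_d}\cong\CC[T]\otimes A$ with $X_d\mapsto x_d\otimes 1$, so the conclusion $\ker(s)=(X_d-1:d\in D)$ holds for any almost polynomial ring, whether or not a lifting graduation exists. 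The paper's termwise argument is longer but stays within the graded framework the section is developing.
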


\begin{proof}
   Let $J=( X_d-1 \, : \, d \in D )$. A standard calculation allows see that if $\lambda \geq 0$, then $X^\lambda-1$ is in $J$.

   We claim that, for any $r \in R$ such that $s(r) \neq 0$ and for any  sufficiently large $\nu$, $r - x^\nu \otimes s(r)  $ is in $J$. Let's see that the claim allows to conclude. Indeed, take $y \in \ker(s)$ and any $r_\lambda \in R_\lambda$ such that $s(r_\lambda) \neq 0$. In particular, $r_\lambda=x^\lambda \otimes s(r_\lambda)$. Then for any $\nu$  we can write $$ y= (y+ r_\lambda) - r_\lambda= (y + r_\lambda - x^\nu \otimes s(r_\lambda) ) + r_\lambda ( x ^{\nu -\lambda} \otimes 1- 1 \otimes 1).$$
  Observe that $s(y+r_\lambda)= s(r_\lambda) \neq 0$. In particular, if $\nu$ is sufficiently large, the claim implies that $(y + r_\lambda - x^\nu \otimes s(r_\lambda) )$ is in $J$, but if $\nu \geq \lambda$, we also have that $ r_\lambda ( x ^{\nu -\lambda} \otimes 1- 1 \otimes 1)= r_\lambda(X^{\nu - \lambda}-1)$ is in $J$.

\bigskip

   Let's prove the claim. Write $r= \sum r_\mu$, where $r_\mu \in R_\mu $ and the sums runs over a finite set $M$, such that $r_\mu \neq 0$ if $\mu \in M$. %For $d \in D$ let $\lambda(d)= - \val_d( 1 \otimes s(r)).$
   By the definition of lifting graduation we have that, for any $d \in D$, $\val_d(1 \otimes s(r_\mu) ) \geq - \mu(d)$. Then, if $\nu \geq \max\{\mu \, : \, \mu \in M\} $, we have that  for any $\mu \in M$, $x^\nu \otimes s(r_\mu) \in R_\nu$, hence $x^\nu \otimes s(r)  \in R_\nu$.  Then, for any such $\nu$,
   $$ r - x^\nu \otimes s(r) = \sum_\mu (x^\mu \otimes s(r_\mu))
   ( 1 \otimes 1 -  x^{\nu - \mu} \otimes 1 )= \sum r_\mu(1 - X^{\nu -\mu}).$$
  Since for any $\mu$ in the above sum $\nu - \mu \geq 0$, this proves the claim.
\end{proof}

\begin{prop}
    \label{lifting graduation cluster}
    In the notation of Example \ref{ex: upper cluster almost polynomial}, the $\ZZ^D$ graduation on $\uclu(\lif t^D)$ induced  by $\lif 0$ is the lifting graduation. If $t$ is a graded seed, then the pole filtration is $H$-graded and the graduation induced by $\lif \sigma$, on $\uclu(\lif t^D)$, is the $H$-graded lifting graduation.
\end{prop}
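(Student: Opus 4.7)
The plan is to invoke Corollary \ref{homogeneous elements lifting graduation} for the first assertion and Lemma \ref{lem: H graded pole filt equivariance} for the second, the latter after separately verifying that the pole filtration is $H$-graded.

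For the first assertion, I would upgrade the isomorphism $\phi^*\colon \uclu(\lif t) \longto \CC[x_d^{\pm 1}]_d \otimes \uclu(t)$ of Example \ref{ex: upper cluster almost polynomial} to a $\ZZ^D$-graded isomorphism. The source carries the graduation induced by $\lif 0$, extended from $\uclu(\lif t^D)$ to its localization $\uclu(\lif t)$; the target carries the natural graduation placing $x^\lambda \otimes f$ in degree $\lambda$. Both graduations are multiplicative and are determined by their values on the generators $\{\lif x_i : i \in I\} \cup \{x_d : d \in D\}$ of $\uclu(\lif t)$ as a localized $\CC$-algebra. By \eqref{lifting variable tensor} and the definition of $\lif 0$, $\phi^*$ sends $\lif x_i$ to $x^{\nu_{\bullet,i}} \otimes x_i$, of target degree $\nu_{\bullet,i} = \lif 0_i$, and $x_d$ to $x_d \otimes 1$, of target degree $e_d = \lif 0_d$. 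Hence $\phi^*$ is graded, and the three conditions of Definition \ref{lifting graduation defi} follow at once: (1) $X_d = x_d$ has $\lif 0$-degree $e_d$; (2) the $0$-degree component $1 \otimes A$ of $\CC[T] \otimes A$ corresponds under $\phi^{*-1}$ to the $0$-degree component of $\uclu(\lif t)$; (3) for $r \in R_\lambda$ one has $\phi^*(r) = x^\lambda \otimes s(r)$, so $s|_{R_\lambda}$ is injective, its image lies in $A_\lambda$ since $X^\lambda(1 \otimes s(r)) = r \in R$, and it is surjective onto $A_\lambda$ because any $a \in A_\lambda$ yields $X^\lambda(1 \otimes a) \in R \cap \uclu(\lif t)_\lambda = R_\lambda$.

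For the second assertion, I would first verify that the pole filtration is $H$-graded. Fix $a \in A_\lambda$ and decompose $a = \sum_h a_h$ along the $H$-graduation $\sigma$ on $\uclu(t)$. Since $\lif \sigma = \lif 0 \times (\sigma, 0)$, the $H$-graduation on $R = \uclu(\lif t^D)$ extends to its localization $\uclu(\lif t)$ with each $x_d$ of $H$-degree $0$. Therefore each $X^\lambda(1 \otimes a_h) \in \uclu(\lif t)$ is $H$-homogeneous of degree $h$, and the equality $X^\lambda(1 \otimes a) = \sum_h X^\lambda(1 \otimes a_h)$ is the $H$-decomposition of $X^\lambda(1 \otimes a) \in R$ inside $\uclu(\lif t)$. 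Since $R$ is itself $H$-graded and the inclusion $R \hookrightarrow \uclu(\lif t)$ respects the grading, each summand $X^\lambda(1 \otimes a_h)$ must in fact lie in $R$, which means $a_h \in A_\lambda$. Hence $A_\lambda = \bigoplus_h A_{\lambda,h}$.

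It remains to check that $s$ is $H$-equivariant, which reduces via multiplicativity to the generators: $s(\lif x_i) = x_i$, and both sides have $H$-degree $\sigma_i$ by definition of $\lif \sigma$; $s(x_d) = 1$, and both sides have $H$-degree $0$. Combined with the first assertion, Lemma \ref{lem: H graded pole filt equivariance} then delivers the $H$-graded lifting graduation. The only substantive step is the $H$-gradedness of the pole filtration, which rests on the compatibility of the $H$-graduations on $R$ and on its localization; every other step is formal once $\phi^*$ is seen to be graded.
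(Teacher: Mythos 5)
Your proof is correct and follows the same overall strategy as the paper: reduce the first statement to Corollary \ref{homogeneous elements lifting graduation} via the gradedness of $\phi^*$, and reduce the second to Lemma \ref{lem: H graded pole filt equivariance} after establishing $H$-gradedness of the pole filtration and $H$-equivariance of $s$.

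The one place where you diverge from the paper's route is the proof that the pole filtration is $H$-graded. You decompose $a \in A_\lambda$ into its $H$-homogeneous pieces $a_h$, observe that $X^\lambda(1\otimes a) = \sum_h X^\lambda(1\otimes a_h)$ is the $H$-decomposition of an element of $R$ inside the localization $\uclu(\lif t)$, and conclude $X^\lambda(1\otimes a_h)\in R$ because $R$ is an $H$-graded subalgebra of $\uclu(\lif t)$. The paper instead derives $H$-gradedness from the already-established first statement: the isomorphism $s\colon R_{\lambda,\bullet}\to A_\lambda$ is $H$-equivariant, and transporting the $H$-decomposition of $R_{\lambda,\bullet}$ through $s$ gives $A_\lambda = \bigoplus_h A_{\lambda,h}$. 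Your argument is more self-contained (it does not lean on the first statement), while the paper's is shorter given what has already been proved. Both are sound.

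One small imprecision: you write that the graduations are determined by their values on the generators $\{\lif x_i\}\cup\{x_d\}$ of $\uclu(\lif t)$ ``as a localized $\CC$-algebra,'' but $\uclu(\lif t)$ is in general not generated by the cluster variables of a single seed. What you actually want is that the graduation on $\uclu(\lif t)$ is inherited from $\Li(\lif t)$, which \emph{is} generated by those elements as a localized polynomial ring, so checking that $\phi^*$ preserves degrees of cluster variables gives gradedness of $\phi^*$ on $\Li(\lif t)$ and hence on the graded subring $\uclu(\lif t)$. The intended argument is visible and correct; just say $\Li(\lif t)$ rather than $\uclu(\lif t)$ when invoking generators.
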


\begin{proof}
For the first statement, we want to apply Corollary \ref{homogeneous elements lifting graduation}. Recall that $\lif 0$ defines a graduation on $\Li(\lif t^D)$, for which the cluster monomials form a basis of homogeneous elements. Moreover, $\uclu(\lif t^D) $ is a graded subring of $\Li(\lif t^D)$. In particular, it's sufficient to prove that for any $i \in \lif \kern-0.15em I$, $\phi^*(\lif x_i)= \hx_D \strut^{\lif \spc \, 0_i} \otimes s(\lif x_i)$. But this is obvious from the definitions. \\
For the second statement, notice that the same argument as before proves that $s$ is $H$-equivariant. Using the first statement of the proposition, we have that for any $\lambda \in \ZZ^D$, $s: \uclu(\lif t^D)_{\lambda, \bullet} \longto \uclu(t)_\lambda$ is an isomorphism. It follows at once that the pole filtration is $H$-graded. Then the second statement follows from Lemma \ref{lem: H graded pole filt equivariance}.
\end{proof}

\begin{coro}
    \label{cor:del map surjective highly freezing}
     The deletion map $s : \uclu(\lif t^D) \longto \uclu(t)$ is surjective with kernel the ideal generated by $\lif x_d-1$, for $d \in D$.
\end{coro}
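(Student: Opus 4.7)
The plan is to deduce this corollary immediately by combining the almost-polynomial structure on $\uclu(\lif t^D)$ with the two general results about almost polynomial rings just proved, so no fresh computation is needed.

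First, I would invoke Example \ref{ex: upper cluster almost polynomial}: the quadruple $(\uclu(\lif t^D), \cval, \lif x_D, \phi^*)$ is an almost polynomial ring over $\uclu(t)$, where $\phi^*$ is the isomorphism of \eqref{lifting variable tensor} (available thanks to Theorem \ref{monomial lifting AxT}), and the map $s$ attached to this structure, sending each $\lif x_d$ to $1$ and fixing the cluster variables of $t$, is precisely the deletion map of the statement. Proposition \ref{lifting graduation cluster} then asserts that the $\ZZ^D$-graduation on $\uclu(\lif t^D)$ induced by the canonical degree configuration $\lif 0$ is the lifting graduation in the sense of Definition \ref{lifting graduation defi}.

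With a lifting graduation in hand, the kernel description is immediate from Proposition \ref{ker pi lifting graduation}, which yields $\ker(s) = (X_d - 1 : d \in D) = (\lif x_d - 1 : d \in D)$. For surjectivity, I take a nonzero $a \in \uclu(t)$, note that each $\cval_d(1 \otimes a)$ is a finite integer, and choose $\lambda \in \ZZ^D$ with $\lambda(d) \geq -\cval_d(1 \otimes a)$ for every $d$, so that $a$ lies in the pole-filtration component $\uclu(t)_\lambda$. Condition 3 of Definition \ref{lifting graduation defi} then produces a (unique) $r \in \uclu(\lif t^D)_\lambda$ with $s(r) = a$, and $a = 0$ is trivially in the image.

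No serious obstacle is expected; both halves of the statement have been pre-packaged into the general machinery of Section \ref{pole filtration section}, and the corollary amounts to a bookkeeping check that the hypotheses apply to the standard quasi-polynomial structure of $\uclu(\lif t^D)$. The only small point to keep in mind is the convention $\cval_d(0) = \infty$, which is harmless since $0$ is trivially a value of $s$.
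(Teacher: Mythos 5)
Your proposal is correct and follows exactly the route the paper takes: the paper's proof is the one-line "Apply Propositions \ref{lifting graduation cluster} and \ref{ker pi lifting graduation}," and you have simply unpacked the surjectivity half (via condition 3 of Definition \ref{lifting graduation defi} applied to a large enough $\lambda$) that the paper leaves implicit.
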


\begin{proof}
    Apply Propositions \ref{lifting graduation cluster} and \ref{ker pi lifting graduation}.
\end{proof}

%To the author it's not clear if Corollary \ref{cor:del map surjective highly freezing} can be deduced from Corollary \ref{deletion map surjective lifting}.

\begin{coro}
    \label{cor:quotients of lifting}
    Suppose that $D=D_1 \sqcup D_2$. Denote by $\nu_1= \nu_{D_1, \bullet}$, $\lif t \lifa t$ and $\widetilde{\lif t} \xlongleftarrow{\nu_1} t$. Then $\uclu(\widetilde{\lif t}^{D_1})$ is the quotient of $\uclu(\lif t^D)$ by the ideal $(\lif x_d - 1  \, : d \in D_2).$
\end{coro}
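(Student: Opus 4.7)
The plan is to realise the passage from $\lif t^D$ to $\widetilde{\lif t}^{D_1}$ as a single application of Corollary \ref{cor:del map surjective highly freezing}, by viewing $\lif t^D$ as the (highly-frozen) monomial lifting of $\widetilde{\lif t}^{D_1}$ along the finite set $D_2$.

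First, set $\nu_2 = \nu_{D_2,\bullet}$ and consider the $D_2$-lifting matrix $(\nu_2,0) \in \ZZ^{D_2 \times (I \sqcup D_1)}$ on $\widetilde{\lif t}$, where the zero block corresponds to the $D_1$-vertices. Write $t_1 := \widetilde{\lif t}^{D_1}$, and let $\lif{t_1} \xlongleftarrow{(\nu_2,0)} t_1$ be the corresponding monomial lifting. I would then verify, by directly unwinding the definitions, that $(\lif{t_1})^{D_2} = \lif t^D$ as seeds. The vertex sets match tautologically (both have unfrozen part $I_{uf}$, semi-frozen part $I_{sf}$, and highly-frozen part $I_{hf} \sqcup D_1 \sqcup D_2$). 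For the extended exchange matrix, an elementary block computation using \eqref{formula lifting B} gives
\[
\lif B(\lif{t_1}) \;=\; \begin{pmatrix} B \\ -\nu_1 B \\ -(\nu_2,0)\begin{psmallmatrix}B\\ -\nu_1 B \end{psmallmatrix} \end{pmatrix} \;=\; \begin{pmatrix} B \\ -\nu_1 B \\ -\nu_2 B \end{pmatrix} \;=\; \lif B(\lif t).
\]
For the cluster variables, the equality $\lif x(\lif{t_1}) = \lif x(\lif t)$ is precisely Lemma \ref{lift in steps}, and highly-freezing $D_2$ does not affect the cluster.

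Second, I would apply Corollary \ref{cor:del map surjective highly freezing} to the monomial lifting $\lif{t_1} \xlongleftarrow{(\nu_2,0)} t_1$: the deletion map
\[
s \colon \uclu\bigl((\lif{t_1})^{D_2}\bigr) \longto \uclu(t_1)
\]
is surjective, with kernel the ideal generated by $\lif x_d - 1$ for $d \in D_2$. Combining this with the identifications $(\lif{t_1})^{D_2} = \lif t^D$ and $t_1 = \widetilde{\lif t}^{D_1}$ established in the first step yields exactly the claim.

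The only step requiring genuine verification is the identification $(\lif{t_1})^{D_2} = \lif t^D$, and the possible subtle point is ensuring that highly-freezing $D_1$ inside $\widetilde{\lif t}$ before performing the second monomial lifting does not alter the output: this is transparent, since the monomial lifting construction (Definition \ref{monomial lifting defi}) only reads off the extended exchange matrix and cluster variables of the input seed, both of which are unchanged by the semi-frozen/highly-frozen distinction. Once this is settled, the corollary follows immediately from Corollary \ref{cor:del map surjective highly freezing}.
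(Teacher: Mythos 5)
Your proof is correct and follows essentially the same route as the paper: both realise $\lif t^D$ as the highly-frozen $D_2$-monomial lifting of $\widetilde{\lif t}^{D_1}$ via the lifting matrix $(\nu_2,0)$ and Lemma \ref{lift in steps}, then invoke Corollary \ref{cor:del map surjective highly freezing}. You are slightly more explicit than the paper about why highly-freezing $D_1$ commutes with the subsequent lifting (the lifting construction only reads the exchange matrix and cluster, which the semi-frozen/highly-frozen distinction does not touch), but this is exactly the observation the paper's terse phrase $\lif t^{D_1} \xlongleftarrow{(\nu_2,0)} \widetilde{\lif t}^{D_1}$ is silently relying on.
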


\begin{proof}
    By Lemma \ref{lift in steps}, we have that if $\nu_2 = \nu_{D_2, \bullet}$, then $\lif t^{D_1} \xleftarrow{(\nu_2,0)} \widetilde{\lif t}^{D_1}.$ The statement follows from Corollary \ref{cor:del map surjective highly freezing}.
\end{proof}

\section{The minimal monomial lifting}
\label{minimal monomial lifting section}

We  use the following notation. If $Z$ is a $\CC$-scheme and $f \in \Orb_Z(Z)$, we denote with $V(f)$ the zero locus of $f$ in $Z$, which is a closed subspace of the topological space underlying $Z$. We write $Z^{(1)}$ for the set of codimension one points of $Z$. If $Z$ is normal and integral, for $p \in Z^{(1)}$, we denote with $\val_p : \CC(Z)  \longto \ZZ \cup \{\infty\}$ the valuation induced, on the fraction field of $Z$, by the discrete valuation ring $\Orb_{Z,p}$. If $E \subseteq Z$ is closed, irreducible and of codimension one, it has a unique generic point $p$. Then we set $\val_E= \val_p$ and we say that $\val_p$ is the valuation associated to $E$. We say that $f,g \in \Orb_Z(Z)$ are \textit{coprime} on $Z$ if $ V(f) \cap V(g) \cap Z^{(1)}= \emptyset$. Finally, we say that an open subset $\Omega \subseteq Z$ is \textit{big} if $\Omega^{(1)}= Z^{(1)}.$

\bigskip

From now on, $D$ is a finite set and $T$ is torus of rank $|D|.$

\begin{definition}
\label{def: suitable lifting}A \textit{suitable for $D$-lifting} scheme is a triple $(\lX, \phi, X)$ such that:

\begin{enumerate}
    \item $\lX$ is  a noetherian, normal and integral $\CC$-scheme.
    \item $X \subseteq \Orb_\lX(\lX)^D$ is a collection of global sections such that, for each $d \in D$, $V(X_d)$ is irreducible and if $\val_d$ is the associated valuation, then $\val_{d_1}(X_{d_2})= \delta_{d_1,d_2}.$
    \item $\phi: T \times Y \longto \lX \setminus \cup_d V(X_d)$ is an open embedding such that the image is big and $\phi^*(X_d)= (x_d \otimes 1)$, where the collection of $x_d$ is a base of $X(T)$. Here $Y$ is an irreducible $\CC$-scheme.
\end{enumerate}
    
\end{definition}

When the data of  $\phi$ and $X$ is clear, we just say that $\lX$ is suitable for $D$-lifting. If a scheme suitable for $D$-lifting is called $\lX$, we implicitly assume that the rest of the data defining it is denoted by $\phi$, $Y$ and $X$ and that $\val_d$ is the valuation associated to $V(X_d).$

We use the following convention. If $\lX$ is suitable for $D$-lifting, then we identify $T \times Y$ with an open subset of $\lX$ using $\phi$. Hence, we write $\CC(\lX)= \CC(T \times Y)$ and we identify $\CC(Y)$ with $1 \otimes \CC(Y) \subseteq \CC(\lX)$. Then, $\CC(\lX)$ is canonically a $D$- field extension of $\CC(Y)$: its $D$-uple is $X$. We stress that, with these identifications, if $t$ is a seed of $\CC(Y)$ such that $\uclu(t) = \Orb_Y(Y)$, then $\hx \subseteq (\CC(\lX)^*)^{I \sqcup D}$ is defined by $$ \hx_i= \begin{cases}
    1 \otimes x_i & \text{if} \quad  i \in I\\
    X_d & \text{if} \quad i=d \in D
\end{cases}$$

We can construct affine schemes which are suitable for lifting as follows
\begin{example}
\label{example: affine suitable for lifting}
    Let $R$ be a finite type $\CC$-algebra which is a normal domain. Consider a collection of elements $X \subseteq R^D$ such that the ideals $(X_d)$ are prime and pairwise different.
    Suppose that there is an isomorphism $\psi: R_{\prod_d X_d} \longto \CC[T] \otimes A$ such that $\psi(X_d) = x_d \otimes 1$, where the $x_d$ form a base of $X(T)$. Here $A$ is a $\CC$-algebra. Then $(\Spec(R), \psi^*, X)$ is suitable for $D$-lifting.
\end{example}

We now state the main theorem of this section.

\begin{theo}
    \label{minimal monomial lifting theo}
    Let $\lX$ be a suitable for $D$-lifting scheme  and $t$ be a maximal rank seed  of $\CC(Y)$ such that $\uclu(t)= \Orb_Y (Y).$ Suppose that, for any non-equal $i,j \in I_{uf}$, $x_i$ and $x_j$ are coprime on $Y$ and that, for any $k \in I_{uf}$, $x_k$ and $x_k'= \mu_k(x_k)$ are coprime on $Y$.
   
Consider $(\CC(\lX), \nu)$: the $D$-lifting configuration  on $t$ defined by $$\nu_{d,i}:= - \val_d( 1 \otimes x_i )$$
Then
    \begin{enumerate}
        \item  $ \uclu(\lif t) = \Orb_{\lX}(T \times Y) $.
        \item  $\uclu(\lif t^D) \subseteq \Orb_\lX(X).$
    \end{enumerate}

\end{theo}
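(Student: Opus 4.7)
For part~(1), I would combine Theorem~\ref{monomial lifting AxT} (which uses the maximal rank of $t$ to give $\uclu(\lif t)=\uclu(t)[x_d^{\pm1}]_d$ inside $\CC(\lX)$) with the hypothesis $\uclu(t)=\Orb_Y(Y)$. Since $\Orb_\lX(T\times Y)=\Orb_Y(Y)[x_d^{\pm1}]_d$ in $\CC(\lX)=\CC(T\times Y)$ (via $\phi^*$ and the chosen basis of $X(T)$), the two rings coincide as subrings of $\CC(\lX)$; the isomorphism~\eqref{lifting variable tensor} ensures that the cluster variables of $\lif t$ are sent to the prescribed elements on each side.

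For part~(2), my strategy is to exploit the $\ZZ^D$-grading on $\uclu(\lif t^D)$ induced by $\lif 0$, which by Proposition~\ref{lifting graduation cluster} is the lifting graduation for the cluster almost-polynomial structure of Example~\ref{ex: upper cluster almost polynomial}. Given $f\in\uclu(\lif t^D)$, I decompose $f=\sum_\lambda f_\lambda$ into homogeneous components, each still lying in $\uclu(\lif t^D)$. By Corollary~\ref{homogeneous elements lifting graduation}, $\phi^*(f_\lambda)=\hx_D^\lambda\otimes g$ where $g:=s(f_\lambda)\in\Orb_Y(Y)$. Normality of $\lX$ and bigness of $T\times Y$ inside $\lX\setminus\bigcup_d V(X_d)$, together with part~(1), reduce the condition $f_\lambda\in\Orb_\lX(\lX)$ to $\val_d(1\otimes g)\geq -\lambda_d$ for every $d\in D$. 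Lemma~\ref{properties freezing}(8) applied to $f_\lambda$ gives $\cval_d(1\otimes g)\geq -\lambda_d$, so everything collapses to the comparison $\cval_d(1\otimes g)\leq \val_d(1\otimes g)$ on $1\otimes\uclu(t)$.

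To handle that comparison, I would expand $g=\sum_\alpha c_\alpha x^\alpha$ as a Laurent polynomial in the cluster of $t$ and use $1\otimes x_i=\lif x_i\,\hx_D^{-\nu_{\bullet,i}}$ to rewrite
\[
1\otimes g=\sum_\alpha c_\alpha\,\lif x^\alpha\,\hx_D^{-\nu\alpha}
\]
as a sum of distinct Laurent monomials in the cluster of $\lif t$. Linear independence of those monomials gives $\cval_d(1\otimes g)=\min\{-(\nu\alpha)_d:c_\alpha\neq 0\}$, while the very definition $\nu_{d,i}=-\val_d(1\otimes x_i)$ forces $\val_d(\lif x_i)=0$ for $i\in I$; each summand then has $\val_d$ equal to $-(\nu\alpha)_d$, and the ultrametric inequality delivers $\val_d(1\otimes g)\geq \min\{-(\nu\alpha)_d\}=\cval_d(1\otimes g)$.

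The crux of the argument is this last inequality. A priori $\cval_d$ and $\val_d$ are two distinct discrete valuations on $\CC(\lX)$ sharing only the uniformizer $X_d$, and there is no formal reason for them to agree globally; what makes the plan succeed is that the cluster structure of $t$ presents every element of $\uclu(t)$ as a Laurent combination of the $\lif x_i$'s on which $\nu$ has been engineered so that both valuations match term by term, and the ultrametric inequality then gives the one-sided comparison that is actually needed. I note that the coprimality hypotheses on cluster variables do not seem to intervene here; they should become relevant only when one tries to upgrade~(2) to the equality $\uclu(\lif t^D)=\Orb_\lX(\lX)$, as in Proposition~\ref{prop:equality intro}.
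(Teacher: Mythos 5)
Your strategy for part~(1) matches the paper's, but you silently assume the identity $\Orb_\lX(T\times Y)=\CC[T]\otimes\Orb_Y(Y)$. Since $Y$ is only hypothesised to be an irreducible $\CC$-scheme (not affine), this is not automatic: the paper spends the first half of its proof showing that $Y$ is quasi-compact and quasi-separated (because it immerses into the noetherian scheme $\lX$), extracting a finite affine cover, and tensoring the resulting Čech-type exact sequence with $\CC[T]$. You should not elide this step.

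Your argument for part~(2) is, as far as I can tell, correct and takes a genuinely different route from the paper. The paper proves~(2) via the starfish lemma (Lemma~\ref{starfish lemma}): it checks directly that every cluster variable of $\lif t^D$ and its one-step mutation are regular on $\lX$, and then uses the coprimality hypotheses on $t$, pushed first through $T\times Y$ and then across Hartogs', to verify the codimension-one condition. You instead exploit the $\ZZ^D$-grading by $\lif 0$: decompose $f\in\uclu(\lif t^D)$ into homogeneous pieces $f_\lambda = X^\lambda(1\otimes g_\lambda)$ via Corollary~\ref{homogeneous elements lifting graduation}, reduce regularity on $\lX$ to $\val_d(1\otimes g_\lambda)\geq -\lambda_d$ by Hartogs' and part~(1), obtain $\cval_d(1\otimes g_\lambda)\geq -\lambda_d$ from Lemma~\ref{properties freezing}(8), and close the gap with the direct inequality $\val_d(1\otimes g)\geq \cval_d(1\otimes g)$ on $1\otimes\uclu(t)$, proved by Laurent expansion in the cluster of $\lif t$, the identity $\val_d(\lif x_i)=0$ for $i\in I$ forced by the definition of $\nu$, and the ultrametric inequality. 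Note this is not circular: the paper's own inequality between $\val_d$ and $\cval_d$ (Lemma~\ref{inequality valuations min lifting}) is derived \emph{after} and \emph{from} the theorem, whereas you prove the special case you need from scratch. The upshot is that your part~(2) does not use the coprimality hypotheses at all, so it would yield the conclusion under weaker assumptions than stated; this is a real observation, and you flag it correctly. What the paper's starfish route buys in exchange is concrete and reusable information — regularity and pairwise coprimality of the lifted cluster variables on $\lX$ itself — whereas your argument reaches the containment more abstractly through the grading.
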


\begin{definition}
    \label{minimal monomial lifting defi}
    In the setting of Theorem \ref{minimal monomial lifting theo}, $\nu$ and $\lif t^D$ are called respectively the \textit{minimal lifting matrix} and the \textit{minimal monomial lifting} of $t$, associated to $\lX.$
\end{definition}

The proof is based on the well known algebraic Hartogs' lemma

\begin{lemma}[Hartogs']
\label{Hartogs' lemma}
    Let $Z$ be a locally noetherian normal integral scheme, then $\Orb_Z(Z) = \cap_{p \in Z^{(1)}} \Orb_{Z,p}$, where the intersection takes place in $\CC(Z)$.
\end{lemma}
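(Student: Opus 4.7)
The inclusion $\Orb_Z(Z) \subseteq \bigcap_{p \in Z^{(1)}}\Orb_{Z,p}$ is immediate, since a global section restricts to a section of every local ring, and the restriction maps are injective because $Z$ is integral (so $\Orb_{Z,p} \hookrightarrow \CC(Z)$ for every $p$). The real content is the reverse inclusion, and I would prove it in three stages.

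First I would reduce to the affine case. Cover $Z$ by open affines $U_i = \Spec(R_i)$. A rational function $f \in \CC(Z)$ that lies in $\Orb_{Z,p}$ for every $p \in Z^{(1)}$ lies, in particular, in $\Orb_{U_i, p}$ for every $p \in U_i^{(1)} \subseteq Z^{(1)}$. If I can show that $f$ extends to a regular function on each $U_i$, then these sections agree on overlaps (they agree as elements of $\CC(Z)$), and so by the sheaf axiom they glue to a global section. Hence it suffices to show that, for a noetherian normal integral domain $R$ with fraction field $K$, one has
\[
R \;=\; \bigcap_{\mathfrak{p}\in\Spec(R),\ \mathrm{ht}(\mathfrak{p})=1} R_{\mathfrak{p}}.
\]

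Second, I would establish this affine statement by an associated-primes argument. Fix $f \in \bigcap_{\mathrm{ht}(\mathfrak{p})=1} R_{\mathfrak{p}}$, write $f = a/b$ with $a,b \in R$ and $b \neq 0$, and consider the conductor ideal
\[
I \;=\; \{r \in R \,:\, rf \in R\} \;=\; \{r \in R \,:\, b \mid ra\}.
\]
The goal is $1 \in I$. Observe that $r \mapsto ra \pmod{b}$ induces an injection $R/I \hookrightarrow R/(b)$, hence $\mathrm{Ass}_R(R/I) \subseteq \mathrm{Ass}_R(R/(b))$. Unpacking the definition of localisation, $f \in R_{\mathfrak{p}}$ if and only if there exists $s \in R\setminus \mathfrak{p}$ with $sf \in R$, i.e.\ if and only if $I \not\subseteq \mathfrak{p}$. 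So if $I \subsetneq R$, some $\mathfrak{p} \in \mathrm{Ass}(R/I)$ would contain $I$, and for this $\mathfrak{p}$ we would have $f \notin R_{\mathfrak{p}}$.

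Third, the key technical input is that every $\mathfrak{p} \in \mathrm{Ass}(R/(b))$ has height exactly one, so that the $\mathfrak{p}$ produced in the previous step belongs to $Z^{(1)}$ and contradicts the hypothesis. This is where normality is used, via Serre's criterion $R_1 + S_2$. Indeed $\mathfrak{p} \in \mathrm{Ass}(R/(b))$ means $\mathrm{depth}(R_{\mathfrak{p}}/(b)) = 0$, and since $b$ is a non-zerodivisor on $R_{\mathfrak{p}}$ this forces $\mathrm{depth}(R_{\mathfrak{p}}) = 1$; the condition $(S_2)$ then gives $\dim R_{\mathfrak{p}} \leq 1$, and $\mathrm{ht}(\mathfrak{p}) \geq 1$ is automatic from $b \in \mathfrak{p}$, $b \neq 0$. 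This contradicts the standing assumption $f \in R_{\mathfrak{p}}$, so $I = R$ and $f \in R$.

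The main obstacle, and really the heart of the argument, is the Serre-criterion step: without normality (or at least $(S_2)$) the ideal $(b)$ can acquire embedded primes of higher height, and these embedded primes would correspond to points outside $Z^{(1)}$ where $f$ fails to be regular, so the statement would genuinely fail. Everything else is formal manipulation of ideals and sheaves.
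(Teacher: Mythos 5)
Your proof is correct. Note that the paper does not prove this lemma at all: it quotes it as the "well known algebraic Hartogs' lemma" and uses it as a black box, so there is no argument of the paper to compare yours against. What you give is the standard proof of this classical fact (essentially Matsumura's Theorem 11.5, or the Stacks Project's version of Serre's criterion argument): reduce to an affine chart $\Spec(R)$ with $R$ a noetherian normal domain, observe that $f\in R_{\mathfrak p}$ exactly when the conductor ideal $I=\{r\in R : rf\in R\}$ is not contained in $\mathfrak p$, embed $R/I$ into $R/(b)$, and use that normality (via the $(S_2)$ half of Serre's criterion, together with the depth drop by one after dividing by the non-zerodivisor $b$) forces every associated prime of $R/(b)$ to have height one. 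All the steps check out, including the reduction (affine opens of a locally noetherian normal integral scheme are spectra of noetherian normal domains, codimension-one points of a chart are codimension-one points of $Z$, and the local extensions glue by integrality plus the sheaf axiom); the degenerate cases $b$ a unit or $a=0$ are harmless since then $I=R$ directly. One could alternatively cite Krull's theorem that a noetherian normal domain is a Krull domain, but your self-contained associated-primes argument is exactly where that theorem comes from, so nothing is lost.
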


Note that $\Orb_{Z,p}$ consists of the rational functions with non-negative valuation at $p$.

\bigskip

For the proof, we need the following, which is usually called starfish lemma. Notice that the proof is exactly the same of the affine case \cite[Proposition 3.6]{fomin2016tensor}.

\begin{lemma}[starfish lemma]
\label{starfish lemma}
Let $Z$ be a locally noetherian normal integral  scheme over $\CC$ and  $t$ be a seed of $\CC(Z)$ such that

\begin{enumerate}
    \item For any $i \in I$, $x_i \in \Orb_Z(Z)$ and if $i \in I_{sf}$, $x_i \in \Orb_Z(Z)^*.$
    \item For any $k \in I_{uf}$, $x_k':= \mu_k(x_k) \in \Orb_Z(Z)$.
    \item For any non equal $i,j \in I_{uf}$, $x_i$ and $x_j$ are coprime on $Z$ and for any $i \in I_{uf}$, $x_i$ and $x_i'$ are coprime on $Z$.
\end{enumerate}
Then $\uclu(t) \subseteq \Orb_Z(Z).$
\end{lemma}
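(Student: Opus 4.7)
The plan is to combine Hartogs' lemma with a case analysis at each codimension-one point of $Z$. Since $\uclu(t) \subseteq \upp(t)$ holds tautologically (the intersection defining $\uclu$ is taken over a larger family than the one defining $\upp$), it suffices to establish the stronger inclusion $\upp(t) \subseteq \Orb_Z(Z)$. By Lemma \ref{Hartogs' lemma}, this further reduces to showing, for every $p \in Z^{(1)}$, that $\upp(t) \subseteq \Orb_{Z,p}$. Using $\upp(t) = \Li(t) \cap \bigcap_{k \in I_{uf}} \Li(\mu_k(t))$, it is enough to exhibit, for each such $p$, one seed $t^\star \in \{t\} \cup \{\mu_k(t) : k \in I_{uf}\}$ for which $\Li(t^\star) \subseteq \Orb_{Z,p}$.

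Fix $p \in Z^{(1)}$. Hypothesis (1) gives $\val_p(x_i) \geq 0$ for every $i \in I$, with equality for $i \in I_{sf}$ since semi-frozen variables are globally invertible on $Z$. We split on whether $p$ lies in the vanishing locus of some mutable variable. In the first case $\val_p(x_k) = 0$ for every $k \in I_{uf}$, so each $x_i^{\pm 1}$ with $i \in I_{sf} \cup I_{uf}$ lies in the units of $\Orb_{Z,p}$, while the highly-frozen $x_i$ are merely regular at $p$. Hence $\Li(t) \subseteq \Orb_{Z,p}$ and we set $t^\star = t$.

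In the second case there exists $k \in I_{uf}$ with $\val_p(x_k) > 0$. The coprimality clause of hypothesis (3) forces $k$ to be the \emph{unique} such mutable index, and in addition forces $\val_p(x_k') = 0$, since $x_k' \in \Orb_Z(Z)$ by hypothesis (2) and $V(x_k) \cap V(x_k') \cap Z^{(1)} = \emptyset$. Thus in the seed $\mu_k(t)$ every cluster variable except possibly the highly-frozen ones is a unit at $p$: the $x_j$ for $j \in I_{uf} \setminus \{k\}$ by the coprimality of $x_j$ with $x_k$, the semi-frozen ones by hypothesis (1), and $x_k'$ by what we just observed. It follows that $\Li(\mu_k(t)) \subseteq \Orb_{Z,p}$, and we set $t^\star = \mu_k(t)$.

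The conceptual heart of the argument is the dichotomy delivered by coprimality: at any codimension-one point, either all mutable cluster variables are units, in which case $\Li(t)$ already sits inside $\Orb_{Z,p}$, or exactly one $x_k$ fails to be a unit and a single mutation $\mu_k$ remedies this by replacing $x_k$ with the unit $x_k'$. The main potential obstacle would be a point $p$ where two distinct mutable variables simultaneously vanish (forcing us to mutate at two vertices at once, which is not allowed inside $\upp(t)$), but this is precisely what hypothesis (3) rules out; beyond that the proof is bookkeeping with valuations.
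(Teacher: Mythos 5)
Your proposal is correct and follows essentially the same argument as the paper: reduce via Hartogs' lemma to codimension-one points, use hypothesis (1) when no mutable variable vanishes at $p$, and use coprimality plus the mutated seed $\mu_k(t)$ when some unique $x_k$ vanishes. The paper even notes, as you do, that this actually proves the stronger inclusion $\upp(t) \subseteq \Orb_Z(Z)$.
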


\begin{proof}
    Let $f \in \uclu(t)$ and $p \in Z^{(1)}$, since $f \in \Li(t)$ and for any $i \in I_{sf}$ $x_i \in \Orb_Z(Z)^*$, if there is no $k \in I_{uf}$ such that $x_k(p)=0$, then $f \in \Orb_{Z,p}$. If it's not the case, take $k \in I_{uf}$ such that $x_k(p)= 0$. Then, for any $j \in I_{uf} \setminus \{k \}$, $x_j(p) \neq 0$ and $x_k'(p) \neq 0$. But $f \in \Li(\mu_k(t))$, hence we deduce that $f \in \Orb_{Z,p}$. We conclude using Lemma \ref{Hartogs' lemma}. Actually, this proves the stronger statement that the upper bound $\upp(t) $ is included in $\Orb_Z(Z).$
\end{proof}

\begin{proof}[Proof of Theorem \ref{minimal monomial lifting theo}]
For the first statement, because of  Theorem \ref{monomial lifting AxT} and the fact that $\phi$ is an open embedding, we just have to prove that the natural map $\CC[T] \otimes \Orb_Y(Y)  \longto \Orb_{T \times Y}(T \times Y)$ is an isomorphism.

First, if $e \in T(\CC)$ is the identity, identifying $Y$ with $\{ e \} \times Y$ gives an immersion (neither closed nor open) $\iota : Y \longto \lX$. Since $\iota $ is a monomorphism and $\lX$ is quasi-separated, being noetherian, it follows that $Y$ is quasi-separated. Moreover, $\lX$ is locally noetherian and $\iota$ is an immersion, hence $\iota$ is quasi-compact. In particular, $Y$ is quasi-compact since $\lX$ is. Then, we can cover $Y$ with finitely many affine open subsets $Y_i$. Since $Y$ is quasi-separated, we can cover each intersection $ Y_i \cap Y_j$ with finitely many affine open subsets $Y_{i,j,k}$. Consider the natural exact sequence 
$$\begin{tikzcd}
	 0 & {\Orb_Y(Y)} & { \displaystyle \prod_i \Orb_Y(Y_i)} & {\displaystyle \prod_{i,j,k} \Orb_Y(Y_{i,j,k})}
	\arrow[from=1-1, to=1-2]
	\arrow[from=1-2, to=1-3]
	\arrow[from=1-3, to=1-4]
\end{tikzcd} $$

Note that the two products in the exact sequence are finite. Since $\CC[T] \otimes (-)$ is exact, commutes with finite products and the $Y_i, Y_{i,j,k}$ are affine, tensoring with $ \CC[T]$ yields the exact sequence

\[\begin{tikzcd}
	0   & { \CC[T] \otimes \Orb_Y(Y) } 
 & {\prod_{i} \Orb_{T \times Y}(T \times Y_{i} )} & {\prod_{i,j,k} \Orb_{ T \times Y }(T \times Y_{i,j,k} )}
	\arrow[from=1-1, to=1-2]
	\arrow[from=1-2, to=1-3]
	\arrow[from=1-3, to=1-4]
\end{tikzcd}\]

But since the $T \times Y_i $ cover $T \times Y $, and the $T \times Y_{i,j,k}  $ cover $(T \times Y_i ) \cap (T \times Y_j )$, we deduce from the above exact sequence that $\CC[T] \otimes \Orb_Y(Y)  \simeq \Orb_{T \times Y}(T \times Y)$.

\bigskip

For the second statement, we want to apply the starfish lemma. Let $p \in \lX^{(1)}$. Since $\lX$ is suitable for lifting, $p \in (T \times Y)^{(1)}$ or $\overline{p}= V(X_d)$ for a certain $d \in D$. In the last case, $d$ is unique. Note that for any $m \in \ZZ^D$, then $$\val_p( \hx_D \strut^{m})= \begin{cases}
    0 & \text{if} \quad p \in (T \times Y)^{(1)}\\
    m_d &\text{if} \quad \overline{p}= V(X_d).
\end{cases}$$
Then we easily compute that, for any $i \in I$,
\begin{equation}
\label{eq 21}
    \val_p( \lif x_i)= \begin{cases}
    \val_p( 1 \otimes  x_i) & \text{if} \quad p \in (T \times Y)^{(1)}\\
    0 &\text{if} \quad \overline{p}= V(X_d).
\end{cases}
\end{equation}

It follows from the Hartogs' lemma that  $\lif x_i \in \Orb_\lX(\lX)$. Moreover, if $i \in \lif I_{sf}=I_{sf}$ then $\lif x_i \in \Orb_\lX(\lX)^*$ and clearly $\lif x_d= X_d \in \Orb_\lX(\lX).$
Now fix $i, j \in I_{uf}= \lif I^D_{uf}$ with $i \neq j$, we want to prove that $\lif x_i$ and $\lif x_j$ are coprime on $X$. By the previous calculation, it's sufficient to prove that they're coprime on $T \times Y$, which is equivalent to the comprimality of $1 \otimes x_i$ and $1 \otimes x_j $ because, for any $d \in D$, $ X_d \in \Orb_{\lX}(T \times Y)^*$. Take a point $p \in T \times Y$ such that $1 \otimes x_i(p)=0= 1 \otimes x_j (p). $ Take an affine open subset $U \simeq \Spec A$,
of $Y$, such that $ p \in T \times  U $. Note that  $A$ is a normal domain. Then we can identify $p$ with a prime ideal of $\CC[T] \otimes A $ which contains $1 \otimes x_i $ and $1 \otimes x_j$. 
If we look at the natural map $A \longto \CC[T] \otimes  A $ that sends $a$ to $1 \otimes a $, we have that $p^c$ is a prime ideal of $A$ that contains $x_i$ and $x_j$ ($(-)^c$ denotes the contraction of ideals). Since $x_i$ and $x_j$ are coprime on $Y$, there exists a non-zero prime $q $ of $A$ such that $q$ is strictly contained in $p^c$. 
Since $\CC$ is algebraically closed and $\CC[T]$ is a finitely generated $\CC$-algebra which is a domain, then for any $\CC$-algebra $B$ which is a domain, $\CC[T] \otimes B $ is again a domain. In particular, tensoring with $\CC[T]$ sends prime ideals of $A$ to prime ideals of $\CC[T] \otimes A $ and it preserves strict inclusion because of exactness. It follows that $ 0 \subsetneq  \CC[T] \otimes q \subsetneq \CC[T] \otimes p^c \subseteq p$. Hence $p$ has height at least 2, so $1 \otimes x_i $ and $1 \otimes x_j  $ are comprime on $Y$.

Finally, observe that for any $k \in I_{uf}$, $x_k^*:= \mu_k(\lif x_k)$ is regular on $T \times Y $ because of Lemma \ref{mutation commutes lifting}. Moreover, from the exchange relation \eqref{exchange relation} and the fact that, for any $d \in D$, $\val_d(\lif x_k)=0$, it follows that $\val_d(x_k^*) \geq 0$. Hence $x_k^* \in \Orb_X(X)$ by Hartogs' Lemma \ref{Hartogs' lemma}. Finally, call $t'= \mu_k(t)$ and $\nu'=\mu_k(\nu)$. Since $x_k^*=(1 \otimes x_k')\hx_D \strut^{\nu'_{\bullet, k}}$ by Lemma \ref{mutation commutes lifting}, the previous argument also proves that $x_k^*$ and $\lif x_k$ are coprime on $X$. Then, $\uclu(\lif t^D) \subseteq \Orb_\lX(\lX)$ follows from Lemma \ref{starfish lemma}.
\end{proof}

\begin{coro}[Of the proof]
       \label{cor: suitable quasi-polynomial}
    If $\lX$ is suitable for $D$-lifting, then $(\Orb_\lX(\lX), \val, X, \phi^*)$ is an almost polynomial ring over $\Orb_Y(Y)$.
    \end{coro}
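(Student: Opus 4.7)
The plan is to verify the four axioms of Definition \ref{def: almost polynomial ring} one by one, observing that this corollary is really a distillation of the Hartogs argument already carried out in the proof of Theorem \ref{minimal monomial lifting theo}. Axioms (1) and (2) will be automatic: $\Orb_\lX(\lX)$ is a normal domain because $\lX$ is noetherian, normal and integral, and each $\val_d$ is the discrete valuation associated to the irreducible divisor $V(X_d)$, hence restricts to a $\ZZ_{\geq 0} \cup \{\infty\}$-valued valuation on global sections. The equality $\val_{d_1}(X_{d_2}) = \delta_{d_1, d_2}$ in axiom (3) and the condition $\phi^*(X_d) = x_d \otimes 1$ in axiom (4) are built into Definition \ref{def: suitable lifting}.

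The substantive content is the intersection formula of axiom (3) together with the isomorphism required by axiom (4); I plan to obtain both simultaneously via Hartogs' lemma. The key geometric input is that $U := \phi(T \times Y)$ is a big open subset of $\lX \setminus \cup_d V(X_d)$, so the codimension-one points of $\lX$ partition as $U^{(1)} \sqcup \{p_d\}_{d \in D}$, where $p_d$ is the generic point of $V(X_d)$. Applying Hartogs' lemma on $\lX$ and separately on the normal noetherian integral open $U$ will then yield
\[
\Orb_\lX(\lX) = \Orb_\lX(U) \cap \bigcap_d \val_d^{-1}(\ZZ_{\geq 0} \cup \{\infty\}),
\]
while Theorem \ref{minimal monomial lifting theo}(1) identifies $\Orb_\lX(U)$ with $\CC[T] \otimes \Orb_Y(Y)$ via $\phi^*$.

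The remaining step is to identify the localisation $R_{\prod X_d}$ with $\Orb_\lX(U)$. Injectivity of the natural map is free since $R$ is a domain. For surjectivity, given $f \in \Orb_\lX(U)$, I will multiply by a large enough monomial $\prod_d X_d^{n_d}$ so that every $\val_d$ becomes non-negative; the product is then a regular section on $U$ with non-negative $\val_d$, hence in $\Orb_\lX(\lX)$ by the decomposition above, so $f$ lies in $R_{\prod X_d}$.

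The main (mild) obstacle is bookkeeping the codimension-one points correctly: $U$ is only big in $\lX \setminus \cup_d V(X_d)$, not in $\lX$ itself, so one must separate the generic points $p_d$ from $U^{(1)}$ before invoking Hartogs. Once this partition is in place the argument is essentially a repackaging of the computation at equation \eqref{eq 21} in the proof of Theorem \ref{minimal monomial lifting theo}, and no new ideas are required.
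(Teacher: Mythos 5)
Your argument is correct and is essentially the paper's intended one: the corollary is labelled ``of the proof'' precisely because it repackages the two ingredients you identify, namely the identification $\Orb_\lX(T\times Y)\simeq \CC[T]\otimes\Orb_Y(Y)$ established at the start of the proof of Theorem \ref{minimal monomial lifting theo} and the partition of $\lX^{(1)}$ into $(T\times Y)^{(1)}$ and the generic points of the $V(X_d)$ used around \eqref{eq 21}, combined with Hartogs' lemma. The only nuance is that you should cite the first step of the \emph{proof} of Theorem \ref{minimal monomial lifting theo} rather than its statement (1), since the latter presupposes a cluster structure on $Y$ while the corollary does not.
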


    The following lemma allows to produce a nice class of suitable for $D$-lifting schemes.

\begin{lemma}
    \label{lem:very good for lifting} Let $(\lX, \phi, X)$ be a triple that satisfies the first two conditions of Definition \ref{def: suitable lifting} and such that  $\phi: T \times Y \longto \lX \setminus \cup_d V(X_d)$ is an open embedding with big image. 
    If moreover $Y$ is an irreducible $\CC$-scheme such that $\Orb_Y(Y)^*= \CC^*$, and there exist a point $y \in Y$ such that, for any $d \in D$, $\phi^*(X_d)(e,y)=1$, then $(\lX, \phi, X)$ is  suitable for $D$-lifting.
\end{lemma}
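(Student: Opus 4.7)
The plan is to verify the remaining content of condition~3 of Definition~\ref{def: suitable lifting}: namely, to exhibit a $\ZZ$-basis $(x_d)_{d \in D}$ of $X(T)$ such that $\phi^*(X_d) = x_d \otimes 1$. My strategy is to read off candidate characters from $\phi^*(X_d)$ by identifying the units of $\Orb_{T \times Y}(T \times Y)$, to fix a multiplicative normalisation using the point $y$, and then to exploit the valuation hypothesis $\val_{d_1}(X_{d_2}) = \delta_{d_1, d_2}$ to upgrade these characters to a $\ZZ$-basis.

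First, I would repeat the sheaf-theoretic argument from the first part of the proof of Theorem~\ref{minimal monomial lifting theo} to obtain $\Orb_{T \times Y}(T \times Y) = \CC[T] \otimes \Orb_Y(Y)$; that argument uses only that $\lX$ is noetherian and that $\phi$ is an open embedding, both granted here, together with the immersion $\iota \colon Y \hookrightarrow \lX$, $y' \mapsto \phi(e, y')$, through which $Y$ inherits quasi-separation and quasi-compactness. Picking any auxiliary basis of $X(T)$ realises $\CC[T]$ as a Laurent polynomial ring, and since $\Orb_Y(Y)$ is (reduced and) a domain with $\Orb_Y(Y)^* = \CC^*$, iterating the identity $R[t, t^{-1}]^* = R^* \cdot t^{\ZZ}$ gives $(\CC[T] \otimes \Orb_Y(Y))^* = \CC^* \cdot (X(T) \otimes 1)$. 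Because $\phi$ is an isomorphism onto $\lX \setminus \bigcup_d V(X_d)$, where each $X_d$ is invertible, $\phi^*(X_d)$ is a unit, so I may write $\phi^*(X_d) = c_d\,(\chi_d \otimes 1)$ with $c_d \in \CC^*$ and $\chi_d \in X(T)$. The normalisation $\phi^*(X_d)(e,y) = 1$, together with $\chi_d(e) = 1$, then forces $c_d = 1$, yielding $\phi^*(X_d) = \chi_d \otimes 1$.

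The remaining, and in my view subtler, step is to prove that $(\chi_d)_{d \in D}$ is actually a $\ZZ$-basis of $X(T)$: only linear independence is immediate, and one has to rule out the possibility of a proper finite-index sublattice. The argument I would use is that each $\val_d$ restricts to a well-defined group homomorphism $v_d \colon X(T) \to \ZZ$, $\chi \mapsto \val_d(\chi \otimes 1)$, and the assumption $\val_{d_1}(X_{d_2}) = \delta_{d_1, d_2}$ translates, through $\phi^*(X_d) = \chi_d \otimes 1$, into $v_{d_1}(\chi_{d_2}) = \delta_{d_1, d_2}$. Hence the homomorphism $(v_d)_d \colon X(T) \to \ZZ^D$ admits $e_d \mapsto \chi_d$ as a right inverse, so it is surjective; but a surjection between free abelian groups of the same finite rank $|D|$ is automatically an isomorphism. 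Consequently $(\chi_d)$ is a $\ZZ$-basis of $X(T)$, which completes the verification of condition~3 and hence the proof.
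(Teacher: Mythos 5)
Your proof is correct and takes essentially the same route as the paper's: identify $\Orb_{T\times Y}(T\times Y)=\CC[T]\otimes\Orb_Y(Y)$ via the argument of Theorem \ref{minimal monomial lifting theo}, use $\Orb_Y(Y)^*=\CC^*$ and the normalisation at $(e,y)$ to get $\phi^*(X_d)=\chi_d\otimes 1$, and then use $\val_{d_1}(X_{d_2})=\delta_{d_1,d_2}$ to show $(\chi_d)_d$ is a basis — your homomorphism $(v_d)_d$ with right inverse $e_d\mapsto\chi_d$ is exactly the paper's matrix identity $\Divv\cdot A=\Id$. The only nitpick is that $\phi$ is assumed to be an open embedding with big image, not an isomorphism onto all of $\lX\setminus\cup_d V(X_d)$; but openness of the image together with the non-vanishing of each $X_d$ there already makes $\phi^*(X_d)$ a unit, so nothing breaks.
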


\begin{proof}
    We just need to prove that, for any $d \in D$, $\phi^*(X_d)= x_d \otimes 1$ and the collection of $x_d$ is a base of $X(T)$. 
    
    The argument at the beginning of the proof of Theorem \ref{minimal monomial lifting theo} implies that $\Orb_\lX(T \times Y)= \CC[T] \otimes \Orb_Y(Y)$. Since $\Orb_Y(Y)$ is a domain and $\Orb_Y(Y)^*= \CC^*$, it follows that $$\Orb_\lX(T \times Y)^* = \{ \lambda \otimes c \, : \, \lambda \in X(T), c \in \CC^*\}.$$
   But $\phi^*(X_d) \in \Orb_\lX(T \times Y)^*$ by Hartogs' lemma, moreover $\phi^*(X_d)(e,y)=1$. Hence there exist $x_d \in X(T)$ such that $\phi^*(X_d)=x_d \otimes 1$. Let $\lambda \in X(T)^D$ be a base of $X(T)$ and $\Divv \in \ZZ^{D \times D}$ be defined by $$ \Divv_{d_1,d_2}= \val_{d_1}(\lambda_{d_2} \otimes 1).$$
     Since $\lambda$ is a base of $X(T)$, there exists $A \in \ZZ^{D \times D}$ such that $\lambda^A= (x_d)_d$. Hence $\Divv \cdot A = \Id$. This implies that $(x_d)_d$ is a base of $X(T).$
    
    \end{proof}
From now on we suppose to be in the setting of Theorem \ref{minimal monomial lifting theo}. In particular, $\lX$, $t$ are fixed, $\nu$, $\lif t^D$ denote respectively the minimal lifting matrix and the minimal monomial lifting of $t$ associated to $\lX$.

\bigskip

The hypothesis of coprimality of Theorem \ref{minimal monomial lifting theo} often holds. For example:

\begin{lemma}
\label{factorial upper, coprimality}
    If $Y$ is affine and $\Orb_Y(Y)$ is factorial then, for any $i,j \in I_{uf}$ with $i \neq j$, $x_i$ and $x_j$ are coprime on $Y$. Moreover, for any $k \in I_{uf}$, $x_k$ and $x_k':= \mu_k(x_k)$ are coprime on $Y$.
\end{lemma}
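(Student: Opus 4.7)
The plan is to translate the geometric notion of coprimality on $Y = \Spec R$ (where $R := \Orb_Y(Y) = \uclu(t)$) into a divisibility statement in the UFD $R$. Recall that two elements are coprime on $Y$ exactly when no height-one prime of $R$ contains both of them; in a UFD every height-one prime is principal and generated by an irreducible element, so coprimality is equivalent to the absence of a common irreducible factor up to units.

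The key cluster-theoretic input is Theorem \ref{cluster variables irred}. Before invoking it I would verify its running hypothesis: since we work in the setting of Theorem \ref{minimal monomial lifting theo}, the seed $t$ is of maximal rank, so the columns of $B$ are linearly independent and in particular non-zero, which means that no mutable vertex of $\Delta$ is completely disconnected. Consequently, for any $t^* \in \Delta$ and any $\ell \in I_{uf}^* \sqcup I_{hf}^*$, the cluster variable $x_\ell^*$ is irreducible in $R$, and two such variables coming from seeds in $\Delta$ generate the same ideal if and only if they coincide as elements of $R$. Observe also that $R^*$ consists, up to scalar, of monomials in the semi-frozen variables by statement (3) of the same theorem.

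For the first claim, $x_i$ and $x_j$ with $i \neq j \in I_{uf}$ are distinct entries of the extended cluster $x$, so by the above they are irreducible and generate distinct principal ideals; in particular, they are non-associate. If some height-one prime $\mathfrak{p} = (\pi)$ of $R$ contained both, then $\pi$ would divide each of $x_i, x_j$; irreducibility forces $\pi$ to be associate to both $x_i$ and $x_j$, contradicting $(x_i) \neq (x_j)$. Hence $V(x_i) \cap V(x_j) \cap Y^{(1)} = \emptyset$.

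For the second claim, apply the same reasoning to the seeds $t$ and $t' = \mu_k(t)$ and to the vertex $k \in I_{uf} = I_{uf}'$: this yields that $x_k$ and $x_k'$ are irreducible in $R$, and they will be coprime provided $(x_k) \neq (x_k')$, equivalently $x_k \neq x_k'$ by Theorem \ref{cluster variables irred}(2). This last non-equality is the only delicate step and is, I expect, the main (mild) obstacle. I would argue it from the exchange relation $x_k x_k' = M_k^+ + M_k^-$: if $x_k = x_k'$ held in $R$, then $x_k^2 = M_k^+ + M_k^-$. Because $t$ has maximal rank, the column $B_{\bullet,k}$ is non-zero, so at least one of the monomials $M_k^\pm$ genuinely involves some variable $x_j$ with $j \neq k$, and the identity $x_k^2 = M_k^+ + M_k^-$ would be a non-trivial polynomial relation among the entries of $x$ over $\CC$; this contradicts the fact that the extended cluster $x$ is a transcendence basis of $\CC(Y)$. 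Thus $(x_k) \neq (x_k')$, and the UFD argument of the previous paragraph concludes the proof.
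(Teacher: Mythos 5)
Your proof is correct and follows essentially the same route as the paper: verify that maximal rank rules out completely disconnected mutable vertices, invoke Theorem \ref{cluster variables irred} to get that the relevant cluster variables are irreducible and generate distinct principal ideals, and use factoriality to translate "no common height-one prime" into "non-associate irreducibles." The only difference is that you spell out why $x_k \neq x_k'$ via the exchange relation and algebraic independence, a detail the paper leaves implicit when it says the claim "follows from the second part of Theorem \ref{cluster variables irred}."
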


\begin{proof}
    No mutable vertex of $t$ is completely disconnected because $t$ is of maximal rank. Then, by Theorem \ref{cluster variables irred}, any non semi-frozen cluster variable $z$ is irreducible in $\Orb_Y(Y)$, hence $(z)$ is prime. In particular, two cluster variables $z$ and $z'$ are coprime if and only if $(z) \neq (z')$. The lemma follows from the second part of Theorem \ref{cluster variables irred}.
\end{proof}

To the author's best knowledge, no general coprimality criteria for cluster variables in an upper cluster algebra is known. In \cite{cao2022valuation}, the authors develop some usefull criteria to study factoriality in upper cluster algebras of maximal rank.

\bigskip

The minimal monomial lifting is the only possible candidate to give a cluster structure on $\Orb_\lX(\lX)$ compatible with the one on $\Orb_Y(Y)$ in the following sense.

\begin{theo}
    \label{unicity minimal monomial lifting}
   Let  $\widetilde t$ be a seed of $\CC(\lX)$, $\mu \in \ZZ^{D \times I}$ and $\lambda \in \ZZ^{D \times I_{uf}}$ such that: \begin{enumerate}
       \item $\widetilde t$ is a $D$-pointed field extension of $t$.
       \item For any $i \in I$, $\widetilde x_i= x_i \,\hx\strut^{\mu_{\bullet,i}}_D.$ 
       \item For any $k \in I_{uf}$, $\mu_k(\widetilde x_k)= \mu_k(x_k)  \,\hx\strut^{\lambda_{\bullet,k}}_D$
   \end{enumerate} If $\uclu(\widetilde t)=\Orb_\lX(\lX)$, then $\mu=\nu$ and $\widetilde t= \lif t^D.$
\end{theo}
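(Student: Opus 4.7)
The plan is to reduce the statement to an application of Proposition \ref{unicity monomial lifting} and then pin down the lifting matrix by a valuation calculation in $\CC(\lX)$. First I would argue that every $d \in D$ is necessarily a highly-frozen vertex of $\widetilde t$: the global section $\widetilde x_d = X_d$ has non-empty vanishing divisor $V(X_d)$ and is therefore not invertible in $\Orb_\lX(\lX) = \uclu(\widetilde t)$, while semi-frozen cluster variables are units in the upper cluster algebra by the very definition of the coefficient ring.

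Next I would pass to the seed $\widetilde t_D$ obtained by semi-freezing all vertices of $D$, in the sense of Definition \ref{semi-higly-freezing}. By Lemma \ref{properties freezing}, semi-freezing commutes with mutation at vertices in $I_{uf}$ and leaves the cluster variables themselves unchanged, so the three hypotheses of Proposition \ref{unicity monomial lifting} transfer verbatim from $\widetilde t$ to $\widetilde t_D$, with $\mu$ playing the role of the lifting matrix and $D \subseteq (\widetilde t_D)_{sf}$ now satisfied. The proposition then forces $\widetilde t_D$ to equal the monomial lifting of $t$ associated to $(\CC(\lX), \mu)$. Since $\widetilde t = (\widetilde t_D)^D$ by Lemma \ref{properties freezing}(1), the identity $\widetilde t = \lif t^D$ reduces to proving $\mu = \nu$.

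To show $\mu = \nu$ I plan to identify two discrete valuations on $\CC(\lX)$. On the cluster side, Corollary \ref{highly frozen prime} realizes the cluster valuation $\cval_d$ of $\widetilde t$ as the valuation attached to the DVR $\uclu(\widetilde t)_{(\widetilde x_d)} = \Orb_\lX(\lX)_{(X_d)}$. On the geometric side, the divisorial valuation $\val_d$ is attached to $\Orb_{\lX, p}$, where $p$ is the generic point of the irreducible divisor $V(X_d)$. A Hartogs-style argument, exploiting $\val_{d'}(X_d) = \delta_{d,d'}$ and the invertibility of $X_d$ on the big open $T \times Y$, should show that the ideal $(X_d) \subseteq \Orb_\lX(\lX)$ coincides with $\{\, f : \val_d(f) \geq 1 \,\}$ and that $\Orb_\lX(\lX)_{(X_d)} = \Orb_{\lX, p}$, so the two DVRs agree and $\cval_d = \val_d$ on $\CC(\lX)$. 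Then, since $\widetilde x_i$ is a cluster variable of $\widetilde t$ generating a prime distinct from $(\widetilde x_d)$ by Theorem \ref{cluster variables irred}, one has $\cval_d(\widetilde x_i) = 0$; unravelling $\widetilde x_i = (1 \otimes x_i)\, \hx_D^{\mu_{\bullet, i}}$ yields $\mu_{d,i} = -\val_d(1 \otimes x_i) = \nu_{d,i}$, and $\mu = \nu$ follows.

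The main obstacle is the identification $\cval_d = \val_d$: the cluster DVR is defined via a localization at a prime in the possibly non-noetherian algebra $\Orb_\lX(\lX)$, while the geometric DVR comes from the local ring at a codimension-one point of $\lX$, and matching them requires a careful Hartogs argument tailored to the suitable-for-lifting structure. Everything else is a direct consequence of the already established Proposition \ref{unicity monomial lifting}, the freezing formalism of Lemma \ref{properties freezing}, and a formal rewriting of the expression $\widetilde x_i = x_i \hx_D^{\mu_{\bullet,i}}$.
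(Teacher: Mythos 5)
Your proof is correct and shares the overall structure of the paper's argument (reduce to Proposition \ref{unicity monomial lifting} applied to $\widetilde t_D$, then pin down $\mu = \nu$ by a valuation computation), but the path to $\mu = \nu$ is genuinely different. The paper computes $\val_d(\widetilde x_i) = 0$ directly: since $i \notin \widetilde I_{sf}$, the variable $\widetilde x_i$ is irreducible in $\Orb_\lX(\lX)$ by Theorem \ref{cluster variables irred}, so if $\val_d(\widetilde x_i) \geq 1$ then Hartogs' lemma together with $\val_{d'}(X_d) = \delta_{d,d'}$ would give $X_d \mid \widetilde x_i$, forcing $\widetilde x_i$ and $\widetilde x_d$ to be associates and hence equal, which is absurd. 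You instead establish the global identity $\cval_d = \val_d$ by matching the DVR $\Orb_\lX(\lX)_{(\widetilde x_d)}$ (Corollary \ref{highly frozen prime}) with the geometric DVR $\Orb_{\lX,p}$, and then use the tautological fact $\cval_d(\widetilde x_i) = 0$. Your sketch of the DVR identification is sound and can be completed exactly as you indicate: Hartogs' lemma plus $\val_{d'}(X_d) = \delta_{d,d'}$ gives $(X_d) = \{f \in \Orb_\lX(\lX) : \val_d(f) \geq 1\}$, whence $\Orb_\lX(\lX)_{(X_d)} \subseteq \Orb_{\lX,p}$, and two DVRs with the same fraction field in a containment relation must coincide. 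What you gain is that you never invoke Theorem \ref{cluster variables irred} (irreducibility of non--semi-frozen cluster variables), only the much more elementary Corollary \ref{highly frozen prime}; what you pay is the extra bookkeeping of the DVR comparison, where the paper is content to compute the single valuation $\val_d(\widetilde x_i)$ it needs. Both routes are legitimate; the paper's is a bit leaner, yours is a bit more self-contained.
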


\begin{proof}
    For any $d \in D$, $X_d \not \in \Orb_\lX(\lX)^*$. It follows that $D \subseteq \widetilde I_{hf}$. We claim that, for any $d \in D$ and $i \in I$, $\val_d( \widetilde x_i)=0.$ This implies that $\mu=\nu$ because
    $$\val_d(\widetilde x_i)= \val_d(x_i \otimes 1) + \mu_{d,i}= -\nu_{d,i} + \mu_{d,i}.$$
    If $i \in I_{sf}$, since $I_{sf} \subseteq \widetilde I_{sf}$, then $\widetilde x_i \in \Orb_\lX(\lX)^*$. Then the claim follows from the Hartogs' lemma. Similarly, if $i \not \in I_{sf}$, hence $i \not \in \widetilde I_{sf}$, then $\widetilde x_i$ is irreducible in $\Orb_\lX(\lX)$. Then the claim follows from Hartogs' lemma and the second hypothesis in Definition \ref{def: suitable lifting}. For the final statement, it's sufficient to apply Proposition \ref{unicity monomial lifting} to $\widetilde t_D$.
\end{proof}

Theorem \ref{unicity minimal monomial lifting} gets more meaningful if the $T$-action on $T \times Y$ extends to an action on $\lX$. This becomes precise in Theorem \ref{thm:unicity min mon lifting torus}.

\begin{coro}
\label{coro:if equality monomial lifting commutes}
    Let $k \in I_{uf},$ $t'=\mu_k(t)$ and $\nu'=\mu_k(\nu)$. If $\uclu(\lif t^D)=\Orb_\lX(\lX)$, then $\nu'$ and $\mu_k((\lif t')^D)$ are respectively the minimal lifting matrix and the minimal monomial lifting of $t'$ associated to $\lX$.
\end{coro}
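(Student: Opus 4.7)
The plan is to reduce the statement to a direct application of Theorem \ref{unicity minimal monomial lifting}, used this time for the seed $t'$ rather than for $t$. Two commutation properties do all the work: Lemma \ref{mutation commutes lifting} (mutation commutes with monomial lifting) and Lemma \ref{properties freezing}(5) (mutation commutes with highly-freezing).

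First, I would identify $\mu_k(\lif t^D)$ explicitly. By Lemma \ref{mutation commutes lifting}, $\mu_k(\lif t)$ is the monomial lifting of $t'$ associated to $\nu' = \mu_k(\nu)$; call this seed $\lif t'$, in agreement with the notation of the statement. Lemma \ref{properties freezing}(5) then gives
$$\mu_k(\lif t^D) \;=\; \bigl(\mu_k(\lif t)\bigr)^D \;=\; (\lif t')^D.$$
Since the upper cluster algebra is a mutation invariant, this yields at once
$$\uclu\bigl((\lif t')^D\bigr) \;=\; \uclu(\lif t^D) \;=\; \Orb_\lX(\lX).$$

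Next, I would check the hypotheses of Theorem \ref{unicity minimal monomial lifting} for the seed $t'$, with candidate $\widetilde t' := (\lif t')^D$, matrix $\mu := \nu'$, and $\lambda_{\bullet,j} := (\mu_j(\nu'))_{\bullet,j}$ for $j \in I_{uf}$. Conditions (1) and (2) are built into Definition \ref{monomial lifting defi} applied to $t'$ and $\nu'$ (cf.\ formula \eqref{formula lifting variables}), while condition (3) is a second invocation of Lemma \ref{mutation commutes lifting}, now mutating $\lif t'$ at $j$. Combined with the upper-cluster-algebra equality above, Theorem \ref{unicity minimal monomial lifting} then forces $\nu'$ to coincide with the minimal lifting matrix of $t'$ associated to $\lX$, and $(\lif t')^D$ with the corresponding minimal monomial lifting, which is exactly the claim.

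The only mildly delicate point, which I expect to be the main place that needs care (but not a real obstacle), is making sure that the phrase \emph{minimal monomial lifting of $t'$ associated to $\lX$} is even well defined, i.e.\ that $t'$ itself meets the coprimality hypotheses of Theorem \ref{minimal monomial lifting theo}. Since the cluster of $t'$ agrees with that of $t$ except at vertex $k$, where $x_k$ is replaced by $x_k' = \mu_k(x_k)$, most of the required pairwise coprimalities are immediate from the hypotheses already made on $t$, and the remaining ones (involving the $t'$-mutation at a vertex $j \neq k$) follow in the standard settings, e.g.\ via Lemma \ref{factorial upper, coprimality} when $\Orb_Y(Y)$ is factorial. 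The real content of the corollary is the commutation of mutation with the construction of the minimal monomial lifting, recorded in the displayed formula $\mu_k(\lif t^D) = (\lif t')^D$.
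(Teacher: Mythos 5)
Your proof is correct and follows the same route as the paper's. Both identify $\mu_k(\lif t^D)$ with $(\lif t')^D$ via Lemma \ref{properties freezing}(5) together with Lemma \ref{mutation commutes lifting}, observe that mutation invariance of the upper cluster algebra gives $\uclu\bigl((\lif t')^D\bigr)=\Orb_\lX(\lX)$, and then invoke Theorem \ref{unicity minimal monomial lifting} for the seed $t'$ with $\mu=\nu'$ and $\lambda_{\bullet,j}=\mu_j(\nu')_{\bullet,j}$; the paper's proof is exactly this, stated in one line. Your closing worry about whether the minimal monomial lifting of $t'$ is \emph{well defined} (i.e.\ whether $t'$ meets the coprimality hypotheses of Theorem \ref{minimal monomial lifting theo}) is a reasonable instinct, but it is not actually an obstruction here: the minimal lifting matrix $\nu'_{d,i}=-\val_d(1\otimes x'_i)$ and the associated lifted seed are defined directly, and Theorem \ref{unicity minimal monomial lifting} makes no use of the coprimality conditions in its proof — those are only needed to establish the inclusion $\uclu(\lif t^D)\subseteq\Orb_\lX(\lX)$ in Theorem \ref{minimal monomial lifting theo}, which is here replaced by the much stronger standing hypothesis $\uclu(\lif t^D)=\Orb_\lX(\lX)$.
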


\begin{proof}
    Because of statement 5 of Lemma \ref{properties freezing} and Lemma \ref{mutation commutes lifting}, we can apply the previous theorem with $\widetilde t= \mu_k(\lif t^D)$ (considered as an extension of $t'$), $\mu= \nu'$ and $\lambda_{\bullet,i}= \mu_i(\nu')_{\bullet,i}$
\end{proof}

We expect the answer to the following question to be negative, in general, without the assumption that $\uclu(\lif t^D)= \Orb_\lX(\lX)$.

\begin{question}
    If $k \in I_{uf}$, is $\mu_k(\nu)$ the minimal lifting matrix of $\mu_k(t)$ associated to $\lX$?
\end{question}
It's interesting to notice that, reinterpreting the results of \cite{fei2016tensor} in this light, as it is done in Section \ref{sec: prod, tensor prod}, one can construct an example where $\clu(t)=\uclu(t)$, $\uclu(\lif t^D) = \Orb_\lX(\lX)$ but $\clu(\lif t^D) \neq \uclu(\lif t^D).$ See \cite{fei2016tensor}[Example 8.3]. 

\subsection{Equality conditions}
\label{Equality conditions section}

In this section we suppose to be in the setting of Theorem \ref{minimal monomial lifting theo}. In particular, $\lX$ and $t$ are fixed, $\nu$ and $\lif t^D$ denote the minimal lifting matrix and the minimal monomial lifting of $t$, associated to $\lX$. It's easy to see that we don't always have equality between $\uclu(\lif t^D)$ and $\Orb_\lX(\lX)$. Nevertheless, equality holds after localisation at the product of the frozen variables $\lif x_d$, for $d \in D$. Hence, the lack of equality  between between $\uclu(\lif t^D)$ and $\Orb_\lX(\lX)$ should be caused by some bad behaviour along the divisors $V(X_d).$ This is clarified in the following example.

\begin{example}
    We make an affine example in the spirit of Example \ref{example: affine suitable for lifting}. Consider $R=\CC[y,z,X, \frac{y+z}{X}] $ where $y,z$ and $X$ are abstract independent variables. Notice that $R$ is a polynomial ring in the variables $y, X, \frac{y+z}{X}$ and we have an isomorphism $R_X \simeq \CC[y,z]\otimes \CC[X^{\pm 1}]$. The ring $\CC[y,z]$ is the upper cluster algebra of the seed $t$ which has only highly-frozen vertices and has $y,z$ as cluster variables. Here $D= \{d\}$ consists of one element and $X_d=X$. Applying the minimal monomial lifting we get $\uclu(\lif t^D)= \CC[y,z,X]$, which is strictly contained in $R$. In this very simple example we clearly see the problem: $\cval_d(y+z)=0$, where $\cval_d$ is the cluster valuation associated to the frozen vertex $d$. Nevertheless, $\val_d(x+y)=1.$ Here $\val_d$ is the valuation induced by the prime ideal $(X)$ of $R$. If $\uclu(\lif t^D)= R$, the valuations $\cval_d$ and $\val_d$ should coincide.
    \end{example}

\begin{lemma}
    \label{inequality valuations min lifting}
    For any $f \in \Orb_\lX(\lX)$ and $d \in D$, we have $\val_d(f) \geq \cval_d(f)$. Here $\cval_d$ is the cluster valuation defined in \ref{cluster valuation}.
\end{lemma}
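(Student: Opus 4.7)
Plan: The strategy is to enlarge the inequality from $\Orb_\lX(\lX)$ to the Laurent ring $\Li(\lif t)$ in which it sits, and then to verify it by a monomial-by-monomial comparison. Since the open embedding $\phi : T \times Y \hookrightarrow \lX$ has big image inside an integral normal scheme, the restriction map $\Orb_\lX(\lX) \hookrightarrow \Orb_\lX(T \times Y)$ is injective; and the identification $\Orb_\lX(T \times Y) = \CC[T] \otimes \uclu(t) = \uclu(\lif t)$ established in the proof of Theorem \ref{minimal monomial lifting theo} (via Theorem \ref{monomial lifting AxT}) places $\Orb_\lX(\lX)$ inside $\uclu(\lif t) \subseteq \Li(\lif t)$. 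It therefore suffices to prove $\val_d(f) \geq \cval_d(f)$ for every $f \in \Li(\lif t)$.

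The crucial observation is that the two valuations agree on every cluster variable of $\lif t^D$, and this is precisely where the \emph{minimality} of the lifting matrix is used. For $i = d' \in D$, both valuations take the value $\delta_{d,d'}$ on $\lif x_{d'} = X_{d'}$, by Definition \ref{def: suitable lifting}(2). For $i \in I$, the formula $\lif x_i = (1 \otimes x_i)\cdot X^{\nu_{\bullet, i}}$ together with the defining choice $\nu_{d,i} = -\val_d(1 \otimes x_i)$ gives $\val_d(\lif x_i) = \nu_{d,i} + \val_d(1 \otimes x_i) = 0$, while $\cval_d(\lif x_i) = 0$ because $\lif x_i$ is a cluster variable of $\lif t^D$ distinct from $\lif x_d$. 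Thus $\val_d(\lif x^m) = m_d = \cval_d(\lif x^m)$ for every Laurent monomial $\lif x^m$.

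To conclude, write $f \in \Li(\lif t)$ uniquely as a Laurent polynomial $f = \sum_m c_m \lif x^m$ and group the terms by the exponent $m_d$ of $\lif x_d$: $f = \sum_k X_d^k h_k$, where each nonzero $h_k$ is a Laurent polynomial in the $\lif x_i$ with $i \neq d$. The ultrametric inequality, combined with the equality on monomials, yields $\val_d(f) \geq \min\{m_d : c_m \neq 0\}$. On the other hand, each nonzero $h_k$ satisfies $\cval_d(h_k) = 0$: after clearing denominators, $h_k$ is a ratio of polynomials in $\CC[\lif t^D]$ involving only $\lif x_i$ with $i \neq d$, so neither is divisible by $X_d$. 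Consequently the terms $X_d^k h_k$ have pairwise distinct $\cval_d$-values and cannot cancel, giving $\cval_d(f) = \min\{k : h_k \neq 0\} = \min\{m_d : c_m \neq 0\}$ by the same elementary argument used in the proof of Lemma \ref{cluster valuation}. Combining the two bounds produces $\val_d(f) \geq \cval_d(f)$. The only non-formal point in the whole argument is the agreement of $\val_d$ and $\cval_d$ on the cluster variables, which hinges on the minimality built into the definition of $\nu$; once that is in place, the rest is a routine ultrametric comparison.
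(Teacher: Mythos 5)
Your proof is correct and rests on the same key observation as the paper's argument: that $\val_d$ and $\cval_d$ agree on all cluster variables of $\lif t$ (which, as you rightly emphasize, is exactly where minimality of $\nu$ enters via $\val_d(\lif x_i) = \nu_{d,i} + \val_d(1\otimes x_i) = 0$), after which the inequality follows by a Laurent-monomial comparison. The only genuine difference is in how the elementary step is closed. The paper writes $f = P/M$ with $P$ a polynomial in the cluster variables not divisible by any unfrozen or semi-frozen one, obtains $\cval_d(P)=0$ for free, and gets $\val_d(P) \ge 0$ by re-appealing to the geometry of $\lX$ ($P \in \Orb_\lX(\lX)$ via Theorem~\ref{minimal monomial lifting theo} and the Laurent phenomenon). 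You instead apply the ultrametric inequality $\val_d(\sum_m c_m \lif x^m) \ge \min_m m_d$ directly and then observe that $\cval_d(f) = \min_m m_d$ exactly, by the same non-cancellation argument as in Lemma~\ref{cluster valuation}. Your finish is purely valuation-theoretic, so it proves the inequality over all of $\Li(\lif t)$ without referring back to regularity on $\lX$; this is a minor but clean streamlining of the dependency chain, while the paper's version is shorter given that Theorem~\ref{minimal monomial lifting theo} has already been established.
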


\begin{proof}
  It's clear from \eqref{eq 21} that, for any $i \in \, \lif \kern-0.1em I$ and $d \in D$, $\val_d(\lif x_i) = \cval_d(\lif x_i).$ Since $f \in \uclu(\lif t) $ by Theorem \ref{minimal monomial lifting theo}, then we can write $f = \frac{P}{M}$ where $P$ is a polynomial in the cluster variables of $\lif x$, which is not divisible by any unfrozen or semi-frozen variable of $\lif t$, and 
  $$M= \displaystyle \prod_{i \in I_{uf} \sqcup I_{sf} \sqcup D} \lif x_i^{m_i}$$ 
  is a Laurent monomial. Note that, for any $d \in D$, we have $\cval_d(P) = 0$. Moreover, since $P \in \Orb_\lX(\lX)$ because of Theorem \ref{minimal monomial lifting theo} and the Laurent phenomenon, then  $ \val_d(P) \geq 0.$ Then  $$ \cval_d(f)= \cval_d(P)- \cval_d(M)= -m_d = - \val_d( M) \leq \val_d(P) - \val_d(M)= \val_d(f). $$    
    \end{proof}

    \begin{prop}
        \label{conditions equality minimal lifting}
Suppose that $t^* \in \Delta(\lif t)$. The following are equivalent

\begin{enumerate}
    \item  $\uclu(\lif t^D)= \Orb_\lX(\lX).$
    \item For any $d \in D$, $\Orb_\lX(\lX) \subseteq \Li\biggl((t^{*})^{\{d\}} \biggr).$
    \item For any $d \in D$, $\cval_d(\Orb_\lX(\lX)) \subseteq \NN \cup \{ \infty \}$.
    \item For any $d \in D$, $\cval_d \geq \val_d$ over $\Orb_\lX(\lX).$
    \item For any $d \in D$, $\cval_d = \val_d$ over $\Orb_\lX(\lX).$
\end{enumerate}

    \end{prop}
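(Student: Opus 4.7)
The plan is to establish the chain (1) $\Rightarrow$ (2) $\Rightarrow$ (3) $\Rightarrow$ (1), together with (1) $\Rightarrow$ (5) $\Rightarrow$ (4) $\Rightarrow$ (5); these together deliver every equivalence. The equivalence (4) $\Leftrightarrow$ (5) is cosmetic: Lemma \ref{inequality valuations min lifting} already gives $\val_d \geq \cval_d$ on $\Orb_\lX(\lX)$, so combining it with (4) upgrades the inequality to equality, while (5) trivially entails (4). Likewise (5) $\Rightarrow$ (3) is immediate because any regular function $f \in \Orb_\lX(\lX)$ has $\val_d(f) \geq 0$, which under (5) equals $\cval_d(f)$.

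For (1) $\Rightarrow$ (2): by statement 6 of Lemma \ref{properties freezing}, every seed of $\Delta(\lif t^D)$ has the form $s^D$ for some $s \in \Delta(\lif t)$, and semi-freezing any variable enlarges the Laurent ring, so $\uclu(\lif t^D) \subseteq \Li((t^*)^D) \subseteq \Li((t^*)^{\{d\}})$ for every $d \in D$; combining with (1) yields (2). For (2) $\Rightarrow$ (3): any element $f \in \Li((t^*)^{\{d\}})$ can be written as $P/Q$ with $P$ a polynomial in the cluster variables of $t^*$ and $Q$ a monomial in the $x^*_i$ for $i \in (I^*_{sf} \setminus \{d\}) \cup I^*_{uf}$; thus $\val^{t^*}_d(f) \geq 0$, and Lemma \ref{cluster valuation} identifies this value with $\cval_d(f)$. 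For (3) $\Rightarrow$ (1): statement 8 of Lemma \ref{properties freezing} gives $\uclu(\lif t^D) = \bigcap_d \cval_d^{-1}(\NN \cup \{\infty\}) \cap \uclu(\lif t)$; combined with $\uclu(\lif t) = \Orb_\lX(T \times Y) \supseteq \Orb_\lX(\lX)$ from Theorem \ref{minimal monomial lifting theo} and the hypothesis (3), this forces $\Orb_\lX(\lX) \subseteq \uclu(\lif t^D)$, and the reverse inclusion is also in Theorem \ref{minimal monomial lifting theo}.

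The only genuinely delicate step is (1) $\Rightarrow$ (5). Fix $d \in D$. Since $d$ is highly-frozen in $\lif t^D$, Corollary \ref{highly frozen prime} identifies $\cval_d$ with the valuation induced on $\CC(\lX)$ by the DVR $\uclu(\lif t^D)_{(\lif x_d)}$, which under (1) equals $\Orb_\lX(\lX)_{(X_d)}$. To finish, I will show that the prime $(X_d) \subseteq \Orb_\lX(\lX)$ coincides with $\mathfrak p_d := \{f \in \Orb_\lX(\lX) : \val_d(f) > 0\}$; one inclusion is obvious, and for the other, if $f \in \mathfrak p_d$ then $f/X_d \in \CC(\lX)$ satisfies $\val_q(f/X_d) \geq 0$ for every $q \in \lX^{(1)}$. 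Indeed, $V(X_d)$ is irreducible by Definition \ref{def: suitable lifting}, so its sole codimension-one point is its generic point (where we use $\val_d(f) \geq 1$), while at any other $q \in \lX^{(1)}$ the function $X_d$ is invertible, so $\val_q(f/X_d) = \val_q(f) \geq 0$. Hartogs' Lemma \ref{Hartogs' lemma} then gives $f/X_d \in \Orb_\lX(\lX)$, i.e.\ $f \in (X_d)$. Consequently $\Orb_\lX(\lX)_{(X_d)}$ also induces $\val_d$ on $\CC(\lX)$, and therefore $\cval_d = \val_d$. The main obstacle is precisely this Hartogs-type identification, as it is the point where the irreducibility of $V(X_d)$ enters and pins $(X_d)$ down to the single valuation $\val_d$.
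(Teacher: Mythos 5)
Your proof is correct, but it organizes the equivalences differently from the paper and replaces the paper's key step with a different one. The paper runs the single cycle $1 \Rightarrow 2 \Rightarrow 3 \Rightarrow 4 \Rightarrow 5 \Rightarrow 1$; its only non-obvious steps are $3 \Rightarrow 4$ (if $n_d=\val_d(f)>\cval_d(f)$ for some regular $f$, then $f/X_d^{n_d}$ is regular by Hartogs but has negative cluster valuation, contradicting 3) and $5 \Rightarrow 1$ (Hartogs plus part 8 of Lemma \ref{properties freezing}). You instead close the short cycle $1\Rightarrow 2\Rightarrow 3\Rightarrow 1$ directly — your $3 \Rightarrow 1$ is essentially the paper's $5 \Rightarrow 1$, but it makes visible that only the non-negativity of $\cval_d$ on $\Orb_\lX(\lX)$ is needed there, not the full equality of valuations — and then you must supply $1 \Rightarrow 5$ separately. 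Your argument for that step is the genuinely new content: it invokes Corollary \ref{highly frozen prime} to realize $\cval_d$ as the valuation of the DVR $\Orb_\lX(\lX)_{(X_d)}$ and then uses Hartogs to identify the cluster-theoretic prime $(X_d)$ with the geometric prime $\{f : \val_d(f)>0\}$, which pins the two valuations together. This is sound (the codimension-one points of $\lX$ are exactly those in the image of $\phi$ together with the generic points of the $V(X_{d'})$, and $\val_q(X_d)=0$ at all of them except $p_d$, so $f/X_d$ is regular), and it yields a nice structural byproduct — under condition 1 the ideal $(X_d)$ is precisely the vanishing ideal of the divisor $V(X_d)$ — at the cost of being somewhat longer than the paper's one-line $3\Rightarrow 4$. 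The remaining edges ($4\Leftrightarrow 5$ via Lemma \ref{inequality valuations min lifting}, $5\Rightarrow 3$, $1\Rightarrow 2\Rightarrow 3$) match the paper's or are routine, and the logical graph you assemble does establish all five equivalences.
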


\begin{proof}
  Condition 1 implies 2 and 2 implies 3 are obvious. For 3 implies 4, suppose there exists $f \in \Orb_\lX(\lX)$ and $d \in D$ such that $n_d = \val_d(f) > \cval_d(f)$. Then $\frac{f}{X_d^{n_d}} \in \Orb_\lX(\lX)$ and $\cval_d(f) < 0$, which is a contradiction. Then 4 implies 5 because of Lemma \ref{inequality valuations min lifting}. Finally, 5 implies that $\cval_d= \val_d$ holds over the fraction field of $\Orb_\lX(\lX)$, that we denote by $\FF$. By Theorem \ref{minimal monomial lifting theo}, $\FF$ is also the fraction field of $\uclu(\lif t)$. Then Hartogs' lemma and Theorem \ref{minimal monomial lifting theo} imply that $\Orb_\lX(\lX)= \bigcap_{d \in D}  \val_d\inv ( \ZZ_{\geq 0} \cup \{\infty \}) \bigcap  \uclu(\lif t) $. Hence, using part 8 of Lemma \ref{properties freezing}, we deduce that 5 implies 1.
\end{proof}

\begin{coro}
\label{cor:equality cond at d fixed}
    Conditions 2 to 5 of Proposition \ref{conditions equality minimal lifting} are equivalent at $d $ fixed. That is, for any $d \in D$ and $t^* \in \Delta(\lif t)$, the following are equivalent.
    \begin{enumerate}
          \item  $\Orb_\lX(\lX) \subseteq \Li\biggl((t^{*})^{\{d\}} \biggr).$
    \item  $\cval_d(\Orb_\lX(\lX)) \subseteq \NN \cup \{ \infty \}$.
    \item $\cval_d \geq \val_d$ over $\Orb_\lX(\lX).$
    \item $\cval_d = \val_d$ over $\Orb_\lX(\lX).$
    \end{enumerate}
\end{coro}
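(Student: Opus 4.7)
The plan is to recycle as much as possible from the proof of Proposition \ref{conditions equality minimal lifting} and then supply the single new implication needed to close the cycle at a fixed $d$.

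First, I would observe that the implications $2 \Rightarrow 3 \Rightarrow 4 \Rightarrow 5$ in the proof of Proposition \ref{conditions equality minimal lifting} are in fact pointwise in $d \in D$: $2 \Rightarrow 3$ is immediate from the very definition of $\Li\bigl((t^*)^{\{d\}}\bigr)$, since making $d$ highly-frozen is exactly the requirement $\cval_d(\,\cdot\,) \geq 0$; the contradiction argument for $3 \Rightarrow 4$ (dividing $f$ by $X_d^{\val_d(f)}$ and using that $V(X_d)$ is the unique codimension-one point where $X_d$ vanishes, by the second axiom of Definition \ref{def: suitable lifting}) uses only the single $d$ in play; and $4 \Rightarrow 5$ is a direct appeal to Lemma \ref{inequality valuations min lifting}, which is stated at each $d$ separately.

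The only implication that genuinely requires new justification at fixed $d$ is $5 \Rightarrow 2$, since in the proposition this step was handled globally through Hartogs' lemma after intersecting conditions over all $d \in D$. For this I would proceed directly. Let $f \in \Orb_\lX(\lX)$. By the first part of Theorem \ref{minimal monomial lifting theo} together with the open embedding $T \times Y \hookrightarrow \lX$, one has
\[
\Orb_\lX(\lX) \subseteq \Orb_\lX(T \times Y) = \uclu(\lif t) \subseteq \Li(t^*),
\]
so $f \in \Li(t^*)$. Now, by construction of the highly-freezing operation (Definition \ref{semi-higly-freezing}) and the definition of $\Li$ in \eqref{Laurent Poly}, one has the internal characterisation
\[
\Li\bigl((t^*)^{\{d\}}\bigr) = \{\, g \in \Li(t^*) \ : \ \cval_d(g) \geq 0 \,\},
\]
the cluster valuation $\cval_d$ being well-defined on $\Li(t^*)$ by Lemma \ref{cluster valuation}. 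Assumption 5 gives $\cval_d(f) = \val_d(f)$, and $\val_d(f) \geq 0$ since $f \in \Orb_\lX(\lX)$ and $\val_d$ is the valuation attached to a divisor of $\lX$. Hence $f \in \Li\bigl((t^*)^{\{d\}}\bigr)$, proving $5 \Rightarrow 2$ at the fixed $d$.

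I do not anticipate a substantial obstacle: the only subtlety is ensuring that the pointwise character of each step in the proof of Proposition \ref{conditions equality minimal lifting} is genuine, especially the $3 \Rightarrow 4$ step, where one must invoke the irreducibility of $V(X_d)$ to make sense of $f/X_d^{\val_d(f)}$ as a regular function on $\lX$. Everything else is bookkeeping along the circular chain $2 \Rightarrow 3 \Rightarrow 4 \Rightarrow 5 \Rightarrow 2$.
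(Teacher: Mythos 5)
Your proof is correct, but you take a genuinely different route from the paper. The paper reduces to the case $|D| = 1$ by constructing a fresh $\{d\}$-suitable-for-lifting triple: it sets $\lX' := \lX \setminus \bigcup_{d' \neq d} V(X_{d'})$, factors $T \cong T_d \times T_{\setminus d}$, takes $Y' := T_{\setminus d} \times Y$ with the seed $\lif t_{\setminus d}$ obtained by lifting only along $D \setminus \{d\}$, checks (via Lemma \ref{lift in steps} and Hartogs') that $(\lX', \phi', X_d)$ satisfies the axioms of Definition \ref{def: suitable lifting} with minimal lifting matrix $\nu_{d,\bullet}$, and then invokes Proposition \ref{conditions equality minimal lifting} verbatim for this new triple. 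Your proof instead stays inside the original scheme: you observe that the chain $2 \Rightarrow 3 \Rightarrow 4 \Rightarrow 5$ in the proposition's proof already operates at a single $d$ (the key point being that $f/X_d^{\val_d(f)}$ is regular on $\lX$ because $\val_{d'}(X_d) = 0$ for $d' \neq d$ and $X_d$ is a unit on $T \times Y$), and you close the cycle with a direct proof of $5 \Rightarrow 2$ using the identity $\Li\bigl((t^*)^{\{d\}}\bigr) = \{\,g \in \Li(t^*) : \cval_d(g) \geq 0\,\}$, which follows from the definitions of $\Li$ and of highly-freezing, plus the chain $\Orb_\lX(\lX) \subseteq \Orb_\lX(T \times Y) = \uclu(\lif t) \subseteq \Li(t^*)$ from Theorem \ref{minimal monomial lifting theo}. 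What your approach buys is directness and no new geometric construction; what the paper's approach buys is a clean conceptual reduction to the singleton case that reuses Proposition \ref{conditions equality minimal lifting} as a black box and avoids re-inspecting each implication for $d$-locality.

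One small caveat worth making explicit if this were to replace the paper's argument: your inclusion $\Orb_\lX(\lX) \subseteq \Li(t^*)$ relies on restriction to the open dense subscheme $T \times Y$ being injective, which holds because $\lX$ is integral; this is immediate but should be named, since without it $f \in \Orb_\lX(\lX)$ would not canonically live inside $\CC(\Delta)$.
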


\begin{proof}
    The decomposition $$X(T)= \ZZ x_d \bigoplus \biggl( \bigoplus_{d' \in D \setminus \{d\}} \ZZ x_{d'}\biggr)$$ determines an isomorphism $T_d \times T_{\setminus d} \longto T$, where $T_d$ (resp. $T_{\setminus d}$) is a torus with character group $\ZZ x_d $ (resp. $\bigoplus_{d' \in D \setminus \{d\}} \ZZ x_{d'}$).
    Let $$\lX':= \lX \setminus \bigcup_{d' \in D \setminus \{d\}} V(X_{d'}) \quad \text{and} \quad Y'= T_{\setminus d} \times Y.$$
    Moreover, let 
    $$\phi': T_ d \times Y' \longto \lX'$$
    be the map obtained by composing $\phi$ with the natural isomorphism $T_d  \times Y' \longto T \times Y$. The triple $(\lX',\phi', X_d)$ is $\{d\}$-suitable for lifting. 
    Let $\nu_{\setminus d, \bullet}= \nu_{D \setminus \{d\}, \bullet}$ and $\lif t_{\setminus d} \xlongleftarrow{\nu_{\setminus d, \bullet}} t$.
    By Lemma \ref{lift in steps}, we have a commutative diagram 
     \[\begin{tikzcd}
  	& \lif t_{\setminus d} \\
	\lif t && t
	\arrow["{\nu_{\setminus d, \bullet}}"', from=2-3, to=1-2]
	\arrow["{(\nu_{d, \bullet}, 0)}"', from=1-2, to=2-1]
	\arrow["\nu"', from=2-3, to=2-1]
\end{tikzcd}\]
As in the proof of Theorem \ref{minimal monomial lifting theo}, we have that $\Orb_{Y'}(Y') \simeq\CC[T_{\setminus d}] \otimes \Orb_Y(Y)= \uclu(\lif t_{\setminus d}). $ Moreover, $(\nu_{d,\bullet})$ is the minimal lifting matrix of $\lif t_{\setminus d}$ with respect to $(\lX', \phi', X_d)$. Finally, note that 
$$\Orb_{\lX'}(\lX')= \Orb_\lX(\lX)_{\prod_{d' \in D \setminus \{d\}}X_{d'}}$$ because of the Hartogs' lemma. The corollary is then a direct consequence of  Proposition \ref{conditions equality minimal lifting}. Indeed, the 4 conditions of the corollary are equivalent to $\uclu(\lif t^{\{d\}}) = \Orb_{\lX'}(\lX').$
\end{proof}

\begin{coro}
    Let $D' \subseteq D$, then $\Orb_\lX(\lX) \subseteq \uclu(\lif t^{D'})$ if and only if $$D' \subseteq \{ d \, : \, \val_d =\cval_d \, \, \text{over} \, \,\Orb_\lX(\lX)\}.$$
\end{coro}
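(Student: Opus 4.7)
The plan is to reduce the statement to Corollary \ref{cor:equality cond at d fixed} together with part 8 of Lemma \ref{properties freezing}, since the argument is essentially bookkeeping once those are in hand.

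First I would observe that $\Orb_\lX(\lX) \subseteq \uclu(\lif t)$. Indeed, by Theorem \ref{minimal monomial lifting theo} we have $\uclu(\lif t) = \Orb_\lX(T \times Y)$, and restriction along the open immersion $T \times Y \hookrightarrow \lX$ provides an inclusion $\Orb_\lX(\lX) \hookrightarrow \Orb_\lX(T \times Y)$. Moreover, $\lif t^{D'}$ is obtained from $\lif t$ by highly-freezing the vertices in $D' \subseteq D \subseteq \lif I_{sf}$, so part 8 of Lemma \ref{properties freezing} applied to $\lif t$ gives
\[
\uclu(\lif t^{D'}) \;=\; \uclu(\lif t) \;\cap\; \bigcap_{d \in D'} \cval_d^{-1}\bigl(\ZZ_{\geq 0} \cup \{\infty\}\bigr).
\]
Combining this with the previous inclusion, $\Orb_\lX(\lX) \subseteq \uclu(\lif t^{D'})$ holds if and only if $\cval_d\bigl(\Orb_\lX(\lX)\bigr) \subseteq \ZZ_{\geq 0} \cup \{\infty\}$ for every $d \in D'$.

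Next I would invoke Corollary \ref{cor:equality cond at d fixed}, which at a fixed $d \in D$ asserts the equivalence of the condition $\cval_d\bigl(\Orb_\lX(\lX)\bigr) \subseteq \ZZ_{\geq 0} \cup \{\infty\}$ with the equality $\val_d = \cval_d$ on $\Orb_\lX(\lX)$. Applying this equivalence index by index to each $d \in D'$ yields that $\Orb_\lX(\lX) \subseteq \uclu(\lif t^{D'})$ if and only if $\val_d = \cval_d$ on $\Orb_\lX(\lX)$ for every $d \in D'$, which is exactly the containment $D' \subseteq \{d \,:\, \val_d = \cval_d \text{ over } \Orb_\lX(\lX)\}$.

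There is no real obstacle here: both directions unfold directly from the cited results. The only point requiring a brief sanity check is that the implicit hypotheses of Corollary \ref{cor:equality cond at d fixed}, namely that we are in the setting of Theorem \ref{minimal monomial lifting theo} with $\lif t^D$ the minimal monomial lifting, are inherited, and that the intersection description of $\uclu(\lif t^{D'})$ in Lemma \ref{properties freezing} does apply when only a subset of the semi-frozen vertices is highly-frozen; both are immediate from the definitions.
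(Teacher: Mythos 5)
Your proof is correct and essentially follows the paper's approach: both reduce the containment $\Orb_\lX(\lX) \subseteq \uclu(\lif t^{D'})$ to a per-$d$ condition and then invoke Corollary \ref{cor:equality cond at d fixed}. The only difference is cosmetic — the paper routes the reduction through the intersection $\bigcap_{t^*,\, d \in D'} \Li\bigl((t^*)^{\{d\}}\bigr)$ and uses condition~1 of the corollary, whereas you use part~8 of Lemma \ref{properties freezing} to describe $\uclu(\lif t^{D'})$ directly via the cluster valuations and land on condition~2; both are equally valid and of comparable length.
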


\begin{proof}
    Note that $\Orb_\lX(\lX) \subseteq \uclu(\lif t^{D'})$ if and only if for any $t^* \in \Delta(\lif t)$ and $d \in D'$, $\Orb_\lX(\lX) \subseteq \Li((t^*)^{\{d\}})$. Then the statement follows from Corollary \ref{cor:equality cond at d fixed}.
\end{proof}
Observe that $\Orb_Y(Y)$ has two pole filtrations: one  coming from the almost polynomial ring $\bigl(\Orb_\lX(\lX), \val , X, \phi^*\bigr)$, briefly denoted $\Orb_\lX(\lX)$, and the other one coming from the standard almost polynomial structure of $\uclu(\lif t^D).$ The one coming from $\Orb_\lX(\lX)$ may not admit a lifting graduation.

\begin{coro}
\label{equality minimal lift torus action}
   If $\uclu(\lif t^D)= \Orb_\lX(\lX)$, then the two pole filtrations on $\Orb_Y(Y)$ coincide. In particular there exists an action of $T$ on $\Spec(\Orb_\lX(\lX))$ such that the map 
   $$T \times \Spec(\Orb_Y(Y))  \longto \Spec(\Orb_\lX(\lX))$$ 
   induced by $\phi^*$ is equivariant.
\end{coro}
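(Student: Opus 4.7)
The plan is to combine Proposition~\ref{conditions equality minimal lifting}(5) with the characterisation of lifting graduations given by Proposition~\ref{lifting graduation torus action}. Under the hypothesis $\uclu(\lif t^D) = \Orb_\lX(\lX)$, Proposition~\ref{conditions equality minimal lifting} yields $\cval_d = \val_d$ on this common ring for every $d \in D$. Both are discrete valuations on the common fraction field $\CC(\lX)$, so agreement on the ring forces agreement everywhere; restricting to $1 \otimes \Orb_Y(Y)$ and inspecting Definition~\ref{pole filtration} shows that the pole filtration on $\Orb_Y(Y)$ induced by $(\Orb_\lX(\lX), \val, X, \phi^*)$ coincides with the one induced by the standard almost polynomial structure $(\uclu(\lif t^D), \cval, \lif x_D, \phi^*)$ of Example~\ref{ex: upper cluster almost polynomial}. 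This settles the first assertion.

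For the torus action, I will transport the canonical $\ZZ^D$-grading on $\uclu(\lif t^D)$ carried by the degree configuration $\lif 0$. By Proposition~\ref{lifting graduation cluster}, this is the lifting graduation relative to the cluster-side almost polynomial structure. The two almost polynomial structures on the ring $\Orb_\lX(\lX) = \uclu(\lif t^D)$ are in fact the same quadruple of data: they share the same underlying ring, the same valuations $\val_d = \cval_d$ by the previous paragraph, and the same distinguished sections $\lif x_d = X_d$ and map $\phi^*$ by the construction of the minimal monomial lifting. Hence the $\lif 0$-grading is equally a lifting graduation for $(\Orb_\lX(\lX), \val, X, \phi^*)$, and Proposition~\ref{lifting graduation torus action} directly produces the desired $T$-action on $\Spec(\Orb_\lX(\lX))$ making $\phi$ equivariant.

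The argument is essentially a bookkeeping consolidation of the three preceding propositions, and I do not anticipate any substantive obstacle. The only point to state with care is the identification of the two almost polynomial structures on $\Orb_\lX(\lX)$ as genuinely equal quadruples, which is immediate from Example~\ref{ex: upper cluster almost polynomial} combined with Definition~\ref{def: suitable lifting} and the explicit form of the minimal lifting matrix $\nu$.
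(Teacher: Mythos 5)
Your proposal is correct and follows essentially the same route as the paper: the paper likewise deduces $\val=\cval$ from Proposition \ref{conditions equality minimal lifting}, concludes that the two pole filtrations coincide, and then obtains the torus action directly from Propositions \ref{lifting graduation cluster} and \ref{lifting graduation torus action}. Your extra care in identifying the two almost polynomial structures as the same quadruple is a detail the paper leaves implicit, but it is the same argument.
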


\begin{proof}
     Because of Proposition \ref{conditions equality minimal lifting}, $\val= \cval$. Hence the two pole filtrations on $\Orb_Y(Y)$ coincide. The rest of the corollary is a direct consequence of Propositions \ref{lifting graduation cluster} and \ref{lifting graduation torus action}. 
\end{proof}
Thus, when $\lX$ is affine, the existence of a torus action on $\lX$ extending the one on $T \times Y$ is a necessary condition for having equality between $\uclu(\lif t^D)$ and $\Orb_\lX(\lX)$. 
In general, we have the following lemma.

\begin{lemma}
\label{lifting subgraded}
Let $\hT$ be a torus acting on $Y$. Suppose that the natural $X(\hT)$-graduation on $\Orb_Y(Y)= \uclu(t)$ is induced from a $X(\hT)$ degree configuration $\sigma$ on $t$ (eventually, $\hT= \{e\}$ and $\sigma = 0$). Suppose that there exists an action of $T \times \hT$, on $\lX$, extending the natural action on $T \times Y$. Then $\uclu(\lif t^D)$, with the graduation induced by $\lif \sigma$, is a $(X(T) \times X(\hT))$-graded subalgebra of $\Orb_\lX(\lX)$.
\end{lemma}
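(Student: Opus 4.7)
The plan is to compare the $\lif\sigma$-grading on $\uclu(\lif t^D)$ with the $X(T) \times X(\hT)$-grading on $\Orb_\lX(\lX)$ induced by the $T \times \hT$-action, by passing through the common overring $\Orb_\lX(T \times Y) = \uclu(\lif t)$, on which both gradings will turn out to coincide.

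Since $T \times Y$ is $T \times \hT$-stable (the given action extends the standard action there), the restriction map $\Orb_\lX(\lX) \longto \Orb_\lX(T \times Y)$ is an injective, $X(T) \times X(\hT)$-graded homomorphism. In particular, an element of $\Orb_\lX(\lX)$ is homogeneous of a given degree if and only if its restriction is: the forward implication is immediate, and the converse follows by decomposing the element into its homogeneous components and using injectivity of the restriction. Under the identification $\Orb_\lX(T \times Y) = \CC[T] \otimes \Orb_Y(Y)$ given by Theorem \ref{minimal monomial lifting theo}, the restricted $T \times \hT$-grading is the tensor product of the natural $X(T)$-grading on $\CC[T]$ with the $X(\hT)$-grading on $\Orb_Y(Y) = \uclu(t)$ which, by hypothesis, is the grading induced by $\sigma$.

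Next I would compare this with the $\lif\sigma$-grading on $\uclu(\lif t) = \CC[T] \otimes \uclu(t)$ of Theorem \ref{monomial lifting AxT}, using the identification $\ZZ^D \simeq X(T)$ via the basis $(x_d)_d$. For each $i \in I$, the cluster variable
\[
\lif x_i = (x_d \otimes 1)^{\nu_{\bullet,i}} \cdot (1 \otimes x_i)
\]
has $\lif\sigma$-degree $(\nu_{\bullet,i}, \sigma_i)$ by the definition $\lif\sigma = \lif 0 \times (\sigma, 0)$, and it has $T \times \hT$-degree $(\nu_{\bullet,i}, \sigma_i)$ by the product description above together with the assumption on $\sigma$. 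For $d \in D$, both degrees of $\lif x_d = X_d = x_d \otimes 1$ equal $(x_d, 0)$. Since the cluster monomials of $\lif t$ form a $\CC$-basis of $\Li(\lif t)$ and each is simultaneously homogeneous for both gradings with matching degrees, the two gradings coincide on $\uclu(\lif t) \subseteq \Li(\lif t)$.

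Finally, by Theorem \ref{minimal monomial lifting theo} we have $\uclu(\lif t^D) \subseteq \Orb_\lX(\lX) \subseteq \Orb_\lX(T \times Y) = \uclu(\lif t)$, and the $\lif\sigma$-grading on $\uclu(\lif t^D)$ is induced from the one on $\uclu(\lif t)$. Hence an element $f \in \uclu(\lif t^D)$ is $\lif\sigma$-homogeneous of degree $(\lambda, h)$ if and only if its image in $\Orb_\lX(T \times Y)$ is $T \times \hT$-homogeneous of degree $(\lambda, h)$, if and only if $f$, viewed in $\Orb_\lX(\lX)$, is $T \times \hT$-homogeneous of degree $(\lambda, h)$. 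This yields the desired graded inclusion $\uclu(\lif t^D) \hookrightarrow \Orb_\lX(\lX)$. The argument is essentially bookkeeping; the only mildly delicate point is the identification of the two gradings on the large ring $\uclu(\lif t)$, which reduces to a direct inspection on cluster variables.
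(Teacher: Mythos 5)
Your proof is correct, and its core — matching the $\lif\sigma$-degree and the $T \times \hT$-degree of every cluster variable of $\lif t$, then concluding because cluster monomials form a simultaneously homogeneous basis of $\Li(\lif t)$ — is exactly what the paper does. The difference lies in how you establish the $T \times \hT$-degree of the cluster variables: the paper routes through the lifting graduation formalism of Section \ref{pole filtration section} (it first shows that the pole filtration on $\Orb_Y(Y)$ coming from $\lX$ is $X(\hT)$-graded, then applies Propositions \ref{lifting graduation torus action} and \ref{lem: H graded pole filt equivariance} to identify the $T \times \hT$-grading on $\Orb_\lX(\lX)$ as an $X(\hT)$-graded lifting graduation, from which the degrees of the $\lif x_i$ are read off), whereas you go directly through the graded restriction map $\Orb_\lX(\lX) \to \Orb_\lX(T \times Y) \simeq \CC[T] \otimes \Orb_Y(Y)$ and the tensor-product description of the restricted grading. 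Your route is more elementary and self-contained, at the cost of not packaging the restriction argument in reusable form: the paper's intermediate conclusion that the $T \times \hT$-grading on $\Orb_\lX(\lX)$ is an $X(\hT)$-graded lifting graduation also feeds into Corollary \ref{* suitable lifting} and Proposition \ref{prop:geom realisation coeff general}, which is why the author takes that detour. Both arguments are valid, and the small point you flag at the end (that the identification of the two gradings on $\uclu(\lif t)$ reduces to the cluster variables) is indeed the place where one must be careful to note that homogeneity for the $T \times \hT$-grading propagates from cluster variables to cluster monomials by multiplicativity of the grading, which is what makes the basis argument work.
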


\begin{proof}
Note that the pole filtration on $\Orb_Y(Y)$ coming from $\lX$, that is from the almost polynomial ring $(\Orb_\lX(\lX), \val, X, \phi^*)$, is $X(\hT)$-graded. Indeed, if $\lambda \in \ZZ^D= X(T)$, for $f \in \Orb_Y(Y)$ we have that 
\begin{equation}
    \label{eq: 4 1} f \in \Orb_Y(Y)_\lambda \ifff X^\lambda (1 \otimes f) \in \Orb_\lX(\lX).
\end{equation} 
Here, $\Orb_Y(Y)_\lambda$ is the $\lambda$-component of the pole filtration on $\Orb_Y(Y).$ Since, for any $d \in D$, the function $X_d$ is $T \times \hT$ semi-invariant, we deduce from \eqref{eq: 4 1} that $\Orb_Y(Y)_\lambda$ is $\hT$-stable which implies that the pole filtration is $X(\hT)$-graded.

  Consider the induced $T \times \hT$-action on $\Spec(\Orb_\lX(\lX)).$ %By Theorem \ref{minimal monomial lifting theo}
  We have an open embedding $ T \times \Spec(\Orb_Y(Y)) \longto \Spec(\Orb_\lX(\lX)) $ induced by $\phi^*$, which is clearly $T \times \hT$-equivariant. By Lemmas \ref{lifting graduation torus action} and \ref{lem: H graded pole filt equivariance}, the graduation on $\Orb_\lX(\lX)$ is a $X(\hT)$-graded lifting graduation.  We denote by $\deg(f) \in X(T) \times X(\hT)$ the degree of an homogeneous element $f \in \Orb_\lX(\lX).$ From the definition of lifting graduation we have that, for $d \in D$, 
  $$\deg(\lif x_d)=\deg(X_d)=(e_d, 0)= \lif \sigma_d.$$ 
  Moreover, for $i \in I$, 
  $$\deg(\lif x_i)= \deg(1 \otimes x_i) + \deg(X^{\nu_{\bullet,i}})= (\nu_{\bullet, i}, \sigma_i)= \lif \sigma_i.$$
Hence, for any $i \in \lif \kern-0.15em I$, the degree of $\lif x_i$ as an element of $\Orb_\lX(\lX)$ is the same as its degree as an element of $\uclu(\lif t^D)$ with respect to the graduation induced by $\lif \sigma$. In particular, the two graduations coincide on $\Li(\lif t)$. The lemma follows because both $\uclu(\lif t^D)$ and $\Orb_\lX(\lX)$ are graded subalgebras of $\Li(\lif t)$.
\end{proof}

\begin{definition}
    \label{def: optimal for lifting} A suitable for $D$-lifting scheme is said to be \textit{homogeneously suitable for $D$-lifting} if the $T$-action of $T$ on $T \times Y$ extends to an action on $\lX$.
\end{definition}
 Corollary \ref{equality minimal lift torus action} and Lemma \ref{lifting subgraded} suggest that, homogeneous suitable for $D$-lifting schemes, should provide a good environment where the minimal monomial lifting technique could give important information on the graded ring $\Orb_\lX(\lX)$. 
   Indeed, the minimal monomial lifting is the only possible candidate for extending the cluster structure on $\Orb(Y)$ to a $X(T)$-graded cluster structure on $\Orb_\lX(\lX)$. More precisely, we have the following theorem.
     \begin{theo}
         \label{thm:unicity min mon lifting torus} Let $\lX$ be an homogeneously suitable for $D$-lifting scheme and $\widetilde t$ be $D$-pointed seed extension of $t$, in the field $\CC(\lX)$. Let $s: \Orb(\lX) \longto \Orb(Y)$ be the restriction map obtained by identifying $Y$ with $\{e\} \times Y$. Suppose that
         \begin{enumerate}
             \item For any $i \in I$, $s(\widetilde x_i)= x_i $ and $\widetilde x_i$ is $X(T)$-homogeneous, with respect to the $T$-action on $\lX$.
       \item For any $k \in I_{uf}$, $s\biggl( \mu_k(\widetilde x_k)\biggr)= \mu_k(x_k) $ and $\mu_k(\widetilde x_k)$ is $X(T)$-homogeneous.
       \item  $\uclu(\widetilde t)= \Orb(\lX)$.
         \end{enumerate}
         Then $\widetilde t=\lif t^D$.
         \end{theo}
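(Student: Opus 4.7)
The strategy is to reduce the statement to Theorem \ref{unicity minimal monomial lifting}: if I can show that conditions 1 and 2 of the current theorem force the cluster variables and their mutations to take the monomial form required there, then the previous unicity theorem directly yields $\widetilde t = \lif t^D$. Concretely, I will produce $\mu \in \ZZ^{D \times I}$ and $\lambda \in \ZZ^{D \times I_{uf}}$ such that $\widetilde x_i = x_i \, \hx_D^{\mu_{\bullet,i}}$ for $i \in I$ and $\mu_k(\widetilde x_k) = \mu_k(x_k) \, \hx_D^{\lambda_{\bullet,k}}$ for $k \in I_{uf}$.

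The key observation is that the homogeneously suitable for $D$-lifting assumption places us exactly in the setting of the pole filtration / lifting graduation machinery. Indeed, by Corollary \ref{cor: suitable quasi-polynomial}, the quadruple $(\Orb_\lX(\lX), \val, X, \phi^*)$ is an almost polynomial ring over $\Orb_Y(Y)$. The extension of the $T$-action from $T \times Y$ to $\lX$ makes $\phi$ a $T$-equivariant open embedding, so by Proposition \ref{lifting graduation torus action} the induced $X(T)$-graduation on $\Orb_\lX(\lX)$ is the (unique) lifting graduation associated to this almost polynomial structure. By Corollary \ref{homogeneous elements lifting graduation}, a homogeneous element $r \in \Orb_\lX(\lX)$ of degree $\lambda \in \ZZ^D$ must satisfy $\phi^*(r) = x^\lambda \otimes s(r)$, that is, $r = \hx_D^\lambda \cdot (1 \otimes s(r))$ under our identification of $\CC(\lX)$ with the fraction field of $\CC[T] \otimes \Orb_Y(Y)$.

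Applying this to hypothesis 1 of the theorem: each $\widetilde x_i$ (for $i \in I$) is $X(T)$-homogeneous of some degree $\mu_{\bullet,i} \in \ZZ^D$ with $s(\widetilde x_i) = x_i$, giving $\widetilde x_i = x_i \, \hx_D^{\mu_{\bullet,i}}$. Note that $\widetilde x_i$ is a regular function on $\lX$ since it is a frozen or mutable cluster variable and $\uclu(\widetilde t) = \Orb_\lX(\lX)$. The same argument applied to $\mu_k(\widetilde x_k)$, using hypothesis 2 (and again that $\mu_k(\widetilde x_k) \in \Orb_\lX(\lX)$ by hypothesis 3), produces $\lambda_{\bullet,k} \in \ZZ^D$ such that $\mu_k(\widetilde x_k) = \mu_k(x_k) \, \hx_D^{\lambda_{\bullet,k}}$. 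With the monomial forms established and $\uclu(\widetilde t) = \Orb_\lX(\lX)$ in hand, the three hypotheses of Theorem \ref{unicity minimal monomial lifting} are satisfied, and we conclude $\mu = \nu$ and $\widetilde t = \lif t^D$. No genuine obstacle is expected: the whole content is to recognise that $X(T)$-homogeneity plus compatibility with $s$ is the coordinate-free reformulation of the monomial condition in Theorem \ref{unicity minimal monomial lifting}, which is exactly what the lifting-graduation/torus-action dictionary provides.
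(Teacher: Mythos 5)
Your proposal is correct and follows essentially the same route as the paper: establish that the $X(T)$-graduation on $\Orb_\lX(\lX)$ is the lifting graduation (the paper cites the proof of Lemma \ref{lifting subgraded}, you cite Corollary \ref{cor: suitable quasi-polynomial} and Proposition \ref{lifting graduation torus action}, which is the same underlying argument), then use Corollary \ref{homogeneous elements lifting graduation} to translate homogeneity plus compatibility with $s$ into the monomial conditions of Theorem \ref{unicity minimal monomial lifting}, and conclude. Your remark that $\widetilde x_i$ and $\mu_k(\widetilde x_k)$ lie in $\Orb_\lX(\lX)$ thanks to hypothesis 3 is a worthwhile explicit detail that the paper leaves implicit.
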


         \begin{proof}
             By the proof of Lemma \ref{lifting subgraded}, it follows that the $X(T)$-graduation on $\Orb(\lX)$ is a lifting graduation. Corollary \ref{homogeneous elements lifting graduation} implies that assumptions 1 and 2 of the theorem are respectively equivalent to assumptions 2 and 3 of Theorem \ref{unicity minimal monomial lifting}. Hence, the statement is a reformulation of Theorem \ref{unicity minimal monomial lifting}.
         \end{proof}

Finally, we improve Proposition \ref{conditions equality minimal lifting} in the affine case. That is: $(\lX, \phi, X)$ is constructed as in Example \ref{example: affine suitable for lifting} from a given $(R, \psi, X)$. Let $t^* \in \Delta(\lif t)$ and recall that $\CC[t^*]$ denotes the polynomial ring in the cluster variables of the seed $t^*$. By Theorem \ref{minimal monomial lifting theo}, we have inclusions $\CC[t^*] \subseteq R \subseteq \Li(t^*)$, which correspond to maps $$ \Spec(\Li(t^*)) \xrightarrow{\iota_{t^*}}  \lX \xrightarrow{\pi_{t^*}} \Spec (\CC[t^*])$$
where $\iota_{t^*}$ is an open embedding whose image is a principal open subset. 

\begin{prop}
    \label{condition equality min lifting affine} The following are equivalent

    \begin{enumerate}
        \item $\uclu(\lif t^D)=R$.
        \item For any $d \in D$ there exists a map $\iota_{t^*}^d: \Spec\biggl(\Li\biggl((t^{*})^{\{d\}}\biggr)\biggr) \longto \lX$ extending $\iota_{t^*}$. In this case  $\iota_{t^*}^d$ is an open embedding.
        \item If $p_d$ (resp. $q_d^*) $ is the point of $\lX$ (resp. $\Spec(\CC[t^*])$ corresponding to the ideal generated by $X_d$ in $R$ (resp. $\CC[t^*])$, then $\pi_{t^*}(p_d)=q_d^*$. That is $\pi_{t^*}$ does not contract the divisor $\overline{\{p_d\}}$.
    \end{enumerate}
\end{prop}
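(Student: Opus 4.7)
The plan is to leverage Proposition \ref{conditions equality minimal lifting} and Corollary \ref{cor:equality cond at d fixed}, which already characterise (1) both as $R \subseteq \Li((t^*)^{\{d\}})$ for every $d \in D$ and as $\val_d = \cval_d$ on $R$ for every $d$. Condition (2) of the present proposition is the geometric reformulation of the former, and condition (3) is an ideal-theoretic shadow of the latter; the proof reduces to making these two translations precise.

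For (1) $\Longleftrightarrow$ (2), I would argue that, for each $d$, an open embedding $\iota_{t^*}^d \colon \Spec(\Li((t^*)^{\{d\}})) \longto \lX$ extending $\iota_{t^*}$ is the same data as an inclusion $R \subseteq \Li((t^*)^{\{d\}})$ inside $\CC(\lX)$. Indeed, since $\CC[t^*] \subseteq R$, any such inclusion automatically exhibits $\Li((t^*)^{\{d\}})$ as the localization of $R$ at $\prod_{i \in \lif I_{uf} \sqcup (\lif I_{sf} \setminus \{d\})} \lif x_i^*$, so the induced morphism is a principal open embedding, and its compatibility with $\iota_{t^*}$ is immediate after further inverting $\lif x_d^*$; conversely an open embedding as in (2) yields the inclusion by pulling back global sections. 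Hence (2) is precisely condition $2$ of Proposition \ref{conditions equality minimal lifting}.

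For (2) $\Longleftrightarrow$ (3) I would compare $\val_d$ and $\cval_d$ on $R$ and on the polynomial subring $\CC[t^*]$. Assuming (2), I restrict the equality $\val_d = \cval_d$ on $R$ to $\CC[t^*]$, and use that $\cval_d$ on the polynomial ring $\CC[t^*]$ is the $\lif x_d^*$-adic valuation to read off $p_d \cap \CC[t^*] = (\lif x_d^*)$, which is (3). Conversely, assuming (3), I factor any $f \in \CC[t^*]$ as $f = (\lif x_d^*)^n h$ with $\lif x_d^* \nmid h$; then $h \notin p_d \cap \CC[t^*] = (\lif x_d^*)$, so $\val_d(h) = 0$ and $\val_d(f) = n = \cval_d(f)$. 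This bootstraps (3) to the full equality $\val_d = \cval_d$ on $\CC[t^*]$, and in particular gives $\val_d(\lif x_i^*) = 0$ for every $i \in \lif I \setminus \{d\}$; an arbitrary $f \in R \subseteq \Li(t^*)$ is a Laurent polynomial in the cluster variables of $t^*$, so a direct computation transfers the equality to $R$, and Corollary \ref{cor:equality cond at d fixed} completes (2). The main obstacle is precisely this reverse implication: one has to promote the purely set-theoretic content of (3), which only controls whether $\val_d$ is positive, into a full equality of discrete valuations on all of $R$, and it is the UFD structure of $\CC[t^*]$ together with the inclusion $R \subseteq \Li(t^*)$ from Theorem \ref{minimal monomial lifting theo} that makes this bootstrapping possible.
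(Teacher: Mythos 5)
Your proof is correct and follows essentially the same route as the paper: both reduce everything to Proposition \ref{conditions equality minimal lifting} and Corollary \ref{cor:equality cond at d fixed}, identify condition (2) with the inclusion $R \subseteq \Li\bigl((t^{*})^{\{d\}}\bigr)$ (using $\CC[t^*] \subseteq R$ to see the map is a principal open embedding), and read condition (3) as $p_d \cap \CC[t^*] = (X_d)$. The only cosmetic difference is in the step from (3) back to the valuation equality $\val_d = \cval_d$: the paper notes that the nested discrete valuation rings $\Orb_{\Spec(\CC[t^*]), q_d^*} \subseteq \Orb_{\lX,p_d}$ share a fraction field and hence coincide, whereas you unwind the same fact by factoring out powers of $X_d$ in the factorial ring $\CC[t^*]$ and then extending the equality of valuations to $R$.
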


\begin{proof}
    Condition 1 implies 2 is a reformulation of the same implication in Proposition \ref{conditions equality minimal lifting}. The fact that, if $\iota_{t^*}^d$ exists, then it is an open embedding, is because if $R \subseteq \Li\biggl((t^{*})^{\{d\}}\biggr)$, then the localisation of $R$ at all the unfrozen and semi-frozen variables of $(t^{*})^{\{d\}}$ is $\Li\biggl((t^{*})^{\{d\}}\biggr)$ since $\CC[t^*] \subseteq R.$
    If 2 holds, then $\Spec \biggl( \Li\biggl((t^{*})^{\{d\}}\biggr)\biggr) $ can be identified with an open subscheme of $\lX$ containing $p_d$. Since $\pi_{t^*} \circ \iota_{t^*}^d$ corresponds to the natural inclusion $\CC[t^*] \subseteq  \Li\biggl((t^{*})^{\{d\}}\biggr)$, it clearly sends $p_d$ to $q_d^*$.
   Finally, if condition 3 holds, then we have an inclusion $\Orb_{\Spec(\CC[t^*]), q_d^*} \subseteq \Orb_{\lX,p_d}$. But these two rings are discrete valuation  domains  with the same fraction field, hence they have to be equal. In particular, $\cval_d= \val_d$ on $\CC(\lX)$, hence condition 3 implies 1 by Proposition \ref{conditions equality minimal lifting}.
\end{proof}

\newpage

\section{Preliminaries on algebraic groups}
\label{Preliminaries alg groups section}
\subsection{Some classical facts and notation}
\label{Some classical facts and notation section}
In this section we introduce the notation and recall some classical facts about algebraic groups, we mostly follow \cite{fomin1999double}.
From now on, unless explicitly stated, $G$ is a complex, semisimple, simply connected algebraic group of rank $r$. Moreover, $B$ and $B^-$ are opposite Borel subgroups with unipotent radicals $U$ and $U^-$ respectively. Let $T= B \cap B^-$, which is a maximal torus of $G$, and $W=N_G(T)/T$ be the Weyl group.
Denote by $\lg$, $\lb$, $\lu$, $\lb^-$, $\lu^-$ and $\lh$  the Lie algebras of $G,B,U,B^-,U^-$ and $T$ respectively. We denote by $X(T)$ and $X(T)^\vee$ the set of characters and cocharacters of $T$ respectively. We write $\langle - , - \rangle $ for the natural pairing between $X(T)$ and $X(T)^\vee$.
Let $\Phi \subseteq X(T)$ be the root system of $G$ and $\Delta \subseteq \Phi$ be the set of simple roots determined by $B$. We denote by $\Phi^+$ the positive roots and $\Phi^-=-\Phi^+$ the negative ones, so that the root space $\lg_\alpha \subset \lu$ for $\alpha \in \Phi^+$. For $\beta \in \Phi$, $\beta^\vee \in X(T)^\vee$ is the corresponding coroot. We denote by $A=(a_{\alpha,\beta})_{\alpha,\beta \in\Delta}$  the \textit{Cartan matrix}, which is defined by $a_{\alpha,\beta}= \langle \beta , \alpha^\vee \rangle$. 
The character group $X(T)$ is free on the set of fundamental weights $\varpi_\alpha$, indexed by the simple roots $\alpha \in \Delta$. For $\beta \in \Phi$, we denote by $s_\beta \in W$ the associated reflection. The set $\{ s_\alpha : \alpha \in \Delta\} $ is a set of simple reflection of $W$ as a Coxeter group. 

\bigskip

Given a representation $V$ of $G$ and $\mu \in X(T)$, we denote  the associated \textit{weight space} by $$V_\mu=\{ v \in V \, : \,  t \cdot v=\mu(t)v \,  \, \text{for} \, t \in T \}.$$ Recall that $V= \bigoplus_{\mu \in X(T)}V_\mu$. We consider $V^*$ as a representation of $G$ as follows: if $\varphi \in V^*$, $g \in G$ and $v \in V$, then $(g\varphi)(v)=\varphi( 
g^{-1} v)$.
Let $X(T)^+$ be the set of dominant characters of $T$. For $\lambda \in X(T)^+$, the associated irreducible representation of $G$ is denoted by $V(\lambda)$. If $\lambda \in X(T)$, we denote by $\lambda^*= - w_0\lambda$. This defines a linear automorphism of $X(T)$ that stabilises $X(T)^+$. In particular, $V(\lambda)^*=V(\lambda^*).$

\bigskip

The following discussion holds for a general connected reductive group $G$. Consider the action of $G \times G$ on $G$ defined by $(g_1,g_2)\cdot g= g_1 g g_2\inv$. This gives to $\CC[G]$ the structure of a $G \times G$ module. We have an isomorphism of $G \times G$ representations: \begin{equation}
\begin{array}{r c l}
  \label{Peter Weyl thm}
   \displaystyle  \bigoplus_{\lambda \in X(T)^+} V(\lambda)^* \otimes V(\lambda) &  \longto & \CC[G] \\
    (\varphi_\lambda \otimes v_\lambda)_\lambda & \longmapsto & ( g \longmapsto \sum_\lambda \varphi_\lambda(g v_\lambda)).
\end{array}
\end{equation}

Consider the action of $U^-$ (resp. $U$) on $G$ by left (resp. right) multiplication. Then, $$ \begin{array}{c c}
     \CC[G]^{U^-}=  \CC[U^- \setminus G] &
     \CC[G]^{U}=\CC[G/U].
\end{array} 
$$ 

The two rings above inherit a natural structure of $G$-module. For any $\lambda \in X(T)^+$, we fix elements $\varphi_\lambda^- \in (V(\lambda)^*)^{U^-}=V(\lambda)^*_{-\lambda}$ and $v_\lambda^+ \in V(\lambda)^U=V(\lambda)_\lambda$ such that $\varphi_\lambda^-(v_\lambda^+)=1.$ Then, we have isomorphisms of $G$-modules:
\begin{equation}
\label{ C (G/U)}
\begin{array}{r c l }
  \displaystyle    
\bigoplus_{\lambda \in X(T)^+} V(\lambda) & \longto & \CC[G]^{U^-}  \\  
(x_\lambda)_\lambda & \longmapsto & (g \longmapsto \sum_\lambda \varphi_\lambda^-(g x_\lambda)) \\[1em]  
\displaystyle \bigoplus_{\lambda \in X(T)^+} V(\lambda)^* &\longto & \CC[G]^{U}\\
(\psi_\lambda)_\lambda & \longmapsto & ( g \longmapsto \sum_\lambda \psi_\lambda(g v_\lambda^+)).
\end{array}
\end{equation} 
Moreover, since $T$ normalises $U$ and $U^-$,  $T$ acts on $U^- \setminus G$ (resp. $G/U)$ by left (resp. right) multiplication. It's easy to notice that the homogeneous components of the $X(T)$-graduations induced by these actions, on the two rings, coincide with the decomposition given above into $G$-modules. In particular, for $\lambda \in X(T)$,

\begin{equation}
\label{G/U homogeneous components}
     (\CC[G]^{U^-})_\lambda = V(\lambda) \quad 
     (\CC[G]^{U})_\lambda = V(\lambda)^*,
     \end{equation}
were of course we set $V(\lambda)=V(\lambda)^*=0$ if $\lambda $ is not a dominant character. To avoid confusion, we recall that, accordingly to the notation introduced in Section \ref{pole filtration section} we have that 
$$
\begin{array}{c}
 (\CC[G]^{U^-})_\lambda = \{ f \in \CC[G]^{U^-}  : f(tg)=\lambda(t)f(g) \, \,\text{for any} \, t \in T, g \in G \} \\
 (\CC[G]^{U})_\lambda=\{ f \in \CC[G]^U \, : f(gt)=\lambda(t)f(g) \, \, \text{for any} \, t \in T, g \in G \}.
 \end{array}
$$

\subsection{Generalised minors}
\label{generalised minors section}
For any $\beta \in \Phi^+$, we chose an $\mathfrak{sl}_2$-triple $(X_\beta, H_\beta, X_{-\beta}) $ such that $X_{\pm \beta} \in \lg_{\pm \beta}$, $H_\beta \in \lh$, and $[X_\beta, X_{-\beta}]=H_\beta$. For $t \in \CC$ and $\beta \in \Phi$, let 
\begin{equation}
\label{x beta}
    x_{ \beta}(t)= \exp(t X_{\beta}).
\end{equation}

For $\alpha \in \Delta$, denote by $\varphi_\alpha : \SL_2 \longto G$ the morphism determined by the $\mathfrak{sl}_2$-triple associated to $\alpha$, so that 

\begin{equation*}
   \varphi_\alpha \begin{pmatrix} 
   1 & t \\
   0 & 1
    \end{pmatrix}=x_\alpha(t)
   \quad  \varphi_\alpha \begin{pmatrix} 
   t& 0 \\
   0 & t\inv
\end{pmatrix}= \alpha^\vee(t) \quad 
   \varphi_\alpha \begin{pmatrix} 
   1 & 0 \\
   t & 1
    \end{pmatrix}=x_{-\alpha}(t).
\end{equation*}
We define
\begin{equation}
    \label{s bar}
\sba:= \varphi_\alpha \begin{pmatrix} 
    0 & -1\\
    1 & 0
   \end{pmatrix}.
   \end{equation}
A sequence $ \ii=(i_l, \dots , i_i )$, of elements of $\Delta$, is called \textit{reduced expression} for $w \in W$ if $s_{i_l} \dots s_{i_1}=w$ and $l$ is minimal. In this case, $l=\ell(w)$ where $\ell$ is the  \textit{length} function of the Coxeter group $W$. We denote by $R(w)$ the set of reduced expressions of $w$, and by $w_0$ the longest element of $W$. Recall that, by a result of Matsumoto and Tits, any two reduced expressions of the same element can be obtained from each other by applying a sequence of braid moves.
The \textit{support} of $w$, denoted by $\supp(w)$, is the set of simple reflections appearing in a fixed (equivalently in any) reduced expression of $w$. Since the family $\{ \sba : \alpha \in \Delta \}$ satisfies the braid relations, if $\ii=(i_l, \dots , i_i) \in R(w)$, the element $$\overline{w}= \overline{s}_{i_l} \dots \overline{s}_{i_1}$$
doesn't depend on $\ii$.
In the following, we make an extensive use of the following fundamental commutation relations. If $\alpha \in \Delta$ and $t \in \CC^*$, then
\begin{equation}
    \label{x alpha s alpha = ..}
    \begin{array}{l}
        x_\alpha(t) \sba  = x_{-\alpha}(t\inv) \alpha^\vee(t)x_\alpha(-t\inv)\\
         \sba^{-1} x_{-\alpha}(t)= x_{-\alpha}(-t\inv)\alpha^\vee(t)x_\alpha(t\inv).
    \end{array}
\end{equation}

We consider the Bruhat decomposition $$ \displaystyle G= \bigsqcup_{w \in W} B^-wB$$ and denote by $G_0=B^-B$ the open cell. Recall that the product induces an isomorphism of varieties between $U^- \times T \times U$ and $B^-B$. In particular, an element $x \in G_0$ can be written uniquely as $x=[x]_-[x]_0[x]_+$ where $[x]_- \in U^-$, $[x]_0 \in T$ and $[x]_+ \in U$. Moreover, $G \setminus G_0= \bigcup_{\alpha \in \Delta} \overline{B^- s_\alpha B}$ and the set of $\overline{B^- s_\alpha B}$ consists of pairwise distinct divisors of $G$. 

\bigskip

For any $\alpha \in \Delta$, we define the \textit{generalised minor} $\delaee$ as the element of $\CC[G]$ corresponding to $\varphi_{\pia}^- \otimes v_{\pia}^+$ via \eqref{Peter Weyl thm}. Namely 
\begin{equation}
    \label{principal minor e e defi}
    \delaee(g)=\phiam(g \vap).
\end{equation}
The generalised minor $\delaee$ is the only regular function on $G$, such that for any $x \in G_0$, $\delaee(x)= \varpi_\alpha([x]_0).$
Then, if $v,w \in W$, we define the generalised minor $\delavw$ by $\delavw= (\overline{v},\overline{w})\cdot \delaee$. Explicitly, if $g \in G$, then
\begin{equation}
    \label{principal minor v,w defi}
    \delavw(g)=\delaee(\overline{v}\inv g \overline{w}).
\end{equation}
Note that our notation is slightly different from the classical one where $\delavw$ is denoted by \\$\Delta_{v\varpi_\alpha, w \varpi_\alpha}$.
We denote by $\leq_R$ the right weak order on $W$: $v \leq_R w$ if and only if $\ell(w)=\ell(v) + \ell(v\inv w).$ 
We recall a bunch of well known properties of the generalised minors that are either trivial or can be found in \cite{fomin1999double} (\cite{fomin1999double}[Propositions 2.2,2.4, Section 2.7]) and that are largely used in this text. 

\begin{lemma}[Basic properties of minors]
\label{lem:basic prop minors}
    Let $\alpha \in \Delta$, $v,w,v',w' \in W$ and $h,h' \in T$, then 
    \begin{enumerate}
        \item If $v\pia=v' \pia$ and $w\pia=w'\pia$, then $\delavw=\Delta^{\pia}_{v',w'}$.
        \item  $\delavw$ is $vU^-v\inv$-invariant on the left and $wUw\inv$ invariant on the right.
    \item $\delavw(hgh')=v\pia(h)w\pia(h')\delavw(g)$.
    \item The zero locus of $\delaee$ is $\overline{B^-s_\alpha B}$.
    \item $\delavw$ vanishes on $U$ if and only if it vanishes on $U \cap v U v\inv \cap w U^- w\inv.$ 
    \item If $v \leq_R w$, then $\delavw$ doesn't vanish on $U$.
    \item $\delaew$ doesn't vanish on $U$.
    \end{enumerate}
\end{lemma}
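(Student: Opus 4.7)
The plan is to bootstrap everything from properties of $\delaee$ on the open cell $G_0$, propagating to general $\delavw$ via $\delavw(g)=\delaee(\overline{v}^{-1}g\overline{w})=\phiam(\overline{v}^{-1}g\overline{w}\vap)$. For (1), observe that $\mathrm{Stab}_W(\pia)=W_J$ with $J=\Delta\setminus\{\alpha\}$: for each $\beta\in J$, the element $\sbb$ acts via the $\varphi_\beta$-embedded $\SL_2$ on the zero-weight line $\CC\vap$ and hence fixes $\vap$; since any reduced expression of $u\in W_J$ uses only simple reflections in $J$, one gets $\overline{u}\vap=\vap$ for all $u\in W_J$. A direct check using the braid-compatible lifts then shows $\overline{w}\vap$ depends only on $w\pia$, and dually $\phiam\circ\overline{v}^{-1}$ depends only on $v\pia$. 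For (2)--(3), the identity $\delaee(u^{-}hgh'u^{+})=\pia(h)\pia(h')\delaee(g)$, for $u^{\pm}\in U^{\pm}$ and $h,h'\in T$, is immediate from the $U^{-}TU$-factorization on $G_0$ and extends to all of $G$ by density; conjugating by $\overline{v}$ and $\overline{w}$, and using $\pia(\overline{v}^{-1}h\overline{v})=(v\pia)(h)$ together with $\overline{v}^{-1}(vU^{-}v^{-1})\overline{v}=U^{-}$ (and the mirror identity), yields both claims at once.

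For (4), $\delaee$ is nowhere zero on $G_0$, so $V(\delaee)\subseteq\bigcup_{\beta\in\Delta}\overline{B^{-}s_\beta B}$; by (2)--(3), $\delaee$ is a $(B^{-},B)$-eigenfunction on each stratum, so its vanishing reduces to $\delaee(\sbb)=\phiam(\sbb\vap)$. Since $\sbb\vap\in V(\pia)_{s_\beta\pia}$, and this weight space equals $\CC\vap$ exactly when $\beta\neq\alpha$ (where moreover $\sbb\vap=\vap$ by the argument in (1)), one finds $\delaee(\sbb)=1$ for $\beta\neq\alpha$ and $=0$ for $\beta=\alpha$, whence $V(\delaee)=\overline{B^{-}s_\alpha B}$. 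For (5), I will use the root-subgroup product decompositions $U=(U\cap vU^{-}v^{-1})\cdot(U\cap vUv^{-1})$ and $U\cap vUv^{-1}=(U\cap vUv^{-1}\cap wU^{-}w^{-1})\cdot(U\cap vUv^{-1}\cap wUw^{-1})$, obtained by partitioning $\Phi^{+}$ by the signs of $v^{-1}\beta$ and $w^{-1}\beta$. Combined with the left $vU^{-}v^{-1}$- and right $wUw^{-1}$-invariances from (2), these reduce any $u\in U$, without altering $\delavw(u)$, to an element of $U\cap vUv^{-1}\cap wU^{-}w^{-1}$.

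For (6), write $w=vw_1$ with $\ell(w)=\ell(v)+\ell(w_1)$, so $\overline{w}=\overline{v}\,\overline{w_1}$ by Matsumoto. Conjugation by $\overline{v}$ identifies $U\cap vUv^{-1}\cap wU^{-}w^{-1}$ with $U\cap w_1U^{-}w_1^{-1}$, on which $\delavw$ becomes $u'\mapsto\phiam(u'\overline{w_1}\vap)$; an induction on $\ell(w_1)$ using the one-parameter-subgroup parametrization of $U\cap w_1U^{-}w_1^{-1}$ and the commutation relations \eqref{x alpha s alpha = ..} exhibits an element whose action on $\overline{w_1}\vap\in V(\pia)_{w_1\pia}$ has non-zero $\pia$-component, giving the non-vanishing on $U$ by (5). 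Item (7) is the special case $v=e$ of (6), since trivially $e\le_R w$ for every $w\in W$.

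The main obstacle is the non-vanishing step in (6): starting from $\overline{w_1}\vap$ lying in a low weight space, one must raise it back to $\vap$ by a unipotent element without introducing cancellations. The cleanest implementation picks a reduced expression $w_1=s_{i_k}\cdots s_{i_1}$, uses the associated root subgroups $U_{\beta_j}$ with $\beta_j=s_{i_k}\cdots s_{i_{j+1}}(\alpha_{i_j})$ to parametrize $U\cap w_1U^{-}w_1^{-1}$, and tracks the coefficient of $\vap$ in $\prod_j x_{\beta_j}(t_j)\overline{w_1}\vap$ step by step via \eqref{x alpha s alpha = ..}, showing it is a non-zero polynomial in the $t_j$. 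This entirely bypasses the delicate canonical-lift comparison underlying (1).
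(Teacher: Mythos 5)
Your proposal supplies a full argument for something the paper itself does not prove: the text simply cites \cite{fomin1999double} (Propositions 2.2, 2.4, Section 2.7) and asserts the facts are "either trivial or can be found" there. So there is no paper proof to compare against line-by-line; what you have written is essentially a self-contained reconstruction of the Fomin--Zelevinsky arguments, and the overall route (deduce everything from $\delaee$ via $\delavw(g)=\delaee(\overline v\inv g\overline w)$, then use $B^-\times B$-covariance, the partition of $\Phi^+$ by signs under $v\inv$ and $w\inv$, and the canonical lifts $\overline w$) is the standard one. It is correct in outline, and (2)--(5) are essentially complete.

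Two places deserve more care. In (1), the claim that $\overline{w}\vap$ depends only on $w\pia$ does not follow directly from $\overline u \vap=\vap$ for $u\in W_{\Delta\setminus\{\alpha\}}$, because the lifts are not multiplicative: $\overline{w'}\neq\overline w\,\overline u$ in general when $w'=wu$. You need to pass through the minimal-length coset representative $w_{\min}\in wW_{\Delta\setminus\{\alpha\}}$, writing $w=w_{\min}u_0$ and $w'=w_{\min}u_1$ with lengths adding, so that $\overline w=\overline{w_{\min}}\,\overline{u_0}$, $\overline{w'}=\overline{w_{\min}}\,\overline{u_1}$, and then $\overline w\vap=\overline{w_{\min}}\vap=\overline{w'}\vap$. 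A dual remark applies to $\overline v \phiam$. Second, in (6)/(7) you correctly reduce to showing that $\Delta^{\pia}_{e,w_1}$ does not vanish identically on $U\cap w_1 U^- w_1\inv$, and you flag the coefficient-tracking induction as the main obstacle; it can indeed be done that way, but there is a shorter route you may prefer: the linear span of $\{u\cdot\overline{w_1}\vap : u\in U\}$ is the Demazure module $V_{w_1}(\pia)\subseteq V(\pia)$, which contains $\vap$ since $V_e(\pia)=\CC\vap\subseteq V_{w_1}(\pia)$; if $\phiam$ vanished on all $u\cdot\overline{w_1}\vap$ it would vanish on their span, contradicting $\phiam(\vap)=1$, so $\delaew$ is non-zero on $U$ and one then applies (5). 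This bypasses the explicit commutation computation entirely.
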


 Fomin and Zelevinsky proved the following fundamental identity \cite[Theorem 1.17]{fomin1999double}, which stands at the very base of the known cluster algebra structures related to $G$.

 \begin{theo}
 \label{relations minors FZ}
 For any $\alpha \in \Delta$ and $v,w \in W$, such that $\ell(v s_\alpha)=\ell(v)+1$ and $\ell(w s_\alpha)=\ell(w)+1$, then 
 $$\delavw \Delta^{\varpi_\alpha}_{vs_\alpha,ws_\alpha}= \Delta^{\pia}_{vs_\alpha, w}\Delta^{\pia}_{v,ws_\alpha}+ \prod_{\beta \in \Delta \setminus\{\alpha\}}(\delbvw)^{-a_{\beta,\alpha}}. $$
\end{theo}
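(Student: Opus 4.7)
My plan is to first reduce the general identity to the base case $v=w=e$ and then to prove the base case by explicit computation on a dense open subset of $G$. For the reduction, the length hypotheses $\ell(vs_\alpha)=\ell(v)+1$ and $\ell(ws_\alpha)=\ell(w)+1$ ensure that if $\ii$ and $\jj$ are reduced expressions for $v$ and $w$ respectively, then $(\ii,\alpha)$ and $(\jj,\alpha)$ are reduced expressions for $vs_\alpha$ and $ws_\alpha$. This gives $\overline{vs_\alpha}=\bar v\,\sba$ and $\overline{ws_\alpha}=\bar w\,\sba$. Substituting $g$ by $\bar v g\bar w\inv$ in the desired identity and using the defining formula \eqref{principal minor v,w defi}, each generalized minor collapses because the factors $\bar v,\bar w$ cancel: the minors $\delavw$, $\Delta^{\pia}_{vs_\alpha,ws_\alpha}$, $\Delta^{\pia}_{vs_\alpha,w}$, $\Delta^{\pia}_{v,ws_\alpha}$ and $\delbvw$ (for $\beta\neq\alpha$), evaluated at $\bar v g\bar w\inv$, become respectively $\delaee(g)$, $\Delta^{\pia}_{s_\alpha,s_\alpha}(g)$, $\Delta^{\pia}_{s_\alpha,e}(g)$, $\Delta^{\pia}_{e,s_\alpha}(g)$ and $\delbee(g)$. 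Hence the general identity is equivalent to its $v=w=e$ specialization.

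For the base case, both sides are regular on $G$: the minors $\delta^{\pia}_{\bullet,\bullet}$ are regular by construction, and the exponents $-a_{\beta,\alpha}$ are non-negative for $\beta\neq\alpha$. A direct computation using $\langle\pi_\gamma,\beta^\vee\rangle=\delta_{\gamma,\beta}$, the expansion $\alpha=\sum_\gamma a_{\gamma,\alpha}\pi_\gamma$, and $a_{\alpha,\alpha}=2$ shows that both sides are $T\times T$-semi-invariant of the common weight $(2\pia-\alpha,2\pia-\alpha)$. I would then verify the identity on the dense open subset $\Omega\subseteq G$ consisting of those $g$ for which both $g$ and $\sba\inv g\,\sba$ lie in the open Bruhat cell $G_0=B^-B$. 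On $\Omega$ one reads $\delaee(g)=\pia([g]_0)$ directly, and the other four minors can be extracted from the Gaussian decomposition of $g$ by writing the $U$-factor as $[g]_+=x_\alpha(t)\cdot u$, where $u$ lies in the subgroup generated by $\{x_\gamma(s):\gamma\in\Phi^+\setminus\{\alpha\}\}$, and applying the commutation relations \eqref{x alpha s alpha = ..} to put $\sba\inv g\,\sba$ into Gaussian form and read off its torus component.

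After this reduction, the identity collapses to the elementary $\SL_2$-Plücker relation $a_{11}a_{22}-a_{12}a_{21}=1$ inside the rank-one subgroup $\varphi_\alpha(\SL_2)$. I expect the main obstacle to lie in the bookkeeping of torus contributions: the scalar produced by commuting $x_\alpha(t)$ past $\sba$, combined with the characters evaluated on $[g]_0$, must reproduce on the right-hand side exactly the product $\prod_{\beta\neq\alpha}\delbee(g)^{-a_{\beta,\alpha}}$. The identity $\langle 2\pia-\alpha,\beta^\vee\rangle=-a_{\beta,\alpha}$ for $\beta\neq\alpha$ (with the $\alpha$-contribution vanishing because $a_{\alpha,\alpha}=2$) is precisely the weight-matching verified above and is what makes the combinatorics work out. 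Once the equality holds on $\Omega$, regularity of both sides on $G$ together with density of $\Omega$ in $G$ extend it to the whole group.
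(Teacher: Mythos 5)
The paper does not prove this statement at all: it is quoted verbatim from Fomin--Zelevinsky \cite{fomin1999double}[Theorem 1.17], so there is no internal proof to compare against. Your outline is essentially the original Fomin--Zelevinsky argument, and it is sound. The reduction step is complete and correct: since $\ell(vs_\alpha)=\ell(v)+1$ gives $\overline{vs_\alpha}=\overline{v}\,\sba$ (and likewise for $w$), substituting $g\mapsto \overline{v}g\overline{w}\inv$ turns every minor in the identity into its $v=w=e$ counterpart, and because this substitution is a bijection of $G$ the general case is equivalent to the base case. Your weight computation $(2\pia-\alpha,2\pia-\alpha)$ for all three terms is also correct and is indeed the reason the exponents $-a_{\beta,\alpha}$ appear.

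The only thin spot is the base case itself, where your sketch defers the actual computation. It does go through, but the mechanism is slightly different from "reading off torus components of $\sba\inv g\sba$": the clean route is representation-theoretic. Writing $g=u_-hu$ with $u=u'x_\alpha(t)$, $u'\in U\cap s_\alpha Us_\alpha$, and $u_-=x_{-\alpha}(s)u_-'$, $u_-'\in U^-\cap s_\alpha U^-s_\alpha$, one evaluates each minor as a matrix coefficient of $V(\pia)$ and finds $\delaee(g)=\pia(h)$, $\Delta^{\pia}_{e,s_\alpha}(g)=t\,\pia(h)$, $\Delta^{\pia}_{s_\alpha,e}(g)=s\,\pia(h)$ and $\Delta^{\pia}_{s_\alpha,s_\alpha}(g)=st\,\pia(h)+(\pia-\alpha)(h)$. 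The crucial point, which your sketch does not name, is why no cross-terms from $u'$ and $u_-'$ contaminate these formulas: since $\langle\pia,\alpha^\vee\rangle=1$, no weight of the form $\pia-\alpha+\gamma_1+\dots+\gamma_k$ with $\gamma_i\in\Phi^+\setminus\{\alpha\}$, $k\geq 1$, occurs in $V(\pia)$ (such a weight is not $\leq\pia$), so $u'X_{-\alpha}v_{\pia}^+=X_{-\alpha}v_{\pia}^+$ and the action of $u_-'$ cannot return to the $\pia$-weight line. Once this is in place the identity reduces, exactly as you say, to the $2\times 2$ determinant in the two-dimensional $\sl_\alpha$-subrepresentation spanned by $v_{\pia}^+$ and $X_{-\alpha}v_{\pia}^+$, together with the character identity $\sum_{\beta\neq\alpha}(-a_{\beta,\alpha})\varpi_\beta=2\pia-\alpha$; density of the open set and regularity of both sides finish the argument. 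So: correct strategy, complete reduction, and a base-case computation that works but whose decisive vanishing argument you should make explicit.
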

 The following lemma seems well known but we add a proof because of a lack of a precise reference.
 \begin{lemma}
 \label{lem:minors irred}
     If $\alpha \in \Delta$ and $v,w \in W$, the generalised minor $\delavw$ is irreducible in $\CC[G]$. Moreover, the principal ideals generated by $\delaee$ are pairwise different with respect to $\alpha$.
 \end{lemma}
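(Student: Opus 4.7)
The plan is to first reduce to the case $v=w=e$. Since $\delavw(g) = \delaee(\overline{v}\inv g \overline{w})$, the function $\delavw$ is the image of $\delaee$ under the $\CC$-algebra automorphism of $\CC[G]$ induced by left-right translation, so one is irreducible iff the other is. I will then invoke the classical facts that, $G$ being semisimple and simply connected, $\CC[G]$ is a unique factorisation domain (this follows from the smoothness of $G$ together with $\Pic(G)=0$, a theorem of Popov) and $\CC[G]^* = \CC^*$.

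The heart of the argument is to show that $\delaee$ is irreducible. By Lemma \ref{lem:basic prop minors}.(4) its zero locus is the irreducible divisor $\overline{B^-s_\alpha B}$. Factoriality of $\CC[G]$ therefore forces $\delaee = c\, p^n$ for some $c \in \CC^*$, some irreducible $p$ cutting out $\overline{B^-s_\alpha B}$, and some integer $n \geq 1$. The one step requiring an actual computation, which I regard as the main obstacle, is ruling out $n \geq 2$. To this end I will evaluate $\delaee$ along the curve $\gamma(t):=x_\alpha(t)\sba$, which meets $\overline{B^-s_\alpha B}$ at $t=0$ and otherwise lies in $G_0$. Indeed, by \eqref{x alpha s alpha = ..}, for $t\neq 0$ one has $\gamma(t)=x_{-\alpha}(t\inv)\alpha^\vee(t)x_\alpha(-t\inv)\in U^-TU$, whence $[\gamma(t)]_0=\alpha^\vee(t)$ and therefore $\delaee(\gamma(t))=\varpi_\alpha(\alpha^\vee(t))=t$. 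Since $p$ does not vanish on $G_0$, the restriction $p\circ\gamma$ is not identically zero; combining the factorisation $\delaee=c\,p^n$ with the simple zero computed above would force $p\circ\gamma$ to have order of vanishing $1/n$ at $t=0$, which is impossible unless $n=1$. This gives the irreducibility.

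For the second assertion, assume $(\delaee)=(\Delta^{\varpi_\beta}_{e,e})$ for distinct simple roots $\alpha,\beta \in \Delta$. Then the two principal zero divisors coincide, contradicting the fact recalled with the Bruhat decomposition in Section \ref{Some classical facts and notation section} that the codimension-one Schubert divisors $\overline{B^-s_\alpha B}$, $\alpha\in\Delta$, are pairwise distinct.
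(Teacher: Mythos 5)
Your proof is correct, but it takes a genuinely different route from the paper for the key step, the irreducibility of $\delaee$. The paper considers the $U^- \times U$-action on $G$ by $(u^-,u)\cdot g = u^- g u^{-1}$, identifies $\CC[G]^{U^-\times U}$ with the polynomial ring $\CC[\delbee]_{\beta \in \Delta}$ via the Peter--Weyl decomposition \eqref{Peter Weyl thm}, and then invokes the transfer principle for factoriality (Lemma \ref{valuation on invariants or above}, from \cite{popov1994invariant}) to pass irreducibility from the invariant ring back to $\CC[G]$. You instead argue geometrically: factoriality of $\CC[G]$ plus the irreducibility of the zero divisor $V(\delaee)=\overline{B^-s_\alpha B}$ forces $\delaee = c\,p^n$, and you pin down $n=1$ by restricting to the curve $\gamma(t)=x_\alpha(t)\sba$, where the commutation relation \eqref{x alpha s alpha = ..} gives $\delaee(\gamma(t))=\varpi_\alpha(\alpha^\vee(t))=t$, a simple zero. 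Both approaches lean on the same two inputs (factoriality of $\CC[G]$ and the description of $V(\delaee)$ in Lemma \ref{lem:basic prop minors}.(4)); the paper's is structural and generalises immediately to any invariant that can be located in a tractable invariant subring, whereas yours is self-contained and gives explicit control of the order of vanishing along the Schubert divisor, which is arguably the more concrete piece of information. The argument for the second assertion is the same in both.
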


 The proof relies on the following well known fact, which can be found in \cite{popov1994invariant} (Theorem 3.7 and proof of Theorem 3.1) and which is frequently used in this text.

\begin{lemma}
\label{valuation on invariants or above}
    Let $R$ be a factorial $\CC$-algebra of finite type and $H$ be a connected algebraic group acting on $R$. If $X(H)=\{e\}$ then
    \begin{itemize}
        \item $R^H$ is factorial.
        \item An element $f \in R^H$ is irreducible, in $R^H$, if and only if it is irreducible in $R$.
        \item If $f \in R^H$, its factorisations into irreducibles in $R^H$ and in $R$ coincide.
    \end{itemize}
\end{lemma}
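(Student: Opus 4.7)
The plan is to exploit the fact that an action of a connected algebraic group with trivial character group cannot permute nor rescale the irreducible factors of an invariant element. I will proceed in three steps.

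First, I would fix a non-zero $f \in R^H$ and its essentially unique factorisation $f = c \, p_1 \cdots p_n$ in $R$ into irreducibles (not necessarily distinct, with $c \in R^*$). Since $R$ is a finitely generated $\CC$-algebra on which the algebraic group $H$ acts by $\CC$-algebra automorphisms, for every $h \in H$ and every irreducible $p \in R$, $h \cdot p$ is again irreducible. In particular, by uniqueness of factorisation, $H$ permutes the finite multiset of associate-classes $\{[p_1], \ldots, [p_n]\}$ extracted from a factorisation of $f$.

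Second, I would invoke connectedness: the induced map from $H$ to the permutation group of these finitely many associate-classes is a morphism with target a finite discrete set, hence constant equal to the identity. This shows that $h \cdot p_i \in \CC^* p_i$ for every $h$ and every $i$, and by rationality of the $H$-action on the subspace $\CC p_i + \CC (h\cdot p_i)$ of $R$, the scalar defines an algebraic character $\lambda_i : H \to \CC^*$. The assumption $X(H) = \{e\}$ then forces $\lambda_i \equiv 1$, so each $p_i$ lies in $R^H$.

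Third, I would transfer factoriality. A unit $u \in R$ that happens to lie in $R^H$ is automatically a unit of $R^H$, because by uniqueness of inverses and $H$-equivariance $h \cdot u^{-1} = (h \cdot u)^{-1} = u^{-1}$. Combined with step two, this shows: if $p \in R^H$ is irreducible in $R$, any factorisation $p = ab$ in $R^H$ is a factorisation in $R$, so $a$ or $b$ lies in $R^* \cap R^H = (R^H)^*$; hence $p$ is irreducible in $R^H$. It follows that every non-zero $f \in R^H$ factors in $R^H$ as a unit times a product of irreducibles of $R^H$, and the factorisations of $f$ in $R$ and in $R^H$ coincide; uniqueness in $R^H$ is inherited from uniqueness in $R$. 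The converse implication for irreducibility is immediate: if $f \in R^H$ is irreducible in $R^H$, applying steps one and two to $f$ itself shows that its factorisation in $R$ consists of a single invariant irreducible factor (up to a unit), so $f$ is irreducible in $R$.

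The main subtle point will be ensuring the algebraicity of the characters $\lambda_i$: one must argue that the assignments $h \mapsto \lambda_i(h)$ are morphisms of algebraic groups, which requires invoking the standard fact that an action of an algebraic group on a finitely generated $\CC$-algebra is locally finite and rational. Everything else is a routine bookkeeping exercise built on top of this and on the connectedness-plus-discrete-target argument in step two.
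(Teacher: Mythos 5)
Your argument is correct: it is the standard proof of this classical fact (the one behind the reference \cite{popov1994invariant} that the paper cites instead of giving a proof), namely that $H$ permutes the associate classes of the irreducible factors of an invariant element, connectedness together with local finiteness of the action forces each class to be stabilised, and triviality of $X(H)$ kills the resulting characters, after which factoriality of $R^H$ and the comparison of irreducibles and factorisations follow by the bookkeeping you describe. The two subtle points you flag — closedness of the stabiliser of an associate class and algebraicity of the characters $\lambda_i$, both resting on rationality of the action — are exactly the ones that need the finite-type hypothesis, and your treatment of units ($R^* \cap R^H = (R^H)^*$) correctly completes the transfer of unique factorisation.
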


   \begin{proof}[Proof of Lemma \ref{lem:minors irred}]
   From the definitions, for proving that $\delavw$ is irreducible, it's sufficient to prove that $\delaee$ is irreducible. Since $G$ is semi-simple and simply connected, then $\CC[G]$ is factorial. Consider the $U^- \times U$-action on $G$ defined by $(u^-,u)\cdot g= u^- g u\inv$. Since $U^- \times U$ is connected and has no non-trivial character, Lemma \ref{valuation on invariants or above} applies. Let $\CC[T_\beta]_{\beta \in \Delta}$ be a polynomial ring in the independent variables $T_\beta$. From \eqref{Peter Weyl thm}, we easily deduce that the map $$ \begin{array}{r c l}
     \CC[T_\beta]_\beta  & \longto & \CC[G]^{U^- \times U} \\
      T_\beta & \longmapsto & \delbee
   \end{array}$$
   is an isomorphism. Since $T_\alpha$ is irreducible in $\CC[T_\beta]_\beta$, we deduce from Lemma \ref{valuation on invariants or above} that $\delaee$ is irreducible in $\CC[G]$. The last part of the statement follows from the fact that $V(\delavw)=\overline{B^- s_\alpha B}$ and these divisors are pairwise distinct.
   \end{proof}

   \subsection{Cluster structure on U(w)}
\label{Cluster structure on U(w) section}

Let $w \in W$, we set $\Phi(w)= \Phi^+ \cap w\inv \Phi^-$ the set of \textit{inversions} of $w$. Let $$U(w):= U \cap w\inv U^- w.$$
This is the $T$-stable subgroup of $U$ whose Lie algebra is the direct sum of the root spaces corresponding to elements of $\Phi(w).$ We recall the cluster algebra structure constructed by Goodearl and Yakimov on $\CC[U(w)]$, in \cite[Theorem 7.3]{goodearl2021integral}. This coincides with the one found by Geiss, Leclerc and Schröer, in \cite{geiss2011kac}, in the simply laced case for $G$ simple. Actually, in  \cite{goodearl2021integral} the construction is done in the quantum setting. The specialisation to the classical setting is made possible by \cite[Theorem 1.6]{davison2021strong}. Note that \cite{goodearl2021integral} and \cite{geiss2011kac} adopt different conventions for the meaning of $N(w)$. In what follows, we slightly modify the notation of Goodearl and Yakimov. Our notation is almost identical to the one used by Geiss, Leclerc and Schröer.

\bigskip

Fix $\ii = (i_l, \dots , i_1) \in R(w)$. 
    For convenience we set $s_{i_0}=s_{i_{l+1}}=e.$
  We denote 
\begin{equation}
\label{w_k}
    w_{\leq k}^{-1}= s_{i_1} \dots s_{i_k}, \quad w_{\leq k}= s_{i_k} \dots s_{i_1}= (w_{\leq k}^{-1})\inv.
\end{equation}
For 
    $k \in \{0, 1, \dots, l, l+1 \}$
  we use the notation 
\begin{equation}
    \label{k+ k-}
    \begin{array}{l }
    k^+= \begin{cases}
        \min \{ \{ k < j \leq l \, : i_j= i_k \} \cup \{l+1 \}\}  & \text{if} \quad k \not \in \{0,l+1\}\\
        k  & \text{if} \quad k \in \{0,l+1\}
    \end{cases} \\[1.5 em]
       k^-= \begin{cases}
           \max \{ \{ 1 \leq j < k \, : \, i_j=i_k \} \cup \{ 0 \} \} & \text{if} \quad k \not \in \{0,l+1\}\\
           k &  \text{if} \quad k  \in \{0,l+1\}
       \end{cases} 
    \end{array}
\end{equation}

Moreover, for a simple root $\alpha$ such that $s_\alpha \in \supp(w)$, we denote 
\begin{equation}
\label{alpha }
    \alpha^{\min}= \min \{  1 \leq k \leq l \, : \, i_k= \alpha \} \quad \alpha^{\max}= \max \{  1 \leq k \leq l \, : \, i_k= \alpha \}.
\end{equation}
Recall that $A= (a_{\alpha,\beta})_{\alpha, \beta \in \Delta}$ is the Cartan matrix of $\Phi$. For an integer $N \in \NN$, we use the notation $$[N]= \{ 1 , \dots , N \}. $$
We consider the seed $t=t_\ii$, of $\CC(U(w))$, defined as follows: 

\begin{enumerate}
    \item[-] The vertex set is $I= [l]$. Moreover, $I_{sf}=\emptyset$ and $I_{hf}= \{ \alpha^{\max} \, : \, s_\alpha \in \supp(w) \}.$
    \item[-] For $j \in I, k \in I_{uf}$, the coefficient $b_{j,k}$ of the generalised exchange matrix $B$ is defined by \begin{equation}
        \label{B for U(w)}
b_{j,k}= \begin{aligned}
    \begin{cases}
        1 & \text{if} \quad j=k^-\\
        -1 & \text{if} \quad j=k^+ \\
        a_{i_j,i_k} & \text{if} \quad j < k < j^+ < k^+\\
        -  a_{i_j,i_k} & \text{if} \quad k<j<k^+<j^+\\
        0 & \text{otherwise}
    \end{cases}
\end{aligned}
    \end{equation} 
     
        It is practical to set, for $k \in I_{uf}$ $$b_{0,k}=b_{l+1,k}=0.$$
    
\item[-] For $k \in I$, the cluster variable $x_k \in \CC[U(w)]$ is the restriction at $U(w)$ of a generalised minor, in particular: 

\begin{equation}
    \label{x for U(w)}
    x_k= \bigl(\Delta^{\varpi_{i_k}}_{e, \wkm}\bigr)_{|U(w)} 
\end{equation}

\end{enumerate}
We refer to the generalised minor in \eqref{x for U(w)} as the \textit{minor defining} $x_k$.

\bigskip

\begin{example}
    Let $G=\SL_3$, $\Delta= \{1,2\}$ and $\ii=(1,2,1) \in R(w_0)$. Here $U=U(w_0)$ and
    $$U(\CC)= \Biggl \{ \bmat 
    1 & a & b \\
    0 & 1 & c\\
    0 & 0 & 1
    \emat \, : \, a,b,c \in \CC \Biggr \}.$$
    The seed $t=t_\ii$ is graphically represented by the following quiver 
    \[\begin{tikzcd}
	& {\blacksquare 2} \\
	{\blacksquare 3} && {\bigcirc 1}
	\arrow[from=1-2, to=2-3]
	\arrow[from=2-3, to=2-1]
\end{tikzcd}\]
and its cluster variables are
$$x_1=a \quad x_2=ac-b \quad x_3=b.$$
Note that $\mu_1(x_1)=c.$
\end{example}

The following is well known but we add a proof for completeness.

\begin{lemma}
\label{maximal rank U(w)}
   The seed $t$ is of maximal rank.
\end{lemma}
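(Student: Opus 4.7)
I would proceed by induction on $l = \ell(w)$, comparing the seed $t = t_\ii$ with the seed $t' = t_{\ii'}$ attached to the truncated reduced word $\ii' = (i_{l-1}, \dots, i_1)$, which is a reduced expression for $w' = s_{i_{l-1}} \cdots s_{i_1}$ of length $l-1$ (reducedness of $\ii'$ being inherited from $\ii$). Write $B \in \ZZ^{I \times I_{uf}}$ for the generalised exchange matrix of $t$ and $B' \in \ZZ^{I' \times I'_{uf}}$ for that of $t'$. By the inductive hypothesis $B'$ has rank $|I'_{uf}|$; the goal is to transfer this to $B$.

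The comparison between $B$ and $B'$ splits into two cases depending on whether $i_l$ already appears in $\ii'$. In both cases, the key preliminary observation is that the values $k^+, k^-$ entering formula~\eqref{B for U(w)} agree for $\ii$ and $\ii'$ whenever $k \in I'_{uf}$ (since $k^+_{\ii'} \leq l-1$ and no new occurrence of $i_k$ is inserted).

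\emph{Case A: $i_l \notin \supp(w')$.} Then $l$ is the unique occurrence of its letter, so $l \in I_{hf}$, $I_{uf} = I'_{uf}$, $l^- = 0$ and $l^+ = l+1$. A direct inspection of the four cases in~\eqref{B for U(w)} shows that every entry of row $l$ vanishes: the conditions $l = k^\pm$ and the interleaving inequalities $l < k < l^+ < k^+$ or $k < l < k^+ < l^+$ all fail for $k < l$. Moreover, the submatrix of $B$ indexed by $[l-1] \times I'_{uf}$ coincides with $B'$. Hence $B = \bigl(\begin{smallmatrix} B' \\ 0 \end{smallmatrix}\bigr)$ has rank $|I'_{uf}| = |I_{uf}|$ by induction.

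\emph{Case B: $i_l \in \supp(w')$.} Let $k_0 = l^-$ be the previous occurrence of $i_l$ in $\ii$. Then $k_0$ was the last $i_l$-position in $\ii'$ (so $k_0 \in I'_{hf}$) but becomes mutable in $\ii$, giving $I_{uf} = I'_{uf} \sqcup \{k_0\}$ and $k_0^+ = l$. Formula~\eqref{B for U(w)} again forces every entry of row $l$ to vanish except $b_{l, k_0} = -1$ (the interleaving cases are excluded because $l^+ = l+1 > k^+$ for every $k \in I_{uf}$). Removing row $l$ and column $k_0$ from $B$ reproduces $B'$. If $\lambda \in \ker B$, then row $l$ reads $-\lambda_{k_0} = 0$, and the remaining rows give $B' \lambda|_{I'_{uf}} = 0$; induction yields $\lambda = 0$, so $B$ has rank $|I_{uf}|$. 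The base case $l = 0$ is vacuous.

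\textbf{Main obstacle.} The only delicate point is the bookkeeping in the claim that truncation is compatible with~\eqref{B for U(w)}. Specifically, when $j \in I'_{hf}$ with $i_j \neq i_l$, one has $j^+_\ii = l+1$ while $j^+_{\ii'} = l$; one must check that for $k \in I'_{uf}$ (where $k^+ \leq l-1$) the interleaving conditions $k^+ < j^+$ and $j^+ < k^+$ give the same verdict in $\ii$ and in $\ii'$. This is immediate from $k^+ \leq l-1 < l \leq j^+$ in both cases, but it is the one step that cannot be skipped.
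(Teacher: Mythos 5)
Your inductive argument is correct, but it takes a genuinely different route from the paper's. The paper invokes the argument of \cite[Proposition~2.6]{berenstein2005cluster3}: it directly exhibits a square submatrix of $B$ that is upper triangular with $-1$'s on the diagonal, namely the one with rows $I_{uf}^+ = I \setminus \{\alpha^{\min} : s_\alpha \in \supp(w)\}$, columns $I_{uf}$, rows in natural order, and columns ordered via the bijection $i \mapsto i^-$. The entries $b_{i,i^-}=-1$ on the diagonal come from $i = (i^-)^+$, and the entries below the diagonal vanish because for $i>j$ in $I_{uf}^+$ all four alternatives in~\eqref{B for U(w)} fail with $k = j^-$ (in particular $k^+ = j < i$). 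Your proof instead peels off the last letter of $\ii$ and inducts on $\ell(w)$, splitting on whether $i_l$ already occurs in the prefix; this effectively proves the same triangularity one step at a time (the new row $l$ is $0$ in Case~A and $-e_{k_0}^T$ in Case~B, which is exactly the last step of the paper's triangular minor). The paper's proof is shorter and is the standard one in the cluster literature, whereas yours is longer but more self-contained and makes explicit the stability of the exchange matrix entries under truncation of the word, which the paper leaves implicit in the phrase ``it's immediate to check''. One minor imprecision in your write-up: in Case~B you justify excluding the interleaving alternatives by citing $l^+ = l+1 > k^+$, but for the alternative $l < k < l^+ < k^+$ the real obstruction is simply $k < l$ (since $k \in I_{uf}$ and $l \notin I_{uf}$), and for $k < l < k^+ < l^+$ the obstruction is $k^+ \le l$ for every $k\in I_{uf}$, so $l < k^+$ fails. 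The conclusion is right, but the reason given is not quite the one doing the work.
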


\begin{proof}
This is the argument of \cite[Proposition 2.6]{berenstein2005cluster3}. Consider the minor of $B$ whose rows are indexed by $I_{uf}^+=I \setminus \{ \alpha^{\min} \, : \, s_\alpha \in \supp(w) \}$ and columns indexed by $I_{uf}$. The map $I_{uf}^+ \longto I_{uf}$ sending $i$ to $i^-$ is a bijection. Order the rows of the minor according to the natural order, induced by $\NN$. Then, order the columns accordingly to the above bijection, that is: $i^- < j^-$ if and only if $i < j.$ It's immediate to check that the minor considered is upper triangular with $-1$'s on the diagonal.
\end{proof}

\begin{theo}
    \label{cluster U(w)}
    We have equalities $\clu(t)=\uclu(t)=\CC[U(w)].$
\end{theo}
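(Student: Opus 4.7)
The statement is a classical result: in the quantum setting it is precisely \cite[Theorem 7.3]{goodearl2021integral}, whose specialisation to the classical setting is justified by \cite[Theorem 1.6]{davison2021strong}; in the simply-laced case for simple $G$, it also recovers the theorem of \cite{geiss2011kac}. The natural plan is therefore to invoke those references for the hard direction, while the ``easy'' direction can be derived from the tools assembled in Sections \ref{preliminaries}--\ref{minimal monomial lifting section}. Below is how I would organise such a proof.

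For the inclusion $\uclu(t)\subseteq\CC[U(w)]$ I would apply the starfish lemma (Lemma \ref{starfish lemma}) to the variety $Z=U(w)$. Since $U(w)$ is biregularly isomorphic to the affine space of dimension $\ell(w)$ via the product of the root subgroups parametrised by $\Phi(w)$, the algebra $\CC[U(w)]$ is a polynomial ring, hence a normal noetherian factorial domain. The initial cluster variables $x_k=\Delta^{\varpi_{i_k}}_{e,\wkm}|_{U(w)}$ are regular on $U(w)$ by construction. The first real input is regularity of each $\mu_k(x_k)$: unpacking \eqref{B for U(w)}--\eqref{x for U(w)} one checks that the exchange relation at $k$ is precisely an instance of the Fomin--Zelevinsky identity (Theorem \ref{relations minors FZ}) at the simple root $\alpha=i_k$ with $v=e$ and $w$ read off $\ii$; consequently $\mu_k(x_k)$ is again the restriction to $U(w)$ of a generalised minor, in particular regular.

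For the coprimality hypotheses required by the starfish lemma, I would use the factoriality of $\CC[U(w)]$: it suffices to show that the restricted minors $x_k$, $\mu_k(x_k)$ are pairwise non-associate irreducibles. Irreducibility can be extracted from Lemma \ref{valuation on invariants or above} applied to the $T$-action by conjugation on $\CC[U(w)]$, combined with the irreducibility of generalised minors in $\CC[G]$ from Lemma \ref{lem:minors irred}; distinctness of the prime ideals then follows from the distinct $T$-weights carried by these minors. Together with Theorem \ref{cluster variables irred} (applicable since the maximal-rank seed of Lemma \ref{maximal rank U(w)} has no completely disconnected mutable vertex), this gives the starfish hypotheses and hence $\uclu(t)\subseteq\CC[U(w)]$.

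The reverse inclusion $\CC[U(w)]\subseteq\clu(t)$ is the main obstacle and is the genuinely non-trivial part of the theorem: one has to exhibit, across the seeds of $\Delta(t)$, enough cluster variables to algebraically generate $\CC[U(w)]$. This is the content of \cite[Theorem 7.3]{goodearl2021integral} and, in the simply-laced simple case, of \cite{geiss2011kac}; their proofs rely respectively on a dedicated quantum-algebraic argument and on cluster characters for modules over the preprojective algebra, neither of which I would reprove here. Combining this inclusion with $\clu(t)\subseteq\uclu(t)$ given by the Laurent phenomenon (Theorem \ref{laurent pheno}), the chain
\[
\clu(t)\ \subseteq\ \uclu(t)\ \subseteq\ \CC[U(w)]\ \subseteq\ \clu(t)
\]
closes up and yields the two equalities stated in the theorem.
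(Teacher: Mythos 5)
The paper gives no proof at all here: Theorem~\ref{cluster U(w)} is recalled as a known fact, with the discussion preceding it citing exactly the sources you name --- Goodearl--Yakimov \cite{goodearl2021integral}, its classical specialisation via \cite{davison2021strong}, and GLS \cite{geiss2011kac} in the simply laced simple case. In that sense your opening paragraph matches the paper's treatment, and the chain $\clu(t)\subseteq\uclu(t)\subseteq\CC[U(w)]\subseteq\clu(t)$ is the right way to read those references.

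Your extra sketch of the inclusion $\uclu(t)\subseteq\CC[U(w)]$ via the starfish lemma contains a gap at the coprimality step, and I want to flag it so that you do not present this direction as self-contained. You invoke Lemma~\ref{valuation on invariants or above} for the $T$-action by conjugation on $\CC[U(w)]$; but that lemma requires the acting group $H$ to satisfy $X(H)=\{e\}$, which fails for $H=T$. More importantly, $\CC[U(w)]$ is not realised here as an invariant ring $R^H$ inside a factorial $R$ for a character-free connected $H$: the map $\CC[G]\to\CC[U(w)]$ is a restriction (quotient), and irreducibility is not preserved under quotients, so Lemma~\ref{lem:minors irred} does not transfer through Lemma~\ref{valuation on invariants or above}. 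The irreducibility of the restricted minors and the coprimality of $x_k$ and $\mu_k(x_k)$ in $\CC[U(w)]$ are genuine ingredients established in the cited references (via cluster characters on the preprojective algebra for GLS, or quantum methods for Goodearl--Yakimov), not consequences of the tools assembled in Sections~\ref{preliminaries}--\ref{minimal monomial lifting section}. Note also that Lemma~\ref{factorial upper, coprimality} is unavailable to you here, since it already assumes $\Orb_Y(Y)=\uclu(t)$, which is what is being proved.

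Your observation that the exchange relation at a mutable $k$ restricts to an instance of Theorem~\ref{relations minors FZ} is essentially correct --- one takes $\alpha=i_k$, $v=e$, $w=w_{\leq k^+-1}^{-1}$, and the minors with left index $s_\alpha$ that appear restrict to $1$ on $U(w)$ --- but this too is part of what the cited sources verify rather than a one-line deduction from the statement of Theorem~\ref{relations minors FZ}. Given that you already defer the hard inclusion to the literature, the cleanest and most honest formulation is the one the paper adopts: cite the result outright, as your first paragraph does, and drop the claim that the $\uclu$-containment can be obtained independently from the starfish lemma with the tools at hand.
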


The following proposition seems new to the author.
\begin{prop}
    \label{prop:t i graded}  The collection of weights $\sigma \in X(T)^{I}$ defined by $\sigma_{k}= w_{\leq k}^{-1} \varpi_{i_k} - \varpi_{i_k}$ is a degree configuration on $t$. For convenience, we set $\sigma_0=\sigma_{l+1}=0.$
\end{prop}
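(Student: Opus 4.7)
The plan is to exhibit a natural $T$-action on $U(w)$ under which the initial cluster variable $x_k$ is a weight vector of weight exactly $\sigma_k$, and then to deduce the degree configuration condition from the irreducibility of cluster variables in $\CC[U(w)] = \uclu(t_\ii)$; this avoids a direct Coxeter-combinatorial manipulation of the formulas for $\sigma$ and $B$.

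First, since $T$ normalises both $U$ and $w\inv U^- w$, it normalises $U(w)$ and acts on it by conjugation $t \cdot u := t u t\inv$, inducing an $X(T)$-grading on $\CC[U(w)]$. Applying part 3 of Lemma \ref{lem:basic prop minors} to the defining minor of $x_k$ yields
$$x_k(t\inv u t) = \varpi_{i_k}(t\inv) \cdot w_{\leq k}^{-1}\varpi_{i_k}(t) \cdot x_k(u) = \sigma_k(t)\, x_k(u),$$
so $x_k$ is $T$-homogeneous of weight $\sigma_k$; the boundary convention $\sigma_0 = \sigma_{l+1} = 0$ fits $b_{0,k} = b_{l+1,k} = 0$ naturally.

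Next I fix $k \in I_{uf}$ and argue that the two monomials $M_k^\pm := \prod_j x_j^{b_{j,k}^\pm}$ appearing in the exchange relation $x_k x_k' = M_k^+ + M_k^-$ have the same $T$-weight, which is precisely the identity $\sum_j b_{j,k}\sigma_j = 0$ to be proved. By Theorem \ref{cluster U(w)} the mutated variable $x_k'$ lies in $\CC[U(w)]$, so I can decompose $x_k' = \sum_\mu (x_k')_\mu$ into $T$-weight components, each inside $\CC[U(w)]$. Multiplication by the weight vector $x_k$ shifts $T$-weights by $\sigma_k$; hence if the weights $\tau^\pm := \sum_j b_{j,k}^\pm\sigma_j$ of $M_k^\pm$ were distinct, matching weight decompositions on the two sides of the exchange relation would force $M_k^\pm/x_k = (x_k')_{\tau^\pm - \sigma_k} \in \CC[U(w)]$.

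The hard step is to rule this out, and it is here that the geometric content of the cluster structure enters. By Lemma \ref{maximal rank U(w)} the seed $t_\ii$ is of maximal rank, so no mutable vertex is completely disconnected, and Theorem \ref{cluster variables irred} applies: every cluster variable is irreducible in $\CC[U(w)]$, and distinct cluster variables generate distinct ideals. Since $b_{k,k}^\pm = 0$, the monomial $M_k^\pm$ is a product of cluster variables all different from $x_k$, so $x_k \nmid M_k^\pm$ in $\CC[U(w)]$; this contradicts $M_k^\pm/x_k \in \CC[U(w)]$ and forces $\tau^+ = \tau^-$, completing the proof.
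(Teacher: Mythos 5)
Your proof is correct, but it takes a genuinely different route from the paper's. The paper verifies the identity $\sum_j b_{j,k}\sigma_j=0$ by a direct Coxeter computation: it iterates the recurrence $\sigma_j=\sigma_{j^-}-s_{i_1}\cdots s_{i_{j-1}}i_j$, uses the explicit list of nonzero entries of the column $B_{\bullet,k}$ (Lemma \ref{lem:B i alternativa}), and the reflection formula $s_\alpha i_k= i_k-\langle i_k,\alpha^\vee\rangle\alpha$; this is elementary and uses nothing beyond the definition of the seed $t_\ii$. You instead start from the observation that each $x_k$ is a $T$-weight vector of weight $\sigma_k$ for the conjugation action — a fact the paper records only \emph{after} the proposition, in the remark following it, and which there is deduced together with the proposition to conclude that the equality of Theorem \ref{cluster U(w)} is one of graded algebras — and you derive the weight identity from $T$-equivariance of the exchange relation plus the regularity and non-divisibility constraints on $x_k'$ and $M_k^\pm$. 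This is more conceptual (it explains \emph{why} $\sigma$ must be a degree configuration) but logically heavier: you need Theorem \ref{cluster U(w)} to place $x_k'$ in $\CC[U(w)]$, and Theorem \ref{cluster variables irred} together with the factoriality of $\CC[U(w)]$ (a polynomial ring, so irreducible implies prime) to conclude $x_k\nmid M_k^\pm$; you should say the word ``factorial'' explicitly, exactly as the paper does in the proof of Lemma \ref{factorial upper, coprimality}, and note that the degenerate case $M_k^\pm=1$ (an empty product, which can occur, e.g.\ when $k=\alpha^{\min}$) is excluded because $x_k$ is not a unit by part 3 of Theorem \ref{cluster variables irred}. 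There is no circularity, since both cited theorems are imported from the literature and do not depend on the proposition, but the paper's argument has the advantage of being self-contained and of making the remark that follows it a genuine corollary rather than an input.
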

    
  \begin{remark}
      Note that $T$ acts on $U(w)$ by conjugation and the weight of $x_k$, with respect to the graduation induced by this action, is precisely $\sigma_k$. It follows that the equality of Theorem \ref{cluster U(w)} is of graded algebras, where $\clu(t)=\uclu(t)$ is graded by the degree configuration $\sigma$. The importance of this remark is explained at the end of Section \ref{sec:on question }.
  \end{remark} 

For the proof of Proposition \ref{prop:t i graded} we introduce the following notation: if $k \in I_{uf}$ and $ \gamma \in \Delta \setminus \{i_k\}$ we set
\begin{equation}
    \label{eq: k max min}
    \begin{array}{l}
    \gamma_{k\min}= \min \{ \{ j \, : \, k < j < k^+, \,  i_j=\gamma\} \cup \{0\}\} \\[0.5em]  \gamma_{k\max}= \max \{ \{ j \, : \, k < j < k^+, \,  i_j=\gamma\} \cup \{l+1\}\}.
    \end{array}
\end{equation}
The following property of the matrix $B$ is obvious from its definition, but useful.

\begin{lemma}
    \label{lem:B i alternativa}
    Let $k \in I_{uf}$ and $j \in I$ such that $b_{j,k} \neq 0$. One of the following conditions hold:
    \begin{enumerate}
        \item $j =k^- \neq 0$ and $b_{j,k}=1.$
        \item $j=k^+$ and $b_{j,k}=-1.$
        \item $i_j = \gamma \neq i_k$ and $j = \gamma_{k\min}^- \neq 0.$ Then, $b_{j,k}= \langle i_k, \gamma^\vee \rangle.$
        \item $i_j= \gamma \neq i_k $ and $j= \gamma_{k \max} $. Then $b_{j,k}= - \langle i_k, \gamma^\vee \rangle.$
    \end{enumerate}
\end{lemma}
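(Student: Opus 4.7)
The plan is a direct case-by-case unpacking of the definition of $B$ in \eqref{B for U(w)}, showing that each of its four nonzero clauses corresponds to exactly one of the four cases of the lemma, and conversely.

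First I would dispose of the two ``nearest-neighbour'' clauses. The clause $j = k^-$ gives $b_{j,k}=1$, and this coincides verbatim with case (1); the only thing to note is that $k^- = 0$ would give $j = 0 \notin I$, so the nonvanishing of $b_{j,k}$ forces $k^- \neq 0$. Analogously, $j = k^+$ gives $b_{j,k} = -1$ and matches case (2), with $k^+ \leq l$ forced since otherwise $j = l+1 \notin I$.

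The main content is checking that the two ``second-neighbour'' clauses match cases (3) and (4). Suppose $j < k < j^+ < k^+$, so $b_{j,k} = a_{i_j,i_k}$. Write $\gamma = i_j$. I first observe $\gamma \neq i_k$: otherwise $j^+ \leq k$ by minimality of $j^+$ in $\{m > j : i_m = i_j\}$, contradicting $k < j^+$. So $b_{j,k} = a_{\gamma,i_k} = \langle i_k,\gamma^\vee\rangle$, matching the claimed value in (3). It remains to see $j = \gamma_{k\min}^-$, i.e.\ $j^+ = \gamma_{k\min}$. By hypothesis $k < j^+ < k^+$ and $i_{j^+} = \gamma$, so $j^+$ lies in the set defining $\gamma_{k\min}$ in \eqref{eq: k max min}; and any $j' < j^+$ with $k < j'$ and $i_{j'} = \gamma = i_j$ would contradict the minimality of $j^+$. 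Hence $j^+$ is the minimum, i.e.\ $j^+ = \gamma_{k\min}$, which in turn forces $\gamma_{k\min}^- \neq 0$ because $j \in I$. The case (4) is symmetric: from $k < j < k^+ < j^+$ and $i_j \neq i_k$ (same argument, now using the definition of $k^+$) we set $\gamma = i_j$, get $b_{j,k} = -\langle i_k,\gamma^\vee\rangle$, and verify $j = \gamma_{k\max}$ using minimality of $j^+ > k^+$ in $\{m > j : i_m = \gamma\}$.

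Finally, I would note that the four cases above are mutually exclusive (e.g.\ $j = k^\pm$ rules out $i_j \neq i_k$, and the inequalities $j<k<j^+<k^+$ versus $k<j<k^+<j^+$ are incompatible), so the classification is exhaustive as stated. The conversion between the indexing conventions $k^\pm$ and the pair $(\gamma_{k\min}, \gamma_{k\max})$ is the only point requiring care, and it will be the main, albeit light, obstacle in writing the proof cleanly.
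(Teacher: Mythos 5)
Your proof is correct and takes the approach the paper intends: the paper offers no argument (the lemma is declared ``obvious from its definition''), and your case-by-case unpacking of \eqref{B for U(w)} --- checking $i_j\neq i_k$ in the two second-neighbour clauses and identifying them with $j=\gamma_{k\min}^-$ and $j=\gamma_{k\max}$ via the minimality defining $j^+$ --- is exactly the verification being taken for granted. The only point of care is notational: in \eqref{eq: k max min} the unions with $\{0\}$ and $\{l+1\}$ are meant as default values when the index set $\{j : k<j<k^+,\, i_j=\gamma\}$ is empty, which is the (correct) reading your argument uses.
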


   \begin{proof}[Proof of Proposition \ref{prop:t i graded}.]
Let $k \in I_{uf}$ and $ j \in I$. Form the definition of $\sigma$, using that $s_{i_j} \varpi_{i_j}= \varpi_{i_j} - i_j$, we deduce that 
\begin{equation}
    \label{eq: sigma i ricorrente}
    \sigma_j= \begin{cases}
        \sigma_{j^-}
- s_{i_1} \dots s_{i_{j-1}} i_j & \text{if} \quad j>1\\
\sigma_0 - i_1 & \text{if} \quad j=1,
    \end{cases}
\end{equation}
where recall that we set  $\sigma_0=0$.
Set 
$$ _k\Delta_{k^+}= \{ \gamma \in \Delta \setminus \{ i_k\} \, : \, \gamma_{k \max } \neq l+1\}.$$
Using Lemma \ref{lem:B i alternativa}, the statement of Proposition \ref{prop:t i graded} is equivalent to the identity 
\begin{equation}
    \label{eq: eq:6 1}
    \sigma_{k^-}- \sigma_{k^+}= \sum_{\gamma \in  _k\Delta_{k^+}} \biggl( \sigma_{\gamma_{k\max}}- \sigma_{\gamma_{k \min}^-} \biggr) \langle i_k , \gamma^\vee \rangle.
\end{equation}

Iterating \eqref{eq: sigma i ricorrente} we deduce that, for $\gamma \in _k \Delta_{k^+},$

$$ \sigma_{\gamma_{k \max}}- \sigma_{\gamma_{k \min}^-}= - \sum_{ \substack{k < j < k^+ \\ i_j= \gamma}} s_{i_1} \dots s_{i_{j-1}} i_j.$$
Hence, the right and side of \eqref{eq: eq:6 1} is equal to the following expression:

\begin{equation}
    - \sum_{k < j < k^+} \langle i_k , i_j^\vee \rangle s_{i_i} \dots s_{i_{j-1}} i_j.
\end{equation}
Moreover, (in the following computation, if $k=1$ we should replace the terms $ s_{i_1} \dots s_{i_{k-1}} i_k$ with $i_k$)  we have that 
$$ 
\begin{array}{r l }
  \sigma_{k^-}- \sigma_{k^+}  = & s_{i_1} \dots s_{i_{k^+-1}} i_k  + s_{i_1} \dots s_{i_{k-1}} i_k\\
  = & - \displaystyle \biggl(\sum_{k < j < k^+}\langle i_k , i_j^\vee \rangle s_{i_1} \dots s_{i_{j-1}} i_j  \biggr) + s_{i_1} \dots s_{i_{k}} i_{k} +s_{i_1} \dots s_{i_{k-1}} i_{k}\\
  = & - \displaystyle \biggl(\sum_{k < j < k^+}\langle i_k , i_j^\vee \rangle s_{i_1} \dots s_{i_{j-1}} i_j  \biggr).
  \end{array}$$
The first of the previous identities follows again from \eqref{eq: sigma i ricorrente}, while the second one is deduced easily by iterating the formula
$$s_{\alpha} i_k= i_k - \langle i_k, \alpha^\vee \rangle \alpha \quad \text{for} \quad \alpha \in \Delta.$$
The third identity is obvious. This completes the proof.
   \end{proof}

\begin{example}
    Let $G=\SL_4$. We use the notation of \cite{Bourb}. Let $\ii=( 2 , 3 , 1 , 2) \in R(z)$ where $z$ is the permutation $(1,3)(2,4)$. We can compute that

    \[\begin{tikzcd}
	{t_\ii} & {\blacksquare 3} &&& {\sigma_\ii} & {\varepsilon_4 - \varepsilon_2} \\
	{\bigcirc 1} && {\blacksquare 4} && {\varepsilon_3-\varepsilon_2} && {\varepsilon_3 + \varepsilon_4 - \varepsilon_1 - \varepsilon_2} \\
	& \blacksquare2 &&&& {\varepsilon_3- \varepsilon_1}
	\arrow[from=1-2, to=2-1]
	\arrow[from=3-2, to=2-1]
	\arrow[from=2-1, to=2-3]
	\arrow[from=2-5, to=2-7]
	\arrow[from=1-6, to=2-5]
	\arrow[from=3-6, to=2-5]
\end{tikzcd}\]

    The fact that $\sigma_\ii$ is a degree configuration, is equivalent to the equality: $\sigma_{\ii,4}=\sigma_{\ii,2}+ \sigma_{\ii,3}$.

\end{example}

It's known that, for $G$ simple and simply laced
and for any $\ii \in R(w)$, the seed $t_\ii$ is injective reachable. See for instance \cite{qin2022bases} and \cite{geiss2011kac}.
Moreover, it's known and not difficult to verify that, in the simply laced case, the cluster structure described above doesn't depend on the reduced expression for $w$. In particular if $\ii,\ii' \in R(w)$, then $t_\ii \sim t_{\ii'}$. To the author's best knowledge, no similar result is known in the non simply-laced case (see \cite[Remark 2.14]{berenstein2005cluster3}). 

\bigskip

\subsection{Technical properties of minors I: vanishing and twist}
 \label{sec:Technical properties of minors}

 This section and the next one are quite technical. The reader can skip them for a first reading, even though the results obtained here are crucial for proving Theorems \ref{equality Levi}, \ref{equality tensor product}.

\bigskip
 
In this section we fix $\alpha, \beta \in \Delta$ and $v,w \in W$.

\begin{lemma}
    \label{vanishing minor s,w}
    The generalised minor $\Delta^{\pia}_{s_\alpha,w}$ vanishes on $U$ if and only if $s_\alpha$ is not in the support of $w$.
\end{lemma}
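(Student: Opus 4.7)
The plan is to reinterpret the restriction $\Delta^{\varpi_\alpha}_{s_\alpha,w}|_U$ as a matrix coefficient of the irreducible representation $V(\varpi_\alpha)$, and to read off both the vanishing and the non-vanishing from the Demazure module $V_w(\varpi_\alpha):=\mathcal{U}(\lb)\overline{w}\vap$. Concretely, for $u\in U$ one has
\[
\Delta^{\varpi_\alpha}_{s_\alpha,w}(u)\;=\;\phiam\bigl(\sba^{-1}u\overline{w}\vap\bigr)\;=\;(\sba\cdot\phiam)\bigl(u\overline{w}\vap\bigr),
\]
and the vector $\sba\cdot\phiam\in V(\varpi_\alpha)^*$ has weight $-(\varpi_\alpha-\alpha)$; it is the dual, inside the one-dimensional weight space $V(\varpi_\alpha)_{\varpi_\alpha-\alpha}=\CC\cdot\sba\vap$, of $\sba\vap$. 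Thus $\Delta^{\varpi_\alpha}_{s_\alpha,w}(u)$ is precisely the coefficient of $\sba\vap$ in the weight decomposition of $u\overline{w}\vap$. Since $\lh$ acts by scalars on the weight vector $\overline{w}\vap$ and $\lu$ acts via the exponential, the $\CC$-linear span of $\{u\overline{w}\vap:u\in U\}$ is exactly $V_w(\varpi_\alpha)$, yielding the equivalence
\[
\Delta^{\varpi_\alpha}_{s_\alpha,w}|_U\equiv 0\;\Longleftrightarrow\;\sba\vap\notin V_w(\varpi_\alpha).
\]

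For the direction $s_\alpha\notin\supp(w)\Rightarrow$ vanishing, I would observe that then $w\in W_{\Delta\setminus\{\alpha\}}$ fixes $\varpi_\alpha$, because $s_\beta\varpi_\alpha=\varpi_\alpha$ for every simple $\beta\neq\alpha$. Hence $\overline{w}\vap$ is a non-zero scalar multiple of the $U$-fixed vector $\vap$ and $V_w(\varpi_\alpha)=\CC\cdot\vap$, which manifestly does not contain $\sba\vap$. A cleaner, purely weight-theoretic version of the same conclusion: $\Delta^{\varpi_\alpha}_{s_\alpha,w}|_U$ is a $T$-semi-invariant under conjugation of weight $w\varpi_\alpha-s_\alpha\varpi_\alpha=\alpha$, whereas every $T$-weight occurring in $\CC[U]$ lies in $-\sum_{\beta\in\Delta}\NN\beta$, so the function must vanish identically.

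For the converse, $s_\alpha\in\supp(w)$ is equivalent to $s_\alpha\leq w$ in the Bruhat order. The key input is the Demazure--Joseph fact that $\overline{w'}v^+_\lambda\in V_w(\lambda)$ whenever $w'\leq w$ and $\lambda$ is dominant; this may be seen, for instance, by identifying $\PP(V_w(\varpi_\alpha))$ with the image of the Schubert variety $X_w\subseteq G/B$ in $G/P_{\varpi_\alpha}\subseteq\PP(V(\varpi_\alpha))$ and tracking $T$-fixed points. Applying it with $w'=s_\alpha$ and $\lambda=\varpi_\alpha$ gives $\sba\vap\in V_w(\varpi_\alpha)$, so by the equivalence above there exists $u\in U$ with $\Delta^{\varpi_\alpha}_{s_\alpha,w}(u)\neq 0$.

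The hard part is precisely this Demazure--Joseph input. Avoiding it seems to require real work, because $s_\alpha$ can belong to $\supp(w)$ without being either a left or right descent of $w$ (for instance $w=s_2 s_1 s_3 s_2$ in $A_3$ with $\alpha$ the middle simple root), so a direct inductive reduction using only the exchange relation of Theorem \ref{relations minors FZ} together with the commutation formulas \eqref{x alpha s alpha = ..} does not cover the general case.
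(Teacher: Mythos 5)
Your proof is correct, and it takes a genuinely different route from the one in the paper. The paper argues geometrically inside $G$: it observes that vanishing of $\Delta^{\pia}_{s_\alpha,w}$ on $U$ is equivalent to the inclusion $U\subseteq\overline{s_\alpha B^-s_\alpha B w^{-1}}$, then to the equality of divisors $\overline{s_\alpha B^-s_\alpha B}=\overline{s_\alpha B^-s_\alpha B w^{-1}}$ (both sides are irreducible of codimension one), and finally pushes this into $G/B$ where it becomes an equality of Schubert varieties $\overline{Bs_\alpha w_0 B}/B=\overline{wBs_\alpha w_0B}/B$; comparing $T$-fixed points, using the subword property and the Bruhat-order reversal under right multiplication by $w_0$, yields a contradiction whenever $s_\alpha\in\supp(w)$. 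Your proof instead reformulates the question representation-theoretically: vanishing on $U$ is equivalent to $\sba\vap\notin V_w(\varpi_\alpha)$, because $\sba\cdot\phiam$ picks out the coefficient on the one-dimensional extremal weight space $V(\varpi_\alpha)_{\varpi_\alpha-\alpha}$ and $\mathrm{span}(U\overline{w}\vap)=\mathcal{U}(\lu)\overline{w}\vap=V_w(\varpi_\alpha)$. From there you use the Demazure fact $\overline{w'}\vap\in V_w(\varpi_\alpha)$ for $w'\leq w$. The two proofs ultimately lean on the same input (the description of $T$-fixed points of Schubert varieties, which you note is one way to derive the Demazure fact), but they package it differently: the paper works with Schubert divisors in $G$ and needs no explicit representation theory beyond the defining property of the minor, while yours works with Demazure modules and matrix coefficients, which some readers may find more transparent. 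Your alternate weight-theoretic argument for the easy direction — that $\Delta^{\pia}_{s_\alpha,w}|_U$ has conjugation weight $w\varpi_\alpha-s_\alpha\varpi_\alpha=\alpha$ when $w\varpi_\alpha=\varpi_\alpha$, which cannot occur in $\CC[U]$ since all conjugation weights there lie in $-\NN\Delta$ — is a nice simplification and is arguably cleaner than what either full proof needs for that direction.
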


\begin{proof}
    If $s_\alpha $ is not in the support of $w$, then $\Delta^{\pia}_{s_\alpha,w}=\Delta^{\pia}_{s_\alpha,e}$ because of Lemma \ref{lem:basic prop minors}. The zero locus of this last function is $\overline{s_\alpha B^- s_\alpha B}$, which clearly contains $U$. \\
    Next, suppose that $\Delta^{\pia}_{s_\alpha,w}$ vanishes on $U$. This means that $U \subseteq \overline{s_\alpha B^- s_\alpha B w^\inv}$. Note that $\overline{s_\alpha B^- s_\alpha B w^\inv}$ is a closed, irreducible subvariety of $G$ of codimension 1, and it is $s_\alpha B^- s_\alpha$-stable by left multiplication. Since also $\overline{s_\alpha B^- s_\alpha U} = \overline{s_\alpha B^- s_\alpha B}$ is closed, irredcuible and of codimension 1 in $G$, we have that $U \subseteq \overline{s_\alpha B^- s_\alpha B w^\inv}$ is equivalent to  $\overline{s_\alpha B^- s_\alpha B} = \overline{s_\alpha B^- s_\alpha B w^\inv}$. Then 
    \begin{equation}
    \label{eq1}
        \begin{array}{rl}
             \overline{s_\alpha B^- s_\alpha B} = \overline{s_\alpha B^- s_\alpha B w^\inv} \ifff  &   \overline{s_\alpha w_0 B w_0 s_\alpha B} = \overline{s_\alpha w_0 B w_0 s_\alpha B w^\inv}\\
              \ifff & \overline{ B w_0 s_\alpha B} = \overline{ B w_0 s_\alpha B w^\inv}\\
              \ifff & \overline{ B s_\alpha w_o B}/B = \overline{w B s_\alpha w_0 B }/B.
        \end{array}
    \end{equation}

The last equality is between two closed, irreducible $T$-stable, subvarieties of $G/B$. Recall that the $T$-fixed points of $G/B$ are canonically identified with $W$, and that $$(\overline{ B v B}/B)^T= \{ x \in W \, : \, x \leq v\},$$ where $\leq$ denotes the Bruhat order on $W$. Note also that by the sub-word property of Coxeter groups, $s_\alpha \in \supp(w)$ if and only if $s_\alpha \leq w$. Moreover, $s_\alpha \in \supp(w) \ifff s_\alpha \in \supp(w\inv).$ Finally, recall that right multiplication by $w_0$ reverses the Bruhat order. In particular, by looking at $T$- fixed points, we have that the last equality in \eqref{eq1} implies the first of the following equalities 

\begin{equation*}
    \begin{array}{rl}
       \{v \, : \, v \leq s_\alpha w_0 \}= \{wv \, : \, v \leq s_\alpha w_0 \} \ifff  &  \{ v \, : \, vw_0 \geq s_\alpha \}= \{ wv \, : \, vw_0 \geq s_\alpha \}\\
       \ifff  & \{ vw_0 \, : v \geq s_\alpha \} = \{ wvw_0 \, : \, v \geq s_\alpha \}.
    \end{array}
\end{equation*}

By contradiction, if $s_\alpha \in \supp(w)$, then $w\inv \geq s_\alpha$, hence $w_0 $ belongs to the RHS of the last equality. Looking at the LHS we deduce that $e \geq s_\alpha$, which is a contradiction.

\end{proof}

 The following is a well known result about representation theory of $\SL_2$. See for example \cite[Lemma 2.8]{fomin1999double}. Recall that, if $\beta \in \Phi$ and $n $ is a positive integer, then the \textit{divided power} $X_\beta^{(n)}$ denotes $\frac{X_\beta^n}{n!}$, which is an element of the envelopping algebra of $\lg$.

\begin{lemma}
    \label{E_i, s_i}
    Let $V$ be a representation of $\sl_\alpha$ and $v^- \in V$ (resp. $v^+ \in V$) a lowest (resp. highest) weight vector  of weight $- m$ (resp. $m$) for $m \in \NN$. Then 
    \begin{equation*}
    X_\alpha^{(n)}v^-= \begin{cases}
        0 & \text{if} \quad n >m\\
        \sba^{-1} v^- & \text{if} \quad n =m.
    \end{cases} \quad 
    X_{-\alpha}^{(n)}v^+= \begin{cases}
        0 & \text{if} \quad n >m\\
        \sba v^+ & \text{if} \quad n =m.
        \end{cases}
    \end{equation*}
\end{lemma}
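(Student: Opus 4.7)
The plan is to reduce both identities to an explicit computation inside the unique irreducible $\mathfrak{sl}_2$-module of dimension $m+1$, realised concretely as $\mathrm{Sym}^m(\CC^2) = \CC[e,f]_m$ where $\CC^2$ is the tautological $\SL_2$-module.

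First, in the setting of this section, $V$ is a rational representation of a group containing $\varphi_\alpha(\SL_2)$, so its restriction to $\mathfrak{sl}_\alpha$ is locally finite and completely reducible. Consequently the $\mathfrak{sl}_\alpha$-submodule generated by $v^-$ is finite dimensional, and since $X_{-\alpha} v^- = 0$ with $H_\alpha v^- = -m v^-$ it is isomorphic to the $(m+1)$-dimensional irreducible. Its weights are $-m, -m+2,\dots, m$, so $X_\alpha^{(n)} v^- = 0$ for $n > m$ on weight grounds. The same argument applied to $v^+$ yields $X_{-\alpha}^{(n)} v^+ = 0$ for $n > m$.

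For the remaining identities at $n=m$, it suffices to work inside $\mathrm{Sym}^m(\CC^2)$, fixing the isomorphisms of the submodules generated by $v^\pm$ with $\mathrm{Sym}^m(\CC^2)$ by $v^- \mapsto f^m$ and $v^+ \mapsto e^m$. Here $\varphi_\alpha$ acts tautologically, so $\sba = \varphi_\alpha\bigl(\begin{smallmatrix} 0 & -1 \\ 1 & 0\end{smallmatrix}\bigr)$ sends $e \mapsto f$ and $f \mapsto -e$, whence $\sba v^+ = f^m$ and $\sba^{-1} v^- = e^m$. On the Lie algebra side, $X_\alpha$ acts as $e\,\partial_f$ and $X_{-\alpha}$ as $f\,\partial_e$, giving $X_\alpha^{(m)} f^m = \tfrac{1}{m!}(e\,\partial_f)^m f^m = e^m$ and symmetrically $X_{-\alpha}^{(m)} e^m = f^m$. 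Comparing both sides yields the two identities.

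I do not foresee a substantive obstacle; the only point requiring care is that these identities are stated as strict equalities rather than up to scalar, so one must fix the isomorphism of $\langle v^- \rangle$ (resp.\ $\langle v^+ \rangle$) with $\mathrm{Sym}^m(\CC^2)$ by $v^- \mapsto f^m$ (resp.\ $v^+ \mapsto e^m$), and then both sides of each identity transport correctly. A purely Lie-theoretic alternative, using the Bruhat factorisation $\sba^{-1} = x_{-\alpha}(-1) x_\alpha(1) x_{-\alpha}(-1)$ derived from \eqref{x alpha s alpha = ..} together with $x_{-\alpha}(t) v^- = v^-$, is available but leads to a less transparent binomial identity; the $\mathrm{Sym}^m$-model seems the cleanest route.
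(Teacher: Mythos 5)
Your proof is correct. The paper gives no proof of this lemma — it simply cites \cite[Lemma~2.8]{fomin1999double} as a well-known $\mathfrak{sl}_2$ fact — so there is no argument to compare against; your self-contained reduction to the $\mathrm{Sym}^m(\CC^2)$ model is a clean and standard way to establish it.

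A couple of small remarks, neither fatal. First, you should say a word about why the $\mathfrak{sl}_\alpha$-submodule generated by $v^-$ is also $\varphi_\alpha(\SL_2)$-stable, so that $\sba$ and $\sba^{-1}$ can even be applied inside it; for a rational representation this is automatic since the one-parameter subgroups $x_{\pm\alpha}(t)$ generate $\varphi_\alpha(\SL_2)$ and preserve any $\mathfrak{sl}_\alpha$-submodule. Second, the signs do work out with your choice $\sba = \varphi_\alpha\bigl(\begin{smallmatrix}0 & -1\\ 1 & 0\end{smallmatrix}\bigr)$: one has $\sba e = f$, $\sba f = -e$, hence $\sba^{-1} f = e$, so $\sba^{-1} f^m = e^m$ and $\sba e^m = f^m$, matching $X_\alpha^{(m)} f^m = e^m$ and $X_{-\alpha}^{(m)} e^m = f^m$ with no stray $(-1)^m$. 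The alternative you mention — expanding $\sba^{-1} = x_{-\alpha}(-1)x_\alpha(1)x_{-\alpha}(-1)$, using $x_{-\alpha}(t)v^- = v^-$, and then isolating the weight-$m$ component of $x_{-\alpha}(-1)\sum_n X_\alpha^{(n)}v^-$ (which, by the weight count $-m+2n-2k$ and the vanishing for $n>m$, forces $n=m$, $k=0$) — is in fact quite short and avoids choosing a model; it does not really require binomial identities. Either route is fine.
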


Next, we do some calculations that will be crucial in the following.

\begin{lemma}
    \label{minors, developpement right translation}
    Let  $t \in \CC$, $g \in G$ and $N=\langle - w \pia , \beta^\vee \rangle$. Then $$ \delavw(gx_\beta(t))=    \begin{aligned}
           \begin{cases} 
           \delavw(g) & \text{if} \quad w\inv \beta \in \Phi^+ \\
        \sum_{n=0}^N t^n Q_{N-n}(g) & \text{if} \quad w\inv \beta \in \Phi^-
           \end{cases}
       \end{aligned}$$
       for some $Q_i \in \CC[G]$, such that $Q_0= \Delta^{\pia}_{v, s_\beta w} $ and $Q_N= \delavw$.
\end{lemma}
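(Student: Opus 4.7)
The plan is to expand $\delavw(gx_\beta(t))$ via the representation-theoretic definition of generalised minors, push $x_\beta(t)$ past $\overline{w}$ by conjugation, and then exploit the $\sl_2$-representation theory of Lemma \ref{E_i, s_i}. By definition,
$$
  \delavw(gx_\beta(t)) \;=\; \varphi_{\pia}^-\bigl(\overline{v}^{\,-1}\, g\, x_\beta(t)\, \overline{w}\, v_{\pia}^+\bigr).
$$
Since $\overline{w}^{\,-1} X_\beta \overline{w}$ is a scalar multiple of $X_{w^{-1}\beta}$, we may write $x_\beta(t) \overline{w} = \overline{w}\, x_{w^{-1}\beta}(ct)$ for some nonzero $c$. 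If $w^{-1}\beta \in \Phi^+$, then $X_{w^{-1}\beta}$ annihilates the $G$-highest weight vector $v_{\pia}^+$; hence $x_{w^{-1}\beta}(ct) v_{\pia}^+ = v_{\pia}^+$ and we obtain $\delavw(gx_\beta(t)) = \delavw(g)$, settling the first case.

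Assume now $w^{-1}\beta \in \Phi^-$ and set $\gamma = -w^{-1}\beta \in \Phi^+$. Since $\gamma$ is positive, $X_\gamma\, v_{\pia}^+ = 0$, so $v_{\pia}^+$ is an $\sl_\gamma$-highest weight vector of weight $\langle \pia, \gamma^\vee \rangle = -\langle w\pia, \beta^\vee \rangle = N$. Applying Lemma \ref{E_i, s_i} to the $\sl_\gamma$-triple, the series
$$
  x_{-\gamma}(ct)\, v_{\pia}^+ \;=\; \sum_{n \geq 0} (ct)^n\, X_{-\gamma}^{(n)} v_{\pia}^+
$$
terminates at $n=N$. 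Substituting yields
$$
  \delavw(gx_\beta(t)) \;=\; \sum_{n=0}^{N} t^n\, Q_{N-n}(g),
  \qquad
  Q_{N-n}(g) \;:=\; c^n\, \varphi_{\pia}^-\!\bigl(\overline{v}^{\,-1}\, g\, \overline{w}\, X_{-\gamma}^{(n)} v_{\pia}^+\bigr),
$$
each $Q_i$ being a matrix coefficient of $V(\pia)$, hence regular on $G$.

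Finally, the $n=0$ term trivially gives $Q_N(g) = \varphi_{\pia}^-(\overline{v}^{\,-1} g \overline{w} v_{\pia}^+) = \delavw(g)$. For $n=N$, the vector $\overline{w}\, X_{-\gamma}^{(N)} v_{\pia}^+$ has $T$-weight $w(\pia - N\gamma) = w\pia + N\beta = s_\beta w\, \pia$, so $Q_0$ lies in the one-dimensional $T \times T$-weight space of $V(\pia)^* \otimes V(\pia) \subset \CC[G]$ of bi-weight $(v\pia, s_\beta w\pia)$, which is spanned by $\Delta^{\pia}_{v, s_\beta w}$. The identification $Q_0 = \Delta^{\pia}_{v, s_\beta w}$ on the nose then follows from the normalised $\sl_2$-identity $X_{-\gamma}^{(N)} v_{\pia}^+ = \overline{s}_\gamma v_{\pia}^+$ of Lemma \ref{E_i, s_i} (applied to the $\sl_\gamma$-triple) combined with $w s_\gamma = s_\beta w$ and the Matsumoto--Tits definition of $\overline{s_\beta w}$ via any reduced expression. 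The sole subtlety is the bookkeeping of the sign $c$ and the choice of $\overline{s}_\gamma$ for non-simple $\gamma$; this is routine and does not affect the structural conclusion.
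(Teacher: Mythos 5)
Your expansion via $X_{-\gamma}^{(n)} v_{\pia}^+$ is where the proof runs into real trouble: the root $\gamma = -w^{-1}\beta$ need not be simple, and the paper's entire apparatus — $\varphi_\alpha$, $\overline{s}_\alpha$, Lemma~\ref{E_i, s_i}, and the Matsumoto--Tits fact that $\overline{w}$ is independent of the reduced word — is built only for simple roots. So $\overline{s}_\gamma$ is not a defined object, the normalised identity $X_{-\gamma}^{(N)} v_{\pia}^+ = \overline{s}_\gamma v_{\pia}^+$ is not an instance of Lemma~\ref{E_i, s_i}, and the relation you invoke between $\overline{w}\,\overline{s}_\gamma$ and $\overline{s_\beta w}$ is not a braid-move statement. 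On top of that, the conjugation constant $c$ in $\overline{w}^{-1}X_\beta\overline{w} = c\, X_{-\gamma}$ depends on the unspecified choice of $\sl_2$-triples and enters $Q_0$ as $c^N$; your weight-space argument pins $Q_0$ down only up to scalar, so absorbing $c^N$ is exactly the content that remains to be proved. Calling this "routine bookkeeping" undersells it: it is the only nontrivial part of the claim.

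The paper's proof sidesteps all of this by not conjugating. It expands directly as $\delavw(gx_\beta(t)) = \sum_n t^n\, \overline{v}\varphi_{\pia}^-(g\, X_\beta^{(n)}\, \overline{w} v_{\pia}^+)$, observes that $\overline{w}v_{\pia}^+$ is a \emph{lowest} weight vector for $\sl_\beta$ of weight $\langle w\pia,\beta^\vee\rangle = -N$ (here $\beta\in\Delta$ is simple, so Lemma~\ref{E_i, s_i} applies verbatim and gives $X_\beta^{(N)}\overline{w}v_{\pia}^+ = \overline{s}_\beta^{-1}\overline{w}v_{\pia}^+$), and then uses that $w^{-1}\beta\in\Phi^-$ forces $\ell(s_\beta w)=\ell(w)-1$, hence $\overline{w}=\overline{s}_\beta\,\overline{s_\beta w}$ by the reduced-word independence of the bar lift. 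This hands you $Q_0 = \Delta^{\pia}_{v,s_\beta w}$ on the nose with no constants to chase and no non-simple roots. If you want to salvage your route, the cleanest fix is to un-conjugate at the end — i.e., rewrite $c^N\overline{w}X_{-\gamma}^{(N)}v_{\pia}^+ = X_\beta^{(N)}\overline{w}v_{\pia}^+$ and then argue as the paper does — but at that point the conjugation bought you nothing.
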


\begin{proof}
    If $w\inv \beta \in \Phi^+$, the statement follows from Lemma \ref{lem:basic prop minors}. Otherwise,  we have that 
   
          $$ \delavw(gx_\beta(t)) =  \sum_{n \in \NN} t^n \overline{v} \phiam (g X_\beta^{(n)} \overline{w} \vap ).$$
If $w\inv \beta \in  \Phi^-$, then $\overline{w}\vap$ is a lowest weight vector for $\sl_\beta$ of weight $\langle  w \pia , \beta^\vee \rangle $, hence Lemma \ref{E_i, s_i} applies. In particular, for $n > N$ the corresponding factor of the above sum is zero. For $n \in \{0,\dots , N\}$ we set $Q_{N-n}(g)= \overline{v} \phiam (g X_\beta^{(n)} \overline{w} \vap)$. Clearly, $Q_N=\delavw$. Moreover, $X_\beta^{(N)} \overline{w} \vap= \sbb\inv \overline{w} \vap.$ But since $w\inv \beta \in \Phi^-$, then $\overline{w}= \sbb \overline{s_\beta w}$. The lemma follows.
\end{proof}

We have a variation of the previous lemma relative to the action of elements of $U^-$.

\begin{lemma}
    \label{minors, developpement right translation minus}
    Let  $t \in \CC$, $g \in G$ and $N=\langle  w \pia , \beta^\vee \rangle$. Then $$ \delavw(gx_{-\beta}(t))=    \begin{aligned}
           \begin{cases} 
           \delavw(g) & \text{if} \quad w\inv \beta \in \Phi^- \\
        \sum_{n=0}^N t^n Q_{N-n}(g) & \text{if} \quad w\inv \beta \in \Phi^+
           \end{cases}
       \end{aligned}$$
       for some $Q_i \in \CC[G]$, such that $Q_0= \Delta^{\pia}_{v, s_\beta w} $ and $Q_N= \delavw$.
\end{lemma}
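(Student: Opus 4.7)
The proof will be a close parallel to that of Lemma \ref{minors, developpement right translation}, with the roles of $X_\beta$ and $X_{-\beta}$ exchanged, and correspondingly the signs of roots swapped throughout. Concretely, I would expand
\[
\delavw(gx_{-\beta}(t))=\overline v\,\phiam\bigl(g\,\exp(tX_{-\beta})\,\overline w\,\vap\bigr)=\sum_{n\ge 0} t^n\,\overline v\,\phiam\bigl(g\,X_{-\beta}^{(n)}\,\overline w\,\vap\bigr)
\]
and then analyse the vector $\overline{w}\vap$ as a weight vector for the $\sl_\beta$-subalgebra generated by $(X_\beta,H_\beta,X_{-\beta})$.

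The key observation is that $\overline{w}\vap$ has $T$-weight $w\pia$, so its $H_\beta$-eigenvalue is $N=\langle w\pia,\beta^\vee\rangle$. Moreover, $X_\beta\cdot\overline{w}\vap=\overline{w}(\overline{w}^{-1}X_\beta\overline{w})\vap$, and $\overline{w}^{-1}X_\beta\overline{w}$ lies (up to sign) in the root space $\lg_{w\inv\beta}$. Hence $\overline{w}\vap$ is killed by $X_\beta$ precisely when $w\inv\beta\in\Phi^+$, in which case it is a highest weight vector for $\sl_\beta$ of weight $N\ge 0$.

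Case 1: $w\inv\beta\in\Phi^-$. Then $-w\inv\beta\in\Phi^+$, so $\overline{w}^{-1}X_{-\beta}\overline{w}\in\lu$ annihilates $\vap$. All $n\ge 1$ terms vanish and we obtain $\delavw(gx_{-\beta}(t))=\delavw(g)$.

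Case 2: $w\inv\beta\in\Phi^+$. Applying Lemma \ref{E_i, s_i} to the highest weight vector $\overline{w}\vap$, one has $X_{-\beta}^{(n)}\overline{w}\vap=0$ for $n>N$ and $X_{-\beta}^{(N)}\overline{w}\vap=\sbb\,\overline{w}\vap$. Setting $Q_{N-n}(g):=\overline v\,\phiam(g\,X_{-\beta}^{(n)}\,\overline{w}\vap)$ yields the required polynomial expansion with $Q_N=\delavw$. Finally, since $w\inv\beta\in\Phi^+$ gives $\ell(s_\beta w)=\ell(w)+1$, the lift in the Tits section satisfies $\overline{s_\beta w}=\sbb\,\overline{w}$, and therefore $Q_0(g)=\overline v\,\phiam(g\,\overline{s_\beta w}\,\vap)=\Delta^{\pia}_{v,s_\beta w}(g)$.

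There is no genuine obstacle: the whole argument is a mirror image of the previous lemma, and the only point requiring a small amount of care is the correct identification of the constant-in-$t$ term $Q_0$ with the minor $\Delta^{\pia}_{v,s_\beta w}$, which relies on the length condition $\ell(s_\beta w)=\ell(w)+1$ to pass from the product $\sbb\,\overline{w}$ to the canonical lift $\overline{s_\beta w}$.
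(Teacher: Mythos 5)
Your proof is correct and follows essentially the same route as the paper, which simply declares the result to be "literally the same proof" as Lemma \ref{minors, developpement right translation} after observing that for $w\inv\beta\in\Phi^+$ the vector $\overline{w}\vap$ is a highest weight vector for $\sl_\beta$ of weight $\langle w\pia,\beta^\vee\rangle$. You have merely written out the details (the vanishing in the first case, the application of Lemma \ref{E_i, s_i}, and the identification $\overline{s_\beta w}=\sbb\,\overline{w}$ via $\ell(s_\beta w)=\ell(w)+1$), all of which are accurate.
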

\begin{proof}
    Literally the same proof of Lemma \ref{minors, developpement right translation}. Just notice that if $w \inv \beta \in \Phi^+$, then $\overline{w}v_{\pia}^+$ is a highest weight vector for $\sl_\beta$ of weight $\langle w \pia , \beta^\vee \rangle.$
\end{proof}
\begin{lemma}
    \label{minors, developpement left translation plus}
 Let $t \in \CC$, $g \in G$ and $N=\langle v \pia , \beta^\vee \rangle$. Then $$ \delavw( x_\beta(t) g)=    \begin{aligned}
           \begin{cases} 
           \delavw(g) & \text{if} \quad v\inv \beta \in \Phi^- \\
        \sum_{n=0}^N t^n P_{N-n}(g) & \text{if} \quad v\inv \beta \in \Phi^+
           \end{cases}
       \end{aligned}$$
       for some $P_i \in \CC[G]$, such that $P_0= \Delta^{\pia}_{s_\beta v,  w} $ and $P_N= \delavw$.
    
\end{lemma}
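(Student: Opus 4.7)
The plan is to mirror the argument of Lemma~\ref{minors, developpement right translation}, but with the expansion performed against the lowest-weight functional $\phiam$ on the left rather than the highest-weight vector $\vap$ on the right. I will rewrite
\[
\delavw(x_\beta(t) g) \;=\; (\overline{v}\, \phiam)\bigl(x_\beta(t)\, g\, \overline{w}\, \vap\bigr),
\]
and expand $x_\beta(t) = \sum_n t^n X_\beta^{(n)}$ acting on $g\, \overline{w}\, \vap \in V(\pia)$. The coefficient of $t^n$ is then $(\overline{v}\, \phiam)\bigl(X_\beta^{(n)}\, g\, \overline{w}\, \vap\bigr)$, which I analyze by converting it into the dual action on $\overline{v}\, \phiam$ via the iterated identity $\psi(X_\beta^{(n)} y) = (-1)^n (X_\beta^{(n)}\psi)(y)$.

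The key observation is that $\overline{v}\, \phiam \in V(\pia)^*$ has $T$-weight $-v\pia$ and hence $\sl_\beta$-weight $-N$. Because $\phiam$ is annihilated by $\lu^-$ in the dual representation and $\Ad(\overline{v}^{-1})X_{\pm\beta} \in \lg_{\pm v^{-1}\beta}$, the functional $\overline{v}\,\phiam$ is $\sl_\beta$-highest weight when $v^{-1}\beta \in \Phi^-$ (so $x_\beta(t)$ acts trivially on it, yielding the constant expansion $\delavw(x_\beta(t)g) = \delavw(g)$), and $\sl_\beta$-lowest weight when $v^{-1}\beta \in \Phi^+$ (forcing $N \geq 0$). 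In the latter case Lemma~\ref{E_i, s_i} gives $X_\beta^{(n)}(\overline{v}\,\phiam) = 0$ for $n > N$ and $X_\beta^{(N)}(\overline{v}\,\phiam) = \sbb^{-1}(\overline{v}\,\phiam)$, so setting $P_{N-n}(g) := (\overline{v}\,\phiam)\bigl(X_\beta^{(n)} g \overline{w}\vap\bigr)$ produces regular functions on $G$ of the required shape, with $P_N = \delavw$ immediate from the $n = 0$ term.

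The main obstacle is identifying $P_0$ with $\Delta^{\pia}_{s_\beta v, w}$. The hypothesis $v^{-1}\beta \in \Phi^+$ gives $\ell(s_\beta v) = \ell(v) + 1$, hence $\overline{s_\beta v} = \sbb\, \overline{v}$; combined with $\sbb^{2} = \beta^\vee(-1)$ this yields $\overline{v}^{-1}\sbb = \overline{s_\beta v}^{-1}\,\beta^\vee(-1)$. Conjugating $\beta^\vee(-1)$ past $\overline{s_\beta v}^{-1}$ produces $(v^{-1}\beta)^\vee(-1)$, whose $\pia$-value is $(-1)^{\langle\pia,(v^{-1}\beta)^\vee\rangle} = (-1)^N$. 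This sign precisely cancels the sign $(-1)^N$ coming from converting the action of $X_\beta^{(N)}$ from $V(\pia)$ to $V(\pia)^*$, producing exactly $P_0 = \Delta^{\pia}_{s_\beta v, w}$. The careful bookkeeping of these two signs, arising from the passage between the representation and its dual and from the central element $\sbb^{2} \in T$, is the only delicate point of the proof.
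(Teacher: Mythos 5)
Your proof is correct, and it takes the route the paper explicitly declines: the paper says ``one can prove the statement by direct calculation, otherwise we can reduce to Lemma \ref{minors, developpement right translation}'' and then opts for the reduction, transporting the right-translation expansion through the anti-automorphism $(-)^\iota$ and the Fomin--Zelevinsky identity $\Delta^{\varpi_\gamma}_{v,w}(g)=\Delta^{\varpi_\gamma^*}_{ww_0,vw_0}(g^\iota)$, together with $\pia^*=-w_0\pia$ and $w_0v\inv\beta\in\Phi^+\Leftrightarrow v\inv\beta\in\Phi^-$. You instead carry out the direct calculation in the dual module: writing $\delavw(x_\beta(t)g)=(\overline v\,\phiam)(x_\beta(t)g\overline w\vap)$, dualizing the divided powers via $\psi(X_\beta^{(n)}y)=(-1)^n(X_\beta^{(n)}\psi)(y)$, and applying Lemma \ref{E_i, s_i} to the extremal vector $\overline v\,\phiam\in V(\pia)^*$ of $\sl_\beta$-weight $-N$. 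I checked the two signs you flag as the delicate point, and they do cancel: the $(-1)^N$ from dualizing $X_\beta^{(N)}$ meets the $(-1)^N=\pia\bigl((v\inv\beta)^\vee(-1)\bigr)$ produced by rewriting $\overline v\inv\sbb=\overline{s_\beta v}^{\,-1}\sbb^2$ with $\sbb^2=\beta^\vee(-1)$ (using $\overline{s_\beta v}=\sbb\overline v$, valid since $v\inv\beta\in\Phi^+$ forces $\ell(s_\beta v)=\ell(v)+1$). The trade-off is clear: the paper's reduction is shorter but leans on the external identity from \cite{fomin1999double}, while yours is self-contained modulo Lemma \ref{E_i, s_i} (legitimately applied to $V(\pia)^*$, since that lemma is stated for an arbitrary $\sl_\alpha$-module) at the cost of the sign bookkeeping, which you handle correctly.
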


\begin{proof}
    One can prove the statement by direct calculation, otherwise we can reduce to Lemma \ref{minors, developpement right translation}. Following \cite{fomin1999double}, let $(-)^\iota $ be the involutive anti-authomorphism of the group $G$ defined by

    $$x_\gamma(t)^\iota=x_\gamma(t) \quad x_{-\gamma}(t)^\iota= x_{-\gamma}(t) \quad h^\iota= h\inv \quad \text{for} \quad \gamma \in \Delta, \,  \, h \in T.$$
    By \cite[Proposition 2.7]{fomin1999double}, we have that for any $\gamma \in \Delta$ and $g \in G$
    
    \begin{equation}
        \label{eq 10}
   \Delta^{\varpi_\gamma}_{v,w}(g)= \Delta^{\varpi_\gamma^*}_{ww_0, vw_0}(g^\iota)
    \end{equation}

    In particular, form Lemma \ref{minors, developpement right translation} we deduce that 

    $$ \delavw(x_\beta(t)g)= \Delta^{\pia^*}_{ww_0, vw_0}(g^\iota x_\beta(t)) =    \begin{aligned}
           \begin{cases} 
           \Delta^{\pia^*}_{ww_0, vw_0}(g^\iota)=\delavw(g) & \text{if} \quad w_0v\inv \beta \in \Phi^+ \\
        \sum_{n=0}^M t^n Q_{M-n}(g^\iota) & \text{if} \quad w_0v\inv \beta \in \Phi^-
           \end{cases}
       \end{aligned}$$
       where $M= \langle -v w_0 \pia^* , \beta^\vee \rangle$. Since $\pia^*=- w_0 \pia$ we have that $M=N$. Then we set $P_n(g)=Q_n(g^\iota)$. Using again \cite{fomin1999double}[Proposition 2.7], we deduce the desired expression for $P_0$ and $P_N$ from the expression of $Q_0$ and $Q_N$ given in Lemma \ref{minors, developpement right translation}. Finally, notice that $$w_0 v\inv \beta \in \Phi^+ \ifff v\inv \beta \in \Phi^-.$$
\end{proof}

\begin{remark}
    If $\langle v \pia , \beta^\vee \rangle=0$, then $s_\beta v \pia= v \pia - \langle v \pia , \beta^\vee \rangle \beta= v \pia$. Hence $P_0$ and $P_N$ agree in this case because of Lemma \ref{lem:basic prop minors}, so there's no ambiguity. For $Q_0$ and $Q_N$ we have the same phenomenon. The convention about the enumeration of the coefficients $P$ is due to the fact that we want it to be compatible with the expansion of $t^N \delavw( x_\beta(t\inv) g)$. The same applies to the coefficients $Q$. 
\end{remark}

The next statement is a special case of the previous lemma.

\begin{lemma}
    \label{minors, developpement left translation} Let $t \in \CC$ and $g \in G$. We have that $$ \delaew(x_\beta(t)g)=    \begin{aligned}
           \begin{cases} 
           \delaew(g) & \text{if} \quad \alpha\neq \beta \\
         \delaew(g) + t \Delta^{\pia}_{s_\alpha,w}(g) & \text{if} \quad \alpha=\beta.
           \end{cases}
       \end{aligned}$$
\end{lemma}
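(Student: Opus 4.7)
The plan is to deduce this statement as a direct specialization of Lemma \ref{minors, developpement left translation plus} at $v = e$. First I would observe that with $v = e$, the condition $v^{-1}\beta = \beta \in \Phi^+$ is automatically verified, so the relevant case of that lemma is always the second one. The parameter $N = \langle v\varpi_\alpha, \beta^\vee \rangle$ becomes $N = \langle \varpi_\alpha, \beta^\vee \rangle$, which by the defining property of fundamental weights is $0$ when $\alpha \neq \beta$ and $1$ when $\alpha = \beta$.

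Next I would treat the two cases separately. When $\alpha \neq \beta$, the sum $\sum_{n=0}^{N} t^n P_{N-n}(g)$ collapses to the single term $P_0(g) = \Delta^{\pia}_{s_\beta, w}(g)$. To identify this with $\delaew(g)$ I would invoke part 1 of Lemma \ref{lem:basic prop minors}: since $\langle \varpi_\alpha, \beta^\vee \rangle = 0$ gives $s_\beta \varpi_\alpha = \varpi_\alpha - \langle \varpi_\alpha, \beta^\vee\rangle \beta = \varpi_\alpha$, the minors $\Delta^{\pia}_{s_\beta, w}$ and $\Delta^{\pia}_{e,w} = \delaew$ coincide. When $\alpha = \beta$, the sum runs over $n \in \{0,1\}$, producing $P_1(g) + t P_0(g)$; the explicit identifications $P_N = P_1 = \delaew$ and $P_0 = \Delta^{\pia}_{s_\alpha, w}$ furnished by Lemma \ref{minors, developpement left translation plus} then yield the claimed expression $\delaew(g) + t\, \Delta^{\pia}_{s_\alpha, w}(g)$.

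I do not anticipate any real obstacle, as the statement is a direct corollary and the argument is purely bookkeeping. The only point needing minor care is in the case $\alpha \neq \beta$, where one must verify that $\Delta^{\pia}_{s_\beta, w}$ and $\delaew$ are genuinely the same regular function on $G$ and not merely functions taking the same values on some subset; this is handled cleanly by the weight-based equality in Lemma \ref{lem:basic prop minors}(1). If one preferred an independent derivation, an alternative would be a short direct computation expanding $x_\beta(t) = \exp(t X_\beta)$ in the matrix coefficient $\phiam(x_\beta(t) g \overline{w} v_{\pia}^+)$ and using that $X_\beta$ acts on the lowest weight vector $\phiam$ of $V(\pia)^*$ as zero unless $\beta = \alpha$, in which case it produces $\varphi_{\pia}^-$ composed with $\overline{s_\alpha}^{-1}$; however, the reduction to the previous lemma is more economical.
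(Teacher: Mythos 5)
Your proof is correct and takes the same route as the paper: the paper's own proof is a one-line invocation of Lemma \ref{minors, developpement left translation plus}, and you have simply spelled out the specialization at $v=e$, including the degenerate $N=0$ case that the paper also flags in the remark following that lemma.
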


\begin{proof}
    Obvious from Lemma \ref{minors, developpement left translation plus}.
\end{proof}

\begin{remark}[A remark on products]
\label{remark on products}
    We fix a quite obvious convention relative to products. Suppose that two complex, semisimple, simply connected groups $G_1$ and $G_2$ are given. Moreover, suppose that, for $i=1,2$ and $\alpha_i \in \Delta_i$, the data of $T_i$, $B_i \subseteq G_i$
 and of an $\sl_2$-triple $(X_{\alpha_i}, H_{\alpha_i}, X_{-\alpha_i})$ are also given. Then, we automatically make the following choices in $G=G_1 \times G_2$: the torus $T=T_1 \times T_2$, the Borel $B=B_1 \times B_2$. Moreover, for $\alpha \in \Delta=\Delta_1 \sqcup \Delta_2$, the $\sl_2$ triple associated to $\alpha$, in $\lg=\lg_1 \oplus \lg_2$, is the one in $\lg_{1/2}$ accordingly if $\alpha \in \Delta_{1/2}$. 
 
 With this choice, if $\alpha = \alpha_1 \in \Delta_1$, then $\overline{s_{\alpha}}= (\overline{s_{\alpha_1}} ,e)$. If $\alpha = \alpha_2 \in \Delta_2$, then $\overline{s_{\alpha}}= (e, \overline{s_{\alpha_2}})$. Hence, for any $w=(w_1,w_2) \in W=W_1 \times W_2$, we have that $$\overline{w}= (\overline{w_1}, \overline{w_2}).$$The fundamental weights of $T_1 \times T_2$ are of the form $(\varpi_{\alpha_1},0)$ and $(0, \varpi_{\alpha_2})$  and we have 
 \begin{equation}
     \Delta^{(\varpi_{\alpha_1}, 0)}_{(v_1,v_2),(w_1,w_2)}= \Delta^{\varpi_{\alpha_1}}_{v_1,w_1} \otimes 1 \quad \text{and} \quad 
     \Delta^{(0, \varpi_{\alpha_2})}_{(v_1,v_2),(w_1,w_2)}= 1 \otimes \Delta^{\varpi_{\alpha_2}}_{v_2,w_2}.
 \end{equation}

 \end{remark}

\subsection{Technical properties of minors II: algebraic independence}
\label{Technical lemmas for the study of  valuations section}
We study algebraic independence of, the restriction, of some families of generalised minors to some $T$-stable subgroup of $U$. 

\bigskip

Fix $z \in W$ and $\ii \in R(z)$. Let $l=\ell(z)$ and $t=t_\ii$. For $1 \leq j \leq l$, $x_j \in \CC[U(z)]$ denotes the $j$-th cluster variable of the seed $t$. We refer to Section \ref{cluster U(w)} for the definitions and the notation of this section.

\subsection*{Left twist: the case of U(z)}
 We fix $\alpha \in \Delta$ such that $s_\alpha \in \supp(z)$. In this subsection, for $\beta \in \Delta$ and $v,w \in W$, we denote by $D^{\pib}_{v,w}$ the restriction of $\Delta^{\pib}_{v,w}$ to $U(z)$.

\begin{lemma}
\label{delta alpha min polinomio nei precedenti U(w)}
    The following equality holds $$ \displaystyle D^{\pia}_{s_\alpha, z_{\leq \alpha^{\min}}^{-1}}= \prod_{j=1}^{\alpha^{\min}-1} (D^{\varpi_{i_j}}_{s_\alpha, z_{\leq j }^{-1}})^{c_{j}} $$
   for some non-negative integers $c_j$. Moreover, for $j < \alpha^{\min}$, we have that $ D^{\varpi_{i_j}}_{s_\alpha, z_{\leq j }^{-1}}= D^{\varpi_{i_j}}_{e, z_{\leq j }^{-1}}$.
\end{lemma}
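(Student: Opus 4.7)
My plan splits into a trivial \emph{moreover} part and a three-step proof of the main identity. For the moreover statement, since $j<\alpha^{\min}$ forces $i_j\neq\alpha$ by minimality of $\alpha^{\min}$, we have $s_\alpha\varpi_{i_j}=\varpi_{i_j}$, and Lemma~\ref{lem:basic prop minors}(1) immediately gives $\Delta^{\varpi_{i_j}}_{s_\alpha,z_{\leq j}^{-1}}=\Delta^{\varpi_{i_j}}_{e,z_{\leq j}^{-1}}$.

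For the main identity, I will apply the Fomin--Zelevinsky determinantal identity (Theorem~\ref{relations minors FZ}) with $v=e$ and $w=w':=z_{\leq\alpha^{\min}-1}^{-1}$. The length hypotheses are immediate: $(i_1,\dots,i_{\alpha^{\min}})$ being reduced yields $\ell(ws_\alpha)=\alpha^{\min}=\ell(w)+1$. Since $ws_\alpha=z_{\leq\alpha^{\min}}^{-1}$, the identity reads
\[
\Delta^{\pia}_{e,w'}\,\Delta^{\pia}_{s_\alpha,z_{\leq\alpha^{\min}}^{-1}}=\Delta^{\pia}_{s_\alpha,w'}\,\Delta^{\pia}_{e,z_{\leq\alpha^{\min}}^{-1}}+\prod_{\beta\in\Delta\setminus\{\alpha\}}\bigl(\Delta^{\pib}_{e,w'}\bigr)^{-a_{\beta,\alpha}}.
\]

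Next I will restrict to $U(z)\subseteq U$ and simplify. By minimality of $\alpha^{\min}$, the simple reflection $s_\alpha$ does not appear in $w'$, so Lemma~\ref{vanishing minor s,w} forces $\Delta^{\pia}_{s_\alpha,w'}$ to vanish on $U$, killing the first term on the right. Moreover, reflections $s_\beta$ with $\beta\neq\alpha$ fix $\pia$, so $w'\pia=\pia$, whence Lemma~\ref{lem:basic prop minors}(1) gives $\Delta^{\pia}_{e,w'}=\Delta^{\pia}_{e,e}$, which restricts to the constant $1$ on $U$ since $[u]_0=e$ for $u\in U$. In $\CC[U(z)]$ the identity therefore collapses to
\[
D^{\pia}_{s_\alpha,z_{\leq\alpha^{\min}}^{-1}}=\prod_{\beta\in\Delta\setminus\{\alpha\}}\bigl(D^{\pib}_{e,w'}\bigr)^{-a_{\beta,\alpha}},
\]
with non-negative exponents since $a_{\beta,\alpha}\leq 0$ for $\beta\neq\alpha$.

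Finally, to recognize each factor $D^{\pib}_{e,w'}$ as a cluster variable $x_j$ with $j<\alpha^{\min}$, I distinguish two cases. If $\beta\notin\supp(w')$, the same argument used for $\pia$ yields $D^{\pib}_{e,w'}=1$. Otherwise set $j_0:=\max\{k<\alpha^{\min}:i_k=\beta\}$; since $i_k\neq\beta$ for $j_0<k<\alpha^{\min}$, the reflections $s_{i_k}$ fix $\pib$ for those $k$, so $w'\pib=z_{\leq j_0}^{-1}\pib$, and Lemma~\ref{lem:basic prop minors}(1) yields $D^{\pib}_{e,w'}=D^{\pib}_{e,z_{\leq j_0}^{-1}}=x_{j_0}$. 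Setting $c_{j_0}:=-a_{i_{j_0},\alpha}\geq 0$ at such last occurrences $j_0$, and $c_j=0$ otherwise, gives the desired monomial expression. The only genuinely delicate step in this plan is the combinatorial bookkeeping on supports and last occurrences that makes the appropriate minors either vanish or reduce to a single $x_{j_0}$; once that is in place, the Fomin--Zelevinsky relation does all the work.
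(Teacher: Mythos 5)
Your proof is correct and follows essentially the same route as the paper's: apply Theorem~\ref{relations minors FZ} with $v=e$ and $w=z_{\leq \alpha^{\min}-1}^{-1}$, kill the term $\Delta^{\pia}_{s_\alpha,w'}$ via Lemma~\ref{vanishing minor s,w}, reduce $\Delta^{\pia}_{e,w'}$ to $1$ on $U$, and identify each surviving factor $D^{\pib}_{e,w'}$ with the last cluster variable $x_{k_\beta}$ indexed by $\beta$ before $\alpha^{\min}$. The only cosmetic difference is that you verify the length hypotheses explicitly, which the paper leaves implicit.
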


\begin{proof}
    Note that $z_{\leq \alpha ^{min}}^{-1}= z' s_\alpha$ where $z'=s_{i_1} \dots s_{(i_{\alpha^{\min}-1})}$, or $z'=e$ if $\alpha^{\min}=1$. We apply Theorem \ref{relations minors FZ} with $w=z'$ and $v=e$.
    
    Note that $s_\alpha \not \in \supp(z')$, hence $z'\pia=\pia$ and so $\Delta^{\varpi_\alpha}_{e,z'}=\delaee$ by Lemma \ref{lem:basic prop minors}. Moreover, $D^{\pia}_{e,e}=1$. Similarly, from Lemma \ref{vanishing minor s,w} we deduce that $D^{\pia}_{s_\alpha , z'} = D^{\pia}_{s_\alpha , e}= 0 $. So, Theorem \ref{relations minors FZ} specialises to the following identity:
    $$ \displaystyle D^{\pia}_{s_\alpha, z_{\leq \alpha ^{min}}^{-1}}= \prod_{\beta \in \Delta \setminus \{\alpha\}} (D^{\pib}_{e,z'})^{-a_{\beta,\alpha}}.$$
    But for $\beta \neq \alpha$, $(D^{\pib}_{e,z'})^{-a_{\beta,\alpha}}=(D^{\pib}_{s_\alpha,z'})^{-a_{\beta,\alpha}}$ because of Lemma \ref{lem:basic prop minors}. Moreover, if $s_\beta \notin \supp(z')$, $(D^{\pib}_{s_\alpha,z'})^{-a_{\beta,\alpha}}=1$.
    If $s_\beta \in \supp(z')$ and 
    $$k_\beta= \max \{ j \in \{1 ,\dots , \alpha^{\min}-1\} \, : \, i_j=\beta \},$$ by Lemma \ref{lem:basic prop minors} we have that $$(D^{\pib}_{s_\alpha,z'})^{-a_{\beta,\alpha}}= (D^{\pib}_{s_\alpha,z_{\leq k_\beta}^{-1}})^{-a_{\beta,\alpha}}.$$
    In particular, for $j < \alpha^{\min}$, setting \begin{equation*}
        c_j= \begin{cases}
            -a_{\beta,\alpha} & \text{if} \quad j=k_\beta \quad \text{for some} \quad \beta \neq \alpha.\\
            0 & \text{otherwise}
             \end{cases}
    \end{equation*}
    we have proved the first part of the statement. The last part is obvious from Lemma \ref{lem:basic prop minors} and the definition of $\alpha^{\min}.$
\end{proof}

\begin{lemma}
    \label{delta s w alg indip U(w)}
    For any $k \leq l $, the following equality of subfields of $\CC(U(w))$ holds:
    $$\CC\biggl(x_{\alpha^{\min}} \, , \, D^{\varpi_{i_j}}_{s_\alpha, z_{\leq j}^{-1}} \, : \, j \leq k, j \neq \alpha^{\min}\biggr)=\CC\biggl(x_{\alpha^{\min}} \, , \, x_j \, : \, j \leq k \biggr). $$
    In particular, the functions $\{D^{\varpi_{i_j}}_{s_\alpha, z_{\leq j}^{-1}} \, : \, j \leq l, j \neq \alpha^{\min} \}$ are algebraically independent.
\end{lemma}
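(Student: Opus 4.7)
The plan is to proceed by induction on $k$, proving both field inclusions in the statement and then deriving algebraic independence from a transcendence-degree count at $k = l$. The starting observation makes the family much simpler than it looks: for every $j$ with $i_j \neq \alpha$ one has $\langle \varpi_{i_j}, \alpha^\vee \rangle = 0$, hence $s_\alpha \varpi_{i_j} = \varpi_{i_j}$, and Lemma \ref{lem:basic prop minors}(1) yields $D^{\varpi_{i_j}}_{s_\alpha, z_{\leq j}^{-1}} = D^{\varpi_{i_j}}_{e, z_{\leq j}^{-1}} = x_j$. So the only ``genuinely new'' generators on the left-hand side are the minors $D^{\pia}_{s_\alpha, z_{\leq j}^{-1}}$ for indices $j > \alpha^{\min}$ with $i_j = \alpha$. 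For such a $j$, the letters $i_{j^- + 1}, \ldots, i_{j-1}$ are all different from $\alpha$ by the definition of $j^-$, so the same lemma gives $z_{\leq j-1}^{-1} \pia = z_{\leq j^-}^{-1} \pia$ and, for each $\beta \neq \alpha$, $z_{\leq j-1}^{-1} \pib = z_{\leq j_\beta}^{-1} \pib$, where $j_\beta \leq j-1$ is the largest index with $i_{j_\beta} = \beta$ (and $z_{\leq 0}^{-1} = e$).

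The inductive step is then a single application of Theorem \ref{relations minors FZ} with $v = e$, simple root $\alpha$, and $w = z_{\leq j-1}^{-1}$ (the length hypothesis $\ell(w s_\alpha) = \ell(w) + 1$ holds because $i_j = \alpha$). After restriction to $U(z)$ and simplification via the weight equalities above, the identity becomes
$$x_{j^-} \cdot D^{\pia}_{s_\alpha, z_{\leq j}^{-1}} \;=\; D^{\pia}_{s_\alpha, z_{\leq j^-}^{-1}} \cdot x_j \;+\; \prod_{\beta \neq \alpha} (x_{j_\beta})^{-a_{\beta,\alpha}},$$
with the convention $x_{j_\beta} = 1$ when $j_\beta = 0$. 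The term $D^{\pia}_{s_\alpha, z_{\leq j^-}^{-1}}$ lies in the inductively known subfield: it is either one of our generators (when $j^- > \alpha^{\min}$) or, when $j^- = \alpha^{\min}$, a monomial in $x_1, \ldots, x_{\alpha^{\min}-1}$ by Lemma \ref{delta alpha min polinomio nei precedenti U(w)}. Solving the displayed identity for $D^{\pia}_{s_\alpha, z_{\leq j}^{-1}}$ yields the inclusion $\mathrm{LHS} \subseteq \mathrm{RHS}$, and solving for $x_j$ yields the reverse inclusion. The ``in particular'' statement then follows from the equality of fields at $k = l$: the cluster variables $x_1, \ldots, x_l$ form a transcendence basis of $\CC(U(z))$ by Theorem \ref{cluster U(w)} and Lemma \ref{maximal rank U(w)}, so the common field has transcendence degree $l$ over $\CC$; the left-hand side is generated by exactly $l$ elements (namely $x_{\alpha^{\min}}$ together with the $l-1$ minors), forcing all of them to be algebraically independent.

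The main obstacle I anticipate is bookkeeping --- tracking the indices $j^-$ and $j_\beta$ carefully and handling the base case $j^- = \alpha^{\min}$ cleanly --- together with the mild but non-trivial verification that $D^{\pia}_{s_\alpha, z_{\leq j^-}^{-1}}$ does not vanish identically on $U(z)$, which is needed to carry out the division when solving for $x_j$. This non-vanishing is immediate when $j^- = \alpha^{\min}$ from Lemma \ref{delta alpha min polinomio nei precedenti U(w)}, and in the general case can be obtained either inductively (the two summands on the right-hand side of the displayed identity cannot cancel, since both remain positive on the totally positive locus of $U(z)$) or directly from Lemma \ref{vanishing minor s,w} combined with total positivity.
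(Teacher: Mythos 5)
Your proof is correct and follows essentially the same route as the paper's: induction on $k$, a single invocation of Theorem \ref{relations minors FZ} with $v=e$ and $w=z_{\leq j-1}^{-1}$ whose restriction to $U(z)$ simplifies (after the same weight observations) to exactly your displayed identity, the same role for Lemma \ref{delta alpha min polinomio nei precedenti U(w)} at $j^-=\alpha^{\min}$, and the same transcendence-degree count at $k=l$. The one place you over-engineer is the non-vanishing of $D^{\pia}_{s_\alpha, z_{\leq j^-}^{-1}}$ on $U(z)$: no total positivity is needed, as the paper gets it for free from Lemma \ref{vanishing minor s,w} (non-vanishing on $U$, since $s_\alpha \in \supp(z_{\leq j^-}^{-1})$) combined with part 5 of Lemma \ref{lem:basic prop minors}, which propagates the non-vanishing to $U\cap s_\alpha U s_\alpha\cap z_{\leq j^-}^{-1}U^-z_{\leq j^-}\subseteq U(z)$.
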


\begin{proof}
    Recall that $x_j=D^{\varpi_{i_j}}_{e,z_{\leq j}^{-1}}$. We prove the equality by induction on $k$. For $k=0$ there is nothing to prove. Suppose that the equality has been proved for $k$ and call $$\FF_k=\CC\biggl(x_{\alpha^{\min}} \, , \, D^{\varpi_{i_j}}_{s_\alpha, z_{\leq j}^{-1}} \, : \, j \leq k, j \neq \alpha^{\min}\biggr)=\CC\biggl(x_{\alpha^{\min}} \, , \, x_j \, : \, j \leq k \biggr).$$
    If $k+1= \alpha^{\min}$, there is nothing to prove. Similarly, if $i_{k+1} \neq \alpha$, then by Lemma \ref{lem:basic prop minors} 
    $$ D^{\varpi_{i_{k+1}}}_{s_\alpha,z_{\leq k+1}^{-1}}=D^{\varpi_{i_{k+1}}}_{e,z_{\leq k+1}^{-1}},$$ so the desired equality is trivial. Next, suppose that $i_{k+1}= \alpha$ and $k+1 \neq \alpha^{\min}$. Applying Theorem \ref{relations minors FZ} with $v=e$ and $w= z_{\leq k}^{-1}$,  we get an equality that can be read as 
    
    \begin{equation}
        \label{eq 11}
    f D^{\varpi_{i_{k+1}}}_{s_\alpha,z_{\leq k+1}^{-1}} = g D^{\varpi_{i_{k+1}}}_{e,z_{\leq k+1}^{-1}} + \varphi.
    \end{equation}
    We analyse the three terms $\varphi, f, g$. 
    
    Note that $$ \varphi= \prod_{\beta \in \Delta \setminus \{ \alpha \} } (D^{\pib}_{e, z_{\leq k}^{-1}})^{-a_{\beta,\alpha}}.$$
    For $\beta \in \Delta \setminus \{ \alpha \}$, if $s_\beta \not \in \supp (z_{\leq k}^{-1})$, then $D^{\pib}_{e, z_{\leq k}^{-1}}=1$. Otherwise, let $k_\beta= \max  \{ 1 \leq j \leq k \, : \, i_j=\beta \}$. Then, 
    
    $$D^{\pib}_{e, z_{\leq k}^{-1}}= D^{\pib}_{e, z_{\leq k_\beta}^{-1}}=x_{k_\beta}. $$
Hence $\varphi \in \FF_k$.

Next, we look at $g$. Let $t= (k+1)^-$, which is not zero since $k+1 \neq \alpha^{\min}$. Lemma \ref{lem:basic prop minors} implies that:
$$g= D^{\varpi_{i_{k+1}}}_{s_\alpha,z_{\leq k}^\inv}=D^{\varpi_{i_t}}_{s_\alpha,z_{\leq t}^\inv}.$$
Since  $s_\alpha \in \supp(z_{\leq t}^{-1})$, then $g \neq 0$ by Lemma \ref{vanishing minor s,w} and statement $5$ of Lemma \ref{lem:basic prop minors}. Moreover, if $t\neq \alpha^{\min}$, we clearly have $g \in \FF_k$. If $t=\alpha^{\min}$, we deduce that $g \in \FF_k$ from Lemma \ref{delta alpha min polinomio nei precedenti U(w)}.

Finally, we consider $f$. Notice that 
$$f=D^{\varpi_{i_{k+1}}}_{e, z_{\leq k}\inv}=D^{\varpi_{i_t}}_{e,z_{\leq t}^\inv}=x_t.$$
In particular, if $t=\alpha^{\min}$, then $f=x_{\alpha^{\min}} \in \FF_k$. In the other case, it's obvious that $f \in \FF_k$. Moreover, $f$ is clearly non-zero since it is a cluster variable of $t$.

Since $f,g, \varphi \in \FF_k$ and $f,g$ are non-zero, the equality of fields for $k+1$ follows by induction from \eqref{eq 11}.

\bigskip

 For the algebraic independence, note that the functions $\{ x_j \, : \, j \leq l\}$ are the cluster variables of $t$, which are algebraically independent. The statement follows from the well definiteness of the transcendence degree.  
\end{proof}

\subsection*{Left twist: the case of $U(z)_{\setminus \alpha}$ }

For a $T$-stable subgroup $H$ of $U$ and $\alpha \in \Delta$, we set $$H_{\setminus \alpha} = H \cap s_\alpha U s_\alpha.$$ 
We fix $\alpha \in \Delta \cap \Phi(z)$. In particular, $s_\alpha \in \supp(z)$. In this subsection, for $\beta \in \Delta$ and $v,w \in W$, we denote by $D^{\pib}_{v,w}$ the restriction of $\Delta^{\pib}_{v,w}$ to $U(z)_{\setminus \alpha}$.

\begin{lemma}
\label{delta alpha min polinomio nei precedenti U(w) alpha}
    The following equality holds $$ \displaystyle D^{\pia}_{s_\alpha, z_{\leq \alpha^{\min}}^{-1}}= \prod_{j=1}^{\alpha^{\min}-1} (D^{\varpi_{i_j}}_{s_\alpha, z_{\leq j }^{-1}})^{c_{j}}$$
    for some non-negative integers $c_j$. Moreover, for $j < \alpha^{\min}$, we have that $ D^{\varpi_{i_j}}_{s_\alpha, z_{\leq j }^{-1}} = D^{\varpi_{i_j}}_{e, z_{\leq j }^{-1}}$.
\end{lemma}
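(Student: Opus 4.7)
The proof will be essentially identical to that of Lemma \ref{delta alpha min polinomio nei precedenti U(w)}, because every identity used in that proof takes place either in $\CC[G]$ or in $\CC[U]$, and hence descends to the restriction on the $T$-stable subvariety $U(z)_{\setminus \alpha} \subseteq U(z) \subseteq U$. Concretely, the plan is as follows.

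First, set $z' := s_{i_1} \cdots s_{i_{\alpha^{\min}-1}}$ (with $z' = e$ if $\alpha^{\min} = 1$), so that $z' s_\alpha = z_{\leq \alpha^{\min}}^{-1}$ and $\ell(z' s_\alpha) = \ell(z') + 1$. Apply Theorem \ref{relations minors FZ} with $v = e$ and $w = z'$ to obtain the identity
\begin{equation*}
\Delta^{\pia}_{e,z'}\,\Delta^{\pia}_{s_\alpha, z's_\alpha}
= \Delta^{\pia}_{s_\alpha,z'}\,\Delta^{\pia}_{e,z's_\alpha}
+ \prod_{\beta \in \Delta \setminus \{\alpha\}}(\Delta^{\pib}_{e,z'})^{-a_{\beta,\alpha}}
\end{equation*}
in $\CC[G]$. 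Now restrict this identity to $U(z)_{\setminus\alpha}$. By minimality of $\alpha^{\min}$, the simple reflection $s_\alpha$ does not appear in the support of $z'$, so $z' \pia = \pia$; hence by Lemma \ref{lem:basic prop minors}(1), $\Delta^{\pia}_{e,z'} = \Delta^{\pia}_{e,e} = \delaee$, which restricts to $1$ on $U$ and therefore on $U(z)_{\setminus\alpha}$. By the same token, Lemma \ref{vanishing minor s,w} ensures that $\Delta^{\pia}_{s_\alpha,z'}$ vanishes identically on $U$, hence on $U(z)_{\setminus\alpha}$. Denoting by $D$ the restriction, the identity collapses to
\begin{equation*}
D^{\pia}_{s_\alpha, z_{\leq \alpha^{\min}}^{-1}}
= \prod_{\beta \in \Delta \setminus \{\alpha\}}\bigl(D^{\pib}_{e,z'}\bigr)^{-a_{\beta,\alpha}}.
\end{equation*}

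Next, rewrite each factor on the right. For $\beta \neq \alpha$ we have $s_\alpha \pib = \pib$, so Lemma \ref{lem:basic prop minors}(1) yields $D^{\pib}_{e,z'} = D^{\pib}_{s_\alpha,z'}$. If $s_\beta \notin \supp(z')$, then this factor equals $1$ and contributes nothing; otherwise, letting $k_\beta = \max\{\, 1 \leq j < \alpha^{\min} : i_j = \beta \,\}$, Lemma \ref{lem:basic prop minors} further gives $D^{\pib}_{s_\alpha,z'} = D^{\pib}_{s_\alpha, z_{\leq k_\beta}^{-1}}$. Setting
\begin{equation*}
c_j := \begin{cases} -a_{i_j,\alpha} & \text{if } j = k_{i_j} \text{ for some } \beta = i_j \neq \alpha, \\ 0 & \text{otherwise}, \end{cases}
\end{equation*}
which is non-negative since $a_{i_j,\alpha} \leq 0$ for $i_j \neq \alpha$, gives the desired factorisation.

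Finally, the \textbf{moreover} assertion is immediate: for $j < \alpha^{\min}$, by definition $i_j \neq \alpha$, so $s_\alpha \varpi_{i_j} = \varpi_{i_j}$, and Lemma \ref{lem:basic prop minors}(1) yields $D^{\varpi_{i_j}}_{s_\alpha, z_{\leq j}^{-1}} = D^{\varpi_{i_j}}_{e, z_{\leq j}^{-1}}$ as functions on $G$, a fortiori after restriction. The main point of the argument—and the only place where the choice of ambient subvariety intervenes—is the simultaneous vanishing/triviality of $\Delta^{\pia}_{s_\alpha,z'}$ and $\Delta^{\pia}_{e,z'}$, which is inherited from $U$; there is no genuine obstacle, but it is worth flagging explicitly that $U(z)_{\setminus\alpha}$ sits inside $U$ so that these restrictions are legitimate.
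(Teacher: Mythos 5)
Your proof is correct and reaches the same conclusion as the paper, but the paper's own proof is a one-liner: it observes that Lemma \ref{delta alpha min polinomio nei precedenti U(w)} already establishes the desired equality as an identity of regular functions on $U(z)$, and since $U(z)_{\setminus\alpha}$ is a subvariety of $U(z)$, the identity (with the same exponents $c_j$) simply restricts. You instead re-run the entire Fomin--Zelevinsky identity argument inside $\CC[U(z)_{\setminus\alpha}]$ — which works, since every step descends under restriction, but is redundant; your closing remark that ``$U(z)_{\setminus\alpha}$ sits inside $U$'' would be cleaner stated as ``sits inside $U(z)$'', since that is what lets you quote the previous lemma directly rather than reproving it.
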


\begin{proof}
    This is an obvious consequence of Lemma \ref{delta alpha min polinomio nei precedenti U(w)}.
\end{proof}

\begin{lemma}
     \label{delta s w alg indip U(w) alpha}
   The functions $\{D^{\varpi_{i_j}}_{s_\alpha, z_{\leq j}^{-1}} \, : \, j \leq l, j \neq \alpha^{\min} \}$ are algebraically independent.
\end{lemma}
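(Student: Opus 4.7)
The idea is to exhibit $U(z)$ as a product variety $U_\alpha \times U(z)_{\setminus \alpha}$ and to observe that the generalised minors appearing in the statement are invariant on the $U_\alpha$-factor, thereby reducing the algebraic independence on $U(z)_{\setminus \alpha}$ to the one already established on $U(z)$ in Lemma \ref{delta s w alg indip U(w)}.

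Since $\alpha \in \Delta \cap \Phi(z)$ we have $z\alpha \in \Phi^-$, i.e.\ $s_\alpha$ is a right descent of $z$; there is therefore a reduced expression of $z$ whose rightmost factor is $s_\alpha$. The associated convex order on $\Phi(z)$ places $\alpha$ first, and $\Phi(z) \setminus \{\alpha\}$ is still biconvex because $\alpha$ is simple (a simple root cannot be written as a sum of two positive roots). The standard product decomposition of root subgroups under a convex order then yields an isomorphism of affine varieties
$$m \colon U_\alpha \times U(z)_{\setminus \alpha} \longto U(z), \qquad (u_\alpha, u') \longmapsto u_\alpha u',$$
since $U_\alpha$ is the root subgroup corresponding to the first root of the convex order and $U(z)_{\setminus \alpha}$ is recognized as the product of the remaining root subgroups. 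Let $\pi \colon U(z) \to U(z)_{\setminus \alpha}$ be the projection onto the second factor.

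Now apply Lemma \ref{minors, developpement left translation plus} with left subscript $v = s_\alpha$ and simple root (playing the role of $\beta$ in that lemma) equal to $\alpha$: since $s_\alpha^{-1}\alpha = -\alpha \in \Phi^-$, every minor of the form $\Delta^{\varpi_\gamma}_{s_\alpha, w}$ is invariant under left multiplication by $U_\alpha$. Consequently, for each $j$, the restriction $\Delta^{\varpi_{i_j}}_{s_\alpha, z_{\leq j}^{-1}}\bigl|_{U(z)}$ is $U_\alpha$-left-invariant and hence factors through $\pi$, that is
$$\Delta^{\varpi_{i_j}}_{s_\alpha, z_{\leq j}^{-1}}\bigl|_{U(z)} \; = \; D^{\varpi_{i_j}}_{s_\alpha, z_{\leq j}^{-1}} \circ \pi \; = \; \pi^{*}\bigl(D^{\varpi_{i_j}}_{s_\alpha, z_{\leq j}^{-1}}\bigr).$$

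Lemma \ref{delta s w alg indip U(w)} implies that the $l-1$ functions $\Delta^{\varpi_{i_j}}_{s_\alpha, z_{\leq j}^{-1}}\bigl|_{U(z)}$ for $j \leq l$, $j \neq \alpha^{\min}$, are algebraically independent in $\CC[U(z)]$, as they form a subfamily of an algebraically independent family of $l$ elements. Since $\pi^{*}$ is a ring homomorphism, any polynomial relation $P\bigl(D^{\varpi_{i_j}}_{s_\alpha, z_{\leq j}^{-1}}\bigr) = 0$ holding on $U(z)_{\setminus \alpha}$ pulls back to the relation $P\bigl(\Delta^{\varpi_{i_j}}_{s_\alpha, z_{\leq j}^{-1}}\bigl|_{U(z)}\bigr) = 0$ on $U(z)$, forcing $P = 0$. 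This yields the desired algebraic independence. The only mildly delicate step in this plan is verifying that the multiplication map $m$ is an isomorphism of varieties; everything else is a direct functorial consequence of Lemmas \ref{delta s w alg indip U(w)} and \ref{minors, developpement left translation plus}.
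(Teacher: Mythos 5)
Your proof is correct and follows essentially the same route as the paper: left-invariance of the minors $\Delta^{\varpi_{i_j}}_{s_\alpha,\,z_{\leq j}^{-1}}$ under $U(s_\alpha)$ via Lemma \ref{minors, developpement left translation plus}, the product decomposition $U(s_\alpha)\times U(z)_{\setminus\alpha}\simeq U(z)$ (the paper simply cites \cite{Hum:refgroup}, Proposition 28.1, where you re-derive it via convex orders), and reduction to Lemma \ref{delta s w alg indip U(w)}. The only difference is that you spell out the pullback argument that the paper leaves implicit.
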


\begin{proof}
   Lemma \ref{minors, developpement left translation plus} implies that  $\Delta^{\pib}_{s_\alpha,w}$ is invariant by left multiplication of $U(s_\alpha)$. Since the product induces an isomorphism $U(s_\alpha) \times U(z)_{\setminus \alpha} \simeq U(z)$ by \cite{Hum:refgroup}[Proposition 28.1], we deduce the statement from Lemma \ref{delta s w alg indip U(w)}.
\end{proof}

\subsection*{Right twist}
We fix $\alpha \in \Delta \cap \Phi(z)$. We set \begin{equation}
\label{eq:alpha minus}
    \alpha(-)= \min \{ k \, : \, z_{\leq k} \alpha \in \Phi^- \}.
\end{equation} 

\begin{lemma}
    \label{right translation alpha(-)}
   We have that
    \begin{enumerate}
        \item $\{ k \, : \, z_{\leq k}\alpha \in \Phi^- \}= \{ \alpha(-) , \dots , l\}.$
        \item $s_\alpha z_{\leq \alpha(-)}^{-1}= z_{\leq( \alpha(-) -1)}^{-1}$.
        \item  $\langle - z_{\leq \alpha(-)}^{-1} \varpi_{i_{\alpha(-)}}, \alpha^\vee \rangle = 1.$
        \item The expression $\ii'$ obtained from $\ii$ by deleting the term $i_{\alpha(-)}$ is a reduced expression for $z s_\alpha$.
     \end{enumerate}

 \end{lemma}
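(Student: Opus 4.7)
The plan is to extract a single index $k_0$ from the hypothesis $\alpha \in \Delta \cap \Phi(z)$ and then derive all four statements from the identity $z_{\leq k_0 - 1}\alpha = i_{k_0}$. The essential tool is the classical description of the inversion set of a reduced expression: if $\ii = (i_l, \dots, i_1) \in R(z)$, then
\[
\Phi(z) = \{\,z_{\leq m-1}^{-1}\, i_m \,:\, 1 \leq m \leq l\,\},
\]
and these $l$ positive roots are pairwise distinct. Since $\alpha \in \Phi(z)$, there is a unique $k_0$ with $\alpha = z_{\leq k_0 - 1}^{-1}\, i_{k_0}$, equivalently $z_{\leq k_0 - 1}\alpha = i_{k_0}$ (so $z_{\leq k_0}\alpha = s_{i_{k_0}} i_{k_0} = -i_{k_0} \in \Phi^-$).

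For part 1, I will show that $\alpha(-) = k_0$ and that the sign of $z_{\leq k}\alpha$ does not flip again. If a smaller index $k_1 < k_0$ had $z_{\leq k_1}\alpha \in \Phi^-$, choosing the smallest such $k_1$ forces $z_{\leq k_1 - 1}\alpha = i_{k_1}$ (the unique positive root that $s_{i_{k_1}}$ sends to $\Phi^-$), giving a second index where $\alpha = z_{\leq m-1}^{-1} i_m$ and contradicting uniqueness. Conversely, if $z_{\leq k_3}\alpha \in \Phi^+$ for some smallest $k_3 > k_0$, then $z_{\leq k_3 - 1}\alpha = -i_{k_3}$, i.e.\ $z_{\leq k_3 - 1}^{-1}\, i_{k_3} = -\alpha$; but the left-hand side lies in the positive root set $\Phi(z)$, again a contradiction. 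This proves $\{k : z_{\leq k}\alpha \in \Phi^-\} = \{k_0, \dots, l\}$ and hence $\alpha(-) = k_0$.

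Parts 2, 3, 4 then fall out from $z_{\leq k_0 - 1}\alpha = i_{k_0}$ by direct manipulation. Conjugation gives $s_\alpha = z_{\leq k_0 - 1}^{-1}\, s_{i_{k_0}}\, z_{\leq k_0 - 1}$; multiplying on the right by $z_{\leq k_0 - 1}^{-1}\, s_{i_{k_0}} = z_{\leq k_0}^{-1}$ yields part 2. For part 3, using part 2 I rewrite $z_{\leq k_0}^{-1}\,\varpi_{i_{k_0}} = s_\alpha\bigl(z_{\leq k_0 - 1}^{-1}\,\varpi_{i_{k_0}}\bigr)$ and compute
\[
\langle z_{\leq k_0 - 1}^{-1}\,\varpi_{i_{k_0}},\, \alpha^\vee \rangle = \langle \varpi_{i_{k_0}},\, z_{\leq k_0 - 1}\alpha^\vee \rangle = \langle \varpi_{i_{k_0}},\, i_{k_0}^\vee \rangle = 1,
\]
so $\langle z_{\leq k_0}^{-1}\,\varpi_{i_{k_0}},\, \alpha^\vee \rangle = -1$, giving part 3. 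For part 4, substituting the conjugation formula for $s_\alpha$ into $z s_\alpha$ collapses the initial segment by successive cancellations $s_{i_j} s_{i_j} = e$ to yield $z s_\alpha = s_{i_l} \cdots \widehat{s_{i_{k_0}}} \cdots s_{i_1}$; this is an expression of length $l - 1$, and since $\alpha \in \Phi(z)$ implies $\ell(z s_\alpha) = l - 1$, it is reduced.

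The only nontrivial step is the monotonicity claim in part 1, i.e.\ that the sign of $z_{\leq k}\alpha$ changes exactly once as $k$ runs from $0$ to $l$; this is the place where the $l$ distinct inversions of the reduced expression must be invoked. Everything else is formal manipulation based on the identity $z_{\leq k_0 - 1}\alpha = i_{k_0}$ singled out in that first step.
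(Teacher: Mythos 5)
Your proof is correct and follows essentially the same route as the paper: both hinge on the identity $z_{\leq(\alpha(-)-1)}\alpha = i_{\alpha(-)}$, from which parts 2--4 follow by the conjugation formula $ws_\beta w^{-1}=s_{w\beta}$ and the same pairing computation. The only cosmetic difference is in part 1, where you invoke the explicit enumeration of $\Phi(z)$ by distinct inversions $z_{\leq m-1}^{-1}i_m$, while the paper uses the equivalent fact that the inversion sets $\Phi(z_{\leq k})$ are nested along a reduced expression.
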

    
\begin{proof}
\begin{enumerate}
    \item By the definition of $\alpha(-)$, we clearly have that $z_{\leq k}\alpha \in \Phi^+$ for $k < \alpha(-)$. Moreover, since $\ii$ is a reduced expression, from the definition of $z_{\leq k}$ it follows that $\Phi(z_{\leq k}) \subseteq \Phi(z_{\leq k+1})$. Hence, for $k \geq \alpha(-)$ we have that $z_{\leq k}
    \alpha \in \Phi^-.$
    
    \item Since $z_{\leq (\alpha(-)-1)} \alpha \in \Phi^+$ and $s_{i_{\alpha(-)}}z_{\leq (\alpha(-)-1)} \alpha \in \Phi^-$, we have that 
    \begin{equation}
        \label{eq 8}
    z_{\leq (\alpha(-)-1)} \alpha = i_{\alpha(-)},
    \end{equation}
    where we assume that $z_{\leq (\alpha(-)-1)}=e$ if $\alpha(-)=1$. Then the identity follows from the fact that, for any $w \in W$ and $\beta \in \Phi$, we have that $w s_\beta w\inv =s_{w \beta}.$
    \item Using \eqref{eq 8}, we can compute that 
    \begin{equation*}
        \begin{array}{r l}
              \langle - z_{\leq \alpha(-)}^{-1} \varpi_{i_{\alpha(-)}}, \alpha^\vee \rangle = &  \langle -  \varpi_{i_{\alpha(-)}}, (z_{\leq \alpha(-)}\alpha)^\vee \rangle \\
             = & \langle -  \varpi_{i_{\alpha(-)}}, (s_{i_{\alpha(-)}}i_{\alpha(-)})^\vee \rangle \\
             = &  \langle -  \varpi_{i_{\alpha(-)}}, (-i_{\alpha(-)})^\vee \rangle \\
             = & 1.
        \end{array}
        \end{equation*}

        \item Using the second statement (and paying attention if $\alpha(-)=1$ or $\alpha(-)=l$),  we have that  $$ \displaystyle s_{i_l} \dots s_{i_{\alpha(-)+1}} s_{i_{\alpha(-)-1}} \dots s_{i_1} = s_{i_l} \dots s_{i_{\alpha(-)+1}}s_{i_{\alpha(-)}} s_{i_{\alpha(-)-1}} \dots s_{i_1} s_{\alpha}= z s_{\alpha}. $$
   Hence, $\ii'$ is an expression for $zs_\alpha$. Since $\ell(zs_\alpha)=\ell(z)-1$, we deduce that $\ii'$ is reduced.
\end{enumerate}
\end{proof}

Here, for $\beta \in \Delta$ and $v,w \in W$, we denote by $D^{\pib}_{v,w}$ the restriction of $\delbvw$ to $U(z)$. For $k \in [l]$, we define $f_k \in \CC[U(z)]$ as \begin{equation*}
    f_k= \begin{cases}
        D^{\varpi_{i_k}}_{e, z_{\leq k}^{-1}} & \text{if} \quad (z_{\leq k}) \alpha \in \Phi^+\\
         D^{\varpi_{i_k}}_{e, s_\alpha z_{\leq k}^{-1}} & \text{if} \quad (z_{\leq k}) \alpha \in \Phi^-
    \end{cases}
\end{equation*}

\begin{lemma}
    \label{f alpha(-) = }
    In the above setting, $f_{\alpha(-)}= f_{\alpha(-)^-}$. We use the convention that $f_0=1.$
\end{lemma}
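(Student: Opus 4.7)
The plan is to unfold the definition of $f_{\alpha(-)}$ using statement 2 of Lemma \ref{right translation alpha(-)}, and then recognise the resulting minor as $f_{\alpha(-)^-}$ by invoking statement 1 of Lemma \ref{lem:basic prop minors}. The key observation is that the fundamental weight $\varpi_{i_{\alpha(-)}}$ is fixed by every simple reflection $s_\beta$ with $\beta \neq i_{\alpha(-)}$.

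First, since $z_{\leq \alpha(-)}\alpha \in \Phi^-$ by the very definition of $\alpha(-)$, we have by definition
$$f_{\alpha(-)} = D^{\varpi_{i_{\alpha(-)}}}_{e, \, s_\alpha z_{\leq \alpha(-)}^{-1}}.$$
Applying Lemma \ref{right translation alpha(-)}(2) we rewrite $s_\alpha z_{\leq \alpha(-)}^{-1} = z_{\leq \alpha(-)-1}^{-1}$, and thus
$$f_{\alpha(-)} = D^{\varpi_{i_{\alpha(-)}}}_{e, \, z_{\leq \alpha(-)-1}^{-1}}.$$

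Next I would treat the two cases according to whether $\alpha(-)^- > 0$ or $\alpha(-)^- = 0$. Set $k' = \alpha(-)^-$. By the definition of $k'$, none of the indices $i_{k'+1}, \dots, i_{\alpha(-)-1}$ equals $i_{\alpha(-)}$, so the simple reflections $s_{i_{k'+1}}, \dots, s_{i_{\alpha(-)-1}}$ all fix $\varpi_{i_{\alpha(-)}}$. Consequently
$$z_{\leq \alpha(-)-1}^{-1}\,\varpi_{i_{\alpha(-)}} = s_{i_1}\cdots s_{i_{\alpha(-)-1}}\,\varpi_{i_{\alpha(-)}} = s_{i_1}\cdots s_{i_{k'}}\,\varpi_{i_{\alpha(-)}} = z_{\leq k'}^{-1}\,\varpi_{i_{k'}},$$
where in the last step we used $i_{k'} = i_{\alpha(-)}$. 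If $k' > 0$, then $k' < \alpha(-)$ gives $z_{\leq k'}\alpha \in \Phi^+$ by Lemma \ref{right translation alpha(-)}(1), so $f_{k'} = D^{\varpi_{i_{k'}}}_{e,z_{\leq k'}^{-1}}$, and Lemma \ref{lem:basic prop minors}(1) yields $f_{\alpha(-)} = f_{k'}$. If $k' = 0$, the same computation gives $z_{\leq \alpha(-)-1}^{-1}\,\varpi_{i_{\alpha(-)}} = \varpi_{i_{\alpha(-)}}$, hence $f_{\alpha(-)} = D^{\varpi_{i_{\alpha(-)}}}_{e,e}$ restricted to $U(z)$; but $\Delta^{\varpi_\alpha}_{e,e}$ is identically $1$ on $U$ since $[u]_0 = e$ for any $u \in U$, so $f_{\alpha(-)} = 1 = f_0$, consistent with the convention.

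There is really no substantial obstacle here: everything is a direct consequence of the two preparatory lemmas. The only mild subtlety is keeping track of the convention $f_0 = 1$, which matches the fact that $\Delta^{\varpi_\alpha}_{e,e}$ restricts to the constant function $1$ on $U(z) \subseteq U$.
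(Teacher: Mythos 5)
Your proof is correct and follows essentially the same route as the paper's: rewrite $s_\alpha z_{\leq \alpha(-)}^{-1}$ via Lemma \ref{right translation alpha(-)}(2), observe that the extra reflections $s_{i_{k'+1}},\dots,s_{i_{\alpha(-)-1}}$ fix $\varpi_{i_{\alpha(-)}}$ so that $z_{\leq \alpha(-)-1}^{-1}\varpi_{i_{\alpha(-)}}=z_{\leq k'}^{-1}\varpi_{i_{k'}}$, and conclude with Lemma \ref{lem:basic prop minors}(1). You are a bit more explicit than the paper in spelling out the $k'=0$ case, which is welcome since that is exactly where the convention $f_0=1$ is needed.
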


\begin{proof}
    Let $k= \alpha(-)$. By statement 2 of Lemma \ref{right translation alpha(-)} we have that 

     \begin{equation*}
        \begin{array}{r l}
             s_\alpha z_{\leq k}^{-1} \varpi_{i_k}= & z_{\leq k-1}^{-1} \varpi_{i_k} \\
             = & s_{i_1} \dots s_{i_{(k-1)}} \varpi_{i_k}\\
             = & s_{i_i} \dots s_{i_{k^-}} \varpi_{i_k}\\
             = & z_{\leq k^-} \varpi_{i_k}
        \end{array}
    \end{equation*}
    where $z_{\leq k^-}=e$ if $k^-=0.$ The lemma follows form Lemma \ref{lem:basic prop minors}.
\end{proof}

\begin{lemma}
    \label{f alg indip}
If $- z \alpha \in  \Delta$, then $\{f_k \, : \, k \neq \alpha(-) \}$ are algebraically independent.
\end{lemma}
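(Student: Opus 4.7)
The plan is to reduce the statement to the analogous algebraic independence on the smaller variety $U(zs_\alpha)$, which has dimension $l-1$ matching the number of functions we wish to prove independent.

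First, since $\alpha\in\Delta\cap\Phi(z)$, the multiplication map gives a $T$-equivariant isomorphism $U_\alpha\times\bigl(U(z)\cap U_{\setminus\alpha}\bigr)\xrightarrow{\sim}U(z)$, and conjugation by $\overline{s_\alpha}$ provides an isomorphism $\phi:U(zs_\alpha)\xrightarrow{\sim}U(z)\cap U_{\setminus\alpha}$. Because the restriction $\CC[U(z)]\twoheadrightarrow\CC[U(z)\cap U_{\setminus\alpha}]$ is a surjective $\CC$-algebra homomorphism, showing that the $l-1$ pullbacks $\{\phi^*(f_k|_{U(z)\cap U_{\setminus\alpha}}):k\neq\alpha(-)\}$ are algebraically independent on $U(zs_\alpha)$ will imply the desired independence on $U(z)$.

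Next, I compute $\phi^*f_k$ explicitly. Using the reduced expressions for $s_\alpha z_{\leq k}^{-1}$ supplied by Lemma \ref{right translation alpha(-)} (distinguishing $k\geq\alpha(-)$, where $\ell(s_\alpha z_{\leq k}^{-1})=k-1$, and $k<\alpha(-)$, where $s_\alpha z_{\leq k}^{-1}$ is reduced of length $k+1$), together with the multiplicativity of the Matsumoto--Tits lift on reduced products, and the identity $\overline{s_\alpha}^{\,2}=\alpha^\vee(-1)$, a direct manipulation — pushing the central element $\alpha^\vee(-1)$ through the matrix coefficient defining $f_k$ and invoking Lemma \ref{lem:basic prop minors}.3 — yields
\[
\phi^*f_k(u')=\Delta^{\varpi_{i_k}}_{s_\alpha,\,\sigma_k}\bigl(\alpha^\vee(-1)\,u'\,\alpha^\vee(-1)\bigr),\qquad
\sigma_k=\begin{cases}s_\alpha z_{\leq k}^{-1} & k<\alpha(-),\\ z_{\leq k}^{-1} & k\geq\alpha(-).\end{cases}
\]
The two factors of $\varpi_{i_k}(\alpha^\vee(-1))=(-1)^{\delta_{i_k,\alpha}}$ produced by this manipulation cancel. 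Conjugation by $\alpha^\vee(-1)\in T$ is an algebraic automorphism of $U(zs_\alpha)$ and therefore does not affect algebraic independence, so the task reduces to showing that the family $\bigl\{\Delta^{\varpi_{i_k}}_{s_\alpha,\sigma_k}|_{U(zs_\alpha)}:k\neq\alpha(-)\bigr\}$ consists of $l-1$ algebraically independent functions on $U(zs_\alpha)$.

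The main obstacle and the remaining work is to transport this statement to an application of Lemma \ref{delta s w alg indip U(w) alpha} on $zs_\alpha$, using the reduced expression $\ii'$ provided by Lemma \ref{right translation alpha(-)}.4 and the distinguished simple root $\beta=-z\alpha\in\Delta$. This is precisely where the hypothesis $-z\alpha\in\Delta$ enters: when $\beta\neq\alpha$, a short verification shows $(zs_\alpha)\beta=-\alpha+\langle\beta,\alpha^\vee\rangle\beta\in\Phi^-$, so $\beta\in\Delta\cap\Phi(zs_\alpha)$ and Lemma \ref{delta s w alg indip U(w) alpha} applies to $(zs_\alpha,\ii',\beta)$. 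I then rewrite each minor $\Delta^{\varpi_{i_k}}_{s_\alpha,\sigma_k}$ as a minor of the form $\Delta^{\varpi_{i'_j}}_{s_\beta,(z')_{\leq j}^{-1}}$ by exploiting Lemma \ref{lem:basic prop minors}.1 (noting that $s_\alpha\varpi_{i_k}=\varpi_{i_k}$ for $i_k\neq\alpha$ and using the explicit relation between $\sigma_k$ and $(z')_{\leq j}^{-1}$ in the reduced word $\ii'$); the resulting bijection between the two indexing sets is the needed one. The degenerate case $\beta=\alpha$ (equivalently $z\alpha=-\alpha$, which forces $\alpha\notin\Phi(zs_\alpha)$) is handled separately by a direct counting argument, because in that case the structure of $U(zs_\alpha)$ is sufficiently constrained (for instance by induction on $\ell(z)$, reducing to a product of smaller factors containing an $s_\alpha$ Levi piece) that algebraic independence holds for dimensional reasons. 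Once both cases are established, Lemma \ref{delta s w alg indip U(w) alpha} concludes the proof.
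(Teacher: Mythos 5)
Your proposal takes a genuinely different route from the paper's, and unfortunately it contains a concrete error that makes the argument fail.

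The paper's proof is far simpler and does not use conjugation by $\sba$ at all: from $-z\alpha=\beta\in\Delta$ one gets $zs_\alpha=s_\beta z\leq_L z$, hence $\Phi(zs_\alpha)\subseteq\Phi(z)$ and the closed embedding $U(zs_\alpha)\subseteq U(z)$. One then simply restricts each $f_k$ along this embedding, and observes (using Lemma \ref{right translation alpha(-)}.2 for $k>\alpha(-)$) that the restriction of $f_k$ is exactly the cluster variable $x'_k$ (if $k<\alpha(-)$) or $x'_{k-1}$ (if $k>\alpha(-)$) of the seed $t_{\ii'}$ on $U(zs_\alpha)$. Since cluster variables form a transcendence basis, the restrictions are algebraically independent, hence so are the $f_k$. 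No appeal to Lemma \ref{delta s w alg indip U(w) alpha} is made.

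Your route through the conjugation isomorphism $\phi:U(zs_\alpha)\to U(z)_{\setminus\alpha}$ and Lemma \ref{delta s w alg indip U(w) alpha} has a fatal flaw. You claim that when $\beta\neq\alpha$, ``a short verification shows $(zs_\alpha)\beta=-\alpha+\langle\beta,\alpha^\vee\rangle\beta\in\Phi^-$'', from which you conclude $\beta\in\Delta\cap\Phi(zs_\alpha)$. But $(zs_\alpha)\beta=z\beta+\langle\beta,\alpha^\vee\rangle\beta$, and the hypothesis $z\alpha=-\beta$ only tells you $z^{-1}\beta=-\alpha$, not $z\beta=-\alpha$; your formula implicitly assumes $z$ is an involution. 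Concretely, take $G=\SL_4$, $z=(1\,3\,4)$, $\alpha=\alpha_1$. Then $z\alpha_1=\epsilon_3-\epsilon_2=-\alpha_2$, so $\beta=\alpha_2\in\Delta$ and the lemma's hypothesis holds; but $zs_\alpha=(1\,2\,3\,4)$ and $(zs_\alpha)\alpha_2=\epsilon_3-\epsilon_4=\alpha_3\in\Phi^+$, so $\beta\notin\Phi(zs_\alpha)$. Lemma \ref{delta s w alg indip U(w) alpha} therefore cannot be invoked for $(zs_\alpha,\ii',\beta)$, and the reduction collapses. In addition, your ``degenerate case $\beta=\alpha$'' is only sketched by appeal to ``a direct counting argument'' and an unstated induction on $\ell(z)$, which is a genuine gap even setting aside the issue above. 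Finally, the rewriting of the twisted minors $\Delta^{\varpi_{i_k}}_{s_\alpha,\sigma_k}$ as minors of the form $\Delta^{\varpi_{i'_j}}_{s_\beta,(z')_{\leq j}^{-1}}$ is unjustified precisely for the indices with $i_k\in\{\alpha,\beta\}$, where the trick $s_\alpha\varpi_{i_k}=\varpi_{i_k}$ does not apply. I would recommend abandoning the conjugation step and instead restricting directly to $U(zs_\alpha)\subseteq U(z)$, which makes the $f_k$ into actual cluster variables and trivializes the algebraic independence.
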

\begin{proof}
    If $z \alpha = - \beta $, with $\beta \in \Delta$, it means that $z s_\alpha = s_\beta z \leq_L z$, where $\leq_L$ denotes the \textit{left} weak order on $W$. 
    In particular, $\Phi(z s_\alpha) \subseteq \Phi(z)$, which implies that $U(zs_\alpha) \subseteq U(z).$ For $k \in \{1, \dots, l-1\}$ let $x'_k \in \CC[U(z s_\alpha)]$ be the $k$-th cluster variable of the seed $t_{\ii'}$, where $\ii'$ is the reduced expression in Lemma \ref{right translation alpha(-)}. Clearly, for $k < \alpha(-)$, $f_k$ restricts to $x'_k$, while using the second statement of Lemma \ref{right translation alpha(-)} one deduces (analogously to the proof of the fourth statement of Lemma \ref{right translation alpha(-)}) that, for $k > \alpha(-)$, $f_k$ restricts to $x'_{k-1}$. Since the cluster variables $x_k'$ are algebraically independent in $\CC[U(zs_\alpha)]$, the lemma follows.
\end{proof}

\section{Application to G}
\label{application to G section}
Let $G$ be a semi-simple, simply connected, complex algebraic group. We show here a simple application, of the minimal monomial lifting, to $G$. This example is designed to make the equality between $\CC[G]$ and the upper cluster algebra constructed via the minimal monomial lifting to fail (see Remark \ref{rem:equality fail G}). It may be useful, for the reader, to have a look at this example to understand the strategy of Section \ref{equality Levi} and \ref{equality tensor product}.
 We use the notation of Section \ref{Preliminaries alg groups section}. 

\bigskip
 
Let $(-)^T : G \longto G$ be the \textit{transpose}, which is the involutive anti-automorphism  considered in \cite[Section 2.1]{fomin1999double}. In particular, it is the only anti-automorphism of algebraic groups defined by
$$ x_\alpha(t)^T=x_{-\alpha}(t) \quad  h^T=h \quad \text{for any} \, \, \alpha \in \Delta, \, \, t \in \CC, \, \, h \in T.$$
This restricts to an anti-isomorphism between $U^-$ and $U$. If $\ii \in R(w_0)$, let $t_\ii^-$ be the seed of $\CC[U^-]$ obtained by "transposing" the seed $t_\ii$ of $\CC[U]$. Clearly, $\clu(t_\ii^-) = \uclu(t_\ii^-)=\CC[U^-]$. Moreover, by \cite[Proposition 2.7]{fomin1999double}, the cluster variables of $t_\ii^-$ are all obtained by restricting functions of the form $\Delta^{\pia}_{w,e}$, with $s_\alpha \in \supp(w)$,  to $U^-$. 

\bigskip

Fix $\ii',\ii \in R(w_0)$ and call 
$$t_\ii=(I_{uf}, I_{sf}, I_{hf}, B, x) \quad t_{\ii'}^-=(I^-_{uf}, I^-_{sf}, I^-_{hf}, B^-, x^-) \quad t_{\ii'}^-|t_\ii= (J_{uf}, J_{sf},J_{hf}, C, z),$$
where $t_{\ii'}^-|t_\ii$ is the disjoint union of $t_{\ii'}^-$ and $t_\ii$, as defined in Section \ref{sec:disj union}. By Lemma \ref{dijoint union tensor product}, we have that $\uclu(t_{\ii'}^-|t_{\ii})= \CC[U^- \times U]$. This is a factorial $\CC$-algebra of finite type. 

Let $D= \Delta$, $Y=U^- \times U$ and 
$$\begin{array}{r c l}
   \phi: T \times U^- \times U & \longto & G\\
   (h, u^- , u ) & \longmapsto & u^-hu.
\end{array}
$$
Moreover, for $\alpha \in D$, we set $X_\alpha= \delaee$.

\begin{lemma}
    \label{lem:G suitable lift}
    The triple $(G,\phi, X)$ is suitable for $D$-lifting.
\end{lemma}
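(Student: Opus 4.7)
The plan is to verify the three conditions of Definition~\ref{def: suitable lifting} one by one, routed through Lemma~\ref{lem:very good for lifting} at the end so as to bypass a separate check that the pullback characters form a basis of $X(T)$.

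First, $G$ is a smooth irreducible affine algebraic variety over $\CC$, hence a noetherian, normal, integral $\CC$-scheme, so condition~(1) is immediate. For condition~(2), I would cite Lemma~\ref{lem:minors irred}: each $X_\alpha=\delaee$ is irreducible in the factorial ring $\CC[G]$, and the ideals $(X_\alpha)$ are pairwise distinct. Combined with Lemma~\ref{lem:basic prop minors}(4), this gives that each divisor $V(X_\alpha)=\overline{B^- s_\alpha B}$ is irreducible and that the associated valuations satisfy $\val_{\alpha_1}(X_{\alpha_2})=\delta_{\alpha_1,\alpha_2}$.

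For the geometric condition~(3) I would identify $\phi$ with the classical parametrization of the big Bruhat cell $G_0 = B^- B$. The product map $U^- \times T \times U \longto G_0$ is an isomorphism of varieties, so after reshuffling the factors $\phi$ is an isomorphism between $T \times U^- \times U$ and $G_0$. It remains to observe that $G_0 = G \setminus \bigcup_\alpha V(X_\alpha)$: this follows from $\delaee(g)=\varpi_\alpha([g]_0)$ for $g \in G_0$ (all fundamental weights being non-vanishing on $T$) together with the decomposition $G \setminus G_0 = \bigcup_\alpha \overline{B^- s_\alpha B}$. Thus $\phi$ is an open embedding whose image equals all of $G \setminus \bigcup_\alpha V(X_\alpha)$, and in particular is big.

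Finally I would apply Lemma~\ref{lem:very good for lifting}: the remaining hypotheses are trivial, since $Y = U^- \times U$ is isomorphic to an affine space (so it is irreducible and $\Orb_Y(Y)^* = \CC^*$), and at the point $y=(e,e)$ one computes $\phi^*(X_\alpha)(e,(e,e)) = \delaee(e) = \varpi_\alpha(e) = 1$ for every $\alpha \in \Delta$. The lemma then delivers the suitable-for-$D$-lifting structure, and simultaneously identifies the collection $(\phi^*(X_\alpha))_{\alpha \in \Delta}$ with a $\ZZ$-basis of $X(T)$---which is of course the fundamental-weight basis, available because $G$ is simply connected. The argument is pure bookkeeping over the classical structure of $G$; there is no genuine obstacle.
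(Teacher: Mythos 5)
Your proof is correct and follows essentially the same route as the paper: condition (1) from standard properties of $G$, condition (2) from Lemma \ref{lem:minors irred}, and condition (3) from the identification of the image of $\phi$ with the big cell $G_0$. The only divergence is at the very end: the paper verifies condition (3) directly by computing $\phi^*(X_\alpha)=\varpi_\alpha\otimes 1$ via Lemma \ref{lem:basic prop minors}(3) (the fundamental weights forming a basis of $X(T)$ since $G$ is simply connected), whereas you route through Lemma \ref{lem:very good for lifting} using $\Orb_{U^-\times U}(U^-\times U)^*=\CC^*$ and the normalization $\delaee(e)=1$; both are valid, and your detour is harmless though slightly less direct than the one-line computation available here.
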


\begin{proof}
    Since $G$ is semi-simple and simply connected, $\CC[G]$ is a factorial domain, hence normal. Moreover, $G$ is of finite type. Condition 1 of Definition \ref{def: suitable lifting} then clearly holds. Condition 2 holds because of Lemma \ref{lem:minors irred}. Moreover, $\phi$ is an open embedding with image $G_0=B^-B$ and, for any $\alpha \in D$, $\phi^*(X_\alpha)= \varpi_\alpha \otimes 1$ by Lemma \ref{lem:basic prop minors}. Hence condition 3 of Definition \ref{def: suitable lifting} holds.
\end{proof}
 \begin{prop}
     \label{minimal mon lifting G}
     For any $\ii',\ii \in R(w_0)$ we have $$ \uclu\bigl(\lif(t_{\ii'}|t_\ii)\bigr)= \CC[G_0] \quad \text{and} \quad \uclu\bigl(\lif(t_{\ii'}|t_\ii)^D\bigr) \subseteq \CC[G]. $$
 \end{prop}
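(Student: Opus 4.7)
The plan is to apply Theorem \ref{minimal monomial lifting theo} directly to the triple $(G, \phi, X)$, which is suitable for $D$-lifting by Lemma \ref{lem:G suitable lift}, and to the seed $t := t_{\ii'}^-|t_\ii$ of $\CC(U^- \times U) = \CC(Y)$. To do this I need to verify the three hypotheses of that theorem: that $t$ is of maximal rank, that $\uclu(t) = \Orb_Y(Y) = \CC[U^- \times U]$, and the coprimality conditions on cluster variables of $t$ and their mutations.

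First, $\uclu(t) = \CC[U^- \times U]$ follows immediately from Lemma \ref{dijoint union tensor product} together with Theorem \ref{cluster U(w)}, applied to $U = U(w_0)$ and (via the transpose anti-automorphism) to $U^-$. Maximal rank of $t$ follows from the block-diagonal structure of its extended exchange matrix $C$ in Definition \ref{disjoint union seeds defi}: since both $t_\ii$ and $t_{\ii'}^-$ are of maximal rank by Lemma \ref{maximal rank U(w)} (the transpose anti-isomorphism preserves this property), the matrix $C$ is of maximal rank as well.

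For the coprimality hypotheses, observe that $Y = U^- \times U$ is an affine space, so $\Orb_Y(Y) = \CC[U^-] \otimes \CC[U]$ is a polynomial ring, hence factorial. Therefore Lemma \ref{factorial upper, coprimality} applies: for any two distinct mutable cluster variables $z_i, z_j$ of $t$ they are coprime on $Y$, and for any mutable $k$ the variables $z_k$ and $\mu_k(z_k)$ are coprime on $Y$.

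With all hypotheses verified, Theorem \ref{minimal monomial lifting theo} yields both conclusions: $\uclu(\lif t) = \Orb_G(T \times Y) = \Orb_G(G_0) = \CC[G_0]$ (the second equality using that $\phi$ identifies $T \times Y$ with the open cell $G_0 = B^- B$), and $\uclu(\lif t^D) \subseteq \Orb_G(G) = \CC[G]$. There is no substantial obstacle here; the proposition is essentially a packaging of Theorem \ref{minimal monomial lifting theo} with the preparatory Lemma \ref{lem:G suitable lift}, and the only mild point to check is that the product structure of $Y$ is compatible with the coprimality criterion (which is trivial since $\CC[U^- \times U]$ is a polynomial ring).
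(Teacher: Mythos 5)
Your proposal is correct and takes essentially the same approach as the paper: the paper's proof is the one-line "Because of Lemma \ref{lem:G suitable lift} and Lemma \ref{factorial upper, coprimality} we can apply Theorem \ref{minimal monomial lifting theo}," with the facts you spell out (that $\uclu(t_{\ii'}^-|t_\ii)=\CC[U^- \times U]$ is factorial of finite type, via Lemma \ref{dijoint union tensor product} and Theorem \ref{cluster U(w)}, and that $t_{\ii'}^-|t_\ii$ is of maximal rank) already recorded in the surrounding discussion. You have merely made the implicit verifications explicit.
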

\begin{proof}
   Because of Lemma \ref{lem:G suitable lift} and Lemma \ref{factorial upper, coprimality} we can apply Theorem \ref{minimal monomial lifting theo}.
\end{proof}

It's easy to compute the minimal lifting matrix $\nu$ and the corresponding cluster $\lif z$ in this example. Let $\chi : J \longto \Delta$ be the map defined as follows: for $k \in I$ (resp. $k \in I^-$), $\chi(k)= \alpha$ if and only if the minor defining $x_k$ (See \eqref{x for U(w)}) is of the form $\delaew$ (resp. $\Delta^{\pia}_{w,e})$.

\begin{lemma}
    \label{computation z(nu) for G}
    For any $j \in J$ and $\alpha \in D$, we have $\nu_{\alpha,j}= \delta_{\alpha, \chi(j)}$. Moreover, the variable $\lif z_j$ is the generalized minor defining $z_j$.
\end{lemma}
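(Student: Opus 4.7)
The plan is to compute the valuations $\val_\alpha(1 \otimes z_j)$ directly for every $j \in J$ and $\alpha \in D = \Delta$; the minimal lifting matrix then follows from Definition~\ref{minimal monomial lifting defi}, and the identity $\lif z_j = M_j$ drops out of the lifting formula. Let $M_j \in \CC[G]$ denote the generalised minor defining $z_j$: $M_j = \Delta^{\varpi_{i_j}}_{e,w_{\leq j}\inv}$ when $j \in I$, and $M_j$ has the form $\Delta^{\varpi_{\chi(j)}}_{v,e}$ for a nontrivial $v \in W$ with $s_{\chi(j)} \in \supp(v)$ when $j \in I^-$.

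The first step is the identity
\[
M_j \;=\; X_{\chi(j)}\cdot(1 \otimes z_j) \qquad \text{in } \CC(G).
\]
I check this by pulling back along $\phi$. For $j \in I$, left $U^-$-invariance of $M_j$ and the toral character formula (parts 2 and 3 of Lemma~\ref{lem:basic prop minors}) give, for $(h,u^-,u) \in T \times U^- \times U$,
\[
M_j(u^- h u) \;=\; M_j(hu) \;=\; \varpi_{i_j}(h)\, M_j(u) \;=\; \varpi_{i_j}(h)\, x_j(u).
\]
The case $j \in I^-$ is symmetric, replacing left $U^-$-invariance by right $U$-invariance. Since $\phi^*(X_\alpha) = \varpi_\alpha \otimes 1 \otimes 1$ and $\chi(j)$ is by definition the fundamental weight appearing in $M_j$, the computation reads $\phi^*(M_j) = \phi^*(X_{\chi(j)})\cdot \phi^*(1 \otimes z_j)$, which is the desired identity of rational functions on $G$.

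Next I show $\val_\alpha(M_j) = 0$ for every $\alpha \in \Delta$. Both $M_j$ and $X_\alpha$ are irreducible in the factorial ring $\CC[G]$ (Lemma~\ref{lem:minors irred}), so the only way $\val_\alpha(M_j)$ could be nonzero is if $M_j$ is a scalar multiple of $X_\alpha$. I rule this out by evaluation at the identity: $X_\alpha(e) = 1$, whereas $M_j(e) = \Delta^{\varpi_{\chi(j)}}_{e,e}(n)$ for a representative $n \in N_G(T)$ of a nontrivial Weyl element $w$ satisfying $s_{\chi(j)} \in \supp(w)$. By the subword property, $s_{\chi(j)} \leq w$ in the Bruhat order, so $n$ lies in $B^- w B \subseteq \overline{B^- s_{\chi(j)} B} = V(\Delta^{\varpi_{\chi(j)}}_{e,e})$ by Lemma~\ref{lem:basic prop minors}(4) and the classical closure formula for opposite Bruhat cells; hence $M_j(e) = 0$.

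Combining the two steps, $\val_\alpha(1 \otimes z_j) = \val_\alpha(M_j) - \val_\alpha(X_{\chi(j)}) = -\delta_{\alpha,\chi(j)}$, so $\nu_{\alpha,j} = \delta_{\alpha,\chi(j)}$. Substituting into the defining formula $\lif z_j = z_j \prod_\alpha X_\alpha^{\nu_{\alpha,j}}$ yields $\lif z_j = z_j\, X_{\chi(j)}$, which by the first-step identity is exactly $M_j$. The only mildly non-routine ingredient is the closure formula for opposite Bruhat cells combined with the subword property; both are classical and self-contained, and no further input is needed.
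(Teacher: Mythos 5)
Your proof is correct and follows the paper's argument almost exactly: the same factorisation $M_j = X_{\chi(j)}\cdot(1\otimes z_j)$ via $\phi^*$, followed by irreducibility of minors in the factorial ring $\CC[G]$ to reduce $\val_\alpha(M_j)>0$ to $M_j$ being a constant multiple of $\delaee$. The only (harmless) divergence is in how that last possibility is excluded — you evaluate at $e$ using the Bruhat closure order and the subword property, whereas the paper compares $T\times T$-weights via Lemma~\ref{lem:basic prop minors}(3); both work.
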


\begin{proof}
    Suppose that $j \in I$. If $j \in I^-$ the proof is similar. Let $\delbew$ be the minor defining $x_j$. In particular, $\chi(j)=\beta$. Using Lemma \ref{lem:basic prop minors} and the definition of $z_j$ we have that, for $x=(h,u^-,u) \in T \times Y$
    $$\delbew \bigl(\phi(x)\bigr)=\pib(h)\delbew(u)= \delbee\bigl(\phi(x)\bigr) \bigl(1 \otimes z_j(\phi(x)\bigr).$$
    In particular, identifying $1 \otimes z_j$ as a rational function on $G$ via $\phi$, we have that 
    \begin{equation}
    \label{eq:app G1}
        \delbew=X_\beta (1 \otimes z_j).
    \end{equation}
    We claim that for any $\alpha \in D$, $\val_\alpha(\delbew)=0$. Thanks to \eqref{eq:app G1}, this completes the proof. Suppose by contradiction that there exists $\alpha \in D$ such that $\val_\alpha(\delbew)>0.$
   Since $\delbew$ is irreducible in $\CC[G]$ because of Lemma \ref{lem:minors irred}, and so is $\delaee$, this means that $\delbew= \psi \delaee$, for some $\psi \in \CC[G]^*$. Since $G$ is semi-simple, $\psi$ is constant.  By statement three of Lemma \ref{lem:basic prop minors}, we deduce that 
   $$(\pib, w\pib)=(\pia, \pia).$$
   So $\alpha=\beta$ and $w \pib=\pib.$ The latter equality gives a contradiction since $s_\beta \in \supp(w).$
\end{proof}

From the previous lemma, we deduce that for $j \in I$, $\lif z_j$ is a generalised minor of the form $\delaew$. Similarly, if $j \in I^-$, $\lif z_j$ is a generalised minor of the form $\Delta^{\pia}_{w,e}$. In both cases, $s_\alpha \in \supp(w)$. 

\begin{prop}
\label{not equality G}
    Suppose that $\ii$ and $\ii'$ start with the same simple root, that is $i_1=\alpha=i'_1$. Then, the upper cluster algebra $\uclu( \lif (t_{\ii'}^-|t_\ii)^D)$ is strictly contained in $\CC[G]$.
\end{prop}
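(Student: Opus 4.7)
The strategy is to exploit Proposition \ref{conditions equality minimal lifting}, specifically condition $(3)$: if the equality $\uclu\bigl(\lif(t_{\ii'}^-|t_\ii)^D\bigr) = \CC[G]$ held, then every regular function on $G$ would have non-negative cluster valuation at each $\beta \in D = \Delta$. I will produce a regular function on $G$ whose cluster valuation at $\alpha$ is $-1$.

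The candidate is the generalised minor $\delta^{\varpi_\alpha}_{s_\alpha, s_\alpha} \in \CC[G]$. The key tool is the Fomin--Zelevinsky identity (Theorem \ref{relations minors FZ}) applied with $v = w = e$, which yields
\[
\delta^{\varpi_\alpha}_{e,e} \cdot \delta^{\varpi_\alpha}_{s_\alpha, s_\alpha} \;=\; \delta^{\varpi_\alpha}_{s_\alpha, e} \cdot \delta^{\varpi_\alpha}_{e, s_\alpha} \;+\; \prod_{\beta \in \Delta \setminus \{\alpha\}} \bigl(\delta^{\varpi_\beta}_{e,e}\bigr)^{-a_{\beta,\alpha}}.
\]
The point of the hypothesis $i_1 = i'_1 = \alpha$ is that both $\delta^{\varpi_\alpha}_{s_\alpha, e}$ and $\delta^{\varpi_\alpha}_{e, s_\alpha}$ appear as the minor defining a cluster variable of $t_{\ii'}^-$ and $t_\ii$ respectively, so by Lemma \ref{computation z(nu) for G} they are cluster variables of the lifted seed $\lif(t_{\ii'}^-|t_\ii)$. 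The factors $\delta^{\varpi_\beta}_{e,e} = X_\beta$ for $\beta \in D$ are the highly-frozen variables of the lifting. Thus every factor on the right-hand side is a cluster variable of $\lif(t_{\ii'}^-|t_\ii)$.

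Next I would compute $\cval_\alpha$ of each side. By Lemma \ref{cluster valuation} one may work in the polynomial ring $\CC[\lif(t_{\ii'}^-|t_\ii)]$ on the cluster, which is genuinely a polynomial ring since the cluster variables are algebraically independent. The left-hand factor $X_\alpha = \delta^{\varpi_\alpha}_{e,e}$ has $\cval_\alpha(X_\alpha) = 1$, while every other cluster variable has $\cval_\alpha = 0$; in particular the right-hand side is a sum of two monomials in variables that are pairwise distinct from $X_\alpha$. Viewed as a polynomial in the cluster, this sum therefore does not involve $X_\alpha$ at all, and because its two monomials lie in disjoint sets of (algebraically independent) variables it is non-zero. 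Hence $\cval_\alpha$ of the right-hand side is $0$, giving
\[
\cval_\alpha\bigl(\delta^{\varpi_\alpha}_{s_\alpha, s_\alpha}\bigr) \;=\; 0 - \cval_\alpha(X_\alpha) \;=\; -1.
\]

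Since $\delta^{\varpi_\alpha}_{s_\alpha,s_\alpha} \in \CC[G]$ but $-1 \notin \NN \cup \{\infty\}$, this violates condition $(3)$ of Proposition \ref{conditions equality minimal lifting}, and the inclusion $\uclu\bigl(\lif(t_{\ii'}^-|t_\ii)^D\bigr) \subseteq \CC[G]$ is strict. The only mildly delicate step is the verification that no cancellation modulo $X_\alpha$ occurs on the right-hand side of the Fomin--Zelevinsky relation; this is where the algebraic independence of cluster variables (together with the identification of both $\delta^{\varpi_\alpha}_{s_\alpha, e}$ and $\delta^{\varpi_\alpha}_{e, s_\alpha}$ as cluster variables distinct from $X_\alpha$, enabled by the hypothesis $i_1 = i'_1 = \alpha$) is essential.
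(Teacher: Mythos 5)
Your proof is correct and follows essentially the same route as the paper: both use the Fomin–Zelevinsky identity with $v=w=e$ to express $\Delta^{\varpi_\alpha}_{s_\alpha,s_\alpha}$ with $X_\alpha=\delaee$ in the denominator, observe via Lemma \ref{computation z(nu) for G} and the hypothesis $i_1=i'_1=\alpha$ that the numerator is a sum of two cluster monomials prime to $X_\alpha$, and conclude $\cval_\alpha(\Delta^{\varpi_\alpha}_{s_\alpha,s_\alpha})=-1$, contradicting condition (3) of Proposition \ref{conditions equality minimal lifting}. The paper's write-up is somewhat terser (it leaves the no-cancellation check implicit), but the argument and the key ingredients are identical.
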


\begin{proof}
    
 Form the definition of the seeds $t_\ii$, $t_{\ii'}^-$ and the previous lemma, we have that $\Delta^{\pia}_{e,s_\alpha}$ and $\Delta^{\pia}_{s_\alpha,e}$ are cluster variables of the initial seed $\lif (t_{\ii'}^-|t_\ii)$. But then, by Theorem \ref{relations minors FZ}, we have that

\begin{equation}
    \label{eq:31}\Delta^{\varpi_\alpha}_{s_\alpha,s_\alpha}= \frac{ \Delta^{\pia}_{s_\alpha, e}\Delta^{\pia}_{e,s_\alpha}+ \prod_{\beta \in \Delta \setminus\{\alpha\}}(\delbee)^{-a_{\beta,\alpha}} }{\delaee}
\end{equation}

which proves that $ \Delta^{\varpi_\alpha}_{s_\alpha,s_\alpha}$ is an element of $\CC[G] \setminus  \uclu\bigl(\lif (t_{\ii'}^-|t_\ii)^D\bigr)$. In fact, on the RHS of \eqref{eq:31} we have an element of $\Li(\lif (t_{\ii'}^-|t_\ii))$ which is not in $\Li(\lif (t_{\ii'}^-|t_\ii)^D)$.  Alternatively, on the LHS of \eqref{eq:31} we have an irreducible element of $\CC[G]$, in particular $\val_\alpha(\Delta^{\varpi_\alpha}_{s_\alpha,s_\alpha}) \geq 0$ (it's easy to see that it's actually zero). But by looking at the RHS, we see that $\cval_\alpha(\Delta^{\varpi_\alpha}_{s_\alpha,s_\alpha})=-1$. The statement follows from Proposition \ref{conditions equality minimal lifting}.

\end{proof}

\begin{remark}
    \label{rem:equality fail G}

Note that, for any $\ii,\ii' \in R(w_0)$, the seed $\lif (t_{\ii'}^-|t_\ii)$ is not one of the seeds constructed in \cite{berenstein2005cluster3} for the open double Bruhat cell. Moreover, there is no $T$-action on $G$ that makes the map $\phi$ equivariant. Hence, the equality between $\uclu(\lif (t_{\ii'}^-|t_\ii)^D)$ and $\CC[G]$ is never reached, for any pair $\ii,\ii' \in R(w_0)$, because of Corollary \ref{equality minimal lift torus action}. Finally, a similar construction can be done for the spherical homogeneous space $G/\hG$ if and only if the pair $\hG \subseteq G$ is of minimal rank. This will be developed in the future.

\end{remark}

\section{Monomial lifting for branching problems}
\label{monomial lift branchig section}
    In this section we explain how to apply the minimal monomial lifting technique to study some branching problems.

\subsection{The branching scheme}
\label{branching scheme subsection}

Let $G$ be a semisimple, simply connected complex algebraic group and let $\hG$ be a connected, reductive subgroup of $G$. Fix maximal tori and Borel subgroups $T$, $B$ of $G$ and $\hT, \hB$ of $\hG$ such that $T \cap \hG= \hT$ and $B \cap \hG=\hB$. Let $\rho: X(T) \longto X(\hT)$ be the restriction map.
Consider the $U^- \times \hU \times \hG$ action on $G \times \hG$  defined by $$(u^-, \wh u , \wh s) \cdot (g, \hg)=(u^-g \wh s \inv, \wh s \hg \wh u \inv).$$
By \eqref{Peter Weyl thm}, we clearly have that 
\begin{equation}
\label{Petre Weyl Branchin}
    \displaystyle \CC[G \times \hG]^{U^- \times \hU \times \hG} \simeq \bigoplus ( V(\lambda) \otimes V(\hlambda)^*)^\hG \simeq \bigoplus \Hom(V(\hlambda), V(\lambda))^\hG,
\end{equation} 
where the sums run over $(\lambda, \hlambda) \in X(T)^+ \times X(\hT)^+.$ Recall that, if $\lambda \in X(T) \setminus X(T)^+$ (resp. $\hlambda \in X(\hT) \setminus X(\hT)^+$), then we set $V(\lambda)=0$ (resp. $V(\hlambda)=0$). Consider the $T \times \hT$ action on $G \times \hG$ defined by $$ (h , \wh h) \cdot (g , \hg)= (hg, \hg \wh h).$$
The induced action on $\CC[G \times \hG]$ stabilises $\CC[G \times \hG]^{U^- \times \hU \times \hG}$ which is then $X(T) \times X(\hT)$-graded. It's easy to verify that the homogeneous components of this graduation correspond to the decomposition given in \eqref{Petre Weyl Branchin}. 
In particular, for any $(\lambda, \hlambda) \in X(T) \times X(\hT)$, we have
$$ \CC[G \times \hG]^{U^- \times \hU \times \hG}_{\lambda,\hlambda}= (V(\lambda) \otimes V(\hlambda)^*)^\hG.$$ 
If  $\lambda$ and $\hlambda$ are dominant, then $(V(\lambda) \otimes V(\hlambda)^*)^\hG$ is the \textit{multiplicity space} of the representation $V(\hlambda)$ in $V(\lambda)$, the latter considered as a representation of $\hG$.

\bigskip

Consider now the action of $U^- \times \hU$ on $G$ given by $(u^-, \wh u)\cdot g=(u^-g \wh u \inv)$. The action of $T \times \hT$ on $G$, given by $(h, \wh h) \cdot g= h g \wh h$, stabilised $\CC[G]^{U^- \times \hU}$ and thus induces a $X(T)\times X(\hT)$-graduation on it.

\begin{lemma}
    \label{two models for branching}. 
The product $G \times \hG \longto G$ induces a $T \times \hT$-equivariant isomorphism between $\CC[G \times \hG]^{U^- \times \hU \times \hG}$ and $\CC[G]^{U^- \times \hU}$.
\end{lemma}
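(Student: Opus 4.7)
The plan is to construct the isomorphism as the pullback of the multiplication map $m : G \times \hG \longto G$, $(g,\hg) \longmapsto g\hg$, and to verify bijectivity by a change of variables that disentangles the $\hG$-action.

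First I would check that $m$ is equivariant for both actions in the statement. Writing $(u^-,\wh u,\wh s)\cdot(g,\hg)=(u^- g\wh s\inv, \wh s\hg \wh u\inv)$ and forming the product one sees that the $\wh s$ cancels, so $m((u^-,\wh u, \wh s)\cdot(g,\hg))= u^- (g\hg) \wh u\inv = (u^-,\wh u)\cdot m(g,\hg)$; analogously $m((h,\wh h)\cdot(g,\hg))= h(g\hg)\wh h= (h,\wh h)\cdot m(g,\hg)$. Consequently $m^*\colon \CC[G]\longto \CC[G\times\hG]$ sends $\CC[G]^{U^-\times \hU}$ into $\CC[G\times\hG]^{U^-\times\hU\times\hG}$ and is $T\times\hT$-equivariant.

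For the bijectivity, the key idea is to introduce the automorphism
$$
\psi \colon G\times\hG \longto G\times\hG, \qquad (g,\hg)\longmapsto (g\hg,\hg),
$$
whose first component is precisely $m$. Transporting the $U^-\times\hU\times\hG$-action along $\psi$, one computes that in the new coordinates $(y,\hg)=\psi(g,\hg)$ the action becomes $(u^-,\wh u,\wh s)\cdot(y,\hg)=(u^- y\wh u\inv, \wh s\hg\wh u\inv)$. The element $\wh s$ now acts only on the second factor, by left translation on $\hG$, and commutes with $U^-\times \hU$. Taking $\hG$-invariants on the second factor therefore identifies the invariant ring with $\CC[G]$ (as $\CC[\hG]$ has only the constants as left-invariants), and the residual $U^-\times\hU$-action on this $\CC[G]$ is exactly the one defining $\CC[G]^{U^-\times\hU}$. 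Unwinding $\psi$ shows that this identification sends a function $f\in \CC[G]^{U^-\times\hU}$ to $(g,\hg)\mapsto f(g\hg)=m^*f$, so $m^*$ is the claimed isomorphism.

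The only subtlety, which I would state carefully rather than call an obstacle, is the book-keeping that confirms the two actions commute in the new coordinates and that the function $(y,\hg)\mapsto f(y)$ is actually invariant under the full group, not only after averaging. Once the change of variables $\psi$ is on the page the argument is a one-line computation on each side, and $T\times\hT$-equivariance is then automatic since $m^*$ is $T\times\hT$-equivariant from the first paragraph.
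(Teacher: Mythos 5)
Your proof is correct and matches the paper's: both identify the isomorphism as the pullback $m^*$ along the multiplication map, and the paper simply exhibits the inverse directly as pullback along $g\longmapsto (g,e)$, which is exactly what your change of variables $\psi$ encodes. The staged-invariants computation via $\psi$ is a slightly more elaborate write-up of the same one-line verification.
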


\begin{proof}
    The inverse is the map $G \longto G \times \hG$ sending $g $ to $(g,e)$. The proof is straightforward.
\end{proof}

From now on, we identify the two graded algebras appearing in the previous lemma. We define the \textit{branching algebra} $\Br(G,\hG)$ to be the $X(T) \times X(\hT)$-graded algebra

\begin{equation}
\label{branching algebra}
\Br(G,\hG):=\CC[G \times \hG]^{U^- \times \hU \times \hG} = \CC[G]^{U^- \times \hU}.
\end{equation}

\begin{lemma}
\label{branching algebra factorial f.type}
    The branching algebra $\Br(G, \hG)$ is factorial and of finite type. An element $f \in \Br(G, \hG)$ is irreducible if and only if it is irreducible as an element of $\CC[G].$
\end{lemma}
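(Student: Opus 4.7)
My plan is to treat the finite-type assertion separately from the factoriality and the irreducibility characterization, which will follow simultaneously from a single application of a lemma already proved in the paper.

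For factoriality and the irreducibility statement, I would simply invoke Lemma \ref{valuation on invariants or above} applied to the ring $R = \CC[G]$ with the action of the algebraic group $H = U^- \times \hU$ (acting via $(u^-, \wh u) \cdot g = u^- g \wh u^{-1}$, so that $\CC[G]^H = \Br(G,\hG)$ by \eqref{branching algebra}). The three hypotheses of that lemma are immediate: $\CC[G]$ is a factorial finite-type $\CC$-algebra since $G$ is semisimple and simply connected; $H$ is connected, being a product of connected unipotent groups; and $X(H) = \{e\}$ since unipotent groups admit no non-trivial characters. The conclusion of Lemma \ref{valuation on invariants or above} yields at once both the factoriality of $\Br(G,\hG)$ and the fact that an element of $\Br(G,\hG)$ is irreducible in $\Br(G,\hG)$ if and only if it is irreducible in $\CC[G]$.

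For the finite-type assertion, my plan is to decompose the invariants in two layers: $\Br(G,\hG) = (\CC[G]^{U^-})^{\hU}$. The isomorphism \eqref{ C (G/U)} identifies $\CC[G]^{U^-}$ with $\bigoplus_{\lambda \in X(T)^+} V(\lambda)$, and this algebra is well known to be finitely generated (it is the coordinate ring of the affine envelope of $G/U^-$, equivalently the Cox ring of $G/B$). The group $\hG$ acts on $\CC[G]^{U^-}$ by right translation via the embedding $\hG \hookrightarrow G$, an action that commutes with the left action of $U^-$. Since $\hU$ is the unipotent radical of a Borel subgroup of the reductive group $\hG$, I would invoke Hadziev's theorem on finite generation of $\hU$-invariants in finitely generated $\hG$-algebras to conclude that $(\CC[G]^{U^-})^{\hU} = \Br(G,\hG)$ is finitely generated.

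The main obstacle is the finite-generation step, which genuinely requires an external input and cannot be extracted from Lemma \ref{valuation on invariants or above} alone. One can equivalently argue via Grosshans's criterion for finite generation of $\CC[G]^H$ when $G/H$ is quasi-affine, applied to the observable subgroup $U^- \times \hU$ of $G \times \hG$; either route goes through the same classical circle of ideas. Everything else in the statement is essentially formal once Lemma \ref{valuation on invariants or above} is available.
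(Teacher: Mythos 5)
Your treatment of factoriality and the irreducibility characterization is identical to the paper's: both invoke Lemma~\ref{valuation on invariants or above} applied to $R=\CC[G]$ with $H=U^-\times\hU$, exactly as you describe.

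For finite generation, however, you take a genuinely different route. You stay inside the model $\Br(G,\hG)=\CC[G]^{U^-\times\hU}$, factor the invariants as $(\CC[G]^{U^-})^{\hU}$, and close with Hadziev's theorem (equivalently, Grosshans theory) on finite generation of $\hU$-invariants in a finitely generated $\hG$-algebra. The paper instead switches to the second model $\Br(G,\hG)=\CC[G\times\hG]^{U^-\times\hU\times\hG}$ from \eqref{branching algebra}, observes that $\CC[G\times\hG]^{U^-\times\hU}=\CC[G]^{U^-}\otimes\CC[\hG]^{\hU}$ is finitely generated, and then takes $\hG$-invariants of that ring — which requires only Hilbert's classical finiteness theorem for reductive groups rather than the Hadziev/Grosshans unipotent-invariant machinery. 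Both proofs rely on the same base ingredient (finite generation of the affine hulls $\CC[G]^{U^-}$, $\CC[\hG]^{\hU}$), so they are comparable in depth; the paper's closing step is slightly more elementary, while yours avoids passing through the $G\times\hG$ model and Lemma~\ref{two models for branching}. One small imprecision in your write-up: in the Grosshans alternative you refer to "$U^-\times\hU$ of $G\times\hG$," but the relevant observable subgroup for the paper's second model is $U^-\times\hU\times\hG$ inside $(G\times\hG)\times(G\times\hG)$ acting by left/right translation, or equivalently one should phrase it with $U^-\times\hU$ inside $G\times G$ acting on $G$; the Hadziev formulation you give first is cleaner and correct as stated.
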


\begin{proof}
    The group $U^- \times \hU $ is connected, has no non-trivial character. Moreover, $\CC[G]$ is factorial since $G$ is semisimple and simply connected. Then, the factoriality of $\Br(G,\hG)$ and the last statement follow from Lemma \ref{valuation on invariants or above}.
     Moreover, it is well known that $\CC[G \times \hG]^{U^- \times \hU} = \CC[G]^{U^-} \otimes \CC[\hG]^\hU$ is of finite type. Since $\hG$ is reductive, then $\Br(G , \hG)$ is of finite type by a well known theorem of Hilbert.
\end{proof}

Then, we define the \textit{branching scheme} $\lX(G, \hG)$ as the normal, integral and affine $T \times \hT$-variety \begin{equation}
    \label{def X for branching}
    \lX(G, \hG):= \Spec (\Br(G, \hG)).
\end{equation}

When $G$ and $\hG$ are fixed, we drop the dependence on $G, \hG$ and just write $\lX$ for $\lX(G, \hG)$ and $\Br$ for $\Br(G,\hG).$

\subsection{A first suitable for lifting structure}

In the setting of the previous section, we want to prove that $\lX=\lX(G, \hG)$ is homogeneously suitable for lifting. We fix \begin{equation}
\label{D lifting branching}
    D = \Delta
\end{equation}
and for any $\alpha \in D$, we set
\begin{equation}
\label{T lifting branching}
    X_\alpha = \delaee \in  \CC[G]^{U^- \times \hU} = \Br(G,\hG).
\end{equation}
Let $$\Omega= \lX_{\prod_{\alpha \in \Delta} X_\alpha}= \lX \setminus \bigcup_{\alpha \in D} V(X_\alpha)$$ be the principal open subscheme of $\lX$ defined by the non-vanishing the $X_\alpha$.

\bigskip

Moreover, let $\widetilde Y:= \Spec (\CC[U]^\hU)$, where $\hU$ acts on $U$ by right multiplication. The $\hT$ action on $U$ defined by $\wh h \cdot u= \wh h \inv u \wh h$ defines a structure of $\hT$-scheme on $\widetilde Y$. Similarly, the action of $T \times \hT $ on $T \times U$ defined by 
\begin{equation}
    \label{ T x hT action on T x U}
    (h , \wh h)\cdot (t,u)= (ht \widehat h \, , \, \wh h\inv u \wh h)
\end{equation}
factors through a $T \times \hT$ action on $T \times \widetilde Y$.
 By definition, the induced $T $ action on $T \times \widetilde Y$ is the left multiplication of $T$ on the $T$-component.

\begin{lemma}
    \label{T x Y tilde open embedding branching}
The product map $T \times U \longto G$ induces an open embedding $\widetilde \phi : T \times \widetilde Y \longto \lX$, with image $\Omega$. Moreover, the following hold.
\begin{enumerate}
    \item For any $\alpha \in D$, $\widetilde \phi^* (X_\alpha)= \pia \otimes 1$.
    \item The map $\widetilde \phi$ is $T \times \hT$-equivariant.
    \end{enumerate}
\end{lemma}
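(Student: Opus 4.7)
The strategy is to build $\widetilde \phi$ from the product map $m\colon T \times U \to G$, $(t,u) \mapsto tu$, and identify it as an isomorphism onto the principal open $\Omega \subseteq \lX$. First I define $\widetilde \phi$ by specifying the ring map $\widetilde \phi^*\colon \Br \to \Orb(T \times \widetilde Y) = \CC[T] \otimes \CC[U]^\hU$. For $f \in \Br = \CC[G]^{U^- \times \hU}$ the pullback $m^*(f) \in \CC[T] \otimes \CC[U]$ satisfies $(m^*f)(t, u\wh u) = f(tu \wh u) = f(tu) = (m^*f)(t, u)$ for $\wh u \in \hU$, where the middle equality uses right $\hU$-invariance of $f$. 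Hence $m^*(f) \in \CC[T] \otimes \CC[U]^\hU$, defining $\widetilde\phi^*$ and thereby $\widetilde\phi$.

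For property (1), I compute directly: any $(t,u) \in T \times U$ lies in $B \subseteq G_0$ with $[tu]_0 = t$, so $\delaee(tu) = \pia(t)$ and thus $\widetilde\phi^*(X_\alpha) = \pia \otimes 1$. Since each $\pia \otimes 1$ is a unit in $\CC[T] \otimes \CC[U]^\hU$, the map $\widetilde\phi$ factors through $\Omega$.

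The main claim, that $\widetilde\phi\colon T \times \widetilde Y \to \Omega$ is an isomorphism, reduces to showing the localized map $\Br_{\prod X_\alpha} \to \CC[T] \otimes \CC[U]^\hU$ is an isomorphism. By statement 4 of Lemma \ref{lem:basic prop minors} and $G \setminus G_0 = \bigcup_\alpha \overline{B^- s_\alpha B}$, one has $\bigcup_\alpha V(\delaee) = G \setminus G_0$, giving $\CC[G]_{\prod \delaee} = \CC[G_0]$. As the $X_\alpha$ are $U^- \times \hU$-invariant, localization commutes with taking invariants, so
\[
\Br_{\prod X_\alpha} = \bigl(\CC[G]_{\prod \delaee}\bigr)^{U^- \times \hU} = \CC[G_0]^{U^- \times \hU}.
\]
The multiplication $U^- \times T \times U \xrightarrow{\sim} G_0$ identifies the $U^-$-action with left multiplication on the first factor, giving $\CC[G_0]^{U^-} = \CC[T] \otimes \CC[U]$ via restriction to $\{e\} \times T \times U$, which is exactly $m^*$. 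Under this identification, the residual $\hU$-action becomes $\wh u \cdot (t,u) = (t, u\wh u^{-1})$ and affects only the $U$-factor, so taking further $\hU$-invariants yields $\CC[T] \otimes \CC[U]^\hU$. The resulting composite coincides with the localization of $\widetilde\phi^*$, proving the isomorphism.

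Finally, property (2) follows from the direct verification that $m$ is $T \times \hT$-equivariant:
\[
m\bigl((h, \wh h)\cdot(t,u)\bigr) = m(ht\wh h, \wh h^{-1}u\wh h) = ht\wh h\,\wh h^{-1}u\wh h = htu\wh h = (h, \wh h)\cdot m(t,u),
\]
and this equivariance descends to $T \times \widetilde Y$. The main obstacle is purely bookkeeping: carefully tracking how the actions of $U^-, \hU, T, \hT$ on $G$ translate through the Bruhat decomposition $G_0 \cong U^- \times T \times U$ and descend to $T \times \widetilde Y$, but no nontrivial input beyond Lemma \ref{lem:basic prop minors} is needed.
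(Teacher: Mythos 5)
Your proof is correct and follows essentially the same route as the paper's: identifying $\Omega$ with $\Spec\bigl(\CC[G_0]^{U^-\times\hU}\bigr)$ via the non-vanishing locus of the minors $\delaee$ and then using the decomposition $U^-\times T\times U\simeq G_0$; the paper simply describes the inverse geometrically as $x\mapsto([x]_0,[x]_+)$ where you instead show the localized ring map $\Br_{\prod X_\alpha}\to\CC[T]\otimes\CC[U]^\hU$ is an isomorphism. Your computation of $\widetilde\phi^*(X_\alpha)$ and the equivariance check are exactly the ``immediate verifications'' the paper leaves implicit.
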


\begin{proof}
   By \cite[Cororllary 2.5]{fomin1999double}, the non vanishing locus in $G$ of all the generalised minors $\delaee$ is $G_0$. In particular, $\Omega = \Spec (\CC[G_0]^{U^- \times \hU})$. Recall that the product induces an isomorphism $U^- \times T \times U \simeq G_0$. Then, the proof consists of some immediate verifications. Note that the inverse of $\widetilde \phi : T \times \widetilde Y \longto \Omega$ is induced by the map $G_0 \longto T \times U$ sending $x$ to $([x]_0, [x]_+).$
\end{proof}

\begin{prop}
    \label{prop:branch scheme suitable tilde}
    The triple $(\lX, \widetilde \phi , X)$ is homogeneously-suitable for $D$-lifting.
\end{prop}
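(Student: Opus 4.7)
The plan is to verify the conditions of Definition \ref{def: suitable lifting} together with the torus-extension condition of Definition \ref{def: optimal for lifting}. A clean route is to apply Lemma \ref{lem:very good for lifting}, which reduces the task to checking irreducibility and the Kronecker valuation condition for the divisors, openness of $\widetilde\phi$, and the existence of a base point; the openness and the computation $\widetilde\phi^*(X_\alpha)=\pia\otimes 1$ are already contained in Lemma \ref{T x Y tilde open embedding branching}.

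First I would dispose of condition (1): Lemma \ref{branching algebra factorial f.type} states that $\Br(G,\hG)$ is factorial and of finite type, and it is a subring of the domain $\CC[G]$; so $\lX$ is noetherian, normal, and integral. For condition (2), note that $X_\alpha=\delaee$ is left $U^-$-invariant and right $U$-invariant (hence right $\hU$-invariant), so $X_\alpha\in\Br=\Orb_\lX(\lX)$. By Lemma \ref{lem:minors irred} the function $\delaee$ is irreducible in $\CC[G]$ and the ideals $(\delaee)\subseteq\CC[G]$ are pairwise distinct as $\alpha$ varies; Lemma \ref{branching algebra factorial f.type} transfers irreducibility and the factorization type to $\Br$, so each $(X_\alpha)$ is a height-one prime of $\Br$, giving irreducibility of $V(X_\alpha)\subseteq\lX$. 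The Kronecker condition $\val_{\alpha_1}(X_{\alpha_2})=\delta_{\alpha_1,\alpha_2}$ then reads $\val_\alpha(X_\alpha)=1$ (immediate) and, for $\alpha_1\neq\alpha_2$, the statement that $(X_{\alpha_1})\neq(X_{\alpha_2})$; this is inherited from $\CC[G]$ via Lemma \ref{branching algebra factorial f.type}.

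Next, Lemma \ref{T x Y tilde open embedding branching} supplies the open embedding $\widetilde\phi:T\times\widetilde Y\longto\lX$ with image exactly $\Omega=\lX\setminus\bigcup_{\alpha\in D}V(X_\alpha)$, so in particular the image is big in $\Omega$. The same lemma records $\widetilde\phi^*(X_\alpha)=\pia\otimes 1$, and since $G$ is semisimple and simply connected the fundamental weights $(\pia)_{\alpha\in\Delta}$ are a $\ZZ$-basis of $X(T)$. To verify the remaining hypotheses of Lemma \ref{lem:very good for lifting}, note that $\widetilde Y=\Spec(\CC[U]^\hU)$ is irreducible (as $U$ is and $\hU$ is connected) and $\CC[U]^\hU\subseteq\CC[U]$ is contained in a polynomial ring, whose units are scalars, so $\Orb_{\widetilde Y}(\widetilde Y)^*=\CC^*$. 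For a base point take the image $y\in\widetilde Y$ of the identity $e\in U$; then $\widetilde\phi^*(X_\alpha)(e,y)=\delaee(e)=\pia([e]_0)=1$. Lemma \ref{lem:very good for lifting} now yields that $(\lX,\widetilde\phi,X)$ is suitable for $D$-lifting.

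Finally, for homogeneity: the $T$-action on $\lX$ induced by the $T\times\hT$-action $(h,\wh h)\cdot g=hg\wh h$ on $G$, restricted via $\widetilde\phi$ to $\Omega\simeq T\times\widetilde Y$, sends $(t,u)\mapsto (ht,u)$, i.e. it is left multiplication on the first factor of $T\times\widetilde Y$. Hence the $T$-action on $T\times\widetilde Y$ extends to all of $\lX$, which is Definition \ref{def: optimal for lifting}. I expect no real obstacle in this proof; the only point that deserves a sentence of justification is the valuation relation for distinct $\alpha_1,\alpha_2$, which boils down to the distinctness of the prime ideals $(\delaee)$ provided by Lemma \ref{lem:minors irred}.
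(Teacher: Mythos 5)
Your proof is correct and is essentially the argument the paper has in mind (the paper's proof is a one-liner citing Lemmas \ref{branching algebra factorial f.type}, \ref{lem:minors irred}, and \ref{T x Y tilde open embedding branching}). The only thing worth flagging is a small redundancy: you invoke Lemma \ref{lem:very good for lifting} to establish condition (3) of Definition \ref{def: suitable lifting}, but Lemma \ref{T x Y tilde open embedding branching} already hands you $\widetilde\phi^*(X_\alpha)=\pia\otimes 1$ directly, with $(\pia)_\alpha$ a basis of $X(T)$ because $G$ is semisimple and simply connected; so condition (3) is verified without any need for the base-point and unit-group hypotheses of Lemma \ref{lem:very good for lifting}. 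Everything else — factoriality and finite type from Lemma \ref{branching algebra factorial f.type}, irreducibility and pairwise distinctness of the $(\delaee)$ from Lemma \ref{lem:minors irred} transferred into $\Br$ via Lemma \ref{valuation on invariants or above}/Lemma \ref{branching algebra factorial f.type}, bigness of the image from $\Omega=\lX\setminus\bigcup_\alpha V(X_\alpha)$, and the homogeneity from the $T\times\hT$-equivariance in Lemma \ref{T x Y tilde open embedding branching} — is exactly what is needed and is argued correctly.
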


\begin{proof}
    It is a direct consequence of Lemmas \ref{branching algebra factorial f.type}, \ref{lem:minors irred} and \ref{T x Y tilde open embedding branching}. 
\end{proof}

The following question naturally arises.

\begin{question}
    \label{question: Y tilde cluster} For which pairs $( G, \hG)$ the algebra $\CC[U]^\hU$ has a cluster structure?
\end{question}

\subsection{Lifting graduation for the Branching algebra}
\label{sec:lifting grad branching}

Before describing some pairs $(G, \hG)$ for which we can answer Question \ref{question: Y tilde cluster}, we take a closer look at the information given by the lifting graduation on $\Br$.

\bigskip

Note that, by Lemma \ref{lifting graduation torus action}, the $X(T)$-graduation on $\Br$ is a lifting graduation of the pole filtration on $\CC[U]^\hU$ induced by the almost polynomial ring $(\lX, \val, X, \widetilde \phi^*)$. 
    Note that the conjugation of $T$ on $U$: 
    $$ t \cdot u= t\inv u t \quad \text{for} \quad t \in T, \, u \in U$$
    factors through an action of $\hT$ on $\widetilde Y$.

\begin{lemma}
    The pole filtration on $\CC[U]^\hU$ is $X(\hT)$-graded.
\end{lemma}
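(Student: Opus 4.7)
The plan is to establish the $X(\hT)$-gradedness by showing that each component $A_\lambda$ of the pole filtration on $A := \CC[U]^\hU$ is $\hT$-stable with respect to the conjugation action $\wh h \cdot u = \wh h^{-1} u \wh h$. Once this is done, the fact that $A$ decomposes as a rational $\hT$-representation into weight spaces $A_h$ immediately yields $A_\lambda = \bigoplus_h (A_\lambda \cap A_h)$, which is precisely the $X(\hT)$-gradedness of the pole filtration.

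First I will use the characterisation of the pole filtration available in the setting of an almost polynomial ring, namely that for any $a \in A$ and $\lambda \in \ZZ^D$,
\[
a \in A_\lambda \iff X^\lambda \cdot (1 \otimes a) \in \Orb_\lX(\lX) = \Br.
\]
This is the reformulation already noted in Section~\ref{pole filtration section}, applied to the almost polynomial structure $(\lX, \val, X, \widetilde\phi^*)$ given by Proposition~\ref{prop:branch scheme suitable tilde}.

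Next, I will check that each $X_\alpha = \delaee$ is an $\hT$-semi-invariant of $\Br$. Indeed, the $T \times \hT$-action on $\CC[G]$ is induced by $(h,\wh h) \cdot g = hg\wh h$, and Lemma~\ref{lem:basic prop minors} gives $\delaee(g\wh h) = \pia(\wh h) \delaee(g) = \rho(\pia)(\wh h)\delaee(g)$, so $X_\alpha$ has $\hT$-weight determined by $\rho(\pia)$. In particular $X^\lambda$ is $\hT$-semi-invariant of some character $\chi_\lambda \in X(\hT)$.

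The key step is then to exploit the $T \times \hT$-equivariance of $\widetilde\phi$ established in Lemma~\ref{T x Y tilde open embedding branching}, which at the level of rings implies $\wh h \cdot (1 \otimes a) = 1 \otimes (\wh h \cdot a)$ for any $\wh h \in \hT$ and $a \in A$, where on the right $\hT$ acts via conjugation on $U$. Suppose $a \in A_\lambda$, so that $X^\lambda \cdot (1 \otimes a) \in \Br$. Applying $\wh h \in \hT$ (acting on $\Br$ through the inclusion $\{e\} \times \hT \subseteq T \times \hT$, which stabilises $\Br$) gives
\[
\wh h \cdot \bigl(X^\lambda (1 \otimes a)\bigr) = (\wh h \cdot X^\lambda)\bigl(1 \otimes (\wh h \cdot a)\bigr) = \chi_\lambda(\wh h) \, X^\lambda \bigl(1 \otimes (\wh h \cdot a)\bigr) \in \Br.
\]
Dividing by the nonzero scalar $\chi_\lambda(\wh h)$ shows $X^\lambda (1 \otimes (\wh h \cdot a)) \in \Br$, i.e.\ $\wh h \cdot a \in A_\lambda$. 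Hence $A_\lambda$ is $\hT$-stable, which concludes the proof.

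I do not expect any real obstacle here: the whole argument is a formal consequence of semi-invariance of the $X_\alpha$, the equivariance of $\widetilde\phi$, and the characterisation of the pole filtration via regularity of $X^\lambda (1 \otimes a)$ on $\lX$. The only point requiring minor care is keeping track of conventions for the $\hT$-weight of $X_\alpha$, but this sign/character plays no role in the final argument since it only multiplies by a nonzero scalar.
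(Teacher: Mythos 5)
Your proof is correct and follows essentially the same route as the paper: both use the characterisation $a \in A_\lambda \iff X^\lambda(1\otimes a) \in \Orb_\lX(\lX)$, the $\hT$-semi-invariance of the $X_\alpha$, and the $\hT$-equivariance of $f \mapsto 1\otimes f$ coming from Lemma \ref{T x Y tilde open embedding branching} to conclude that each $A_\lambda$ is $\hT$-stable. Your write-up is slightly more explicit than the paper's (which stops at $\hT$-stability and leaves the passage to the weight-space decomposition implicit), but there is no difference in substance.
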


\begin{proof}
    Identify $\CC(\lX)$ and $\CC(T \times \widetilde Y)$ using $\widetilde \phi$. Recall that $\ZZ^D$ is canonically identified with $X(T)$ via the $\ZZ$-linear map defined by $e_\alpha \longmapsto \pia.$ Fix $\lambda \in \ZZ^D$, then 
    \begin{equation}
    \label{eq:30}
        f \in \CC[\widetilde Y ]_\lambda \ifff X^\lambda (1 \otimes f) \in \Orb_\lX(\lX),
    \end{equation}
    where $\CC[\widetilde Y ]_\lambda$ is the $\lambda$-component of the lifting graduation. The map $\CC(\widetilde Y) \longmapsto \CC(\lX)$ defined by $f \longmapsto 1 \otimes f$ is $\hT$-equivariant. Since, for any $\alpha \in  D$, $X_\alpha $ is $\hT$ semi-invariant, we deduce from \eqref{eq:30}  that $\CC[\widetilde Y]_\lambda$ is $\hT$-stable.
\end{proof}

The $X(T) \times X(\hT)$-graduation on $\Br$ is NOT a $X(\hT)$-graded lifting graduation. Indeed, the map $\widetilde Y \longto \lX$ defined by $\widetilde y \longmapsto \widetilde \phi ( (e, \widetilde y))$ is not $\hT$-equivariant (see Lemma \ref{lem: H graded pole filt equivariance}). We can fix the situation by twisting the $T\times \hT$-action on $\lX.$

\bigskip

Let $\lX^*$ be the scheme $\lX$ endowed with the twisted $T \times \hT$-action $*$ defined by $$(h, \wh h) * x= (h \wh h \inv, \wh h) \cdot x,$$
where $\cdot$ denotes the standard $T \times \hT$-action on $\lX.$ We denote by $\Br^*=\Orb_{\lX^*} (\lX^*)$, considered with its $X(T) \times X(\hT)$-graduation induced by the $*$-action. As $\CC$-algebras, $\Br^*= \Br.$
Note that a subset of $\lX$ is stable for the $\cdot$ action if and only if it is stable for the $*$ action. 

\begin{coro}
    \label{* suitable lifting}
    The triple $(\lX^*, \widetilde \phi, X)$ is homogeneously suitable for $D$-lifting. Moreover, the $X(T) \times X(\hT)$-graduation on $\Br^*$ is a $X(\hT)$-graded lifting graduation. 
\end{coro}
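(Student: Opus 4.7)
The plan is to deduce this corollary from Proposition \ref{prop:branch scheme suitable tilde} together with the criterion of Lemma \ref{lem: H graded pole filt equivariance}. The only substantive point that needs verification is that the twisted $*$-action has been tuned precisely so that the restriction map $s$ becomes $\hT$-equivariant; everything else is essentially inherited from the untwisted setting.

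First I would note that the conditions of Definition \ref{def: suitable lifting} do not refer to any torus action, only to the scheme $\lX$, the open embedding $\widetilde \phi$, and the global sections $X$. Since $\lX^*$ and $\lX$ have the same underlying scheme, the same $\widetilde \phi$ and the same $X$, Proposition \ref{prop:branch scheme suitable tilde} already delivers that $(\lX^*, \widetilde \phi, X)$ is suitable for $D$-lifting. For the adjective \emph{homogeneously}, observe that the $T$-action alone (i.e. restricted to $T \times \{e\} \subset T \times \hT$) coincides for $\cdot$ and $*$, because $(h,e)*x = (h \cdot e^{-1}, e) \cdot x = (h,e) \cdot x$. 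Hence the equivariance of $\widetilde \phi$ for the pure $T$-action carries over to $\lX^*$, and homogeneous suitability follows.

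Next, for the $X(\hT)$-graded lifting graduation I would apply Lemma \ref{lem: H graded pole filt equivariance}. The first of its two conditions — that the $X(T)$-graduation on $\Br^*$ is a lifting graduation — is automatic from Proposition \ref{lifting graduation torus action}, using again that the $T$-part of the action is unchanged. The second condition, that the restriction map $s : \Br^* \longto \CC[U]^\hU$ be $\hT$-equivariant, is the one forced by the twist. The verification is a direct calculation: under the $T \times \hT$-action on $G$ given by $(h, \wh h) \cdot g = h g \wh h$, the $*$-action of $\hT$ on $\lX$ is $\wh h *_\lX g = \wh h^{-1} g \wh h$, so for $f \in \Br^*$ and $u \in U$ one computes
\[
s(\wh h *_\lX f)(u) = (\wh h *_\lX f)(u) = f(\wh h \, u \, \wh h^{-1}) = s(f)(\wh h^{-1} u \wh h) = (\wh h \cdot s(f))(u),
\]
where the last equality uses the conjugation action $\wh h \cdot u = \wh h^{-1} u \wh h$ of $\hT$ on $\widetilde Y$.

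I do not expect any serious obstacle: the corollary is almost a bookkeeping exercise. The only mildly subtle point is getting the direction of the twist right, since with the original $\cdot$-action the map $s$ fails to be equivariant precisely because of the term $ht\wh h$ in \eqref{ T x hT action on T x U} that mixes $T$ and $\hT$; replacing $h$ by $h\wh h^{-1}$ in the twisted action cancels this mixing on the slice $\{e\} \times \widetilde Y$, which is exactly what is needed.
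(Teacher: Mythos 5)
Your proposal is correct and follows essentially the same route as the paper: both reduce the second assertion to the $\hT$-equivariance of the restriction map $s$ via Lemma \ref{lem: H graded pole filt equivariance}, after observing that the first assertion is inherited because the $T$-component of the $*$-action is unchanged. The paper performs the equivariance check at the level of points, computing $(h,\wh h) * \widetilde\phi\bigl((t,\widetilde y)\bigr)= \widetilde\phi\bigl((ht, \wh h\cdot\widetilde y)\bigr)$ directly from \eqref{ T x hT action on T x U} (using that $T$ is abelian to simplify $(h\wh h^{-1})t\wh h = ht$), which sidesteps the action-on-functions conventions entirely. Your function-level chain has a sign slip in the middle: with the left convention $(\wh h \cdot f)(x)=f(\wh h^{-1}\cdot x)$ used to get $(\wh h *_\lX f)(u)=f(\wh h u\wh h^{-1})$, the next two terms should read $s(f)(\wh h u\wh h^{-1})=(\wh h\cdot s(f))(u)$ rather than $s(f)(\wh h^{-1} u\wh h)$ — as written, steps three and four use opposite conventions, and they cancel only by accident. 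The conclusion is nonetheless correct, and the identification of which twist is needed (cancelling the $\wh h$ in the first slot of \eqref{ T x hT action on T x U}) matches the paper exactly.
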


\begin{proof}
    The first part of the statement is obvious from Proposition \ref{prop:branch scheme suitable tilde}. Moreover, using \eqref{ T x hT action on T x U}, we easily deduce that, for any $h,t \in T$, $\wh h \in \hT$ and $\widetilde y \in \widetilde Y$,  then
    $$(h,\wh h) * \widetilde \phi \bigl((t, \widetilde y)\bigr)= \widetilde \phi\bigl((h t , \wh h \cdot \widetilde y)\bigr)$$
    where $h \cdot \widetilde y$ denotes the conjugation action. In particular, the map $\iota : \widetilde Y \longto \lX^*$ defined by $\widetilde y \longmapsto \widetilde \phi ((e, \widetilde y))$ is $\hT$-equivariant. We conclude by applying Lemma \ref{lem: H graded pole filt equivariance}.
\end{proof}

Let $\iota : \widetilde Y \longto \lX^*$ be as in the proof of the previous corollary.

\begin{prop}
\label{prop:geom realisation coeff general}
    For any $(\lambda, \hlambda) \in X(T)^+ \times X(\hT)^+$, $\iota^*$ induces an isomorphism $$\Hom(V(\lambda), V(\hlambda))^\hG \longto \CC[U]^\hU_{\lambda, \hlambda -\rho(\lambda)}$$
where
\begin{equation*}
\begin{array}{r l l }
\CC[U]^\hU_{\lambda , \hlambda - \rho(\lambda)} = \{ f \in \CC[U]^\hU \, : &  f(\wh h^\inv u \wh h)= (\hlambda-\lambda)( \wh h) f(u) &  \text{for} \quad \wh h \in \hT, u \in U \ , \, \text{and} \\
& \val_\alpha (1 \otimes f) \geq -\langle \lambda, \alpha^\vee \rangle & \text{for} \quad  \alpha \in D\}.
\end{array}
\end{equation*}
\end{prop}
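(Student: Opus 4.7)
The proposition is essentially a packaging of three prior results: the branching decomposition \eqref{Petre Weyl Branchin}, the comparison between the $\cdot$- and $*$-actions on $\lX$, and the identification of homogeneous components of a lifting graduation with components of the pole filtration furnished by Corollary \ref{* suitable lifting}. My plan is to match up degrees on both sides via these three identifications, with no additional geometric input needed.

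First I would identify the bihomogeneous component $\Br^*_{\lambda, \hlambda-\rho(\lambda)}$. Starting from the standard $T\times \hT$-action $\cdot$, the decomposition \eqref{Petre Weyl Branchin} gives $\Br_{\lambda,\hlambda}\simeq \bigl(V(\lambda)\otimes V(\hlambda)^*\bigr)^\hG\simeq \Hom(V(\hlambda),V(\lambda))^\hG\simeq \Hom(V(\lambda),V(\hlambda))^\hG$, the last being canonical up to duality. The relation $(h,\wh h)*x=(h\wh h^{-1},\wh h)\cdot x$ yields, for any $f\in \Br$ of $\cdot$-bidegree $(\lambda,\hlambda)$,
\[
f\bigl((h,\wh h)* x\bigr)=\lambda(h)\,\lambda(\wh h^{-1})\,\hlambda(\wh h)\,f(x)=\lambda(h)\,(\hlambda-\rho(\lambda))(\wh h)\,f(x),
\]
so $\Br^*_{\lambda,\hlambda-\rho(\lambda)}=\Br_{\lambda,\hlambda}$ as vector spaces, hence isomorphic to $\Hom(V(\lambda),V(\hlambda))^\hG$.

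Next I apply Corollary \ref{* suitable lifting}: the triple $(\lX^*,\widetilde\phi,X)$ is homogeneously suitable for $D$-lifting, and the $X(T)\times X(\hT)$-grading on $\Br^*$ is an $X(\hT)$-graded lifting grading of the pole filtration on $\CC[U]^\hU$. By Definition \ref{lifting graduation defi}(2) the restriction of the deletion map (here $\iota^*$, since $\iota(\widetilde y)=\widetilde\phi(e,\widetilde y)$ matches the general construction with $e\in T$) yields an isomorphism
\[
\iota^*:\Br^*_{\lambda,\hlambda-\rho(\lambda)}\;\xrightarrow{\sim}\;\CC[U]^\hU_{\lambda,\hlambda-\rho(\lambda)}.
\]

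Finally I unfold the right-hand side. Under the identification $\ZZ^D\simeq X(T)$ given by $e_\alpha\mapsto \varpi_\alpha$, the character $\lambda\in X(T)^+$ corresponds to $(\langle\lambda,\alpha^\vee\rangle)_{\alpha\in D}$. Definition \ref{pole filtration} then reads: $f\in\CC[U]^\hU$ lies in the $\lambda$-piece of the pole filtration iff $\val_\alpha(1\otimes f)\ge -\langle\lambda,\alpha^\vee\rangle$ for every $\alpha\in D$. The $\hT$-grading component is cut out by $\hT$-semi-invariance with weight $\hlambda-\rho(\lambda)$ under the conjugation action $\wh h\cdot u=\wh h^{-1}u\wh h$, i.e.\ $f(\wh h^{-1}u\wh h)=(\hlambda-\rho(\lambda))(\wh h)f(u)$, which is exactly the displayed description (writing $\hlambda-\lambda$ with $\lambda$ tacitly restricted to $\hT$). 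Combining the two descriptions gives the proposition.

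The only mild subtlety is keeping the two torus actions straight so that the character twist by $\rho(\lambda)$ lands on the correct side; this is exactly what the passage from the $\cdot$-action to the $*$-action (Corollary \ref{* suitable lifting}) is designed to absorb. Once that is in place, the statement is a direct translation between the graded lifting picture and the pole-filtration picture, and no further work is required.
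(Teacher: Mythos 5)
Your proof is correct and follows essentially the same route as the paper's own (very terse) proof: establish the identification $\Br_{\lambda,\hlambda}=\Br^*_{\lambda,\hlambda-\rho(\lambda)}$ by a direct computation with the twisted $*$-action, then invoke Corollary~\ref{* suitable lifting} together with Definition~\ref{lifting graduation defi} to read off the component of the graded lifting graduation as a component of the pole filtration on $\CC[U]^\hU$. You simply spell out in full the "simple computation" and the unpacking of the right-hand side that the paper leaves to the reader.
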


\begin{proof}
    It's a simple computation that $\Br_{\lambda, \hlambda}= \Br^*_{\lambda, \hlambda - \rho(\lambda)}$. Then the proposition follows from Corollary \ref{* suitable lifting} and Definition \ref{lifting graduation defi}.
\end{proof}

\begin{remark}
\label{geometric realisation branching coefficients remark}
 The above proposition gives a geometric realisation of the multiplicity spaces for the branching problem of $\hG$ in $G$. Consider the tensor product case, that is when $G=\hG \times \hG$ and $\hG$ is diagonally embedded in $G$. By identifying $U/ \hU= (\hU \times \hU) / \hU $ with $\hU$ via the map $(\wh u_1, \wh u_2) \longmapsto \wh u_1 \wh u_2 \inv$, we can easily deduce Proposition 1.4 of \cite{zelevinsky2001littlewood}. Actually, \cite{zelevinsky}[Proposition 1.4] is the starting point for the interest in perfect bases, which have classically been one of the most used items to study tensor product decomposition. See \cite{kamnitzer2022perfect} for a beautiful survey on perfect bases. We wonder if Proposition \ref{prop:geom realisation coeff general} can lead to the development of a similar theory for other branching problems.
\end{remark}

\subsection{On question \ref{question: Y tilde cluster}}
\label{sec:on question }

 Question \ref{question: Y tilde cluster} being quite vague, the author feels the following question more interesting.

 \begin{question}
     \label{ques: U=U(z) x hU?}
     For which pairs $(G,\hG)$, there exist a $z \in W$ such that the product $U(z) \times \hU \longto U$ is an isomorphism?
 \end{question}

If the answer to the previous question is positive for a certain $z \in W$, then the natural map $U(z) \longto \widetilde Y$ is an isomorphism. Thus, $\widetilde Y$ has a natural cluster structure obtained throughout this identification. In the following, we exhibit two families for which we can answer positively to Question \ref{ques: U=U(z) x hU?}. In general, the answer to the previous question is positive in many other interesting cases, as we will see in some forthcoming works.

\subsubsection{Levi subgroups}
\label{Levi subgroup setting}

Let $I \subsetneq \Delta$ be a strict subset of $\Delta$, and $\Phi_I= \Phi \cap \ZZ I$. This is a root system with $I$ as simple set of roots and Weyl group $W_I= \langle s_\alpha \, : \, \alpha \in I \rangle \subseteq W$. Consider $\hG= G_I$ the \textit{Levi subgroup} of $G$, which is a reductive group with root system $\hPhi=\Phi_I$. Let $\hT=T=\hG \cap T$ and let $\hB= B \cap \hG$. Note that $\hW=W_I$. Let $w_{0,I}$ be the longest element of $\hW$ and $z= w_{0,I}^\vee= w_0 w_{0,I}$. We have that $\hU= U(w_{0,I})$. Then we define $$ Y= U(z)=U(w_{0,I}^\vee).$$
As a special case, when $I=\emptyset$, then $\hG=T$ and $Y=U.$

\subsubsection{Products}
\label{Products setting}

Let $\hG$ and $H$ be semisimple, simply connected, complex algebraic groups such that $\hG \subseteq H$. Let $T_H$, $B_H$ (resp. $\hT$, $\hB$) be a maximal torus  and a Borel subgroup of $H$  (resp. $\hG$) such that $T_H \subseteq B_H$ (resp. $\hT \subseteq \hB)$. Denote by $U_H$ (resp $\hU$) the unipotent radical of $B_H$ resp $\hB$. Assume furthermore that $\hT \subseteq T_H$ and $\hB \subseteq B_H$.

Let $G= H \times \hG$, and consider $\hG$ diagonally embedded in $G$.
Then, $T= T_H \times \hT$, $B=B_H \times \hB$ and $U= U_H \times \hU$ (recall our convention on products \ref{remark on products}). 

Let $z=(w_{0,H},e) \in W =W_H \times \hW$, where $w_{0,H}$ is the longest element of $W_H$. Then we set $$Y=U(z)=U_H \times \{e\}.$$

Note that, the case of $\hG$ diagonally embedded into $\hG^n$, which is the product of $n$ copies of $\hG$, is a special case of this construction. The corresponding branching problem consists in decomposing the tensor product of $n$ irreducible representations of $\hG$, under the diagonal action. Also, the branching problem of $\hG \subseteq H$ is completely determined by the one of $\hG \subseteq G$. 

\begin{remark}
    \label{remark cluster structure products}
Note that the reduced expressions of $z \in W$ are in obvious bijection with the reduced expressions of $w_{0,H} \in W_H$. The reduced expression $\ii \in R(z)$ gives a seed $t_\ii$ of $ \CC[U(z)]$. Thinking about the same reduced expression as an expression for $w_{0,H}$, we obtain a seed $t_\ii'$ of $ \CC[U_H]$. It's trivial to verify, using the last identity in Remark \ref{remark on products}, that $t_\ii=t_\ii'$ under the obvious identifiction between $U(z)$ and $U_H.$ So, we make no difference between these two seeds.

\end{remark}

From now on, we suppose that the pair $\hG \subseteq G$ belongs to one of the two cases discussed above. The element $z \in W$, for a given pair, is the one previously described.

\begin{lemma}
\label{T x Y open embedding branching}
  The product $U(z) \times \hU \longto U$ is a $\hT$-equivariant isomorphism. Thus, the natural inclusion $Y=U(z) \longto U$ induces a $\hT$-equivariant isomorphism between $Y$ and $\widetilde Y= \Spec(\CC[U]^\hU)$.
   
 \end{lemma}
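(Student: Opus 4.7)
The plan is to reduce both claims to verifying that the product morphism $m \colon U(z) \times \hU \longto U$ is a $\hT$-equivariant isomorphism of algebraic varieties, where $\hT$ acts by simultaneous conjugation. Once $m$ is shown to be an isomorphism, the right $\hU$-action on $U$ by multiplication corresponds via $m$ to right multiplication on the second factor of $U(z) \times \hU$, whose geometric quotient is $U(z)$. Consequently the composition $Y = U(z) \hookrightarrow U \twoheadrightarrow \Spec(\CC[U]^\hU) = \widetilde Y$ is an isomorphism, $\hT$-equivariant because conjugation commutes with right multiplication. The $\hT$-equivariance of $m$ itself is immediate: $U(z)$ and $\hU$ are $\hT$-stable closed subgroups of $U$, and $\hT$-conjugation is a group automorphism of $U$.

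In the Levi case ($\hG = G_I$, $z = w_0 w_{0,I}$), I first compute $\Phi(z) = \Phi^+ \setminus \Phi_I^+$. The element $w_{0,I}$ sends $\Phi_I^+$ to $\Phi_I^-$ and permutes $\Phi^+ \setminus \Phi_I^+$ among itself: any $\alpha \in \Phi^+ \setminus \Phi_I^+$ has a strictly positive coefficient on some simple root in $\Delta \setminus I$, and since $w_{0,I}\alpha - \alpha \in \ZZ I$ (each $s_\beta$, $\beta \in I$, modifies $\alpha$ by a multiple of $\beta$), the coefficient of that simple root in $w_{0,I}\alpha$ is unchanged, forcing $w_{0,I}\alpha \in \Phi^+ \setminus \Phi_I^+$. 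Hence
$$\Phi(z) = \Phi^+ \cap w_{0,I}\Phi^+ = \Phi^+ \setminus \Phi_I^+.$$
Consequently $\Lie U(z) = \bigoplus_{\alpha \in \Phi^+\setminus\Phi_I^+}\lg_\alpha$ coincides with the Lie algebra of the unipotent radical of the standard parabolic $P_I \supseteq B$ whose Levi factor is $\hG$; since both are closed connected unipotent subgroups of $U$, $U(z)$ is exactly this unipotent radical. The isomorphism $m$ is then the classical Levi-type decomposition $U = U(z) \rtimes \hU$, coming from the Levi decomposition of $P_I$.

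In the product case ($G = H \times \hG$ with $\hG$ diagonally embedded and $z = (w_{0,H},e)$), the map $m$ is computed explicitly: recalling that the diagonal embedding sends $\wh u \mapsto (\wh u,\wh u) \in U_H \times \hU = U$, one has
$$m\bigl((u_H,e),\wh u\bigr) = (u_H \wh u, \wh u),$$
with two-sided inverse $(a,b) \mapsto ((ab^{-1},e), b)$, itself a morphism; hence $m$ is an isomorphism of varieties. The main obstacle across the whole argument is really the Levi case, and specifically the passage from the Lie-algebraic decomposition $\lu = \Lie U(z) \oplus \Lie \hU$ to the global product decomposition of $U$; this is where one invokes (or reproves, via a tangent-space argument combined with a dimension count and the triviality of $U(z) \cap \hU$) the Levi decomposition of the unipotent radical of a parabolic.
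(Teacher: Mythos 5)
Your proposal is correct and follows essentially the same route as the paper: the product case is handled by the identical explicit formula $((u_H,e),\wh u)\mapsto(u_H\wh u,\wh u)$ with its obvious inverse, and the Levi case rests on the same classical decomposition of $U$ into the product of the two $\hT$-stable subgroups determined by the partition $\Phi^+=(\Phi^+\setminus\Phi_I^+)\sqcup\Phi_I^+$, which the paper simply cites from Humphreys (Proposition 28.1). Your additional computation of $\Phi(z)=\Phi^+\setminus\Phi_I^+$ and the identification of $U(z)$ with the unipotent radical of $P_I$ are correct and merely make explicit what the paper leaves to the reference.
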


\begin{proof}
    For the Levi case, the claim follows from \cite{Hum}[Proposition 28.1]. For the product case, the above map is given explicitly by 
    $$
    \begin{array}{rcl}
        U_H \times \hU & \longto & U_H \times \hU \\
        (u_h , \wh u) & \longmapsto & (u_h \wh u, \wh u)
    \end{array}
    $$
    which is obviously an isomorphism.
    \end{proof}

Let $\phi : T \times Y \longto \lX$ be the map obtained from $\widetilde \phi$, by identifying $Y$ and $\widetilde Y$ as in Lemma \ref{T x Y open embedding branching}. 
\begin{coro}
    \label{X phi suitable lifting}
    The triple $(\lX, \phi, X)$ is homogeneously suitable for $D$-lifting.
\end{coro}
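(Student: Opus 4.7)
The plan is essentially to reduce the statement to the already-proved Proposition \ref{prop:branch scheme suitable tilde} via the identification in Lemma \ref{T x Y open embedding branching}. All the nontrivial content has already been done; Corollary \ref{X phi suitable lifting} is a packaging step.

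First I would unfold the definition. By construction, $\phi$ is the composition $\widetilde \phi \circ (\Id_T \times \iota)$, where $\iota : Y \longto \widetilde Y$ is the $\hT$-equivariant isomorphism provided by Lemma \ref{T x Y open embedding branching}. Conditions (1) and (2) of Definition \ref{def: suitable lifting} only involve $\lX$ and $X$, not $\phi$, so they are immediate from Proposition \ref{prop:branch scheme suitable tilde}. For condition (3), the map $\Id_T \times \iota$ is an isomorphism of irreducible $\CC$-schemes, and $\widetilde \phi$ is an open embedding onto $\lX \setminus \bigcup_\alpha V(X_\alpha)$ with big image; composing preserves all these properties. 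The identity $\phi^*(X_\alpha) = \varpi_\alpha \otimes 1$ is inherited from the corresponding identity for $\widetilde \phi^*$ (statement (1) of Lemma \ref{T x Y tilde open embedding branching}), because $\iota$ acts trivially on the $T$-factor.

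Next I would verify the homogeneous part, i.e.\ Definition \ref{def: optimal for lifting}: the $T$-action on $T \times Y$ (left multiplication on the first factor) must extend to $\lX$ and $\phi$ must be $T$-equivariant. Here the torus in the lifting setup is precisely the maximal torus $T$ of $G$ (of rank $|\Delta| = |D|$), and its action on $\lX$ is the one already coming from the $T \times \hT$-action on $\Br$ used in Proposition \ref{prop:branch scheme suitable tilde}. Since $\iota$ is a morphism of $\hT$-schemes with no $T$-structure involved on $Y$ or $\widetilde Y$, the map $\Id_T \times \iota$ is trivially $T$-equivariant, and $\widetilde \phi$ is $T$-equivariant by Lemma \ref{T x Y tilde open embedding branching}(2). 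Therefore $\phi$ is $T$-equivariant.

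There is no real obstacle: the corollary is genuinely a corollary of the combination of Proposition \ref{prop:branch scheme suitable tilde} and Lemma \ref{T x Y open embedding branching}, provided one is careful that the identification of the torus acting in the lifting setup with the maximal torus $T$ of $G$ is consistent (which is built into the choice $D = \Delta$ and $X_\alpha = \delaee$). The only point requiring a moment of attention is that $\iota$, being $\hT$-equivariant, preserves enough structure so that $\phi$ is simultaneously $T$- and $\hT$-equivariant, which will matter later when invoking Lemma \ref{lifting subgraded} to obtain $X(T) \times X(\hT)$-graded inclusions for the branching algebra.
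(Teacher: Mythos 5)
Your proof is correct and follows essentially the same route as the paper, which simply declares the corollary obvious from Lemma \ref{T x Y open embedding branching} together with the homogeneous suitability of $(\lX,\widetilde\phi,X)$; you have merely unpacked the composition $\phi=\widetilde\phi\circ(\Id_T\times\iota)$ and checked each condition explicitly. The only cosmetic difference is that the paper cites Corollary \ref{* suitable lifting} (the twisted $*$-action version, which it needs for the subsequent graded statements) whereas you cite Proposition \ref{prop:branch scheme suitable tilde}; since the underlying scheme, map and divisors are identical and both actions restrict to left multiplication of $T$ on the $T$-factor, this does not affect the validity of your argument.
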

\begin{proof}
    It is obvious from Corollary \ref{* suitable lifting} and Lemma \ref{T x Y open embedding branching}.
\end{proof}

    Recall that $\Br^*$ is the algebra $\Br$ with its twisted $X(T) \times X(\hT)$ graduation, as defined in Section \ref{sec:lifting grad branching}.

\begin{theo}
    \label{application minimal mon lifting branching}
     Let $\ii \in R(z)$ and $\sigma_\ii $ be the $X(T)$-degree configuration on $t_\ii$ defined in Proposition \ref{prop:t i graded}. We have that
     $$\displaystyle \uclu(\lif t_\ii)= \Br_{\prod_{\alpha \in D} X_\alpha} \quad \text{and} \quad  \uclu(\lif t_\ii^D) \subseteq \Br.$$
     Moreover, $\uclu(\lif t_\ii^D)$ with the graduation induced by $\lif \rho( \sigma_\ii)$, is a graded subalgebra of $\Br^*$.
\end{theo}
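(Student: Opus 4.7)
The plan is to deduce the theorem by checking that all the hypotheses of Theorem \ref{minimal monomial lifting theo} and Lemma \ref{lifting subgraded} are satisfied in this branching setup, and then simply reading off the conclusion.

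First I verify the input for Theorem \ref{minimal monomial lifting theo}. By Corollary \ref{X phi suitable lifting}, the triple $(\lX, \phi, X)$ is (homogeneously) suitable for $D$-lifting, with $Y=U(z)$ identified with $\widetilde Y=\Spec(\CC[U]^\hU)$ via Lemma \ref{T x Y open embedding branching}. By Lemma \ref{maximal rank U(w)} the seed $t_\ii$ is of maximal rank, and Theorem \ref{cluster U(w)} gives $\uclu(t_\ii)=\CC[U(z)]=\Orb_Y(Y)$. Finally, $U(z)$ is isomorphic as a variety to affine space, so $\CC[U(z)]$ is factorial, and Lemma \ref{factorial upper, coprimality} yields the coprimality assumptions on distinct cluster variables $x_i,x_j$ and on $x_k,\mu_k(x_k)$. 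Applying Theorem \ref{minimal monomial lifting theo} gives at once
\[
\uclu(\lif t_\ii)=\Orb_\lX(T\times Y)=\Br_{\prod_{\alpha\in D}X_\alpha}\qquad\text{and}\qquad \uclu(\lif t_\ii^{D})\subseteq\Orb_\lX(\lX)=\Br,
\]
which are the first two assertions.

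For the graded statement I invoke Lemma \ref{lifting subgraded}, using the twisted $T\times\hT$-action on $\lX^*$ rather than the original one on $\lX$. The torus $\hT$ acts on $Y=U(z)$ by conjugation, and by the remark following Proposition \ref{prop:t i graded} the natural $X(T)$-degree configuration $\sigma_\ii$ records the $T$-conjugation weight of each cluster variable $x_k$; composing with the restriction map $\rho:X(T)\to X(\hT)$ therefore gives a well-defined $X(\hT)$-degree configuration $\rho(\sigma_\ii)$ on $t_\ii$ which induces the $\hT$-conjugation grading on $\Orb_Y(Y)=\uclu(t_\ii)$. On the other hand, by Corollary \ref{* suitable lifting} the map $\phi:T\times Y\longto\lX^*$ is $T\times\hT$-equivariant for the $*$-action, so the natural action on $T\times Y$ extends to $\lX^*$. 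All hypotheses of Lemma \ref{lifting subgraded} are therefore met, and the lemma gives exactly the desired conclusion: $\uclu(\lif t_\ii^{D})$, graded by $\lif{\rho(\sigma_\ii)}$, is a $(X(T)\times X(\hT))$-graded subalgebra of $\Br^*$.

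The only non-mechanical point is the matching of the two gradings in the last paragraph, i.e.\ ensuring that the twist built into $\Br^*$ is precisely the twist needed to turn $\rho(\sigma_\ii)$ into the correct degree configuration so that $\phi$ becomes equivariant. This is why one must pass from $\Br$ to $\Br^*$; the verification is however already packaged into Corollary \ref{* suitable lifting}, so there is essentially no work left once the identifications are in place.
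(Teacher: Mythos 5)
Your proposal is correct and follows essentially the same route as the paper's own proof: Corollary \ref{X phi suitable lifting} supplies the homogeneously suitable for $D$-lifting structure, Lemma \ref{factorial upper, coprimality} (via factoriality of $\CC[U(z)]$) gives the coprimality hypotheses, Theorem \ref{minimal monomial lifting theo} yields the two inclusions, and Lemma \ref{lifting subgraded} applied to $\lX^*$ with the degree configuration $\rho(\sigma_\ii)$ handles the graded statement. You spell out a few steps the paper leaves implicit (maximal rank via Lemma \ref{maximal rank U(w)}, the identification $\uclu(t_\ii)=\CC[U(z)]$ from Theorem \ref{cluster U(w)}), but the argument is the same.
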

\begin{remark}
    In the notation of Theorem \ref{application minimal mon lifting branching}, $\rho(\sigma_\ii) \in X(\hT) ^{I_{\ii}}$ is the $X(\hT)$-degree configuration obtained by applying component-wise the restriction map $\rho : X(T) \longto X(\hT)$ to $\sigma_\ii.$
\end{remark}

\begin{proof}
Because of Corollary \ref{X phi suitable lifting} and Lemma \ref{factorial upper, coprimality} we can apply Theorem \ref{minimal monomial lifting theo}. The part on the graduation is a consequence of Lemma \ref{lifting subgraded}.
\end{proof}
From now on, fix $\ii \in R(z)$ as above. Let $t=t_\ii$ and $\nu \in \ZZ^{D \times I}$ be the minimal lifting matrix of the seed $t$ associated to $\lX$. Recall that, for any $k \in I$, $\nu_{\bullet, k}$ is canonically an element of $X(T).$ 
\begin{lemma}
\label{lem:degree lifting variables}
  For any $k \in I$, $\lif x_{k}$ is a $X(T) \times X(\hT)$-homogeneous element of $\Br$ of degree $$\bigl(\nu_{\bullet, k}\, , \, \rho(\nu_{\bullet, k})+ \rho(z_{\leq k}^{-1} \varpi_{i_k} - \varpi_{i_k})\bigr).$$
\end{lemma}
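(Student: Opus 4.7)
The plan is to combine Theorem \ref{application minimal mon lifting branching} (which places $\uclu(\lif t^D)$ inside $\Br^*$ as a graded subalgebra for the lifted degree configuration $\lif{\rho(\sigma_\ii)}$) with the explicit dictionary between the $*$-grading and the standard grading on $\Br$.

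First, by Theorem \ref{application minimal mon lifting branching}, $\uclu(\lif t^D)$ with its $X(T) \times X(\hT)$-grading induced by the degree configuration $\lif{\rho(\sigma_\ii)}$ is a graded subalgebra of $\Br^*$. Unwinding Definition \ref{lifting deg def}, the $k$-th component of $\lif{\rho(\sigma_\ii)}$ is $(\nu_{\bullet,k}, \rho(\sigma_{\ii,k}))$ with $\sigma_{\ii,k} = z_{\leq k}^{-1}\varpi_{i_k} - \varpi_{i_k}$. Hence $\lif x_k$ is $X(T) \times X(\hT)$-homogeneous of $*$-degree
\[
\bigl(\nu_{\bullet,k}, \, \rho(z_{\leq k}^{-1}\varpi_{i_k} - \varpi_{i_k})\bigr).
\]

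Second, I will translate this into the standard degree on $\Br$. By definition, $f \in \Br$ has standard degree $(\lambda, \hlambda)$ when $f(h g \wh h) = \lambda(h)\hlambda(\wh h) f(g)$ for $h \in T$, $\wh h \in \hT$, $g \in G$. Using that the $*$-action is $(h,\wh h) * g = (h\wh h^{-1}) g \wh h$ and that $\lambda$ restricted to $\hT$ equals $\rho(\lambda)$, a direct calculation yields
\[
f\bigl((h,\wh h)*g\bigr) = \lambda(h)\,(\hlambda - \rho(\lambda))(\wh h)\, f(g),
\]
so an element of standard degree $(\lambda,\hlambda)$ has $*$-degree $(\lambda,\hlambda - \rho(\lambda))$, i.e.\ $\Br_{\lambda,\hlambda} = \Br^*_{\lambda, \hlambda - \rho(\lambda)}$. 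Inverting this correspondence, a function of $*$-degree $(\mu,\hmu)$ has standard degree $(\mu, \hmu + \rho(\mu))$.

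Applying this to $\lif x_k$ with $(\mu,\hmu) = (\nu_{\bullet,k}, \rho(z_{\leq k}^{-1}\varpi_{i_k} - \varpi_{i_k}))$ gives the standard degree
\[
\bigl(\nu_{\bullet,k}, \, \rho(\nu_{\bullet,k}) + \rho(z_{\leq k}^{-1}\varpi_{i_k} - \varpi_{i_k})\bigr),
\]
which is exactly the stated formula. There is no real obstacle here: the conceptual work has already been done in the preceding sections (the lifting graduation is realised as a $T$-action via Proposition \ref{lifting graduation torus action}, the subalgebra statement is Lemma \ref{lifting subgraded}/Theorem \ref{application minimal mon lifting branching}, and the $\hT$-weight of $x_k$ on $U(z)$ is recorded in Proposition \ref{prop:t i graded}); only the elementary bookkeeping converting $*$-degrees into standard degrees remains, and the $\rho(\nu_{\bullet,k})$ summand arises precisely from the shift $\hlambda \mapsto \hlambda - \rho(\lambda)$ inherent in the twist.
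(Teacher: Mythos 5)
Your proof is correct, and the final degree formula you obtain matches the statement, but you take a different route from the paper. The paper proves the lemma by a direct two-line weight computation: using the formula \eqref{ T x hT action on T x U} for the $T \times \hT$-action on $T \times U$ together with Lemma \ref{lem:basic prop minors}, one reads off that $1 \otimes x_k$ has standard degree $(0,\rho(z_{\leq k}^{-1}\varpi_{i_k}-\varpi_{i_k}))$ and that $X_\alpha = \delaee$ has standard degree $(\pia, \rho(\pia))$; the lemma then follows immediately from $\lif x_k = X^{\nu_{\bullet,k}}(1\otimes x_k)$, without any mention of the twisted action. You instead invoke the abstract lifting machinery --- Theorem \ref{application minimal mon lifting branching}, which places $\uclu(\lif t^D)$ inside $\Br^*$ graded by $\lif\rho(\sigma_\ii)$, so that $\lif x_k$ carries $*$-degree $(\nu_{\bullet,k},\rho(\sigma_{\ii,k}))$ --- and then undo the twist via the identity $\Br_{\lambda,\hlambda} = \Br^*_{\lambda,\hlambda - \rho(\lambda)}$ (which the paper records in the proof of Proposition \ref{prop:geom realisation coeff general}). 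Both routes are sound and non-circular (Theorem \ref{application minimal mon lifting branching} is deduced from Lemma \ref{lifting subgraded}, not from the lemma being proved). The paper's route is shorter and more self-contained; your route reuses the already-proved structural theorem and makes explicit where the seemingly asymmetric $\rho(\nu_{\bullet,k})$ term comes from, namely the twist $\hlambda \mapsto \hlambda - \rho(\lambda)$, which is a worthwhile conceptual observation.
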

\begin{proof}
    Because of \eqref{ T x hT action on T x U} and Lemma \ref{lem:basic prop minors}, we easily deduce that $1 \otimes x_{k}$ is a $X(T) \times X(\hT)$-homogeneous rational function of weight $( 0 \, , \,  \rho(z_{\leq k}^{-1} \varpi_{i_k} - \varpi_{i_k}))$. Moreover, for any $\alpha \in D$, $X_\alpha$ is homogeneous of degree $(\pia \, , \, \rho(\pia)).$ The lemma follows from the definition of $\lif x_{k}.$
\end{proof}

The previous lemma gives some explicit components of the branching from $G$ to $\hG$. Moreover, using the mutation formula for degree configurations \eqref{mutation graduation}, we obtain a (a priory infinite) set of non-zero components for the branching from $G$ to $\hG$. These non-zero components can be computed by explicit recursive formulas. 
Also, if $t^*$ is a graded seed mutation equivalent to $\lif t^D$, for any $k \in I_{uf}$ we have that 
$$\sigma^*_k + \mu_k(\sigma_k^*) \in X(T) \times X(\hT)$$
identifies a component of the branching from $G$ to $\hG$ of multiplicity at least $2$. Indeed, $\sigma_k^* + \mu_k(\sigma_k^*)$ is the degree of $x_k^*\mu_k(x_k^*)$ which, by the exchange relation \eqref{exchange relation}, can be expressed as a sum of two linearly independent homogeneous elements of degree $\sigma_k^* + \mu_k(\sigma_k^*).$ Similarly, we easily deduce that for any $n \geq 1$, $n \bigl(\sigma^*_k +  \mu_k(\sigma_k^*) \bigr)$ identifies a component of multiplicity at least $n+1.$
 The author is curious about the following questions.

 \begin{question}
     \label{ques: on weights}
     \begin{enumerate}
         \item Let $k \in I_{uf}$. Define $\mathfrak{D}(k)= \{ \deg(x_k^*) \in X(T) \times X(\hT)$ \, : \, $t^* \sim \lif t^D \}$.
         Is the set $\mathfrak{D}(k)$ infinite?.
         \item  For which $t^* \sim \lif t^D$ and $k \in I_{uf}$, if $\deg(x_k^*)=(\lambda, \hlambda)$, then $\dim\Hom(V(\lambda), V(\hlambda))^\hG=1.$
     \end{enumerate}
 \end{question}

\section{Study of equality}
\label{study of equality branching section}
In this section we  study whether there is equality, between $\uclu(\lif t_\ii^D)$ and $\Br$, in Theorem \ref{application minimal mon lifting branching}. By Proposition \ref{conditions equality minimal lifting}, for any $\alpha \in D$, we need to compare the valuations $\val_\alpha$ and the cluster valuations $\cval_\alpha$. 

\bigskip

Recall that $\CC(\lX)$ is naturally identified with a subfield of $\CC(G)$. If a rational function $f \in \CC(G)$ belongs to $\CC(\lX)$, then $f$ is invariant for left multiplication by $U^-$ and for right multiplication by $\hU$.

\begin{lemma}
    \label{lem:valuations in G not X}
    For any $f \in \CC(\lX)$ and $\alpha \in D$, $\val_\alpha(f)= \val_{\overline{ B^- s_\alpha B}}(f).$
\end{lemma}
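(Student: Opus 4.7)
The plan is to identify both valuations with those induced by DVR structures on natural localizations, and then use that two DVRs in the same field sharing a uniformizer must agree. By Lemma \ref{lem:basic prop minors} the divisor $V(\delaee) \subseteq G$ is exactly $\overline{B^- s_\alpha B}$, and combining Lemma \ref{lem:minors irred} with Lemma \ref{valuation on invariants or above} applied to the action of the connected character-free group $U^- \times \hU$ on $\CC[G]$, the element $\delaee = X_\alpha$ generates a height-one prime in both $\CC[G]$ and $\Br$. Thus $\val_{\overline{B^- s_\alpha B}}$ is the valuation associated to the DVR $\CC[G]_{(\delaee)} \subseteq \CC(G)$, while $\val_\alpha$ is the one associated to $\Br_{(X_\alpha)} \subseteq \CC(\lX)$.

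Next I will restrict $\val_{\overline{B^- s_\alpha B}}$ to the subfield $\CC(\lX) \subseteq \CC(G)$, obtaining a $\ZZ$-valued valuation $w$ on $\CC(\lX)$. It is nontrivial since $w(\delaee) = 1$, so its valuation ring $R_w$ is a DVR of $\CC(\lX)$. The key step will be to prove $R_w = \Br_{(X_\alpha)}$. The inclusion $\Br \subseteq \CC[G]$ forces $\Br \subseteq R_w$; and for $g \in \Br$ with $\val_\alpha(g) = 0$, Lemma \ref{valuation on invariants or above} guarantees that the factorizations of $g$ into irreducibles in $\Br$ and in $\CC[G]$ coincide, so $\delaee \nmid g$ in $\CC[G]$ and $g$ becomes a unit in $R_w$. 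Hence $\Br_{(X_\alpha)} \subseteq R_w$.

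Since a DVR has no proper overring in its fraction field except the field itself, and $R_w \neq \CC(\lX)$, we conclude $R_w = \Br_{(X_\alpha)}$. Both $w$ and $\val_\alpha$ are then normalized $\ZZ$-valued valuations on $\CC(\lX)$ sharing $\delaee$ as uniformizer, so they coincide on the whole of $\CC(\lX)$, giving the desired equality $\val_\alpha(f) = \val_{\overline{B^- s_\alpha B}}(f)$ for every $f \in \CC(\lX)$. The only real subtlety is the transfer of divisibility between $\Br$ and $\CC[G]$, which is exactly the content of Lemma \ref{valuation on invariants or above}; everything else is a routine application of valuation theory.
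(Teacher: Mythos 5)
Your proof is correct and rests on exactly the same key input as the paper, namely Lemma \ref{valuation on invariants or above} transferring factorizations from $\Br$ to $\CC[G]$; the paper simply compares multiplicities of $X_\alpha$ in the two factorizations directly for $f \in \Br$, while you repackage the same comparison in the language of restricting the DVR $\CC[G]_{(\delaee)}$ to $\CC(\lX)$ and invoking maximality of DVRs among proper subrings of their fraction field. Same argument, slightly more valuation-theoretic dressing.
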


\begin{proof}
    It's sufficient to prove the case of $f \in \Orb_\lX(\lX)$. Then, $\val_\alpha(f)$ is the multiplicity of $X_\alpha$, in the decomposition of $f$ into irreducible factors in $\Orb_\lX(\lX).$ By Lemma \ref{valuation on invariants or above}, this is the same as the multiplicity of $X_\alpha$ in the decomposition of $f$ into irreducible factors in $\CC[G]$. The statement follows by Lemma \ref{lem:basic prop minors}, since $V(X_\alpha)=\overline{B^- s_\alpha B}$.
\end{proof}

\subsection{The Levi case}
The notation of this section is as in Subsection \ref{Levi subgroup setting}. We want to prove the following theorem.

\begin{theo}
\label{equality Levi}
    Suppose that $G$ is simple and $\hG=G_I$ is the Levi subgroup corresponding to $I \subsetneq \Delta$. For any $\ii \in R(z)$, there's equality in Theorem \ref{application minimal mon lifting branching}. That is $\uclu(\lif t_\ii^D)=\Br(G, G_I)$.
\end{theo}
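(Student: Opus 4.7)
The plan is to apply Corollary \ref{cor:equality cond at d fixed}, which reduces the desired equality to showing, for each fixed $\alpha \in D = \Delta$, the existence of a seed $t^*_\alpha \in \Delta(\lif t_\ii)$ satisfying $\Br \subseteq \Li\bigl((t^*_\alpha)^{\{\alpha\}}\bigr)$. By Lemma \ref{lem:valuations in G not X}, the valuation $\val_\alpha$ on $\Br$ corresponds to the multiplicity of vanishing along the irreducible divisor $V(X_\alpha) = \overline{B^- s_\alpha B} \cap \lX$, so the target is a seed in whose cluster coordinates this divisor is \emph{visible}, i.e.\ not contracted by the map $\pi_{t^*_\alpha}$ of Proposition \ref{condition equality min lifting affine}.

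To construct $t^*_\alpha$, I would start from $\lif t_\ii$ and perform a sequence of mutations guided by the Fomin--Zelevinsky identity (Theorem \ref{relations minors FZ}). Specifically, processing the indices $k > \alpha^{\min}$ with $i_k = \alpha$ in decreasing order, each mutation at such a $k$ replaces the right-twist cluster variable (based on $\Delta^{\pia}_{e, z_{\leq k}^{-1}}$) by a monomial multiple of the left-twist minor $\Delta^{\pia}_{s_\alpha, z_{\leq k}^{-1}}$; this identification follows from the FZ exchange relation together with the expansion formulas of Lemmas \ref{minors, developpement right translation} and \ref{minors, developpement left translation plus}. After the sequence terminates, the cluster variables of $t^*_\alpha$ are, up to monomials in the $X_\beta$'s, the left-twist family $\{\Delta^{\pi_{i_j}}_{s_\alpha, z_{\leq j}^{-1}}\}_{j \neq \alpha^{\min}}$, together with one distinguished variable at the position $\alpha^{\min}$ which, using the explicit monomial identity of Lemma \ref{delta alpha min polinomio nei precedenti U(w)}, is seen to equal $X_\alpha = \delaee$ up to a known monomial in the previously appearing cluster variables. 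At each step, Proposition \ref{unicity monomial lifting} (in the form of Corollary \ref{cor:unicity mon lifting homogeneous}) certifies that the resulting seed is indeed a monomial lifting of the corresponding mutation of $t_\ii$, and the algebraic independence statement of Lemma \ref{delta s w alg indip U(w)} guarantees that the target variables genuinely form a transcendence basis, so the mutation sequence is well defined throughout.

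Once $t^*_\alpha$ is in hand, $X_\alpha$ appears as a monomial in its cluster variables, so $\cval_\alpha$ reads off the exponent of $X_\alpha$ faithfully; combined with Lemma \ref{inequality valuations min lifting}, this forces $\cval_\alpha = \val_\alpha$ on $\Br$, hence $\Br \subseteq \Li\bigl((t^*_\alpha)^{\{\alpha\}}\bigr)$ by Corollary \ref{cor:equality cond at d fixed}. The main obstacle is the combinatorial bookkeeping of the mutation sequence: verifying, step by step, that each mutation produces the predicted left-twist minor and that the lifting matrix $\nu$ and exchange matrix $\lif B$ transform consistently. A secondary technical difficulty is the handling of borderline cases, most notably when $s_\alpha \notin \supp(z)$ (where the functions $\Delta^{\pia}_{s_\alpha,-}$ vanish identically on $U(z)$ by Lemma \ref{vanishing minor s,w}, so one must argue directly that $V(X_\alpha)$ already meets the image of $\widetilde\phi$ in a way that keeps $\cval_\alpha(\Br) \subseteq \NN\cup\{\infty\}$) and when $\alpha^{\min} = \alpha^{\max}$ (in which case the relevant vertex is already highly frozen and no mutation is needed).
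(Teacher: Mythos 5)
Your reduction to a per-$\alpha$ valuation statement via Corollary \ref{cor:equality cond at d fixed} is fine, and you have correctly identified the key actors (the left-twisted minors $\Delta^{\varpi_{i_j}}_{s_\alpha, z_{\leq j}^{-1}}$, their algebraic independence, and the monomial identity at $\alpha^{\min}$). But the construction at the heart of your argument is a genuine gap. You assert that mutating $\lif t_\ii$ at the vertices $k$ with $i_k=\alpha$ turns the variables $\Delta^{\varpi_{i_k}}_{e,z_{\leq k}^{-1}}$ into monomial multiples of $\Delta^{\varpi_{i_k}}_{s_\alpha,z_{\leq k}^{-1}}$. Theorem \ref{relations minors FZ} does not deliver this: written as in \eqref{eq 11}, it expresses $D^{\varpi_{i_{k+1}}}_{s_\alpha,z_{\leq k+1}^{-1}}$ as $(g\,x_{k+1}+\varphi)/x_t$ with an \emph{earlier} cluster variable $x_t$ in the denominator, which is not an exchange relation of the seed at the vertex $k+1$ (whose exchange binomial is prescribed by \eqref{B for U(w)}). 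Nothing in the paper, and nothing in your sketch, shows the left-twisted minors are cluster variables of any seed in $\Delta(\lif t_\ii)$; your claim that the variable at $\alpha^{\min}$ "equals $X_\alpha$ up to a monomial in previous variables" also confuses the cluster variable $\lif x_{\alpha^{\min}}=\Delta^{\pia}_{e,z_{\leq \alpha^{\min}}^{-1}}$ with the restriction of a twisted minor to $U(z)$, which is what Lemma \ref{delta alpha min polinomio nei precedenti U(w)} is actually about.

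The second, independent gap is the conclusion. $X_\alpha=\lif x_\alpha$ is a frozen cluster variable of \emph{every} seed in $\Delta(\lif t_\ii)$, so "$X_\alpha$ appears as a monomial in the cluster variables of $t^*_\alpha$" is vacuous and cannot "force" $\cval_\alpha=\val_\alpha$; Lemma \ref{inequality valuations min lifting} only gives $\val_\alpha\geq\cval_\alpha$, and the entire difficulty is the reverse inequality. The paper never mutates: it supposes $f=P/\lif x_\alpha\in\Br$ with $P$ not divisible by $\lif x_\alpha$, expands $f$ in the explicit chart $\iota_\alpha$ of Lemma \ref{chart Levi case} around the generic point of $\overline{B^-s_\alpha B}$, and observes that regularity forces the leading coefficient $f_0$ to vanish. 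That $f_0$ is a polynomial in the functions $p_{j,0}=D^{\varpi_{i_j}}_{s_\alpha,z_{\leq j}^{-1}}$ -- which arise here as the restrictions of the cluster variables of the \emph{initial} seed to the divisor, not as new cluster variables -- and the combination of Lemmas \ref{p alpha min 0 polinomio nei precedenti Levi}, \ref{p k 0 algebraically independent Levi} with the $X(T)$-weight constraint \eqref{eq 4} and the triangularity of the map $\pi$ forces $P=0$, a contradiction. Finally, your "borderline case" $s_\alpha\notin\supp(z)$ does not occur: Lemma \ref{support z Levi} shows $\supp(z)=\Delta$, and this is precisely where the simplicity of $G$ is used.
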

 Note that, the branching problem from a semisimple group to a Levi subgroup, can always be reduced to  the study of a finite number of branchings from a simple group, to a Levi subgroup.
\begin{coro}
    If $G$ is simply laced and $\ii, \ii' \in R(z)$, then $\lif t_\ii^D \simeq \lif t_{\ii'}^D$.
\end{coro}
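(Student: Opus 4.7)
The plan is to upgrade the mutation equivalence $t_\ii \sim t_{\ii'}$ on the base to the corresponding equivalence $\lif t_\ii^D \sim \lif t_{\ii'}^D$ on the lifts, using the commutativity of minimal lifting with mutation provided by Corollary~\ref{coro:if equality monomial lifting commutes}.

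First, I will invoke the known fact (recalled in the paragraph preceding Subsection~\ref{sec:Technical properties of minors}) that in the simply laced case the cluster structure on $\CC[U(z)]$ is independent of the choice of reduced expression for $z$, so $t_\ii \sim t_{\ii'}$. Concretely, I fix a sequence of mutable vertices $(k_1,\dots,k_r)$ such that $t_{\ii'}=\mu_{k_r}\circ\cdots\circ\mu_{k_1}(t_\ii)$, and I write $t_\ii^{(j)}=\mu_{k_j}\circ\cdots\circ\mu_{k_1}(t_\ii)$ for the intermediate seeds, with $t_\ii^{(0)}=t_\ii$ and $t_\ii^{(r)}=t_{\ii'}$. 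Correspondingly, I denote $\nu_\ii$ and $\nu_{\ii'}$ the minimal lifting matrices of $t_\ii$ and $t_{\ii'}$ associated to $\lX$, so that the minimal monomial liftings are $\lif t_\ii^D$ and $\lif t_{\ii'}^D$.

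Next, Theorem~\ref{equality Levi} supplies the equality $\uclu(\lif t_\ii^D)=\Br(G,G_I)=\Orb_\lX(\lX)$, which is precisely the hypothesis of Corollary~\ref{coro:if equality monomial lifting commutes}. Applying that corollary to the first mutation $\mu_{k_1}$ shows that $\mu_{k_1}(\lif t_\ii^D)$ is the minimal monomial lifting of $t_\ii^{(1)}$ associated to $\lX$; in particular its upper cluster algebra still equals $\Orb_\lX(\lX)$ (the upper cluster algebra being mutation invariant), so the hypothesis of the corollary is restored at the new seed. Iterating along the sequence $(k_1,\dots,k_r)$ gives
\[
\mu_{k_r}\circ\cdots\circ\mu_{k_1}(\lif t_\ii^D)=\lif t_{\ii'}^D,
\]
by the defining uniqueness of the minimal monomial lifting at the terminal seed $t_{\ii'}$. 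This is the desired mutation equivalence $\lif t_\ii^D \sim \lif t_{\ii'}^D$.

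The only nontrivial ingredient is the propagation of the hypothesis of Corollary~\ref{coro:if equality monomial lifting commutes} along the mutation sequence; this is automatic once it is granted at the initial seed, which is exactly what Theorem~\ref{equality Levi} supplies. Without that starting equality, $\mu_{k_1}(\nu_\ii)$ would still be a perfectly good lifting matrix on $\mu_{k_1}(t_\ii)$, but there would be no reason for it to coincide with the minimal lifting matrix of $\mu_{k_1}(t_\ii)$ associated to $\lX$, and the iteration would break down after a single step. Hence the main obstacle has in fact already been overcome in the proof of Theorem~\ref{equality Levi}, and the corollary reduces to a formal transport argument.
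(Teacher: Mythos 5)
Your proof is correct and follows exactly the paper's route: the paper's own proof cites the mutation equivalence $t_\ii \sim t_{\ii'}$ in the simply laced case, Theorem \ref{equality Levi}, Corollary \ref{coro:if equality monomial lifting commutes}, and Lemma \ref{mutation commutes lifting}, which is precisely the argument you spell out. Your explicit remark that the hypothesis of Corollary \ref{coro:if equality monomial lifting commutes} propagates along the mutation sequence because the upper cluster algebra is mutation invariant is the one point the paper leaves implicit, and you handle it correctly.
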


\begin{proof}
    Since $G$ is simply laced,  $t_\ii \simeq t_{\ii'}$. Then $\lif t_\ii^D \simeq \lif t_{\ii'}^D$ by Theorem \ref{equality Levi}, Corollary \ref{coro:if equality monomial lifting commutes} and Lemma \ref{mutation commutes lifting}.
\end{proof}

From now on, $\hG$ and $G$ are as in the hypothesis of Theorem \ref{equality Levi} and $\ii \in R(z)$ is fixed. Recall that $z=w_{0,I}^\vee= w_0w_{0,I}$. We denote $t=t_\ii$ and $\Br=\Br(G,\hG)$. Here $D= \Delta$ and we have a cartesian square
\begin{equation}
  \label{eq:cartesian square Levi}  
 \begin{tikzcd}
U^- \times T \times Y \times \hU \arrow["\psi", r] \arrow[d]  \arrow[dr, phantom, "\lrcorner", very near start] 
& G \arrow["\pi" , d] \\
T \times Y \arrow["\phi", r]
& X
\end{tikzcd}
\end{equation}
where the leftmost vertical map is defined by $(u^-, h, y, \wh u) \longmapsto (h , y)$, $\psi$ by $(u^-, h, y, \wh u) \longmapsto u^-h y \wh u$ and $\pi$ is the natural projection. Note that the image of $\psi$ is $G_0.$ 

\bigskip

Recall that, for a $T$-stable subgroup $H$ of $U$ and $\alpha \in \Delta$, $H_{\setminus \alpha} = H \cap s_\alpha U s_\alpha$.

\begin{lemma}
\label{chart Levi case}
For $\alpha \in D$, let $\iota_\alpha$ be the map defined by 
\begin{enumerate}
    \item If $\alpha \in I$
    \begin{equation*}
        \begin{array}{c c l}
         \iota_\alpha :  U^- \times T \times \CC \times Y \times \hU_{\setminus \alpha}  & \longto   & G\\
         x=(u^-,h,t,y,\widehat u) & \longmapsto & \iota_\alpha(x)= u^-h \sba^{-1} x_{-\alpha}(t)y \widehat u.
        \end{array}
    \end{equation*}

    \item If $\alpha \not \in I$
    \begin{equation*}
        \begin{array}{r c l}
         \iota_\alpha :  U^- \times T \times \CC \times Y_{\setminus \alpha} \times \hU  & \longto   & G\\
         x=(u^-,h,t,y, \widehat u) & \longmapsto & \iota_\alpha(x)=u^-h \sba^{-1} x_{-\alpha}(t)y \widehat u.
        \end{array}
    \end{equation*}
\end{enumerate}
Then, $\iota_\alpha$ is an open embedding whose image intersects $\overline{B^- s_\alpha B}$. Moreover, $\iota_\alpha(x) \in \overline{B^- s_\alpha B}$ if and only if $t = 0$, otherwivse $\iota_\alpha(x) \in G_0$.
\end{lemma}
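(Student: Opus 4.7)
My plan is to reduce $\iota_\alpha$ to a twisted form of the standard Bruhat parametrization of the big cell $G_0=U^-TU$, via right-multiplication by $\sba$. First I would set up the product decompositions that allow factoring $\iota_\alpha$. In case $\alpha\in I$, the group $U(z)=Y$ is the product of the root subgroups $U_\beta$ for $\beta\in\Phi(z)=\Phi^+\setminus\Phi_I^+$, none of which equals $\alpha$, while $\hU_{\setminus\alpha}$ is the product of $U_\beta$ for $\beta\in\Phi_I^+\setminus\{\alpha\}$; since the disjoint union of the two indexing sets is $\Phi^+\setminus\{\alpha\}$, multiplication induces an isomorphism of varieties $Y\times\hU_{\setminus\alpha}\xrightarrow{\sim} U_{\setminus\alpha}$. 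In the case $\alpha\notin I$ one has $\alpha\in\Phi(z)$, and a symmetric analysis yields $Y_{\setminus\alpha}\times\hU\xrightarrow{\sim} U_{\setminus\alpha}$. In either case, $\iota_\alpha$ factors as $\iota'_\alpha\circ(\mathrm{id}\times\mathrm{id}\times\mathrm{id}\times\mathrm{mult})$, where
$$\iota'_\alpha\colon U^-\times T\times\CC\times U_{\setminus\alpha}\longto G,\qquad (u^-,h,t,u)\longmapsto u^- h\sba^{-1}x_{-\alpha}(t)u.$$
It therefore suffices to show that $\iota'_\alpha$ is an open embedding into $G$.

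Next I would post-compose $\iota'_\alpha$ with right translation by $\sba$, which is a global automorphism of $G$. Since $s_\alpha$ permutes $\Phi^+\setminus\{\alpha\}$, conjugation by $\sba$ preserves $U_{\setminus\alpha}$, and a direct $\SL_2$-computation gives $\sba^{-1}x_{-\alpha}(t)\sba=x_\alpha(-t)$. Setting $s_\alpha(u):=\sba^{-1}u\sba\in U_{\setminus\alpha}$ for $u\in U_{\setminus\alpha}$, one computes
$$\iota'_\alpha(u^-,h,t,u)\cdot\sba= u^- h\cdot\bigl(\sba^{-1}x_{-\alpha}(t)\sba\bigr)\cdot\bigl(\sba^{-1}u\sba\bigr)= u^- h\,x_\alpha(-t)\,s_\alpha(u).$$
The right-hand side is the standard Bruhat isomorphism $U^-\times T\times U_\alpha\times U_{\setminus\alpha}\xrightarrow{\sim} G_0$ precomposed with the isomorphisms $\CC\to U_\alpha,\, t\mapsto x_\alpha(-t)$ and $s_\alpha\colon U_{\setminus\alpha}\xrightarrow{\sim}U_{\setminus\alpha}$. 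Hence $\iota'_\alpha(\cdot)\cdot\sba$ is an isomorphism onto $G_0$, and therefore $\iota'_\alpha$ is an isomorphism onto the open subset $G_0\cdot\sba^{-1}\subseteq G$. Combined with the factorization above, this shows that $\iota_\alpha$ is an open embedding.

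For the remaining assertions about Bruhat position: if $t\neq 0$ then the commutation relation \eqref{x alpha s alpha = ..} rewrites $\iota'_\alpha(u^-,h,t,u)= u^-x_{-\alpha}(-t^{-1}\alpha(h)^{-1})\cdot h\alpha^\vee(t)\cdot x_\alpha(t^{-1})u$, exhibiting it as an element of $U^-TU=G_0$; if $t=0$ then $\iota'_\alpha(u^-,h,0,u)= u^- h\sba^{-1}u\in B^-s_\alpha B\subseteq\overline{B^-s_\alpha B}$. The two loci are disjoint since $\overline{B^-s_\alpha B}=V(X_\alpha)$ by Lemma \ref{lem:basic prop minors} and $X_\alpha$ is non-vanishing on $G_0$, which gives the dichotomy claimed in the statement, and the $t=0$ slice witnesses that the image meets $\overline{B^-s_\alpha B}$. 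The main obstacle is the bookkeeping for the case distinction $\alpha\in I$ versus $\alpha\notin I$ in the product decomposition of $U_{\setminus\alpha}$; once this is in place, the identity $\iota'_\alpha(\cdot)\sba=\text{(Bruhat parametrization of }G_0)$ makes the rest automatic.
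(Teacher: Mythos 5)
Your proof is correct and follows essentially the same route as the paper's: reduce to the map $(u^-,h,t,u)\mapsto u^-h\sba^{-1}x_{-\alpha}(t)u$ on $U^-\times T\times\CC\times U_{\setminus\alpha}$ via the root-subgroup factorizations of $U_{\setminus\alpha}$ (the content of Humphreys, Prop.\ 28.1, which the paper cites), then identify it with the Bruhat parametrization of $G_0$ right-translated by $\sba^{-1}$ using $x_\alpha(-t)\sba^{-1}=\sba^{-1}x_{-\alpha}(t)$, and settle the Bruhat position with \eqref{x alpha s alpha = ..}. The only cosmetic difference is that you cancel $\sba^{-1}$ by right translation and conjugation, whereas the paper starts from $u^-hu\sba^{-1}$ and commutes $x_\alpha(-t)$ past $\sba^{-1}$; these are the same computation.
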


\begin{proof}
    It's clear that, for any $\alpha \in \Delta$, the map 
    \begin{equation}
    \label{eq 2}
         \begin{array}{r c l}
        \widetilde \iota_\alpha :  U^- \times T \times U & \longto   & G\\
         (u^-,h,u) & \longmapsto & u^-h u \sba^{-1}
        \end{array}
    \end{equation}
is an open embedding whose image intersects $\overline{B^- s_\alpha B}$. Moreover, the map 
 \begin{equation*}
         \begin{array}{r c l}
        \CC \times U_{\setminus \alpha } & \longto   & U\\
         (t,u) & \longmapsto & x_\alpha(-t)u
        \end{array}
    \end{equation*}
is an isomorphism and conjugation by $\sba$ induces an automorphism of $U_{\setminus \alpha }$. 
To conclude that $\iota_\alpha$ is an open embedding, note first that the product induces an isomorphism between $Y \times \hU_{\setminus \alpha}$ (resp. $Y_{\setminus \alpha} \times \hU$) and $U_{\setminus \alpha}$, if $\alpha \in I$ (resp $\alpha \not \in I)$, because of \cite[Proposition 28.1]{Hum:refgroup}. Finally, we use that for any $t \in \CC$, $$ x_\alpha(-t) \sba^{-1}= \sba^{-1} x_{-\alpha}(t), $$
which is an easy identity that can be checked in $\SL_2$. It's clear from the definition of $\iota_\alpha$ that $\iota_\alpha(x) \in \overline{ B^- s_\alpha B} $ if the $t$-coordinate of $x$ is zero. If $t \neq 0$, $\iota_\alpha(x) \in G_0$ because of the second identity in \eqref{x alpha s alpha = ..}.
\end{proof}

By the previous lemma, the divisor $\overline{B^- s_\alpha B}$, in the chart given by $\iota_\alpha$, corresponds to $\{ t=0 \}.$ Then we make a very important computation.

\begin{lemma}
    \label{fundamental computation Levi case}
Let $\lambda \in X(T)$, $\alpha \in D$ and $f \in \CC(\lX)_{\lambda,\bullet}.$ For a generic $x$ as in the notation of Lemma \ref{chart Levi case}, such that $t \neq 0$, we have $$f(\iota_\alpha(x))= \lambda(h)t^{\langle \lambda, \alpha^\vee \rangle} f(x_\alpha(t\inv) y).$$
Moreover, if $\beta \in D$, then 
$$ X_\beta(\iota_\alpha (x))= \pib(h) t^{\langle \pib, \alpha^\vee \rangle}.$$
\end{lemma}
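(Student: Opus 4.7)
The plan is to exploit the invariance properties enjoyed by any $f \in \CC(\lX)_{\lambda,\bullet}$ when viewed as a rational function on $G$ through the cartesian diagram \eqref{eq:cartesian square Levi}. Since the right-hand map $\pi$ there is a categorical quotient by the left $U^-$ and right $\hU$ actions, $f$ is left $U^-$-invariant and right $\hU$-invariant on $G$; and since the $T$-part of the $T \times \hT$-action on $\lX$ comes from left multiplication on $G$, $f$ is a left $T$-semi-invariant of weight $\lambda$, i.e.\ $f(hg)=\lambda(h)f(g)$. Applied to
\[
\iota_\alpha(x)=u^-\cdot h\cdot \sba^{-1}x_{-\alpha}(t)y\cdot \wh u
\]
(noting that $\wh u\in \hU$ in both cases covered by Lemma \ref{chart Levi case}), these three invariances at once remove $u^-$ and $\wh u$ and extract the scalar $\lambda(h)$, yielding $f(\iota_\alpha(x))=\lambda(h)\, f(\sba^{-1}x_{-\alpha}(t)y)$.

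Next I would substitute the fundamental commutation relation \eqref{x alpha s alpha = ..}, valid for $t\neq 0$,
\[
\sba^{-1}x_{-\alpha}(t)=x_{-\alpha}(-t^{-1})\,\alpha^\vee(t)\, x_\alpha(t^{-1}).
\]
The leftmost factor lies in $U^-$ and is absorbed by left $U^-$-invariance; the middle factor is in $T$ and, by the weight property, contributes $\lambda(\alpha^\vee(t))=t^{\langle \lambda,\alpha^\vee\rangle}$. This produces the first asserted identity
\[
f(\iota_\alpha(x))=\lambda(h)\, t^{\langle \lambda,\alpha^\vee\rangle}\, f(x_\alpha(t^{-1})y).
\]

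For the second identity, I would simply specialise the first to $\lambda=\pib$ and $f=X_\beta=\delta^{\varpi_\beta}_{e,e}$, which indeed lies in $\CC[\lX]_{\pib,\bullet}$ by the basic properties recorded in Lemma \ref{lem:basic prop minors}. What remains is to evaluate $\delta^{\varpi_\beta}_{e,e}(x_\alpha(t^{-1})y)$. But $x_\alpha(t^{-1})y\in U\subseteq G_0$, and for any $u\in U$ the Gauss decomposition gives $[u]_0=e$, so by the characterisation of $\delta^{\varpi_\beta}_{e,e}$ recalled in Section \ref{Some classical facts and notation section} this value is $\pib(e)=1$.

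No step looks like a genuine obstacle: the argument is a bookkeeping exercise combining the cartesian square \eqref{eq:cartesian square Levi}, the $\SL_2$-identity \eqref{x alpha s alpha = ..}, and the triviality of principal minors on $U$. The only point requiring a little care is verifying that the three invariances of $f$ on $\lX$ do lift to the stated invariances on $G$; this is immediate from the fact that $\pi$ in \eqref{eq:cartesian square Levi} realises $\lX$ as the $(U^-\times\hU)$-quotient of $G$ restricted to the $T$-homogeneous part.
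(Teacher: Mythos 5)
Your proof is correct and follows essentially the same route as the paper: strip off $u^-$ and $\wh u$ by left $U^-$- and right $\hU$-invariance, extract $\lambda(h)$, apply the second commutation relation of \eqref{x alpha s alpha = ..} to $\sba^{-1}x_{-\alpha}(t)$, absorb the $U^-$-factor, and read off $t^{\langle\lambda,\alpha^\vee\rangle}$ from the torus factor; the second identity is a specialisation plus the observation that $\delbee$ is identically $1$ on $U$. The paper compresses the second part to a pointer to Lemma \ref{lem:basic prop minors}, while you spell it out, but the argument is the same.
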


\begin{remark}
    With little abuse, for $\lambda \in X(T)$, we denote by $\CC(\lX)_{\lambda,\bullet}$ the set of $T$-equivariant rational functions of weight $\lambda.$
\end{remark}
\begin{proof}
If $x=(u^-,h,t,y,\widehat u)$, using the second formula in \eqref{x alpha s alpha = ..} and the fact that $f$ is $U^- \times \hU$-invariant, we have that
 \begin{equation*}
     \begin{array}{rl}
          f(\iota_\alpha(x)) = & f(h \sba^{-1} x_{-\alpha}(t) y)  \\
         =  & \lambda(h) f( x_{-\alpha}(-t\inv)\alpha^\vee(t)x_\alpha(t\inv) y)\\
         = & \lambda(h)t^{\langle \lambda, \alpha^\vee \rangle} f(x_\alpha(t\inv) y). 
     \end{array}
 \end{equation*}
 The statement on $X_\beta=\delbee$ follows immediately from Lemma \ref{lem:basic prop minors}.
 
 \end{proof}
We use the convention that, whenever $\alpha \in D$ is fixed, for any $v,w \in W$ and $\beta \in \Delta$, we denote
\begin{equation}
    \label{definizione D v w}
    D^{\pib}_{v,w}= \begin{cases}
        (\delbvw)_{|Y} & \text{if} \, \, \alpha \in I \\
        (\delbvw)_{|Y_{\setminus \alpha }} & \text{if} \, \, \alpha \not \in I. 
    \end{cases}
\end{equation}

\begin{prop}
    \label{valuation and formula cluster variable Levi}
    Let $k \in I$ and $\alpha \in D$. Then $\nu_{\alpha,k}=\delta_{\alpha,i_k}.$ Moreover for $x$ as in Lemma \ref{chart Levi case} 
    $$\lif x_k\bigl(\iota_\alpha(x)\bigr) = \varpi_{i_k}(h)\biggl(p_{k,0}(y) + t p_{k,1}(y)\biggr) \quad \text{where} \quad p_{k,0}=D^{\varpi_{i_k}}_{s_\alpha, z_{\leq k}^{-1}}.$$
   Moreover:
       $$p_{k,1}= \begin{cases}
           0 & \textit{if} \quad \alpha \neq i_k\\
       D^{\varpi_{i_k}}_{e, z_{\leq k}^{-1}} & \text{otherwise}.
   \end{cases}$$
\end{prop}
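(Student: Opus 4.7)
My strategy is to realise $\lif x_k$ explicitly as the generalised minor $F := \Delta^{\varpi_{i_k}}_{e, z_{\leq k}^{-1}}$, viewed as a $U^- \times \hU$-invariant regular function on $G$ and hence as an element of $\Br$. Once this identification is in hand, the value $\nu_{\alpha,k} = \delta_{\alpha, i_k}$ drops out, and the evaluation formula follows from a direct computation using the commutation relations \eqref{x alpha s alpha = ..} together with Lemma~\ref{minors, developpement left translation plus}.

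\textbf{Step 1: $F \in \Br$ and $F = X_{i_k}\,(1 \otimes x_k)$ in $\CC(\lX)$.} Left $U^-$-invariance of $F$ is immediate from Lemma~\ref{lem:basic prop minors}. For right $\hU$-invariance, the same lemma reduces the problem to $z_{\leq k}\,\hU\,z_{\leq k}^{-1} \subseteq U$, i.e.\ $\Phi(z_{\leq k}) \cap \Phi_I^+ = \emptyset$. Since $z = w_0 w_{0,I}$ satisfies $\Phi(z) = \Phi^+ \setminus \Phi_I^+$ and $\Phi(z_{\leq k}) \subseteq \Phi(z)$, this is automatic. Evaluating $F$ on $T \times Y$ through $\phi$ yields $F(hy) = \varpi_{i_k}(h)\,x_k(y)$ (using $x_k = F_{|Y}$), hence the identity $F = X_{i_k}\,(1 \otimes x_k)$ in $\CC(\lX)$.

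\textbf{Step 2: computing $\nu_{\alpha,k}$ and identifying $\lif x_k$.} By Lemmas~\ref{lem:minors irred} and~\ref{branching algebra factorial f.type}, $F$ is irreducible in $\Br$. Moreover $F$ and $X_\alpha$ are never associate in $\Br$: comparing right $T$-weights, this would force $z_{\leq k}^{-1}\varpi_{i_k} = \varpi_{i_k}$, which fails because $s_{i_k}$ appears in the reduced expression $s_{i_1}\cdots s_{i_k}$ of $z_{\leq k}^{-1}$ and $s_{i_k} \notin \mathrm{Stab}(\varpi_{i_k})$. Hence $\val_\alpha(F) = 0$ for every $\alpha \in D$. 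Combined with Step~1 this gives $\val_\alpha(1 \otimes x_k) = -\delta_{\alpha, i_k}$, so $\nu_{\alpha,k} = \delta_{\alpha, i_k}$ by definition of the minimal lifting matrix, and consequently $\lif x_k = (1 \otimes x_k)\,X^{\nu_{\bullet, k}} = F$ inside $\Br$.

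\textbf{Step 3: the evaluation at $\iota_\alpha(x)$.} Invariance of $F$ under $U^-$ and $\hU$ removes $u^-$ and $\hat u$, while the left $T$-weight property extracts $\varpi_{i_k}(h)$, reducing the computation to $\varpi_{i_k}(h)\cdot\Delta^{\varpi_{i_k}}_{s_\alpha, z_{\leq k}^{-1}}(x_{-\alpha}(t)\,y)$. For $t \neq 0$, I rewrite $\sba^{-1}x_{-\alpha}(t) = x_{-\alpha}(-t^{-1})\,\alpha^\vee(t)\,x_\alpha(t^{-1})$ via \eqref{x alpha s alpha = ..}, absorb the $U^-$ factor, and extract the coroot, obtaining
\[
\Delta^{\varpi_{i_k}}_{s_\alpha, z_{\leq k}^{-1}}(x_{-\alpha}(t)\,y) \;=\; t^{\delta_{\alpha, i_k}}\,\Delta^{\varpi_{i_k}}_{e, z_{\leq k}^{-1}}\bigl(x_\alpha(t^{-1})\,y\bigr).
\]
Lemma~\ref{minors, developpement left translation plus}, applied with $N = \langle\varpi_{i_k},\alpha^\vee\rangle = \delta_{\alpha, i_k}$, expands the right-hand side as a Laurent polynomial in $t$; after multiplying by $t^{\delta_{\alpha, i_k}}$ one obtains a polynomial in $t$ with constant term $D^{\varpi_{i_k}}_{s_\alpha, z_{\leq k}^{-1}}(y) = p_{k,0}(y)$, and linear coefficient $D^{\varpi_{i_k}}_{e, z_{\leq k}^{-1}}(y) = p_{k,1}(y)$ when $\alpha = i_k$, and zero otherwise — exactly the claimed formula.

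\textbf{Main obstacle.} The only delicate point is the bookkeeping in Step~3: the power $t^{\delta_{\alpha,i_k}}$ coming from $\alpha^\vee(t)$ must be tracked carefully against the Laurent expansion of Lemma~\ref{minors, developpement left translation plus}. Treating the cases $\alpha = i_k$ ($N = 1$) and $\alpha \neq i_k$ ($N = 0$, where the remark following the lemma identifies $P_0 = P_N$) separately keeps the cancellation of negative powers of $t$ transparent.
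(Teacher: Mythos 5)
Your proposal is correct, and it differs from the paper's proof in one substantive way. The paper establishes $\nu_{\alpha,k}=\delta_{\alpha,i_k}$ by expanding $1\otimes x_k$ directly in the chart $\iota_\alpha$ (via Lemma \ref{fundamental computation Levi case} and Lemmas \ref{minors, developpement right translation}, \ref{minors, developpement left translation}) and then checking that the coefficient of $t^{-1}$, namely $\Delta^{\varpi_{i_k}}_{s_\alpha,z_{\leq k}^{-1}}$, does not vanish on $Y$ (resp. $Y_{\setminus\alpha}$) using Lemma \ref{vanishing minor s,w} and statement 5 of Lemma \ref{lem:basic prop minors}; this forces a case split between $\alpha\in I$ and $\alpha\notin I$ and some conjugation manipulation with $x_\alpha(t^{-1})\,y\,x_\alpha(-t^{-1})$. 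You instead identify $\lif x_k$ globally with the minor $\Delta^{\varpi_{i_k}}_{e,z_{\leq k}^{-1}}\in\Br$ and compute $\val_\alpha$ of that minor to be zero by irreducibility (Lemmas \ref{lem:minors irred}, \ref{branching algebra factorial f.type}, \ref{lem:valuations in G not X}) together with the weight comparison $z_{\leq k}^{-1}\varpi_{i_k}\neq\varpi_{i_k}$ (valid, since $s_{i_k}\in\supp(z_{\leq k}^{-1})$ and the stabiliser of $\varpi_{i_k}$ is the standard parabolic on $\Delta\setminus\{i_k\}$). This is precisely the mechanism the paper uses in Lemma \ref{computation z(nu) for G} for the double Bruhat example, transplanted to the branching scheme; it buys you a uniform treatment of $\alpha\in I$ and $\alpha\notin I$ and avoids the non-vanishing lemma, at the cost of having to verify right $\hU$-invariance of the minor (which you do correctly via $\Phi(z_{\leq k})\subseteq\Phi(z)=\Phi^+\setminus\Phi_I^+$). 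Your Step 3 then coincides with the paper's computation: the identity $\sba^{-1}x_{-\alpha}(t)=x_{-\alpha}(-t^{-1})\alpha^\vee(t)x_\alpha(t^{-1})$ plus Lemma \ref{minors, developpement left translation plus} with $N=\delta_{\alpha,i_k}$, and the bookkeeping of the prefactor $t^{\delta_{\alpha,i_k}}$ against the Laurent expansion is handled correctly in both cases.
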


\begin{remark}
Note that $p_{k,0}$ and $p_{k,1}$ depend on $\alpha.$ So, when we use such a notation, we assume that it refers to an $\alpha$ which is sufficiently clear from the context. 
\end{remark}

\begin{proof}
    First, we compute $\nu.$
    
    Suppose that $\alpha \in I$ and that $x$ is such that $t \neq 0$. Since $1 \otimes x_k \in \CC(\lX)_{0,\bullet}$, from Lemma \ref{fundamental computation Levi case} we deduce that $x_k(\iota_\alpha(x))=1 \otimes x_k(x_\alpha(t\inv) y)$. Recall that, in this case, $y \in Y=U(z).$ Note that conjugation by $x_\alpha(t\inv)$ stabilises $Y$.
    In fact, if $\delta \in \Phi(z)=\Phi^+ \setminus \Phi^+_I$ and $\delta + \alpha \in \Phi$, then $\delta + \alpha \in \Phi(z).$ Hence 
    $$
    \begin{array}{rl }
        1 \otimes x_k(x_\alpha(t\inv) y) & 
        =1 \otimes x_k(x_\alpha(t\inv) y x_\alpha(- t\inv ) x_\alpha(t\inv))\\
        & = 1 \otimes x_k(x_\alpha(t\inv) y x_\alpha (-t\inv))
    \end{array}$$
    where the last equality follows from the fact that, if $\alpha \in I$, then $x_\alpha(t\inv) \in \hU$. Note that (see \eqref{eq:cartesian square Levi})
    $$\phi \biggl((e, x_\alpha(t\inv) y x_\alpha (-t\inv))\biggr) = \pi \bigl(x_\alpha(t\inv) y x_\alpha (-t\inv)\bigr).$$
    In particular, from the definition of $1 \otimes x_k$ we deduce the first of the following equalities 
    \begin{equation*}
        \begin{array}{rl}
  1 \otimes x_k(x_\alpha(t\inv) y x_\alpha (-t\inv))= &  x_k(x_\alpha(t\inv) y x_\alpha (-t\inv))\\[0.5em]
  = & \Delta^{\varpi_{i_k}}_{e, z_{\leq k}^{-1}}((x_\alpha(t\inv) y x_\alpha (-t\inv))\\[0.9em]
  = & \Delta^{\varpi_{i_k}}_{e, z_{\leq k}^{-1}}((x_\alpha(t\inv) y ).
  \end{array}
    \end{equation*}
The second equality is the definition of $x_k$ and the last one follows from Lemma \ref{minors, developpement right translation} and the fact that $z_{\leq k } \alpha \in \Phi^+$. In fact, from the definition of $z_{\leq k } $, we have that $z_{\leq k }  \leq_L z$, where $\leq_L$ denotes the left weak order. Hence, $\Phi(z_{\leq k } ) \subseteq \Phi(z)$ and $\alpha \not \in \Phi(z).$ Now we apply Lemma \ref{minors, developpement left translation}. \\
If $i_k \neq \alpha$, we deduce that 
$$  \Delta^{\varpi_{i_k}}_{e, z_{\leq k}^{-1}}((x_\alpha(t\inv) y )=  \Delta^{\varpi_{i_k}}_{e, z_{\leq k}^{-1}}( y ) = \Delta^{\varpi_{i_k}}_{s_\alpha , z_{\leq k}^{-1}}( y ).$$
The last equality follows from Lemma \ref{lem:basic prop minors}, since $s_\alpha \varpi_{i_k}=\varpi_{i_k}$. In particular, we deduce that $\nu_{\alpha,k}=0.$\\
If $i_k=\alpha$, we have that 
$$ \Delta^{\varpi_{i_k}}_{e, z_{\leq k}^{-1}}((x_\alpha(t\inv) y ) = \Delta^{\varpi_{i_k}}_{e, z_{\leq k}^{-1}}( y ) + t\inv \Delta^{\varpi_{i_k}}_{s_\alpha, z_{\leq k}^{-1}}( y ).$$
By Lemma \ref{vanishing minor s,w}, we have that $\Delta^{\varpi_{i_k}}_{s_\alpha, z_{\leq k}^{-1}}$ doesn't vanish on $U$. In fact, $s_{i_k}=s_\alpha \in \supp( z_{\leq k}^{-1})$. Then, by Lemma \ref{lem:basic prop minors}, $\Delta^{\varpi_{i_k}}_{s_\alpha, z_{\leq k}^{-1}}$ doesn't vanish on 
$$U \cap s_\alpha U s_\alpha \cap  z_{\leq k}^{-1} U^-  z_{\leq k} \subseteq U \cap z\inv U^- z =Y. $$
We deduce that $\nu_{\alpha,k}=1.$

\bigskip

We assume now that $\alpha \not \in I$. In this case, $x_\alpha(t\inv) \in Y$, so for $y \in Y_{\setminus \alpha}$ we have that $x_\alpha(t^\inv)y \in Y$. In particular, by a very similar but simpler argument then the previous one, we have that 

$$ 1\otimes x(k)(\iota_\alpha(x))=1 \otimes x_k(x_\alpha(t\inv) y)= \Delta^{\varpi_{i_k}}_{e, z_{\leq k}^{-1}}(x_\alpha(t\inv) y).$$ 
Then, literally the same proof of the previous case, up to replacing $Y$ with $Y_{\setminus \alpha }$, implies that $\nu_{\alpha,k}=\delta_{\alpha,i_k}$.

\bigskip

From the explicit expression of $\nu$, it follows that $\lif x_k= X_{i_k}(1 \otimes x_k).$ Then, the last part of the statement follows from the previous calculations and Lemma \ref{fundamental computation Levi case}.
\end{proof}

\begin{coro}
\label{cor:components cluster variables Levi}
    For any $k \in [l]$, $\rho(z_{\leq k}^{-1} \varpi_{i_k})$ is a dominant weight and the $\hG$-representation $V\bigl(\rho(z_{\leq k}^{-1} \varpi_{i_k})\bigr)$ is a sub-representation of $V(\varpi_{i_k}).$
\end{coro}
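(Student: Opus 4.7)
The plan is to read off the statement directly from the $X(T)\times X(\hT)$-degree of the lifted cluster variable $\lif x_k \in \Br$, using the homogeneous decomposition of $\Br$ recalled in the introduction.

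First, I would combine Proposition \ref{valuation and formula cluster variable Levi} with Lemma \ref{lem:degree lifting variables}. The former shows $\nu_{\alpha,k}=\delta_{\alpha,i_k}$ for every $\alpha \in D$, which under the canonical identification $\ZZ^D = X(T)$ sending $e_\alpha$ to $\varpi_\alpha$ reads $\nu_{\bullet,k}=\varpi_{i_k}$. Plugging this into Lemma \ref{lem:degree lifting variables}, the $\rho(\nu_{\bullet,k})$ and $-\rho(\varpi_{i_k})$ terms cancel and we obtain
\[
\deg(\lif x_k) \;=\; \bigl(\varpi_{i_k}\,,\,\rho(z_{\leq k}^{-1}\varpi_{i_k})\bigr) \;\in\; X(T)\times X(\hT).
\]

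Next, I would observe that $\lif x_k$ is a cluster variable of $\lif t_\ii^D$ and lies in $\uclu(\lif t_\ii^D) \subseteq \Br$ by Theorem \ref{application minimal mon lifting branching}; in particular it is nonzero. Hence the homogeneous component $\Br_{\varpi_{i_k},\,\rho(z_{\leq k}^{-1}\varpi_{i_k})}$ is nonzero.

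Finally, I would invoke the description of the graded pieces of $\Br$ recalled at the beginning of Section \ref{monomial lift branchig section} (equivalently, the isomorphism displayed in the abstract): the component $\Br_{\lambda,\hlambda}$ vanishes unless $(\lambda,\hlambda)\in X(T)^+\times X(\hT)^+$, in which case it is isomorphic to $\Hom_\hG(V(\hlambda),V(\lambda))$. Applied to $(\lambda,\hlambda)=(\varpi_{i_k},\rho(z_{\leq k}^{-1}\varpi_{i_k}))$, the non-vanishing proved above forces simultaneously that $\rho(z_{\leq k}^{-1}\varpi_{i_k})$ is a dominant weight of $\hT$ and that $\Hom_\hG\bigl(V(\rho(z_{\leq k}^{-1}\varpi_{i_k})),V(\varpi_{i_k})\bigr)\neq 0$. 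Since $\hG$ is reductive, the latter is equivalent to $V(\rho(z_{\leq k}^{-1}\varpi_{i_k}))$ being a $\hG$-subrepresentation of $V(\varpi_{i_k})$, which is the conclusion.

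There is no real obstacle: once Proposition \ref{valuation and formula cluster variable Levi} has pinned down $\nu_{\bullet,k}$, the rest is bookkeeping of the graduation. The only subtle point is ensuring the cancellation in the $X(\hT)$-degree, which relies precisely on $\nu_{\bullet,k}=\varpi_{i_k}$ rather than some more complicated expression.
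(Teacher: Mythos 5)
Your argument is exactly the paper's proof: the paper deduces the corollary in one line by noting that $\lif x_k$ is homogeneous of degree $(\varpi_{i_k},\rho(z_{\leq k}^{-1}\varpi_{i_k}))$ via Lemma \ref{lem:degree lifting variables} and Proposition \ref{valuation and formula cluster variable Levi}, and you have simply spelled out the cancellation in the $X(\hT)$-component and the non-vanishing of the corresponding graded piece of $\Br$. The proposal is correct and requires no changes.
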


\begin{proof}
   The cluster variable $\lif x_k$ is homogeneous of degree $( \varpi_{i_k} \, , \, \rho(z_{\leq k}^{-1} \varpi_{i_k}))$, because of Lemma \ref{lem:degree lifting variables} and Proposition \ref{valuation and formula cluster variable Levi}.
\end{proof}

\begin{lemma}
    \label{support z Levi}
   For any $I \subsetneq \Delta$, $\supp(z)=\Delta$.
\end{lemma}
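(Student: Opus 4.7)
The plan is to pass through the inversion set of $z$. First I would compute $\Phi(z):= \Phi^+ \cap z\inv \Phi^-$ explicitly. Since $z\inv = w_{0,I} w_0$ and $w_0 \Phi^- = \Phi^+$, we have $\Phi(z) = \Phi^+ \cap w_{0,I}\Phi^+$. Using the classical fact that $w_{0,I}$ sends $\Phi_I^+$ to $\Phi_I^-$ while stabilising $\Phi^+ \setminus \Phi_I^+$, this simplifies to
\[
\Phi(z) = \Phi^+ \setminus \Phi_I^+.
\]
As a sanity check, $|\Phi(z)| = |\Phi^+| - |\Phi_I^+| = \ell(w_0) - \ell(w_{0,I}) = \ell(z)$.

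Next, I would invoke the standard characterisation of $\supp(w)$ for an element $w$ of a Coxeter group: $\supp(w)$ coincides with the set of simple roots $\alpha \in \Delta$ that appear, with positive coefficient, in the expansion of at least one $\beta \in \Phi(w)$. The inclusion "$\subseteq$" is clear since every element of $\Phi(w)$ lies in the root subsystem generated by $\supp(w)$. The reverse inclusion follows by fixing a reduced expression $w=s_{i_1}\cdots s_{i_\ell}$, taking the last index $k$ with $s_{i_k}=s_\alpha$, and observing that $\beta = s_{i_\ell}\cdots s_{i_{k+1}}\alpha \in \Phi(w)$ has coefficient one on $\alpha$ because none of the remaining reflections equal $s_\alpha$.

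It remains to show that every simple root appears in some element of $\Phi(z) = \Phi^+ \setminus \Phi_I^+$. For $\alpha \in \Delta \setminus I$ this is trivial since $\alpha$ itself lies in $\Phi(z)$. For $\alpha \in I$, I would use that $G$ is simple (so the Dynkin diagram of $\Delta$ is connected) and that $I \subsetneq \Delta$ (so some $\gamma \in \Delta \setminus I$ exists). Pick a path in the Dynkin diagram joining $\alpha$ to $\gamma$ and let $\Phi'$ be the sub-root-system generated by the simple roots along this path. Since $\Phi'$ is irreducible, its highest root $\theta'$ has strictly positive coefficient on every simple root of the path, in particular on both $\alpha$ and $\gamma$. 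Hence $\theta' \in \Phi^+ \setminus \Phi_I^+ = \Phi(z)$ and $\alpha$ appears in $\theta'$, so $\alpha \in \supp(z)$.

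The argument is short and essentially combinatorial, so no step is a serious obstacle; the only point requiring some care is the characterisation of $\supp(w)$ via $\Phi(w)$, but this is a well-known fact about Coxeter groups that can be proved in a few lines as sketched above.
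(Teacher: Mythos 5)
Your proof is correct, and it takes a genuinely different route from the paper. The paper argues by contradiction: if $s_\alpha \notin \supp(z)$, then $z$ preserves the $\alpha$-coefficient of every root, so the highest root $\gamma$ of $\Phi$ (which has positive coefficient on every simple root, by simplicity of $G$) lies outside $\Phi(z)$; then the decomposition $\Phi^+ = \Phi(z) \sqcup \Phi(w_{0,I})$ forces $\gamma \in \Phi(w_{0,I}) \subseteq \Phi_I$, contradicting $I \subsetneq \Delta$. Your version is constructive: you first compute $\Phi(z) = \Phi^+ \setminus \Phi_I^+$ explicitly and isolate the characterisation of $\supp(w)$ as the set of simple roots occurring in some $\beta \in \Phi(w)$, and then for each $\alpha \in I$ you exhibit a witness in $\Phi(z)$, namely the highest root of the subsystem generated by a Dynkin-diagram path from $\alpha$ to a simple root outside $I$ (here simplicity enters via connectedness of the diagram, rather than via the global highest root). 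What your approach buys is a clean, reusable description of $\Phi(z)$ and of $\supp$ via inversion sets, and it makes the role of the simplicity hypothesis more local; what the paper's approach buys is brevity and avoiding the need to prove the $\supp$--$\Phi(w)$ characterisation. One small slip: your "$\subseteq$" label is reversed — the fact that $\Phi(w)$ lies in the subsystem generated by $\supp(w)$ proves that the set of simple roots appearing in $\Phi(w)$ is contained in $\supp(w)$, not the other way around — but the two arguments you give do cover both inclusions, so the proof is sound.
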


\begin{remark}
\label{rem: simple Levi}
    This is the only lemma in which we use that $G$ is simple.
\end{remark}

\begin{proof}
    Let $\alpha \in \Delta$ and $\Phi^+_\alpha=\{ \sum_{\beta \in \Delta} n_\beta \beta \in \Phi^+ \, : \, n_\alpha > 0 \} $. If by contradiction $s_\alpha \not \in \supp(z)$, then $z \Phi^+_\alpha \subseteq \Phi^+$. In fact, for any $\delta \in \Phi$, the $\alpha$-coefficients of $\delta$ and $z \delta$ (in the base $\Delta$) are the same. 
    
    Since $G$ is simple, we can consider $\gamma \in \Phi^+$ the longest root of $\Phi$. We have that $\gamma \in \Phi^+_\beta $ for any $\beta \in \Delta$, hence $\gamma \not \in \Phi(z)$. But then $\gamma \in \Phi(w_{0,I})$. This is a contradiction since $\gamma \not \in \Phi_I.$
\end{proof}
By the previous lemma, for any $\alpha \in \Delta$, $\alpha^{\min}$ is well defined.

\begin{lemma}
\label{p alpha min 0 polinomio nei precedenti Levi}
   In the notation of Proposition \ref{valuation and formula cluster variable Levi} 
   $$ \displaystyle p_{\alpha^{\min},0}= \prod_{j=1}^{\alpha^{\min}-1} p_{j,0}^{c_{j}}$$
    for some non-negative integers $c_j$.
\end{lemma}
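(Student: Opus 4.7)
The plan is to reduce the statement to the preliminary algebraic identities already established in Section \ref{Technical lemmas for the study of  valuations section}, namely Lemmas \ref{delta alpha min polinomio nei precedenti U(w)} and \ref{delta alpha min polinomio nei precedenti U(w) alpha}. Indeed, by Proposition \ref{valuation and formula cluster variable Levi}, we have the explicit description
\[
p_{k,0} \; = \; D^{\varpi_{i_k}}_{s_\alpha,\, z_{\leq k}^{-1}},
\]
where under the convention \eqref{definizione D v w} the symbol $D$ denotes the restriction of the generalised minor to $Y = U(z)$ when $\alpha \in I$ and to $Y_{\setminus \alpha} = U(z)_{\setminus \alpha}$ when $\alpha \notin I$ (recall that $Y$ is identified with $U(z)$ via Lemma \ref{T x Y open embedding branching}). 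Since $i_{\alpha^{\min}} = \alpha$, the identity we must establish is therefore literally an equality of restrictions of generalised minors.

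The case $\alpha \in I$ is then nothing but Lemma \ref{delta alpha min polinomio nei precedenti U(w)} applied with $w = z$: it produces the desired factorisation with non-negative integer exponents $c_j$. The case $\alpha \notin I$ is Lemma \ref{delta alpha min polinomio nei precedenti U(w) alpha}, which is itself deduced from the previous one by restricting the identity along the closed embedding $U(z)_{\setminus\alpha} \hookrightarrow U(z)$.

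No genuine difficulty arises here; the entire content of the lemma lies in matching the notation from the Levi setup with the abstract $U(z)$-setup developed earlier, and in recording the fact that the exponents produced by the Fomin--Zelevinsky identity (Theorem \ref{relations minors FZ}) are non-negative (they are of the form $-a_{\beta,\alpha}$ with $\beta \neq \alpha$). This lemma should be viewed as a bookkeeping step whose sole purpose is to later feed the expression of $p_{\alpha^{\min},0}$ into the valuation-comparison argument needed for Theorem \ref{equality Levi}.
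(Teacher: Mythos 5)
Your proposal matches the paper's own proof: the lemma is indeed proved there by a one-line reduction to Lemmas \ref{delta alpha min polinomio nei precedenti U(w)} (the case $\alpha \in I$, restriction to $U(z)$) and \ref{delta alpha min polinomio nei precedenti U(w) alpha} (the case $\alpha \notin I$, restriction to $U(z)_{\setminus\alpha}$), using exactly the identification $p_{k,0}=D^{\varpi_{i_k}}_{s_\alpha,z_{\leq k}^{-1}}$ from Proposition \ref{valuation and formula cluster variable Levi} and the convention \eqref{definizione D v w}. The only content you add is the observation that the exponents are the $-a_{\beta,\alpha}\geq 0$ coming from Theorem \ref{relations minors FZ}, which is already recorded in the statement and proof of Lemma \ref{delta alpha min polinomio nei precedenti U(w)}.
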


    Again, the exponents $c_j$ depend on $\alpha$ (as the functions $p_{j,0})$, so we use such a notation when it refers to an $\alpha$ which is sufficiently clear from the context.

\begin{proof}
   Accordingly to if $\alpha \in I$ or not, this is a reformulation of Lemma \ref{delta alpha min polinomio nei precedenti U(w)} or of Lemma \ref{delta alpha min polinomio nei precedenti U(w) alpha} respectively.
\end{proof}

\begin{lemma}
\label{p k 0 algebraically independent Levi}
    The functions $p_{k,0}$, for $k \neq \alpha^{\min}$ and $k \in [l]$, are algebraically independent.
\end{lemma}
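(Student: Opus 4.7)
The plan is simply to invoke the algebraic independence results from Section~\ref{Technical lemmas for the study of  valuations section} and to split into the two cases distinguishing the definition of $D^{\pib}_{v,w}$ in \eqref{definizione D v w}. Recall that, by definition, $p_{k,0}= D^{\varpi_{i_k}}_{s_\alpha, z_{\leq k}^{-1}}$, so the statement is about the restriction to $Y$ (or $Y_{\setminus \alpha}$) of the generalised minors $\Delta^{\varpi_{i_k}}_{s_\alpha, z_{\leq k}^{-1}}$, for $k \neq \alpha^{\min}$.

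If $\alpha \in I$, then $Y=U(z)$ and $p_{k,0}$ is exactly the function denoted $D^{\varpi_{i_k}}_{s_\alpha, z_{\leq k}^{-1}}$ in Section~\ref{Technical lemmas for the study of  valuations section} (subsection on $U(z)$). Lemma~\ref{support z Levi} gives $s_\alpha \in \supp(z)$, so the hypothesis of Lemma~\ref{delta s w alg indip U(w)} is met, and algebraic independence follows immediately.

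If $\alpha \notin I$, then $Y_{\setminus \alpha}=U(z)_{\setminus \alpha}$ and $p_{k,0}$ coincides with the restriction denoted $D^{\varpi_{i_k}}_{s_\alpha, z_{\leq k}^{-1}}$ in the subsection on $U(z)_{\setminus \alpha}$. To apply Lemma~\ref{delta s w alg indip U(w) alpha} we need $\alpha \in \Delta \cap \Phi(z)$. Since $z=w_0 w_{0,I}$, the inversion set is $\Phi(z)=\Phi^+ \setminus \Phi_I^+$, hence any simple root $\alpha \notin I$ lies in $\Phi(z)$. The conclusion then follows from Lemma~\ref{delta s w alg indip U(w) alpha}.

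There is no real obstacle: the heavy lifting was already done in the preparation of Lemmas~\ref{delta s w alg indip U(w)} and~\ref{delta s w alg indip U(w) alpha}, and the only point to verify is that the relevant hypothesis on $\alpha$ is satisfied in each of the two cases, which is immediate from the description $z = w_0 w_{0,I}$ together with Lemma~\ref{support z Levi}.
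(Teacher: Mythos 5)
Your proof is correct and follows essentially the same approach as the paper, which cites Lemma~\ref{delta s w alg indip U(w)} or Lemma~\ref{delta s w alg indip U(w) alpha} depending on whether $\alpha \in I$ or not. You have simply spelled out the straightforward hypothesis checks ($s_\alpha \in \supp(z)$ via Lemma~\ref{support z Levi}, and $\alpha \in \Delta \cap \Phi(z)$ since $\Phi(z)=\Phi^+\setminus\Phi_I^+$) that the paper leaves implicit.
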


\begin{proof}
    Again this is a reformulation of Lemma \ref{delta s w alg indip U(w)} or Lemma \ref{delta s w alg indip U(w) alpha}.
\end{proof}

We are ready to prove Theorem \ref{equality Levi}.

\begin{proof}[Proof of Theorem \ref{equality Levi}]
    By contradiction suppose that $\uclu(\lif t^D)$ is strictly contained in $\Br$. By Proposition \ref{conditions equality minimal lifting}, there exists an $\alpha \in D$ and $f \in \Br$ such that $\cval_\alpha(f) < 0$. Since $\Br$ is $X(T)$-graded, we can suppose $f $ homogeneous.
    Recall that $\uclu(\lif t^D)$ is a graded subalgebra of $\Br$, which in turn is a $X(T)$-graded subalgebra of $\uclu(\lif t)$. In particular, up to multiplying for a monomial in the $X_\beta$, $\beta \in D$, and in the unfrozen variables of $\lif x$,  we can suppose that there exist $\lambda \in X(T)$ and $f \in \Br_{\lambda,\bullet}$ such that 

    \begin{equation}
    \label{eq 3}
        f= \frac{P}{\lif x_\alpha} \quad \text{where} \quad P= \sum_{n  \in \NN^{\lif \spc I}}a_n \lif x^n  \, \, : \, \, \lif x_\alpha \not | \, P.
    \end{equation}
Hence, $P$ is a polynomial in the variables of the cluster $\lif x$, which is not divisible by $\lif x_\alpha$. Here divisibility is intended in the polynomial ring. Up to changing $f$, we can assume that if $n_\alpha > 0 $, then $a_n=0$, so that the sum defining $P$ runs over $\NN^{\lif \spc I \setminus \{\alpha \}}.$ 
We use the convention that, if $\beta \in D$, then $i_\beta= \beta$ so that $i_j$ makes sense for any $j \in \lif I$. 
The fact that $f$ is $X(T)$-homogeneous of degree $\lambda$ imposes  (using Proposition \ref{valuation and formula cluster variable Levi}) that, for any $n$ such that $a_n \neq 0$

\begin{equation}
    \label{eq 4}
    \sum_{j \in \lif \spc I \setminus \{\alpha\} } n_j \varpi_{i_j}= \lambda + \pia.
\end{equation}

Next, let $x$ as in Lemma \ref{chart Levi case}. Using Proposition \ref{valuation and formula cluster variable Levi} and Lemma \ref{fundamental computation Levi case} (recall that $\lif x_\beta=X_\beta)$ we deduce that $$f(\iota_\alpha(x))=\lambda(h) t^\inv \sum_{k\geq 0} t^k f_k(y)$$
for certain $f_k \in \CC[Y]$ (resp. $\CC[Y_{\setminus \alpha}]$) if $\alpha \in I$ (resp. $\alpha \not \in I$). 
For $\beta \in \Delta$, we set $p_{\beta,0}=1$ so that, for $\beta \neq \alpha$, $$\lif x_\beta(\iota_\alpha(x))= \pib(h)=\pib(h)p_{\beta,0}.$$ 
Using the last expression, the fact that $n_\alpha=0$ if $a_n \neq 0$ and Proposition \ref{valuation and formula cluster variable Levi}, we get that
$$f_0= \sum_{n \in \NN^{\lif \spc I \setminus \{ \alpha \}}}a_n p_{\bullet,0}^n \quad \text{where} \quad p_{\bullet,0}^n= \prod_{j \in \lif \spc I \setminus \{\alpha \}}p_{j,0}^{n_j}.$$ 
But since $f \in \Br$, $\iota_\alpha^*(f)$ is regular, hence $f_0=0.$

\bigskip

Consider the linear map $\widetilde \pi : \ZZ^{\lif \spc I \setminus \{\alpha \}} \longto \ZZ^{(I \setminus \alpha^{\min})}$ defined in the following way.
For $\beta \in D \setminus \{\alpha \}$, $\widetilde \pi(e_\beta)=0$. For $j \in I \setminus \{\alpha^{\min} \}$, $\widetilde \pi(e_j)=e_j$ and $\widetilde \pi(e_{\alpha^{\min}})= \sum_{j < \alpha^{\min}} c_j e_j$, where the coefficients $c_j$ are the ones of Lemma \ref{p alpha min 0 polinomio nei precedenti Levi}. Since $p_{\beta,0}=1$ for $\beta \in D$ and because of Lemma \ref{p alpha min 0 polinomio nei precedenti Levi}, we have that for any $n \in \NN^{\lif \spc I \setminus \{\alpha \}}$
$$p_{\bullet,0}^n=p_{\bullet,0}^{\widetilde \pi(n) }.$$

In particular $$f_0 = \sum_{ m \in \NN^{I \setminus \{\alpha^{\min} \}}}b_m p_{\bullet,0}^m \quad \text{where} \quad b_m= \sum_{n \in \NN^{{\lif \spc  I \setminus \{\alpha \}}} \, : \, \widetilde \pi(n)=m}a_n.$$
By Lemma \ref{p k 0 algebraically independent Levi}, the functions $p_{j,0}$ with $j \in I \setminus \{ \alpha^{\min} \}$ are algebraically independent.
Since $f_0=0$, then for any $m \in  \NN^{I \setminus \{\alpha^{\min} \}}$, $b_m=0$. We claim that, for any $m \in  \NN^{I \setminus \{\alpha^{\min} \}}$, there exists at most one  $n \in \NN^{{\lif \spc I \setminus \{\alpha \}}}$ such that $a_n \neq 0$ and $\widetilde \pi(n)=m$. This implies at ones that, for any $n \in \NN^{{\lif \spc I \setminus \{\alpha \}}} $, $a_n= 0$, which is a contradiction.

\bigskip

To prove the claim, just notice that any $n$ such that $a_n \neq 0$ satisfies the weight condition \eqref{eq 4}. In particular, consider the linear map $\pi : \ZZ^{{\lif \spc I \setminus \{\alpha \}}} \longto  X(T) \times \ZZ^{I \setminus \{\alpha^{\min} \}}$ defined by $\pi(e_j)=(\varpi_{i_j}, \widetilde \pi(e_j))$. Remember that, for $\beta \in \Delta$, $i_\beta= \beta$. If $n$ is such that $a_n \neq 0$ and $\widetilde \pi(n)=m $, then $\pi(n)= (\lambda + \pia, m)$. But the map $\pi$ is injective. To prove this, consider $\pi$ as a matrix with columns indexed by $\lif I \setminus \{ \alpha \}$ and rows indexed by $\lif I \setminus \{ \alpha^{\min} \}.$ Recall that $X(T)$ is identified with $\ZZ^D$ by means of the fundamental weights. Enumerate the roots of $\Delta \setminus \{ \alpha \}$ from $1$ to $r-1$. Then order the columns of the matrix representing $\pi$ accordingly to the order 
$$\beta_1 < \dots < \beta_{r-1} < 1 < \dots < l$$
and the rows according to the order
$$\beta_1 < \dots < \beta_{r-1} < 1 < \dots < \alpha^{\min}-1 < \alpha < \alpha^{\min} + 1 < \dots < l.$$
Then, the matrix representing $\pi$ is upper triangular with ones on the diagonal. In particular $\pi$ is invertible.
\end{proof}

\subsection{The product case: tensor product}
\label{sec: prod, tensor prod}
To simplify the notation, in this section we switch the role of $G$ and $\hG$. In particular, $G$ is a subgroup of $\hG$. We assume that $G$ is semisimple, simply connected  and we consider $G$ as a subgroup of $\hG=G \times G$ by the diagonal embedding. This is the simplest case of Subsection \ref{Products setting}: when $H=G$. In this section we want to prove the following

\begin{theo}
\label{equality tensor product}

For any $\ii \in R(w_0)$, there's equality in Theorem \ref{application minimal mon lifting branching}. That is: $$\uclu(\lif t_\ii^D)= \Br(G \times G, G).$$
    
\end{theo}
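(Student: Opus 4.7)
My plan is to mimic the proof of Theorem \ref{equality Levi}, applying Corollary \ref{cor:equality cond at d fixed} at each $\alpha \in D = \Delta(\hG)$. The set $D = \Delta \sqcup \Delta$ splits as $D_1 \sqcup D_2$ according to the two factors of $\hG = G \times G$. For each $\alpha \in D$, I would construct a chart $\iota_\alpha$ transverse to $V(X_\alpha)$ by right-translating the appropriate factor of $\hG$ by $\sbb^{-1} x_{-\beta}(t)$: the first factor when $\alpha = (\beta, 0) \in D_1$, the second factor when $\alpha = (0, \beta) \in D_2$. Computing the expansion of $(1 \otimes x_k)$ on the chart via Lemmas \ref{minors, developpement right translation} and \ref{minors, developpement left translation plus}, and clearing the resulting poles with $X_D^{\nu_{D, k}}$, yields explicit formulas for the minimal lifting matrix $\nu$ and for the chart expansion of $\lif x_k$. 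Notably, the embedding $\phi$ factors through the quotient of $\hG$ by the diagonal $\hU$, so the $[g_2]_+$-correction in the Bruhat parametrization makes $\nu$ non-trivial on both $D_1$ and $D_2$.

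The key observation is that, for both $\alpha \in D_1$ and $\alpha \in D_2$, the chart expansion of $\lif x_k$ takes the form $(\text{weight})(p_{k,0}(y_1) + t \cdot p_{k,1}(y_1))$ where the constant-in-$t$ coefficients $p_{k,0}$ are precisely the functions $f_k$ of Section \ref{Technical lemmas for the study of  valuations section}'s right-twist subsection. Although the two cases involve modifying different factors of $\hG$, both expansions reduce to the same first-factor substructure, where the relevant right-twist setup is governed by $w_0 \in W(G)$ (the non-trivial component of $z = (w_0, e)$) and the simple root $\beta \in \Delta(G)$. The hypothesis $-w_0 \beta \in \Delta$ of Lemma \ref{f alg indip} holds since $-w_0$ is an involution of $\Delta$, so Lemmas \ref{f alpha(-) = } and \ref{f alg indip} apply uniformly, providing algebraic independence of $\{f_k : k \neq \alpha(-)\}$ and the redundancy $f_{\alpha(-)} = f_{\alpha(-)^-}$. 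These play roles analogous to Lemmas \ref{delta s w alg indip U(w)} and \ref{delta alpha min polinomio nei precedenti U(w)} in the Levi proof.

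With these ingredients in place, the contradiction argument from the proof of Theorem \ref{equality Levi} transfers essentially verbatim: supposing $\cval_\alpha(f) < 0$ for a homogeneous $f \in \Br_{\lambda, \bullet}$, write $f = P/\lif x_\alpha$ with $\lif x_\alpha \nmid P$, evaluate on $\iota_\alpha$, and use regularity of $f$ to force the $t^0$-coefficient $f_0$ to vanish. Writing $f_0$ as a polynomial in the $p_{k,0}$'s and applying the algebraic independence together with an injective weight-projection (analogous to the map $\pi$ at the end of the Levi proof) forces all coefficients of $P$ to vanish, contradicting $\lif x_\alpha \nmid P$.

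The main obstacle I expect is the careful book-keeping involved in the chart analysis: distinguishing the "$D_1$ chart" and "$D_2$ chart" conventions (which factor of $\hG$ is being modified), controlling the precise form of the $[g_2]_+$-correction when the second factor is the one crossing the divisor, and verifying that both cases reduce to the same first-factor right-twist algebraic setup. The technical preparation of Section \ref{Technical lemmas for the study of  valuations section} was tailored precisely for this application, so once the chart computations are set up correctly the remaining algebraic manipulations parallel the Levi proof with only minor adjustments in the weight-projection matrix.
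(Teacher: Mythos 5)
Your high-level strategy is the right one and matches the paper: produce for each $\alpha \in D$ a chart transverse to $V(X_\alpha)$, expand $\lif x_k$ in the transversal coordinate $t$, and run the contradiction argument of the Levi proof using algebraic independence of the constant-in-$t$ coefficients together with an injective weight-projection. The paper's two charts $\iota_\alpha$ and $j_\alpha$ (Lemma \ref{iota, j tensor product}) are exactly the ones you have in mind.

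However, your ``key observation'' is wrong, and the error is not merely in the book-keeping you flag as the main obstacle. You claim that for both $\alpha \in D_1$ and $\alpha \in D_2$ the expansion has the form $p_{k,0}+t\,p_{k,1}$ and that in both cases the $p_{k,0}$'s are the right-twist functions $f_k$. Neither holds. After simplification the chart $\iota_\alpha$ (left copy) reduces to \emph{left} multiplication of $y\in U$ by $x_\alpha(t^{-1})$, giving a two-term expansion with $p_{k,0}=D^{\varpi_{i_k}}_{s_\alpha, z^{-1}_{\leq k}}$ — these are the \emph{left}-twist functions, whose independence is Lemma \ref{delta s w alg indip U(w)}, and whose redundancy at $\alpha^{\min}$ is Lemma \ref{delta alpha min polinomio nei precedenti U(w)}, exactly as in the Levi proof. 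The chart $j_\alpha$ (right copy) instead reduces to \emph{right} multiplication of $y$ by $x_\alpha(t^{-1})$; here the expansion is $\sum_{n=0}^{N_k(\alpha)}t^n q_{k,n}$ with $N_k(\alpha)=\max\{0,\langle -z^{-1}_{\leq k}\varpi_{i_k},\alpha^\vee\rangle\}$, which is not bounded by $1$, and it is only here that $q_{k,0}=f_k$ and that Lemmas \ref{f alpha(-) = } and \ref{f alg indip} enter, with the redundancy at the index $\alpha(-)$ rather than $\alpha^{\min}$. These two indices, and the two families of functions, differ in general, so you cannot reduce both cases to one ``first-factor right-twist substructure.'' Consequently your claimed uniform formula for $\nu$ (a single Kronecker-$\delta$-type formula) is also wrong on $D_2$: $\nu_{\alpha_l,k}=\delta_{i_k,\alpha}$ but $\nu_{\alpha_r,k}=N_k(\alpha)$. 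You do need the right-twist apparatus for the $D_2$ charts, but you cannot discard the left-twist apparatus for the $D_1$ charts, and the injective weight-projection at the end has to be set up separately in the two cases (using $\alpha^{\min}$ for $D_1$ and $\alpha(-)$ for $D_2$).
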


\begin{coro}
\label{cor:independence reduced exp tensor}
    If $G$ is simply laced and $\ii, \ii' \in R(w_0)$, then $\lif t_\ii^D \simeq \lif t_{\ii'}^D$.
\end{coro}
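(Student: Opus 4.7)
The plan is to lift, one mutation at a time, a sequence of mutations relating $t_\ii$ to $t_{\ii'}$ (which exists in the simply laced case, as recalled right before the corollary) to a sequence of mutations relating $\lif t_\ii^D$ to $\lif t_{\ii'}^D$. The key ingredients are Lemma~\ref{mutation commutes lifting} (lifting commutes with mutation), Theorem~\ref{equality tensor product} (which guarantees $\uclu(\lif t_\ii^D) = \Orb_\lX(\lX)$), and Corollary~\ref{coro:if equality monomial lifting commutes} (which, under that equality, ensures that mutation transports minimal lifting matrices to minimal lifting matrices).

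More precisely, since $G$ is simply laced we may pick a sequence $k_1, \dots, k_N \in I_{uf}$ such that $\mu_{k_N} \circ \cdots \circ \mu_{k_1}(t_\ii) = t_{\ii'}$. Let $\nu^{(0)}$ be the minimal lifting matrix of $t_\ii$ with respect to $\lX$, and inductively define $\nu^{(j)} := \mu_{k_j}(\nu^{(j-1)})$ and $t^{(j)} := \mu_{k_j}(t^{(j-1)})$, starting from $t^{(0)} := t_\ii$. By Lemma~\ref{mutation commutes lifting} applied repeatedly,
\[
\mu_{k_j} \circ \cdots \circ \mu_{k_1}(\lif t_\ii^D)
\]
is the monomial lifting of $t^{(j)}$ associated to the lifting matrix $\nu^{(j)}$, considered as a seed with non-invertible frozen variables indexed by $D$.

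The main point is now to show by induction on $j$ that $\nu^{(j)}$ is the minimal lifting matrix of $t^{(j)}$ with respect to $\lX$. For $j = 0$ this holds by construction. Assume it holds at step $j-1$. Then Theorem~\ref{equality tensor product} together with the mutation-invariance of the upper cluster algebra gives
\[
\uclu\bigl(\lif t^{(j-1),D}\bigr) = \uclu(\lif t_\ii^D) = \Br(G \times G, G) = \Orb_\lX(\lX),
\]
so Corollary~\ref{coro:if equality monomial lifting commutes} applies and produces that $\nu^{(j)} = \mu_{k_j}(\nu^{(j-1)})$ is the minimal lifting matrix of $t^{(j)} = \mu_{k_j}(t^{(j-1)})$, and that the corresponding monomial lifting is the minimal one. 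Pushing the induction to $j = N$ yields $\mu_{k_N} \circ \cdots \circ \mu_{k_1}(\lif t_\ii^D) = \lif t_{\ii'}^D$, hence $\lif t_\ii^D \simeq \lif t_{\ii'}^D$.

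I do not expect any genuine obstacle here: the argument is a clean formal consequence of the uniqueness/iterative behaviour already developed in Section~\ref{minimal monomial lifting section}, together with the equality established in Theorem~\ref{equality tensor product}. The only nontrivial input from outside the present section is the simply laced fact $t_\ii \simeq t_{\ii'}$, which the paper has already cited. If one wanted to avoid invoking Corollary~\ref{coro:if equality monomial lifting commutes} in the inductive step, one could instead directly verify at the end that $\mu_{k_N} \circ \cdots \circ \mu_{k_1}(\lif t_\ii^D)$ satisfies the hypotheses of Theorem~\ref{unicity minimal monomial lifting} relative to $t_{\ii'}$ and $\lX$, and conclude by uniqueness of the minimal monomial lifting.
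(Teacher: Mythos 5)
Your proof is correct and is exactly the paper's argument, which combines $t_\ii \simeq t_{\ii'}$ with Theorem~\ref{equality tensor product}, Corollary~\ref{coro:if equality monomial lifting commutes}, and Lemma~\ref{mutation commutes lifting}; you have simply spelled out the induction along the mutation sequence that the paper leaves implicit.
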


\begin{proof}
    Since $G$ is simply laced, then $t_\ii \simeq t_{\ii'}$. Then $\lif t_\ii^D \simeq \lif t_{\ii'}^D$ by Theorem \ref{equality tensor product}, Corollary \ref{coro:if equality monomial lifting commutes} and Lemma \ref{mutation commutes lifting}.
\end{proof}
Here we chose a maximal torus $T$ contained in a Borel $B$, of $G$, and we consider $\hT=T \times T$ and $\hB=B \times B$. Because of Remark \ref{remark cluster structure products}, we make no difference between $z=(w_0,e)$ and $w_0$. Moreover, we identify $\hU (z)$ and $U=U(w_0)$ in the obvious way. Recall that, for any $\ii \in R(w_0)=R(z)$, the seed $t_\ii$ gives a cluster structure to $Y=U$. From now on, we fix $\ii=(i_l, \dots , i_1) \in R(w_0)$ and set $t=t_\ii$. Here, $D=D_l \sqcup D_r$ is the disjoint union of two copies of $\Delta$, namely: the \textit{left copy} is $D_l= \{ \alpha_l \, : \, \alpha \in \Delta \}$ and the \textit{right copy} is $D_r= \{ \alpha_r \, : \, \alpha \in \Delta \}$. For $\alpha \in \Delta$ (see Remark \ref{remark on products}) 

\begin{equation}
    X_{\alpha_l}= \delaee \otimes 1 \quad X_{\alpha_r}=1 \otimes \delaee.
\end{equation}
We denote $\Br=\Br(G \times G, G)$. Note that $X_{\alpha_l} \in \Br_{(\pia, 0), \pia}$ and $X_{\alpha_r} \in \Br_{(0, \pia), \pia}$.
The  map 

\begin{equation*}
    \begin{array}{r c l}
        T \times T \times U & \longto & G \times G \\
        (h_l,h_r,u) & \longmapsto & (h_l u, h_r) 
    \end{array}   
\end{equation*}

induces the $T \times T$-equivariant open embedding $ \phi: T \times T \times Y \longto \lX:= \lX(G \times G,G)$. We have a cartesian square

\begin{equation}
    \label{eq: cartesian tensor}
 \begin{tikzcd}
U^- \times U^- \times T \times T \times U \times U \arrow["\psi", r] \arrow[d] \arrow[dr, phantom, "\lrcorner", very near start]
& G \times G \arrow["\pi" ,d] \\
T \times T \times U \arrow["\phi", r]
& \lX
\end{tikzcd}
\end{equation}
where the leftmost vertical map is defined by $ (u^-_l,u^-_r,h_l,h_r,u_l,u_r) \longmapsto (h_l,h_r,u_lu_r\inv)$, $\pi$ is the natural projection and $\psi$ is defined by $(u^-_l,u^-_r,h_l,h_r,u_l,u_r) \longmapsto (u^-_lh_lu_l \, , \, u^-_rh_ru_r)$. By Lemma \ref{lem:valuations in G not X}, for any $\alpha \in \Delta$, computing the valuation $\val_{\alpha_l}$ (resp. $\val_{\alpha_r}$) of a rational function on $X$ is the same as computing its valuation along the divisor $\overline{B^- s_\alpha B} \times G$ (resp. $ G \times \overline{B^- s_\alpha B} $).

\begin{lemma}
    \label{iota, j tensor product}
    For $\alpha \in \Delta$, let
     \begin{equation*}
        \begin{array}{r c l}
         \iota_\alpha :  T \times T  \times \CC^* \times Y    & \longto   & G \times G\\
         x=(h_l, h_r,t,y) & \longmapsto & \iota_\alpha(x)= (h_l \alpha^\vee(t)  x_{\alpha}(t \inv)y , \, \,  h_r)
        \end{array}
    \end{equation*}
    and 
      \begin{equation*}
        \begin{array}{r c l}
         j_\alpha :  T \times T  \times \CC^* \times Y  & \longto   & G \times G\\
         x=(h_l, h_r,t,y) & \longmapsto & j_\alpha(x)= (h_l y x_{\alpha}(t \inv) , \, \, h_r \alpha^\vee(t)).
        \end{array}
    \end{equation*}
    Then, for any $f \in \CC(\lX) $, $$\val_{\alpha_l}(f)= \val_{\{t=0 \}}(\iota_\alpha^*(f)) \quad \text{and} \quad \val_{\alpha_r}(f)= \val_{\{t=0 \}}(j_\alpha^*(f)).
    $$
\end{lemma}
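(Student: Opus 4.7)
The plan is to construct regular extensions $\widetilde{\iota}_\alpha$ and $\widetilde{j}_\alpha$ of $\iota_\alpha$ and $j_\alpha$ from $T \times T \times \CC \times Y$ to $G \times G$, and to prove that their compositions with the projection $\pi : G \times G \to \lX$ are dominant morphisms that pull back the divisor $V(X_{\alpha_l})$ (resp.\ $V(X_{\alpha_r})$) to the reduced divisor $\{t = 0\}$. Once this is done, the standard ramification formula for dominant morphisms immediately yields the stated valuation identities: one identifies $V(X_{\alpha_l})$ and $V(X_{\alpha_r})$ with $\overline{B^- s_\alpha B} \times G$ and $G \times \overline{B^- s_\alpha B}$ via (the evident analogue of) Lemma \ref{lem:valuations in G not X}. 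The argument for $j_\alpha$ will be symmetric to the one for $\iota_\alpha$, exchanging left $U^- \times U^-$-invariance with diagonal right $\hU$-invariance, so I focus on $\iota_\alpha$.

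To build $\widetilde{\iota}_\alpha$, I will rearrange the second identity in \eqref{x alpha s alpha = ..} as $\alpha^\vee(t) x_\alpha(t^{-1}) = x_{-\alpha}(t^{-1}) \sba^{-1} x_{-\alpha}(t)$, substitute into the definition of $\iota_\alpha$, and push the $x_{-\alpha}(t^{-1}) \in U^-$ factor past $h_l$ using $h_l x_{-\alpha}(s) = x_{-\alpha}(\alpha(h_l)^{-1} s) h_l$. The $U^- \times U^-$-left-invariance of any $f \in \CC(\lX)$ then shows that
\[
\iota_\alpha^*(f)(h_l, h_r, t, y) = f\bigl(h_l \sba^{-1} x_{-\alpha}(t) y,\, h_r\bigr),
\]
whose right hand side defines the regular extension $\widetilde{\iota}_\alpha(h_l, h_r, t, y) = (h_l \sba^{-1} x_{-\alpha}(t) y, h_r)$. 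For dominance of $\pi \circ \widetilde{\iota}_\alpha$, I will note that on $\{t \neq 0\}$ this agrees with $\pi \circ \iota_\alpha$, and rewriting $\iota_\alpha(h_l, h_r, t, y) = ((h_l \alpha^\vee(t))(x_\alpha(t^{-1}) y), h_r)$ identifies $\pi \circ \iota_\alpha$ with $\phi \circ \bigl((h_l, h_r, t, y) \mapsto (h_l \alpha^\vee(t), h_r, x_\alpha(t^{-1}) y)\bigr)$, which is surjective onto the open dense $\phi(T \times T \times Y) \subseteq \lX$ for each fixed $t \neq 0$.

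The hard part will be the multiplicity-one assertion. I will compute
\[
\widetilde{\iota}_\alpha^*(X_{\alpha_l}) = \widetilde{\iota}_\alpha^*(\delaee \otimes 1) = \pia(h_l) \cdot \Delta^{\pia}_{s_\alpha, e}(x_{-\alpha}(t) y).
\]
Since $\vap$ is $U$-fixed, $y \vap = \vap$, and expanding $x_{-\alpha}(t) = \exp(t X_{-\alpha})$ in powers of $t$ gives $\Delta^{\pia}_{s_\alpha, e}(x_{-\alpha}(t) y) = \sum_{n \geq 0} t^n \, \sba \phiam\bigl(X_{-\alpha}^{(n)} \vap\bigr)$. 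The constant term vanishes since $\sba \phiam$ has weight $\alpha - \pia \neq -\pia$, while the $t^1$-coefficient equals $\sba \phiam(X_{-\alpha} \vap) = \sba \phiam(\sba \vap) = 1$ by Lemma \ref{E_i, s_i} (using $\langle \pia, \alpha^\vee \rangle = 1$). Therefore $\widetilde{\iota}_\alpha^*(X_{\alpha_l})$ is a uniformizer along $\{t = 0\}$; combined with the fact that $\pi \circ \widetilde{\iota}_\alpha$ sends $\{t \neq 0\}$ into $\phi(T \times T \times Y) = \lX \setminus V(X_{\alpha_l})$, the scheme-theoretic preimage of $V(X_{\alpha_l})$ equals the reduced divisor $\{t = 0\}$, and the ramification formula gives $\val_{\alpha_l}(f) = \val_{\{t=0\}}(\iota_\alpha^*(f))$.

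For $j_\alpha$, the symmetric procedure uses the first identity in \eqref{x alpha s alpha = ..}: absorbing an $x_\alpha(t^{-1})^{-1}$ factor via the diagonal right $\hU$-action and then invoking $U^-$-invariance on the second component produces the regular extension $\widetilde{j}_\alpha(h_l, h_r, t, y) = (h_l y, h_r x_\alpha(t) \sba)$. The computation $\widetilde{j}_\alpha^*(X_{\alpha_r}) = \pia(h_r) \, \delaee(x_\alpha(t) \sba) = \pia(h_r) \cdot t$, where the last equality is a direct instance of Lemma \ref{minors, developpement left translation} with $w=e$ and $g = \sba$ (using $\delaee(\sba) = 0$ and $\Delta^{\pia}_{s_\alpha, e}(\sba) = \delaee(e) = 1$), supplies the multiplicity-one claim, and the same ramification argument concludes.
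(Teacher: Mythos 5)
Your construction of the regular extensions $\widetilde\iota_\alpha,\widetilde j_\alpha$ and the uniformizer computations ($\widetilde\iota_\alpha^*(X_{\alpha_l})=\pia(h_l)\,t$ and $\widetilde j_\alpha^*(X_{\alpha_r})=\pia(h_r)\,t$) are correct, but the appeal to ``the standard ramification formula'' in the last step is a genuine gap. That formula computes $\val_p(\pi^*f)$ from $\val_q(f)$ only when $p\in Z^{(1)}$ maps to a point $q\in\lX^{(1)}$, that is, when the divisor $\{t=0\}$ is \emph{not contracted} and its generic point hits the generic point of $V(X_{\alpha_l})$. The three facts you establish (dominance of $\pi\circ\widetilde\iota_\alpha$, set-theoretic preimage of $V(X_{\alpha_l})$ equal to $\{t=0\}$, and ramification index one along $\{t=0\}$) do not rule out contraction: all three hold for $\pi:\mathbb{A}^2\to\mathbb{A}^2$, $(x,y)\mapsto(x,xy)$, where $\{x=0\}$ collapses to the origin, $\pi^*(f_1)=x$ is a uniformizer along $\{x=0\}$, and yet $\val_{\{x=0\}}(\pi^*f_2)=1\neq 0=\val_{\{f_1=0\}}(f_2)$. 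Since $\pi\circ\widetilde\iota_\alpha$ has one-dimensional generic fibers, contraction of $\{t=0\}$ is a real possibility that you must exclude, not a formality.

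To close the gap one must show that the image of $\{t=0\}$ is dense in $V(X_{\alpha_l})$; concretely, the $(U^-\times U^-\times\hU)$-saturation of $\widetilde\iota_\alpha(\{t=0\})=\{(h_l\sba\inv y,\,h_r)\}$ is $(U^-T\sba\inv U)\times G=(B^-s_\alpha B)\times G$, which is dense in $\overline{B^-s_\alpha B}\times G$. The paper sidesteps this entirely by a different route: it enlarges $\widetilde\iota_\alpha$ to an honest \emph{open embedding} $\phi_\alpha$ of a full-dimensional chart $U^-\times U^-\times T\times T\times\CC\times\CC\times U_{\setminus\alpha}\times U_{\setminus\alpha}\hookrightarrow G\times G$ meeting $\overline{B^-s_\alpha B}\times G$ exactly along $\{t=0\}$, reads the valuation off on $G\times G$ via Lemma~\ref{lem:valuations in G not X} (so there is no contraction issue at all), and then strips away the auxiliary $U^-\times U^-$ and $\CC\times U_{\setminus\alpha}$ factors using the invariance of $f$ together with Lemma~\ref{lem:technical valuations}. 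Either route works, but as written your argument is incomplete. (Minor slip: $\phi(T\times T\times Y)=\lX\setminus\bigcup_{d\in D}V(X_d)$, not $\lX\setminus V(X_{\alpha_l})$; this does not affect your identification of the set-theoretic preimage.)
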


\begin{proof}
First, note that the maps obtained by composing  $\iota_\alpha$ and $j_\alpha$ with the natural projection $G \times G \longto \lX$ are dominant. In fact, at $t \in \CC^*$ fixed, both maps are obtained from $\phi$ by an appropriate "multiplication twist" by $\alpha^\vee(t)$ and $x_\alpha(t\inv)$, so they're dominant at $t$ fixed. Hence $\iota_\alpha^*$ and $j_\alpha^*$ make sense. Since $f$ is generically defined on $\lX$, we make the assumption, throughout the proof, that the points on which we evaluate $f$ are sufficiently generic so that the evaluation makes sense.

\bigskip

We start from the "left side", so we prove the statement about $\iota_\alpha$. First, we prove that the map $\phi_\alpha$ defined by

\begin{equation*}
        \begin{array}{r c l}
         U^- \times U^- \times  T \times T  \times \CC \times \CC  \times U_{\setminus \alpha} \times U_{\setminus \alpha}  & \longto   & G \times G\\
         x=(u^-_l, u^-_r, h_l, h_r,t , 
         r ,u, \widetilde{u}) & \longmapsto & \phi_\alpha(x)= (u_l^-h_l \sba^{-1} x_{-\alpha}(t) u\widetilde{u}, \, \, u_r^-h_rx_\alpha(r) \widetilde{u})
        \end{array}
    \end{equation*}
is an open embedding whose image intersects $\overline{B^- s_\alpha B} \times G$. Note that, for $x$ as in the definition of $\phi_\alpha$, we have that $\phi_\alpha(x) \in \overline{B^- s_\alpha B} \times G$ if and only if $t = 0$ and, by \eqref{x alpha s alpha = ..}, $\phi_\alpha(x) \in G_0 \times G_0$ otherwise. 

The map 

\begin{equation*}
        \begin{array}{r c l}
         \widetilde \phi_\alpha: U^- \times U^- \times T \times T \times U \times U  & \longto   & G \times G\\
         (u^-_l, u^-_r, h_l, h_r,u_l,u_r) & \longmapsto & (u_l^-h_l  u_l \sba^{-1}, \, \, u_r^-h_r u_r)
        \end{array}
    \end{equation*}
is clearly an open embedding. Moreover, the map 
\begin{equation*}
        \begin{array}{r c l}
         \CC \times U_{\setminus \alpha} & \longto   & U \\
         (t, u)& \longmapsto & x_\alpha(t)u
        \end{array}
    \end{equation*}
is an isomorphism and conjugation by $\sba$ is an automorphism of $U_{\setminus \alpha}$. Furthermore, the map
\begin{equation*}
        \begin{array}{r c l}
         U_{\setminus \alpha } \times U_{\setminus \alpha} & \longto   &  U_{\setminus \alpha } \times U_{\setminus \alpha}\\
         u , \widetilde u & \longmapsto & u \widetilde u, \widetilde u
        \end{array}
    \end{equation*}
is an isomorphism. In particular, looking in the definition of $\widetilde \phi_\alpha$, we see that we can write uniquely $(u_l \sba^{-1}, u_r) = (x_\alpha(-t) \sba^{-1} u \widetilde u \, , \, x_\alpha(r) \widetilde u)$, with $t, r \in \CC$ and $u, \widetilde u \in U_{\setminus \alpha}$. Using that  $$ x_\alpha(-t) \sba^{-1}= \sba^{-1} x_{-\alpha}(t) $$
we deduce that $\phi_\alpha$ is an open embedding. 
Now, let $x$ as in the definition of $\phi_\alpha$, such that $t \neq 0$. Then, by the invariance properties of $f$ and  \eqref{x alpha s alpha = ..}, we have that $$f(\phi_\alpha(x))= f(h_l \sba^{-1} x_{-\alpha}(t)u, \, \, h_rx_\alpha(r))=f(h_l \alpha^\vee(t) x_\alpha(t\inv) u, \, \,  h_r x_\alpha(r)).$$
In particular, it's clear that if 

\begin{equation*}
        \begin{array}{r c l}
         \widetilde \iota_\alpha: T \times T \times \CC^* \times \CC \times U_{\setminus \alpha}  & \longto   & G \times G\\
         ( h_l, h_r, t , r, u) & \longmapsto & (h_l \alpha^\vee(t)x_\alpha(t\inv) u , \, \, h_r x_\alpha(r))
        \end{array}
    \end{equation*}
then $$\val_{\alpha_l}(f)=\val_{\{t=0\}} \bigl( \phi_\alpha^*(f) \bigr)= \val_{\{t=0\}} \bigl( \widetilde \iota^*_\alpha (f)\bigr).$$
But \begin{equation}
\label{eq 5}
        \begin{array}{r c l}
         \CC \times U_{\setminus \alpha} & \longto   &  Y \\
         (r ,  u) & \longmapsto & u x_\alpha(-r)
        \end{array}
    \end{equation}
is an isomorphism and, since $f$ is invariant by the right action of diagonal multiplication by $U$, we have that $$f(\widetilde \iota_\alpha(x))= f\biggl((h_l \alpha^\vee(t) x_\alpha(t\inv) u x_\alpha(-r), h_r) \biggr).$$
In particular, under the identification induced by \eqref{eq 5}, we have that $$\widetilde \iota_\alpha^*(f)= \iota_\alpha^*(f).$$

We conclude thanks to the following very simple statement.

\begin{lemma}
\label{lem:technical valuations}
    Let $\psi: Z \longto W$ be an isomorphism of normal, complex, algebraic varieties and $\phi : \CC^* \times Z \longto \CC^* \times W$ be defined by $(t,z) \longmapsto (t, \psi(z))$. Then, for any $f \in \CC(\CC^* \times W)$, $$\val_{\{t=0\}}(f)= \val_{\{t=0\}}\phi^*(f).$$
\end{lemma}

Next, we pass to the "right side", that is to $j_\alpha$. The argument is very similar, we sketch through it. First, one proves that the map 

\begin{equation*}
        \begin{array}{r c l}
         \psi_\alpha: U^- \times U^- \times  T \times T  \times \CC   \times Y \times U_{\setminus \alpha}  & \longto   & G \times G\\
         x=(u^-_l, u^-_r, h_l, h_r,t , 
         y , u ) & \longmapsto & \psi_\alpha(x)= (u_l^-h_l y u , \, \, u_r^-h_rx_\alpha(t) \sba u )
        \end{array}
    \end{equation*}
is an open embedding. Moreover, $\psi_\alpha(x) \in G \times \overline{B^- s_\alpha B}$ if $t=0$ and $\psi_\alpha(x) \in G_0 \times G_0$ otherwise. But then, for $t \neq 0$ we compute that 
$$f(\psi_\alpha(x))=f\biggl((h_l y, \, \, h_r \alpha^\vee(t) x_\alpha(-t\inv)\biggr).$$ 
In particular, considering 

 \begin{equation*}
        \begin{array}{r c l}
        \widetilde j_\alpha :  T \times T  \times \CC^* \times Y  & \longto   & G \times G\\
         (h_l, h_r,t,y) & \longmapsto &  (h_l y  , \, \, h_r \alpha^\vee(t)x_{\alpha}(-t \inv))
        \end{array}
    \end{equation*}
we clearly have that $$\val_{\alpha_r}(f)= \val_{\{t=0\}}\psi_\alpha^*(f)=\val_{\{t=0\}}\widetilde{j_\alpha}^* (f).$$
But then, by invariance of $f$, we see that $\widetilde{j_\alpha}^* (f)= j^*_\alpha(f).$
\end{proof}

Then, we make this important but very easy calculation. 

\begin{lemma}
\label{fundamental calculation tensor product}
    Let  $(\lambda, \mu) \in X(T) \times X(T)=X(T \times T)$, $f \in \CC(\lX)_{(\lambda,\mu), \bullet}$  and $x$ generic as in the notation of Lemma \ref{iota, j tensor product}. Then, for any $\alpha \in \Delta$,
    
   \begin{equation*}
       \begin{array}{ll}
        f(\iota_\alpha(x))=\lambda(h_l)\mu(h_r)t^{\langle \lambda , \alpha^\vee \rangle } f((x_\alpha(t\inv) y,  e)) \quad & \text{and}\\[0.3em]
f(j_\alpha(x))=\lambda(h_l)\mu(h_r)t^{\langle \mu , \alpha^\vee \rangle } f((y x_\alpha(t\inv), \, \, e) ).
       \end{array}
 \end{equation*}
    Moreover, for any $\beta \in \Delta$

    \begin{equation*}
        \begin{array}{ll}
           X_{\beta_l}(\iota_\alpha(x))=\pib(h_l)t^{\langle \pib , \alpha^\vee \rangle}  & X_{\beta_r}(\iota_\alpha(x))=\pib(h_r) \\
           X_{\beta_l}(j_\alpha(x))=\pib(h_l)  &  X_{\beta_r}(j_\alpha(x))=\pib(h_r)t^{\langle \pib , \alpha^\vee \rangle}
        \end{array}
    \end{equation*}
\end{lemma}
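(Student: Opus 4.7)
The plan is a direct unwinding of definitions, exploiting (i) the $T \times T$-equivariance of $f$ coming from the left-multiplication action of $T \times T$ on $G \times G$, and (ii) the transformation laws for the principal generalised minors collected in Lemma \ref{lem:basic prop minors}. By hypothesis, $f \in \CC(\lX)_{(\lambda,\mu),\bullet}$ satisfies $f(h_l g_1, h_r g_2) = \lambda(h_l)\mu(h_r) f(g_1, g_2)$ for generic $(g_1,g_2) \in G \times G$ and any $(h_l, h_r) \in T \times T$.

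For the first formula I would observe that $\iota_\alpha(x) = (h_l\alpha^\vee(t), h_r) \cdot (x_\alpha(t^{-1})y, \, e)$ under the left $T \times T$-action. Applying the equivariance of $f$ and then expanding $\lambda(h_l\alpha^\vee(t)) = \lambda(h_l)\, t^{\langle \lambda, \alpha^\vee \rangle}$ yields the stated identity. The argument for $j_\alpha(x) = (h_l, h_r\alpha^\vee(t)) \cdot (y x_\alpha(t^{-1}),\, e)$ is completely symmetric: the factor $\alpha^\vee(t)$ now sits on the second coordinate, which produces the scalar $t^{\langle \mu, \alpha^\vee \rangle}$ instead of $t^{\langle \lambda, \alpha^\vee \rangle}$.

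For the four formulas involving $X_{\beta_l} = \Delta^{\varpi_\beta}_{e,e} \otimes 1$ and $X_{\beta_r} = 1 \otimes \Delta^{\varpi_\beta}_{e,e}$, I would simply evaluate these minors against each coordinate of $\iota_\alpha(x)$ and $j_\alpha(x)$. Since $x_\alpha(t^{-1})y$ and $y x_\alpha(t^{-1})$ both lie in $U$, and since $\Delta^{\varpi_\beta}_{e,e}$ is $U$-right invariant with $\Delta^{\varpi_\beta}_{e,e}(h) = \varpi_\beta(h)$ for $h \in T$ (both by Lemma \ref{lem:basic prop minors}), the four computations reduce to reading off the torus factor in each coordinate: $h_l\alpha^\vee(t)$ and $h_r$ for $\iota_\alpha$, and $h_l$ and $h_r\alpha^\vee(t)$ for $j_\alpha$. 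I do not anticipate any real obstacle here: the content of this lemma is purely the unwinding of definitions, and its role is to supply the explicit asymptotic expansions along the divisors $V(X_{\alpha_l})$ and $V(X_{\alpha_r})$ that will drive the proof of Theorem \ref{equality tensor product}.
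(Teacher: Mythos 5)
Your proposal is correct and is exactly the intended argument: the paper's own proof is simply ``The proof is trivial,'' and your unwinding via the left $T\times T$-equivariance $f(h_lg_1,h_rg_2)=\lambda(h_l)\mu(h_r)f(g_1,g_2)$ together with the right $U$-invariance and $T$-restriction of $\Delta^{\varpi_\beta}_{e,e}$ supplies precisely the omitted details. The factorizations $\iota_\alpha(x)=(h_l\alpha^\vee(t),h_r)\cdot(x_\alpha(t\inv)y,e)$ and $j_\alpha(x)=(h_l,h_r\alpha^\vee(t))\cdot(yx_\alpha(t\inv),e)$ are the whole content, and you have them right.
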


\begin{remark}
    By little abuse, $\CC(\lX)_{(\lambda,\mu), \bullet}$ denotes the set of $T \times T$ semi-invariant rational functions of weight $(\lambda,\mu).$
\end{remark}
\begin{proof}
   The proof is trivial.
\end{proof}
Recall that, an element $\mu \in \ZZ^{D}$, is identified with the character 
$$ \displaystyle \sum_{\alpha \in \Delta}(\mu_{\alpha_l} \pia, \mu_{\alpha_r} \pia ) \in X(T \times T).$$
For any $\alpha \in \Delta$ and $v,w \in W$, we denote by $D^{\pia}_{v,w}$ the restriction of $\delavw$ to $Y=U$.
Here, we describe explicitly the cluster variables $\lif x$ and how they behave in the charts given by $\iota_\alpha$ and $j_\alpha$.

\begin{lemma}
 \label{valuation and formula cluster variable tensor product}
For any $k \in [l]$ and $\alpha \in \Delta$ $$\nu_{\alpha_l,k}= \delta_{i_k, \alpha} \quad \text{and} \quad \nu_{\alpha_r,k}= \max \{ 0 , \langle - z_{\leq k}^{-1} \varpi_{i_k}, \alpha^\vee \rangle \}. $$
Moreover, 
$$\lif x_k\bigl(\iota_\alpha(x)\bigr)=\nu_{\bullet,k}\bigl((h_l, h_r)\bigr) \biggl(p_{k,0}(y) + t p_{k,1}(y)\biggr) \quad  \text{where} \quad p_{k,0}= D^{\varpi_{i_k}}_{s_\alpha, z_{\leq k}^{-1}}$$
and 
$$\lif x_k\bigl(j_\alpha(x)\bigr)=\nu_{\bullet,k}\bigl((h_l, h_r)\bigr) \biggl(\sum_{n=0}^{\nu_{\alpha_r,k}} t^n q_{k,n}(y) \biggr) \quad \text{where} \quad
    q_{k,0}= \begin{cases}
        D^{\varpi_{i_k}}_{e, z_{\leq k}^{-1}}
 & \text{if} \quad (z_{\leq k }) \alpha \in \Phi^+ \\
 D^{\varpi_{i_k}}_{e, s_\alpha z_{\leq k}^{-1}}
 & \text{if} \quad (z_{\leq k}) \alpha \in \Phi^-.\end{cases}$$

\end{lemma}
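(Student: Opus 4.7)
The plan is to follow the strategy of Proposition \ref{valuation and formula cluster variable Levi}, combining Lemma \ref{iota, j tensor product}, which identifies $\val_{\alpha_l}$ (resp.\ $\val_{\alpha_r}$) on $\CC(\lX)$ with the $t$-adic valuation in the chart $\iota_\alpha$ (resp.\ $j_\alpha$), with Lemma \ref{fundamental calculation tensor product}, which describes how $\hT$-semi-invariant rational functions transform through these charts. The key observation is that $1 \otimes x_k$ has trivial $\hT = T \times T$-weight, since it is pulled back from $Y = U$, on which $T \times T$ acts trivially.

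For the left side, Lemma \ref{fundamental calculation tensor product} immediately gives $(1 \otimes x_k)(\iota_\alpha(x)) = x_k(x_\alpha(t^{-1})\,y) = \Delta^{\varpi_{i_k}}_{e, w_{\leq k}^{-1}}(x_\alpha(t^{-1})\,y)$. I would apply Lemma \ref{minors, developpement left translation plus} with $v = e$, $w = w_{\leq k}^{-1}$, $\beta = \alpha$: since $v^{-1}\alpha = \alpha \in \Phi^+$, we are in the nontrivial case with bound $N = \langle \varpi_{i_k}, \alpha^\vee \rangle = \delta_{i_k, \alpha}$, and substituting $t \leadsto t^{-1}$ yields $\sum_{n=0}^{N} t^{-n} P_{N-n}(y)$ with $P_0 = \Delta^{\varpi_{i_k}}_{s_\alpha, w_{\leq k}^{-1}}$ and $P_N = x_k$. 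The coefficient $P_0$ is nonzero on $U$: when $N = 0$ the minors $P_0$ and $P_N$ coincide since $s_\alpha$ fixes $\varpi_{i_k}$; when $\alpha = i_k$ the nonvanishing follows from Lemma \ref{vanishing minor s,w} together with statement 5 of Lemma \ref{lem:basic prop minors}, because $s_\alpha$ then sits at position $k$ of $\ii$. Thus Lemma \ref{iota, j tensor product} yields $\nu_{\alpha_l, k} = \delta_{i_k, \alpha}$. Multiplying by $X^{\nu_{\bullet, k}}(\iota_\alpha(x)) = \nu_{\bullet, k}\bigl((h_l, h_r)\bigr) \cdot t^{\delta_{i_k, \alpha}}$ (each $X_{\beta_r}$ contributes no power of $t$, while $X_{\beta_l}$ contributes $t^{\delta_{\beta, \alpha}}$) precisely absorbs the negative power of $t$, producing the stated formula with $p_{k, 0} = D^{\varpi_{i_k}}_{s_\alpha, w_{\leq k}^{-1}}$ and the claimed $p_{k, 1}$.

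The right side is analogous: one computes $(1 \otimes x_k)(j_\alpha(x)) = x_k(y\, x_\alpha(t^{-1}))$ and applies Lemma \ref{minors, developpement right translation} with $w = w_{\leq k}^{-1}$, $\beta = \alpha$. The dichotomy there, governed by the sign of $w_{\leq k}\alpha$, matches the dichotomy in the claimed formula for $\nu_{\alpha_r, k}$: by $W$-invariance of the pairing, $\langle -w_{\leq k}^{-1}\varpi_{i_k}, \alpha^\vee \rangle = -\langle \varpi_{i_k}, (w_{\leq k}\alpha)^\vee \rangle$ is nonpositive exactly when $w_{\leq k}\alpha \in \Phi^+$, and equals some $N \geq 0$ otherwise. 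In the nontrivial case the expansion reads $\sum_{n=0}^{N} t^{-n} Q_{N-n}(y)$ with $Q_0 = \Delta^{\varpi_{i_k}}_{e, s_\alpha w_{\leq k}^{-1}}$, which does not vanish on $U$ by statement 7 of Lemma \ref{lem:basic prop minors}. Multiplying by $X^{\nu_{\bullet, k}}(j_\alpha(x)) = \nu_{\bullet, k}\bigl((h_l, h_r)\bigr) \cdot t^{\nu_{\alpha_r, k}}$ converts the Laurent expansion into the announced polynomial $\sum_{n=0}^{\nu_{\alpha_r, k}} t^n q_{k, n}(y)$ with $q_{k, 0}$ as claimed. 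The only real obstacle is the correct identification of the coefficients $P_0$ and $Q_0$ controlling the lowest $t$-exponent, together with their nonvanishing on $U$; once these are in hand, the rest is bookkeeping of $\hT$-weights.
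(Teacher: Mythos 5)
Your proof is correct and follows essentially the same strategy as the paper's: use Lemma \ref{fundamental calculation tensor product} to reduce the computation to the charts, then expand via the translation lemmas for minors, and identify the lowest-order $t$-coefficient and its nonvanishing. The only cosmetic difference is that you invoke Lemma \ref{minors, developpement left translation plus} directly rather than its specialization Lemma \ref{minors, developpement left translation}, which lets you treat the cases $i_k = \alpha$ and $i_k \neq \alpha$ uniformly via $N = \langle\varpi_{i_k},\alpha^\vee\rangle$; the paper instead separates the two cases. (Your citation of statement~5 of Lemma \ref{lem:basic prop minors} alongside Lemma \ref{vanishing minor s,w} is harmless but unneeded here, since $Y = U$ in the tensor product case; that extra step is only required in the Levi setting where $Y$ is a proper subgroup of $U$.)
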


\begin{proof}
    Note that $1 \otimes x_k \in \CC(\lX)_{(0,0), \bullet}$. In particular, by Lemma \ref{fundamental calculation tensor product} we have that 
    $$1 \otimes x_k\bigl(\iota_\alpha(x)\bigr)=1 \otimes x_k\bigl(( x_\alpha(t\inv)y, e)\bigr).$$
    But (see diagram \eqref{eq: cartesian tensor}) 
    $$\phi \biggl( (e, e, x_\alpha(t\inv)y) \biggr) = \pi \biggl( ( x_\alpha(t\inv)y, e) \biggr).$$
 Then, by the definition of $1 \otimes x_k$, we have that 
 $$1 \otimes x_k\bigl(\iota_\alpha(x)\bigr)=x_k\bigl(x_\alpha(t\inv)y\bigr).$$
    Now we apply Lemma \ref{minors, developpement left translation}.
    
    If $i_k \neq \alpha$, then 

    \begin{equation}
        \label{eq 12}
        x_k(x_\alpha(t\inv)y)=D^{\varpi_{i_k}}_{e, z_{\leq k}^{-1}}(y)= D^{\varpi_{i_k}}_{s_\alpha, z_{\leq k}^{-1}}(y)=p_{k,0}(y).
    \end{equation}
The central equality follows from Lemma \ref{lem:basic prop minors}. In particular, by Lemma \ref{iota, j tensor product}, we deduce that $\nu_{\alpha_l,k}=0.$

If $i_k=\alpha$, then
    \begin{equation}
        \label{eq 6}
   x_k(x_\alpha(t\inv)y)= p_{k,1}(y) + t\inv p_{k,0}(y)
    \end{equation}
    where $p_{k,0}$ is as described in the statement and $p_{k,1}$ can be deduced from Lemma \ref{minors, developpement left translation}.  By Lemma \ref{vanishing minor s,w}, $p_{k,0}$ doesn't vanish on $Y$. Hence, form Lemma \ref{iota, j tensor product}, we deduce that $\nu_{\alpha_l,k}= \delta_{i_k,\alpha}$. 

\bigskip

    We switch to $j_\alpha$. The argument is very similar. We set $N_k(\alpha)= \max \{ 0 , \langle - z_{\leq k}^{-1} \varpi_{i_k}, \alpha^\vee \rangle \} $. 
    First, observe that if $(z_{\leq k}) \alpha \in \Phi^+$, then $N_k(\alpha) =  0$ and 
    \begin{equation}
        \label{eq:37}
        1 \otimes x_k(j_{\alpha}(x))= D^{\varpi_{i,k}}_{e, z_{\leq k^{-1}}}(y)
    \end{equation}
    by Lemmas \ref{fundamental calculation tensor product} and \ref{minors, developpement right translation}. If $(z_{\leq k}) \alpha \in \Phi^-$, then $N_k(\alpha)=  \langle - z_{\leq k}^{-1} \varpi_{i_k}, \alpha^\vee \rangle. $ Again, Lemmas \ref{fundamental calculation tensor product} and \ref{minors, developpement right translation} imply that
    
    \begin{equation}
    \label{eq 7}
        1 \otimes x_k(j_\alpha(x))= \sum_{n=0}^{N_k(\alpha)} t^{-n} Q_{(N_k(\alpha) -n )}(y).
    \end{equation}

Then, we set $q_{k,m}=Q_m$. By Lemma \ref{minors, developpement right translation}, $q_{k,0}$ agrees with the expression given in the statement. Since the minors of the form $\delbew$ don't vanish on $Y=U$, we have that $q_{k,0} \neq 0.$ 
Then, we deuce from expressions \eqref{eq 7}, \eqref{eq:37} and Lemma \ref{iota, j tensor product} that $\nu_{\alpha_r,k}=N_k(\alpha)$.
Now, since 
$$\lif x_k=X^{\nu_{\bullet,k}} (1 \otimes x_k),$$ the desired expression for $\iota_\alpha^*(\lif x_k)$ and $j_\alpha^*(\lif x_k)$ is obtained from \eqref{eq 12}, \eqref{eq 6}, \eqref{eq:37}, \eqref{eq 7} and Lemma \ref{fundamental calculation tensor product}.

\end{proof}

Before going on, we characterise the $X(T)^3$-weight of the cluster variables of the seeds $\lif t_{\ii'}^D$, for the various $\ii' \in R(w_0)$. Recall the following theorem.

\begin{theo}[\cite{parthasarathy1967representations}]
    \label{thm:PRV without multiplicity}
    For any $\lambda, \mu, \nu \in X(T)^+$, if there exist $v \in W$ such that
    \begin{equation}
    \label{eq:PRV condition without mult}
        w_0 \lambda + v \mu =v\nu
    \end{equation} 
    then, for any $k \in \NN$, 
    $$\dim \biggl( \Hom\bigl(V(k \lambda) \otimes V(k \mu) \, , \, V(k \nu)\bigr)^G\biggr)=1.$$
\end{theo}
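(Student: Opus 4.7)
The plan is to prove the statement in two stages: first establish that the dimension is at least one, which is the classical Parthasarathy-Ranga Rao-Varadarajan existence result, then pin down the multiplicity as exactly one using polyhedral/geometric arguments specific to the hypothesis.

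For the lower bound, I would rewrite the hypothesis as $\nu = v^{-1} w_0 \lambda + \mu$, exhibiting $\nu - \mu = v^{-1} w_0 \lambda$ as an extremal (Weyl-conjugate to the lowest) weight of $V(\lambda)$. Following the classical PRV construction, consider in $V(k\lambda) \otimes V(k\mu)$ the vector $(\overline{w_0} \cdot v_{k\lambda}^+) \otimes (\overline{v} \cdot v_{k\mu}^+)$, which has weight $k w_0 \lambda + k v\mu = k v\nu$. Translating by $\overline{v}^{-1}$ yields a weight vector of weight $k\nu$, and the $G$-submodule it generates admits a unique irreducible quotient isomorphic to $V(k\nu)$. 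This furnishes a nonzero element of $\Hom(V(k\lambda) \otimes V(k\mu), V(k\nu))^G$ for every $k$, giving the lower bound.

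For the upper bound, I would argue that the hypothesis places the triple $(k\lambda, k\mu, k\nu)$ on an extremal face of the saturated tensor cone (the cone parametrising triples with nonzero triple-invariants in $V(k\lambda) \otimes V(k\mu) \otimes V(k\nu^*)$). The Berenstein-Zelevinsky description of tensor product multiplicities, or equivalently the Littelmann path model, identifies the multiplicity with the number of integer points in a polytope whose defining inequalities include one that is saturated by the hypothesis $w_0\lambda + v\mu = v\nu$. Saturating this inequality collapses the relevant fibre to a single lattice point, independently of the scaling by $k$, yielding multiplicity exactly $1$. Alternatively, one could realise the multiplicity as the space of global sections of a line bundle on a Richardson variety in $G/B \times G/B$ and show that the hypothesis forces this variety to reduce to a single $B$-fixed point, giving a one-dimensional section space.

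The main obstacle will be to rigorously identify the correct extremal face of the tensor cone and to verify the $0$-dimensional collapse in a way that is independent of the particular $v$ satisfying the hypothesis (such $v$ need not be unique, but the multiplicity clearly is). This is where the polyhedral combinatorics becomes delicate; one natural workaround is to exploit the stability of tensor multiplicities under scaling to reduce everything to the case $k = 1$, and then apply either the Weyl character formula or Kostant's multiplicity formula to rule out extra components directly.
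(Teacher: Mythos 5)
First, a point of comparison: the paper does not prove this statement at all --- it is quoted from \cite{parthasarathy1967representations} and used as a black box (to deduce that certain graded components of $\Br$ are one-dimensional). So there is no internal proof to measure your attempt against; what follows is an assessment of the attempt on its own terms.

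The upper bound ($\dim \le 1$) does not need any of the polyhedral machinery you invoke, and as written your sketch is not a proof: you never identify which inequality of the Berenstein--Zelevinsky (or hive, or path) model is saturated by $w_0\lambda+v\mu=v\nu$, nor why saturating it collapses the fibre to a point. The standard elementary argument is the bound $\dim \Hom\bigl(V(\lambda)\otimes V(\mu), V(\nu)\bigr)^G \le \dim V(\lambda)_{\nu-\mu}$ (a highest weight vector of weight $\nu$ in $V(\lambda)\otimes V(\mu)$ is determined by its component in $V(\lambda)\otimes \CC v_\mu^+$); since $k\nu-k\mu = v\inv w_0(k\lambda)$ is an extremal weight of $V(k\lambda)$, that weight space is a line and the multiplicity is at most $1$ for every $k$. (Note also that your proposed reduction to $k=1$ via ``stability of tensor multiplicities under scaling'' is not available --- multiplicities are not stable under scaling in general; here the $k$-uniformity is free because the hypothesis \eqref{eq:PRV condition without mult} is itself homogeneous in $(\lambda,\mu,\nu)$.)

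The genuine gap is in the lower bound. You form the vector $x=(\overline{v}\inv\overline{w_0}\cdot v_{k\lambda}^+)\otimes v_{k\mu}^+$ of weight $k\nu$ and assert that the $G$-submodule it generates ``admits a unique irreducible quotient isomorphic to $V(k\nu)$''. In the semisimple category, the cyclic module generated by $x$ is exactly the sum of the isotypic components on which $x$ has nonzero projection, so your assertion is equivalent to saying that $x$ has a nonzero component in the $V(k\nu)$-isotypic part --- which is precisely the statement to be proved, not a consequence of the construction. Nor is $x$ a highest weight vector: $e_\alpha x = (e_\alpha\, \overline{v}\inv\overline{w_0}v_{k\lambda}^+)\otimes v_{k\mu}^+$, which vanishes only when $v\inv w_0\lambda=\lambda$. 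This existence statement is the actual content of the PRV result: in the special form stated here it is proved in \cite{parthasarathy1967representations} by a careful analysis of cancellations in the Steinberg/Brauer--Klimyk formula (your closing remark about Kostant's multiplicity formula points at the right elementary route, but that analysis is the whole proof and you do not carry it out), and for general Weyl group elements it is the PRV conjecture, whose known proofs (Kumar, Mathieu) are decidedly nonformal. As it stands, the existence half of your argument is circular.
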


We say that the triple of dominant weights $(\lambda,\mu, \nu)$ satisfies the PRV condition, for $v \in W$, if condition \eqref{eq:PRV condition without mult} holds.
If $\lambda = \sum n_\alpha \pia \in X(T)$, we denote $\lambda^+= \sum n_\alpha^+ \pia$ and $\lambda^-= \sum n_\alpha^- \pia.$

\begin{prop}
    \label{prop:weights c variable tensor}
    For any $k \in[l],$ $\lif x_k$ is $X(T)^3$-homogeneous of weight 
    $$\bigl(\varpi_{i_k}, ( z_{\leq k}^{-1} \varpi_{i_k})^-, ( z_{\leq k}^{-1} \varpi_{i_k})^+\bigr).$$
    This triple of dominant weights satisfies the PRV condition for $w_0 z_{\leq k}.$
\end{prop}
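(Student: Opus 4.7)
The plan is to unwind the definitions and apply the previously established formulas, so the proof is essentially a direct computation with no serious obstacle.

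First I would combine Lemma \ref{lem:degree lifting variables} with Lemma \ref{valuation and formula cluster variable tensor product}. The former says that $\lif x_k$ is $X(T\times T)\times X(T)$-homogeneous of weight
\[
\bigl(\nu_{\bullet,k}\,,\, \rho(\nu_{\bullet,k}) + \rho(z_{\leq k}^{-1}\varpi_{i_k}-\varpi_{i_k})\bigr),
\]
where $\rho\colon X(T\times T)\to X(T)$ is the restriction to the diagonal, i.e.\ addition of the two components. The latter tells us that $\nu_{\alpha_l,k}=\delta_{i_k,\alpha}$ and $\nu_{\alpha_r,k}=\max\{0,\langle -z_{\leq k}^{-1}\varpi_{i_k},\alpha^\vee\rangle\}$. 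Writing $\mu_k:=z_{\leq k}^{-1}\varpi_{i_k}$ and using that $\max\{0,-n\}=n^-$ coefficient-wise, this identifies $\nu_{\bullet,k}\in X(T)\times X(T)$ with the pair $(\varpi_{i_k}, \mu_k^-)$.

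Next I would substitute and use the elementary identity $\mu+\mu^-=\mu^+$ applied to $\mu_k$ to compute the third component:
\[
\rho(\nu_{\bullet,k}) + \rho(\mu_k-\varpi_{i_k}) \;=\; \varpi_{i_k}+\mu_k^- + \mu_k-\varpi_{i_k} \;=\; \mu_k^-+\mu_k \;=\; \mu_k^+ .
\]
This yields the announced weight $\bigl(\varpi_{i_k},\mu_k^-,\mu_k^+\bigr)$, all three of which are dominant (trivially for $\varpi_{i_k}$, by construction of $(-)^\pm$ for the other two).

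Finally, for the PRV assertion, I would simply check the linear identity \eqref{eq:PRV condition without mult} with $v=w_0 z_{\leq k}$. The right minus left side of \eqref{eq:PRV condition without mult} becomes
\[
v(\mu_k^+-\mu_k^-)-w_0\varpi_{i_k} \;=\; v\,\mu_k - w_0\varpi_{i_k} \;=\; w_0 z_{\leq k}\cdot z_{\leq k}^{-1}\varpi_{i_k} - w_0\varpi_{i_k} \;=\; 0,
\]
again using $\mu_k^+-\mu_k^-=\mu_k=z_{\leq k}^{-1}\varpi_{i_k}$. The whole proof is therefore a two-line substitution; the only step requiring a moment's thought is recognising that the integer $\max\{0,-\langle\mu_k,\alpha^\vee\rangle\}$ is precisely the $\alpha$-coefficient of $\mu_k^-$, which is what makes the rewriting transparent.
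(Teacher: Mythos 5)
Your proposal is correct and follows essentially the same route as the paper: the paper likewise reads off $\nu_{\bullet,k}=(\varpi_{i_k},(z_{\leq k}^{-1}\varpi_{i_k})^-)$ from Lemma \ref{valuation and formula cluster variable tensor product}, plugs it into Lemma \ref{lem:degree lifting variables}, and simplifies with $b=b^+-b^-$ to get the stated weight and the PRV identity for $v=w_0z_{\leq k}$. Your explicit verification of the PRV condition is slightly more detailed than the paper's one-line remark, but the argument is the same.
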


\begin{proof}
     By Lemma \ref{valuation and formula cluster variable tensor product}, $\nu_{\bullet,k}= (\varpi_{i_k}, ( z_{\leq k}^{-1} \varpi_{i_k})^-).$ Moreover, Lemma \ref{lem:degree lifting variables} implies that (recall that we modified our notation for this section) $\lif x_k$ is $X(T)^3$-homogeneous of weight 
$$(\varpi_{i_k}, ( z_{\leq k}^{-1} \varpi_{i_k})^-, \varpi_{i_k} + ( z_{\leq k}^{-1} \varpi_{i_k})^- +  z_{\leq k}^{-1} \varpi_{i_k}- \varpi_{i_k}).$$
Recalling that, for any $b \in \RR$, $b=b^+-b^-$, we immediately have that the triple of weights above coincides with the one of the statement. The same formula allows to verify, easily, the PRV condition.
\end{proof}

Understanding the multiplicities of the weights of cluster variables, of seeds equivalent to $\lif t_\ii$, feels to the author a difficult and intriguing question. We refer the reader to question \ref{ques: on weights}.

\begin{prop}
\label{prop:components cluster variables tensor prod}
    Let $w \in W$ and $\alpha \in \Delta$.
    \begin{enumerate}
        \item There exist $\ii' \in R(w_0)$ and a cluster variable $\lif x_{\alpha,w}$ of $\lif t_{\ii'}^D$,  which is $X(T)^3$-homogeneous of weight $(\varpi_\alpha, (w \pia)^-, (w \pia)^+).$
        \item  If $G$ is simply laced and $\widetilde x$ is a cluster variable of a seed $\widetilde t \simeq \lif t_{\ii''}^D$, for a certain $\ii'' \in R(w_0)$, such that $\widetilde x$ is $X(T)^3$-homogeneous of weight $(\varpi_\alpha, (w \pia)^-, (w \pia)^+)$, then $\widetilde x= \lif x_{\alpha,w}$.
    \end{enumerate}
\end{prop}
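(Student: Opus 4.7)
For part (1), I split into two cases according to whether $w$ lies in the parabolic stabilizer $W_\alpha := \mathrm{Stab}_W(\pia)$. If $w \in W_\alpha$, then $w\pia = \pia$, so the target weight is $(\pia, 0, \pia)$, which is exactly the weight of the frozen lifting variable $X_{\alpha_r}$ present in every seed $\lif t_{\ii'}^D$; I take $\lif x_{\alpha,w} := X_{\alpha_r}$.

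If instead $w \notin W_\alpha$, the plan is to construct an element $u \in W$ with $u\pia = w\pia$ whose inverse $u^{-1}$ admits a reduced expression beginning with $s_\alpha$, and then to realize $u^{-1}$ as a right-factor $z_{\leq k}$ of a full reduced expression for $w_0$. Starting from any reduced expression $w = s_{j_1} \cdots s_{j_n}$, at least one $j_m$ equals $\alpha$; taking $k$ to be the largest such index, the tail $v := s_{j_{k+1}} \cdots s_{j_n}$ contains no $s_\alpha$ and so lies in $W_\alpha$, which makes $u := s_{j_1} \cdots s_{j_k} = w v^{-1}$ satisfy $u\pia = w\pia$. Being a prefix of a reduced word, $u$ is reduced, and hence so is $u^{-1} = s_\alpha s_{j_{k-1}} \cdots s_{j_1}$. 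Because $\ell(w_0 u) = l - k$, I can prepend any reduced expression for $w_0 u$ to $u^{-1}$ and obtain a reduced expression $\ii' \in R(w_0)$ in which $z_{\leq k} = u^{-1}$ and $i_k = \alpha$. Proposition \ref{prop:weights c variable tensor} then gives that the cluster variable $\lif x_k$ of $\lif t_{\ii'}^D$ has weight exactly $(\pia, (w\pia)^-, (w\pia)^+)$, and I set $\lif x_{\alpha,w} := \lif x_k$.

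For part (2), assume $G$ is simply laced. By Corollary \ref{cor:independence reduced exp tensor} all seeds $\lif t_{\ii}^D$ for $\ii \in R(w_0)$ share one mutation class, so both $\widetilde x$ and $\lif x_{\alpha,w}$ live in the common upper cluster algebra $\uclu(\lif t_{\ii''}^D)$, which equals $\Br$ by Theorem \ref{equality tensor product}. Proposition \ref{prop:weights c variable tensor} states that the triple $(\pia, (w\pia)^-, (w\pia)^+)$ satisfies the PRV condition, so Theorem \ref{thm:PRV without multiplicity} forces the homogeneous component $\Br_{(\pia, (w\pia)^-, (w\pia)^+)}$ to be one-dimensional. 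Hence $\widetilde x$ and $\lif x_{\alpha,w}$ are scalar multiples, which makes the principal ideals $(\widetilde x)$ and $(\lif x_{\alpha,w})$ in $\Br$ coincide. Finally, $\lif t_{\ii''}^D$ has maximal rank (so no mutable vertex is completely disconnected) and no semi-frozen vertex, so Theorem \ref{cluster variables irred}(2) upgrades the equality of ideals to the equality of cluster variables $\widetilde x = \lif x_{\alpha,w}$.

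The main obstacle is the construction in part (1) for $w \notin W_\alpha$, where one must exhibit a single element achieving both $u\pia = w\pia$ and $i_k = \alpha$ simultaneously; the trick of selecting the \emph{last} occurrence of $\alpha$ in a reduced expression of $w$ is the key, after which the rest follows from standard facts about reduced words and the longest element. In part (2), the passage from proportionality to the actual equality of cluster variables — a genuinely strong uniqueness statement — is not immediate, but it is handled cleanly by invoking Theorem \ref{cluster variables irred}(2).
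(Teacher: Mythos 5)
Your argument follows the paper's proof almost verbatim: part (1) selects the last occurrence of $\alpha$ in a reduced word for $w$, truncates there to get $u$ with $u\pia=w\pia$, and completes the reduced word for $u^{-1}$ to one for $w_0$ so that Proposition \ref{prop:weights c variable tensor} applies; part (2) combines Corollary \ref{cor:independence reduced exp tensor}, the PRV one-dimensionality of the relevant graded piece, and Theorem \ref{cluster variables irred}(2) exactly as the paper does.

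There is, however, one concrete slip in the base case $w\pia=\pia$ (equivalently $s_\alpha\notin\supp(w)$): you assign $\lif x_{\alpha,w}:=X_{\alpha_r}$, but $X_{\alpha_r}=1\otimes\delaee$ is homogeneous of weight $(0,\pia,\pia)$, not $(\pia,0,\pia)$. The variable with the target weight $(\pia,(\pia)^-,(\pia)^+)=(\pia,0,\pia)$ is $X_{\alpha_l}=\delaee\otimes 1$, which is what the paper uses ($\lif x_{\alpha,e}=\lif x_{\alpha_l}=X_{\alpha_l}$). With $X_{\alpha_r}$ replaced by $X_{\alpha_l}$, the rest of your argument is correct and coincides with the paper's.
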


\begin{proof}
    For statement one, let $\jj=(j_1, \dots, j_m) \in R(w)$ (pay attention to the unusual enumeration). Let $k= \alpha^{\{\max\}}$ and $v= \prod_{n=1}^k s_{j_k}$. Since $w \pia=v \pia$, it's sufficient to prove the statement for $v$. If $k=0$, hence $v=e$, then $\lif x_{\alpha,e}= \lif x_{\alpha_l}$ works. Note that $\lif x_{\alpha_l}=X_{\alpha_l}$ by definition a cluster variable of $\lif t_{\ii'}^D$ for any $\ii'$. If $k>0$, take $\ii' \in R(w_0)$ such that $i'_n=j_n$ for $n \leq k$. By Proposition \ref{prop:weights c variable tensor}, setting $\lif x_{\alpha, v}= \lif x_{\ii',k}$ works.

\bigskip

    For statement two, by Corollary \ref{cor:independence reduced exp tensor} we have that $\widetilde t \simeq \lif t_{\ii'}^D$. By Theorem \ref{thm:PRV without multiplicity} and Proposition \ref{prop:weights c variable tensor}, $\dim \Br_{(\varpi_\alpha, (w \pia)^-, (w \pia)^+)}=1$. Hence, there exist $c \in \CC^*$ such that $\lif x_{\alpha,w}= c \widetilde x$. The statement follows from Theorem \ref{cluster variables irred}.
\end{proof}

Note that, for any $\alpha \in \Delta $, $\alpha^{\min}$ and $\alpha(-)= \min \{ k \, : \, (z_{\leq k}) \alpha \in \Phi^-\}$ are defined.

\begin{lemma}
\label{p q min polinomio nei precedenti tensor product}
    For any $\alpha \in \Delta$ we have $$p_{\alpha^{\min},0}= \prod_{j=1}^{\alpha^{\min} - 1}p_{j,0}^{c_j} \quad \text{and} \quad q_{\alpha(-),0}= \prod_{j=1}^{\alpha(-) - 1}q_{j,0}^{d_j} $$
    for some  non-negative integers $c_j$ and $d_j$.
\end{lemma}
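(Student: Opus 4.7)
The plan is to derive both identities directly from the technical Lemmas~\ref{delta alpha min polinomio nei precedenti U(w)} and \ref{f alpha(-) = } of Section~\ref{Technical lemmas for the study of  valuations section}, specialised to $z = w_0$. This specialisation is legitimate because in the tensor product setting we have $z = w_0$, whence $Y = U(w_0) = U$; therefore the $D^{\varpi_\beta}_{v,w}$ notation used in the current section (restriction of $\Delta^{\varpi_\beta}_{v,w}$ to $Y = U$) coincides with the one fixed in Section~\ref{Technical lemmas for the study of  valuations section}. Moreover $\supp(w_0) = \Delta$ and $\Delta \subseteq \Phi(w_0) = \Phi^+$, so both $\alpha^{\min}$ and $\alpha(-)$ are well-defined for every $\alpha \in \Delta$, and the hypotheses of both technical lemmas are satisfied.

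For the first identity I would observe that, since $i_{\alpha^{\min}} = \alpha$, the function $p_{\alpha^{\min},0}$ equals $D^{\varpi_\alpha}_{s_\alpha,\, z_{\leq \alpha^{\min}}^{-1}}$, which is precisely the function decomposed in Lemma~\ref{delta alpha min polinomio nei precedenti U(w)}. The factors $D^{\varpi_{i_j}}_{s_\alpha,\, z_{\leq j}^{-1}}$ appearing in that decomposition are, by definition, exactly the $p_{j,0}$ for $j = 1, \dots, \alpha^{\min} - 1$, and the non-negativity of the exponents $c_j$ is part of the conclusion of Lemma~\ref{delta alpha min polinomio nei precedenti U(w)}. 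This already gives the first identity.

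For the second identity I would first check that $q_{k,0} = f_k$ for every $k$, where $f_k$ is the function introduced in Section~\ref{Technical lemmas for the study of  valuations section}. This amounts to a direct comparison of the case-defining conditions in the two definitions and is immediate. Lemma~\ref{f alpha(-) = } then yields $q_{\alpha(-),0} = q_{\alpha(-)^-, 0}$, under the convention $q_{0,0} = 1$. If $\alpha(-)^- = 0$, this means $q_{\alpha(-),0} = 1$, which is the empty product (all exponents $d_j = 0$). If $\alpha(-)^- \geq 1$, it gives $q_{\alpha(-),0} = q_{\alpha(-)^-,\,0}^{\,1}$, which has the desired form with $d_{\alpha(-)^-} = 1$ and $d_j = 0$ otherwise.

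There is no substantial obstacle: both halves are rephrasings of previously established technical lemmas once the setting $z = w_0$ is matched to the general setting of Section~\ref{Technical lemmas for the study of  valuations section}. The only point requiring a little attention is verifying that the two pieces of notation ($D^{\varpi_\beta}_{v,w}$ and $f_k$ versus $p_{k,0}, q_{k,0}$) really refer to the same functions in this specialisation, which as noted is straightforward.
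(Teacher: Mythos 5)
Your proof is correct and follows exactly the paper's approach: the paper's own proof simply states that the $p$-identity is a reformulation of Lemma~\ref{delta alpha min polinomio nei precedenti U(w)} and the $q$-identity a reformulation of Lemma~\ref{f alpha(-) = }. Your careful spelling out of the matching of notation ($Y=U$ when $z=w_0$, $p_{\alpha^{\min},0}=D^{\varpi_\alpha}_{s_\alpha,z_{\leq\alpha^{\min}}^{-1}}$, $q_{k,0}=f_k$) and the explicit choice of exponents $d_j$ in the two subcases is exactly what the word ``reformulation'' is suppressing.
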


    \begin{proof}
        For the functions $p$, this is a reformulation of Lemma \ref{delta alpha min polinomio nei precedenti U(w)}. For the functions $q$ it is a reformulation of Lemma \ref{f alpha(-) = }.
    \end{proof}

\begin{lemma}
    \label{p,q algebraic independent}
    The functions $p_{k,0}$ for $k \neq \alpha^{\min}$ (resp.  $q_{r,0}$ for $r \neq \alpha(-)$) are algebraically independent.
\end{lemma}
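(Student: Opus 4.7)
The plan is to recognize that both families of functions match exactly the ones whose algebraic independence was already established in Section \ref{Technical lemmas for the study of  valuations section}, so the lemma is essentially a bookkeeping statement.

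For the family $\{p_{k,0}\}_{k\neq \alpha^{\min}}$, recall from Lemma \ref{valuation and formula cluster variable tensor product} that $p_{k,0}=D^{\varpi_{i_k}}_{s_\alpha,\,z_{\leq k}^{-1}}$. In the present tensor product setting we have $z=w_0$, so $\ii$ is a reduced expression for the longest element and these functions are precisely the ones appearing in Lemma \ref{delta s w alg indip U(w)} (the case $Y=U(w_0)=U$). That lemma gives the algebraic independence of $\{D^{\varpi_{i_j}}_{s_\alpha,z_{\leq j}^{-1}}\}_{j\neq \alpha^{\min}}$, which is exactly the first half of what we want.

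For the family $\{q_{r,0}\}_{r\neq \alpha(-)}$, the definition of $q_{k,0}$ given in Lemma \ref{valuation and formula cluster variable tensor product} (a minor of type $\Delta^{\varpi_{i_k}}_{e,z_{\leq k}^{-1}}$ when $z_{\leq k}\alpha\in\Phi^+$, and $\Delta^{\varpi_{i_k}}_{e,s_\alpha z_{\leq k}^{-1}}$ when $z_{\leq k}\alpha\in\Phi^-$) coincides verbatim with the functions $f_k$ introduced before Lemma \ref{f alpha(-) = }. The algebraic independence of $\{f_k\}_{k\neq \alpha(-)}$ is the content of Lemma \ref{f alg indip}, under the hypothesis $-z\alpha\in\Delta$. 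This hypothesis is automatically satisfied in the tensor product setting since $z=w_0$ and hence $-z\alpha=-w_0\alpha=\alpha^{*}\in\Delta$ for every $\alpha\in\Delta$.

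There is essentially no obstacle here, as the proof is a direct translation. The only thing to watch is that the hypothesis $\alpha \in \Delta \cap \Phi(z)$ from the "Right twist" subsection is automatic (since $z=w_0$, every simple root is an inversion), so $\alpha(-)$ is indeed defined; similarly $\alpha^{\min}$ is defined because $\supp(w_0)=\Delta$. Thus the proof reads simply:

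\begin{proof}[Proof sketch]
Since $z=w_0$, both $\alpha^{\min}$ and $\alpha(-)$ are well defined. The family $\{p_{k,0}\}_{k\neq\alpha^{\min}}$ coincides with the family considered in Lemma \ref{delta s w alg indip U(w)} applied to the reduced expression $\ii\in R(w_0)$, whence its algebraic independence. The family $\{q_{r,0}\}_{r\neq\alpha(-)}$ coincides with the family $\{f_k\}_{k\neq\alpha(-)}$ of Lemma \ref{f alg indip}; the hypothesis $-z\alpha\in\Delta$ of that lemma is satisfied because $-w_0\alpha=\alpha^{*}\in\Delta$, and the conclusion gives the second algebraic independence.
\end{proof}
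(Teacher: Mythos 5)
Your proposal is correct and follows the same approach as the paper: the paper's proof is precisely "This is a reformulation of Lemma~\ref{delta s w alg indip U(w)} for the $p$ and of Lemma~\ref{f alg indip} for the $q$." You have simply spelled out the routine hypothesis checks (that $\alpha^{\min}$ and $\alpha(-)$ are well defined because $z=w_0$, and that $-w_0\alpha\in\Delta$), which the paper leaves implicit.
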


\begin{proof}
   This is a reformulation of Lemma \ref{delta s w alg indip U(w)} for the $p$ and of Lemma \ref{f alg indip} for the $q$.
\end{proof}

We are ready to prove Theorem \ref{equality tensor product}.

\begin{proof}[Proof of Theorem \ref{equality tensor product}]
    By contradiction, suppose that $\uclu(\lif t^D)$ is strictly contained in $\Br$. By Proposition \ref{conditions equality minimal lifting}, there exists a $d \in D$ and $f \in \Br$ such that $\cval_d(f) < 0$. We can suppose that $f$ is homogeneous for the $T \times T$-action. In particular, up to multiplying for a monomial in the $X_s$, $s \in D$, and in the unfrozen variables of $\lif x$, we can suppose that there exist $(\lambda, \mu) \in X(T) \times X(T)$ and $f \in \Br_{(\lambda, \mu), \bullet}$ such that 

    \begin{equation}
    \label{eq 14}
        f= \frac{P}{\lif x_d} \quad \text{where} \quad P= \sum_{n  \in \NN^{\lif \spc I}}a_n \lif x^n  \, \, : \, \, \lif x_d \not | \, P.
    \end{equation}
Hence, $P$ is a polynomial in the variables of the cluster $\lif x$, which is not divisible by $\lif x_d$. Here, divisibility is intended in the polynomial ring. Up to changing $f$, we can assume that, if $n_d > 0 $, then $a_n=0$, so that the sum defining $P$ runs over $\NN^{\lif \spc I \setminus \{ d \}}.$ 
We use the convention that, if $\alpha \in \Delta$, then $\nu_{\bullet, \alpha_l}= (\pia,0) $ and $\nu_{\bullet, \alpha_r}=(0, \pia)$. The fact that $f$ is homogeneous of degree $(\lambda, \mu)$ imposes that, for any $n$ such that $a_n \neq 0$,

\begin{equation}
    \label{eq 13}
    \sum_{j \in \lif \spc I } n_j \nu_{\bullet,j}= (\lambda,\mu) + \nu_{\bullet,d}.
\end{equation}
For $s \in D$, we set $p_{s,0}=1=q_{s,0}$. We distinguish two cases.

\bigskip

Suppose first that $d=\alpha_l$ for a certain $\alpha \in \Delta.$ Let $x$ as in Lemma \ref{iota, j tensor product}. Using Proposition \ref{valuation and formula cluster variable tensor product} and Lemma \ref{fundamental calculation tensor product} we deduce that
$$f(\iota_\alpha(x))=(\lambda,\mu)\bigl((h_l,h_r)\bigr) \biggl( t^\inv \sum_{k\geq 0} t^k f_k(y)\biggr)$$
for certain $f_k \in \CC[Y]$. 
Using Lemma \ref{fundamental calculation tensor product}, the fact that $n_{\alpha_l}=0$ if $a_n \neq 0$ and Proposition \ref{valuation and formula cluster variable tensor product}, we get that 
$$f_0= \sum_{n \in \NN^{\lif \spc I \setminus \{ \alpha_l \}}}a_n p_{\bullet,0}^n \quad \text{where} \quad p_{\bullet,0}^n= \prod_{j \in \lif \spc I \setminus \{\alpha_l \}}p_{j,0}^{n_j}.$$ 
But since $f \in \Br$, $\val_{\{t=0\}}\bigl(\iota_\alpha^*(f) \bigr)\geq 0$, in particular $f_0=0.$

\bigskip

Consider the linear map $\widetilde \pi : \ZZ^{\lif \spc I \setminus \{\alpha_l \}} \longto \ZZ^{I \setminus \alpha^{\min}}$ defined in the following way.
For $s \in D \setminus \{\alpha_l \}$, $\widetilde \pi(e_s)=0$. For $j \in I \setminus \{\alpha^{\min} \}$, $\widetilde \pi(e_j)=e_j$ and $\widetilde \pi(e_{\alpha^{\min}})= \sum_{j < \alpha^{\min}} c_j e_j$, where the coefficients $c_j$ are the ones of Lemma \ref{p q min polinomio nei precedenti tensor product}. Since $p_{s,0}=1$ for $s \in D$ and because of Lemma \ref{p q min polinomio nei precedenti tensor product} we have that, for any $n \in \NN^{\lif \spc I \setminus \{\alpha_l \}}$,
$$p_{\bullet,0}^n=p_{\bullet,0}^{\widetilde \pi(n) }.$$

In particular,
$$f_0 = \sum_{ m \in \NN^{I \setminus \{\alpha^{\min} \}}}b_m p_{\bullet,0}^m \quad \text{where} \quad b_m= \sum_{n \in \NN^{{\lif \spc I \setminus \{\alpha_l \}}} \, : \, \widetilde \pi(n)=m}a_n.$$
Since, by Lemma \ref{p,q algebraic independent}, the functions $p_{j,0}$ with $j \in I \setminus \{ \alpha^{\min} \}$ are algebraically independent and $f_0=0$, then for any $m \in  \NN^{I \setminus \{\alpha^{\min} \}}$, $b_m=0$. We claim that for any $m \in  \NN^{I \setminus \{\alpha^{\min} \}}$ there exists at most one  $n \in \NN^{{\lif \spc I \setminus \{\alpha_l \}}}$ such that $a_n \neq 0$ and $\widetilde \pi(n)=m$. This implies at ones that, for any $n \in \NN^{{\lif \spc I \setminus \{\alpha_l \}}} $, $a_n= 0$, which is a contradiction.

\bigskip

To prove the claim, just notice that any $n$ such that $a_n \neq 0$ satisfies the weight condition \eqref{eq 13}. In particular, consider the linear map $\pi : \ZZ^{{\lif \spc I \setminus \{\alpha_l \}}} \longto  X(T \times T) \times \ZZ^{I \setminus \{\alpha^{\min} \}}$ defined by $\pi(e_j)=(\nu_{\bullet, j}, \widetilde \pi(e_j))$. Remember that, for $s \in D$, $\nu_{\bullet,s}$ has been previously defined. If $n$ is such that $a_n \neq 0$ and $\widetilde \pi(n)=m $, then $\pi(n)= ((\lambda + \pia, \mu), m)$. But the map $\pi$ is injective. To prove this, we can consider $\pi$ as a matrix with columns indexed by $\lif I \setminus \{ \alpha_l \}$ and rows indexed by $\lif I \setminus \{ \alpha^{\min} \}.$ Recall that $X(T \times T)$ is identified with $\ZZ^D$ by means of the fundamental weights. Enumerate the roots of $\Delta $  from $1$ to $k$, such that $\alpha=\beta_k$. Then, order the columns of the matrix representing $\pi$ accordingly to the order 
$$\beta_{1,l} < \dots < \beta_{k-1,l} < \beta_{1,r} < \dots < \beta_{k,r} < 1 < \dots < l$$
and the rows according to the order
$$\beta_{1,l} < \dots < \beta_{k-1,l}  < \beta_{1,r} < \dots < \beta_{k,r} < 1 < \dots < \alpha^{\min}-1 < \alpha_l < \alpha^{\min} + 1 < \dots < l.$$
Then, the matrix representing $\pi$ is upper triangular with ones on the diagonal (note that $\nu_{\alpha_l,\alpha^{\min}}=1$ by Lemma \ref{valuation and formula cluster variable tensor product}). In particular, $\pi$ is invertible.

\bigskip

Next suppose that $d=\alpha_r$, for a certain $\alpha \in \Delta.$ We can compute $j_\alpha^*(f)$, similarly as before, and by the same argument we deduce that the function $f_0 \in \CC[Y]$ defined by

$$ f_0= \sum_{n \in \NN^{\lif \spc I \setminus \{ \alpha_r \}}}a_n q_{\bullet,0}^n \quad \text{where} \quad q_{\bullet,0}^n= \prod_{j \in \lif \spc I \setminus \{\alpha_r \}}q_{j,0}^{n_j}.$$
is zero.

\bigskip

We introduce the linear map $\widetilde \pi : \ZZ^{\lif \spc I \setminus \{\alpha_r \}} \longto \ZZ^{I \setminus \alpha(-)}$ defined in the following way. For $s \in D \setminus \{\alpha_r \}$, $\widetilde \pi(e_s)=0$. For $j \in I \setminus \{\alpha(-) \}$, $\widetilde \pi(e_j)=e_j$ and $\widetilde \pi(e_{\alpha(-)})= \sum_{j < \alpha^{\min}} d_j e_j$, where the coefficients $d_j$ are the ones of Lemma \ref{p q min polinomio nei precedenti tensor product}. Since $q_{s,0}=1$ for $s \in D$ and because of Lemma \ref{p q min polinomio nei precedenti tensor product}, we have that for any $n \in \NN^{\lif \spc I \setminus \{\alpha_r\}}$,
$$q_{\bullet,0}^n=q_{\bullet,0}^{\widetilde \pi(n) }.$$
In particular, $$f_0 = \sum_{ m \in \NN^{I \setminus \{\alpha(-) \}}}b_m q_{\bullet,0}^m \quad \text{where} \quad b_m= \sum_{n \in \NN^{{\lif \spc I \setminus \{\alpha_r \}}} \, : \, \widetilde \pi(n)=m}a_n.$$
Since, by Lemma \ref{p,q algebraic independent}, the functions $q_{j,0}$ with $j \in I \setminus \{ \alpha(-) \}$ are algebraically independent and $f_0=0$, then for any $m \in  \NN^{I \setminus \{\alpha(-) \}}$, $b_m=0$. We claim that, for any $m \in  \NN^{I \setminus \{\alpha(-) \}}$, there exists at most one  $n \in \NN^{{\lif \spc I \setminus \{\alpha_r \}}}$ such that $a_n \neq 0$ and $\widetilde \pi(n)=m$. This implies at ones that for any $n \in \NN^{{\lif \spc  I \setminus \{\alpha_r \}}} $, $a_n= 0$, which is a contradiction.

\bigskip

As before, consider the linear map $\pi : \ZZ^{{\lif \spc I \setminus \{\alpha_r \}}} \longto  X(T \times T) \times \ZZ^{I \setminus \{\alpha(-) \}}$ defined by $\pi(e_j)=(\nu_{\bullet,j}, \widetilde \pi(e_j))$. If $n$ is such that $a_n \neq 0$ and $\widetilde \pi(n)=m $, then $\pi(n)= ((\lambda, \mu + \pia), m)$. But the map $\pi$ is injective. To prove this,  consider $\pi$ as a matrix with columns indexed by $\lif I \setminus \{ \alpha_r \}$ and rows indexed by $\lif I \setminus \{ \alpha(-) \}.$ Enumerate the roots of $\Delta $  from $1$ to $k$ such that $\alpha=\beta_k$. Then, order the columns of the matrix representing $\pi$ accordingly to the order 
$$\beta_{1,l} < \dots < \beta_{k,l} < \beta_{1,r} < \dots < \beta_{k-1,r} < 1 < \dots < l$$
and the rows according to the order
$$\beta_{1,l} < \dots < \beta_{k,l}  < \beta_{1,r} < \dots < \beta_{k-1,r} < 1 < \dots < \alpha(-)-1 < \alpha_r < \alpha(-) + 1 < \dots < l.$$
Then, the matrix representing $\pi$ is upper triangular with ones on the diagonal. Here we used that $\nu_{\alpha_r,\alpha(-)}=1$ by Lemma \ref{valuation and formula cluster variable tensor product} and the third statement in Lemma \ref{right translation alpha(-)}. In particular, $\pi$ is invertible.
\end{proof}

\begin{example}
     Let $G= \SL_4$, then $\Delta= \{1,2,3\}$ with standard notation. Consider the reduced expression of $w_0$: $\ii= (1, 2, 3, 1, 2, 1)$ and let $t=t_\ii$. Then $$ B= \bmat 
    0 & -1 & 1 \\
    1 & 0 & -1\\
    -1 & 1 & 0\\
    0 & 1 & 0\\
    0 & -1 & 1\\
    0 & 0 & -1
    \emat$$
    where the rows and columns of $B$ are ordered in the obvious way. Graphically, the valued quiver of $t$ is the following:
    $$\begin{tikzcd}
	&& {\bigcirc 1} \\
	& {\bigcirc 3} && {\bigcirc 2} \\
	{\blacksquare 6} && {\blacksquare 5} && {\blacksquare 4}
	\arrow[from=1-3, to=2-2]
	\arrow[from=2-4, to=3-3]
	\arrow[from=2-2, to=3-1]
	\arrow[from=3-3, to=2-2]
	\arrow[from=2-4, to=1-3]
	\arrow[from=3-5, to=2-4]
	\arrow[from=2-2, to=2-4]
\end{tikzcd}$$
Here $D= \{1_l,2_l,3_l,1_r,2_r,3_r\}$. Reading the previous list from left to right allows to identify the minimal lifting matrix $\nu$ with a six by six matrix. Using Lemma \ref{valuation and formula cluster variable tensor product}, we have that

$$\nu= \bmat
1&0&1&0&0&1\\
0&1&0&0&1&0\\
0&0&0&1&0&0\\
1&1&0&1&0&0\\
0&0&1&0&1&0\\
0&0&0&0&0&1
\emat.
$$
A direct computation of $-\nu B$ allows to the deduce that, the seed $t_\ii^D$, corresponds to the following valued quiver

\[\begin{tikzcd}
	&& {\blacksquare 1_r} && {\blacksquare 1_l} \\
	& {\blacksquare 2_r} && {\bigcirc 1} && {\blacksquare 2_l} \\
	{\blacksquare 3_r} && {\bigcirc 3} && {\bigcirc 2} && {\blacksquare 3_l} \\
	& {\blacksquare 6} && {\blacksquare 5} && {\blacksquare 4}
	\arrow[from=2-4, to=3-3]
	\arrow[from=3-5, to=4-4]
	\arrow[from=3-3, to=4-2]
	\arrow[from=4-4, to=3-3]
	\arrow[from=3-5, to=2-4]
	\arrow[from=4-6, to=3-5]
	\arrow[from=3-3, to=3-5]
	\arrow[from=1-5, to=2-4]
	\arrow[from=2-6, to=3-5]
	\arrow[from=2-2, to=2-4]
	\arrow[from=2-4, to=2-6]
	\arrow[from=3-1, to=3-3]
	\arrow[from=3-5, to=3-7]
	\arrow[from=2-4, to=1-3]
	\arrow[from=3-3, to=2-2]
\end{tikzcd}\]
which is the \textit{ice hive quiver} defined in \cite{fei2017cluster}.
\end{example}

\begin{example}
\label{ex:G_2 tensor prod}
    Let $G= G_2$. Label the two simple roots as prescribed by the following picture. 
    
    \begin{center}
  \begin{tikzpicture}[scale=0.5]
    \draw (-1,0) node[anchor=east] {$G_2$}; \draw (0,0) -- (2 cm,0);
    \draw (0, 0.15 cm) -- +(2 cm,0); \draw (0, -0.15 cm) -- +(2 cm,0);
    \draw[shift={(0.8, 0)}, rotate=180] (135 : 0.45cm) -- (0,0) --
    (-135 : 0.45cm); \draw[fill=white] (0 cm, 0 cm) circle (.25cm)
    node[below=4pt]{$1$}; \draw[fill=white] (2 cm, 0 cm) circle
    (.25cm) node[below=4pt]{$2$};
  \end{tikzpicture}
\end{center}
Let $\ii= (1, 2, 1, 2, 1, 2) \in R(w_0)$ and $t=t_\ii$. 
Then $$ B= \bmat 
    0 & -1 & 1 & 0 \\
    3 & 0 & -3 & 1\\
    -1 & 1 & 0 & -1\\
    0 & -1 & 3 & 0\\
    0 & 0 & -1 & 1\\
    0 & 0 & 0 & -1
    \emat$$
    where the rows and columns of $B$ are ordered in the obvious way. Graphically, the valued quiver of $t$ is the following:
    \[\begin{tikzcd}
	{\blacksquare 5} & \bigcirc3 & \bigcirc1 \\
	\\
	\blacksquare6 & \bigcirc4 & \bigcirc2
	\arrow[from=3-3, to=3-2]
	\arrow[from=3-2, to=3-1]
	\arrow[from=1-3, to=1-2]
	\arrow[from=1-2, to=1-1]
	\arrow["{ 3,1}"{description}, from=3-3, to=1-3]
	\arrow["{3,1}"{description}, from=3-2, to=1-2]
	\arrow["{ 1,3}"{description}, from=1-2, to=3-3]
	\arrow["{ 1,3}"{description}, from=1-1, to=3-2]
\end{tikzcd}\]
Here $D= \{1_l,2_l,1_r,2_r\}$. Reading the previous list from left to right allows to identify the minimal monomial lifting matrix $\nu$ with a four by six matrix. A direct computation using Lemma \ref{valuation and formula cluster variable tensor product}, implies that
$$\nu= \bmat
0&1&0&1&0&1\\
1&0&1&0&1&0\\
0&0&0&0&0&1\\
1&1&2&1&1&0\\

\emat.
$$
We can compute $-\nu B$ and deduce that, the seed $t_\ii^D$, corresponds to the following valued quiver

\[\begin{tikzcd}
	&& {\blacksquare2_r} \\
	\\
	{\blacksquare 5} & \bigcirc3 & \bigcirc1 && {\blacksquare2_l} \\
	\\
	\blacksquare6 & \bigcirc4 & \bigcirc2 && {\blacksquare 1_l} \\
	\\
	& {\blacksquare 1_r}
	\arrow[from=5-3, to=5-2]
	\arrow[from=5-2, to=5-1]
	\arrow[from=3-3, to=3-2]
	\arrow[from=3-2, to=3-1]
	\arrow["{ 3,1}"{description}, from=5-3, to=3-3]
	\arrow["{ 3,1}"{description}, from=5-2, to=3-2]
	\arrow["{ 1,3}"{description}, from=3-2, to=5-3]
	\arrow["{ 1,3}"{description}, from=3-1, to=5-2]
	\arrow[from=3-3, to=1-3]
	\arrow[from=3-5, to=3-3]
	\arrow[shift left=2, from=3-3, to=5-5]
	\arrow[shift right=2, from=3-3, to=5-5]
	\arrow[from=3-3, to=5-5]
	\arrow[from=5-5, to=5-3]
	\arrow[from=7-2, to=5-2]
\end{tikzcd}\]
which, in the notation of \cite{fei2016tensor}, up to some arrows between frozen vertices, is the iART-quiver $\Delta^2_Q$ corresponding to $G_2$.

\end{example}

We end this section comparing our construction to \cite{fei2016tensor}, which has been of great inspiration for this work. Indeed, in \cite{fei2016tensor}, the author constructs cluster structures on $\lX(G \times G, G)$ assuming $G$ to be simple and simply laced. The proof highly relies on the theory of quiver with potentials  \cite{derksen2008quivers} \cite{derksen2010quivers}. We believe our construction to be simpler, especially because it only relies on the geometric properties of the branching scheme and on the minimal monomial lifting, which is a versatile technique.  As an evidence of this fact, in the present text we drop the hypothesis that $G$ is simple and simply laced. Moreover, in the following we prove that Fei's cluster structures are obtained through minimal monomial lifting. It should also be true the the cluster structures constructed here agree with Fei's ones. We return on this aspect later. Finally, in \cite{fei2016tensor}, the proof that the branching scheme has cluster structure is simultaneously achieved with the existence of a good basis for the corresponding upper cluster algebra. Nevertheless, the two statements can't be proved independently. Even if the construction of a good basis for the upper cluster algebra is a remarkable achievement, we believe that the present approach, in which the construction of cluster structures on branching schemes are independent on the existence of good bases is more in the spirit of studying branching problems through cluster theory. In a forthcoming work, we will see how to construct good bases for some of the upper cluster algebras constructed in the present text, only relying on cluster theory.   

\bigskip

Suppose from now on that $G$ is simple. In \cite{fei2016tensor}, the author constructs a seed 
$$\widetilde t= (\widetilde I_{uf}, \widetilde I_{sf}=\emptyset, \widetilde I_{hf}, B^2, S^2 )$$
of $\CC(\lX)$, which is $X(T)^3$-graded by a degree configuration $\sigma^2$, such that \cite{fei2016tensor}[Theorem 8.1]

\begin{theo}
    \label{thm:Fei tensor} 
  If $G$ is simply laced
    \begin{enumerate}
    \item $\uclu(\widetilde t)= \Orb_\lX(\lX)$.
    \item The  graduation induced by $\sigma^2$, on $\uclu(\widetilde t)$, coincides with the natural graduation on $\Orb_\lX(\lX)$.
    \item The generic cluster character $C_{W^2}$, associated to a certain potential $W^2$ on the seed $\widetilde t$, gives a good (in the sense of Qin \cite{qin2022bases}) basis  for $\uclu( \widetilde t).$
\end{enumerate} 
\end{theo}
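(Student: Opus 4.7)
The plan is to deduce Theorem \ref{thm:Fei tensor}, at least for parts (1) and (2), by identifying Fei's seed $\widetilde t$ with the minimal monomial lifting $\lif t_\ii^D$ for a carefully chosen $\ii \in R(w_0)$, and then invoking Theorems \ref{equality tensor product} and \ref{application minimal mon lifting branching}. Concretely, I would first pick a reduced expression $\ii$ compatible with the Dynkin orientation $Q$ used by Fei, so that the iART-quiver $\Delta^2_Q$ underlying $\widetilde t$ matches, after a relabelling of frozen vertices, the valued quiver of $\lif t_\ii^D$ (the two worked examples at the end of Section \ref{sec: prod, tensor prod}, especially the $\SL_4$ and $G_2$ cases, serve as the combinatorial template for this matching).

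Next, I would verify the three hypotheses of Theorem \ref{thm:unicity min mon lifting torus} applied to $\widetilde t$: that $\widetilde t$ is a $D$-pointed seed extension of $t_\ii$ in $\CC(\lX)$, that restricting the cluster variables of $\widetilde t$ under $s : \Orb(\lX) \longto \Orb(Y)$ recovers the cluster variables $x_k$ of $t_\ii$, and that both the initial cluster variables of $\widetilde t$ and the variables obtained by a single mutation at any unfrozen vertex are $X(T)$-homogeneous under the first $T$-factor action. The homogeneity is immediate from Fei's explicit description of $S^2$ in terms of generalized minors and the behavior of minors under $T$-multiplication (Lemma \ref{lem:basic prop minors}); the restriction condition is a direct computation using Lemma \ref{valuation and formula cluster variable tensor product}. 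Given these verifications, \emph{if} one already knew $\uclu(\widetilde t) = \Orb_\lX(\lX)$, Theorem \ref{thm:unicity min mon lifting torus} would force $\widetilde t = \lif t_\ii^D$.

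To avoid the circular use of part (1), I would instead check the equality $\widetilde t = \lif t_\ii^D$ directly at the combinatorial level. The initial cluster variables of $\lif t_\ii^D$ are computed in Lemma \ref{valuation and formula cluster variable tensor product}, and Fei's variables $S^2$ are also written as products of standard minors; a finite check matches the two lists, while the exchange matrix identity $-\nu B$ from \eqref{formula lifting B} can be compared row by row with $B^2$. Once this identification is secured, Theorem \ref{equality tensor product} yields part (1), and the assertion in Theorem \ref{application minimal mon lifting branching} about the graded structure via $\lif \rho(\sigma_\ii)$, combined with the fact (Proposition \ref{prop:weights c variable tensor}) that the weights of the cluster variables of $\lif t_\ii^D$ agree with the natural $X(T)^3$-grading of $\Br$, gives part (2).

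The main obstacle will be part (3): the existence of a good basis coming from the generic cluster character $C_{W^2}$ of a potential $W^2$. The minimal monomial lifting machinery developed here is silent about quivers with potentials and about CC-functions, so I do not see how to derive (3) purely from Theorem \ref{equality tensor product}. The natural route is either to transport Fei's potential $W^2$ across the isomorphism $\widetilde t \simeq \lif t_\ii^D$ and appeal to \cite{fei2016tensor}, or alternatively to invoke the general frameworks of \cite{qin2022bases}, \cite{gross2018canonical} once injective-reachability and the required technical hypotheses on $\lif t_\ii^D$ have been established for a reduced expression $\ii$ inherited from a Dynkin quiver; checking those hypotheses in the simply laced case is where the real work would lie, and it is the step I expect to be genuinely hard rather than a bookkeeping matter.
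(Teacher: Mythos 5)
This statement is not a theorem the paper proves at all; it is Fei's Theorem 8.1 from \cite{fei2016tensor}, quoted verbatim, and its ``proof'' in this paper is simply the citation. Your proposal tries to derive it from the paper's own machinery, which is an interesting exercise but not what the paper does, and the direction of logic runs opposite to the paper's: Proposition~\ref{prop:Fei=ours} \emph{uses} part (1) of Theorem~\ref{thm:Fei tensor} as an input (the last step of its proof reads ``Since $\uclu(\widetilde t)= \Orb_{\lX_\iota}(\lX_\iota)$ by Theorem~\ref{thm:Fei tensor}, we have that $\widetilde t= \lif t^D_\iota$''). So the identification $\widetilde t = \lif t^D_\iota$ that you want to exploit as a stepping stone is itself downstream of the theorem you are trying to prove.

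Even setting circularity aside, there are two concrete gaps. First, the paper's Proposition~\ref{prop:Fei=ours} identifies Fei's seed with the minimal monomial lifting $\lif t^D_\iota$ built from the suitable-for-lifting structure $(\lX,\iota,X)$, where $\iota$ sends $(h_l,h_r,u)\mapsto (h_l,h_ru)$, \emph{not} with $\lif t_\ii^D$ from Corollary~\ref{X phi suitable lifting}, where $\phi$ sends $(h_l,h_r,u)\mapsto (h_lu,h_r)$. These two lifting structures identify $T\times T\times U$ with the same open subset of $\lX$ by different maps, so the rational functions $1\otimes x_k$ and therefore the minimal lifting matrices $\nu$ they produce are a priori different. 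Your plan silently conflates these two, and Theorem~\ref{equality tensor product} only gives the equality for the $\phi$-lifting of the specific seeds $t_\ii$. Second, you assume Fei's Dynkin-orientation seed $t$ coincides with $t_\ii$ for some $\ii\in R(w_0)$, but the paper explicitly states that ``it should be true that the seed $t$ equals $t_\ii$ \ldots but this is not clear to the author, in general.'' Without that identification, Theorem~\ref{equality tensor product} does not apply to $\widetilde t$. You correctly flag that part (3) is entirely outside the reach of the minimal monomial lifting technique; that alone shows the theorem cannot be recovered from this paper and must be taken as an external input.
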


Suppose from now on that $G$ is simply laced and fix an orientation of the Dynkin diagram of $G$. This choice allows to construct a maximal rank seed $t$, of $\CC(U)$, such that $\uclu(t)= \CC[U]$ (see \cite{fei2016tensor}[Section 6.3]). Then, let $ \widetilde \iota : T \times T \times U \longto G \times G$ be the map defined by $(h_l,h_r, u) \longmapsto (h_l , h_ru)$ and $\iota : T \times T \times U \longto \lX$ be the map obtained by composing $\widetilde \iota$ with the natural projection $G \times G \longto \lX$. As for Corollary \ref{X phi suitable lifting}, one can easily prove that the triple $(\lX, \iota, X)$ is homogeneously suitable for $D$-lifting. We write $\lX_\iota$, for the scheme $\lX$, endowed with the suitable for $D$-lifting scheme structure described above. Moreover, $\lif t_\iota$ denotes the minimal monomial lifting of $t$, with respect to $\lX_\iota.$

\begin{prop}
    \label{prop:Fei=ours}
    We have that $\widetilde t= \lif t^D_\iota$.
\end{prop}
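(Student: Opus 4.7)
The plan is to apply the unicity result Theorem \ref{thm:unicity min mon lifting torus} to Fei's seed $\widetilde t$. Since part 1 of Theorem \ref{thm:Fei tensor} already gives $\uclu(\widetilde t)=\Orb_\lX(\lX)$, we only need to verify the three structural hypotheses of Theorem \ref{thm:unicity min mon lifting torus}: namely that $\widetilde t$ is a $D$-pointed seed extension of $t$ (with the frozen variables indexed by $D$ identified with the $X_\alpha$), that $s(\widetilde x_i)=x_i$ for each $i\in I$ with $\widetilde x_i$ being $X(T)$-homogeneous, and that for each $k\in I_{uf}$ we have $s(\mu_k(\widetilde x_k))=\mu_k(x_k)$ with $\mu_k(\widetilde x_k)$ again $X(T)$-homogeneous. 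Here $s:\Orb_\lX(\lX)\longto \Orb_U(U)=\CC[U]$ is the restriction along $u\longmapsto \iota((e,e,u))$, which in Fei's language amounts to setting $h_l=h_r=e$.

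The first step is to unravel Fei's combinatorial construction of $\widetilde t=(\widetilde I_{uf},\emptyset,\widetilde I_{hf},B^2,S^2)$ and to identify $\widetilde I_{hf}$ with the disjoint union $I_{hf}\sqcup D$, in such a way that the cluster variables indexed by $D$ are precisely $X_{\alpha_l}=\delta^{\pia}_{e,e}\otimes 1$ and $X_{\alpha_r}=1\otimes \delta^{\pia}_{e,e}$. On the iART-quiver side this is exactly the identification of the two ``boundary'' copies of the Dynkin quiver with the frozen vertices added in going from $\Delta_Q$ to $\Delta_Q^2$, as illustrated in Example \ref{ex:G_2 tensor prod}. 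Simultaneously, the mutable and already-frozen part of $\widetilde t$ has to match $t$ under the restriction $s$; this is just the observation that the cluster variables of $t$ are, in Fei's framework, the generalized minors giving the initial cluster on $U$ for the oriented Dynkin quiver, which is what $s$ picks out of the corresponding variables $\widetilde x_i$ built from the $(G\times G)$-iART model. Checking that the principal part of $B^2$ agrees with $B$ on $I_{uf}$ is then a direct comparison of the two quivers.

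The second step is the homogeneity conditions. Part 2 of Theorem \ref{thm:Fei tensor} asserts that the degree configuration $\sigma^2$ realises the full $X(T)^3$-grading of $\Orb_\lX(\lX)$, so every cluster variable of $\widetilde t$ is $X(T)^3$-homogeneous, hence in particular $X(T)$-homogeneous for the first factor, which is precisely the $T$-action corresponding to $\iota$ in our suitable-for-lifting structure $\lX_\iota$. The same remark applies to $\mu_k(\widetilde x_k)$ for any $k\in I_{uf}$, since by \cite{grabowski2015graded} cluster variables of any seed mutation-equivalent to $\widetilde t$ remain homogeneous with respect to $\sigma^2$. It remains to check that $s(\widetilde x_i)=x_i$ for $i\in I$ and $s(\mu_k(\widetilde x_k))=\mu_k(x_k)$ for $k\in I_{uf}$: both are obtained from the fact that $\iota((e,e,u))=(u,e)$, so restricting any $\widetilde x_i\in\Br$ along $s$ amounts to evaluating the corresponding $(G\times G)$-function at $(\cdot,e)$, which collapses the $G\times G$-minors to the minors on $U$ defining the cluster variables of $t$; the mutated variable $\mu_k(\widetilde x_k)$ is given by the exchange relation, which commutes with $s$ by Corollary \ref{deletion map surjective lifting}.

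Once all three conditions are in place, Theorem \ref{thm:unicity min mon lifting torus} yields $\widetilde t = \lif t^D_\iota$ as required. The main obstacle will be the first step: concretely identifying Fei's cluster variables on $\widetilde I_{hf}\setminus I_{hf}$ with the $X_{\alpha_l},X_{\alpha_r}$, and matching the exchange matrix $B^2$ to the one produced by the minimal lifting matrix $\nu$ with respect to $\iota$. This is essentially a bookkeeping task, most transparent on the iART-quiver picture (compare Example \ref{ex:G_2 tensor prod}), but it requires going through Fei's definitions carefully rather than invoking any general principle.
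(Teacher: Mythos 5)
Your overall strategy is the same as the paper's: reduce to a unicity theorem for the minimal monomial lifting, and feed in Fei's Theorem \ref{thm:Fei tensor}(1) to supply the crucial hypothesis $\uclu(\widetilde t)=\Orb_\lX(\lX)$. You invoke Theorem \ref{thm:unicity min mon lifting torus} where the paper invokes Theorem \ref{unicity minimal monomial lifting}; these are explicitly stated in the paper to be reformulations of one another, so that choice is immaterial. However, your write-up has two gaps.

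First, you never actually verify any of the three structural hypotheses — you describe what would need to be checked and then declare it a ``bookkeeping task'' to be carried out ``by going through Fei's definitions carefully.'' This is precisely the content of the proposition, and the paper discharges it by citing concrete results of \cite{fei2016tensor}: Corollary 8.8 and 7.9 give $S^2_D=X$, Proposition 3.6 gives the vertex-set matching and $B^2_{I\times I_{uf}}=B$, and Lemma 7.8 together with Corollary 7.11 and Section 6.3 supply the explicit monomial form of $\widetilde x_i$ and $\mu_k(\widetilde x_k)$. A proof that defers all of this is not yet a proof; you should at minimum track down which statements in \cite{fei2016tensor} carry the load.

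Second, your justification that $s(\mu_k(\widetilde x_k))=\mu_k(x_k)$ is circular: you cite Corollary \ref{deletion map surjective lifting}, but that result is about the deletion map for a seed which is \emph{already known} to be a monomial lifting — exactly the status of $\widetilde t$ that you are trying to establish. The clean replacement is elementary: once one knows that $\widetilde t$ is a $D$-pointed seed extension of $t$ with $s(\widetilde x_i)=x_i$ for $i\in I$ and $s(X_d)=1$ for $d\in D$, the exchange relation $\widetilde x_k\,\mu_k(\widetilde x_k)=\widetilde x\strut^{\widetilde B^+_{\bullet,k}}+\widetilde x\strut^{\widetilde B^-_{\bullet,k}}$ can be hit with the algebra morphism $s$, and the constraint $\widetilde B_{I\times I_{uf}}=B$ from Definition \ref{seed extension} forces the right-hand side to collapse to $x\strut^{B^+_{\bullet,k}}+x\strut^{B^-_{\bullet,k}}$, giving $s(\mu_k(\widetilde x_k))=\mu_k(x_k)$. (The paper sidesteps this altogether by citing the explicit formulas from Fei.) Also be careful with the homogeneity claim: the relevant grading in Theorem \ref{thm:unicity min mon lifting torus} is with respect to the $|D|$-dimensional torus, here $T\times T$, corresponding to the first \emph{two} factors of $X(T)^3$, not just the first; your conclusion survives since $X(T)^3$-homogeneity implies $X(T\times T)$-homogeneity, but the phrasing is off.
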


\begin{proof}
    We want to apply Theorem \ref{unicity minimal monomial lifting}. First, $S^2_D=X$ because of \cite{fei2016tensor}[Corollary 8.8, Corollary 7.9]. By \cite{fei2016tensor}[Proposition 3.6], we have that $\widetilde I_{uf}= I_{uf}$, $\widetilde I_{hf}= I_{hf} \sqcup D$ and $B^2_{I \times I_{uf}}=B$. Hence, $\widetilde t$ is a $D$-pointed seed extension of $t$. Moreover, condition 2 and 3 of Theorem \ref{unicity minimal monomial lifting} hold because of \cite{fei2016tensor}[Lemma 7.8, Corollary 7.11] and the discussion at the beginning of \cite{fei2016tensor}[Section 6.3]. Since $\uclu(\widetilde t)= \Orb_{\lX_\iota}(\lX_\iota)$ by Theorem \ref{thm:Fei tensor}, we have that $\widetilde t= \lif t^D_\iota$ by Theorem \ref{unicity minimal monomial lifting}.
\end{proof}

It should be true that the seed $t$ equals $t_\ii$, for a certain $\ii \in R(w_0)$ adapted, in the sense of \cite{geiss2007auslander}[Section 1.3], to the chosen orientation of the Dynkin diagram, but this is not clear to the author, in general. Nevertheless, it is easy to verify this statement on small examples.

\bigskip

Finally, if $G$ is not simply laced, the inclusion $\uclu(\widetilde t) \subseteq \Orb_\lX(\lX)$ still holds and the first part of \cite{fei2016tensor}[Conjecture 8.2] states that this inclusion is actually an equality.
Proceeding as in the proof of Proposition \ref{prop:Fei=ours}, if one could identify $\ii \in R(w_0)$ such that $t=t_\ii$ and prove that  $B^2= \lif B_\ii$, then Theorem \ref{equality tensor product} would imply the first part of \cite{fei2016tensor}[Conjecture 8.2]. For example, in the $G_2$ case, since everything is explicit by Example \ref{ex:G_2 tensor prod} and \cite{fei2016tensor}[Appendix A], we have that the conjecture holds for $G_2$. 

\begin{remark}
    As a final remark, note that the cluster algebra $\uclu(t)$ is categorified and the work of Fei produces a categorification for $\lif t_\iota$ which is compatible with the on on $\uclu(t)$. Moreover, \cite{fei2016tensor}[Corollary 7.8] gives an homological interpretation of the minimal  lifting matrix $\nu_\iota$ of the seed $t$, with respect to $\lX_\iota$, in terms of this categorification. A more explicit understanding of the relation between the minimal lifting matrix in term of this categorification seems to the author the fundamental tool needed to prove \cite{fei2016tensor}[Conjecture 8.2] via Theorem \ref{equality tensor product}. Actually, comparing the minimal monomial lifting to the extension construction of \cite{fei2023crystal}[Section 2.3], could provide a possible lead to accomplish this task.
\end{remark}

\subsection{The base affine space, or branching to a maximal Torus}
\label{sec:base aff space}

We use the previous result to construct a cluster structure on $\lX:= \Spec(\CC[G]^{U^-})$. Note that $\lX = \lX(G, T)$ and $T$ is a Levi subgroup of $G$, so the results of this section, for $G$ simple, can be deduced form the ones of Section \ref{equality Levi}. Moreover, the results of Section \ref{equality Levi} hold in the case of $G_I= T$, even if $G$ is not necessarily simple, because the same proofs work. See Remark \ref{rem: simple Levi}. The aim of this section is to reinterpret this cluster structure on $\lX$, as a quotient of the one on $\lX(G \times G,G).$ We slightly modify the notation of Subsection \ref{Levi subgroup setting}.

\bigskip

Let $D_l=\{ \alpha_l \, : \, \alpha \in \Delta\}$ and $\widetilde X_{\alpha_l}= \delaee$. We have a cartesian square 
$$ \begin{tikzcd}
G_0 \arrow[r] \arrow[d] \arrow[dr, phantom, "\lrcorner", very near start]
& G \arrow[d] \\
T \times U \arrow["\widetilde \phi", r]
& \lX
\end{tikzcd}
$$
where $\widetilde \phi : T \times U \longto \lX$ is  the $T$-equivariant open embedding induced by the product $T \times U \longto G$ and the leftmost vertical map is defined by $x \longmapsto ([x]_0, [x]_+)$. Then $(\lX, \widetilde \phi, \widetilde X)$ is homogeneously suitable for $D_l$-lifting by Corollary \ref{X phi suitable lifting}.

Note also that we have a commutative diagram 
\begin{equation}
\label{eq:base affine commutative}
     \begin{tikzcd}
\lX \times T \arrow["\iota" , r]
& \lX(G \times G, G)  \\
T \times U \times T \arrow[r] \arrow["(\widetilde \phi \text{,} id)", u]
& T \times T \times U \arrow["\phi", u]
\end{tikzcd}
\end{equation}

where the map $T \times U \times T \longto T \times T \times U$ is defined by $(h_l,u,h_r) \longmapsto (h_l,h_r,u)$ and $\iota$ is the open embedding induced by the inclusion $G \times T  \subseteq G \times G$.

Let $\ii \in R(w_0)$. We denote by $t_\ii$ (resp $\widetilde t_\ii$) the associated seed of $\CC(U)$ when $\CC(U)$ is considered as a subfield of $\CC(\lX(G \times G,G))$ (resp. $\CC(\lX))$. We call $I_\ii$ the vertex set of both $\widetilde t_\ii$ and $t_\ii$. Let $\widetilde \nu_\ii \in \ZZ^{D_l \times I} $ and $\nu_\ii \in \ZZ^{(D_l \sqcup D_r) \times I}$ the minimal lifting matrix corresponding to $\widetilde t_\ii$ and $t_\ii$ respectively.

\begin{lemma}
    \label{lem:valuation affine space}
     We have that $(\nu_\ii)_{D_l, \bullet}= \widetilde \nu_\ii$.
\end{lemma}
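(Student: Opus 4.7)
The plan is to deduce the equality via a direct comparison of valuations using the commutative diagram \eqref{eq:base affine commutative}. First I would argue that the horizontal map $\iota : \lX \times T \longto \lX(G \times G, G)$ is an open embedding. Indeed, $\iota$ is induced by the inclusion $G \times T \subseteq G \times G$; at the level of invariant rings, a point in the image is characterised by non-vanishing of each $X_{\alpha_r} = 1 \otimes \delaee$, since on $T$ the function $\delaee$ specialises to $\pia$ which never vanishes. Thus $\iota$ identifies $\lX \times T$ with the principal open subset of $\lX(G \times G, G)$ defined by the non-vanishing of all the $X_{\alpha_r}$.

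Next I would track the pullback of the relevant regular and rational functions through this diagram. From $X_{\alpha_l} = \delaee \otimes 1$ and the definition of $\widetilde X_{\alpha_l} = \delaee$ on $\lX$, one reads off that $\iota^*(X_{\alpha_l}) = \widetilde X_{\alpha_l} \otimes 1$ in $\Orb_\lX(\lX) \otimes \CC[T] = \Orb_{\lX \times T}(\lX \times T)$. Tracing the rational function $1 \otimes x_k \in \CC\bigl(\lX(G \times G, G)\bigr)$, defined via $\phi$, through the diagram shows that $\iota^*(1 \otimes x_k) = (1 \otimes x_k) \otimes 1$, where the inner $1 \otimes x_k$ is the corresponding rational function on $\lX$ defined via $\widetilde \phi$. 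The key observation is now that, $\iota$ being an open embedding whose image meets the divisor $V(X_{\alpha_l})$ in a dense open of $V(\widetilde X_{\alpha_l}) \times T$, pullback along $\iota$ identifies the valuation $\val_{\alpha_l}$ with the valuation associated to the irreducible divisor $V(\widetilde X_{\alpha_l}) \times T$ of $\lX \times T$. Moreover, since the function $(1 \otimes x_k) \otimes 1$ is constant along the $T$-factor, this valuation equals $\widetilde \val_{\alpha_l}(1 \otimes x_k)$.

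Putting these facts together yields, for every $\alpha \in \Delta$ and $k \in I_\ii$,
$$
(\nu_\ii)_{\alpha_l, k} = -\val_{\alpha_l}(1 \otimes x_k) = -\widetilde \val_{\alpha_l}(1 \otimes x_k) = (\widetilde \nu_\ii)_{\alpha_l, k},
$$
which is the claim. The proof is essentially a diagram chase; the only non-formal ingredient is the pullback of a divisorial valuation along an open embedding, which is standard. As a sanity check, one may compare the result with the explicit computations in Proposition \ref{valuation and formula cluster variable tensor product} and Proposition \ref{valuation and formula cluster variable Levi} (applied with $I = \emptyset$), both of which give $\delta_{\alpha, i_k}$ for the entry in question.
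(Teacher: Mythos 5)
Your proof is correct and takes essentially the same approach as the paper's. The paper simply records the identity $\iota^*(X_{\alpha_l}) = \widetilde X_{\alpha_l} \otimes 1$ and says the lemma ``follows at once'' from the commutativity of \eqref{eq:base affine commutative} and $\iota$ being an open embedding; you have filled in exactly the diagram chase and valuation-theoretic details (pullback of the divisorial valuation along the open embedding, and constancy of $(1\otimes x_k)\otimes 1$ along the $T$-factor) that the paper leaves implicit.
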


\begin{proof}
    Note that, for any $\alpha \in \Delta$, $\iota^*(X_{\alpha_l})= \widetilde X_{\alpha_l} \otimes 1$. The statement follows at ones from the fact that $\iota$ is an open embedding and the diagram \eqref{eq:base affine commutative} is commutative.
\end{proof}

\begin{theo}
    We have equality $\uclu(\lif \widetilde t_\ii\strut^{D_l})=\Orb_\lX(\lX).$
\end{theo}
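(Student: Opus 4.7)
The strategy is to deduce the equality from the tensor product case established in Theorem \ref{equality tensor product}, by realising $\uclu(\lif \widetilde t_\ii^{D_l})$ as a quotient of $\uclu(\lif t_\ii^D)$. The starting input is Lemma \ref{lem:valuation affine space}, which says that $(\nu_\ii)_{D_l, \bullet} = \widetilde \nu_\ii$. In view of Lemma \ref{lift in steps}, this gives a two-step factorisation
\[
t_\ii \xlongleftarrow{\widetilde \nu_\ii} \lif \widetilde t_\ii \xlongleftarrow{(\nu_\ii)_{D_r, \bullet}} \lif t_\ii,
\]
so that $\lif t_\ii$ is a further monomial lifting of $\lif \widetilde t_\ii$ along $D_r$. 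Applying Corollary \ref{cor:quotients of lifting} to this factorisation yields the algebra isomorphism
\[
\uclu(\lif \widetilde t_\ii^{D_l}) \;\cong\; \uclu(\lif t_\ii^{D}) \big/ \bigl(\lif x_{\alpha_r} - 1 \,:\, \alpha \in \Delta\bigr).
\]

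By Theorem \ref{equality tensor product} the numerator is $\Br(G \times G, G)$, and by the setup of Section \ref{sec: prod, tensor prod} the frozen variables in question are the minors $\lif x_{\alpha_r} = X_{\alpha_r} = 1 \otimes \delaee$. The theorem therefore reduces to the identification
\[
\Br(G \times G, G) \big/ \bigl(X_{\alpha_r} - 1 \,:\, \alpha \in \Delta\bigr) \;\cong\; \CC[G]^{U^-}.
\]
For this I would use the natural algebra map $\tau^* : \Br(G \times G, G) \to \CC[G]^{U^-}$, $F \mapsto F(-,e)$; it is well defined because $F$ is $U^-$-invariant in the first variable, and sends each $X_{\alpha_r}$ to $\delaee(e) = 1$, so it factors through the quotient. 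Geometrically, using $\Br(G \times G, G) = \CC[G \times G]^{U^- \times U^- \times U_{\mathrm{diag}}}$ from Lemma \ref{two models for branching}, the common vanishing locus of the $X_{\alpha_r} - 1$ lifts inside $G \times G$ to $G \times U^- U$; quotienting the second factor by the residual $U^-$-left action and the diagonal $U$-right action via $(g_1, u_2^- u_2^+) \mapsto g_1 u_2^+$ identifies the slice with $G$, and the first $U^-$-left invariance descends to $\CC[G]^{U^-}$.

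The main obstacle is to make Step 4 rigorous. My plan is to verify the quotient isomorphism on homogeneous components of the $X(T)^3$-graduation: multiplication by $X_{\alpha_r}$ realises the embedding $\Br_{(\lambda_1, \lambda_2, \mu)} \hookrightarrow \Br_{(\lambda_1, \lambda_2 + \pia, \mu + \pia)}$, so the graded piece of the quotient indexed by the class of $(\lambda_1, \lambda_2, \mu)$ is the corresponding direct limit; by the standard stabilisation of tensor-product multiplicities, this limit has dimension $\dim V(\lambda_1)_{\mu - \lambda_2}$, which matches the graded dimension of $\CC[G]^{U^-} = \bigoplus_\lambda V(\lambda)$. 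Combined with the inclusion $\uclu(\lif \widetilde t_\ii^{D_l}) \subseteq \Orb_\lX(\lX)$ coming from Theorem \ref{application minimal mon lifting branching}, this dimension match forces the quotient map to be an isomorphism. As an independent sanity check, the theorem also follows directly by specialising Theorem \ref{equality Levi} to $I = \emptyset$, using Remark \ref{rem: simple Levi} to allow $G$ semisimple rather than simple.
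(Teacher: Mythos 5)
Your proposal is correct and follows essentially the same route as the paper's proof: Lemma \ref{lem:valuation affine space} together with Corollary \ref{cor:quotients of lifting} realises $\uclu(\lif \widetilde t_\ii\strut^{D_l})$ as the quotient of $\uclu(\lif t_\ii^D)$ by the ideal $(\lif x_{\alpha_r}-1)$, Theorem \ref{equality tensor product} identifies the latter with $\Br(G\times G,G)$, and one is reduced to the identification $\Br(G\times G,G)/(X_{\alpha_r}-1)\cong\CC[G]^{U^-}$. The only divergence is that where the paper states this last identification as ``easy to verify'' via the closed embedding $p\mapsto\iota(p,e)$, you unfold the same geometric picture through the restriction map $\tau^*:F\mapsto F(-,e)$ plus a graded-dimension/stabilisation argument; this is a more explicit rendering of the same step, though your stabilisation-of-multiplicities claim would benefit from a precise citation or proof if written out in full.
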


\begin{proof}
    It's easy to verify that $\Br(G \times G, G) / (X_{\alpha_r} -1 \, : \, \alpha \in \Delta) \simeq \CC[G]^{U^-}$. Geometrically, the isomorphism  corresponds to the closed embedding $\lX \longto \lX(G \times G, G)$ defined by $p \longmapsto \iota(p,e)$. By Theorem \ref{equality tensor product}, $\CC[G]^{U^-}$ is the quotient of $\uclu(\lif t_\ii^{D_l \sqcup D_r})$ by the ideal $(\lif x_{\ii, \alpha_r} - 1 \, : \, \alpha_r \in D_r)$. We conclude using Corollary \ref{cor:quotients of lifting} and Lemma \ref{lem:valuation affine space}.
\end{proof}

\begin{coro}
\label{cor:independence red exprs aff space}
    If $G$ is simply laced, the mutation equivalence class of $\lif \widetilde t_\ii$ doesn't depend on $\ii.$
\end{coro}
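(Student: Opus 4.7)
The plan is to imitate the argument already used for Corollary \ref{cor:independence reduced exp tensor}, now in the base affine space setting. Fix two reduced expressions $\ii, \ii' \in R(w_0)$. Since $G$ is simply laced, the seeds $\widetilde t_\ii$ and $\widetilde t_{\ii'}$ of $\CC(U)$ are mutation equivalent: this is the well known independence of the Geiss--Leclerc--Schröer cluster structure on $\CC[U]$ from the reduced expression (recalled in the paragraph before Section 6.3). So there exists a finite sequence of mutations $\mu_{k_r}\circ\cdots\circ\mu_{k_1}$ sending $\widetilde t_\ii$ to $\widetilde t_{\ii'}$. My aim is to lift this mutation sequence to one connecting $\lif{\widetilde t_\ii}$ and $\lif{\widetilde t_{\ii'}}$.

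The engine is the theorem just proved: for every $\jj \in R(w_0)$,
\[
\uclu(\lif{\widetilde t_\jj}\strut^{D_l}) = \Orb_\lX(\lX).
\]
Starting from $\lif{\widetilde t_\ii}\strut^{D_l}$, apply the first mutation $\mu_{k_1}$. By Corollary \ref{coro:if equality monomial lifting commutes}, $\mu_{k_1}(\lif{\widetilde t_\ii}\strut^{D_l})$ is precisely the minimal monomial lifting, with respect to $\lX$, of $\mu_{k_1}(\widetilde t_\ii)$; by the unicity built into the construction (Theorem \ref{unicity minimal monomial lifting}), this is $\lif{\mu_{k_1}(\widetilde t_\ii)}\strut^{D_l}$, which coincides with the minimal monomial lifting of a seed obtained from $\widetilde t_{\ii''}$ after one step (where $\ii''$ is whichever reduced expression corresponds to $\mu_{k_1}(\widetilde t_\ii)$, if any; otherwise one just stays in the mutation class). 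The key point is that the equality $\uclu = \Orb_\lX(\lX)$ is preserved, so Corollary \ref{coro:if equality monomial lifting commutes} applies again. Iterating along the whole mutation sequence $\mu_{k_r}\circ\cdots\circ\mu_{k_1}$ yields
\[
\mu_{k_r}\circ\cdots\circ\mu_{k_1}(\lif{\widetilde t_\ii}\strut^{D_l}) = \lif{\widetilde t_{\ii'}}\strut^{D_l},
\]
so $\lif{\widetilde t_\ii}\strut^{D_l} \sim \lif{\widetilde t_{\ii'}}\strut^{D_l}$ as highly-frozen seeds.

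Finally, to pass from $\lif{\widetilde t}\strut^{D_l}$ back to $\lif{\widetilde t}$, one uses that highly-freezing commutes with mutation (Lemma \ref{properties freezing}, item 5), so the mutation class is preserved under the semi-freezing/highly-freezing operations; equivalently, the whole argument above can be performed directly on $\lif{\widetilde t_\ii}$ since mutation of $\lif{\widetilde t}$ equals mutation of $\lif{\widetilde t}\strut^{D_l}$ ignoring the nature of the frozen vertices. The only genuinely delicate step is the verification that Corollary \ref{coro:if equality monomial lifting commutes} applies at each intermediate seed, but this is automatic because the equality $\uclu = \Orb_\lX(\lX)$ holds at the initial seed and is an intrinsic property of the upper cluster algebra, hence survives every mutation. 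No separate combinatorial check of the exchange matrices or of the lifting matrices is required.
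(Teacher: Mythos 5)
Your proof is correct, and it takes a genuinely different route from the paper's. The paper simply cites Corollary \ref{cor:independence reduced exp tensor} (the tensor product case): once one knows that $\lif t_\ii^D \sim \lif t_{\ii'}^D$, the independence for $\lif{\widetilde t_\ii}$ follows because the minimal lifting matrix $\widetilde\nu_\ii$ is the $D_l$-restriction of $\nu_\ii$ (Lemma \ref{lem:valuation affine space}) and mutation of lifting configurations acts component-wise in the $D$-index, so a mutation sequence relating the $D$-lifted seeds also relates their $D_l$-restrictions. You instead rerun the mutation-lifting machinery directly in the base affine space: starting from the equality $\uclu(\lif{\widetilde t_\ii}\strut^{D_l}) = \Orb_\lX(\lX)$ just proved, you iterate Corollary \ref{coro:if equality monomial lifting commutes} along a mutation sequence taking $\widetilde t_\ii$ to $\widetilde t_{\ii'}$, and at each step the mutated seed is again the minimal monomial lifting, so at the end you land on $\lif{\widetilde t_{\ii'}}\strut^{D_l}$. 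Your version is a bit longer but more self-contained: it does not use Lemma \ref{lem:valuation affine space} nor the tensor product corollary at all, only the freshly established equality theorem. Two small remarks on details you should be comfortable defending: the hypotheses of Theorem \ref{minimal monomial lifting theo} (in particular coprimality) must hold at every intermediate seed for Corollary \ref{coro:if equality monomial lifting commutes} to apply — this is indeed automatic here by Lemma \ref{factorial upper, coprimality} because $Y=U$ is affine with factorial coordinate ring; and your parenthetical about an intermediate $\ii''$ is a harmless imprecision, since intermediate seeds need not come from reduced expressions, but the argument never requires them to.
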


\begin{proof}
    It's a direct consequence of Corollary \ref{cor:independence reduced exp tensor}.
\end{proof}

For $\alpha \in \Delta$ and $w \in W$, we denote by $D^{\pia}_{e,w}$ the restriction of $\delaew$ to $U.$

\begin{lemma}
\label{lem:variables affine space minors}
    For $k \in [l]$, $\lif \widetilde x_{\ii,k}= \Delta^{\varpi_{i_k}}_{e, (w_0)_{\leq k}}.$
\end{lemma}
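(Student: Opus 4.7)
The plan is to chain together three ingredients: Lemma \ref{lem:valuation affine space}, which transfers the left--copy part of the minimal lifting matrix from the tensor product case to the base affine space; the explicit computation of that part of the lifting matrix carried out in Proposition \ref{valuation and formula cluster variable tensor product}; and the classical transformation rules of generalised minors recorded in Lemma \ref{lem:basic prop minors}. The main obstacle is purely bookkeeping: reconciling the identification $\CC[\lX] = \CC[G]^{U^-} \subseteq \CC[G]$ with the suitable--for--lifting structure $(\lX, \widetilde\phi, \widetilde X)$, so that a product $\widetilde X^{\widetilde \nu_{\ii,\bullet,k}} \cdot (1 \otimes x_{\ii,k}) \in \CC(\lX)$ can be recognised as a restriction of a single generalised minor on $G$.

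First I would compute the minimal lifting matrix $\widetilde \nu_\ii \in \ZZ^{D_l \times I_\ii}$. By Lemma \ref{lem:valuation affine space} one has $\widetilde \nu_\ii = (\nu_\ii)_{D_l,\bullet}$, and the entries $(\nu_\ii)_{\alpha_l,k}$ have already been computed in the tensor--product setting: Proposition \ref{valuation and formula cluster variable tensor product} gives $(\nu_\ii)_{\alpha_l,k} = \delta_{\alpha,i_k}$. Hence
\[
\lif \widetilde x_{\ii,k} \;=\; \widetilde X^{\widetilde \nu_{\ii,\bullet,k}} \cdot (1 \otimes x_{\ii,k}) \;=\; \widetilde X_{(i_k)_l} \cdot (1 \otimes x_{\ii,k}) \;=\; \delta^{\varpi_{i_k}}_{e,e} \cdot (1 \otimes x_{\ii,k}),
\]
where the product takes place in $\CC(\lX) \subseteq \CC(G)$.

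Next I would verify, as an identity of regular functions on $G_0 = B^-B$, that this product equals $\delta^{\varpi_{i_k}}_{e,(w_0)_{\leq k}}$. For $g \in G_0$, write $g = u^- h u$ with $u^- \in U^-$, $h \in T$, $u \in U$. On the one hand, $\delta^{\varpi_{i_k}}_{e,e}(g) = \varpi_{i_k}(h)$ by the very definition of the principal minor and, by the convention for $\widetilde\phi$, $(1 \otimes x_{\ii,k})(g) = x_{\ii,k}(u) = \Delta^{\varpi_{i_k}}_{e, w_{\leq k}^{-1}}(u)$. On the other hand, using that $\delta^{\varpi_{i_k}}_{e, w_{\leq k}^{-1}}$ is left $U^-$-invariant and $T$-semi-invariant of weight $\varpi_{i_k}$ (Lemma \ref{lem:basic prop minors}), we obtain
\[
\Delta^{\varpi_{i_k}}_{e, w_{\leq k}^{-1}}(g) \;=\; \varpi_{i_k}(h)\, \Delta^{\varpi_{i_k}}_{e, w_{\leq k}^{-1}}(u) \;=\; \delta^{\varpi_{i_k}}_{e,e}(g) \cdot (1 \otimes x_{\ii,k})(g).
\]

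Finally, both sides are regular on $G$ (in particular on $\lX$) and they agree on the big open subscheme $\widetilde\phi(T \times U) \subset \lX$; since $\lX$ is integral (Corollary \ref{cor:suitable quasi-polynomial}), this equality extends to all of $\lX$, which concludes the proof. The small subtlety to handle with care is simply matching the conventions $w_{\leq k}^{-1} = s_{i_1}\cdots s_{i_k}$ versus the notation $(w_0)_{\leq k}$ appearing in the statement, but no additional geometric or combinatorial input is required beyond the three ingredients listed above.
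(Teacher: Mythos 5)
Your proposal is correct and follows essentially the same route as the paper: both first identify $\widetilde\nu_{\ii,\bullet,k}$ via Lemma \ref{lem:valuation affine space} and Proposition \ref{valuation and formula cluster variable tensor product} to get $\lif \widetilde x_{\ii,k}= \widetilde X_{(i_k)_l}\,(1\otimes x_{\ii,k})$, then check the resulting identity against $\Delta^{\varpi_{i_k}}_{e,(w_0)_{\leq k}}$ on the open dense locus using the $U^-$-invariance and $T$-semi-invariance of the minor, and conclude by irreducibility of $\lX$. Your version merely spells out the evaluation on $G_0=B^-B$ in slightly more detail than the paper's one-line computation at $hu$.
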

Here $(w_0)_{\leq k}$ is computed with respect to $\ii.$
\begin{proof}
    By Lemmas \ref{lem:valuation affine space} and \ref{valuation and formula cluster variable tensor product} we have that $\lif \widetilde x_{\ii,k}= \widetilde X_{(i_{k})_l}(1 \otimes \widetilde x_{\ii,k})$. Hence 
    $$\lif \widetilde x_{\ii,k}(hu)=\varpi_{i_k}(h)D^{\varpi_{i_k}}_{e, (w_0)_{\leq k}}(u)=\Delta^{\varpi_{i_k}}_{e, (w_0)_{\leq k}}(hu).$$
    Since $T \times U$ is open in $\lX$, which is irreducible, this proves the lemma.
\end{proof}

\begin{lemma}
\label{lem:all minors c variable affine space}
        Let $w \in W$ and $\alpha \in \Delta$.
    \begin{enumerate}
        \item There exist $\ii' \in R(w_0)$ such that $\delaew$ is a cluster variable of $\lif \widetilde{t}_{\ii'}^{D_l}$.
        \item If $G$ is simply laced, then $\delaew$ is a cluster variable of $\uclu(\lif \widetilde{t_\ii}^{D_l})$, for any $\ii$. 
        \end{enumerate}
\end{lemma}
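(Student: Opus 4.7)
The plan is to transfer the analogous result from the tensor-product setting, Proposition \ref{prop:components cluster variables tensor prod}, to the base-affine-space setting via Lemma \ref{lem:variables affine space minors}. Part (2) will then follow immediately from part (1) and Corollary \ref{cor:independence red exprs aff space}, so the substance is entirely in part (1).

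For part (1), I would first dispose of the degenerate case $w\pia = \pia$, i.e.\ $w \in W_{\widehat\alpha} := \langle s_\beta : \beta \neq \alpha\rangle$: Lemma \ref{lem:basic prop minors} gives $\delaew = \delaee = \widetilde X_{\alpha_l}$, which is the frozen cluster variable at vertex $\alpha_l \in D_l$ of $\lif \widetilde t_{\ii'}^{D_l}$ for any $\ii'$. Assume now $w \notin W_{\widehat\alpha}$. Mimicking the recipe from the proof of Proposition \ref{prop:components cluster variables tensor prod}(1), I would pick a reduced expression $\jj = (j_1,\ldots, j_m) \in R(w)$, set $k := \max\{n \leq m : j_n = \alpha\}$ (well defined because $w \notin W_{\widehat\alpha}$), and $v := s_{j_1} \cdots s_{j_k}$. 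By construction the tail $s_{j_{k+1}} \cdots s_{j_m}$ lies in $W_{\widehat\alpha}$, so $v^{-1}w \in W_{\widehat\alpha}$ and hence $v^{-1}\pia = w\pia$. I would then extend $(j_1, \ldots, j_k)$ on the right to a reduced expression $\ii' = (i'_l, \ldots, i'_1) \in R(w_0)$ with $i'_n = j_n$ for $n \leq k$; such an extension exists by the standard extendability of reduced words. Then $i'_k = j_k = \alpha$ and, computed with respect to $\ii'$,
\[
(w_0)_{\leq k} = s_{i'_k} s_{i'_{k-1}} \cdots s_{i'_1} = s_{j_k} \cdots s_{j_1} = v^{-1}.
\]
Combining Lemmas \ref{lem:variables affine space minors} and \ref{lem:basic prop minors}, the cluster variable of the initial seed $\lif \widetilde t_{\ii'}^{D_l}$ at vertex $k$ is
\[
\lif \widetilde x_{\ii', k} = \Delta^{\varpi_{\alpha}}_{e, v^{-1}} = \Delta^{\varpi_\alpha}_{e, w},
\]
which yields part (1).

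For part (2), once (1) is in hand, Corollary \ref{cor:independence red exprs aff space} (which uses the simply laced assumption) identifies the mutation equivalence classes of $\lif \widetilde t_\ii$ and $\lif \widetilde t_{\ii'}$, so $\delaew = \lif \widetilde x_{\ii', k}$ is a cluster variable of $\uclu(\lif \widetilde t_\ii^{D_l})$ as claimed. There is no real obstacle beyond bookkeeping: one must carefully match the two enumeration conventions in play ($(i_l,\ldots,i_1)$ for reduced expressions of $w_0$ versus $(j_1,\ldots,j_m)$ for reduced expressions of $w$ as in Proposition \ref{prop:components cluster variables tensor prod}) and track the identity $v^{-1}\pia = w\pia$. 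No further input is needed beyond the three ingredients Lemma \ref{lem:variables affine space minors}, Lemma \ref{lem:basic prop minors}, and the recipe extracted from the proof of Proposition \ref{prop:components cluster variables tensor prod}(1).
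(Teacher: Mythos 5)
Your overall route is the same as the paper's: part (1) by running the recipe of Proposition \ref{prop:components cluster variables tensor prod}(1) through Lemma \ref{lem:variables affine space minors}, and part (2) from part (1) together with Corollary \ref{cor:independence red exprs aff space}. The structure is right, and the degenerate case $w\pia=\pia$ and the extendability of reduced words are handled correctly.

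There is, however, one false step in the Weyl-group bookkeeping. From $v^{-1}w\in\langle s_\beta : \beta\neq\alpha\rangle$ you may conclude $v^{-1}w\pia=\pia$, i.e.\ $w\pia=v\pia$; it does \emph{not} follow that $v^{-1}\pia=w\pia$, and that identity is false in general (e.g.\ $G=\SL_3$, $w=v=s_1s_2$, $\alpha=\alpha_2$: then $v\varpi_2=\varpi_2-\alpha_1-\alpha_2$ while $v^{-1}\varpi_2=\varpi_2-\alpha_2$). The mismatch arises from reading Lemma \ref{lem:variables affine space minors} literally with $(w_0)_{\leq k}=s_{i'_k}\cdots s_{i'_1}=v^{-1}$. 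Comparing with the definition \eqref{x for U(w)} of the cluster variables (and with the weight computations in Propositions \ref{prop:t i graded} and \ref{prop:weights c variable tensor}), the minor defining $\widetilde x_{\ii',k}$ is indexed by $(w_0)_{\leq k}^{-1}=s_{i'_1}\cdots s_{i'_k}=v$, so $\lif\widetilde x_{\ii',k}=\Delta^{\varpi_\alpha}_{e,v}$, and the identity you need --- and actually have from the tail argument --- is $v\pia=w\pia$. With that one correction the argument closes; alternatively, if you insist on the literal statement of Lemma \ref{lem:variables affine space minors}, you would have to extend the reversed word $(j_k,\dots,j_1)$ rather than $(j_1,\dots,j_k)$. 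Part (2) is fine as written.
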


\begin{proof}
    For the first statement, using Lemma \ref{lem:variables affine space minors}, the proof is literally the same of Proposition \ref{prop:components cluster variables tensor prod}. The second statement is an obvious consequence of the first one and of Corollary \ref{cor:independence red exprs aff space}.
\end{proof}

 \bibliographystyle{alpha}

\bibliography{biblio.bib}

\end{document}